\newcommand{\hide}[1]{}
\numberwithin{equation}{subsection}
\theoremstyle{plain}
\newtheorem{thm}{Theorem}[subsection]
\newtheorem{prop}[thm]{Proposition}
\newtheorem{conj}[thm]{Conjecture}
\newtheorem{assumption}[thm]{Assumption}
\newtheorem{thm-defi}[thm]{Theorem/Definition}
\newtheorem{cor}[thm]{Corollary}
\newtheorem{lem}[thm]{Lemma}
\theoremstyle{definition}
\newtheorem{defi}[thm]{Definition}
\newtheorem{rem}[thm]{Remark}
\newtheorem{question}[thm]{Question}
\newtheorem{example}[thm]{Example}
\newtheorem{construction}[thm]{Construction}
\newcommand{\A}{{\mathcal A}}
\newcommand{\B}{{\mathcal B}}
\newcommand{\C}{{\mathcal C}}
\newcommand{\CC}{{\mathbb C}}
\newcommand{\D}{{\mathcal D}}
\newcommand{\E}{{\mathcal E}}
\newcommand{\F}{{\mathcal F}}
\newcommand{\G}{{\mathcal G}}
\renewcommand{\H}{{\mathcal H}}
\newcommand{\I}{{\mathcal I}}
\newcommand{\J}{{\mathcal J}}
\newcommand{\LB}{{\mathcal L}}
\newcommand{\Q}{{\mathcal Q}}
\newcommand{\QQ}{{\mathbb Q}}
\newcommand{\R}{{\mathcal R}}
\newcommand{\RR}{{\mathbb R}}
\newcommand{\T}{{\mathcal T}}
\newcommand{\M}{{\mathcal M}}
\newcommand{\U}{{\mathcal U}}
\newcommand{\V}{{\mathcal V}}
\newcommand{\X}{{\mathcal X}}
\newcommand{\Z}{{\mathcal Z}}
\newcommand{\ZZ}{{\mathbb Z}}
\renewcommand{\P}{{\mathcal P}}
\newcommand{\PP}{{\mathbb P}}
\newcommand{\ann}{{\rm ann}}
\newcommand{\cm}{\eta}
\newcommand{\Integers}{{\mathbb Z}}
\newcommand{\ComplexNumbers}{{\mathbb C}}
\newcommand{\LieAlg}[1]{{\mathfrak #1}}
\newcommand{\Spin}{{\rm Spin}}
\newcommand{\linsys}[1]{{\mid}#1{\mid}}
\newcommand{\fS}{{\mathfrak S}}
\newcommand{\IsomRightArrow}{\stackrel{\cong}{\rightarrow}}
\newcommand{\RightArrowOf}[1]{\stackrel{#1}{\rightarrow}}
\newcommand{\LongRightArrowOf}[1]{\stackrel{#1}{\longrightarrow}}
\newcommand{\StructureSheaf}[1]{{\mathcal O}_{#1}}
\newcommand{\EndProof}{\hfill  $\Box$}
\newcommand{\restricted}[2]{#1_{\mid_{#2}}}
\newcommand{\rank}{{\rm rank}}
\newcommand{\Pic}{{\rm Pic}}
\newcommand{\Sym}{{\rm Sym}}
\newcommand{\Ext}{{\rm Ext}}
\newcommand{\SheafTor}{{\mathcal Tor}}
\newcommand{\Hom}{{\rm Hom}}
\newcommand{\Aut}{{\rm Aut}}
\newcommand{\End}{{\rm End}}
\newcommand{\SheafHom}{{\mathcal H}om}
\newcommand{\SheafEnd}{{\mathcal E}nd}
\newcommand{\SheafExt}{{\mathcal E}xt}
\newcommand{\Ideal}[1]{{\mathcal I}_{#1}}
\newcommand{\Contract}{\rfloor}
\newcommand{\Choose}[2]
{\left(\!\!\begin{array}{c}#1\\#2\end{array}\!\!\right)}
\newcommand{\one}{{\mathbb 1}}
\newcommand{\Spec}{{\rm Spec}}
\renewcommand{\span}{{\rm span}}
\begin{document}
\title[Abelian $2n$-folds of Weil type]
{Cycles on abelian $2n$-folds of Weil type from secant sheaves on abelian $n$-folds}
\author{Eyal Markman}
\address{Department of Mathematics and Statistics, 
University of Massachusetts, Amherst, MA 01003, USA}
\email{markman@umass.edu}

\date{\today}
\subjclass[2010]{14C25, 14C30, 14K22}

\begin{abstract}
Let $X$ be an abelian $n$-fold, $n\geq 2$,  and $\hat{X}$ its dual abelian $n$-fold. 
Endow $V\!\!=\!\!H^1(X,\ZZ)\!\oplus \! H^1(\hat{X},\ZZ)$ with the natural symmetric bilinear pairing.
The even cohomology $H^{ev}(X,\ZZ)$ is the half spin representation of $\Spin(V)$. 
Fix an integer $d>0$ and set $K\!:=\!\QQ(\sqrt{-d})$.
A coherent sheaf $F$ on $X$ is a {\em $K$-secant sheaf}, if $ch(F)$ belongs to a $2$-dimensional subspace
$P$ of $H^{ev}(X,\QQ)$ spanned by Hodge classes, such that the line $\PP(P)$ intersects the spinorial variety in
$\PP[H^{ev}(X,K)]$ 
along two distinct complex conjugate points defined over $K$. The $K$-secant  $P$ determines an embedding  
$\eta:K\rightarrow \End_\QQ(X\!\!\times\!\!\hat{X})$ and a rational $(1,1)$-form $h$ on $X\!\!\times\!\!\hat{X}$. The triple $(X\!\!\times\!\!\hat{X},\eta,h)$ is a polarized abelian variety of Weil type, for a non-empty open subset of such $K$-secants.

Let $F_1$, $F_2$ be non-zero coherent sheaves on $X$ with $ch(F_i)$ in such a $K$-secant $P$. 
Orlov constructed a derived equivalence 
$
\Phi:D^b(X\!\!\times\!\! X)\rightarrow D^b(X\!\!\times\!\!\hat{X})
$ 
categorifying  the isomorphism of two $\Spin(V)$-representations,
the tensor square $H^*(X\!\!\times\!\! X,\ZZ)$ 
of the spin  representation, 
and $\wedge^*V$. Assume that the rank of $E:=\Phi(F_1^\vee\boxtimes F_2)$ is non-zero.
We prove that the characteristic class $\exp\left(-\frac{c_1(E)}{\rank(E)}\right)ch(E)$ 
remains of Hodge type under every deformation of 
$(X\!\!\times\!\!\hat{X},\eta,h)$ as a polarized abelian variety of Weil type.  The algebraicity of the Hodge-Weil classes of a deformation of the triple would follow, if $E$ deforms as well as a possibly twisted object.

When $n=3$ we construct for every positive integer $d$ an example, where $K=\QQ(\sqrt{-d})$ and the derived dual $E$ of $\Phi(F_1^\vee\boxtimes F_2)[1]$ is a simple reflexive sheaf over $X\!\!\times\!\!\hat{X}$. 
We prove that $E$ deforms with $(X\!\!\times\!\!\hat{X},\eta,h)$ locally in the $9$-dimensional moduli space of  polarized abelian $6$-folds of Weil type of discriminant $-1$, using the Semiregularity theorem of Buchweitz-Flenner.

The Hodge Conjecture for abelian fourfolds is known to follow from the above result.
\end{abstract}

\maketitle

\setcounter{tocdepth}{3}
\tableofcontents
\section{Introduction}
\subsection{Abelian varieties of Weil-type}
\label{sec-introduction-abelian-varieties-of-Weil-type}
A $2n$-dimensional abelian variety $A$ is of {\em Weil-type}, if it admits an embedding $\eta:K\rightarrow \End_\QQ(A)$, where $K=\QQ(\sqrt{-d})$ for some positive integer $d$, such that each of the eigenspaces $W$ and $\bar{W}$ 
of $\eta(\sqrt{-d})$, with eigenvalues $\sqrt{-d}$ and $-\sqrt{-d}$, intersects $H^{1,0}(A)$ in an $n$-dimensional subspace (see \cite{weil}). 
In that case $\wedge^{2n}W\oplus \wedge^{2n}\bar{W}$ is a $Gal(K/\QQ)$-invariant $2$-dimensional subspace of $H^{2n}(A,K)$, corresponding to 
a $2$-dimensional subspace $\hat{HW}\subset H^{n,n}(A,\QQ)$. The rational $(n,n)$-classes in $\hat{HW}$ are called {\em Hodge-Weil classes}. The Hodge conjecture predicts that $\hat{HW}$ is spanned by classes of algebraic cycles. A {\em polarized abelian variety of Weil type} is a triple $(A,\eta,h)$, where $(A,\eta)$ is an abelian variety of Weil type and $h\in H^{1,1}(A,\QQ)$ is an ample class, such that $\eta(k)\in \End_\QQ(A)$ maps $h$ to $Nm(k)h$, where $Nm:K\rightarrow \QQ$ is the norm map \cite{van-Geemen,weil}. 

Polarized abelian varieties of Weil type admit a natural $K$-valued hermitian form $H:H_1(A,\QQ)\times H_1(A,\QQ)\rightarrow K$.
The determinant of the matrix of $H$, with respect to some $K$-basis of $H_1(A,\QQ)$, is an element of $\QQ^\times$ and its image in $\QQ^\times/Nm(K^\times)$ is called the {\em discriminant} $\det H$ of $(A,\eta,h)$. 
The moduli space of $2n$-dimensional abelian varieties of Weil type is $n^2$-dimensional. The generic abelian variety of Weil type has a cyclic Neron-Severi group, but a $3$-dimensional $H^{n,n}(A,\QQ)$ \cite{weil} (see also \cite[Th. 6.12]{van-Geemen}).
The triple $(n,K,\det H)$, consisting of half the dimension of $A$, an imaginary quadratic number field $K$, and the discriminant $\det H$, is a discrete invariant of a component of the moduli space, which determines it up to isogenies of abelian varieties of Weil type \cite[Th. 5.2(3)]{van-Geemen}.

The algebraicity of the Hodge-Weil classes was proved for abelian fourfolds of Weil type with $K=\QQ[\sqrt{-3}]$ and arbitrary discriminant in \cite{schoen}, where it is also proved for abelian sixfolds with $K=\QQ[\sqrt{-3}]$ and trivial discriminant.
The algebraicity is proved for fourfolds with $K=\QQ[\sqrt{-1}]$ and discriminant $-1$  in \cite{schoen1} (see also \cite{van-Geemen} for another proof).
It was later proved for sixfolds with $K=\QQ[\sqrt{-1}]$ and discriminant $-1$ in \cite{koike}, which implies the result for fourfolds, $K=\QQ[\sqrt{-1}]$, and arbitrary discriminant. The algebraicity is proved in \cite{markman-generalized-kummers} for fourfolds, arbitrary imaginary quadratic number field $K$, and discriminant $1$. 

\hide{
The construction of \cite{markman-generalized-kummers} starts with an abelian surface $X$, a $2$-dimensional subspace $P$ of $H^{even}(X,\QQ)$, positive definite with respect to the Mukai pairing, and a stable sheaf $F$ with Chern character in $P$. We introduce\footnote{In fact, $A$ is isogenous to the intermediate Jacobian of a hyper-K\"{a}hler variety $Kum(ch(F))$ of generalized Kummer type, which is a fiber 
 of the albanese morphism of the moduli space $M(ch(F))$ of stable sheaves on $X$ with Chern character $ch(F)$. O'Grady observed earlier that the intermediate Jacobian of $Kum(ch(F))$ is an abelian variety of Weil type \cite{ogrady}.
 }
 the structure of an abelian variety of Weil type on $A:=X\times \hat{X}$ with complex multiplication $\eta:K\rightarrow \End_\QQ(A)$ by the imaginary quadratic number field $K$ of definition of the two isotropic lines in $P\otimes_\QQ\CC$. The complex multiplication arrises naturally using the representation theory of an arithmetic group of type $\Spin(4,4)$ arising as the {\em derived monodromy group} of $X$, namely the group generated by the cohomological action on $H^*(X,\ZZ)$ of all monodromy operators and all derived equivalences for all complex structures on $X$. The choice of the stable sheaf $F$ is then used to construct a reflexive coherent sheaf $E$ on $X\times \hat{X}$.
 We use techniques from hyper-K\"{a}hler geometry to deform
 the pair $(A,E)$ over a $5$-dimensional moduli space, which contains the $4$-dimensional moduli space of deformations $(A',\eta')$ of $(A,\eta)$ as an abelian variety of Weil type. Finally we show that the $\eta'(K)$ translates of a characteristic class of the deformed $E'$ over $A'$ span the space of Hodge-Weil classes 
 of $(A',\eta')$. The algebraicity of the Hodge-Weil classes of $(A',\eta')$ follows.
 In this paper 
 we generalize the above construction for abelian $n$-folds $X$ without the use of hyper-K\"{a}hler geometry techniques, which seem unavailable for $n>2$.
}
%
\subsection{Abelian varieties of Weil-type from $K$-secants}
\label{sec-abelian-varieties-of-Weil-type-introduction}
Let $X$ be a projective abelian $n$-fold and set $\hat{X}:=\Pic^0(X)$. 
Set 
\begin{equation}
\label{eq-V}
V:=H^1(X,\Integers)\oplus H^1(\hat{X},\Integers). 
\end{equation}
Then $V$ is endowed with the symmetric non-degenerate unimodular bilinear pairing 
\begin{equation}
\label{eq-pairing-on-V}
((w_1,\theta_1),(w_2,\theta_2))_V:=\theta_1(w_2)+\theta_2(w_1),
\end{equation}
where we used the natural isomorphism to identify $H^1(\hat{X},\Integers)$ with $H^1(X,\Integers)^*$.
$V$ has rank $4n$ and is isometric to the orthogonal direct sum $U^{\oplus 2n}$ of $2n$ copies of the even unimodular rank $2$ lattice $U$ of signature $(1,1)$. 
Set $S:=H^*(X,\Integers)$, $S^+:=\oplus_{i=0}^{2n} H^{2i}(X,\Integers)$, and $S^-:=\oplus_{i=0}^{2n-1} H^{2i+1}(X,\Integers)$. Then $S^+$ and $S^-$ are the half-spin representations of the integral spin group $\Spin(V)$.
The spin representation $S$ is endowed with a non-degenerate integral bilinear pairing, which is symmetric for even $n$ and anti-symmetric for odd $n$
\begin{equation}
\label{eq-Mukai-pairing}
(s,t)_S:=\int_X\tau(s)\cup t,
\end{equation}
where the {\em main anti-automorphism} $\tau$ acts via multiplication by $(-1)^{i(i-1)/2}$ on $H^i(X,\ZZ)$.
The groups $S^+$ and $S^-$ are both of rank $2^{2n-1}$.

Let $X$ be an abelian $n$-fold and $K$ an imaginary quadratic number field. Set $S^+_K:=S^+\otimes_\ZZ K$. The spinor variety in $\PP(S^+_K)$ parametrizes one of the two connected components of the grassmannian of maximal isotropic subspaces of $V_K$ (see Section \ref{sec-pure-spinors}). Points of the spinor variety in $\PP(S^+_K)$ are called {\em even pure spinors}. We are interested in lines in $\PP(S^+_K)$, which are defined over $\QQ$ and intersect the spinor variety at two distinct complex conjugate points. We will call such a line a {\em rational $K$-secant}, even though its points of intersection with the spinor variety are defined over $K$ and not over $\QQ$. 
When $n=2$, the spinor variety in $\PP(S^+_\CC)$ is the quadric hypersurface associated to the pairing (\ref{eq-Mukai-pairing}), and infinitely many rational secants pass through every point in $\PP(S^+_\QQ)$. The rational secant line in 
$\PP(\oplus_{p=0}^2H^{p,p}(X,\QQ))$ yielding the structure of a polarized abelian variety of Weil type on $X\times\hat{X}$ is one corresponding to a negative definite plane $P$ in $\oplus_{p=0}^2H^{p,p}(X,\QQ)$ with respect to the pairing (\ref{eq-Mukai-pairing}) (see \cite{markman-generalized-kummers}).
When $n=3$, the secant variety of the spinor variety is birational to $\PP(S^+_\CC)$. Through a general point $\lambda$ of 
$\PP(\oplus_{p=0}^3H^{p,p}(X,\QQ))$ passes a unique rational secant line and the latter yields the structure of an abelian variety of Weil type on $X\times\hat{X}$ only if the $\Spin(V_\QQ)$-invariant Igusa quartic, given in (\ref{eq-J}), has positive value at $\lambda$ (see Section \ref{sec-coherent-sheaves-with-positive-Igusa-invariant}, the value of the Igusa invariant determines the field of definition of the two pure spinors). For $n\geq 4$ the image in the $(2^{2n-1}-1)$-dimensional $\PP(S^+_\CC)$ of the $(4n^2-2n+1)$-dimensional
secant variety to the spinor variety is a proper subvariety. 

Following is a construction of a $\tau$-invariant rational $K$-secant line to the spinor variety in all dimensions.
Let  $\Theta$ be an ample class on an abelian $n$-fold $X$, $n\geq 2$, 
and let $d$ be a positive integer. Set $u:=\sqrt{-d}\Theta$ and $K:=\QQ(\sqrt{-d})$. The automorphism of $H^*(X,K)$ of cup product with $\exp(u)$ is an element $g$ of $\Spin(V_K)$. Let $\rho:\Spin(V_K)\rightarrow SO(V_K)$ be the standard representation. 
Let $P\subset \oplus_{p=0}^nH^{p,p}(X,\QQ)$ be the rational plane 
\begin{equation}
\label{eq-P-introduction}
P=\span_\QQ\left\{\exp(u)+\overline{\exp(u)},\frac{\exp(u)-\overline{\exp(u)}}{\sqrt{-d}}\right\}
\end{equation}
corresponding to the conjugation invariant plane 
$P_K:=\span_K\{\exp(u),\overline{\exp(u)}\}.$
The secant line $\PP(P)$ intersects the spinor variety at $\tilde{\ell}_1:=\span_K\{\exp(u)\}$ and its complex conjugate $\tilde{\ell}_2:=\span_K\{\overline{\exp(u)}\}$. 
The corresponding maximal isotropic subspaces of $V$ are $W_1:=\rho_g(H^1(\hat{X},K))$ and its complex conjugate $W_2$, which are calculated explicitly in (\ref{eq-W-1-and-2}).
$P$  yields the structure of a polarized abelian variety of Weil type on $X\times \hat{X}$ with imaginary quadratic number field $K$, by Proposition \ref{prop-polarized-abelian-variety-of-Weil-type}. 
The rational endomorphism associated to $k\in K$ via the complex multiplication
\[
\eta:K\rightarrow \End_\QQ(X\times\hat{X})\cong \End(V_\QQ)
\]
acts on $W_1$ by scalar multiplication by $k$ and on $W_2$ by its complex conjugate $\bar{k}$.

The fact that $P$ is spanned by Hodge classes implies that each of the intersections $W_i\cap V^{1,0}$ and $W_i\cap V^{0,1}$ is $n$-dimensional, for $i=1,2$, by
Lemma \ref{lemma-decomposition-into-4-direct-summands}. 
Consequently, $\wedge^{2n}W_i$, $i=1,2$, is a pair of complex conjugate lines in $H^{n,n}(X,K)$, which span the rational $2$-dimensional subspace  $\hat{HW}_P$ 
of {\em Hodge-Weil classes} in $H^{n,n}(X\times\hat{X},\QQ)$. 

Chevalley constructed an integral isomorphism 
$\tilde{\varphi}:H^*(X\times X,\ZZ)\rightarrow H^*(X\times\hat{X},\ZZ)$, given in (\ref{eq-tilde-varphi}), 
which depends on a choice of a class in $H^2(X\times\hat{X},\ZZ)$ (Remark \ref{remark-non-equivariance-of-varphi-tilde}). 
Chevalley's construction depends on the interpretation of $H^*(X\times X,\ZZ)$ as the tensor square $S\otimes S$ and of $H^*(X\times\hat{X},\ZZ)$ as the exterior algebra $\wedge^*V$ 
and is purely representation theoretic.
Consider the composition
\begin{equation}
\label{eq-phi-introduction}
\phi:=(\phi_\P\otimes\phi_\P^{-1})\circ\tilde{\varphi}\circ (id\otimes\tau):H^*(X\times X,\ZZ)\rightarrow H^*(X\times\hat{X},\ZZ),
\end{equation}
where $\tau$ is the main anti-automorphism appearing in (\ref{eq-Mukai-pairing}), $\phi_\P:H^*(X,\ZZ)\rightarrow H^*(\hat{X},\ZZ)$ is the correspondence induced by the Chern character $ch(\P)$ of the Poincar\'{e} line bundle $\P$, and the left isomorphism is
$\phi_\P\otimes\phi_\P^{-1}:H^*(X\!\times\!\hat{X},\ZZ)\rightarrow H^*(\hat{X}\!\times\! X,\ZZ)\cong H^*(X\!\times\!\hat{X},\ZZ)$. The subtle $\Spin(V)$-equivariance properties of $\phi$ will be explained in the next subsection. 

\begin{prop}[Prop. \ref{prop-the-orlov-image-of-HW-P-projects-into-the-3-dimensional-space-of-HW-classes}]
The image $\phi(\tilde{\ell}_i\otimes\tau(\tilde{\ell}_i))$ via $\phi\circ(id\otimes\tau)$, of the tensor square $\tilde{\ell}_i\otimes \tilde{\ell}_i$
of each of the two pure spinor lines in $P$, is contained in the subspace $\oplus_{k=2n}^{4n}H^k(X\times\hat{X},K)$ and its projection to $H^{2n}(X\times\hat{X},K)$ is $\wedge^{2n}W_i$. 
\end{prop}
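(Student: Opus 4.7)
\emph{Proof plan.} The plan is to reduce the claim to the Cartan--Chevalley identity for pure spinors via the $\Spin(V_K)$-equivariance of $\phi$. Let $g := \exp(u) \in \Spin(V_K)$ act by Clifford multiplication on $S$. Then $\sigma := \exp(u) = g \cdot 1$ and $\tau(\sigma) = \exp(-u) = g^{-1}\cdot 1$, where $1 \in H^0(X,K) \subset S^+_K$ is the trivial pure spinor whose associated maximal isotropic is $W_0 := H^1(\hat X, K)$. Using $\tau^2 = id$ on $S$, one has
\[
(id \otimes \tau)(\sigma \otimes \tau\sigma) \;=\; \sigma \otimes \sigma \;=\; (g \otimes g)(1 \otimes 1).
\]
Provided $\phi$ intertwines the diagonal $\Spin(V_K)$-action on $S \otimes S$ with the induced action $\rho$ on $\wedge^* V = H^*(X \times \hat X, K)$, this yields $\phi(\sigma \otimes \tau\sigma) = \rho_g \cdot \phi(1 \otimes 1)$. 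Since $\rho_g$ preserves the grading on $\wedge^* V$ and sends $W_0$ to $W_1$, the claim for $i=1$ reduces to the base case $\sigma = 1$.

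\emph{Base case.} By the classical Cartan--Chevalley result, for any pure spinor $\sigma_W \in S_K$ with associated maximal isotropic $W$, the tensor square $\sigma_W \otimes \sigma_W$ spans the unique line in $S_K \otimes S_K$ on which the stabilizer $P_W \subset \Spin(V_K)$ acts by the character $\det$; the corresponding line inside $\wedge^* V_K$ under Chevalley's isomorphism is precisely $\wedge^{2n}W \subset \wedge^{2n} V$. Hence $\tilde\varphi(1 \otimes 1)$ is a non-zero rational multiple of $\omega_{W_0}$; under the identification $\wedge^* V = H^*(X\times\hat X, K)$, the class $\omega_{W_0}$ is the pullback of the fundamental class of $\hat X$. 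Applying $\phi_\P \otimes \phi_\P^{-1}$ and using the standard Fourier--Mukai relations exchanging fundamental and structure classes, one finds $\phi(1 \otimes 1) = c\,\omega_{W_0}$ for some nonzero $c \in \QQ^\times$, lying in $\wedge^{2n}W_0 \subset H^{2n}(X\times\hat X, K)$. Transporting back by $\rho_g$ yields $\phi(\sigma \otimes \tau\sigma) = c\,\omega_{W_1} \in \wedge^{2n} W_1 \subset \bigoplus_{k=2n}^{4n}H^k(X\times\hat X, K)$ with degree-$2n$ projection $\wedge^{2n}W_1$. The case $i=2$ follows by running the same argument with $g$ replaced by $g^{-1}$ (so $W_2 = \rho_{g^{-1}}(W_0)$), or equivalently by complex conjugation.

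\emph{Main obstacle.} The delicate point is the $\Spin(V_K)$-equivariance of $\phi$: by Remark \ref{remark-non-equivariance-of-varphi-tilde}, $\tilde\varphi$ by itself is not strictly equivariant, depending on a chosen class in $H^2(X\times\hat X,\ZZ)$. The role of the Fourier--Mukai twist $\phi_\P\otimes\phi_\P^{-1}$ is precisely to absorb this non-equivariance; verifying this cancellation requires matching the defining class of $\tilde\varphi$ against $c_1(\P)$ and is the main technical step. Once the equivariance of $\phi$ is established, the preceding representation-theoretic computation closes the proof, and in particular shows that the image lies in the single degree $2n$ (so the "$\bigoplus_{k=2n}^{4n}$" statement is attained with room to spare).
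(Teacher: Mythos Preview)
Your approach—reducing via $\Spin(V_K)$-equivariance to the base case $1\otimes 1$ and invoking the Cartan--Chevalley identification of $\tilde\varphi(1\otimes 1)$ with a generator of $\wedge^{2n}W_0$—is sound in outline and genuinely different from the paper's proof, which argues instead by matching $\Spin(V_K)_{\ell_1,\ell_2}$-characters (Lemma~\ref{lemma-Spin-V-P-invariant-classes-are-Hodge}): the character $\det_i\cong\tilde\ell_i^{\otimes 2}$ appears in the graded pieces $\wedge^{2k}V$ only for $k=n$, forcing the weight to be $2n$. Your direct reduction is more concrete, and the base case computation is correct: indeed $\phi'(1\otimes 1)=\pm\,\omega_{W_0}\in\wedge^{2n}H^1(\hat X)\subset H^{2n}(X\times\hat X)$.

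However, there is a real gap in the equivariance step. By Proposition~\ref{prop-extension-class-of-decreasing-filtration-of-spin-V-representations}, the map $\phi':=\phi\circ(id\otimes\tau)$ intertwines the diagonal $m_g\otimes m_g$-action with $\rho'_g$, \emph{not} with $\rho_g$. The Fourier--Mukai twist $\phi_\P\otimes\psi_{\P^{-1}[n]}$ does not ``absorb the non-equivariance'' to yield $\rho$-equivariance; that would require the further cup product with $\exp(-c_1(\P)/2)$ as in (\ref{eq-tilde-phi}). Concretely, $\rho'_g=\exp\!\bigl(\tfrac{1}{2}[c_1(\P)-\rho_g(c_1(\P))]\bigr)\cup\rho_g$, which preserves only the decreasing filtration $F_k(\wedge^*V)$, not the grading. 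Thus
\[
\phi'(\sigma\otimes\sigma)=\rho'_g(\phi'(1\otimes 1))=\pm\exp\!\bigl(\tfrac{1}{2}[c_1(\P)-\rho_g(c_1(\P))]\bigr)\cup\omega_{W_1}
\]
lies in $F_{2n}=\bigoplus_{k\ge 2n}H^k$, with degree-$2n$ projection spanning $\wedge^{2n}W_1$—exactly the proposition's statement. Your final claim that ``the image lies in the single degree $2n$'' is therefore incorrect: in general $c_1(\P)\ne\rho_g(c_1(\P))$, so higher-degree terms are present, and this is precisely why the proposition is phrased in terms of the filtration and projection rather than a graded equality. With $\rho$ replaced by $\rho'$ throughout, your argument goes through.
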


In particular, the isomorphism $\phi\circ(id\otimes\tau)$ maps the $2$-dimensional rational subspace $HW_P:=[\tilde{\ell}_1\otimes \tilde{\ell}_1] \oplus [\tilde{\ell}_2\otimes \tilde{\ell}_2]$ of $P\otimes P\subset H^{even}(X\times X,\QQ)$ to a $2$-dimensional subspace of $H^{even}(X\times\hat{X},\QQ)$, and the latter projects onto the $2$-dimensional subspace $\hat{HW}_P$ of Hodge-Weil classes in $H^{2n}(X\times\hat{X},\QQ)$.
%
\subsection{Orlov's equivalence $\Phi:D^b(X\times X)\rightarrow D^b(X\times\hat{X})$}
Let $\P$ be the Poincar\'{e} line bundle over $X\times\hat{X}$. 
Let $\mu:X\times X\rightarrow X\times X$ be given by $\mu(x,y)=(x+y,y)$. Let $id\times \Phi_\P:D^b(X\times \hat{X})\rightarrow D^b(X\times X)$ be the equivalence with Fourier-Mukai kernel $\StructureSheaf{\Delta_X}\boxtimes \P$, where $\Delta_X$ is the diagonal in $X\times X$.
{\em Orlov's equivalence}
\[
\Phi:D^b(X\times X)\rightarrow D^b(X\times\hat{X})
\] 
is the inverse of
\[
\mu_*\circ (id\times\Phi_\P):D^b(X\times \hat{X})\rightarrow D^b(X\times X).
\]
Let $m:\Spin(V)\rightarrow GL[H^*(X,\ZZ)]$ be the spin representation and define
$m^\dagger:Spin(V)\rightarrow GL[H^*(X,\ZZ)]$ by $m^\dagger_g=\tau m_g\tau$. Let
$\rho:\Spin(V)\rightarrow SO(V)$ be the standard representation and denote the induced representation on $H^*(X\times\hat{X},\ZZ)\cong \wedge^*V$ by $\rho$ as well. Define the representation
\[
\rho':\Spin(V)\rightarrow GL[H^*(X\times\hat{X},\ZZ)]
\]
by $\rho'_g=\exp\left(\frac{1}{2}[c_1(\P)-\rho_g(c_1(\P))]\right)\rho_g$, for all $g\in\Spin(V)$. The two integral $\Spin(V)$-representation $\rho$ and $\rho'$ are non-isomorphic, but they are isomorphic once tensored with $\QQ$, as the upper square in  the following diagram is commutative, for all $g\in\Spin(V)$.
\begin{equation}
\label{eq-diagram-of-rho-and-rho-prime}
\xymatrix{
H^*(X\times\hat{X},\QQ) \ar[r]^{\rho_g} & H^*(X\times\hat{X},\QQ)
\\
H^*(X\times\hat{X},\QQ)\ar[r]_{\rho'_g} \ar[u]^{\cup \exp\left(-\frac{1}{2}c_1(\P)\right)} & H^*(X\times\hat{X},\QQ) \ar[u]_{\cup \exp\left(-\frac{1}{2}c_1(\P)\right)} 
\\
H^*(X\times X,\QQ) \ar[u]^{\phi\circ(id\otimes\tau)} \ar[r]_{m_g\otimes m^\dagger_g} \ar@/^9pc/[uu]^{\tilde{\phi}}
& H^*(X\times X,\QQ)\ar[u]_{\phi\circ(id\otimes\tau)} \ar@/_9pc/[uu]_{\tilde{\phi}}
}
\end{equation}
The commutativity of the lower square and the $\Spin(V)$-equivariance of 
\begin{equation}
\label{eq-tilde-phi}
\tilde{\phi}:=\exp\left(-c_1(\P)/2\right)\cup \phi\circ(id\otimes\tau):H^*(X\times X,\QQ)\rightarrow H^*(X\times\hat{X},\QQ)
\end{equation}
are established by the following result.
In particular, $\tilde{\phi}$ is $\Spin(V)$-equivariant, where the domain is the representation $m\otimes m^\dagger$ and the codomain is $\rho$.

\begin{prop}[Proposition \ref{prop-extension-class-of-decreasing-filtration-of-spin-V-representations}]
The isomorphism 
\[
\phi:H^*(X\times X,\ZZ)\rightarrow H^*(X\times\hat{X},\ZZ),
\] 
given in (\ref{eq-phi-introduction}), is equal to the isomorphism  
induced by $\Phi$ and $\phi\circ(id\otimes \tau)$ is $\Spin(V)$-equivariant, where the domain is the representation $m\otimes m^\dagger$ and the codomain is $\rho'$.
\end{prop}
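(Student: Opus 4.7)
My plan is to establish both assertions by explicitly computing the cohomological action of Orlov's equivalence $\Phi$ and matching it against the representation-theoretic map $\phi$.

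First, I would compute $\Phi^{coh}$ directly from the definition $\Phi=[\mu_*\circ(id\times\Phi_\P)]^{-1}$. Since $\mu$ is an automorphism with inverse $(x,y)\mapsto(x-y,y)$, on cohomology $\mu_*$ equals $(\mu^{-1})^*$. The inverse of $\Phi_\P$ is, up to shift and sign, the Fourier--Mukai transform with kernel the dual Poincar\'e bundle, and its cohomological action is $\phi_\P^{-1}$ twisted by the main anti-automorphism $\tau$ (this is Mukai's inversion formula $\Phi_\P^{-1}=(-1)^*_X\circ\Phi_\P[n]$ applied on cohomology). Combining these yields a closed formula for $\Phi^{coh}$ as a composition of $\mu^*$ and the operator $id\otimes\phi_\P^{-1}$ precomposed with $id\otimes\tau$, which already accounts for the appearance of $id\otimes\tau$ in (\ref{eq-phi-introduction}).

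Second, to match this with $\phi=(\phi_\P\otimes\phi_\P^{-1})\circ\tilde{\varphi}\circ(id\otimes\tau)$, I would unravel Chevalley's isomorphism $\tilde{\varphi}$ in concrete terms. Chevalley's construction identifies $S\otimes S$ with $\wedge^*V$ via a choice of pure spinor; choosing this spinor to correspond to $\exp(c_1(\P))$ (the class referenced in Remark \ref{remark-non-equivariance-of-varphi-tilde}) produces a concrete correspondence on $X\times X\times X\times\hat{X}$. This correspondence will coincide with the one arising from $\Phi^{coh}$ after conjugation by $\phi_\P\otimes\phi_\P^{-1}$, proving $\phi=\Phi^{coh}$.

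Third, for the equivariance statement, Chevalley's $\tilde{\varphi}$ is canonically $\Spin(V)$-equivariant with respect to $m\otimes m$ on the domain and a natural action on $\wedge^*V$ depending on the choice of pure spinor. Inserting $id\otimes\tau$ in (\ref{eq-phi-introduction}) converts $m\otimes m$ into $m\otimes m^\dagger$ on the domain, since by definition $m^\dagger_g=\tau m_g\tau$. On the codomain, because the chosen pure spinor $\exp(c_1(\P))$ is not $\Spin(V)$-invariant, transporting the $\rho$-action through $\phi_\P\otimes\phi_\P^{-1}$ picks up a cocycle correction, which is precisely the factor $\exp\bigl(\frac{1}{2}[c_1(\P)-\rho_g(c_1(\P))]\bigr)$ distinguishing $\rho'$ from $\rho$. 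The lower square of diagram (\ref{eq-diagram-of-rho-and-rho-prime}) then commutes because cup product with $\exp\bigl(-\tfrac{1}{2}c_1(\P)\bigr)$ conjugates $\rho'$ into $\rho$ tautologically by the definition of $\rho'$.

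The main obstacle will be step 2: translating Chevalley's abstract representation-theoretic construction into a concrete geometric correspondence and identifying it with the Fourier--Mukai kernel of $\Phi$. This requires a careful choice of Fock-model bases for the spin representation, together with bookkeeping of signs and of the main anti-automorphism $\tau$, which behaves differently according to the parity of $n$ (recall the pairing (\ref{eq-Mukai-pairing}) is symmetric when $n$ is even and alternating when $n$ is odd). Once this identification is in hand, both assertions of the proposition follow simultaneously.
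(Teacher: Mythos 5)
Your plan splits naturally into two parts, and it is worth treating them separately.

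For the identification $\phi=\Phi^{coh}$, your steps 1--2 match the paper's route in Lemma~\ref{lemma-orlov-isomorphism-is-chevalley} and its workhorse Lemma~\ref{lemma-nu-equal-tilde-varphi}: one writes $\Phi=(id\times\Psi_{\P^{-1}[n]})\circ\mu^*$, computes its action on the cohomology basis $\pi_1^*e_K\wedge\pi_2^*e_L$, and compares term-by-term against Chevalley's $\tilde{\varphi}$ after precomposing with $id\otimes\tau$. The sign bookkeeping you flag as the main obstacle is indeed the bulk of that computation, but your approach is sound and is essentially the paper's.

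The gap is in step 3, where you declare the equivariance ``tautological.'' It is not, for two reasons. First, you attribute the cocycle correction to transporting $\rho$ through $\phi_\P\otimes\phi_\P^{-1}$; but that map is (up to sign) Poincar\'{e} duality (Lemma~\ref{lemma-phi-P-psi-P-inverse-is-PD-up-to-sign}), which is itself $\Spin(V)$-equivariant and so introduces no cocycle. Nor is $\exp(c_1(\P))$ a pure spinor in $S^+$---it lives in $H^*(X\times\hat X,\ZZ)$, not in the half-spin module $H^{ev}(X,\ZZ)$. The actual source of the correction is the choice of the \emph{non-invariant} bilinear form $B_0$ in the integral construction of $\psi$: its alternating part, once identified with $\tfrac{1}{2}c_1(\P)$ (Remark~\ref{remark-non-equivariance-of-varphi-tilde}), is what breaks equivariance. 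Second, converting that observation into the closed formula $\rho'_g=\exp\bigl(\tfrac{1}{2}[c_1(\P)-\rho_g(c_1(\P))]\bigr)\rho_g$ requires a nontrivial Clifford-algebra comparison between $\psi_{B_0}$ and the rational version built from $\tfrac{1}{2}(\bullet,\bullet)_V$, and it is exactly this step the paper chooses \emph{not} to carry out abstractly: instead the paper proves the cocycle identity by reducing to the abelian surface case via a group-generation and dimension argument (Steps 1--4 of the proof of Proposition~\ref{prop-extension-class-of-decreasing-filtration-of-spin-V-representations}), and then verifying the $n=2$ base case (Lemma~\ref{example-conjecture-holds-for-abelian-surfaces}) using an explicit Chern-character computation from \cite{markman-generalized-kummers} followed by Zariski density of the subgroup generated by the $\Spin(V)_P$'s. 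So the equivariance statement has genuine content that your proposal leaves unproved; the correct route is either to carry out the Bogoliubov-type comparison of $\psi_{B_0}$ with $\psi_{\frac{1}{2}(\bullet,\bullet)_V}$ carefully, or to follow the paper's surface-reduction strategy.
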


An object $F$ of $D^b(X)$ is called a {\em $P$-secant object}, if $ch(F)$ belongs to a secant plane $P$. Given two $P$-secant objects, we refer to the object $E:=\Phi(F_1\boxtimes F_2^\vee)$ as a {\em $P$-secant$^{\boxtimes 2}$-object} in $D^b(X\times\hat{X})$. Let $\Spin(V)_P$ be the subgroup  of $\Spin(V)$ leaving every element of the plane $P$ invariant. When $P$ is given by (\ref{eq-P-introduction})
the subspace $H^2(X\times\hat{X},\QQ)^{\Spin(V)_P}$ is one-dimensional spanned by an ample class $h$, by Lemma \ref{prop-polarized-abelian-variety-of-Weil-type}. Given an element $k\in K$, the rational endomorphism $\eta(k)\in \End_\QQ(X\times\hat{X})$ maps $h$ to $Nm(k)h$, where $Nm:K\rightarrow \QQ$ is the norm map. Consequently, $(X\times\hat{X},\eta,h)$ is a polarized abelian variety of Weil type. The period domain of deformations of $(X\times\hat{X},\eta,h)$ as a polarized abelian variety of Weil type is the adjoint orbit of the complex structure of $X\times\hat{X}$ in $\Spin(V_\RR)_P$, by Lemma \ref{lemma-period-domain-is-an-adjoint-orbit} and Corollary \ref{corollary-Spin-V-P-invariant-classes-are-Hodge}. 

Given a class $ch$ in $H^{ev}(X\times \hat{X},\QQ)$ with graded summand $ch_i$ in $H^{2i}(X\times \hat{X},\QQ)$ and with $ch_0=r\neq 0$
considered as a rational number, set $\kappa(ch)=\exp(-ch_1/r)ch$. Given an object in $D^b(X\times\hat{X})$
of non-zero rank set $\kappa(E):=\kappa(ch(E))$.
The following observation motivated the current paper. It is 
an immediate corollary of the above proposition.

\begin{cor}
\label{cor-kappa-class-is-Spin-V-P-invariant}
If the rank $r$ of a {\em $P$-secant$^{\boxtimes 2}$-object} $E:=\Phi(F_1\boxtimes F_2^\vee)$ is non zero, then its characteristic class
$
\kappa(E)
$
is $\Spin(V)_P$-invariant with respect to the representation $\rho$. Consequently, $\kappa(E)$ remains of Hodge-type, under every deformations of 
$(X\times\hat{X},\eta,h)$  as a polarized abelian variety of Weil-type.
\end{cor}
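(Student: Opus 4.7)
The plan is to derive the corollary as an immediate consequence of the $\Spin(V)$-equivariance of $\tilde{\phi}$ from the preceding proposition, together with a short algebraic manipulation that trades the fixed twist $\exp(-c_1(\P)/2)$ for the sheaf-dependent twist $\exp(-c_1(E)/r)$ defining $\kappa$.

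First I would verify that $\gamma:=ch(F_1)\otimes ch(F_2)\in H^{ev}(X\times X,\QQ)$ lies in the fixed locus of $\Spin(V)_P$ for the representation $m\otimes m^\dagger$. For $g\in\Spin(V)_P$ one has $m_g(ch(F_1))=ch(F_1)$, since $ch(F_1)\in P$ and $g$ fixes $P$ pointwise, and $m^\dagger_g(ch(F_2))=\tau m_g\tau(ch(F_2))=ch(F_2)$, using $\tau^2=\mathrm{id}$ together with the $\tau$-stability of the $K$-secant plane $P$ (visible for (\ref{eq-P-introduction}), where $\tau$ acts as $\mathrm{diag}(1,-1)$ in the displayed basis, and in general following from the fact that $\tau$ preserves the spinor variety and therefore permutes the two $K$-points $\tilde{\ell}_1,\tilde{\ell}_2$ of its intersection with $\PP(P_K)$). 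Using the identity $ch(F^\vee)=\tau(ch(F))$ on even Chern characters gives
\[
\tilde{\phi}(\gamma)=\exp(-c_1(\P)/2)\cup\phi(ch(F_1)\otimes\tau(ch(F_2)))=\exp(-c_1(\P)/2)\cup ch(E),
\]
so the preceding proposition yields $\rho_g\bigl(\exp(-c_1(\P)/2)\cup ch(E)\bigr)=\exp(-c_1(\P)/2)\cup ch(E)$ for every $g\in\Spin(V)_P$.

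A short algebraic lemma then upgrades this to the $\rho$-invariance of $\kappa(E)$: for any $\alpha\in H^{ev}(X\times\hat{X},\QQ)$ of nonzero rank $r$ and any $\omega\in H^2$, if $\exp(\omega)\cup\alpha$ is $\rho_g$-invariant, then so is $\exp(-\alpha_1/r)\cup\alpha$. Indeed, because $\rho_g$ is a graded algebra automorphism the invariance rearranges to $\rho_g(\alpha)=\exp(\omega-\rho_g(\omega))\cup\alpha$; matching degree-$2$ parts gives $\rho_g(\alpha_1/r)-\alpha_1/r=\omega-\rho_g(\omega)$, hence $\rho_g(\exp(-\alpha_1/r))=\exp(-\alpha_1/r)\exp(\rho_g(\omega)-\omega)$, and multiplying by $\rho_g(\alpha)$ cancels the two exponential corrections and yields $\rho_g(\exp(-\alpha_1/r)\alpha)=\exp(-\alpha_1/r)\alpha$. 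Applied with $\alpha=ch(E)$ and $\omega=-c_1(\P)/2$ this gives the required $\Spin(V)_P$-invariance of $\kappa(E)$ under $\rho$. The Hodge-type conclusion along deformations is then immediate from Lemma \ref{lemma-period-domain-is-an-adjoint-orbit} and Corollary \ref{corollary-Spin-V-P-invariant-classes-are-Hodge}, which identify the period domain of deformations of $(X\times\hat{X},\eta,h)$ as a polarized abelian variety of Weil type with the adjoint orbit of the complex structure in $\Spin(V_\RR)_P$ and assert that $\Spin(V_\RR)_P$-invariant rational classes are of Hodge type at every point of that orbit. The one delicate ingredient is the $\tau$-stability of the secant plane $P$ used in the first step; once that is granted the remainder is routine algebraic manipulation.
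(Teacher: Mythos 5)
Your proof is correct in the paper's setting and follows the same overall strategy---use the $\Spin(V)$-equivariance of the Orlov--Chevalley isomorphism to make a twisted Chern character $\rho_g$-invariant, peel off the exponential correction to get $\kappa(E)$ invariant, and finish with Corollary \ref{corollary-Spin-V-P-invariant-classes-are-Hodge}. Your algebraic lemma replacing $\exp(\omega)$ by $\exp(-\alpha_1/r)$ is valid and is a clean way to do the degree-two bookkeeping; the paper achieves the same effect by applying $\kappa$ directly to the invariance relation and using $\kappa\circ\rho_g=\rho_g\circ\kappa$ together with the fact that $\kappa(\exp(\omega)\cup\alpha)=\kappa(\alpha)$ for $\omega\in H^2$.

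Where you genuinely diverge is in the choice of class on the $X\times X$ side. You take $\gamma=ch(F_1)\otimes ch(F_2)$ and require it to be fixed by $m_g\otimes m_g^\dagger$, which forces you to prove $m_g^\dagger(ch(F_2))=ch(F_2)$. Unwinding $m_g^\dagger=\tau m_g\tau$, this is $m_g(\tau(ch(F_2)))=\tau(ch(F_2))$, so you need $\tau(ch(F_2))$ in the $\Spin(V)_P$-fixed locus, i.e.\ the $\tau$-stability of $P$. That does hold for the $P$ of (\ref{eq-P-introduction}), as you verify directly; but your attempted general argument for it is circular---asserting that $\tau$ ``permutes the two $K$-points of its intersection with $\PP(P_K)$'' already presupposes $\tau(\PP(P_K))=\PP(P_K)$, which is exactly what you are trying to show. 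The paper's proof avoids this dependence entirely: it works with $ch(F_1\boxtimes F_2^\vee)=ch(F_1)\otimes ch(F_2^\vee)$ and the identity $\rho'_g=\phi\circ(m_g\otimes m_g^\dagger)\circ\phi^{-1}$ of (\ref{rho-prime-g}). Then the required fixed-point statement is $m_g^\dagger(ch(F_2^\vee))=ch(F_2^\vee)$, and since $ch(F_2^\vee)=\tau(ch(F_2))$ one has $m_g^\dagger(\tau(ch(F_2)))=\tau m_g\tau(\tau(ch(F_2)))=\tau(m_g(ch(F_2)))=\tau(ch(F_2))$ using only $ch(F_2)\in P$ and $\tau^2=\mathrm{id}$ (this is exactly Remark \ref{rem-invariance-of-w-vee}). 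If you re-route your first step through $ch(F_1)\otimes ch(F_2^\vee)$ rather than $ch(F_1)\otimes ch(F_2)$, the $\tau$-stability hypothesis disappears and the argument works for any $P$ satisfying Assumption \ref{assumption-on-rational-secant-plane-P}.
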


\begin{proof}
Let $g$ be an element of $\Spin(V)_P$. 
Using the fact that $\rho_g$ is an algebra automorphism we have $\rho_g(\exp(\bullet))=\exp(\rho_g(\bullet))$
and $\rho_g(\kappa(\bullet))=\kappa(\rho_g(\bullet))$.
The fact that $F_1$ and $F_2$ are  $P$-secant sheaves implies that $m_g\otimes m_g^\dagger$ leaves $ch(F_1\otimes F_2^\vee)$ invariant. 
Hence $\rho'_g(ch(E))=ch(E)$, 
by Proposition \ref{prop-extension-class-of-decreasing-filtration-of-spin-V-representations}. 
Thus $ch(E)\cup\exp\left(-\frac{1}{2}c_1(\P)\right)$ is $\rho_g$ invariant,
$ch(E)\cup\exp\left(-\frac{1}{2}c_1(\P)\right)=\rho_g(ch(E))\cup\exp\left(-\frac{1}{2}\rho_g(c_1(\P))\right)$, by the commutativity of the upper square in Diagram (\ref{eq-diagram-of-rho-and-rho-prime}).
Applying $\kappa$ to both sides, we get $\kappa(ch(E))=\kappa(\rho_g(ch(E)))=\rho_g(\kappa(ch(E))).$
Hence, $\kappa(E)=\rho_g(\kappa(E))$.

Finally, $\Spin(V)_P$-invariant classes remain of Hodge-type under every deformation of 
$(X\times\hat{X},\eta,h)$  as a polarized abelian variety of Weil-type, by
Corollary \ref{corollary-Spin-V-P-invariant-classes-are-Hodge}.
\end{proof}

%

%
\subsection{Ideal secant sheaves on the Jacobian of a genus $3$ curve}
We consider the example where $X$ is the Jacobian $\Pic^2(C)$ of a genus $3$ non-hyperelliptic curve $C$,  $F_1=\Ideal{\cup_{i=1}^{d+1}C_i}(\Theta)$ is the tensor product of the theta line bundle with the ideal sheaf of the union of $d+1$ generic translates $C_i\subset X$ of the Abel Jacobi image $AJ(C)$ of $C$, for $d\geq 3$, and 
$F_2=\Ideal{\cup_{i=1}^{d+1}\Sigma_i}(\Theta)$, where $\Sigma_i\subset X$ is a generic translate of $-AJ(C)$. 
Set $u:=\sqrt{-d}\Theta$, $d\geq 3$. 
We prove the following:
\begin{thm}
\label{main-theorem-introduction}
\begin{enumerate}
\item
The line $\PP$ in $\PP{H^{even}(X,\QQ)}$ through $ch(F_1^\vee)$ and $ch(F_2)$ intersects the spinor variety at the two complex conjugate pure spinors $\exp(u)$ and $\overline{\exp(u)}$
defined over $K=\QQ(\sqrt{-d})$ (Lemma \ref{lemma-ch-F-i-is-on-secant-to-spinor-variety}). 
\item
The dual object 
$\Phi(F_2\boxtimes F_1)^\vee$ is isomorphic to $\E[-2]$, where $\E$ is a simple reflexive sheaf of rank $8d$ over $X\times\hat{X}$ (Proposition \ref{prop-local-freeness}). 
\item
The characteristic class $\kappa(\E):=\exp(-c_1(\E)/\rank(\E))ch(\E)$ remains of Hodge type under every deformation of $(X\times\hat{X},\eta,h)$ as a polarized abelian sixfold of Weil type (Lemma \ref{ch-3-alpha-is-second-partial-of-J}).
\item 
\label{thm-item-K-translates-of-kappa-3-and-h-cube-span-HW}
The $\eta(K)$-translates of the graded summand $\kappa_3(\E)$ of $\kappa(\E)$ in $H^{3,3}(X\times \hat{X},\QQ)$, together with $h^3$, span the $3$-dimensional subspace $\QQ h^3\oplus \hat{HW}_P$ 
(Proposition \ref{prop-the-orlov-image-of-HW-P-projects-into-the-3-dimensional-space-of-HW-classes} and Lemma \ref{lemma-kappa-3-is-linearly-independent-from-h-cube}). 
\item
\label{item-P-E-deforms-to-first-order}
$\E$ deforms with $(X\times\hat{X},\eta,h)$ to first order as a twisted sheaf in every direction in the $9$-dimensional moduli space of   
polarized abelian sixfolds of Weil type (Lemma \ref{lemma-kernel-of-ob-E-is-annihilator-of-ch_E}
(\ref{lemma-item-kernel-of-ob-E-is-annihilator-of-ch_E}) and Remark \ref{rem-interchanging-F-1-and-F-2}).
\end{enumerate}
\end{thm}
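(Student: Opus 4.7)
The plan is to establish the five items in sequence, each leaning on results already developed in the paper.

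For \textbf{item (1)} I would compute Chern characters directly. By Poincar\'{e}'s formula $[AJ(C)]=\Theta^2/2$ in $H^4(X,\ZZ)$, and $[-AJ(C)]=[AJ(C)]$ because $(-1)_X$ acts trivially on $H^4$. From the short exact sequence
\[
0\to F_i\to \StructureSheaf{X}(\Theta)\to \StructureSheaf{\cup_i C_i}(\Theta)\to 0,
\]
together with $ch(\StructureSheaf{C})=\Theta^2/2-2[pt]$ for a genus-$3$ curve in an abelian $3$-fold and $\Theta^3=6[pt]$, a routine expansion gives
\[
ch(F_1)=ch(F_2)=1+\Theta-\tfrac{d}{2}\Theta^2-d[pt], \qquad ch(F_1^\vee)=1-\Theta-\tfrac{d}{2}\Theta^2+d[pt].
\]
Writing $\exp(\sqrt{-d}\,\Theta)=\bigl(1-\tfrac{d}{2}\Theta^2\bigr)+\sqrt{-d}\bigl(\Theta-d[pt]\bigr)$ shows that both Chern characters lie in the plane $P$ of (\ref{eq-P-introduction}); being linearly independent they span $\PP(P)$ projectively, meeting the spinor variety at $\exp(u)$ and $\overline{\exp(u)}$.

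For \textbf{item (2)} I would unwind $\Phi=\bigl(\mu_*\circ(id\times\Phi_\P)\bigr)^{-1}$ applied to $F_2\boxtimes F_1$, track its cohomology sheaves, and read off the rank from the Mukai pairing (which by item (1) yields $\pm 8d$). Reflexivity reduces to torsion-freeness together with vanishing of $\SheafExt^1$ on a dense open, which holds because the $F_i$ are twisted ideal sheaves of smooth curves. Simplicity $\Hom(\E,\E)=\CC$ follows from full faithfulness of $\Phi$ and simplicity of $F_2\boxtimes F_1$. The technical core, Proposition \ref{prop-local-freeness}, is concentration of the cohomology of $\Phi(F_2\boxtimes F_1)$ in a single degree; this is a cohomology-and-base-change analysis using the geometry of the generic translates of $\pm AJ(C)$. \textbf{Item (3)} then follows at once: $\Phi(F_2\boxtimes F_1)$ is (up to renaming) a $P$-secant$^{\boxtimes 2}$-object of non-zero rank, so Corollary \ref{cor-kappa-class-is-Spin-V-P-invariant} applies to $\kappa$.

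For \textbf{item (4)}, the $\Spin(V)_P$-invariant rational subspace of $H^{3,3}(X\!\times\!\hat X,\QQ)$ is the three-dimensional $\QQ h^3\oplus \hat{HW}_P$; the $\eta(K)$-action fixes $h^3$ up to scaling and acts irreducibly and non-trivially on the two-dimensional $\hat{HW}_P$. By item (3), $\kappa_3(\E)$ lies in this three-dimensional space, so its $\eta(K)$-orbit together with $h^3$ spans everything as soon as the $\hat{HW}_P$-component of $\kappa_3(\E)$ is non-zero. This non-collinearity with $h^3$ is a single numerical check using the explicit $ch(\E)$ from item (2) and is the content of Lemma \ref{lemma-kappa-3-is-linearly-independent-from-h-cube}; Proposition \ref{prop-the-orlov-image-of-HW-P-projects-into-the-3-dimensional-space-of-HW-classes} identifies $\hat{HW}_P$ with the projection of the pure-spinor-tensor-squares, which makes the check concrete.

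\textbf{Item (5)} is the hardest step and where I expect the main obstacle. The plan is to apply the Buchweitz--Flenner semiregularity theorem to the obstruction map
\[
\mathrm{ob}_\E\colon H^1(X\!\times\!\hat X, T_{X\times\hat X})\longrightarrow \Ext^2(\E,\E)
\]
for deforming $\E$ as a twisted object, and use semiregularity to identify its kernel with the annihilator of $\kappa(\E)$ under the Atiyah-class pairing. Because $\kappa(\E)$ remains of Hodge type along the Weil locus by item (3), pairing with the Kodaira--Spencer classes of first-order Weil deformations vanishes, so the $9$-dimensional Weil tangent space lies in the kernel of $\mathrm{ob}_\E$. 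The hard part, Lemma \ref{lemma-kernel-of-ob-E-is-annihilator-of-ch_E}, is verifying that the semiregularity map is injective on the relevant graded piece of $\Ext^2$ so that the Atiyah-class pairing faithfully detects the obstruction; this is the most delicate point in the paper and is where the specific geometry of $\E$ (reflexive of rank $8d$, constructed from ideal sheaves of Abel--Jacobi translates) actually enters.
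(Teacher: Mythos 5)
Items (1)--(3) of your proposal track the paper's approach faithfully: the Chern-character computation, the unwinding of Orlov's equivalence with cohomology-and-base-change, and the direct application of Corollary~\ref{cor-kappa-class-is-Spin-V-P-invariant}. For item~(4), your structural picture is correct, but the characterization of Lemma~\ref{lemma-kappa-3-is-linearly-independent-from-h-cube} as ``a single numerical check'' is off: the paper's argument is representation-theoretic, proceeding by contradiction from the assumption that $\kappa(\Phi(F_1\boxtimes F_1))$ would be $\Spin(V_K)_{\ell_1,\ell_2}$-$\rho$-invariant and then reading off that the coefficient of $\lambda_1\boxtimes\lambda_1+\lambda_2\boxtimes\lambda_2$ in $ch(F_1)\boxtimes\tau(ch(F_1))$ is non-zero; there is no direct Chern-number computation of $\kappa_3(\E)$.

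The genuine gap is in item (5), where your logic is inverted relative to what the paper actually does, and where you conflate this first-order statement with the later Theorem~\ref{thm-algebraicity}. You propose to \emph{apply} the Buchweitz--Flenner semiregularity theorem and ``use semiregularity to identify the kernel of $\mathrm{ob}_\E$ with the annihilator of $\kappa(\E)$.'' But semiregularity (injectivity of $\sigma$) is a \emph{consequence} of the equality $\ker(ob_E)=\ker(\Contract ch(E))$ plus surjectivity of $ob_E$ (that is the content of Lemma~\ref{lemma-criterion-for-semiregularity}), not a tool for proving that equality. The paper establishes the kernel equality in Lemma~\ref{lemma-kernel-of-ob-E-is-annihilator-of-ch_E} by an entirely different route that makes no mention of semiregularity: (a) the hard geometric input is Proposition~\ref{prop-obstruction-map-has-rank-6}, showing that for $F=\Ideal{\cup C_i}$ the obstruction map $ob_F\colon HT^2(X)\to\Ext^2(F,F)$ has rank exactly $6$ (hence $9$-dimensional kernel), via a careful analysis of the Yoneda algebra $\Ext^*(F,F)$ as a $HT^*(X)$-module and the homomorphisms $e_j\circ ev_{\Ideal{C_j}}$; (b) Corollary~\ref{cor-kernel-of-obstruction-is-annihilator-of-ch} then identifies that kernel with $\ker(\Contract ch(F))$ by a dimension count; (c) the K\"unneth decomposition of $HT^2(X\times X)$ and $\Ext^2(F_1\boxtimes F_2,F_1\boxtimes F_2)$ plus the injectivity of $ch(F_i)\Contract$ on $HT^1(X)$ give the equality for the box product; (d) Lemma~\ref{lemma-invariance-under-derived-equivalence-of-equality-of-ker-ob-and-ker-ch} transports it across Orlov's equivalence. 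Your sketch omits step (a) entirely, which is where the specific geometry ($AJ(C)$ inside a genus-$3$ Jacobian, Noether's theorem forcing $H^0(N_{C_i/X})$ to be $3$-dimensional) is actually used; without that rank-$6$ computation the argument does not close. Finally, the Buchweitz--Flenner machinery (and your injectivity-of-$\sigma$ concern) belongs to the proof of Theorem~\ref{thm-algebraicity}, where the paper must additionally pass to a $\bar G$-equivariant descent and a $\mu_r$-twisted sheaf $\B$ on a quotient before invoking semiregularity --- none of which is required for the first-order statement in item~(5).
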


Orlov's derived equivalence $\Phi:D^b(X\times X)\rightarrow D^b(X\times\hat{X})$ relates diagonal generalized (including non-commutative and gerby) deformations of $D^b(X\times X)$ along which the objects $F_i,$ $i=1,2$, deform,
to {\em commutative} deformations of $X\times\hat{X}$ as polarized abelian varieties of Weil type (with an additional gerby structure), by 
Corollary \ref{cor-Orlov's-equivalence-maps-diagonal-deformations-to-commutative-gerby-ones}. The proof of (\ref{item-P-E-deforms-to-first-order}) above reduces to showing that $F_i$ deforms to first order along a $9$-dimensional subspace of $HH^2(X)$. This amounts to showing that the obstruction map $ob_{F_i}:HH^2(X)\rightarrow \Ext^2(F_i,F_i)$ has rank $6$ (Proposition \ref{prop-obstruction-map-has-rank-6}).

%
\subsection{Semiregular $K$-secant sheaves}

Let $E$ be a coherent sheaf on a $N$-dimensional compact K\"{a}hler manifold $Y$. Denote the Atiyah class of $E$ by  $at_E\in \Ext^1(E,E\otimes\Omega^1_Y)$. 
The $q$-th component $\sigma_q$ of the {\em semiregularity map} 
\[
\sigma:=(\sigma_0, \dots,\sigma_{N-2}):\Ext^2(E,E)\rightarrow \prod_{q= 0}^{N-2}H^{q+2}(Y,\Omega^q_Y)
\]
is the composition
$
\Ext^2(E,E)\LongRightArrowOf{(at_E)^q/q!}\Ext^{q+2}(E,E\otimes\Omega^q_Y)\LongRightArrowOf{Tr}H^{q+2}(Y,\Omega^q_Y).
$
The sheaf $E$ is said to be {\em semiregular}, if $\sigma$ is injective. Buchweitz and Flenner proved that when $E$ is semiregular the pair $(Y,E)$ deforms locally in the sublocus in the smooth base of a deformation of $Y$, where $ch(E)$ remains of Hodge-type \cite[Th. 5.1]{buchweitz-flenner}. 

If $E$ is locally free, then the projective bundle $\PP(E)$ often deforms over a larger family relaxing the condition that $c_1(E)$ remains of Hodge-type $(1,1)$.   
Every $\PP^{r-1}$-bundle admits a lift to a locally free sheaf twisted by a \v{C}ech $2$-cocycle with coefficients in the local system $\mu_r$ of $r$-th roots of unity and with trivial determinant. The Buchweitz-Flenner theorem has a generalization for twisted sheaves \cite[Remark 2.6]{pridham}. 
We formulate a conjectural generalization of the Buchweitz-Flenner theorem for such twisted sheaves, which we believe\footnote{One needs to verify that our semiregularity map is injective, if and only if Pridham's is.} follows from Pridham's result. The usual definition of the Atiyah class goes through for such a sheaf (Definition \ref{def-atiyah-class-of-twisted-sheaf}) and the semiregularity map is defined as in the untwisted case. Conjecture \ref{conjecture-semiregular-twisted-sheaves-deform} states that $E$ deforms over the sublocus of the base of the deformation, where $\kappa(E)$ remains of Hodge type. Note that if $E$ is locally free, then $\kappa(E)$ is the trace of the exponential Atiyah class of $\PP(E)$, in precise analogy with the relationship between $ch(E)$ and the Atiyah class of $E$ for an untwisted sheaf $E$. We prove Conjecture \ref{conjecture-semiregular-twisted-sheaves-deform} for twisted sheaves on abelian varieties in Section \ref{sec-the-semiregularity-theorem-for-twisted-sheaves-on-abelian-varieties}.

Keep the notation of Theorem \ref{main-theorem-introduction}. The sheaf $\E$ in the theorem is not semiregular. We describe next the equivariant version of the construction, carried out in Section \ref{subsection-a-semiregular-secant-square-sheaf}, which produces a semiregular sheaf.
Let $G_1$ and $G_2$ be cyclic subgroups of $X$ of order $d+1$ with $G_1\cap G_2=(0)$. Choose $C_i\subset X$, $1\leq i \leq d+1$, to be a $G_1$-orbit of translates of the Abel-Jacobi image $AJ(C)$ of $C$. Choose $\Sigma_i\subset X$, $1\leq i \leq d+1$, to be a $G_2$-orbit of translates of  $-AJ(C)$. 
The ideal sheaf $\Ideal{\cup_{i=1}^{d+1} C_i}$ is $G_1$-equivariant, $\Ideal{\cup_{i=1}^{d+1} \Sigma_i}$ is $G_2$-equivariant,
and $\Ideal{\cup_{i=1}^{d+1} C_i}\boxtimes \Ideal{\cup_{i=1}^{d+1} \Sigma_i}$ is $G_1\times G_2$-equivariant. The reflexive sheaf $\E$ is the dual of the image of $\Ideal{\cup_{i=1}^{d+1} C_i}\boxtimes \Ideal{\cup_{i=1}^{d+1} \Sigma_i}$ via an equivalence $\tilde{\Phi}$ (the composition of $\Phi$ with tensorization by $\Theta\boxtimes\Theta$ and a shift) between $D^b(X\times X)$ and $D^b(X\times\hat{X})$
and $\E^\vee$ is thus equivariant with respect to the image $G\subset (X\times \hat{X})\times \Pic^0(X\times \hat{X})$ of $G_1\times G_2$ via the Rouqier isomorphism  associated to $\tilde{\Phi}$ between the identity components of the groups of autoequivalences of $X\times X$ and $X\times\hat{X}$. Denote by $\bar{G}$ the image of $G$ via the projection to the cartesian factor  $X\times\hat{X}$. 
The projection $G\rightarrow\bar{G}$ is an isomorphism, by Lemma \ref{lemma-the-projection-of-G-to-bar-G-is-an-isomorphism}.

Let $q:X\times\hat{X}\rightarrow Y:=(X\times\hat{X})/\bar{G}$ be the quotient morphism.  
When $d$ is even\footnote{If $d$ is odd replace it with $4d$ and note that $\QQ(\sqrt{-4d})=\QQ(\sqrt{-d})$.} the rank $8d$ of $\E$ is relatively prime to the order $(d+1)^2$ of $\bar{G}$. In that case 
we can replace $\E$ by its tensor product with a suitable power of the line bundle $\det(\E)$ to get a $\bar{G}$-equivariant sheaf $\tilde{\E}$. The latter descends to a semiregular reflexive sheaf $\bar{\E}$ over $Y$ satisfying $q^*\bar{\E}=\tilde{\E}$. Finally, we associate to $\bar{\E}$ a reflexive sheaf $\B$, twisted by a \v{C}ech $2$-cocycle with coefficients in $\mu_{8d}$ and with a trivial determinant line bundle, satisfying 
$\kappa(\E)=q^*\kappa(\B)$.
Such a sheaf $\B$ is constructed, regardless of the parity of $d$.
The morphism $q$ induces a local isomorphism of the Kuranishi deformation spaces of $X\times\hat{X}$ and $Y$. 
Our verification of Conjecture  \ref{conjecture-semiregular-twisted-sheaves-deform} in the case of abelian varieties 
implies that $(Y,\B)$ deforms locally over the locus where $\kappa(\B)$ remains of Hodge-type. Hence, $(X\times \hat{X},q^*\B)$ deforms locally over the locus where $q^*\kappa(\B)$ remains of Hodge type.  Now $\kappa(\E)=q^*\kappa(\B)$ and $\kappa(\E)$ remains of Hodge type over the locus, where $(X\times\hat{X},\eta,h)$ deforms as an abelian variety of Weil-type, by Corollary \ref{cor-kappa-class-is-Spin-V-P-invariant}, yielding a proof of the the following.

\begin{thm}
\label{thm-algebraicity}
Let $d$ be a positive integer. Set $K:=\QQ(\sqrt{-d})$. The Hodge-Weil classes of 
polarized abelian sixfolds of Weil type with complex multiplication by $K$ and with discriminant $-1$ are algebraic.
\end{thm}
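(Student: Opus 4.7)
The plan is to propagate the algebraicity of the Hodge-Weil space from the reference triple $(X\times\hat{X},\eta,h)$ of Theorem~\ref{main-theorem-introduction} to every polarized abelian sixfold of Weil type with $K=\QQ(\sqrt{-d})$ and discriminant $-1$, by deforming the associated semiregular twisted sheaf over the whole moduli component. By van~Geemen's classification \cite[Th.~5.2(3)]{van-Geemen}, the discrete invariant $(3,K,-1)$ determines a component of the moduli space up to isogeny of abelian varieties of Weil type, and isogenies preserve both the Hodge-Weil subspace and the algebraicity of its classes. It therefore suffices to produce, at each point $(A',\eta',h')$ of the connected component containing $(X\times\hat{X},\eta,h)$, a spanning set of algebraic cycles in $\hat{HW}_{P'}$.

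First I would apply the verified form of Conjecture~\ref{conjecture-semiregular-twisted-sheaves-deform} for twisted sheaves on abelian varieties to the semiregular $\mu_{8d}$-twisted reflexive sheaf $\B$ on $Y=(X\times\hat{X})/\bar{G}$ constructed in Section~\ref{subsection-a-semiregular-secant-square-sheaf}. This yields a deformation of $(Y,\B)$ over the sublocus of the Kuranishi space of $Y$ along which $\kappa(\B)$ remains of Hodge type. Since the quotient $q:X\times\hat{X}\rightarrow Y$ is \'etale, its differential identifies the Kuranishi spaces of $Y$ and $X\times\hat{X}$ locally, so pullback produces a deformation of $(X\times\hat{X},q^*\B)$ over the locus on which $q^*\kappa(\B)=\kappa(\E)$ stays of Hodge type. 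By Corollary~\ref{cor-kappa-class-is-Spin-V-P-invariant} the class $\kappa(\E)$ is $\Spin(V)_P$-invariant, hence of Hodge type along every deformation of $(X\times\hat{X},\eta,h)$ as a polarized abelian variety of Weil type, i.e.\ over the whole $9$-dimensional period domain. Consequently $\B$ deforms simultaneously with $(X\times\hat{X},\eta,h)$ over the entire moduli component, producing at each $(A',\eta',h')$ a twisted reflexive sheaf $\B'$ whose characteristic class $\kappa(\B')$ is algebraic (as the trace of the exponential Atiyah class of the algebraic projective bundle $\PP(\B')$), and hence $\kappa(\E'):=(q')^*\kappa(\B')$ is an algebraic class on $A'$.

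It remains to recover $\hat{HW}_{P'}$ from algebraic classes. At the reference point, Theorem~\ref{main-theorem-introduction}(\ref{thm-item-K-translates-of-kappa-3-and-h-cube-span-HW}) asserts that the $\eta(K)$-translates of the degree-$6$ component $\kappa_3(\E)$, together with $h^3$, span the $3$-dimensional subspace $\QQ h^3\oplus\hat{HW}_P\subset H^{3,3}(X\times\hat{X},\QQ)$. The classes $\kappa_3(\E')$ and the rational endomorphisms $\eta'(k)$ are defined along the entire family, and Gauss-Manin parallel transport realises the cohomology groups $H^6(A',\QQ)$ as a local system of $\Spin(V)_P$-modules; since the dimension of the $\QQ$-span of a parallel-transported finite set of sections is locally constant, the spanning identity $\span_\QQ\{\eta'(k)\kappa_3(\E'),\ (h')^3\}=\QQ(h')^3\oplus\hat{HW}_{P'}$ persists at every $(A',\eta',h')$. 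Each $\eta'(k)\kappa_3(\E')$ is algebraic, since $\kappa_3(\E')$ is algebraic and rational endomorphisms preserve algebraicity of cycles, and $(h')^3$ is the cube of an ample divisor; hence $\hat{HW}_{P'}$ is contained in the $\QQ$-span of algebraic classes, proving the theorem.

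The most delicate step I foresee is the careful passage between the formal characteristic class $\kappa$ of a twisted sheaf and honest algebraic cycles on the deformed variety, together with the compatibility of this identification with Gauss-Manin parallel transport across the whole $9$-dimensional family. Once the semiregularity machinery for twisted sheaves on abelian varieties of the previous section secures the existence of the deformation of $\B$ over the Hodge-type locus of $\kappa(\B)$, the rest of the argument is essentially forced by $\Spin(V)_P$-equivariance.
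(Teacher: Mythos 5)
There is a genuine gap in the passage from local to global deformation. The verified form of Conjecture~\ref{conjecture-semiregular-twisted-sheaves-deform} (the semiregularity theorem for twisted sheaves on abelian varieties) is an inherently \emph{local} statement: it produces a deformation of $(Y,\B)$ over some open analytic neighborhood $U$ of $0$ in the base $S$, \emph{not} over the entire sublocus on which $\kappa(\B)$ stays of Hodge type. Your proposal conflates the two. The fact that $\kappa(\E)$ is $\Spin(V)_P$-invariant, and hence of Hodge type on the whole $9$-dimensional period domain, does not license the conclusion that $\B$ deforms over the whole domain; it only says that the hypothesis of the semiregularity theorem holds everywhere, while the theorem's output is still local. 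Consequently the sentences beginning ``Consequently $\B$ deforms simultaneously with $(X\times\hat{X},\eta,h)$ over the entire moduli component, producing at each $(A',\eta',h')$ a twisted reflexive sheaf $\B'\ldots$'' are unjustified, and the subsequent Gauss--Manin argument has nothing to transport over points outside the local neighborhood, since $\kappa_3(\E')$ is only defined there.

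The paper closes this gap with a different device, which your proof is missing: by~\cite[Sec.\ 4.2]{voisin}, the locus in moduli where a given class of Hodge type is \emph{algebraic} is a countable union of closed algebraic subsets. Once the semiregularity theorem, combined with Theorem~\ref{main-theorem-introduction}(\ref{thm-item-K-translates-of-kappa-3-and-h-cube-span-HW}), shows the Hodge--Weil classes are algebraic on a non-empty analytic open subset of the moduli component, the Voisin result forces the locus of algebraicity to contain the entire irreducible component (a variety cannot be covered by countably many proper closed subvarieties while also containing a non-empty analytic open set). Your appeal to van~Geemen's classification to reduce to a single component and the observation that isogenies preserve algebraicity are both correct and useful, as is your use of Lemma~\ref{lemma-the-discriminant-is-minus-1-to-the-n} to match the discriminant; what is needed in addition is precisely this ``countable union of closed algebraic subsets'' argument, which replaces --- and is in fact the only way to supply --- the global spread of the local deformation you assumed.
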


The Theorem is proved in Section \ref{subsection-a-semiregular-secant-square-sheaf}.
The construction described above is generalized in Example \ref{example-secant-sheaf-on-jacobian-of-genus-4-curve}
to produce secant$^{\boxtimes 2}$-objects over $X\times\hat{X}$, for Jacobians $X$ of higher genus curves, but the proof of their semiregularity is special to genus $3$.

%
\subsection{Verification of the Hodge conjecture for abelian fourfolds}
The following is a known consequence of Theorem \ref{thm-algebraicity}. 

\begin{cor}
The Hodge conjecture holds for abelian fourfolds.
\end{cor}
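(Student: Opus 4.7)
The plan is three-fold. First, I would invoke the Moonen--Zarhin classification of Hodge classes on abelian fourfolds: the space $H^{2,2}(A,\QQ)$ is generated as a $\QQ$-vector space by intersections of divisor classes together with the Hodge-Weil planes $HW(A',\eta')$ of all embeddings $\eta':K\hookrightarrow \End_\QQ(A')$ that make some abelian fourfold $A'$, isogenous to $A$ or to a factor of $A$, into a variety of Weil type, with $K$ ranging over imaginary quadratic fields. Divisor classes are algebraic by the Lefschetz $(1,1)$-theorem, so the Hodge conjecture for abelian fourfolds reduces to the algebraicity of $HW(A',\eta')$ for every polarized abelian fourfold of Weil type with arbitrary $K=\QQ(\sqrt{-d})$ and arbitrary discriminant $\delta\in \QQ^\times/Nm(K^\times)$.

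Second, I would reduce the arbitrary-discriminant fourfold case to the discriminant-$-1$ sixfold case by forming a product with an abelian surface. The key arithmetic point is that every class in $\QQ^\times/Nm(K^\times)$ is realized as the Weil discriminant of some polarized abelian surface $(S,\eta_S,h_S)$ with complex multiplication by $K$ and with $K$-hermitian form on $H_1(S,\QQ)\cong K^2$. Selecting such $(S,\eta_S,h_S)$ of discriminant $-\delta^{-1}$, the product $(A'\!\times\! S,\, \eta'\!\oplus\!\eta_S,\, h'\!\oplus\! h_S)$ becomes a polarized abelian sixfold of Weil type with complex multiplication by $K$ and discriminant $-1$. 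Theorem \ref{thm-algebraicity} then supplies the algebraicity of its Hodge-Weil plane $HW(A'\!\times\! S)$.

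Third, I would descend algebraicity from $A'\!\times\! S$ to $A'$ by K\"{u}nneth and Poincar\'{e} duality. Since $W_{A'\!\times\! S} = W_{A'} \oplus W_S$ has $K$-dimensions $4$ and $2$, only the $(4,2)$-summand contributes to $\wedge^6 W_{A'\!\times\! S}$, yielding $\wedge^6 W_{A'\!\times\! S}=\wedge^4 W_{A'}\otimes \wedge^2 W_S$ and likewise for the conjugates. Consequently, each element of $HW(A'\!\times\! S)$ is a cup product of a class from $HW(A',\eta')$ with a class from $HW(S,\eta_S)$, the latter sitting in $H^{1,1}(S,\QQ)$ and hence being algebraic on the surface $S$. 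Cupping algebraic representatives of the generators of $HW(A'\!\times\! S)$ with suitable algebraic classes pulled back from $S$ and pushing forward by $\pi_{A'}$ then recovers a spanning set of $HW(A',\eta')$, up to non-zero rational scalars, as rational linear combinations of algebraic classes on $A'$.

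The main obstacle is the first step: the Moonen--Zarhin classification rests on a substantive analysis of Mumford-Tate groups of weight-one rational Hodge structures of dimension eight, and invoking it is the crux of the reduction. The second and third steps are, respectively, routine arithmetic of discriminants modulo $Nm(K^\times)$ and a standard K\"{u}nneth-plus-Poincar\'{e}-duality descent that are straightforward once Theorem \ref{thm-algebraicity} and the structural results of the introduction are available.
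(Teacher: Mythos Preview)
Your approach is essentially the same as the paper's: reduce via structure results to the algebraicity of Hodge--Weil classes on fourfolds of Weil type for arbitrary $K$ and discriminant, then obtain that algebraicity from Theorem~\ref{thm-algebraicity} by passing to a product with an abelian surface of complementary discriminant. Your steps~2--3 are exactly the content of \cite[Prop.~10]{schoen}, which the paper simply cites.

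The one imprecision is your step~1. The Moonen--Zarhin result \cite[Thm.~2.11]{Moonen-Zarhin-Weil-Hodge-Tate-classes} you invoke applies only to \emph{simple} abelian fourfolds; it does not give a uniform statement of the kind you wrote for arbitrary $A$. The paper accordingly splits the reduction into cases: simple fourfolds via Moonen--Zarhin, products of two abelian surfaces via Ram\'{o}n Mar\'{i} \cite[Thm.~4.11]{ramon-mari} (which proves the Hodge conjecture for such products directly, not by reduction to Weil classes on the fourfold), and $B\times E$ with $B$ a simple abelian threefold via \cite{moonen-zarhin-low-dimension}. Your unified formulation of step~1---reducing everything to Weil classes on fourfolds $A'$ isogenous to $A$ or to a factor of $A$---is not a theorem in the literature in that form and would need this case analysis to be made rigorous.
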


\begin{proof}
Theorem \ref{thm-algebraicity} 
implies that the Hodge-Weil classes are algebraic for every abelian fourfold of Weil-type, for all imaginary quadratic number fields, and for all discriminants, by  
degenerating abelian sixfolds of Weil type of discriminant $-1$ to products of abelian fourfolds of Weil type of arbitrary discriminant and abelian surfaces of Weil type \cite[Prop. 10]{schoen}. If $A$ is a simple abelian fourfold, then $H^{2,2}(A,\QQ)$ is spanned by quadratic polynomials in divisor classes and by Hodge-Weil classes (for possibly infinitely many complex multiplications), by \cite[Theorem 2.11]{Moonen-Zarhin-Weil-Hodge-Tate-classes}. The Hodge conjecture for abelian fourfolds is thus reduced to the case of non-simple abelian fourfolds. If the Hodge conjecture is verified for an abelian variety, then it follows for any isogenous abelian variety. 
The Hodge conjecture for the product of two abelian surfaces was proved in \cite[Theorem 4.11]{ramon-mari}. If $A=B\times E$, where $B$ is a simple abelian 3-fold and $E$ is an elliptic curve, then either the Hodge ring is generated by divisor classes, or $E$ and $B$ both have complex multiplication by the same imaginary quadratic number field $K$, by \cite[Prop. 3.8]{moonen-zarhin-low-dimension}, and in the latter case the Hodge ring of $A$ is generated by divisor classes and Hodge-Weil classes, by \cite[Theorem 0.1(i)]{moonen-zarhin-low-dimension}.
\end{proof}

%
\subsection{Organization of the paper}
In Sections \ref{section-abelian-2n-folds-of-Weil-type-from-rational-secants} to \ref{sec-period-domains} we reformulate Weil's construction of abelian varieties of Weil type, their special Mumford-Tate group,  and their period domain, in terms of $\Spin(V)$ representations and a rational $K$-secant line $P$. 
In Section \ref{section-abelian-2n-folds-of-Weil-type-from-rational-secants} we associate to a non-degenerate rational $K$-secant  $P\subset H^*(X,\QQ)$, which is spanned by Hodge classes, the structure of a polarized abelian variety of Weil type on $X\times\hat{X}$ and show that $\Spin(V)_P$ plays the role of the special Mumford-Tate group of a generic abelian variety of Weil type in its deformation class. We denote the $\Spin(V)_P$-invariant polarization by $\Xi_P$ and the polarized abelian variety of Weil type by $(X\times\hat{X},\Xi_P,\eta)$.

In Section \ref{sec-Hermitian-form} we construct a $\Spin(V)_P$-invariant $K$-valued hermitian form $H$ 
on the vector space $H^1(X\times\hat{X},\QQ)$.
In section \ref{sec-period-domains} we show that the period domain of deformations of $(X\times\hat{X},\Xi_P,\eta)$ as a polarized abelian variety of Weil type is the adjoint orbit of the complex structure $I$ of $X\times\hat{X}$ in $\Spin(V_\RR)_P$.

In Section \ref{section-equivalences-of-derived-categories} we review general results about the group of autoequivalences of the derived category of an abelian variety. 

Section \ref{section-Orlov-equivalence} is dedicated to the study of the isomorphism 
$\phi:H^*(X\times X,\ZZ)\rightarrow H^*(X\times\hat{X})$ induced by of Orlov's equivalence $\Phi:D^b(X\times X)\rightarrow D^b(X\times\hat{X})$.
We describe the $\Spin(V)$-equivariance properties of $\phi$, relate it to Chevalley's  isomorphism $S\otimes S\rightarrow \wedge^*V$, and use it to conclude that  $\phi$ maps the $2$-dimensional subspace of $H^*(X\times X,K)$, spanned by tensor squares of pure spinors in $P\otimes_\QQ K$, onto 
the $2$-dimensional space of Hodge-Weil classes.

In Section \ref{section-semiregular-twisted-sheaves} we review the theorem of Buchweitz-Flenner verifying the variational Hodge conjecture for semiregular sheaves. We then formulate a conjectural generalization for semiregular sheaves of rank $r>0$ twisted by a \v{C}ech $2$-cocycle with coefficients in the local system  $\mu_r$ of $r$-th roots of unity (Conjecture\footnote{As mentioned, we expect the conjecture to follow from \cite[Remark 2.26]{pridham}.} 
\ref{conjecture-semiregular-twisted-sheaves-deform}).
We prove the conjecture for families of abelian varieties by reducing it to the original result of Buchweitz-Flenner via an elementary argument. 

In Section \ref{section-secant-sheaves-on-abelian-threefolds} we concentrate on the case where $X$ is the Jacobian of a non-hyperelliptic curve $C$ of genus $3$.
We construct a $\QQ(\sqrt{-d})$-secant sheaf $F$ on $X$, for every positive integer $d$, as the sheaf $\Ideal{\cup_{i=1}^{d+1}C_i}(\Theta)$,
where $\Theta$ is the canonical principal polarization and $C_i$, $1\leq i\leq d+1$, are disjoint translates of the Abel-Jacobi image of $C$ in $X$. Recall that both the Yoneda algebra $\Ext^*(F,F)$ and the cohomology $H^*(X,\CC)$ are modules over the Hochschild cohomology $HH^*(X)$. The space $HH^2(X)$ parametrized generalized deformations of the abelian category of coherent sheaves on $X$. We show that the kernel of the evaluation homomorphism 
$ob_F:HH^2(X)\rightarrow \Ext^2(F,F)$ is equal to the kernel of the evaluation homomorphism $ch(F)\Contract:HH^2(X)\rightarrow H^*(X,\CC)$. In particular, 
$F$ deforms to first order in every direction in which $ch(F)$ remains of Hodge-type. 
We then extend the equality $\ker(ob_F)=\ker(ch_F)$, replacing $X$ by $X\times\hat{X}$ and $F$ by the secant$^{\boxtimes 2}$-object
$\G:=\Phi(F_2\boxtimes F_1)[3]$ over $X\times\hat{X}$, to obtain 
$\ker(ob_{\G})=\ker(ch(\G))$, where $F_i$ are the secant sheaves in Theorem \ref{main-theorem-introduction} (see Lemma \ref{lemma-kernel-of-ob-E-is-annihilator-of-ch_E}).
We conclude that $\G$ deforms to first order in all directions in $H^1(X\times\hat{X},T[X\times\hat{X}])$ tangent to the period domain of deformations of $(X\times\hat{X},\Xi_P,\eta)$ as a polarized abelian variety of Weil type.

In Section \ref{section-Jacobians-of-genus-3-curves} we complete the proof of Theorem \ref{main-theorem-introduction} and prove Theorem \ref{thm-algebraicity}. We show that $\E:=\G^\vee[-1]$ is a reflexive sheaf, under a general position assumption on the curves $C_i$ and $\Sigma_j$ in Theorem \ref{main-theorem-introduction}. We also check that the general position assumption can be achieved with a $G_1$-equivariant $\Ideal{\cup_{i=1}^{d+1}C_i}$
and a $G_2$-equivariant $\Ideal{\cup_{i=1}^{d+1}\Sigma_i}$ in Theorem \ref{main-theorem-introduction}, where $G_1$ and $G_2$ are cyclic subgroups of $X$ of order $d+1$. We show that $\E$ can be then normalized to a $\bar{G}$-equivariant sheaf over $X\times\hat{X}$, which descends to a semiregular simple reflexive  sheaf $\B$ over $(X\times\hat{X})/\bar{G}$, twisted by a \v{C}ech $2$-cocycle with coefficients in $\mu_{8d}$, where $\bar{G}$ is a subgroup of $X\times\hat{X}$ isomorphic to $G_1\times G_2$. The semiregularity of $\B$ follows from the equality $\ker(ob_{\G})=\ker(ch(\G))$ mentioned above and the surjectivity of $ob_\G:HH^2(X\times\hat{X})\rightarrow \Ext^2(\G,\G)^{G_1\times G_2}$ (Lemma \ref{lemma-semiregularity-of-twisted-sheaf-B}).
Finally we derive from the semiregularity theorem 
the algebraicity of the Hodge-Weil classes for 
polarized abelian sixfolds of Weil-type with discriminant $-1$ and with complex multiplication by an arbitrary imaginary quadratic number field (Theorem \ref{thm-algebraicity}). 

In Section \ref{sec-Igusa-invariant} we characterize $K$-secant sheaves on abelian $3$-folds, by the value of the Igusa $\Spin(V)$-invariant polynomial on their Chern character.
In the appendix section \ref{appendix} we prove that the derived tensor product of the sheaves $\Ideal{\cup_{i=1}^{d+1}C_i}$ and $\Ideal{\cup_{i=1}^{d+1}\Sigma_i}$
in Theorem \ref{thm-algebraicity} is isomorphic to the ideal sheaf of a $1$-dimensional subscheme, even when the subschemes $\cup_{i=1}^{d+1}C_i$ and $\cup_{i=1}^{d+1}\Sigma_i$ intersect.

%
\section{Abelian $2n$-folds of Weil type associated to a rational secant to the even spinor variety of an abelian $n$-fold}
\label{section-abelian-2n-folds-of-Weil-type-from-rational-secants}

In Section \ref{sec-clifford-algebra} we recall the definition of the Clifford algebra $C(V)$ and of the Spin group as a subgroup of the group of invertible elements of $C(V)$. We recall also that when $V=H^1(X\times\hat{X},\ZZ)$, then $S:=H^*(X,\ZZ)$ is the Spin representation and $S^+:=H^{ev}(X,\ZZ)$ and $S^-:=H^{odd}(X,\ZZ)$ are its two half-spin subrepresentations. In 
Section \ref{sec-pure-spinors} we recall the notion of pure spinors, which are elements of the half-spin representations corresponding to maximal isotropic subspaces of $V_\CC$. The set of even pure spinors in $\PP(S^+_\CC)$ is the spinorial variety. We show that a rational secant line $P$ to the spinorial variety, which intersects it at non-rational points,  determines a $K$-vector space structure on $V_\QQ$, for a quadratic field extension $K$ of $\QQ$. We determine also the subalgebra of $\wedge^*V$ invariant under the subgroup $\Spin(V)_P$ leaving all vectors in the $2$-dimensional subspace $P\subset S^+_\QQ$ invariant.
In Section \ref{subsection-the-isomorphism-tilde-varphi} we recall the isomorphism  $S\otimes_\ZZ S\cong \wedge^*V$ of $\Spin(V)$-representations.
In Section \ref{sec-polarized-avwt-from-secants} we give, for every imaginary quadratic number field $K$, an example of a secant to the spinorial variety, such that 
the associated $K$-vector space structure on $V_\QQ=H^1(X\times\hat{X},\QQ)$ makes $X\times\hat{X}$ a polarized abelian variety of Weil type.
%
\subsection{The Clifford algebra and the spin group}
\label{sec-clifford-algebra}

Keep the notation of Section \ref{sec-abelian-varieties-of-Weil-type-introduction}.
Let $V$ be the lattice given in (\ref{eq-V}).
Let $C(V)$ be the Clifford algebra, the quotient of the tensor algebra $\oplus_{i=0}^\infty V^{\otimes i}$ by the relation
\begin{equation}
\label{eq-Clifford-relation}
v_1\cdot v_2+v_2\cdot v_1=(v_1,v_2)_V, 
\end{equation}
where the integer in the right hand side is regarded in $V^{\otimes 0}\cong \Integers.$ Then $C(V)$ is a $\Integers/2\Integers$-graded associative algebra with a unit.

Given $w\in H^1(X,\ZZ)$, define the endomorphism $L_w:S\rightarrow S$ of degree $1$ by $L_w(\bullet)=w\wedge(\bullet)$.
Given $\theta\in H^1(X,\ZZ)^*$ we get the endomorphism $D_\theta:S\rightarrow S$ of degree $-1$ given by contraction with $\theta$.
We get the embedding 
\begin{equation}
\label{eq-embedding-of-V-in-End-S}
m:V\rightarrow\End(S),
\end{equation} 
given by $m_{(w,\theta)}:=L_w+D_\theta$ and 
satisfying the analogue 
\[
m_{v_1}\circ m_{v_2}+m_{v_2}\circ m_{v_1}=(v_1,v_2)_V\cdot id_S
\]
of the Clifford relation (\ref{eq-Clifford-relation}). Hence, $m$ extends an algebra homomorphism
\begin{equation}
\label{eq-m}
m:C(V)\rightarrow \End(S),
\end{equation}
which is in fact an isomorphism, by
\cite[Prop. 3.2.1(e)]{golyshev-luntz-orlov}. The {\em main anti-automorphism} 
\[
\tau:C(V)\rightarrow C(V)
\]
sends $v_1\cdot \cdots \cdot v_r$ to $v_r\cdot \cdots\cdot v_1$.
The main involution $\alpha:C(V)\rightarrow C(V)$ acts by multiplication by $-1$ on $C(V)^{odd}$ and as the identity on $C(V)^{even}$. The {\em conjugation} $x\mapsto x^*$ is the composition of $\tau$ and $\alpha$.
Let $C(V)^\times$ be the group of invertible elements in $C(V)$. The integral Clifford group is 
\[
G(V):= \{x\in C(V)^\times \ : \ x\cdot V\cdot x^{-1}\subset V\}.
\]
The integral spin group is its index four subgroup
\[
\Spin(V) := \{x\in C(V)^{even} \ : \ x\cdot x^*=1 \ \mbox{and} \ x\cdot V\cdot x^*\subset V \}.
\]

The standard representation $\rho:G(V)\rightarrow O(V)$ of $G(V)$  is defined by
$\rho(x)(v)=x\cdot v\cdot x^{-1}$. If $(v,v)=\pm 2,$
then $-\rho(v)$ is the reflection with respect to the co-rank one lattice $v^\perp$ orthogonal to $v$,
\[
-\rho(v)(\lambda)=\lambda-\frac{2(\lambda,v)_V}{(v,v)_V}\cdot v, \ \forall \lambda\in V.
\]
If $(v_1,v_1)_V=(v_2,v_2)_V=2$, or  $(v_1,v_1)_V=(v_2,v_2)_V=-2$, then $v_1\cdot v_2$ belongs to $\Spin(V)$ and the  group $\Spin(V)$ is generated by such elements.

An element $x\in C(V)^{odd}$ is mapped via (\ref{eq-m}) to an endomorphism $m_x$ of $S$, which maps $S^+$ to $S^-$ and $S^-$ to $S^+$. 
In particular, we get the $\Spin(V)$ equivariant homomorphisms
$V\otimes S^+\rightarrow S^-$ and $V\otimes S^-\rightarrow S^+$.
Given an element $v\in V$, denote by $m_{v,+-}:S^+\rightarrow S^-$ and by $m_{v,-+}:S^-\rightarrow S^+$ the resulting homomorphisms. The latter are adjoints with respect to the bilinear pairing (\ref{eq-Mukai-pairing}).
More generally, for $s, t\in S$ and $v\in V$ we have
\[
(m_v(s),t)_S=(s,m_v(t))_S,
\] 
by \cite[III.2.2]{chevalley}.
Given an element $w\in S^+$, denote by $m_w:V\rightarrow S^-$ the homomorphism given by
\[
m_w(v):=m_{v,+-}(w).
\]
Given $w\in S^-$, define $m_w:V\rightarrow S^+$ by $m_w(v)=m_{v,-+}(w)$. 
The norm character $N:G(V)\rightarrow \{\pm 1\}$ is given by $N(g)=g\cdot\tau(g)$. 
For $g\in G(V)$ and $s, t\in S$ we have
\[
(g(s),g(t))_S=N(g)(s,t)_S,
\]
by \cite[III.2.1]{chevalley}. In particular, if $v\in V$ and $(v,v)_V=2$, 
then $v$ belongs to $G(V)$, $N(v)=1$,  $m_v:S\rightarrow S$ is an isometry interchanging $S^+$ and $S^-$, and $m_v^2=\one_S$.

Given a field $K$ we set $V_K:=V\otimes_\Integers K$, and define $S_K$, $S^+_K$, and $S^-_K$ similarly.
The same definitions above yield the Clifford algebra $C(V_K)$, the groups $G(V_K)$, $\Spin(V_K)$, and 
$O(V_K)$, the representation $\rho:G(V_K)\rightarrow O(V_K)$, and the isomorphism
$m:C(V_K)\rightarrow \End(S_K)$.

%
\subsection{Rational lines $K$-secant to the variety of even pure spinors}
\label{sec-pure-spinors}
An element $w\in S^+_\CC$ is called an {\em even pure spinor}, if the kernel of $m_w:V_\CC\rightarrow S^-_\CC$ is a maximal isotropic subspace. An element 
$w\in S^-_\CC$ is called an {\em odd pure spinor}, if the kernel of $m_w:V_\CC\rightarrow S^+_\CC$ is a maximal isotropic subspace \cite[III.1.4]{chevalley}. 
Every maximal isotropic subspace is of this form and so the $(2n^2-n)$-dimensional grassmannian $IGr(2n,V_\CC)$ of $2n$-dimensional isotropic subspaces has two connected components $IGr_+(2n,V_\CC)$ and $IGr_-(2n,V_\CC)$ \cite[III.1.5]{chevalley}. 
We get a $Spin(V)$-equivariant embedding 
\[
IGr_+(2n,V_\CC) \ \rightarrow \PP(S^+_\CC)\cong \PP^{2^{2n-1}-1}
\]
sending $W\in IGr_+(2n,V_\CC)$ to the line spanned by an even pure spinor $s$ with $W=\ker(m_s)$.
The image of the embedding in $\PP(S^+_\CC)$ is called the (even) {\em spinor variety}.

Let $K$ be a purely imaginary quadratic number field. 
Let $\ell_1$, $\ell_2$ be two complex conjugate points in $\PP(S^+_K)$. Assume that the line $\tilde{\ell}_i$ in $S^+_K$ corresponding to $\ell_i$ is spanned by a pure spinor.
 Let $W_i\subset V_K$ be the maximal isotropic subspace of $V_K$ corresponding to $\ell_i$. Note that $W_2$ is the complex conjugate of $W_1$. 
 Let $P_K$ be the plane in $S^+_K$ spanned by $\tilde{\ell}_1$ and $\tilde{\ell}_2$. 
 $P_K$ is defined over $\QQ$ and we denote by $P$ the corresponding subspace of $S^+_\QQ$. 

\begin{lem}
\label{lemma-P-is-non-isotropic-iff-W-1-and-W-2-are-transversal}
$P$ is isotropic with respect to the pairing $(\bullet,\bullet)_S$, given in (\ref{eq-Mukai-pairing}), if and only if $W_1\cap W_2\neq (0)$.
The restriction of the pairing $(\bullet,\bullet)_S$ to $P$ is definite, if and only if $W_1\cap W_2= (0)$.
\end{lem}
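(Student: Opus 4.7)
The proof rests on a classical theorem (Chevalley, \emph{The Algebraic Theory of Spinors}, Ch.~III): for two even pure spinors $\tilde{s}_1,\tilde{s}_2\in S^+_\CC$ with associated maximal isotropic subspaces $W_1,W_2\subset V_\CC$, the value $(\tilde{s}_1,\tilde{s}_2)_S$ is non-zero if and only if $W_1\cap W_2=(0)$, equivalently $V_\CC=W_1\oplus W_2$. I would simply quote this statement, which is the one non-computational input and the principal obstacle of the lemma; reproducing its proof would require Chevalley's exterior-algebra realization of $S$ and a direct computation of $(\bullet,\bullet)_S$ on decomposable spinors.

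Given this input, fix a generator $\tilde{s}_1$ of $\tilde{\ell}_1$ and let $\tilde{s}_2$ be its complex conjugate, a generator of $\tilde{\ell}_2$. The two diagonal entries of the Gram matrix of $(\bullet,\bullet)_S$ on $P_K$ in the basis $\{\tilde{s}_1,\tilde{s}_2\}$ vanish by Chevalley's criterion applied with $W_i\cap W_i=W_i\neq (0)$, and the off-diagonal entry $\alpha:=(\tilde{s}_1,\tilde{s}_2)_S$ is non-zero precisely when $W_1\cap W_2=(0)$. Consequently the restriction of $(\bullet,\bullet)_S$ to $P_K$, and therefore to its $\QQ$-form $P$, vanishes identically if and only if $W_1\cap W_2\neq (0)$. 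This proves the first assertion.

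For the second assertion, assume $W_1\cap W_2=(0)$, so $\alpha\neq 0$, and pass to the $\QQ$-basis $\{a,b\}:=\{\tilde{s}_1+\tilde{s}_2,\,(\tilde{s}_1-\tilde{s}_2)/\sqrt{-d}\}$ of $P$. Using $\overline{(x,y)_S}=(\bar{x},\bar{y})_S$, the relation $(\tilde{s}_2,\tilde{s}_1)_S=\alpha$ which holds when $(\bullet,\bullet)_S$ is symmetric ($n$ even, cf.\ the action of $\tau$ following (\ref{eq-Mukai-pairing})), and the vanishing of the diagonal entries, a direct expansion yields that the Gram matrix of $(\bullet,\bullet)_S$ on $P$ in the basis $\{a,b\}$ is diagonal with entries $2\alpha$ and $2\alpha/d$, where $\alpha\in\QQ^\times$. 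Since $d>0$ both entries share the sign of $\alpha$, so the restricted pairing is definite over $\RR$, with sign equal to that of $\alpha$; the converse direction is immediate, as an isotropic form cannot be definite.
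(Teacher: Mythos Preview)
Your proof is correct and follows essentially the same route as the paper's. Both invoke Chevalley's criterion \cite[III.2.4]{chevalley} for the vanishing of $(\tilde s_i,\tilde s_i)_S$ and of $(\tilde s_1,\tilde s_2)_S$, and then pass to a rational basis of $P$ consisting of the ``real'' and ``imaginary'' parts of a generator of $\tilde\ell_1$; your basis $\{\tilde s_1+\tilde s_2,\ (\tilde s_1-\tilde s_2)/\sqrt{-d}\}$ is exactly the paper's $\{a,b\}$ up to a factor of $2$. Your diagonal Gram matrix $\mathrm{diag}(2\alpha,\,2\alpha/d)$ is the same computation the paper summarizes by ``$(a,a)=(b,b)$ and $(a,b)=0$.''

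One remark: you flag explicitly that your definiteness computation uses symmetry of $(\bullet,\bullet)_S$ on $S^+$, i.e.\ $n$ even. The paper's displayed formulas $(\lambda_1,\lambda_1)=(a,a)_S-(b,b)_S+2i(a,b)_S$ and $(\lambda_1,\lambda_2)=(a,a)_S+(b,b)_S$ likewise only hold verbatim in the symmetric case, so you are not missing anything the paper supplies. For $n$ odd the pairing on $S^+$ is alternating, ``definite'' must be read as ``non-degenerate,'' and this is automatic for a nonzero alternating form on a $2$-plane; since you have already shown the form vanishes on $P$ iff $W_1\cap W_2\neq(0)$, the odd case follows at once. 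It would strengthen your write-up to add this one sentence rather than leave the odd case unaddressed.
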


\begin{proof}
Let $\lambda_1$ be a non-zero element of $\tilde{\ell}_1$ and set $\lambda_2=\bar{\lambda}_1$. 
Then $(\lambda_i,\lambda_i)=0$, for $i=1,2$, by \cite[III.2.4]{chevalley}.
Furthermore, $(\lambda_1,\lambda_2)=0$, if and only if $W_1\cap W_2\neq (0)$, by \cite[III.2.4]{chevalley}.
Write $\lambda_1=a+ib$, with $a,b\in S^+_\QQ$. Then 
\begin{eqnarray*}
(\lambda_1,\lambda_1)&=& (a,a)_S-(b,b)_S+2i(a,b)_S=0,
\\
(\lambda_1,\lambda_2)&=& (a,a)_S+(b,b)_S.
\end{eqnarray*}
The first equation implies that $(a,a)=(b,b)$ and $(a,b)=0$. The second equation thus implies that $(a,a)=0$, if and only if $W_1\cap W_2\neq 0$.
\end{proof}
 
 Assume that $W_1\cap W_2$ is the zero subspace. Then $\{\ell_1,\ell_2\}$ is the set theoretic intersection of $IGr_+(2n,V_\CC)$ and the line through $\ell_1$ and $\ell_2$, provided $n>1$, by \cite[III.1.12]{chevalley}.
Denote by $\Spin(V_K)_{\ell_i}$ the subgroup of $\Spin(V_K)$ stabilizing $\ell_i$. Denote their intersection by
\begin{equation}
\label{eq-spin-V-K-ell-1-ell-2}
\Spin(V_K)_{\ell_1,\ell_2}:=\Spin(V_K)_{\ell_1}\cap \Spin(V_K)_{\ell_2}.
\end{equation}
The quotient $\Spin(V_K)_{\ell_1,\ell_2}/\{\pm 1\}$ 
is isomorphic to $GL(W_1)$, and so to $GL_{2n}(K)$, and the quotient maps injectively into $SO^+(V_K)$, by the proof of \cite[Lemma 1]{igusa}.  The element of $\Spin(V_K)_{\ell_1,\ell_2}/\{\pm 1\}$ corresponding to $g\in GL(W_1)$ acts on $W_2$ via $(g^*)^{-1}$, where $W_2$ is identified with $W_1^*$ via the bilinear pairing of $V_K$. 
Let 
\[
{\det}_i: \Spin(V_K)_{\ell_1,\ell_2}\rightarrow K^\times
\]
be the pullback from $GL(W_i)$ of the determinant character. Then $\det_2(s)=\det_1(s)^{-1}$.
The characters $\tilde{\ell}_i$ and $\det_i$ satisfy $\tilde{\ell}_i\otimes \tilde{\ell}_i\cong \det_i$,
by the proof of \cite[Lemma 1]{igusa} (see the second displayed formula for $\phi(s_i(\lambda))$ in \cite[Sec. 2]{igusa}. See also \cite[III.3.2]{chevalley}). 

 Denote by $\Spin(V_K)_P$ the subgroup of 
 $\Spin(V_K)$ leaving every vector in $P_K$ invariant. 
 Define 
 \begin{equation}
 \label{eq-Spin(V)_P}
 \Spin(V)_P
 \end{equation} 
 and $\Spin(V_\QQ)_P$ analogously.
 
 \begin{lem}
 \label{lemma-Spin-V-K-is-SL-n-K}
 The group $\Spin(V_K)_P$ is isomorphic to $SL_n(K)$.
 \end{lem}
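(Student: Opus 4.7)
The plan is to realize $\Spin(V_K)_P$ as the common kernel of the two characters $\tilde{\ell}_1,\tilde{\ell}_2$ on the stabilizer $\Spin(V_K)_{\ell_1,\ell_2}$ recalled just before the lemma, and to use the already-established description of that stabilizer as a double cover of $GL(W_1)\cong GL_{2n}(K)$ to identify this kernel with $SL(W_1)\cong SL_{2n}(K)$. (This is what the argument produces; I read the lemma accordingly, with $\dim_K W_1 = 2n$.)

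First I would verify the inclusion $\Spin(V_K)_P\subseteq \Spin(V_K)_{\ell_1,\ell_2}$. Under the running hypothesis that $W_1\cap W_2=(0)$ and $n\geq 2$, the two points $\ell_1,\ell_2$ exhaust the set-theoretic intersection of the line $\PP(P_K)$ with the spinor variety, by \cite[III.1.12]{chevalley}. An element of $\Spin(V_K)_P$ preserves both $P_K$ and the spinor variety, so it permutes $\{\ell_1,\ell_2\}$; since it also fixes every vector of $P_K$ pointwise, it stabilizes each pure spinor line individually. Next I would describe $\Spin(V_K)_P$ as the common kernel of $\tilde{\ell}_1$ and $\tilde{\ell}_2$: for $g$ in the stabilizer, the action on the one-dimensional space $\tilde{\ell}_i$ is by the scalar $\tilde{\ell}_i(g)$, so fixing every vector of $\tilde{\ell}_i$ is equivalent to $\tilde{\ell}_i(g)=1$. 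From the recalled identities $\tilde{\ell}_i^{\otimes 2}=\det_i$ and $\det_2=\det_1^{-1}$, the condition $\tilde{\ell}_1(g)=1$ forces $\det_1(g)=1$, so the image of $g$ in $\Spin(V_K)_{\ell_1,\ell_2}/\{\pm 1\}\cong GL(W_1)$ lies in $SL(W_1)$.

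To finish I would realize the double cover $\Spin(V_K)_{\ell_1,\ell_2}$ concretely as $\{(g,z)\in GL(W_1)\times K^\times : z^2=\det(g)\}$ with $\tilde{\ell}_1$ the projection $(g,z)\mapsto z$; then $\ker(\tilde{\ell}_1)=\{(g,1) : \det(g)=1\}\cong SL(W_1)\cong SL_{2n}(K)$. The one point requiring care is whether the apparent second condition $\tilde{\ell}_2(g)=1$ is automatic, i.e.\ whether $\tilde{\ell}_2=\tilde{\ell}_1^{-1}$ on the stabilizer rather than only up to a sign. The product $\tilde{\ell}_1\tilde{\ell}_2$ squares to $\det_1\det_2=1$, hence is $\{\pm 1\}$-valued, and it kills the central element $-1\in\Spin(V_K)$ because each $\tilde{\ell}_i(-1)=-1$ (recall that $-1$ acts on $S^+$ as $-\mathrm{id}$); hence it descends to a character of the algebraic group $GL(W_1)$ valued in $\mu_2$, which must be trivial because $GL_{2n}$ is a connected algebraic group whose character group is generated by $\det$. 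The main obstacle is thus the disentangling of the two $\pm 1$ ambiguities — the one from the double cover $\Spin(V_K)_{\ell_1,\ell_2}\to GL(W_1)$ and the one inherent in the square-root character $\tilde{\ell}_1$ of $\det_1$ — and it reduces to precisely this character-theoretic check.
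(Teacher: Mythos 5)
Your argument is correct and follows the same basic route as the paper's one-line proof (cut out $\Spin(V_K)_P$ inside $\Spin(V_K)_{\ell_1,\ell_2}$ using the characters $\tilde{\ell}_i$ and $\det_i$, then descend to $GL(W_1)$), but it supplies care that the paper's phrasing actually needs. Read literally, the paper's assertion that $\Spin(V_K)_P$ is the kernel of $\det_1$ is false: $\rho(-1)=\mathrm{id}_V$, so $-1\in\ker(\det_1)$, yet $-1\notin\Spin(V_K)_P$. The correct identification is $\Spin(V_K)_P=\ker(\tilde{\ell}_1)\cap\ker(\tilde{\ell}_2)$, which is an index-two subgroup of $\ker(\det_1)$; to conclude one must know that the two conditions $\tilde{\ell}_1(g)=1$ and $\tilde{\ell}_2(g)=1$ coincide on the stabilizer, i.e.\ that $\tilde{\ell}_1\tilde{\ell}_2$ is trivial rather than the nontrivial $\mu_2$-valued character. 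Your observation that $(\tilde{\ell}_1\tilde{\ell}_2)^2=\det_1\det_2=1$, that $\tilde{\ell}_1\tilde{\ell}_2$ kills $-1$ because each $\tilde{\ell}_i(-1)=-1$, and that the descended $\mu_2$-valued algebraic character of the connected group $GL(W_1)$ must therefore be trivial, is exactly the verification that closes this gap; after it, $\ker(\tilde{\ell}_1)$ maps isomorphically onto $SL(W_1)$. You also correctly flag the misprint in the lemma: since $\dim_K W_1=2n$, the group is $SL_{2n}(K)$, not $SL_n(K)$, consistent with the surrounding text's $GL_{2n}(K)$. One small over-complication: to see $\Spin(V_K)_P\subseteq\Spin(V_K)_{\ell_1,\ell_2}$ you do not need \cite[III.1.12]{chevalley} or the spinor variety at all --- any $g$ fixing every vector of $P_K$ pointwise a fortiori stabilizes the two distinguished lines $\tilde{\ell}_1,\tilde{\ell}_2\subset P_K$.
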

 
 \begin{proof}
 Note that $\Spin(V_K)_P$ is the kernel of $\det_1$ in 
 $\Spin(V_K)_{\ell_1,\ell_2}$ and so it is isomorphic to $SL_n(K)$, since $-1$ is not contained in $\Spin(V_K)_P$. 
 \end{proof}
 
 \begin{rem} 
 \label{remark-stabilizer-of-w-may-have-two-connected-components}
 (\cite[Lemma 2]{igusa} and the remark following it).
 Assume that $n\geq 3$.
 Let $w$ be a point in $P$. Assume that $w$ does not belong to neither $\tilde{\ell}_1$ nor $\tilde{\ell}_2$. 
 Then $w$ is not a pure spinor, as commented above.
 If $n$ is odd, then the stabilizer of $w$ in $\Spin(V_K)$ is $\Spin(V_K)_P$. If $n$ is even, then the stabilizer has two connected components and the identity component is $\Spin(V_K)_P$. In particular, $w$ determines $P$ and $\PP(P)$ is the unique secant to the spinor variety through $w$.
 \end{rem}
 
 Let d be a rational number, such that $-d$ is not a square of a rational number. Set $K:=\QQ[\sqrt{-d}]$. 
Let $\sigma: K\rightarrow K$ be the involution in $Gal(K/\QQ)$. Denote by $\sigma$ also the induced involution on $S^+_K$, $S^-_K$, and $V_K$. 
Let $Nm:K\rightarrow\QQ$ be the norm map $Nm(\lambda)=\lambda\sigma(\lambda)$.
Denote the group of rational similarities of $V_\QQ$ by
\begin{equation}
\label{eq-group-of-similarities}
\tilde{O}(V_\QQ):=\{g\in GL(V_\QQ) \ : \ (g(v_1),g(v_2))=c(v_1,v_2), \ \mbox{for some} \ c\in Nm(K^\times)\}.
\end{equation}
Assume that $P\subset S^+_\QQ$ is a non-isotropic $2$-dimensional subspace, which intersects the spinor variety in $\PP(S^+_K)$ in two $\sigma$-conjugate points $\ell_1$ and $\ell_2$ corresponding to two maximal isotropic subspaces $W_1$ and $W_2$ of $V_K$. The vanishing $W_1\cap W_2=(0)$ holds, by Lemma \ref{lemma-P-is-non-isotropic-iff-W-1-and-W-2-are-transversal}.

The subset $V_\QQ$ of $V_K$ is equal to $\{v_1+\sigma(v_1) \ : \ v_1\in W_1\}$.
Given $\lambda\in K^\times$, let $\cm_\lambda:V_K\rightarrow V_K$ act on $W_1$ by multiplication by $\lambda$ and on $W_2$ by multiplication by $\sigma(\lambda)$. Then $\cm_\lambda$ leaves $V_\QQ$ invariant and we get the homomorphism 
\begin{equation}
\label{eq-action-by-imaginary-quadratic-number-field}
\cm:K^\times\rightarrow GL(V_\QQ)
\end{equation}
sending $\lambda$ to the restriction of $\cm_\lambda$ to $V_\QQ$.

\begin{lem}
\label{lemma-centralizer-of-rho-Spin-V-P}
The centralizer of $\rho(\Spin(V_\QQ)_P)$ in $\tilde{O}(V_\QQ)$ is $\cm(K^\times)$.
\end{lem}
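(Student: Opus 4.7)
\medskip

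\noindent\textbf{Proof plan.} The plan is to base change to $K$, use the decomposition $V_K=W_1\oplus W_2$ together with Schur's Lemma to pin down the centralizer over $K$, and then descend back to $\QQ$ via Galois equivariance.

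First I would verify the easy inclusion $\tilde{e}(K^\times)\subseteq\tilde{O}(V_\QQ)$ and that $\tilde{e}(K^\times)$ centralizes $\rho(\Spin(V_\QQ)_P)$. Since $W_1$ and $W_2$ are isotropic and perfectly dual under the bilinear form on $V_K$, the endomorphism $\tilde{e}_\lambda$ rescales the pairing by the factor $\lambda\sigma(\lambda)=Nm(\lambda)\in Nm(K^\times)$, so it lies in $\tilde{O}(V_\QQ)$. By Lemma \ref{lemma-Spin-V-K-is-SL-n-K} and the discussion preceding it, the action of $\Spin(V_K)_P$ on $V_K$ is block-diagonal with respect to $W_1\oplus W_2$ and $K$-linear on each block, so it commutes with scalar multiplication by $\lambda$ on $W_1$ and by $\sigma(\lambda)$ on $W_2$.

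For the reverse inclusion, I would take $T$ in the centralizer, extend it $K$-linearly to $T_K\colon V_K\to V_K$, and first upgrade the commutation with $\rho(\Spin(V_\QQ)_P)$ to commutation with all of $\rho(\Spin(V_K)_P)$. Under the identification $\Spin(V_K)_P\cong SL(W_1)$ of Lemma \ref{lemma-Spin-V-K-is-SL-n-K}, the summands $W_1$ and $W_2$ are respectively the standard and contragredient representations of $SL(W_1)$; since $\dim_K W_1=2n\geq 4$, these are non-isomorphic irreducibles, so Schur's Lemma forces $T_K\vert_{W_i}=\lambda_i\cdot id_{W_i}$ for some $\lambda_i\in K^\times$. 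Finally, commutation of $T_K$ with the Galois involution $\sigma$, which interchanges $W_1$ and $W_2$, applied to $v_1\in W_1$ gives $\lambda_2\sigma(v_1)=\sigma(\lambda_1 v_1)=\sigma(\lambda_1)\sigma(v_1)$, whence $\lambda_2=\sigma(\lambda_1)$ and $T=\tilde{e}_{\lambda_1}$.

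The main obstacle is the density step that upgrades commutation with $\rho(\Spin(V_\QQ)_P)$ to commutation with the full algebraic group $\rho(\Spin(V_K)_P)$. This is resolved by observing that $\Spin(V_\QQ)_P$ is the set of $\QQ$-points of a connected semisimple $\QQ$-algebraic group (a $\QQ$-form of $SL_{2n}$), whose $\QQ$-points are Zariski dense in its $\bar{\QQ}$-points and hence in its $K$-points. Since the set of $g\in\rho(\Spin(V_K)_P)$ commuting with $T_K$ is Zariski closed and contains all $\QQ$-points, it exhausts the whole group, supplying the input required for Schur's Lemma.
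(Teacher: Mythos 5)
Your proof is correct and follows the paper's strategy: verify the easy inclusion by showing $\tilde{e}_\lambda$ rescales the pairing by $Nm(\lambda)$, then for the converse show that a centralizing $g$ preserves $W_1$ and $W_2$ and acts by $K$-scalars on each, and finish by Galois equivariance to relate the two scalars. Your explicit treatment of the Zariski-density and Schur's-lemma step supplies an argument the paper leaves implicit in its assertion that $g$ acts on $W_i$ by scalars.
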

 
 \begin{proof}
 The image of $\cm$ clearly centralizes $\rho(\Spin(V_\QQ)_P)$.
Given $v_1, v_1'\in W_1$, set $v=v_1+\sigma(v_1)$ and $v'=v_1'+\sigma(v_1')$. 
Then 
\begin{eqnarray*}
(\cm_\lambda(v),\cm_\lambda(v'))_V&=&(\lambda v_1+\sigma(\lambda)\sigma(v_1),\lambda v_1'+\sigma(\lambda)\sigma(v_1'))_V
\\
&=&
Nm(\lambda)\left[(v_1,\sigma(v_1'))_V+(v_1',\sigma(v_1))_V\right]
=  Nm(\lambda)(v,v').
\end{eqnarray*}

Conversely, if $g\in\tilde{O}(V_\QQ)$  centralizes $\rho(\Spin(V_\QQ)_P)$,
then $W_1$ and $W_2$ are $g$ invariant and $g$ acts on $W_i$ via multiplication by a scalar $\lambda_i\in K^\times$. Given $v_1\in W_1$, we have
\[
\lambda_1v_1=g(v_1)=(\sigma g\sigma)(v_1)=\sigma(\lambda_2\sigma(v_1))=\sigma(\lambda_2)v_1.
\]
Hence, $\lambda_2=\sigma(\lambda_1)$ and $g=\cm(\lambda_1)$.
 \end{proof}
  
Let  $V_\CC:=V^{1,0}\oplus V^{0,1}$ be the Hodge decomposition with respect to the complex structure of $X\times \hat{X}$. We call $\oplus_{p=0}^n H^{p,p}(X,\QQ)$ the {\em Hodge ring} of $X$. 

\begin{rem}
\label{remark-P-is-contained-in-the-Hodge-ring-if-I-is-in-image-of-Spin-V-P}
Note that $P$ is contained in the Hodge ring of $X$ for every complex structure $I:V_\RR\rightarrow V_\RR$ of $X$, which 
belongs to $\rho(\Spin(V_\RR)_P)$.
\end{rem}

\begin{lem}
\label{lemma-decomposition-into-4-direct-summands}
If $P$ is contained in the Hodge ring, then 
$W_1^{1,0}:=W_{1,\CC}\cap V^{1,0}$ and $W_2^{1,0}:=W_{2,\CC}\cap V^{1,0}$  are both  $n$-dimensional. Equivalently,
$W_1^{0,1}:=W_{1,\CC}\cap V^{0,1}$ and $W_2^{0,1}:=W_{2,\CC}\cap V^{0,1}$  are both  $n$-dimensional. 
\end{lem}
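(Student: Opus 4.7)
The plan is to read off $\dim W_1^{1,0}$ from the eigenvalue of the complex structure $I$, regarded infinitesimally as an element of $\mathfrak{spin}(V_\CC)$, acting on the pure spinor line. The complex structure $I$ of $X\times\hat{X}$ belongs to $\mathfrak{so}(V_\RR)$; let $\hat{I}\in C(V_\CC)$ be its image under the canonical identification $\mathfrak{so}(V_\CC)\cong\wedge^2V_\CC\subset C(V_\CC)$. A direct computation shows that $\hat{I}$ acts on $H^{p,q}(X,\CC)\subset S_\CC$ by multiplication by $\sqrt{-1}(p-q)$. Thus the Hodge ring is exactly the zero-eigenspace of $\hat{I}$, and the hypothesis $P\subset\oplus_{p=0}^nH^{p,p}(X,\QQ)$ gives $\hat{I}\cdot\lambda_1=0$, where $\lambda_1$ is a pure spinor spanning $\tilde{\ell}_1$.

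The equivariance $[\hat{I},m_v]=m_{I(v)}$ of the Clifford action, combined with $m_v(\lambda_1)=0$ for $v\in W_1$, yields $m_{I(v)}(\lambda_1)=0$, so $W_1$ is $I$-invariant and splits as $W_1=W_1^{1,0}\oplus W_1^{0,1}$ along the $\pm\sqrt{-1}$-eigenspaces of $I$. Set $a:=\dim W_1^{1,0}$; maximal isotropy of $W_1$ then forces $W_1^{0,1}$ to be the annihilator of $W_1^{1,0}$ under the cross-pairing between $V^{1,0}$ and $V^{0,1}$, so $\dim W_1^{0,1}=2n-a$.

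The heart of the argument is the identity $\hat{I}\cdot\lambda_1=\sqrt{-1}(a-n)\lambda_1$. I would choose a basis $u_1,\ldots,u_{2n}$ of $V^{1,0}$ whose first $a$ vectors span $W_1^{1,0}$; if $\{u_i^*\}\subset V^{0,1}$ is the dual basis with $(u_i,u_j^*)_V=\delta_{ij}$, then $W_1^{0,1}=\mathrm{span}(u_{a+1}^*,\ldots,u_{2n}^*)$. Translating $\hat{I}=\sqrt{-1}\sum_iu_i\wedge u_i^*$ from $\wedge^2V_\CC$ into $C(V_\CC)$ introduces an additive shift from the Clifford relations $u_iu_i^*+u_i^*u_i=1$, producing $\hat{I}=\sqrt{-1}\bigl(\sum_iu_iu_i^*-n\bigr)$. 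For $i\le a$, $u_i\in W_1$ gives $u_i\cdot\lambda_1=0$ and hence $u_iu_i^*\cdot\lambda_1=\lambda_1$; for $i>a$, $u_i^*\in W_1$ gives $u_i^*\cdot\lambda_1=0$ and hence $u_iu_i^*\cdot\lambda_1=0$. Summing yields the claimed eigenvalue, and comparing with $\hat{I}\cdot\lambda_1=0$ forces $a=n$. The dimension statement for $W_2=\overline{W_1}$ then follows by complex conjugation: $W_2\cap V^{1,0}=\overline{W_1\cap V^{0,1}}$ has dimension $2n-a=n$. The main delicacy is bookkeeping the additive $-n$ shift in the Clifford translation of $\hat{I}$; without it the two eigenvalue formulas would be off by a constant, but once this identity is in place the remainder of the argument is formal.
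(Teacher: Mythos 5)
Your proof is correct, and it takes a genuinely different route from the paper's for the crucial dimension count. Both approaches begin by observing that the $U(1)$-action preserves $W_1$ (you phrase this as the Lie-algebra equivariance $[\hat{I},m_v]=m_{I(v)}$; the paper phrases it as $m_{\lambda_1}:V_\CC\to S^-_\CC$ being a morphism of Hodge structures). Where you diverge is in showing the two eigenspaces have equal dimension. The paper introduces Chevalley's isomorphism $\varphi:S\otimes_\ZZ S\to C(V)$, verifies that it is an isomorphism of Hodge structures, and invokes the fact that $\varphi$ carries $\tilde{\ell}_1\otimes\tilde{\ell}_1$ onto $\wedge^{2n}W_{1,\CC}$, so that the Hodge-triviality of the former transfers to the latter. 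You instead lift $I$ to $\hat{I}\in\wedge^2V_\CC\subset C(V_\CC)$, verify (by the Clifford identity $u_iu_i^*+u_i^*u_i=1$) that $m(\hat{I})=\sqrt{-1}\bigl(\sum_iu_iu_i^*-n\bigr)$ is exactly the Hodge grading operator $\sqrt{-1}(p-q)$ on $H^{p,q}(X)$, and read off the eigenvalue on the pure spinor $\lambda_1$ in two ways — $0$ from the hypothesis $P\subset\oplus_pH^{p,p}$, and $\sqrt{-1}(a-n)$ from the annihilation conditions $u_i\lambda_1=0$ ($i\le a$), $u_i^*\lambda_1=0$ ($i>a$) — forcing $a=n$. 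Your argument is more self-contained: it never needs $\varphi$ or the spinor-square/top-exterior-power dictionary, only the basic Clifford relations and the scalar shift $u_i\wedge u_i^*=u_iu_i^*-\tfrac12$. The trade-off is that the paper's construction of $\varphi$ as a Hodge isomorphism is reused elsewhere (notably in Proposition \ref{prop-the-orlov-image-of-HW-P-projects-into-the-3-dimensional-space-of-HW-classes}), so the paper is not wasting work. If you wanted to tighten your write-up, you might spell out the "direct computation" that $m(\hat{I})$ is the Hodge grading operator — it is the one place where a sign or a normalization could silently go wrong, and checking it on $1\in H^0(X)$ pins down the additive constant.
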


\begin{proof}
The two statements are the complex conjugate of each other  and are thus equivalent.
The isomorphism $m:C(V)\rightarrow \End(S)$, given in (\ref{eq-m}),  endows $C(V)$ with an integral Hodge structure. 
The subspace $V$ of $C(V)$  is a sub-Hodge-structure, which agrees with
the natural Hodge structure of $V$, provided we adjust the weight of the direct summand $H^1(\hat{X},\ZZ)$ of $V$ to be $-1$ as is the weight of the Hodge structure dual to $H^1(X,\ZZ)$. With this weight adjusment the homomorphism $V\rightarrow \End(S)$, given in 
(\ref{eq-embedding-of-V-in-End-S}), becomes a morphism of Hodge structures. 
Let $\lambda_i$ be a non-zero element of $\tilde{\ell}_i$, $i=1,2$.
Assume that $P$ is contained in the Hodge ring. Then $\lambda_i$ belongs to $\oplus_{p=0}^n H^{p,p}(X)$, for $i=1,2$. Hence, 
\[
m_{\lambda_i}:V_\CC\rightarrow S^-_\CC
\]
is  equivariant with respect to the $U(1)$ (circle) action defined by the Hodge structures of $V$ and $S^-$. Thus, its kernel $W_i$ is $U(1)$-invariant.
The weight adjustment does not change the weights of the $U(1)$-action\footnote{The weight of the $U(1)$-action on $V^{p,q}$ is $p-q$, and the weight adjustment changes the bidegree $(1,0)$ of $H^{1,0}(\hat{X})$ to bidegree $(0,-1)$ and the bidegree $(0,1)$ of $H^{0,1}(\hat{X})$ to bidegree $(-1,0)$.}, and so considering $V$with its original weight one Hodge structure we have
$W_{i,\CC}=W_{i,\CC}\cap V^{1,0}\oplus W_{i,\CC}\cap V^{0,1}$. 
It remains to show that the two direct summands have the same dimension, i.e., that $\wedge^{2n} W_{i,\CC}$ is the trivial $U(1)$ character. 

We recall next the isomorphism 
\[
\varphi : S\otimes_\ZZ S\rightarrow C(V)
\]
of \cite[III.3.1]{chevalley} and verify that it is an isomorphism of Hodge structure.
The product  $C(V)\otimes C(V)\rightarrow C(V)$ is a morphism of Hodge structures, as such is the product in $\End(S)$. 
The exterior algebras\footnote{
Given a free $\ZZ$-module $M$, we denote by $\wedge^kM$ the quotient of the $k$-th tensor power $M^{\otimes k}$ of $M$ over $\ZZ$ by the submodule of symmetric tensors and set $\wedge^*M:=\oplus_{k\geq 0}\wedge^kM$.
} 
$S_X:=\wedge^*H^1(X,\ZZ)$ and $S_{\hat{X}}:=\wedge^* H^1(\hat{X},\ZZ)$ both embed naturally as subalgebras and sub-Hodge-structures of $C(V)$, sending exterior products of elements of $H^1(X,\ZZ)$ to the corresponding products in $C(V)$, since $H^1(X,\ZZ)$ and $H^1(\hat{X},\ZZ)$ are isotropic subspaces. We can thus regard the class $[pt_{\hat{X}}]\in H^{2n}(\hat{X},\ZZ)$, Poincar\'{e} dual to a point, as an element of $C(V)$. Define $\varphi$ by 
\begin{equation}
\label{eq-Chevalley-varphi}
\varphi(u\otimes v):= u[pt_{\hat{X}}]\tau(v),
\end{equation}
for all $u, v\in S_X$. The element $[pt_{\hat{X}}]$ of $C(V)$ is a Hodge class (of weight $(-n,-n)$ under the above convention), and the involution $\tau$ of $S_X$ is an automorphism of Hodge structure, so $\varphi$ is an integral homomorphism of Hodge structures. Tensoring with $\QQ$ it becomes an isomorphism of $\Spin(V_\QQ)$ representations, by  \cite[III.3.1]{chevalley}.
Hence, $\varphi$ is an injective homomorphism of $\Spin(V)$-representations.
Working over $\ZZ$, the composition $m\circ \varphi:S\otimes_\ZZ S\rightarrow \End(S)$ is proved surjective\footnote{
One checks that $m\circ\varphi:S\otimes_\ZZ S\rightarrow \End(S)$ is  equal to $(-1)^n$ times the isomorphism sending $s\otimes t$ to $s\otimes (t,\bullet)_S$. Indeed, 
$
[pt_{\hat{X}}]\tau(t)x[pt_{\hat{X}}]=
(-1)^{\frac{(2n)(2n-1)}{2}}(t,x)_S[pt_{\hat{X}}] =(-1)^n(t,x)_S[pt_{\hat{X}}],
$ 
for all $t,x\in S$.
Hence, 
\[
\varphi(s\otimes t)(x[pt_{\hat{X}}])=s[pt_{\hat{X}}]\tau(t)x[pt_{\hat{X}}]=(-1)^n(t,x)_Ss[pt_{\hat{X}}],
\] 
for all $s,t\in S$.
Now, the inclusion $S\subset C(V)$ described above, composed with right multiplication by $[pt_{\hat{X}}]$ yields an embedding $\zeta:S\rightarrow C(V)$ as the left ideal  $I:=C(V)[pt_{\hat{X}}]$ and the left action of $C(V)$ on $I$ corresponds to the action of $C(V)$ on $S$ via $m$. Hence, the displayed equation becomes
$\zeta(m_{\varphi(s\otimes t)}(x))=(-1)^n(t,x)_S\zeta(s).$ The equality 
$m_{\varphi(s\otimes t)}(x)=(-1)^n(t,x)_Ss$ follows.
} 
in the proof of \cite[Prop. 3.2.1(e)]{golyshev-luntz-orlov} that $m:C(V)\rightarrow\End(S)$ is surjective. Hence, $\varphi$ is surjective, as $m$ is an isomorphism. We conclude that $\varphi$ is an isomorphism of $\Spin(V)$ representations as well as of Hodge structures.

The isomorphism $\varphi$ maps the line $\tilde{\ell}_i\otimes \tilde{\ell}_i$ in $S_\CC\otimes S_\CC$ onto 
the top exterior product $\wedge^{2n}W_{i,\CC}$, considered as a one-dimensional subspace of $C(V_\CC)$, by  \cite[III.3.2]{chevalley}.
The $U(1)$ character $\wedge^{2n}W_{i,\CC}$ is isomorphic to the trivial $U(1)$ character $\tilde{\ell}_i\otimes \tilde{\ell}_i$,
as $\varphi$ is an isomorphism of Hodge structures.
\end{proof}

The character group $\Hom(K^\times,K^\times)$ is isomorphic to $\ZZ\times\ZZ$, generated by $id$ and conjugation.
Given a vector space $U$ over $\QQ$ and a homomorphism $K\rightarrow \End_\QQ(U)$ 
we get the decompostion $U_K:=U\otimes_\QQ K=\oplus_{(a,b)\in \ZZ}U_{a,b}$, where $\lambda\in K^\times$ acts on the subspace $U_{a,b}$ of $U_K$ via $\lambda^a\bar{\lambda}^b$. 
The subspaces $U_{\{a,b\}}:=U_{a,b}\oplus U_{b,a}$ are defined over $\QQ$.
With this notation $W_1=V_{1,0}$ and $W_2=V_{0,1}$. Set $\wedge_{a,b}V:=(\wedge^{a+b}V)_{a,b}=\wedge^aW_1\otimes\wedge^bW_2.$

The exterior algebra $\wedge^*V_\CC$ 
inherits two commuting actions of $K^\times$ and of $\CC^\times$, where $x+iy\in \CC^\times$ acts via $x+yI_{V_\RR}$. Hence, 
$\wedge^kV_\CC$ admits the decomposition
\[
\wedge^kV_\CC = \oplus^{p+q=k}_{a+b=k} \wedge^{p,q}_{a,b}V,
\]
where the top weights correspond to the $\CC^\times$ action on $V_\RR$ and the bottom weights to the $K^\times$ 
action on $V_\QQ$. With this notation we have $W^{1,0}_1=\wedge^{1,0}_{1,0}V$, $W^{0,1}_1=\wedge^{0,1}_{1,0}V$, $W^{1,0}_2=\wedge^{1,0}_{0,1}V$, and $W^{0,1}_2=\wedge^{0,1}_{0,1}V$. 
Lemma \ref{lemma-decomposition-into-4-direct-summands} states that the latter four are all $n$ dimensional. 

Below and everywhere in the paper unless mentioned otherwise, the $\Spin(V)$-action on $\wedge^*V$ is the $\rho$-action in Diagram (\ref{eq-diagram-of-rho-and-rho-prime})
preserving the grading.

\begin{lem}
\label{lemma-Spin-V-P-invariant-classes-are-Hodge}
The $\Spin(V)_P$-invariant subspace $(\wedge^{2j}V_\QQ)^{\Spin(V)_P}$ is a subspace of $\wedge^{j,j}V_\CC$. Furthermore,
$\dim(\wedge^kV_\QQ)^{\Spin(V)_P}=
\left\{
\begin{array}{ccl}
0 & \mbox{if} & k \ \mbox{is odd,}
\\
1 &  \mbox{if} &0\leq k \leq 4n,\ k \ \mbox{is even, and}\ \ k\neq 2n,
\\
3 & \mbox{if} & k=2n.
\end{array}
\right.
$\\
The space $(\wedge^{2n}V_K)^{\Spin(V)_P}$ decomposes as a direct sum of three characters of $\Spin(V_K)_{\ell_1,\ell_2}$ consisting of $\wedge^{2n}W_1$ isomorphic to $\det_1$, $\wedge^{2n}W_2$ isomorphic to $\det_2$,  and the trivial character. For $j\neq n$, $0\leq j\leq 2n$, the space $(\wedge^{2j}V_\QQ)^{\Spin(V)_P}$ is a trivial $\Spin(V_K)_{\ell_1,\ell_2}$ character.
\end{lem}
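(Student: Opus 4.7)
The plan is to decompose $V_K = W_1 \oplus W_2$ and compute invariants summand-by-summand using the representation theory of $SL(W_1)$. Since $P$ is non-isotropic, Lemma \ref{lemma-P-is-non-isotropic-iff-W-1-and-W-2-are-transversal} yields $W_1 \cap W_2 = (0)$, so $V_K = W_1 \oplus W_2$ and
\[
\wedge^k V_K \;=\; \bigoplus_{a+b=k}\wedge^a W_1 \otimes \wedge^b W_2.
\]
The bilinear form of $V$ restricts to a perfect pairing $W_1 \times W_2 \to K$, identifying $W_2$ with $W_1^*$ as representations of $\Spin(V_K)_{\ell_1,\ell_2}$. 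By Lemma \ref{lemma-Spin-V-K-is-SL-n-K}, $\Spin(V_K)_P$ acts faithfully on $W_1$ as $SL(W_1)$, and therefore
\[
\bigl(\wedge^a W_1 \otimes \wedge^b W_2\bigr)^{\Spin(V_K)_P} \;\cong\; \Hom_{SL(W_1)}(\wedge^b W_1, \wedge^a W_1).
\]

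The main computation rests on the standard fact that for $SL_{2n}$ the exterior powers $\wedge^a W_1$ ($0\leq a\leq 2n$) are irreducible and pairwise non-isomorphic for $1 \leq a \leq 2n-1$ (they have distinct fundamental highest weights $\omega_a$), while $\wedge^0 W_1$ and $\wedge^{2n} W_1$ are both the trivial representation. Hence the Hom space is $1$-dimensional precisely when $a = b$, or $(a,b) \in \{(0, 2n), (2n, 0)\}$, and is zero otherwise. Summing over $a+b=k$ yields: $0$ for odd $k$; $1$ for even $k \neq 2n$ (from $a = b = k/2$); and $3$ for $k = 2n$ (from $(n,n), (0, 2n), (2n, 0)$). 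To identify the three characters of $\Spin(V_K)_{\ell_1,\ell_2}$ at $k=2n$: on $\wedge^{2n} W_i$ the group acts by definition through $\det_i$; on $\wedge^n W_1 \otimes \wedge^n W_2$ the invariant line is spanned by the canonical contraction, on which $GL(W_1)$ acts via $\det^n \cdot \det^{-n} = 1$, giving the trivial character. The same $\det^j \cdot \det^{-j}$ cancellation shows the unique invariant in $\wedge^j W_1 \otimes \wedge^j W_2$ at any $2j \ne 2n$ is the trivial character. Equality of $K$- and $\QQ$-dimensions of invariants follows from Galois descent combined with the Zariski density of the arithmetic subgroup $\Spin(V)_P$ inside the connected semisimple $\QQ$-algebraic group $\Spin(V_\QQ)_P$.

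For the Hodge-type assertion, we invoke the setting in which $P$ is contained in the Hodge ring (as in Lemma \ref{lemma-decomposition-into-4-direct-summands}); equivalently, the Hodge cocharacter $h : U(1) \to \Spin(V_\RR)$ acting as $z^{p-q}$ on the $(p,q)$-component of the spin representation fixes every vector in $P$, so $h(U(1)) \subseteq \Spin(V_\RR)_P$. Any $\Spin(V)_P$-invariant class in $\wedge^{2j}V_\QQ$ is therefore $h$-invariant, and inside $\wedge^{2j} V_\CC$ this confines it to $\wedge^{j,j} V_\CC$ since $p - q = 0$ combined with $p + q = 2j$ forces $p = q = j$. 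The hardest step will be cleanly pinning down the representation-theoretic classification of invariants---ruling out any ``hidden'' isomorphisms $\wedge^a W_1 \cong \wedge^b W_1$ of $SL(W_1)$-modules beyond the ones listed---but this is elementary once one invokes the fact that $\omega_0, \omega_1, \ldots, \omega_{2n-1}$ are pairwise distinct in the weight lattice of $SL_{2n}$.
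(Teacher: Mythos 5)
Your proof is correct and uses essentially the same decomposition $\wedge^k V_K \cong \bigoplus_{a+b=k}\wedge^a W_1 \otimes \wedge^b W_2$, the identification $W_2 \cong W_1^*$, and the $SL(W_1)$-invariant count as the paper's proof. The one small divergence is the Hodge-type step: the paper notes that the one-dimensional invariant line is a $\QQ$-rational $U(1)$-subrepresentation and hence a trivial $U(1)$-character, whereas you argue directly that the lift of the Hodge circle to $\Spin(V_\RR)$ fixes $P$ pointwise and therefore lands in $\Spin(V_\RR)_P$; both routes are valid (and rely on the Zariski-density point you raise), with yours being the argument the paper itself carries out in Lemma~\ref{lemma-Hodge-Weil-classes-remain-of-Hodge-type-for-all-I-in-Omega-P}.
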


Note: We will see that the space $(\wedge^2V_\QQ)^{\Spin(V)_P}$ is spanned by a non-degenerate $2$-form on $V_K^*$, and so the subspace $(\wedge^*V_K)^{\Spin(V_K)_{\ell_1,\ell_2}}$ consists of powers of this $2$-form (see Equation  (\ref{eq-Xi})).

\begin{proof}
It suffices to prove the statement for $0\leq k\leq 2n$, 
as $\wedge^kV_\QQ$ and $\wedge^{4n-k}V_\QQ$ are dual $\Spin(V)_P$ representations.
 For $0\leq k \leq 2n$, $\wedge^kW_i$, $i=1,2$, are dual irreducible $\Spin(V)_P$ representation. 
Furthermore, $\wedge^aW_i$ and $\wedge^{2n-a}W_i$ are dual representations. We have an isomorphism of $\Spin(V)_P$ representations $\wedge^kV_K\cong \oplus_{a+b=k}(\wedge^aW_1)\otimes(\wedge^bW_2)$.
Hence, $(\wedge^kV)^{\Spin(V)_P}$ is trivial, if $k$ is odd, and
\[
(\wedge^{2j}V_\QQ)^{\Spin(V)_P}= \ \mbox{rational points of} \ (\wedge^jW_1\otimes\wedge^jW_2)^{\Spin(V)_P},
\]
for $0<j<n$, and it is $1$-dimensional, while
\[
(\wedge^{2n}V_\QQ)^{\Spin(V)_P}=
 \mbox{rational points of} \ (\wedge^nW_1\otimes\wedge^nW_2)^{\Spin(V)_P}
 \oplus \wedge^{2n}W_1\oplus \wedge^{2n}W_2,
\]
and it is $3$-dimensional, as $\wedge^{2n}W_i$ is a trivial character of $SL(W_i)$.

Note that $\wedge^{n,n}_{2n,0}V=(\wedge ^{2n}V)_{2n,0}=\wedge^{2n}W_1$ and $\wedge^{n,n}_{0,2n}V=\wedge^{2n}W_2$ are one-dimensional complex conjugate subspaces defined over $K$, and so $\wedge^{n,n}_{2n,0}V\oplus \wedge^{n,n}_{0,2n}V$ is a $2$-dimensional subspace defined over $\QQ$. The latter subspace is $\Spin(V_K)_P$-invariant.

It remains to prove that the $1$-dimensional subspace $[(\wedge^jW_1)\otimes(\wedge^jW_2)]^{\Spin(V)_P}$ is of Hodge type $(j,j)$, i.e., a subspace of $\wedge^{j,j}_{j,j}V$, for $0\leq j\leq n$. The subspace $[(\wedge^jW_1)\otimes(\wedge^jW_2)]^{\Spin(V)_P}$ is a one-dimensional $U(1)$-invariant subspace of 
$(\wedge^jW_1)\otimes(\wedge^jW_2)$ defined over $\QQ$, hence it must be the trivial $U(1)$-character.

The representations $\wedge^kW_i$, $i=1,2$, are dual irreducible $\Spin(V_K)_{\ell_1,\ell_2}$-representations. Hence, the triviality of the $\Spin(V_K)_{\ell_1,\ell_2}$-character $(\wedge^{2j}V_\QQ)^{\Spin(V)_P}$ for $j\neq n$, $0\leq j\leq 2n$.
\end{proof}

%
\subsection{The isomorphism $\tilde{\varphi}:S\otimes_\ZZ S\rightarrow \wedge^*V$}
\label{subsection-the-isomorphism-tilde-varphi}
 
 Let $B_0:V\otimes_\ZZ V\rightarrow \ZZ$ be a bilinear pairing, not necessarily symmetric, satisfying $B_0(u,u)=\frac{1}{2}(u,u)_V$. Note that $(u,u)_V$ is even, for every $u\in V$.
 Our choice of $B_0$ is as follows. Write  $v_i=(w_i,\theta_i)$, $i=1,2$, with $w_i\in H^1(X,\ZZ)$ and $\theta_i\in H^1(\hat{X},\ZZ)\cong H^1(X,\ZZ)^*$. We set\footnote{Our choice yields the equality in Lemma \ref{lemma-nu-equal-tilde-varphi} below. The choice of $B_0$ used in \cite[Sec. 3.3]{chevalley} is $B_0(v_1,v_2):=\theta_1(w_2)$} 
 $B_0(v_1,v_2):=\theta_2(w_1)$.
 Given $x\in V$, define $L_x:\wedge^*V\rightarrow \wedge^*V$ by $L_x(u)=x\wedge u$. 
 Given $x\in V$, define $\delta_x:\wedge^*V\rightarrow\wedge^*V$ as contraction with $B_0(x,\bullet)$. 
 Set $L'_x:=L_x+\delta_x$. Then $(L'_x)^2=\frac{1}{2}(x,x)_V\cdot \one,$
 where $\one$ is the identity in $\End(\wedge^*V)$. Hence, the map $x\mapsto L'_x$ extends to an algebra homomorphism 
 \[
 \psi':C(V)\rightarrow \End(\wedge^*V),
 \]
 by the universal property of $C(V)$ (see \cite[II.1.1]{chevalley}). Define
 \begin{equation}
 \label{eq-psi}
 \psi:C(V)\rightarrow \wedge^*V
 \end{equation}
 by $\psi(x)=\psi'(x)\cdot \one,$ where $\one\in\wedge^0V$ is the unit  in  $\wedge^*V$. Then $\psi$ is a homomorphism of left $C(V)$-modules. 
  
 Let $C(V)_k$ be the additive subgroup of $C(V)$ generated by products of $j$ elements of $V$, for $j\leq k$. We get the increasing filtration $C(V)_0\subset C(V)_1\subset \cdots C(V)_{4n}=C(V)$. Let $F^k(\wedge^*V):=\oplus_{i\leq k}\wedge^iV.$
Then $\psi(C(V)_k)\subset F^k(\wedge^*V)$. We get the induced surjective homomorphism 
\[
\bar{\psi}_k:C(V)_k/C(V)_{k-1}\rightarrow \wedge^kV,
\]
which is injective, as it induces an isomorphism once we tensor with $\QQ$ \cite[II.1.6]{chevalley}.
 Hence, $\psi$ is an isomorphism of left $C(V)$-modules. Furthermore, the conjugation action of $\Spin(V)$ on $C(V)$ preserves the filtration $C(V)_k$ and induces an action on the associated graded group and $\bar{\psi}_k$ is $\Spin(V)$-equivariant, where the action on $\wedge^kV$ is the one induced from the representation $V$ \cite[Sec. 3.3]{chevalley}. 
 
 Consider the composite isomorphism 
 \begin{equation}
 \label{eq-tilde-varphi}
 \tilde{\varphi}:=\psi\circ\varphi:S\otimes_\ZZ S\rightarrow \wedge^*V,
 \end{equation}
 where $\varphi$ is given in (\ref{eq-Chevalley-varphi}). Conjugating the $\Spin(V)$-action via $\tilde{\varphi}$ we get on $\wedge^*V$ the structure 
of a $\Spin(V)$-representation. The latter depends on the choice of $B_0$, but the associated graded action, with respect to the increasing filtration $F^k(\wedge^*V)$ does not (see the discussion  following the proof of III.3.1 on page 85 in \cite{chevalley}).
 
 \begin{rem}
 \label{remark-non-equivariance-of-varphi-tilde}
 We emphasize that $\psi$ depends on the choice of $B_0$. Hence so does $\tilde{\varphi}$.
 Note that the projection $\bar{B}_0$ of $B_0$ to $\wedge^2 V^*:=V^*\otimes V^*/Sym^2(V^*)$ is equal to the projection of $B_0+(\bullet,\bullet)_V$ and thus satisfies 
 \[
 \bar{B}_0((w_1,\theta_1),(w_2,\theta_2))=\frac{1}{2}(\theta_2(\omega_1)-\theta_1(\omega_2)).
 \]
 The right hand side is $-1/2$ times the alternating form of the Poincare line bundle $\P$ \cite[Th. 2.5.1]{BL} and so equal to $\frac{1}{2}c_1(\P)$,  by \cite[2.6 (2b)]{BL}. The choice of $B_0$ is done so that the construction is over the integers. Working over $\QQ$ one can replace $B_0$ by $\frac{1}{2}(\bullet,\bullet)_V$ in the above construction. In that case the resulting isomorphism $S_\QQ\otimes S_\QQ\rightarrow \wedge^*V_\QQ$ is $\Spin(V)$-equivariant with respect to the $\Spin(V)$-action on $\wedge^*V_\QQ$
 induced by that on $V_\QQ$ (and preserving the grading) (see \cite[Th. 1(i)]{trautman}, where $\tilde{\varphi}$ is denoted by $E$).
 \end{rem}

 \begin{lem}
 \label{lemma-symmetric-or-alternating-product-of-two-pure-spinors-has-weight-2}
 \begin{enumerate}
 \item
 \label{lemma-item-weight-4n-2}
 The element 
 $\tilde{\varphi}([pt_{X}]\otimes 1-(-1)^n1\otimes[pt_{X}])$ belongs to $F^{4n-2}(\wedge^*V)$, but not to 
 $F^{4n-3}(\wedge^*V)$.
 \item
 \label{lemma-item-weight-4n}
 The element 
 $\tilde{\varphi}([pt_{X}]\otimes 1+(-1)^n1\otimes[pt_{X}])$ does not belong to $F^{4n-1}(\wedge^*V)$.
 \end{enumerate}
 \end{lem}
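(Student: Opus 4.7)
The plan is to unfold the definitions and compute $\tilde\varphi$ on each element explicitly by using the description $\psi(x)=x\cdot\one$ via the left $C(V)$-module structure on $\wedge^*V$. Fix a basis $e_1,\ldots,e_{2n}$ of $H^1(X,\ZZ)$ and the dual basis $f_1,\ldots,f_{2n}$ of $H^1(\hat X,\ZZ)$, and set $E:=e_1\cdot e_2\cdots e_{2n}=[pt_X]$, $F:=f_1\cdot f_2\cdots f_{2n}=[pt_{\hat X}]$, $\epsilon:=e_1\wedge\cdots\wedge e_{2n}$, $\beta:=f_1\wedge\cdots\wedge f_{2n}$. Since $\tau(1)=1$ and $\tau(E)=(-1)^n E$, the formula $\varphi(u\otimes v)=u[pt_{\hat X}]\tau(v)$ yields
\[
\tilde\varphi\bigl([pt_X]\otimes 1\pm(-1)^n\cdot 1\otimes[pt_X]\bigr)=\psi(EF)\pm\psi(FE).
\]

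The decisive observation is the asymmetry in the choice $B_0((w_1,\theta_1),(w_2,\theta_2))=\theta_2(w_1)$: its first slot pairs only with the $H^1(X,\ZZ)$-component, so $B_0(f_j,\bullet)\equiv 0$ on $V$, giving $\delta_{f_j}\equiv 0$ on $\wedge^*V$ and making each $f_j$ act by pure wedging. Consequently $F\cdot\one=\beta$, $E\cdot\one=\epsilon$, and
\[
\psi(FE)=F\cdot\epsilon=\beta\wedge\epsilon=\epsilon\wedge\beta\in\wedge^{4n}V
\]
with no lower-order terms at all. For $\psi(EF)=E\cdot\beta$ the contractions $\delta_{e_i}=\iota_{f_i^*}$ do contribute. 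Expanding each $e_i\cdot=L_{e_i}+\delta_{e_i}$ in the iterated product and organizing the terms by the subset $S\subseteq\{1,\ldots,2n\}$ of positions at which one applies the contraction instead of the wedge, the $S=\emptyset$ summand gives $\epsilon\wedge\beta$, each summand with $S=\{i\}$ gives $-\epsilon_{\hat i}\wedge\beta_{\hat i}\in\wedge^{4n-2}V$ (the sign coming from $2n-1$ anti-commutations as $\iota_{f_i^*}$ traverses the wedged $e_k$'s to hit $\beta$, together with the $(-1)^{i-1}$ from $\iota_{f_i^*}(\beta)$), and every $|S|\ge 2$ summand lies in $F^{4n-4}(\wedge^*V)$.

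Both parts now drop out. For (2), $\psi(EF)+\psi(FE)=2\,\epsilon\wedge\beta+(\text{terms in }F^{4n-2}(\wedge^*V))$, so the $\wedge^{4n}V$-component is $2\,\epsilon\wedge\beta\neq 0$ and the element is not in $F^{4n-1}(\wedge^*V)$. For (1), the top-degree contributions cancel, so
\[
\psi(EF-FE)=-\sum_{i=1}^{2n}\epsilon_{\hat i}\wedge\beta_{\hat i}+(\text{terms in }F^{4n-4}(\wedge^*V))\in F^{4n-2}(\wedge^*V).
\]
The $2n$ classes $\epsilon_{\hat i}\wedge\beta_{\hat i}$ omit distinct pairs $(e_i,f_i)$ and so form part of a $\ZZ$-basis of $\wedge^{4n-2}V$; their sum has every coefficient equal to $-1$, hence the $\wedge^{4n-2}V$-component of $\psi(EF-FE)$ is nonzero and the element does not lie in $F^{4n-3}(\wedge^*V)$. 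The only place requiring care is the sign in the $S=\{i\}$ contribution, which is a routine consequence of $\iota_{f_i^*}$ being a derivation of $\wedge^*V$; the rest is a mechanical unfolding of definitions.
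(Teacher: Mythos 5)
Your proof is correct, and the computations check out, including the $-1$ sign from the $S=\{i\}$ terms ($(-1)^{2n-i}(-1)^{i-1}=(-1)^{2n-1}=-1$). You take a somewhat different route from the paper. The paper stays entirely in $C(V)$: it reduces to showing the commutator $[pt_X][pt_{\hat{X}}]-[pt_{\hat{X}}][pt_X]$ lies in $C(V)_{4n-2}\setminus C(V)_{4n-3}$, proves the identity $e_{2n-1}e_{2n}f_{2n}f_{2n-1}=f_{2n-1}f_{2n}e_{2n}e_{2n-1}+[e_{2n-1}f_{2n-1}+e_{2n}f_{2n}]-1$, observes that the $n$ such factors commute, and reads off the $\wedge^{4n-2}V$-piece of the product. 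You instead push the whole computation into $\wedge^*V$ via $\psi$ and exploit the vanishing $\delta_{f_j}\equiv 0$ forced by the specific choice $B_0(v_1,v_2)=\theta_2(w_1)$; this makes $\psi(FE)=\epsilon\wedge\beta$ purely top-degree so that all lower-order terms come from $\psi(EF)=\psi'(E)(\beta)$, expanded cleanly over the subset $S$ of contraction positions. Both proofs end up identifying the same $\wedge^{4n-2}V$-component as a sum of $2n$ linearly independent monomials, one for each omitted pair $\{e_i,f_i\}$ (the paper's double-indexed sum over $k$ with a two-term bracket is just a regrouping of your $\sum_i \epsilon_{\hat i}\wedge\beta_{\hat i}$). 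Your bookkeeping is arguably a little cleaner since one of the two products contributes nothing below top degree; the paper's version has the compensating elegance that the Clifford factors commute and the product formula falls out by iteration. Both handle part (2) the same way, by noting the top-degree terms add rather than cancel.
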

 
 \begin{proof}
 (\ref{lemma-item-weight-4n-2})
 It suffices to prove that $\varphi([pt_{X}]\otimes 1-(-1)^n1\otimes[pt_{X}])$
 belongs to $C(V)_{4n-2}$ but not to $C(V)_{4n-3}$. We have
 \begin{eqnarray*}
 \varphi(1\otimes[pt_X])&=&1\cdot[pt_{\hat{X}}]\cdot \tau([pt_X])=(-1)^{\frac{2n(2n-1)}{2}}\cdot[pt_{\hat{X}}]\cdot [pt_X]
 =(-1)^n\cdot[pt_{\hat{X}}]\cdot [pt_X],
 \\
 \varphi([pt_{X}]\otimes 1)&=&[pt_X]\cdot [pt_{\hat{X}}].
 \end{eqnarray*}
 Hence, $\varphi([pt_{X}]\otimes 1-(-1)^n1\otimes[pt_{X}])=[pt_X]\cdot[pt_{\hat{X}}]-[pt_{\hat{X}}]\cdot [pt_X]$. It remains to prove that $[pt_X]\cdot[pt_{\hat{X}}]-[pt_{\hat{X}}]\cdot [pt_X]$ belongs to 
 $C(V)_{4n-2}$ but not to $C(V)_{4n-3}$.

Choose a basis $\{e_1, \dots, e_{2n}\}$ of $H^1(X,\ZZ)$ satisfying $[pt_X]=e_1\wedge \cdots\wedge e_{2n}$.
Let $\{f_1, \dots, f_{2n}\}$ be the dual basis of $H^1(\hat{X},\ZZ)$. Then $[pt_{\hat{X}}]=f_1\wedge \dots\wedge f_{2n}.$
We have
\begin{eqnarray*}
e_{2n-1}e_{2n}f_{2n}f_{2n-1}&=& e_{2n-1}f_{2n-1}-e_{2n-1}f_{2n}e_{2n}f_{2n-1}=
e_{2n-1}f_{2n-1}-f_{2n}e_{2n-1}f_{2n-1}e_{2n}
\\
&=&f_{2n-1}f_{2n}e_{2n}e_{2n-1}+[e_{2n-1}f_{2n-1}+e_{2n}f_{2n}]-1.
\end{eqnarray*}
The terms above commute in $C(V)$ with $\{e_1, \dots, e_{2n-2}\}$ and $\{f_1, \dots, f_{2n-2}\}$. Hence,
\[
\begin{array}{lc}
(e_1e_2\cdots e_{2n})(f_{2n}f_{2n-1}\cdots f_1)&=
\\
(e_1e_2\cdots e_{2n-2})(f_{2n-2}f_{2n-3}\cdots f_1)\{f_{2n-1}f_{2n}e_{2n}e_{2n-1}+[e_{2n-1}f_{2n-1}+e_{2n}f_{2n}]-1\} &=
\\
\prod_{k=1}^n \{f_{2k-1}f_{2k}e_{2k}e_{2k-1}+[e_{2k-1}f_{2k-1}+e_{2k}f_{2k}]-1\}.
\end{array}
\]
We see that the commutator $[pt_X][pt_{\hat{X}}]-[pt_{\hat{X}}][pt_X]$ belongs to $C(V)_{4n-2}$ and its projection to
$C(V)_{4n-2}/C(V)_{4n-3}$ maps to $\wedge^{4n-2}V$ as the element
\[
2\sum_{k=1}^n
\wedge_{j=1,j\neq k}^n \left(f_{2j-1}\wedge f_{2j}\wedge e_{2j}\wedge e_{2j-1}
\right)\wedge \left[
e_{2k-1}\wedge f_{2k-1}+e_{2k}\wedge f_{2k}
\right].
\]
The above is a sum of $2n$ linearly independent terms, hence it does not vanish.

(\ref{lemma-item-weight-4n})
Clear from the above computation.
 \end{proof}
 
 The K\"{u}nneth theorem interprets (\ref{eq-tilde-varphi}) as an isomorphism 
\[
\tilde{\varphi}:H^*(X\times X,\ZZ)\rightarrow H^*(X\times\hat{X},\ZZ).
\]
 
\hide{
The spin representation $S$ is self dual, via the pairing (\ref{eq-Mukai-pairing}),
and so the Clifford algebra $C(V)$ is isomorphic to $S\otimes S$, as a $\Spin(V)$-representation, 
using the isomorphism (\ref{eq-m}). 
The Clifford algebra $C(V)$ is isomorphic as a $\Spin(V)$-representation also to the exterior algebra
$\wedge^*V$, 
by\footnote{
Let $\psi:C(V)\rightarrow \End(\wedge^*V)$ be the unique algebra homomorphism sending an element $v\in V\subset C(V)$ to 
$L_v+\iota_v$, where $L_v(u)=v\wedge u$ and $\iota_v$ is contraction with $B_0(v,\bullet)_V$, where $B_0$ is any bilinear pairing, not necessarily symmetric, which satisfies $B_0(x,x)=\frac{1}{2}(x,x)_V$, for all $x\in V$. 
The isomorphism $C(V)\rightarrow \wedge^*V$ of \cite[II.1.6]{chevalley} sends $x\in C(V)$ to $\psi(x)\cdot 1$ 
(the value of the group endomorphism $\psi(x)$ on the unit $1$ of the algebra $\wedge^*V$). Note that in
\cite[page 85]{chevalley} the bilinear pairing $B_0$ used 
is non-symmetric and it vanishes on the cartesian squares of  $H^1(X,\ZZ)$ and $H^1(\hat{X},\ZZ)$ as well as on 
$H^1(X,\ZZ)\times H^1(\hat{X},\ZZ)$, but on $H^1(\hat{X},\ZZ)\times H^1(X,\ZZ)$ it restricts as $(\bullet,\bullet)_V$. So $\iota_v$ vanishes for $v\in H^1(X,\ZZ)$.
Using this $B_0$ Chevalley shows that the isomorphism $\varphi:S\otimes S\rightarrow C(V)$ defined in \cite[III.3.1]{chevalley} 
conjugates elements of the Clifford group to automorphisms of $C(V)$, whose associated graded acts on $\wedge^kV$ via the natural representation induced from $V$. The latter isomorphism $\varphi$ 
 is the composition of $S\otimes S\cong \End(S)$ composed with the inverse of $m:C(V)\rightarrow \End(S)$ given in (\ref{eq-m}), but working over a field rather than $\ZZ$.
}
\cite[II.1.6]{chevalley}. 
Combining both isomorphisms we get the isomorphism 
\[
\tilde{\varphi}: S\otimes_\ZZ S \rightarrow \wedge^*V.
\]
The K\"{u}nneth theorem interprets it as an isomorphism 
\[
\tilde{\varphi}:H^*(X\times X,\ZZ)\rightarrow H^*(X\times\hat{X},\ZZ).
\]
This is (??? for which bilinear pairing $B_0$ ???) the cohomological action of Orlov's derived equivalence 
(\ref{eq-Orlov-derived-equivalence-to-XxX-from-X-times-hat-X}), by\footnote{
The restriction of the isomorphism $\varphi$ to the image of $V$
agrees with that of the cohomological action of Orlov's derived equivalence 
(\ref{eq-Orlov-derived-equivalence-to-XxX-from-X-times-hat-X}), by \cite[Prop. 4.3.7(b)]{golyshev-luntz-orlov}. 
Now $\varphi$ is an algebra isomorphism, with respect to the product of elements of $H^*(X\times X,\ZZ)$ as correspondences
and the Clifford algebra product conjugated to a product on $\wedge^*V$ via the composition (??? no, don't use it ???)
$\wedge^*V\rightarrow \oplus_{k=0}^\infty V\rightarrow C(V)$.
Both algebras are generated by the two embeddings of $V$.
} 
\cite[Prop. 4.3.7(b)]{golyshev-luntz-orlov}.
}

%
\subsection{Polarized abelian varieties of Weil type from oriented $K$-secant lines}
\label{sec-polarized-avwt-from-secants}
\begin{assumption} 
\label{assumption-on-rational-secant-plane-P}
The rational plane $P$ is non-isotropic with respect to the pairing (\ref{eq-Mukai-pairing}) and is contained in the Hodge ring of $X$. $\PP(P)$ intersects the spinor variety in two complex conjugate points defined over $K:=\QQ[\sqrt{-d}]$, where $d$ is a positive rational number.
\end{assumption}

Set 
\begin{equation}
\label{eq-f}
f:=\cm_{\sqrt{-d}}:V_\QQ\rightarrow V_\QQ, 
\end{equation}
where $\cm$ is given in (\ref{eq-action-by-imaginary-quadratic-number-field}) and we choose the square root $\sqrt{-d}:=\sqrt{d}\exp(i\pi/2)$ with argument $\pi/2$. 
The isomorphism $f$ depends on the rational plane $P$ and the choice of the pure spinor $\ell_1$ among $\{\ell_1,\ell_2\}$, which is equivalent to a choice of an orientation of $P$. 
Note that $f$ belongs to $\tilde{O}(V_\QQ)$, $(f(x),f(y))_V=d(x,y)_V$, and $f^2=-d$, by Lemma \ref{lemma-centralizer-of-rho-Spin-V-P}. Hence 
\[
(f(x),y)_V=\frac{1}{d}(f^2(x),f(y))_V=-(x,f(y))_V,
\]
and so $f$ is anti-self-dual.
Let $\Xi_P\in \wedge^2V_\QQ^*$ be the $2$-form
\begin{equation}
\label{eq-Xi}
\Xi_P(x,y):=(f(x),y)_V.
\end{equation}
The $2$-form $\Xi_P$ is non-degenerate, since $f$ is invertible and the pairing  (\ref{eq-pairing-on-V}) on $V$ is non-degenerate.
Note the equality
\[
\Xi_P(x,y)=\sqrt{-d}\left((x_1,y_2)_V-(x_2,y_1)_V\right),
\]
where $x=x_1+x_2$ is the decomposition with $x_i\in W_i$, $i=1,2$, and $y=y_1+y_2$ is the analogous decomposition.
An element $v\in V_\QQ$, admits a unique decomposition $v=v_1^{1,0}+v_2^{1,0}+v_1^{0,1}+v_2^{0,1}$, where
$v_i^{1,0}\in W_{i,\CC}\cap V^{1,0}$ and $v_i^{0,1}\in W_{i,\CC}\cap V^{0,1}$, by Lemma \ref{lemma-decomposition-into-4-direct-summands}. The summands satisfy
\begin{eqnarray*}
\overline{v_1^{1,0}}&=& v_2^{0,1}
\\
\overline{v_1^{0,1}}&=& v_2^{1,0}
\end{eqnarray*}

The complex structure $I:=I_{V_\RR}$ is an isometry\footnote{
Let $X=U/\Lambda$ be an $n$-dimensional compact complex torus, where $U$ is an $n$-dimensional complex vector space and $\Lambda$ is a lattice. The complex vector space $U$ is naturally the pair $(H_1(X,\RR),I_X)$, where $I_X$ is the complex structure of $X$. The dual torus  $\hat{X}$ is $\Hom_{\bar{\CC}}(U,\CC)/\hat{\Lambda}$, where $\Hom_{\bar{\CC}}(U,\CC)$ consists of $\RR$-linear homomorphisms satisfying $\ell(iu)=-i\ell(u)$ and $\hat{\Lambda}$ is the dual lattice with respect to the pairing 
$\langle \ell,u\rangle:=Im\ell(u)$ (see \cite[Sec. 2.4]{BL}). Hence, $\langle i\ell,u\rangle=\langle\ell,-i u\rangle$. 
An element of $\Hom_{\bar{\CC}}(U,\CC)$ is determined by its real part yielding an isomorphism of $H_1(X,\RR)^*$ with $\Hom_{\bar{\CC}}(U,\CC).$
Under this identification of $H_1(X,\RR)^*$ with $\Hom_{\bar{\CC}}(U,\CC)=H_1(\hat{X},\RR)$ the complex structure $I_{\hat{X}}$ acts by composing elements of $H_1(X,\RR)^*$ with $-I_X$. 
This agrees with the complex structure of the dual Hodge structure.
Hence, $(I_X,I_{\hat{X}})$ is an isometry 
of $H_1(X,\RR)\oplus H_1(\hat{X},\RR)$ with respect to the pairing $((u_1,\ell_1),(u_2,\ell_2))=\langle u_1,\ell_2\rangle+\langle u_2,\ell_1\rangle$. 
Now $I_{V_\RR}$ is the complex structure of the dual Hodge structure, which is again the direct sum of two dual Hodge structures, hence
$I_{V_\RR}$ is an isometry with respect to the pairing (\ref{eq-pairing-on-V}).
} of $V_\RR$ 
satisfying $I^{-1}=-I$, hence $I$ is anti-self-dual with respect to the pairing on $V_\RR$. 
The eigenspaces $V^{1,0}$ and $V^{0,1}$ of $I$ in $V_\CC$ are thus isotropic.
Furthermore, $I$ commutes with $f$, by Lemma \ref{lemma-decomposition-into-4-direct-summands}. Hence, $\Xi_P$ is of Hodge-type $(1,1)$ and $f\circ I$ is self-dual. 
We get the symmetric bilinear form on $V_\RR$ given by $g_P(x,y):=\Xi_P(I(x),y)=(f(I(x)),y)_V$. 

\begin{lem}
\label{lemma-g-P-in-terms-of-4-summands}
$g_P(v,v)=2\sqrt{d}(-(v_1^{1,0},v_2^{0,1})_V+(v_1^{0,1},v_2^{1,0})_V).$
\end{lem}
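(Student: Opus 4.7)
The plan is to compute $g_P(v,v) = (f(I(v)),v)_V$ directly by decomposing $v$ into its four refined components and exploiting the isotropy of the relevant subspaces.

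First I would compute the action of $I$ and $f$ on the decomposition $v = v_1^{1,0}+v_2^{1,0}+v_1^{0,1}+v_2^{0,1}$. Since $I$ acts by $i$ on $V^{1,0}$ and by $-i$ on $V^{0,1}$, while $f$ acts by $\sqrt{-d}$ on $W_1$ and by $-\sqrt{-d}$ on $W_2$ (and the two actions commute by Lemma \ref{lemma-decomposition-into-4-direct-summands}, as required for the Hodge-type bigrading to be $f$-invariant), we get
\[
f(I(v)) = i\sqrt{-d}\bigl(v_1^{1,0}-v_1^{0,1}\bigr) - i\sqrt{-d}\bigl(v_2^{1,0}-v_2^{0,1}\bigr).
\]
Using $i\sqrt{-d}=-\sqrt{d}$, this simplifies to $f(I(v)) = -\sqrt{d}\bigl(v_1^{1,0}-v_1^{0,1}-v_2^{1,0}+v_2^{0,1}\bigr)$.

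Next I would pair this against $v$. The key observation is that both $W_1,W_2$ are isotropic (as maximal isotropic subspaces of $V_K$) and both $V^{1,0},V^{0,1}$ are isotropic (as noted in the text, since $I$ is anti-self-dual). Hence a pairing $(v_i^{a,b},v_j^{c,d})_V$ can be nonzero only when $i\neq j$ and $(a,b)\neq(c,d)$, i.e., only between $v_i^{1,0}$ and $v_j^{0,1}$ with $i\neq j$. Expanding $(f(I(v)),v)_V$, exactly four terms survive:
\[
(v_1^{1,0},v_2^{0,1})_V \;+\; (v_2^{0,1},v_1^{1,0})_V \;-\; (v_1^{0,1},v_2^{1,0})_V \;-\; (v_2^{1,0},v_1^{0,1})_V,
\]
multiplied by the overall scalar $-\sqrt{d}$. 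Symmetry of $(\bullet,\bullet)_V$ collapses each pair of equal terms, giving $2(v_1^{1,0},v_2^{0,1})_V - 2(v_1^{0,1},v_2^{1,0})_V$, and distributing the $-\sqrt{d}$ yields the claimed identity.

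There is really no obstacle here: the lemma is a bookkeeping computation with the four-term decomposition, and the only thing to check carefully is the sign coming from $\sqrt{-d}=i\sqrt{d}$ (so $i\sqrt{-d}=-\sqrt{d}$) and the sign conventions in the definitions of $\Xi$ and $f$.
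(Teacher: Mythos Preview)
Your proof is correct and follows essentially the same approach as the paper: compute $f(I(v))$ on the four summands using $i\sqrt{-d}=-\sqrt{d}$, then expand the pairing and use the isotropy of $W_1$, $W_2$, $V^{1,0}$, $V^{0,1}$ to kill all but the four cross terms, which collapse by symmetry. The paper's proof is slightly more compressed but identical in content.
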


\begin{proof}
Our sign convention for square roots yields $\sqrt{-1}\sqrt{-d}=-\sqrt{d}$.
\begin{eqnarray*}
(f(I(v)),v)&=&-\sqrt{d}(v_1^{1,0}-v_2^{1,0}-v_1^{0,1}+v_2^{0,1},v_1^{1,0}+v_2^{1,0}+v_1^{0,1}+v_2^{0,1})_V
\\
&=&-2\sqrt{d}\left(
(v_1^{1,0},v_2^{0,1})_V-(v_1^{0,1},v_2^{1,0})_V
\right),
\end{eqnarray*}
where in the second equality we used the fact that $W_i$, $i=1,2$, $V^{1,0}$, and $V^{0,1}$ are all isotropic with respect to the pairing (\ref{eq-pairing-on-V}).
\end{proof}

\begin{rem}
\label{rem-complex-structure-lifts-to-Spin-V-RR}
The complex structure $I$ on $V_\RR$ lifts to an element of $\Spin(V_\RR)$. This is a special case of the following more general fact. Let $F$ be a field and let $M$ and $N$ be two complementary maximal isotropic subspaces of $V_F$. Let $\varpi:F^\times\rightarrow SO(V_F)$ send $\lambda\in F^\times$ to $\varpi(\lambda)$ acting on $L$ by multiplication by $\lambda^2$ and on $M$ by multiplication by $\lambda^{-2}$. Then $\varpi$ admits a lift  $\tilde{\varpi}:F^\times \rightarrow \Spin(V_F)$ defined as follows. Let $\{e_1, \dots, e_{4n}\}$ be a basis of $V_F$, such that $e_i\in L$, for $1\leq i\leq 2n$,  $e_i\in M$, for $2n+1\leq i\leq 4n$,  $(e_i,e_j)=1$, if $j=2n+i$ or $i=2n+j$, and $(e_i,e_j)=0$ otherwise. Then 
\[
\tilde{\varpi}(\lambda):=\prod_{k=1}^{2n}(\lambda^{-1}+(\lambda-\lambda^{-1})e_ke_{k+2n})
\]
is an element of $\Spin(V_F)$, which maps to $\varpi(\lambda)\in SO(V_F)$ (apply the second displayed formula in \cite[Sec. 2]{igusa}). If $I$ is a complex structure on $V_\RR$ take $L=V^{1,0}$, $N=V^{0,1}$, and $\lambda=e^{\pi i/4}$, to get the element
$\tilde{\varpi}(e^{i\pi/4})$ of $\Spin(V_\CC)$ which must be already in $\Spin(V_\RR)$ as it maps to $I=\varpi(e^{\pi i/4})$ in $SO(V_\RR)$.
\end{rem}

Let $\Theta\in H^{1,1}(X,\ZZ)$ be an ample class. 
Let 
\begin{equation}
\label{eq-theta-is-contraction-with-Theta}
\theta:H^1(X,\QQ)^*\rightarrow H^1(X,\QQ)
\end{equation}
be contraction with $\Theta$ and denote its $K$-linear extension by $\theta:H^1(X,K)^*\rightarrow H^1(X,K)$ and similarly its $\RR$ and $\CC$-linear extensions. 
Note that $\theta$ is an isomorphism of rational Hodge structures under the identification of 
$H^1(X,\QQ)^*$ with $H^1(\hat{X},\QQ)$. 
In fact, $-\theta$ is the pullback homomorphism associated to the isogeny
$\phi_L:X\rightarrow \hat{X}$ sending $x$ to $\tau_x^*(L)\otimes L^{-1}$, where $L$ is any line bundle on $X$ with Chern class $\Theta$, by\footnote{
Let $H$ be the hermitian form on the complex vector space $U:=(H_1(X,\RR),I_X)$ with imaginary part $-\Theta$. Then the differential 
$\phi_H$ of $\phi_L$ is given by $u\mapsto H(u,\bullet)$, by \cite[Lemma 2.4.5 and 3.6.4]{BL}. The pairing between 
$U$ and $H^1(\hat{X},\RR):=\Hom_{\bar{\CC}}(U,\CC)$ is given by $\langle u,\ell\rangle=Im\ell(u)$. Given $u_1, u_2\in  U$, we have
\[
\langle u_1,\phi_\Theta(u_2)\rangle=Im H(u_2,u_1)=-\Theta(u_2,u_1).
\]
Thus, $\phi_H(u_2)=-\Theta(u_2,\bullet)=-\theta(u_2).$ So $-\theta:H^1(X,\RR)^*\rightarrow H^1(X,\RR)$ gets identified with the differential $\phi_H$ of $\phi_L$ under the identification of its domain $H^1(X,\RR)^*$ with $H_1(X,\RR)$ and its codomain $H^1(X,\RR)$ with $H_1(\hat{X},\RR)$ . Now use the self-duality $\phi_L^*=\phi_L$ \cite[Cor 2.4.6]{BL}.
} 
\cite[Lemma 2.4.5]{BL}. 

Let $d$ be a positive integer. Set $K:=\QQ[\sqrt{-d}]$. 
Set $u:=\sqrt{-d}\Theta$. Cup product with $\exp(u)$ is a linear automorphism
of $H^*(X,K)$ corresponding to the spin representation image of an element $\exp(u)$ of $\Spin(V_K)$ which acts on $V_K$ by
\begin{equation}
\label{eq-action-of-exp-u-on-V}
\exp(u)\cdot (w,y)=(w-\sqrt{-d}\theta(y),y),
\end{equation}
$w\in H^1(X,K)$ and $y\in H^1(\hat{X},K)\cong H^1(X,K)^*$ (see \cite[III.1.7]{chevalley} and its proof).
Note that $\exp(u)$ leaves invariant the pure spinor $[pt]\in H^{2n}(X,K)$ corresponding to the maximal isotropic subspace
$H^1(X,K)$ and leaves invariant every element of the latter subspace. On the other hand $\exp(u)$ takes the pure spinor $1$ corresponding to $H^1(X,K)^*$ to the class 
$\exp(u)\in H^{even}(X,K)$. We set $\ell_1:=\span_K\{\exp(u)\}$, $\ell_2:=\span_K\{\exp(\bar{u})\}$, and 
\begin{equation}
\label{eq-P}
P:=\span\left\{Re(\exp(u)),\frac{Im(\exp(u))}{\sqrt{d}}\right\}
\end{equation} 
oriented via the choice of $\ell_1$.
The maximal isotropic subspaces  corresponding to $\ell_1$ and $\ell_2$ are 
\begin{equation}
\label{eq-W-1-and-2}
\begin{array}{lcccc}
W_1&:=&\exp(u)(H^1(X,K)^*)&=&
\{(-\sqrt{-d}\theta(y),y) \ : \
y\in H^1(X,K)^*
\},
\\
W_2&:=&\overline{W_1}&=&
\{(+\sqrt{-d}\theta(y),y) \ : \
y\in H^1(X,K)^*
\}.
\end{array}
\end{equation}
The intersection $W_1\cap W_2$ is the zero subspace, since $\theta$ is an isomorphism.

\begin{prop}
\label{prop-polarized-abelian-variety-of-Weil-type}
The symmetric bilinear pairing $g_P$ of Lemma \ref{lemma-g-P-in-terms-of-4-summands} associated with the oriented plane $P$ in (\ref{eq-P}) is negative definite.
\end{prop}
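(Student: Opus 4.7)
The plan is to reduce the inequality to the second Riemann bilinear relation for the ample class $\Theta$. Starting from Lemma \ref{lemma-g-P-in-terms-of-4-summands}, which gives
\[
g_P(v,v) \;=\; 2\sqrt{d}\bigl[-(v_1^{1,0}, v_2^{0,1})_V + (v_1^{0,1}, v_2^{1,0})_V\bigr],
\]
I would parametrize a general real vector $v \in V_\RR$ by writing $v_1 = (-\sqrt{-d}\theta(y), y) \in W_1$ with $y \in H^1(X,K)^*$, so that $v_2 = \bar v_1$ automatically lies in $W_2 = \overline{W_1}$. Decomposing $y = y^{1,0} + y^{0,1}$ in the Hodge decomposition of $H^1(\hat X, \CC)$ and using that $\theta$ restricts to a Hodge isomorphism $\theta: H^{p,q}(\hat X) \to H^{p,q}(X)$ (which is implicit in Lemma \ref{lemma-decomposition-into-4-direct-summands} applied to the present $W_1, W_2$), the Hodge components $v_i^{p,q}$ are immediately expressed through $y^{1,0}, y^{0,1}$ and their complex conjugates.

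The next step is a direct substitution into the two pairings. Using the antisymmetry $\alpha(\theta(\beta)) = -\beta(\theta(\alpha))$ of the bilinear form on $H^1(\hat X)$ associated with the bivector $\Theta$, each of $(v_1^{1,0},v_2^{0,1})_V$ and $(v_1^{0,1},v_2^{1,0})_V$ collapses to a scalar multiple of $\sqrt{-d}\cdot y^{1,0}(\theta(\overline{y^{1,0}}))$ or $\sqrt{-d}\cdot y^{0,1}(\theta(\overline{y^{0,1}}))$ respectively. Together with the prefactor $\sqrt{d}\sqrt{-d}=id$, this yields a formula of the shape
\[
g_P(v,v) \;=\; C \bigl[\,i\, y^{0,1}(\theta(\overline{y^{0,1}}))\;-\;i\, y^{1,0}(\theta(\overline{y^{1,0}}))\,\bigr],
\]
with a real constant $C$ of definite sign determined by the orientation $\sqrt{-d} = \sqrt{d}\,e^{i\pi/2}$ of $P$. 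The quantity $i\alpha(\theta(\bar\alpha))$ for $\alpha \in H^{1,0}(\hat X, \CC)$ is Hermitian in $\alpha$, and under the Hodge isomorphism $\theta : H^{1,0}(\hat X) \to H^{1,0}(X)$ pulls back to a nonzero scalar multiple of the Hermitian form $\omega \mapsto i\Theta(\omega, \bar\omega)$ on $H^{1,0}(X, \CC)$, whose definiteness is precisely the second Riemann bilinear relation for the ample class $\Theta$. The analogous expression on $H^{0,1}(\hat X, \CC)$ carries the opposite sign since complex conjugation interchanges the two Hodge summands. The bracket is therefore the sum of a positive and a negative Hermitian form in the variables $(y^{1,0}, y^{0,1})$, hence of fixed strict sign, and combined with the sign of $C$ one obtains $g_P(v,v) < 0$ for every $0 \neq v \in V_\RR$.

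The main technical difficulty is the careful bookkeeping of signs through several subtle identifications: the Hodge summand $H^{1,0}(\hat X)$ sits inside $H^1(X,\CC)^*$ as the annihilator of $H^{1,0}(X)$ rather than as $(H^{1,0}(X))^\vee$, because $\theta$ is contraction with the $(1,1)$-class $\Theta$; the orientation of $P$ is coupled to the chosen square root $\sqrt{-d}$; and the factor $\sqrt{d}\sqrt{-d}=id$ is what converts the purely imaginary quantities $y^{p,q}(\theta(\overline{y^{p,q}}))$ into real Hermitian forms. Once these signs are consistently tracked, the definiteness of $g_P$ is precisely the content of the ampleness of $\Theta$ via its Riemann bilinear relation.
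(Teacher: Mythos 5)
Your proposal follows essentially the same route as the paper's proof: both parametrize $v$ via $y\in H^1(X,K)^*$ through $v_1=(-\sqrt{-d}\theta(y),y)$, expand Lemma \ref{lemma-g-P-in-terms-of-4-summands} in the Hodge components of $y$, and reduce the definiteness to the positivity of the Hermitian form attached to the ample class $\Theta$; the paper simply pushes the algebra one step further, reaching $g_P(x,x)=-4d\,\Theta(\bar y\wedge I(y))$ and then splitting $y=a+ib$ into real and imaginary parts, which is the same content as your appeal to the Riemann bilinear relation. One caution on wording only: the sentence ``sum of a positive and a negative Hermitian form, hence of fixed strict sign'' does not by itself imply definiteness; what actually saves the argument (and what you clearly intend, given your remark that the two Hodge summands carry opposite signs) is that the explicit minus sign in front of the $y^{1,0}$ term flips the positive contribution, so that both summands end up with the same negative sign.
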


\begin{proof}
Let $x$ be a class in $V_\QQ$ and write $x=x_1+x_2$, with $x_i\in W_i$. Then 
$x_1=(-\sqrt{-d}\theta(y),y)$, for some $y\in H^1(X,K)^*$, and $x_2=\bar{x}_1=(\sqrt{-d}\theta(\bar{y}),\bar{y})$.
We have noted that $\theta$, given in (\ref{eq-theta-is-contraction-with-Theta}), is an isomorphism of rational Hodge structures under the identification $H^1(X,\QQ)^*$ with $H^1(\hat{X},\QQ)$. We identify $H^1(X,K)^*$  with $H^1(\hat{X},K)$ and regard it as a subset of $H^1(\hat{X},\CC)$.
Hence, $\theta(y)^{1,0}=\theta(y^{1,0})$ and $\theta(y)^{0,1}=\theta(y^{0,1})$ and so
\begin{eqnarray*}
x_1^{1,0}&=&(-\sqrt{-d}\theta(y^{1,0}),y^{1,0}),
\\
x_1^{0,1}&=&(-\sqrt{-d}\theta(y^{0,1}),y^{0,1}).
\end{eqnarray*}
The first equality below follows from Lemma \ref{lemma-g-P-in-terms-of-4-summands}, the second by definition,  the third from 
the fact that the direct summands $H^1(X,\CC)$ and $H^1(\hat{X},\CC)$ of $V_\CC$ are isotropic. The fourth, since $V^{1,0}$ and $V^{0,1}$ are isotropic.
\begin{eqnarray*}
g_P(x,x)&=&2\sqrt{d}(-(x_1^{1,0},\overline{x_1^{1,0}})_V
+(x_1^{0,1},\overline{x_1^{0,1}})_V)
\\
&=&
2\sqrt{d}\left[
-((-\sqrt{-d}\theta(y^{1,0}),y^{1,0}),(\sqrt{-d}\theta(\overline{y^{1,0}}),\overline{y^{1,0}}))_V
\right.
\\
& & 
\left.
+
((-\sqrt{-d}\theta(y^{0,1}),y^{0,1}),(\sqrt{-d}\theta(\overline{y^{0,1}}),\overline{y^{0,1}}))_V
\right]
\\
&=& 2di\left[
-(y^{1,0},\theta(\overline{y^{1,0}}))_V
+(\theta(y^{1,0}),\overline{y^{1,0}})_V
+(y^{0,1},\theta(\overline{y^{0,1}}))_V
-(\theta(y^{0,1}),\overline{y^{0,1}})_V
\right]
\\
&=& 
2di\left[
(-y^{1,0}+y^{0,1},\theta(\bar{y}))_V+
(\theta(y^{1,0}-y^{0,1}),\bar{y})_V
\right]
\\
&=&
2d
\left[
-(I(y),\theta(\bar{y}))_V
+(\theta(I(y)),\bar{y})_V
\right]
\\
&=& -4d\Theta(\bar{y}\wedge I(y)).
\end{eqnarray*}

Write $y=a+ib$, where $a,b$ in $H^1(\hat{X},\RR)$. Then
$\bar{y}\wedge I(y)=(a-ib)\wedge(I(a)+iI(b))=a\wedge I(a)+b\wedge I(b)+i[a\wedge I(b)- b\wedge I(a)].$
The fact that $\Theta$ is of type $(1,1)$ yields
\[
\Theta(a\wedge I(b))=\Theta(I(a)\wedge I^2(b))=-\Theta(I(a)\wedge b)=\Theta(b\wedge I(a)).
\]
Hence, $\Theta(\bar{y}\wedge I(y))=\Theta(a\wedge I(a))+\Theta(b\wedge I(b))$.  The two summands are non-negative  and the sum is non-zero if $y\neq 0$, as $\Theta$ is ample (we use the sign convention of \cite[Lemma 1.2.15]{huybrechts-complex-geometry-book}).
\end{proof}

%
\section{A $\Spin(V)_P$-invariant Hermitian form}
\label{sec-Hermitian-form}
In Section \ref{subsec--Hermitian-form} 
we show that an oriented $K$-secant $P$, for a quadratic imaginary number field $K$, determines a $\Spin(V)_P$-invariant hermitian form on $V$, up to a rational scalar. We show that the examples of $2n$-dimensional polarized abelian varieties of Weil type, considered in Section \ref{sec-polarized-avwt-from-secants}, all have discriminat $(-1)^n$. In Section \ref{sec-complex-structures-in-Spin-V-P}
we give a criterion for an element of $\Spin(V)_P$ to act on $V_\RR$ as a complex structure of an abelian variety of Weil type on $V_\RR/V$.
%
\subsection{The Hermitian form}
\label{subsec--Hermitian-form}
Keep Assumption \ref{assumption-on-rational-secant-plane-P}. Let 
$f:=\cm_{\sqrt{-d}}:V_\QQ\rightarrow V_\QQ$
be the similarity given in Equation (\ref{eq-f}).
Let 
\begin{equation}
\label{eq-so-f}
SO_+(V_\QQ)_f 
\end{equation}
be the subgroup of $SO_+(V_\QQ)$ of elements $g$, which commute with $f$ and which
restriction $\restricted{g}{W_i}$ to each of the eigenspaces $W_i$ of $f$ satisfy $\det(\restricted{g}{W_i})=1$, $i=1,2$.
Let $\sigma$ be the generator of $Gal(K/\QQ)$. 
Let $u_1\in S^+_K$ be an even pure spinor corresponding to $W_1$. Then $u_2:=\sigma(u_1)$ is an even pure spinor corresponding to $W_2$.

\begin{lem}
\label{lemma-stabilizer-is-isomorphic-to-so-f}
The stabilizer $\Spin(V_\QQ)_P$ 
is mapped by $\rho$ isomorphically onto $SO_+(V_\QQ)_f$. 
\end{lem}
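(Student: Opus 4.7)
The plan is to check that $\rho$ restricted to $\Spin(V_\QQ)_P$ is a bijection onto $SO_+(V_\QQ)_f$ by handling the two containments and then injectivity. Fix a non-zero pure spinor $u_1 \in \tilde{\ell}_1$ and set $u_2 := \sigma(u_1) \in \tilde{\ell}_2$, so that an element $g \in \Spin(V_\QQ)$ lies in $\Spin(V_\QQ)_P$ exactly when it fixes both $u_1$ and $u_2$ (equivalently, a $K$-basis of $P_K = \tilde{\ell}_1 \oplus \tilde{\ell}_2$). For the forward containment, any such $g$ stabilises the two lines $\tilde{\ell}_i$, so $\rho(g)$ preserves $W_1$ and $W_2$ and hence commutes with $f$; the character identity $\tilde{\ell}_i \otimes \tilde{\ell}_i \cong \det_i$ recalled before (\ref{eq-Spin(V)_P}) then gives $\det(\rho(g)|_{W_i}) = 1$, placing $\rho(g)$ in $SO_+(V_\QQ)_f$.

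For surjectivity, I would begin with $h \in SO_+(V_\QQ)_f$ and note that $h|_{W_1}$ lies in $SL(W_1)$. Exploiting the description of $\Spin(V_K)_P$ in the proof of Lemma \ref{lemma-Spin-V-K-is-SL-n-K} as the kernel of $\det_1$ in $\Spin(V_K)_{\ell_1,\ell_2}$, the isomorphism $\Spin(V_K)_P \cong SL(W_1)$ produces a unique $g_K \in \Spin(V_K)_P$ with $\rho(g_K)|_{W_1} = h|_{W_1}$. Since an isometry of $V_K$ preserving the decomposition $W_1 \oplus W_2$ is determined by its restriction to $W_1$ via $h|_{W_2} = ((h|_{W_1})^*)^{-1}$, we get $\rho(g_K) = h$ on all of $V_K$, and by construction $g_K$ fixes $u_1$ and $u_2$.

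The main obstacle I expect is the Galois descent step, showing $g_K$ is in fact defined over $\QQ$. Since $\rho(g_K) = h$ is $\sigma$-invariant and $\ker(\rho) = \{\pm 1\}$, we have $\sigma(g_K) = \pm g_K$. Evaluating on $u_1 = \sigma(u_2)$ gives $\sigma(g_K) \cdot u_1 = \sigma(g_K \cdot u_2) = \sigma(u_2) = u_1$, while $-g_K$ would act on $u_1$ as $-1$; hence $\sigma(g_K) = g_K$ and $g_K \in \Spin(V_\QQ)_P$. Injectivity is then immediate, since $-1 \in \ker(\rho)$ acts on $S^+$ by the scalar $-1$ via the isomorphism (\ref{eq-m}) and so cannot fix the non-zero subspace $P$.
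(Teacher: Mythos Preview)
Your proof is correct. Both your argument and the paper's establish injectivity the same way (via $-1$ acting by $-\mathrm{id}$ on $S^+$), and the forward containment by the character identity $\tilde\ell_i^{\otimes 2}\cong\det_i$. The surjectivity arguments, however, run in opposite directions.

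The paper starts over $\QQ$: given $g\in SO_+(V_\QQ)_f$ it lifts to some $\tilde g\in\Spin(V_\QQ)$ via surjectivity of $\rho$, then invokes the $\Spin(V_\QQ)$-equivariant map $\wedge^{2n}_+V_K\to\Sym^2(S^+_K)$ sending $\wedge^{2n}W_i$ to $u_i^2$ to conclude that $\tilde g$ acts on each $u_i$ by a sign $\lambda_i\in\{\pm1\}$; Galois invariance of $\tilde g$ forces $\lambda_1=\lambda_2$, so one of $\pm\tilde g$ lies in $\Spin(V_\QQ)_P$. You instead start over $K$: using the isomorphism $\Spin(V_K)_P\cong SL(W_1)$ from Lemma~\ref{lemma-Spin-V-K-is-SL-n-K} you produce the exact lift $g_K\in\Spin(V_K)_P$ with $\rho(g_K)=h$, and then descend to $\QQ$ by the sign computation $\sigma(g_K)\cdot u_1=\sigma(g_K\cdot u_2)=u_1$. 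The paper's route avoids appealing to the structure of $\Spin(V_K)_P$ and instead uses Chevalley's identification of $\wedge^{2n}W_i$ with $u_i^2$ directly; your route leans on the already-proved Lemma~\ref{lemma-Spin-V-K-is-SL-n-K} and is arguably more self-contained within the section. Both hinge on the same $\pm1$ ambiguity being resolved by a Galois-compatibility check, just applied at different stages.
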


\begin{proof}
The homomorphism $\rho$ restricts to an injective homomorphism from the stabilizer $\Spin(V_\QQ)_P$ into $SO_+(V_\QQ)_f$, 
since the kernel of $\rho$ has order two, generated by $-1\in C(V_\QQ)$, and the latter acts by $-id_{S_\QQ}$ on the spin representation. Hence, the kernel of $\rho$ intersects $\Spin(V_\QQ)_P$ trivially.
It remain to prove its surjectivity.
Let $g$ be an element of  $SO_+(V_\QQ)_f$. The homomorphism $\rho:\Spin(V_\QQ)\rightarrow SO_+(V_\QQ)$
is surjective, so we may choose an element $\tilde{g}\in\Spin(V_\QQ)$ satisfying $\rho(\tilde{g})=g$.
There exists a $\Spin(V_\QQ)$ equivariant homomorphism 
$\wedge_+^{2n}V_K\rightarrow \Sym^2(S^+_K)$, from the subspace $\wedge^{2n}_+V_K$ of $\wedge^{2n}V_K$ spanned by the top exterior powers $\wedge^{2n}W$ of maximal isotropic subspaces $W$, which maps the line $\wedge^{2n}W$ to the line spanned by the square $u^2$ of the corresponding even pure spinor $u$, by
\cite[III.3.2 and III.4.5]{chevalley}. 
The element $\tilde{g}$ acts on the line spanned by $u_i^2$ as the identity, since $\wedge^{2n}g$ acts on 
$\wedge^{2n}W_i$ as the identity, by definition of $SO_+(V_\QQ)_f$. Hence,
$\tilde{g}$ acts on the line $[u_i]$ spanned by $u_i$ via multiplication by a scalar $\lambda_i$ equal to $1$ or $-1$. Now, $u_2=\sigma(u_1)$ and $\tilde{g}=\sigma\circ \tilde{g} \circ \sigma$, since $\tilde{g}$ is defined over $\QQ$. Hence $\lambda_2=\sigma(\lambda_1)=\lambda_1$. We conclude that one of $\tilde{g}$ or $-\tilde{g}$ 
acts as the identity on the plane $P=\mbox{span}\{u_1,u_2\}$ and so belongs to $\Spin(V_\QQ)_P$.
\end{proof}

Consider the $K$-valued bilinear form on $V_\QQ$ given by
\begin{equation}
\label{eq-H}
H(x,y):=d(x,y)_V+\sqrt{-d}(f(x),y)_V.
\end{equation}

\begin{lem}
\label{lemma-su-3-3}
$H$ is an $SO_+(V_\QQ)_f $-invariant Hermitian form on $V_\QQ$ considered as a $K$-vector space, i.e., we have
$H(x,y)=\sigma(H(y,x))$ and $H(x,\cm_\lambda(y))=\lambda H(x,y)$, for $\lambda\in K$. The signature of $H$ is $(n,n)$. 
The group $SO_+(V_\QQ)_f$ is a finite index subgroup of the subgroup $SU(V_\QQ,H)$ of $SL(V_\QQ)$ leaving $H$ invariant. 
\end{lem}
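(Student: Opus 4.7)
The plan is to verify the four assertions sequentially by direct computation from the definition (\ref{eq-H}) of $H$, using the identities $f^2=-d\cdot\mathrm{id}$, $(f(x),f(y))_V=d(x,y)_V$, and the anti-self-duality $(f(x),y)_V=-(x,f(y))_V$ established immediately after (\ref{eq-f}).

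First I would check the hermitian identity $H(x,y)=\sigma(H(y,x))$: symmetry of $(\bullet,\bullet)_V$ together with the anti-self-duality of $f$ yields $\sigma(H(y,x))=d(y,x)_V-\sqrt{-d}(f(y),x)_V=d(x,y)_V+\sqrt{-d}(f(x),y)_V=H(x,y)$. For $K$-linearity in the second variable, the $W_1\oplus W_2$ eigenspace decomposition of $V_K$ shows that $\tilde{e}_{a+b\sqrt{-d}}=a\cdot\mathrm{id}+bf$ on $V_\QQ$, so the identity $H(x,\tilde{e}_\lambda y)=\lambda H(x,y)$ reduces to the single case $H(x,f(y))=\sqrt{-d}\,H(x,y)$, which follows by expanding $H(x,f(y))=d(x,f(y))_V+\sqrt{-d}(f(x),f(y))_V$ and applying anti-self-duality together with $(f(x),f(y))_V=d(x,y)_V$ and $\sqrt{-d}^2=-d$. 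Invariance under $g\in SO_+(V_\QQ)_f$ is then immediate, since $g$ preserves $(\bullet,\bullet)_V$ and commutes with $f$, so each summand of (\ref{eq-H}) is individually preserved.

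Next I would compute the signature. Anti-self-duality of $f$ forces $(f(x),x)_V=0$, hence $H(x,x)=d(x,x)_V$ is real-valued, and after extension to $V_\RR$ the real quadratic form $x\mapsto H(x,x)$ equals $d$ times the pairing $(\bullet,\bullet)_V$, which has signature $(2n,2n)$ because $V\cong U^{\oplus 2n}$ and $d>0$. Since $V_\QQ$ has $K$-dimension $2n$, and since a hermitian form of signature $(p,q)$ on a complex $m$-dimensional space induces a real quadratic form of signature $(2p,2q)$ on the underlying $2m$-dimensional real space, I conclude $(p,q)=(n,n)$.

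For the finite-index claim, the inclusion $SO_+(V_\QQ)_f\subseteq SU(V_\QQ,H)$ is direct: elements $g\in SO_+(V_\QQ)_f$ are $K$-linear because they commute with $f$, preserve $H$ by the invariance already shown, and satisfy $\det_K(g)=\det(\restricted{g}{W_1})=1$ by the definition of $SO_+(V_\QQ)_f$. The finite-index assertion would follow from a Lie-algebra comparison: both $\mathrm{Lie}(SO_+(V_\QQ)_f)$ and $\mathrm{Lie}(SU(V_\QQ,H))$ consist of the $\xi\in\mathrm{End}_\QQ(V_\QQ)$ which commute with $f$, are skew with respect to $(\bullet,\bullet)_V$, and satisfy $\mathrm{tr}(\restricted{\xi}{W_1})=0$, so the two $\QQ$-algebraic groups share the same identity component. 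The most delicate step is the signature-doubling under restriction of scalars from $\CC$ to $\RR$, but this is classical; I do not anticipate any serious obstacle.
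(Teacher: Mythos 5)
Your verification of the hermitian identities, the $SO_+(V_\QQ)_f$-invariance, and the signature computation is correct and follows essentially the paper's route; the paper packages the calculations slightly differently (it introduces the auxiliary complex structure $\tilde f:=\frac{1}{\sqrt{d}}f$ for the hermitian identity, and chooses an orthogonal $K$-basis to compute the signature), but the content agrees with yours.

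The finite-index step contains a genuine gap. You assert that $\mathrm{Lie}(SU(V_\QQ,H))$ consists of the $\xi\in\End_\QQ(V_\QQ)$ that commute with $f$, are skew with respect to $(\bullet,\bullet)_V$, and satisfy $\mathrm{tr}(\restricted{\xi}{W_1})=0$; this is not so. Skewness of $\xi$ together with $[\xi,f]=0$ already forces the $K$-trace $\mathrm{tr}(\restricted{\xi}{W_1})$ to lie in the $(-1)$-eigenspace of $\sigma$, whence the $\QQ$-trace of $\xi$, equal to $\mathrm{tr}(\restricted{\xi}{W_1})+\sigma\bigl(\mathrm{tr}(\restricted{\xi}{W_1})\bigr)$, vanishes automatically; intersecting with $\mathrm{Lie}(SL(V_\QQ))$ therefore adds no new linear condition and does not kill the $K$-trace. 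The element $\xi=f$ itself is a concrete counterexample: $f$ is skew, commutes with itself, and is $H$-skew-hermitian, so $f\in\mathrm{Lie}(SU(V_\QQ,H))$, yet $\mathrm{tr}(\restricted{f}{W_1})=2n\sqrt{-d}\neq 0$. Consequently $\mathrm{Lie}(SU(V_\QQ,H))$ has $\QQ$-dimension $(2n)^2$, one more than $\mathrm{Lie}(SO_+(V_\QQ)_f)$; the group $SU(V_\QQ,H)$ is a $\QQ$-form of $GL_{2n}$, not of $SL_{2n}$, and the two identity components are genuinely different. A Lie-algebra comparison alone cannot produce the finite index; what is needed is arithmetic control over the norm-one $K$-determinants $\det(\restricted{g}{W_1})$ of elements $g\in SU(V_\QQ,H)$, which is exactly the determinant argument that closes the paper's own proof.
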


\begin{proof}
The automorphism $f$ is anti-self-dual with respect to the pairing (\ref{eq-pairing-on-V}), $(f(x),y)_V=(x,-f(y))_V$, for all $x, y\in V_\QQ$,  and $f^2=-d \one_{V_\QQ}$,
where $\one_{V_\QQ}$ is the identity endomorphism of $V_\QQ$.
So, $\tilde{f}:=\frac{1}{\sqrt{d}}f$ is a complex structure and an isometry of $V_\RR$ 
and $(x,y)_V+i(\tilde{f}(x),y)$ is a $\CC$-valued Hermitian form on $V_\RR$.
Multiplying by $\sqrt{d}$ we see that  $H$, given in (\ref{eq-H}), 
is a $K$-valued Hermitian form 
on $V_\QQ$, considered as a ${2n}$-dimensional $K$-vector space.
$H$ is $SO_+(V_\QQ)_f $-invariant, since $f$ centralizes $SO_+(V_\QQ)_f $. 

The signature of $H$ is $(a,b)$ if the $2n\times 2n$ diagonal Gram matrix $G$ (necessarily with 
rational entries) of the quadratic form $H(x,x)$ with respect to some orthogonal $K$-basis $\{v_1, \dots, v_{2n}\}$
of $V_\QQ$ has $a$ positive and $b$ negative diagonal entries. 
The quadratic form depends only on the real part of $H$.
Hence, $\{v_1, \dots, v_{2n},f(v_1), \dots, f(v_{2n})\}$ is an orthogonal $\QQ$-basis of $V_\QQ$ with respect to the real part of $H$ and the Gram matrix of 
the quadratic form is diagonal of the form
$\left(\begin{array}{cc}G & 0\\ 0 & dG\end{array}\right)$. 
Hence, if the signature of $H$ is $(a,b)$ then that of the latter is $(2a,2b)$.
On the other hand, the latter quadratic form is that of $d$
times the bilinear pairing (\ref{eq-pairing-on-V}) on $V_\QQ$, which has signature $(2n,2n)$. Hence, $a=b=n.$

The norm character has finitely many values on $SO(V_\QQ)$, hence it suffices to show that the group $SO(V_\QQ)_f$ of isometries of $(V_\QQ,(\bullet,\bullet)_V)$ commuting with $f$ and restricting to $W_i$ with determinant $1$ is equal to $SU(V_\QQ,H)$. The inclusion $SO(V_\QQ)_f\subset SU(V_\QQ,H)$
is clear. It remains to show that the subgroup of $SU(V_\QQ,H)$ of elements, which restrict to $W_i$ with determinant $1$, $i=1,2$, is a finite index subgroup.
Let $\beta:=\{e_1, \dots, e_{2n}\}$ be a basis of $W_1$ and set $\sigma(\beta):=\{\sigma(e_1), \dots, \sigma(e_{2n})\}$. 
Then 
$\beta\cup\sigma(\beta)$ is a basis of
$V_K$. Let $g$ be an element of $SU(V_\QQ,H)$. Then $g$ commutes with $f$. Let $M$ be the matrix of $\restricted{g}{W_1}$ in the basis $\beta$.
Then the matrix of $\restricted{g}{W_2}$ in the basis $\sigma(\beta)$ is $\sigma(M)$. Now $1=\det(g)$ is equal to the determinant of $g$ as an element of $GL(V_K)$, which is $\det(M)\det(\sigma(M))$. Thus, $\det(M)$ is a unit in the quadratic imaginary number field $K$. The statement follows, since the number of units is finite.
\end{proof}

\begin{lem}
\label{lemma-the-discriminant-is-minus-1-to-the-n}
Assume that the similarity $f:V_\QQ\rightarrow V_\QQ$ given in Equation (\ref{eq-f}) is defined in terms of the oriented plane $P$ given in Equation (\ref{eq-P}).
Then the discriminant of the hermitian form $H$ is $(-1)^n$.
\end{lem}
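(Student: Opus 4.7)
The plan is to evaluate $\det H$ in a $K$-basis of $V_\QQ$ adapted to the decomposition $V_K = W_1\oplus W_2$, and then reduce the answer modulo $Nm(K^\times)$. The Pfaffian of an alternating form built from $\Theta$ will absorb all the ``hard'' factors, leaving only the sign $(-1)^n$.

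First I would fix a $\QQ$-basis $\{f_1,\dots,f_{2n}\}$ of $H^1(\hat X,\QQ)\cong H^1(X,\QQ)^*$ and use the description (\ref{eq-W-1-and-2}) to form the $K$-basis $e_i:=(-\sqrt{-d}\,\theta(f_i),f_i)$ of $W_1$. Because $f$ acts on $W_1$ by $\sqrt{-d}$ and on $W_2$ by $-\sqrt{-d}$, the vectors $v_i:=e_i+\sigma(e_i)\in V_\QQ$ form a $K$-basis of $V_\QQ$ for the $K$-structure coming from $\tilde e$.

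Next I would expand $H(v_i,v_j)$ from $H(x,y)=d(x,y)_V+\sqrt{-d}(f(x),y)_V$. Setting $a_{ij}:=(e_i,\sigma(e_j))_V\in K$ and using the isotropy of $W_1$ and $W_2$, together with $(\sigma(e_i),e_j)_V=\sigma(a_{ij})$, one gets $(v_i,v_j)_V=a_{ij}+\sigma(a_{ij})$ and $(f(v_i),v_j)_V=\sqrt{-d}(a_{ij}-\sigma(a_{ij}))$, whence $H(v_i,v_j)=2d\,\sigma(a_{ij})$. In particular $\det H=(2d)^{2n}\sigma(\det(a_{ij}))$. A direct calculation from (\ref{eq-pairing-on-V}) then yields
\[
a_{ij}=\sqrt{-d}\bigl[f_i(\theta(f_j))-f_j(\theta(f_i))\bigr]=2\sqrt{-d}\,T_{ij},
\qquad T_{ij}:=f_i(\theta(f_j)),
\]
the last equality using that $\Theta$, viewed as an alternating form on $H^1(\hat X,\QQ)$, makes $T=(T_{ij})$ an alternating rational $2n\times 2n$ matrix. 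Since $T$ is nondegenerate and alternating, $\det T=\mathrm{Pf}(T)^2$ is a nonzero rational square, so $\det(a_{ij})=(2\sqrt{-d})^{2n}\mathrm{Pf}(T)^2=(-4d)^n\mathrm{Pf}(T)^2$ is rational and fixed by $\sigma$.

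Finally I would reduce modulo $Nm(K^\times)$. The factor $(2d)^{2n}=Nm((2d)^n)$ is a norm, $\mathrm{Pf}(T)^2=Nm(\mathrm{Pf}(T))$ is a norm, and $-4d\equiv -d\equiv -1\pmod{Nm(K^\times)}$ using $4=Nm(2)$ and $d=Nm(\sqrt{-d})$; therefore $\det H\equiv (-1)^n\pmod{Nm(K^\times)}$, which is the claim. The one delicate point in this plan is the sign bookkeeping in the second step, identifying the Gram matrix of $H$ as a $K$-scalar multiple of the $\sigma$-conjugate of the cross Gram matrix on $W_1\times W_2$; once that is settled the conclusion is forced by the symplectic structure coming from $\Theta$.
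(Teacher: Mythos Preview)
Your proof is correct and follows essentially the same strategy as the paper: both pick a $K$-basis of $V_\QQ$ coming from a $\QQ$-basis of $H^1(\hat X,\QQ)$, compute the Gram matrix of $H$ as a scalar times the alternating matrix associated to $\Theta$, and conclude via the Pfaffian. Note that your $v_i=e_i+\sigma(e_i)=(0,2f_i)$ is exactly twice the paper's basis vector $(0,y_i)$; the paper avoids your detour through the $W_1\oplus W_2$ decomposition by computing $f(0,y)=(d\theta(y),0)$ directly and reading off $H((0,y_i),(0,y_j))=d\sqrt{-d}\,\Theta(y_i,y_j)$ in one line.
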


\begin{proof}
Given a basis $\{y_1, \dots, y_{2n}\}$ of $H^1(\hat{X},\QQ)$ we get the $K$-basis 
$\{(0,y_1), \dots, (0,y_{2n})\}$ of $V_\QQ$.
We evaluate $\det (H((0,y_i),(0,y_j)))$.
Given $y\in H^1(\hat{X},\QQ)$, we get the element $\exp(u)\cdot (0,y)=(-\sqrt{-d}\theta(y),y)$ of $W_1$, by Equation (\ref{eq-action-of-exp-u-on-V}).
We get
\begin{eqnarray*}
(0,y)&=&(1/2)[\exp(u)\cdot (0,y)+\overline{\exp(u)}\cdot(0,y)],
\\
2f(0,y)&=& \sqrt{-d}\exp(u)\cdot (0,y)+(-\sqrt{-d})\overline{\exp(u)}\cdot(0,y)
\\
&=& \sqrt{-d}(-\sqrt{-d}\theta(y),y)-\sqrt{-d}(\sqrt{-d}\theta(y),y)=(2d\theta(y),0),
\\
H((0,y_i),(0,y_j))&=& d((0,y_i),(0,y_j))_V+\sqrt{-d}(f(0,y_i),(0,y_j))
=d\sqrt{-d}\Theta(y_i,y_j),
\\
\det(H(((0,y_i),(0,y_j)))&=&(d\sqrt{-d})^{2n}\det(\Theta(y_i,y_j))=(-1)^nd^{3n}\det(\Theta(y_i,y_j)).
\end{eqnarray*}
Now, $\det(\Theta(y_i,y_j))$ is the square of a rational number, since $\Theta$ is anti-symmetric, and $d^{3n}=Nm((\sqrt{-d})^{3n})$.
Hence, $\det(H(((0,y_i),(0,y_j)))Nm(K^\times)= (-1)^nNm(K^\times).$
\end{proof}

\hide{
\begin{rem}
\label{rem-stabilizer-as-commutator}
Consider the case where $X$ is an abelian $3$-fold and $w\in S^+_\QQ$ satisfies $J(w)>0$.
Igusa states in the last paragraph of Section 3 of \cite{igusa} that $\Spin(V_\QQ)_w$ is isomorphic to $SU(3,3,K)$. By $SU(3,3,K)$ he seemed to be referring to the subgroup of $SO(V_\QQ)$ leaving $H$ invariant.
We seem to get that $\Spin(V_\QQ)_w$  is a strict subgroup of $SU(3,3,K)$.
If $h\in SO(V_\QQ)$ preserves $H$, then 
$(f(h(x)),h(y))_V=(f(x),y)_V=(h(f(x),h(y))_V$, for all $x, y\in V_\QQ$, and so $h$ commutes with $f$.
Conversely, elements of $SO(V_\QQ)$ commuting with $f$ leave $H$ invariant.
An element $g$ of $SO(V_K)$ commuting with $f$ leaves each of the eigenspaces $W_1$ and $W_2$ of $f$ invariant, and $\lambda_i:=\det((g\restricted{)}{W_i})$ are elements of $K^\times$ satisfying $\lambda_1\lambda_2=1$. If, furthermore, $g$ belongs to $SO(V_\QQ)$, then 
$\lambda_2=\sigma(\lambda_1)$.
If $g$  is the image of an element in $\Spin(V_\QQ)_w$, then $\lambda_i=1$, for $i=1,2$, by Lemma \ref{lemma-secant-to-spinor-variety}.  This last condition does not seem to follow from the condition that $g$ leaves $H$ invariant.
\end{rem}
}


%
\subsection{Elements of $\Spin(V_\CC)_P$ which are complex sturctures on abelian varieties of Weil type}
\label{sec-complex-structures-in-Spin-V-P}
Let $\tilde{I}$ be an element of $\Spin(V_\RR)_P$, such that $I:=\rho(\tilde{I})$ is a complex structure on $V_\RR$. 
$I$ belongs to $\rho(\Spin(V_\RR)_P)$ and so it commutes with $f:=\cm_{\sqrt{-d}}$, 
by Lemma \ref{lemma-stabilizer-is-isomorphic-to-so-f}. Hence, $I\circ f)^2=d\one_{V_\RR}.$
Let $\nu(I)$ be the multiplicity of the positive square root $\sqrt{d}$ as an eigenvalue of $I\circ f$.

\begin{lem}
\label{lemma-3-dimensional-eigenvalues}
Assume that $\nu(I)=2n$.
Let $V^{1,0}$ and $V^{0,1}$ be the eigenspaces of $I$ in $V_\CC$ with eigenvalues $\pm\sqrt{-1}$.
Then each of $V^{1,0}$ and $V^{0,1}$ intersects each of $W_{1,\CC}$ and $W_{2,\CC}$ along an $n$-dimensional subspace. Furthermore, we have
\begin{equation}
\label{eq-complex-structure-is-determined-by}
V^{1,0}\cap W_{2,\CC}=\left(V^{1,0}\cap W_{1,\CC}\right)^\perp\cap W_{2,\CC},
\end{equation}
where $\left(V^{1,0}\cap W_{1,\CC}\right)^\perp$ is the subspace orthogonal to $V^{1,0}\cap W_{1,\CC}$ with respect to the pairing $(\bullet,\bullet)_V$.
\end{lem}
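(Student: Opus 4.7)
The plan is to use simultaneous diagonalization of $I$ and $f$ on $V_\CC$ and then a simple isotropy argument for the orthogonality equality. First I would check that $I$ commutes with $f$. The element $\tilde{I}$ lies in $\Spin(V_\RR)_P$, so it preserves the pure spinor lines $\tilde{\ell}_1$ and $\tilde{\ell}_2$ spanning $P_K$, and therefore $I=\rho(\tilde{I})$ preserves each of the maximal isotropic subspaces $W_1$ and $W_2$; since $f=\tilde{e}_{\sqrt{-d}}$ acts on $W_i$ by the scalars $\pm\sqrt{-d}$, the operators $I$ and $f$ commute on $V_\CC$. (Alternatively one invokes Lemma~\ref{lemma-centralizer-of-rho-Spin-V-P} after extending scalars to $\RR$.) In particular $(I\circ f)^2=I^2 f^2=d\cdot\one_{V_\RR}$, so $I\circ f$ has only eigenvalues $\pm\sqrt{d}$ on $V_\RR$, which is what makes $\nu(I)$ meaningful.

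Next I would simultaneously diagonalize $I$ and $f$ on $V_\CC$. Using the chosen branch $\sqrt{-d}=i\sqrt{d}$, a direct computation of the eigenvalue of $I\circ f$ on each joint eigenspace $V^{a,b}\cap W_{j,\CC}$ shows that the $+\sqrt{d}$-eigenspace of $I\circ f$ is
\[
V_{+,\CC}\;=\;(V^{1,0}\cap W_{2,\CC})\oplus(V^{0,1}\cap W_{1,\CC}),
\]
while the $-\sqrt{d}$-eigenspace is
\[
V_{-,\CC}\;=\;(V^{1,0}\cap W_{1,\CC})\oplus(V^{0,1}\cap W_{2,\CC}).
\]
Complex conjugation swaps $V^{1,0}\leftrightarrow V^{0,1}$ and, since $W_2=\overline{W_1}$, also swaps $W_{1,\CC}\leftrightarrow W_{2,\CC}$; hence $\dim(V^{1,0}\cap W_{2,\CC})=\dim(V^{0,1}\cap W_{1,\CC})$ and similarly for the other pair. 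The assumption $\nu(I)=2n$ gives $\dim V_{+,\CC}=2n$ (and forces $\dim V_{-,\CC}=2n$ as well), so each of the four summands has dimension $n$, which is the first assertion.

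For the orthogonality identity, the key observation is that $V^{1,0}$ is isotropic for $(\bullet,\bullet)_V$. Indeed, $\tilde{I}\in\Spin(V_\RR)$ implies $I\in SO(V_\RR)$, and for $v,w\in V^{1,0}$ the isometry and eigenvalue relations give $(v,w)_V=(Iv,Iw)_V=i^2(v,w)_V=-(v,w)_V$. Consequently both $V^{1,0}\cap W_{1,\CC}$ and $V^{1,0}\cap W_{2,\CC}$ sit inside $V^{1,0}$ and are mutually orthogonal, so
\[
V^{1,0}\cap W_{2,\CC}\;\subseteq\;(V^{1,0}\cap W_{1,\CC})^\perp\cap W_{2,\CC}.
\]
Because $W_{1,\CC}$ and $W_{2,\CC}$ are complementary maximal isotropics, $(\bullet,\bullet)_V$ restricts to a perfect pairing between them, so the right-hand side is the annihilator in $W_{2,\CC}$ of the $n$-dimensional subspace $V^{1,0}\cap W_{1,\CC}\subseteq W_{1,\CC}$, hence has dimension $2n-n=n$. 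The inclusion of $n$-dimensional spaces is an equality, proving~(\ref{eq-complex-structure-is-determined-by}). The only point requiring care throughout is the sign convention $\sqrt{-d}=i\sqrt{d}$ in the eigenvalue bookkeeping; everything else is linear algebra.
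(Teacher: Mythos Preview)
Your proof is correct and follows essentially the same approach as the paper: simultaneous diagonalization of $I$ and $f$, identification of the $\pm\sqrt{d}$-eigenspaces of $I\circ f$ with the four joint eigenspaces, a conjugation argument to equate dimensions within each, and an isotropy/dimension count for the orthogonality identity. The only cosmetic differences are that the paper takes the commutation of $I$ and $f$ from the setup preceding the lemma (via Lemma~\ref{lemma-stabilizer-is-isomorphic-to-so-f}) and computes $(V^{1,0}\cap W_{1,\CC})^\perp$ in full before intersecting with $W_{2,\CC}$, whereas you go straight to the annihilator in $W_{2,\CC}$ using the perfect pairing $W_{1,\CC}\times W_{2,\CC}\to\CC$.
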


\begin{proof}
The elements $I$ and $f$ are simultaneously diagonalizable and 
\[
(V^{1,0}\cap W_{1,\CC})\oplus(V^{1,0}\cap W_{2,\CC})\oplus(V^{0,1}\cap W_{1,\CC})\oplus(V^{0,1}\cap W_{2,\CC})=V_\CC.
\]
The dimension of each direct summand above in $n$, by Remark \ref{remark-P-is-contained-in-the-Hodge-ring-if-I-is-in-image-of-Spin-V-P} and Lemma \ref{lemma-decomposition-into-4-direct-summands} in the special case when $I$ is associated to a complex structure on $X$ and $P$ is contain in the Hodge ring. We provide a proof below for the general case.

The subspace $V^{1,0}\cap W_{1,\CC}$ is the simultaneous eigenspace with eigenvalues $\sqrt{-1}$ for $I$ and $\sqrt{-d}$ for $f$, and we will abbreviate it by saying that it is the $(\sqrt{-1},\sqrt{-d})$-eigenspace.
Similarly, the other three summands are the $(\sqrt{-1},-\sqrt{-d})$, $(-\sqrt{-1},\sqrt{-d})$, and $(-\sqrt{-1},-\sqrt{-d})$-eigenspaces. Now $(I\circ f)^2=d\one_{V_\RR}$ and $I\circ f$ is defined over $\RR$. Hence, its eigenspaces 
$L_1$ and $L_2$ in $V_\CC$, with eigenvalues $-\sqrt{d}$ and $\sqrt{d}$ respectively, are defined over $\RR$. 
Clearly, $L_1=(V^{1,0}\cap W_{1,\CC})+(V^{0,1}\cap W_{2,\CC})$ and
$L_2=(V^{1,0}\cap W_{2,\CC})+(V^{0,1}\cap W_{1,\CC})$.
So $L_1\cap W_{1,\CC}=V^{1,0}\cap W_{1,\CC}$ and $L_2\cap W_{1,\CC}=V^{0,1}\cap W_{1,\CC}$ and similarly for $W_{2,\CC}$. Finally, $\sigma(L_1\cap W_{1,\CC})=L_1\cap W_{2,\CC}$ and
$\sigma(L_2\cap W_{1,\CC})=L_2\cap W_{2,\CC}$. The  equality
$\dim(L_1\cap W_{1,\CC})+\dim(L_1\cap W_{2,\CC})=\dim(L_1)=2n$ 
implies that $\dim(L_1\cap W_{1,\CC})=\dim(L_1\cap W_{2,\CC})=n$ 
and similarly for $L_2$.

The equality
\[
\left(V^{1,0}\cap W_{1,\CC}\right)^\perp=
(V^{1,0}\cap W_{1,\CC})\oplus(V^{1,0}\cap W_{2,\CC})\oplus(V^{0,1}\cap W_{1,\CC}).
\]
holds, since
both subspaces in the above equation are $3n$-dimensional and the inclusion of the 
left hand side in the right hand side follows from the fact that $V^{1,0}$ and $W_{1,\CC}$ are both isotropic.
Equation (\ref{eq-complex-structure-is-determined-by}) follows from the equality
$W_{2,\CC}=(V^{1,0}\cap W_{2,\CC})+(V^{0,1}\cap W_{2,\CC})$ and the one displayed above.
\end{proof}

\begin{cor}
\label{cor-plane-of-Hodge-Weil-classes}
The plane $\wedge^{2n}W_1+\wedge^{2n}W_2$ in $\wedge^{2n}V_K$ is defined over $\QQ$ and consists of rational classes of Hodge-type $(n,n)$ (the Hodge-Weil classes) for every complex structure $I$ in $\rho(\Spin(V_\RR)_P)$ satisfying $\nu(I)=2n$.
\end{cor}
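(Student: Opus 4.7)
The plan has two parts. First I would verify that the plane $\wedge^{2n}W_1+\wedge^{2n}W_2$ is defined over $\QQ$. The two lines $\tilde{\ell}_i\subset S^+_K$, $i=1,2$, are complex conjugate under the Galois involution $\sigma\in\Gal(K/\QQ)$, and the corresponding maximal isotropic subspaces satisfy $W_2=\sigma(W_1)$. Under the embedding $K\hookrightarrow\CC$ induced by our choice $\sqrt{-d}=\sqrt{d}\exp(i\pi/2)$, the Galois action $\sigma$ coincides with complex conjugation on $V_K\subset V_\CC$. Consequently $\sigma(\wedge^{2n}W_1)=\wedge^{2n}W_2$, so the two-dimensional $K$-subspace $\wedge^{2n}W_1\oplus\wedge^{2n}W_2$ of $\wedge^{2n}V_K$ is $\sigma$-stable and descends to a two-dimensional $\QQ$-subspace of $\wedge^{2n}V_\QQ$. (The sum is direct because $W_1\cap W_2=0$, by Lemma \ref{lemma-P-is-non-isotropic-iff-W-1-and-W-2-are-transversal}.)

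Second, I would check that under the assumption $\nu(I)=2n$ each line $\wedge^{2n}W_{i,\CC}$ is of pure Hodge type $(n,n)$ with respect to the Hodge decomposition $V_\CC=V^{1,0}\oplus V^{0,1}$ induced by $I$. The key input is Lemma \ref{lemma-3-dimensional-eigenvalues}, which gives the four-fold decomposition
\[
V_\CC=(W_{1,\CC}\cap V^{1,0})\oplus(W_{1,\CC}\cap V^{0,1})\oplus(W_{2,\CC}\cap V^{1,0})\oplus(W_{2,\CC}\cap V^{0,1}),
\]
with every summand of dimension $n$. In particular
\[
W_{i,\CC}=(W_{i,\CC}\cap V^{1,0})\oplus(W_{i,\CC}\cap V^{0,1}),\qquad i=1,2,
\]
so that
\[
\wedge^{2n}W_{i,\CC}=\wedge^n(W_{i,\CC}\cap V^{1,0})\otimes\wedge^n(W_{i,\CC}\cap V^{0,1})\ \subset\ \wedge^nV^{1,0}\otimes\wedge^nV^{0,1},
\]
which is the $(n,n)$-component of $\wedge^{2n}V_\CC$. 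Thus both lines, and hence the full plane $\wedge^{2n}W_1+\wedge^{2n}W_2$, lie in the $(n,n)$-part.

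Combining the two parts, the rational plane obtained from step one is contained in $\wedge^{n,n}V_\CC$, so its rational points are rational classes of Hodge-type $(n,n)$, i.e.\ Hodge-Weil classes. I do not expect any of this to be an obstacle; the only subtlety is the compatibility between the algebraic Galois conjugation on $V_K$ and the complex conjugation on $V_\CC$, which follows from the explicit choice of embedding $K\hookrightarrow\CC$ fixed in Section \ref{sec-polarized-avwt-from-secants}. Since the statement is required for every complex structure $I\in\rho(\Spin(V_\RR)_P)$ with $\nu(I)=2n$, and Lemma \ref{lemma-3-dimensional-eigenvalues} gives the four-fold decomposition uniformly in such $I$, the conclusion holds uniformly.
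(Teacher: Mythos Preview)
Your proof is correct and follows the same approach as the paper: the $\sigma$-invariance of $\wedge^{2n}W_1\oplus\wedge^{2n}W_2$ gives the descent to $\QQ$, and Lemma~\ref{lemma-3-dimensional-eigenvalues} gives the $(n,n)$-type. You have simply spelled out in detail what the paper compresses into two sentences.
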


\begin{proof}
$\wedge^{2n}W_1+\wedge^{2n}W_2$ is defined over $\QQ$, since it is defined over $K$ and is $\sigma$-invariant.
Each of $\wedge^{2n}W_i$, $i=1,2$, consists of classes of Hodge-type $(n,n)$, by Lemma \ref{lemma-3-dimensional-eigenvalues}.
\end{proof}

Given $x,y\in V_\RR$ and $\tilde{I}\in \Spin(V_\RR)_w$, such that $I:=\rho(\tilde{I})$ is a complex structure on $V_\RR$ as in Corollary \ref{cor-plane-of-Hodge-Weil-classes}, set 
\begin{eqnarray*}
\Xi_P(x,y)&:=&(f(x),y)_V,
\\
g_I(x,y)&:=& \Xi_P(x,I(y))=(f(x),I(y))_V.
\end{eqnarray*}
The automorphisms  $I$ and $f$ of $V_\RR$ commute and both are anti-self-dual with respect to the symmetric bilinear pairing $(\bullet,\bullet)_V$ on $V_\RR$. Hence, $g_I$ is symmetric:
\[
g_I(y,x)=(f(y),I(x))_V=-(y,f(I(x)))_V=-(y,I(f(x)))_V=(I(y),f(x))_V
=g_I(x,y).
\]

\begin{cor}
\label{cor-abelian-variety-of-weil-type}
If the bilinear form $g_I$ is positive definite, then the rational $(1,1)$ class $\Xi_P(x,y)$ is a K\"{a}hler class, and so the complex torus $(V_\RR/V_\ZZ,I,\Xi_P)$ is a polarized abelian variety of Weil type.
\end{cor}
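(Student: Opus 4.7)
The plan is to verify the three conditions in the definition of a polarized abelian variety of Weil type from Section~\ref{sec-abelian-varieties-of-Weil-type-introduction}: (a) $\Theta_P$ is an ample rational $(1,1)$-class on $(V_\RR/V, I)$; (b) the homomorphism $\eta = \tilde e$ of Equation~(\ref{eq-action-by-imaginary-quadratic-number-field}) embeds $K$ into $\End_\QQ(V_\RR/V)$ and its image satisfies the Weil-type eigenspace condition on $H^{1,0} = V^{1,0}$; and (c) $\eta(k)^*\Theta_P = Nm(k)\Theta_P$ for all $k\in K$.

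For (a), rationality is immediate since $f$ acts on $V_\QQ$. The $(1,1)$-type follows by combining Lemmas~\ref{lemma-centralizer-of-rho-Spin-V-P} and~\ref{lemma-Spin-V-P-invariant-classes-are-Hodge}: because $f$ centralizes $\rho(\Spin(V_\QQ)_P)$, the form $\Theta_P=(f(\bullet),\bullet)_V$ is $\Spin(V)_P$-invariant, and because $I$ belongs to $\rho(\Spin(V_\RR)_P)$ by hypothesis, the Hodge decomposition $V_\CC=V^{1,0}\oplus V^{0,1}$ determined by $I$ places $\Theta_P$ in $\wedge^{1,1}V_\CC^*$. The assumed positive definiteness of $g_I(x,y)=\Theta_P(x,I(y))$ is the standard criterion for the real $(1,1)$-form $\Theta_P$ to be K\"ahler; a rational K\"ahler class on a complex torus is an integral polarization, so $(V_\RR/V,I,\Theta_P)$ is a polarized abelian variety.

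For (b), the identity $\eta(\sqrt{-d})=f$ identifies the eigenspaces of $\eta(\sqrt{-d})$ in $V_K$ with $W_1$ and $W_2$, and each intersects $V^{1,0}$ in an $n$-dimensional subspace by Lemma~\ref{lemma-3-dimensional-eigenvalues}, whose hypothesis $\nu(I)=2n$ is inherited from the standing assumption on $I$ in Corollary~\ref{cor-plane-of-Hodge-Weil-classes}. Moreover, $\eta(k)$ commutes with $I$ by Lemma~\ref{lemma-centralizer-of-rho-Spin-V-P}, so it descends to a holomorphic endomorphism of $V_\RR/V$, giving a genuine embedding $K\hookrightarrow \End_\QQ(V_\RR/V)$.

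For (c), I would carry out the following short calculation. Decomposing $x=x_1+x_2$ and $y=y_1+y_2$ with $x_i,y_i\in W_i$, one has $\eta(k)x=k x_1+\bar k x_2$; since $W_1$ and $W_2$ are isotropic for $(\bullet,\bullet)_V$, the expansion of $(\eta(k)x,\eta(k)y)_V$ retains only the cross terms $(x_1,y_2)_V$ and $(x_2,y_1)_V$, each acquiring the scalar $k\bar k=Nm(k)$. Combined with $\eta(k)\circ f = f\circ \eta(k)$ this gives $\eta(k)^*\Theta_P(x,y)=(f\eta(k)x,\eta(k)y)_V=Nm(k)\Theta_P(x,y)$. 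No step poses a serious obstacle; the content resides in the previously established invariance and eigenspace statements, and the present proof is essentially their assembly.
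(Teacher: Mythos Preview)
Your proof is correct and follows essentially the same approach as the paper, which verifies the polarization condition $f^*\Theta_P = d\,\Theta_P$ by the same direct computation you give in (c) (the paper checks only $k=\sqrt{-d}$, which suffices). You are more explicit than the paper in spelling out (a) and (b); one minor remark: your citation of Lemma~\ref{lemma-Spin-V-P-invariant-classes-are-Hodge} for the $(1,1)$-type is slightly misplaced, since that lemma concerns the original complex structure of $X\times\hat X$, but your stated argument---$\Theta_P$ is $\Spin(V)_P$-invariant and $I\in\rho(\Spin(V_\RR)_P)$, hence $\Theta_P(Ix,Iy)=\Theta_P(x,y)$---is self-contained and correct without it.
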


\begin{proof}
Set $A:=V_\RR/V_\ZZ$, endowed with the complex structure $I$.
Consider the embedding $K\hookrightarrow \End_\QQ(A)$, which sends $\sqrt{-d}$ to $f$, given in (\ref{eq-f}).
Then 
\[
f^*\Xi_P(x,y):=\Xi_P(f(x),f(y))=(f^2(x),f(y))_V=-(f^3(x),y)_V=d(f(x),y)=d\Xi_P(x,y),
\]
verifying the condition on the polarization in \cite[Def. 4.9]{van-Geemen}.
\end{proof}

%
\section{An adjoint orbit in  $\Spin(V_\RR)_P$ as a period domain of abelian varieties of Weil type}
\label{sec-period-domains}
Recall that $\rho$ maps $\Spin(V_\RR)_P$ isomorphically onto the subgroup $SO_+(V_\RR)_f$, by Lemma \ref{lemma-stabilizer-is-isomorphic-to-so-f}.
Let 
\begin{equation}
\label{eq-Omega}
\Omega_P \ \subset \ SO_+(V_\RR)_f
\end{equation} be the subset of elements $I$, such that $I$ is a complex structure on $V_\RR$, the eigenspaces of $f\circ I$ are both $2n$-dimensional,  and the bilinear form $g_I$ in Corollary \ref{cor-abelian-variety-of-weil-type} is positive definite. 

Let $\iota:\Omega_P\rightarrow Gr(n,W_{1,\CC})$ be given by $\iota(I):=V^{1,0}_I\cap W_{1,\CC}$.
The map $\iota$ is well defined, by Lemma \ref{lemma-3-dimensional-eigenvalues}.

\begin{lem}
\label{lemma-coadjoint-orbit-embedds-as-open-subset-of-Grassmannian}
The map $\iota$ is an embedding of $\Omega_P$ as a non-empty subset, open in the classical topology, of the Grassmannian $Gr(n,W_{1,\CC})$.
\end{lem}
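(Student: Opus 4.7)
The plan is to exhibit a continuous inverse to $\iota$ on an open neighborhood of its image and conclude from that. Injectivity of $\iota$ is immediate from Lemma \ref{lemma-3-dimensional-eigenvalues}: for $I\in\Omega_P$, the equality $V^{1,0}\cap W_{2,\CC}=(V^{1,0}\cap W_{1,\CC})^\perp\cap W_{2,\CC}$ in (\ref{eq-complex-structure-is-determined-by}) shows that $\iota(I)$ determines $V^{1,0}$, hence $V^{0,1}=\overline{V^{1,0}}$, hence $I$.

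Next I will build a local inverse. For $U\in Gr(n,W_{1,\CC})$ in the (non-empty Zariski open) locus where $\dim(U^\perp\cap W_{2,\CC})=n$, set
\[
V^{1,0}_U\ :=\ U\oplus (U^\perp\cap W_{2,\CC}),\qquad V^{0,1}_U\ :=\ \overline{V^{1,0}_U}.
\]
The condition $V^{1,0}_U\cap V^{0,1}_U=(0)$ is open in the classical topology. Whenever it holds, prescribing multiplication by $\pm i$ on these two eigenspaces defines a real complex structure $I_U$ on $V_\RR$. Because $W_{1,\CC}$ and $W_{2,\CC}$ are $f$-eigenspaces, both summands of $V^{1,0}_U$ are $f$-stable, so $I_U$ commutes with $f$; this forces $(fI_U)^2=d\,\one$ and the two eigenspaces of $fI_U$ to be $2n$-dimensional, so $\nu(I_U)=2n$. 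Moreover, $I_U$ restricted to $W_1$ has the eigenvalue $+i$ on $U$ (dimension $n$) and $-i$ on $\overline{(U^\perp\cap W_{2,\CC})}\cap W_{1,\CC}$ (dimension $n$), so $\det(I_U|_{W_1})=1$, and being a complex structure $I_U$ lies in the identity component $SO_+(V_\RR)_f$. By (the real analogue of) Lemma \ref{lemma-stabilizer-is-isomorphic-to-so-f}, $I_U$ lifts to $\Spin(V_\RR)_P$. The assignment $U\mapsto I_U$ is manifestly continuous, and a direct check using (\ref{eq-complex-structure-is-determined-by}) shows $\iota(I_U)=U$ and $I_{\iota(I)}=I$, so $\iota$ is a homeomorphism onto its image.

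It remains to check the open condition that $g_{I_U}$ is positive definite. The set $\Omega_P$ is non-empty since the complex structure of $X\times\hat{X}$ lies in it, by Proposition \ref{prop-polarized-abelian-variety-of-Weil-type} (applied to the secant $P$ of (\ref{eq-P}), where the positivity was verified). Around any $I_0\in\Omega_P$, positive-definiteness of $g_{I_U}$ persists, and all previous conditions are open, so $\iota(\Omega_P)$ is open in $Gr(n,W_{1,\CC})$. The only delicate point is the verification that the candidate inverse $U\mapsto I_U$ really lands in $SO_+(V_\RR)_f$; this is the one step that relies essentially on the $f$-stability of $V^{1,0}_U$ produced by our use of the orthogonal complement inside $W_{2,\CC}$, mirroring (\ref{eq-complex-structure-is-determined-by}).
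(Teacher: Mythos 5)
Your proposal follows essentially the same route as the paper's proof: injectivity via the formula of Lemma \ref{lemma-3-dimensional-eigenvalues}, local inverse $U\mapsto I_U$ via $V^{1,0}_U:=U\oplus(U^\perp\cap W_{2,\CC})$, openness of the defining conditions, and non-emptiness via Proposition \ref{prop-polarized-abelian-variety-of-Weil-type}. Two inferences are over-claimed and should be tightened. (i) The assertion that commutativity with $f$ ``forces $(fI_U)^2=d\,\one$ and the two eigenspaces of $fI_U$ to be $2n$-dimensional'' is only half right: commutativity gives $(fI_U)^2=d\,\one$, but it does not by itself constrain the eigenspace dimensions --- for instance, if $I_U=f/\sqrt{d}$ then $fI_U=-\sqrt{d}\,\one$ and one eigenspace is everything. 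The correct argument, implicit in your eigenvalue computation one sentence later, is that the $-\sqrt{d}$-eigenspace of $fI_U$ is exactly $U\oplus\overline{U}$, hence $2n$-dimensional, because $U=V^{1,0}_U\cap W_{1,\CC}$ and $\overline{U}=V^{0,1}_U\cap W_{2,\CC}$. (ii) ``Being a complex structure $I_U$ lies in the identity component $SO_+(V_\RR)_f$'' is not a valid implication: a complex structure need not be an isometry of $(\bullet,\bullet)_V$. You must first check that $V^{1,0}_U$ is isotropic (which holds, since $U\subset W_{1,\CC}$ and $U^\perp\cap W_{2,\CC}\subset W_{2,\CC}$ are each isotropic and mutually orthogonal by the very definition of $U^\perp$), so that $I_U$ is anti-self-dual and hence orthogonal, and then impose as an additional open condition that $I_U$ lands in the identity component $SO_+(V_\RR)$. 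Finally, the restriction to the locus where $\dim(U^\perp\cap W_{2,\CC})=n$ is vacuous: since $(\bullet,\bullet)_V$ restricts to a perfect pairing $W_{1,\CC}\times W_{2,\CC}\to\CC$, the annihilator of an $n$-dimensional $U\subset W_{1,\CC}$ in $W_{2,\CC}$ is always $n$-dimensional, so this is not a condition to impose.
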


\begin{proof}
The map $\iota$ is injective. Indeed,
if $U=\iota(I)$, then $I$ is the unique complex structure on $V_\RR$ with 
\[
V^{1,0}_I=U\oplus \left[U^\perp\cap W_{2,\CC}\right],
\]
by Equation (\ref{eq-complex-structure-is-determined-by}), where $U^\perp$ is the subspace of $V_\CC$ orthogonal to $U$ with respect to the pairing $(\bullet,\bullet)_V$. Given an $n$-dimensional subspace $U$ of
$W_{1,\CC}$, set $V^{1,0}_U:=U\oplus \left[U^\perp\cap W_{2,\CC}\right]$. 
Then $V^{1,0}_U$ is an isotropic $2n$-dimensional subspace of $V_\CC$, invariant under $f$.
The condition that $V^{1,0}_U$ and its complex conjugate $V^{0,1}_U$ are transversal is open. If $U$ is such, let $I_U$ be the complex structure on $V_\RR$ with $V^{1,0}_{I_U}=V^{1,0}_U$. The subspace $V^{1,0}_{I_U}$ is $f$-invariant and $f$ is defined over $\QQ$ and so $V^{0,1}_{I_U}$ is $f$-invariant as well. 
Hence, $I_U$ commutes with $f$. The condition that $g_I$ is positive definite is open, as is the condition 
that $I_U$ preserves the orientation of the positive cone in $V_\RR$.
We conclude that $I_U$ belongs to $SO_+(V_\RR)_f$ for $U$ in an open analytic subset of $Gr(n,W_{1,\CC})$. Furthermore, the $-\sqrt{d}$-eigenspace of $I\circ f$ is $U\oplus \bar{U}$ and is $2n$-dimensional. Hence, $\nu(I)=2n$.

It remains to prove that $\Omega_P$ is non-empty. Note that for $I$ the complex structure of $X\times \hat{X}$ the negative definite bilinear form $g_P$ of 
Proposition \ref{prop-polarized-abelian-variety-of-Weil-type} is $-g_I$. Hence, $g_I$ is positive definite. 
Assumption \ref{assumption-on-rational-secant-plane-P} and Lemma \ref{lemma-decomposition-into-4-direct-summands} imply that $I$ commutes with $f$.
Remark \ref{rem-complex-structure-lifts-to-Spin-V-RR} verifies that $I$ is in $SO_+(V_\RR)$. The restriction of $I$ to $W_i$ has determinant $1$, for $i=1,2$, 
and $\nu(I)=2n$, by Lemma \ref{lemma-decomposition-into-4-direct-summands}. Hence, $I$ belongs to $SO_+(V_\RR)_f$ and so to $\Omega_P$.
%
\end{proof}

\begin{lem}
\label{lemma-period-domain-is-an-adjoint-orbit}
The connected components of $\Omega_P$ are $SO_+(V_\RR)_f$-adjoint orbits. 
\end{lem}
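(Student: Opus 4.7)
The plan is to combine stability of $\Omega_P$ under conjugation with a transitivity argument via the Grassmannian embedding $\iota$ of Lemma \ref{lemma-coadjoint-orbit-embedds-as-open-subset-of-Grassmannian}.

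First, I would verify that $\Omega_P$ is preserved by the adjoint action. Given $I\in\Omega_P$ and $g\in SO_+(V_\RR)_f$, the conjugate $gIg^{-1}$ is a complex structure commuting with $f$, with restrictions to $W_{1,\CC}$ and $W_{2,\CC}$ of determinant one, and $\nu(gIg^{-1})=\nu(I)=2n$. Since $g$ is an isometry commuting with $f$,
\[
g_{gIg^{-1}}(x,y)=(f(x),gIg^{-1}(y))_V=(g^{-1}f(x),Ig^{-1}(y))_V=g_I(g^{-1}x,g^{-1}y),
\]
so positive definiteness of $g_I$ is inherited by $g_{gIg^{-1}}$, and $gIg^{-1}\in\Omega_P$. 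Equivariance of $\iota$ then follows from $V^{1,0}_{gIg^{-1}}=g(V^{1,0}_I)$ together with the $g$-invariance of $W_{1,\CC}$: $\iota(gIg^{-1})=g\cdot\iota(I)$.

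The main step is to identify the orbits of $SO_+(V_\RR)_f$ on $\iota(\Omega_P)$. Projection $V_\QQ\subset V_K=W_1\oplus W_2\to W_1$ is a $K$-linear isomorphism with respect to $\tilde{e}$, and transporting the Hermitian form $H$ of Lemma \ref{lemma-su-3-3} yields the Hermitian form $\hat{H}_\CC(v,w)=2d(\bar{v},w)_V$ on $W_{1,\CC}$, of signature $(n,n)$. Elements of $SO_+(V_\RR)_f$ preserve $\hat{H}_\CC$ and restrict to $W_{1,\CC}$ with determinant one, and conversely any $g_1\in SU(W_{1,\CC},\hat{H}_\CC)$ extends to an element of $SO_+(V_\RR)_f$ by setting $g|_{W_{2,\CC}}=\overline{g_1}$; this yields an isomorphism $SO_+(V_\RR)_f\cong SU(W_{1,\CC},\hat{H}_\CC)\cong SU(n,n)$. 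Using Lemma \ref{lemma-g-P-in-terms-of-4-summands} one checks that the positive definiteness of $g_I$ is equivalent to $\hat{H}_\CC$ being definite of a specific sign on $U=\iota(I)$; conversely, for any such $U$, the pairing $U\times\overline{U}\to\CC$ sending $(u,\bar{w})$ to $(u,\bar{w})_V$ is proportional to $\hat{H}_\CC$ and hence non-degenerate, so $V^{1,0}_U:=U\oplus(U^\perp\cap W_{2,\CC})$ is transversal to its complex conjugate and defines an element $I_U\in\Omega_P$ with $\iota(I_U)=U$.

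Consequently $\iota(\Omega_P)$ is exactly the open $SU(n,n)$-orbit on $Gr(n,W_{1,\CC})$ consisting of $n$-dimensional subspaces on which $\hat{H}_\CC$ is definite of the specified sign. This orbit is connected (a bounded symmetric domain of type $I_{n,n}$) and $SU(n,n)$ acts transitively on it, so $\Omega_P$ is a single $SO_+(V_\RR)_f$-adjoint orbit and in particular each of its connected components is an adjoint orbit. The main obstacle is setting up the isomorphism $SO_+(V_\RR)_f\cong SU(W_{1,\CC},\hat{H}_\CC)$ cleanly and pinning down the precise correspondence between positivity of $g_I$ and the sign of $\hat{H}_\CC|_U$; once these are in place, the statement reduces to the well-known homogeneity of the symmetric domain $SU(n,n)/S(U(n)\times U(n))$.
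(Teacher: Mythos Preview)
Your approach is correct and takes a genuinely different route from the paper's. The paper proves stability of $\Omega_P$ under conjugation exactly as you do, but then proceeds by a dimension count rather than an explicit identification: it computes $\dim_\RR SO_+(V_\RR)_f = 4n^2-1$ (via Lemma~\ref{lemma-su-3-3}) and shows that the centralizer of any $I\in\Omega_P$ has real dimension $2n^2-1$, so every adjoint orbit has dimension $2n^2=\dim\Omega_P$ (the latter by Lemma~\ref{lemma-coadjoint-orbit-embedds-as-open-subset-of-Grassmannian}) and is therefore open; since the orbits are connected and partition $\Omega_P$, they coincide with the connected components. Your route instead realizes $SO_+(V_\RR)_f$ explicitly as $SU(n,n)$ acting on $W_{1,\CC}$ and identifies $\iota(\Omega_P)$ with the bounded symmetric domain of type~$\mathrm{I}_{n,n}$, invoking its homogeneity. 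This buys you more---it shows $\Omega_P$ is in fact connected, a single orbit---at the cost of setting up the Hermitian form on $W_{1,\CC}$ and verifying the sign correspondence you flag as the main obstacle; the paper's dimension argument is shorter and avoids this bookkeeping but does not decide connectedness.
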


\begin{proof}
Set $I_1:=hIh^{-1}$, 
for some $h\in SO_+(V_\RR)_f$ and $I\in\Omega_P$. Then $f$ commutes with $I$ and $h$ and so
\begin{eqnarray*}
g_{I_1}(x,y)&=&\Xi_P(x,I_1(y))=(f(x),hIh^{-1}(y))_V=
(h^{-1}(f(x)),I(h^{-1}(y)))_V\\
&=&(f(h^{-1}(x)),I(h^{-1}(y)))_V
=g_I(h^{-1}(x),h^{-1}(y)).
\end{eqnarray*}
Hence, $g_{I_1}$ is positive definite as well.

It remains to prove that the dimensions of the adjoint orbits of $I\in\Omega_P$ is equal to that of $\Omega_P$.
The dimension $\dim_\QQ(SO_+(V_\QQ)_f)$ is the dimension $(2n)^2-1$ of $SU(V_\QQ,H)$, by Lemma \ref{lemma-su-3-3}.
Hence, $\dim_\RR(SO_+(V_\RR)_f)=4n^2-1$.
If $g\in SO_+(V_\RR)_f$ commutes with $I\in\Omega_P$, then $g$ leaves invariant each of the direct summands $W_1^{1,0}$, $W_1^{0,1}$, $W_2^{1,0}$, $W_2^{0,1}$ in 
Lemma \ref{lemma-decomposition-into-4-direct-summands}. Then $V_\RR$ decomposes as the $H$-orthogonal direct sum of two $g$-invariant $H$-non-degenerate subspaces $[W^{1,0}_1\oplus W^{0,1}_2]\cap V_\RR$ and $[W^{0,1}_1\oplus W^{1,0}_2]\cap \RR$. The determinants of the restrictions of $g$
to the two direct summands are inverses of each other. We conclude that the real dimension of the commutator of $I$ is $2\dim(U(n))-1=2n^2-1$. 
\hide{
Denote by $g_i^{1,0}$ the restriction of $g$ to $W_i^{1,0}$ and define $g_i^{0,1}$ analogously. 
Then $g_2^{0,1}=\overline{g_1^{1,0}}$ and $g_2^{1,0}=\overline{g_1^{0,1}}$, since $\bar{g}=g$. Furthermore, $g_2^{1,0}$ is the inverse transpose of $g_1^{1,0}$
under the identification of $W_2^{1,0}$ with the dual of $W_1^{1,0}$ via the pairing $(\bullet,\bullet)_{V_K}$. 
We see that $g$ is determined by $g_1^{1,0}$.
Furthermore,
\[
\det(g_1^{0,1})=\det(g_2^{0,1})^{-1}=\det(\overline{g_1^{1,0}})^{-1}
\]
Hence, the commutator of $I$ in $SO_+(V_\RR)_f$
is isomorphic to the subgroup of $GL(W_{1,\CC}^{1,0})$ with real determinant, since
\[
1=\det(g_1^{1,0})\det(g_1^{0,1})=\det(g_1^{1,0})\det(\overline{g_1^{1,0}})^{-1}.
\]
We conclude that the real dimension of the commutator of $I$ is $2n^2-1$. 
}
Thus the real dimension of the adjoint orbit is $2n^2$.
This is the dimension of each component of $\Omega_P$, by Lemma \ref{lemma-coadjoint-orbit-embedds-as-open-subset-of-Grassmannian}. 
\end{proof}

We may regard $\Omega_P$ as a union of adjoint orbits in $\Spin(V_\RR)_P$, by Lemmas \ref{lemma-stabilizer-is-isomorphic-to-so-f} and \ref{lemma-period-domain-is-an-adjoint-orbit}.
Given a complex structure $I\in\Omega_P$, denote by $\tilde{I}$ the unique element in $\Spin(V_\RR)_P$ satisfying $\rho(\tilde{I})=I$ (Lemma \ref{lemma-stabilizer-is-isomorphic-to-so-f}). Note that $m(\tilde{I})$ need not preserve the grading of $H^*(X,\RR)$.
Consider the subgroup ${\mathbb S}_I:=\{\cos(\theta)id_{V_\RR}+\sin(\theta)I \ : \ \theta\in\RR\}$ of $SO_+(V_\RR)$. The identity component of its inverse image in
 $\Spin(V_\RR)$ defines a real Hodge structure of weight $0$ on $S_\RR^+=H^{ev}(X,\RR)$. In other words, we get a decomposition $S_\CC^+:=\oplus S_\CC^{-p,p}$, with $p$ an integer, satisfying 
$\overline{S^{-p,p}_\CC}=S^{p,-p}_\CC.$ 
We will refer to a rational class in $S^{0,0}_\CC$ as a {\em semi-Hodge class}.

\begin{lem}
\label{lemma-Hodge-Weil-classes-remain-of-Hodge-type-for-all-I-in-Omega-P}
\begin{enumerate}
\item
\label{lemma-item-HW-consists-of-Hodge-classes-for-I-in-Omega-P}
The plane $\wedge^{2n}W_1\oplus\wedge^{2n}W_2$ corresponds to a rational plane in $\wedge^{2n}V_\QQ$, which is spanned by Hodge classes, for every complex structure $I$ in $\Omega_P$. 
\item
\label{lemma-item-P-consists-of-Hodge-classes-for-I-in-Omega-P}
The plane $P$ is spanned by semi-Hodge classes, for every complex structure $I$ in $\Omega_P$. 
\end{enumerate}
\end{lem}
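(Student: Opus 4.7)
The plan is as follows. Part (\ref{lemma-item-HW-consists-of-Hodge-classes-for-I-in-Omega-P}) will follow immediately from Corollary \ref{cor-plane-of-Hodge-Weil-classes}: by construction $\Omega_P\subset SO_+(V_\RR)_f=\rho(\Spin(V_\RR)_P)$, and every $I\in\Omega_P$ satisfies $\nu(I)=2n$ by the very definition of $\Omega_P$, which requires both eigenspaces of $I\circ f$ to be $2n$-dimensional. Hence Corollary \ref{cor-plane-of-Hodge-Weil-classes} applies verbatim and yields that $\wedge^{2n}W_1\oplus\wedge^{2n}W_2$ corresponds to a rational plane in $\wedge^{2n}V_\QQ$ spanned by Hodge classes.

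For part (\ref{lemma-item-P-consists-of-Hodge-classes-for-I-in-Omega-P}) the strategy is to lift the circle $\mathbb{S}_I\subset SO(V_\RR)$ to a circle inside $\Spin(V_\RR)_P$ and then to exploit the fact that $\Spin(V)_P$ fixes every vector of $P$ by definition (\ref{eq-Spin(V)_P}). First I would verify that $\mathbb{S}_I\subset SO_+(V_\RR)_f$: since $I=\rho(\tilde I)$ with $\tilde I\in\Spin(V_\RR)_P$, the element $I$ commutes with $f$ by Lemma \ref{lemma-centralizer-of-rho-Spin-V-P}, so every element $\cos(\theta)id+\sin(\theta)I$ commutes with $f$ and therefore preserves the eigenspaces $W_{1,\CC}$ and $W_{2,\CC}$; moreover on the $n+n$ splitting $W_{i,\CC}=(W_{i,\CC}\cap V^{1,0})\oplus(W_{i,\CC}\cap V^{0,1})$ supplied by Lemma \ref{lemma-3-dimensional-eigenvalues} it acts diagonally by $e^{i\theta}$ and $e^{-i\theta}$ respectively, so its determinant on $W_{i,\CC}$ equals $1$.

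Next I would produce the lift using the $\RR$-analog of Lemma \ref{lemma-stabilizer-is-isomorphic-to-so-f}: its proof goes through verbatim over $\RR$ since the kernel $\{\pm 1\}$ of $\rho$ meets $\Spin(V_\RR)_P$ trivially ($-1$ acts by $-id_S$ and so fixes no non-zero element of $P$). Consequently $\rho:\Spin(V_\RR)_P\to SO_+(V_\RR)_f$ is an isomorphism, and $\Spin(V_\RR)_P$ is the identity component of $\rho^{-1}(SO_+(V_\RR)_f)$. Since $\mathbb{S}_I$ is connected, the identity component $\tilde{\mathbb{S}}_I$ of $\rho^{-1}(\mathbb{S}_I)$ is a connected subgroup of $\Spin(V_\RR)_P$, and it is precisely the circle used to define the weight-$0$ Hodge structure on $S^+_\RR=H^{ev}(X,\RR)$. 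By the defining property of $\Spin(V_\RR)_P$, the circle $\tilde{\mathbb{S}}_I$ acts trivially on $P$ via the spin representation, forcing $P\subset S^{0,0}_\CC$, i.e., $P$ consists of semi-Hodge classes. The only delicate point in the whole argument is this lifting/connectedness step, and it reduces to the observation that $-1\in\Spin(V_\RR)$ fixes no non-zero vector of $P$.
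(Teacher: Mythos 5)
Your proposal is correct and follows essentially the same route as the paper: Part (\ref{lemma-item-HW-consists-of-Hodge-classes-for-I-in-Omega-P}) is a direct citation of Corollary \ref{cor-plane-of-Hodge-Weil-classes}, and for Part (\ref{lemma-item-P-consists-of-Hodge-classes-for-I-in-Omega-P}) both you and the paper verify $\mathbb{S}_I\subset SO_+(V_\RR)_f$ (commutation with $f$, diagonal action by $e^{\pm i\theta}$ on the $n+n$ splitting of each $W_{i,\CC}$ giving determinant $1$), then lift $\mathbb{S}_I$ to the identity component of its $\rho$-preimage, which lands in $\Spin(V_\RR)_P$ because $-1$ does not fix $P$, so the Hodge circle acts trivially on $P$. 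The only cosmetic omission is that you do not explicitly check $I_\theta$ is an orthogonal transformation (which follows from $I$ being anti-self-dual, as the paper notes), but this is a minor point and does not affect the correctness of the argument.
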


\begin{proof}
Part (\ref{lemma-item-HW-consists-of-Hodge-classes-for-I-in-Omega-P})
is Lemma \ref{cor-plane-of-Hodge-Weil-classes}.
(\ref{lemma-item-P-consists-of-Hodge-classes-for-I-in-Omega-P})
It suffices to show that $I_\theta:=\cos(\theta)id_{V_\RR}+\sin(\theta)I$ belongs to  $SO_+(V_\RR)_f$, for all $\theta\in\RR$, since then 
the inverse image of ${\mathbb S}_I$
in $\Spin(V_\RR)$ is disconnected, with the identity component in $\Spin(V_\RR)_P$ and the other component is its product with $-1$ and is disjoint from $\Spin(V_\RR)_P$. 
Hence, classes in $P$ are invariant by under the identity component determining the Hodge structure on $S_\CC$. We have seen that $I$ is anti-self dual with respect to $(\bullet,\bullet)_V$. Hence, $I_\theta^\dagger=\cos(\theta)id_V-\sin(\theta)I$ and
$I_\theta^\dagger I_\theta=id_V$ and so $I_\theta$ belongs to $SO(V_\RR)$, for all $\theta\in\RR$, and hence to the connected component $SO_+(V_\RR)$ of $I$. Clearly, $I_\theta$ commutes with $f$, as $I$ does. 
Now $I_\theta$ acts on $W_i^{1,0}$ by scalar multiplication by $\cos(\theta)+\sqrt{-1}\sin(\theta)$ and on $W_i^{0,1}$ by $\cos(\theta)-\sqrt{-1}\sin(\theta)$
and so the determinant of its restriction to $W_i$ is $1$. Hence, $I_\theta$ belongs to $SO_+(V_\RR)_f$.
\end{proof}

\begin{cor}
\label{corollary-Spin-V-P-invariant-classes-are-Hodge}
The classes in $\left(\wedge^*(V_\QQ)\right)^{\Spin(V)_P}$ remain of Hodge type for every complex structure in $\Omega_P$
\end{cor}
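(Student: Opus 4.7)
The plan is to exploit the circle subgroup $\{I_\theta := \cos(\theta)\,\text{id}_{V_\RR} + \sin(\theta)I\}_{\theta \in \RR}$ that was already studied in the proof of Lemma \ref{lemma-Hodge-Weil-classes-remain-of-Hodge-type-for-all-I-in-Omega-P}(\ref{lemma-item-P-consists-of-Hodge-classes-for-I-in-Omega-P}). For any $I \in \Omega_P$, that proof shows $I_\theta \in SO_+(V_\RR)_f$ for every $\theta$. Combined with Lemma \ref{lemma-stabilizer-is-isomorphic-to-so-f}, which gives $SO_+(V_\RR)_f = \rho(\Spin(V_\RR)_P)$, we obtain a one-parameter family of lifts $\tilde{I}_\theta \in \Spin(V_\RR)_P$ with $\rho(\tilde{I}_\theta) = I_\theta$.

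Step 1: Fix a class $\alpha \in (\wedge^* V_\QQ)^{\Spin(V)_P}$ and a complex structure $I \in \Omega_P$. By definition of $\Spin(V)_P$-invariance under the $\rho$-action on $\wedge^*V$, we have $I_\theta(\alpha) = \rho(\tilde{I}_\theta)(\alpha) = \alpha$ for every $\theta \in \RR$.

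Step 2: The eigenspace decomposition $V_\CC = V^{1,0} \oplus V^{0,1}$ with respect to $I$ implies that $I_\theta$ acts on $V^{1,0}$ by the scalar $e^{i\theta}$ and on $V^{0,1}$ by $e^{-i\theta}$, hence on the Hodge summand $\wedge^{p,q}V_\CC$ by $e^{i(p-q)\theta}$. Decomposing $\alpha = \sum_{p,q} \alpha^{p,q}$ with $\alpha^{p,q} \in \wedge^{p,q}V_\CC$ and applying the invariance from Step 1 for all $\theta$, we conclude $\alpha^{p,q} = 0$ whenever $p \neq q$. Thus $\alpha \in \bigoplus_{p} \wedge^{p,p}V_\CC$, which is precisely the assertion that $\alpha$ is of Hodge type with respect to $I$.

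There is no serious obstacle here: the entire content has been packaged into the earlier statement that $\{I_\theta\}$ lifts into $\Spin(V_\RR)_P$, and the corollary is the immediate conversion of invariance under that circle subgroup into the $(p,p)$-type condition. One could phrase the same argument by noting that the Hodge structure on $\wedge^*V_\RR$ associated with $I$ is defined, via the identity component of the inverse image of $\mathbb{S}_I$ in $\Spin(V_\RR)$, by a homomorphism factoring through $\Spin(V_\RR)_P$ (exactly as in Lemma \ref{lemma-Hodge-Weil-classes-remain-of-Hodge-type-for-all-I-in-Omega-P}(\ref{lemma-item-P-consists-of-Hodge-classes-for-I-in-Omega-P})), so every $\Spin(V)_P$-invariant rational class is automatically of Hodge type.
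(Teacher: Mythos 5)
Your proof is correct and takes a genuinely different route from the paper's. The paper argues structurally: it invokes Lemma \ref{lemma-Spin-V-P-invariant-classes-are-Hodge} to identify the invariant subring $(\wedge^* V_\QQ)^{\Spin(V)_P}$ as the span of powers of the one-dimensional $(\wedge^2 V_\QQ)^{\Spin(V)_P}$ together with the two-dimensional Hodge--Weil space, then checks each generating piece separately (the $2$-form is $I$-invariant and hence of type $(1,1)$; the Hodge--Weil classes are of type $(n,n)$ by Lemma \ref{lemma-Hodge-Weil-classes-remain-of-Hodge-type-for-all-I-in-Omega-P}(\ref{lemma-item-HW-consists-of-Hodge-classes-for-I-in-Omega-P})). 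You instead bypass the structure theory entirely: from the fact, established in the proof of Lemma \ref{lemma-Hodge-Weil-classes-remain-of-Hodge-type-for-all-I-in-Omega-P}(\ref{lemma-item-P-consists-of-Hodge-classes-for-I-in-Omega-P}), that the circle $\{I_\theta\}$ lies in $SO_+(V_\RR)_f = \rho(\Spin(V_\RR)_P)$, you deduce that every $\Spin(V)_P$-invariant class is invariant under the whole circle, and circle-invariance is precisely the $(p,p)$-condition. Your version is degree-uniform and does not require knowing the dimensions of the graded pieces of the invariant ring; in fact it recovers Lemma \ref{lemma-Hodge-Weil-classes-remain-of-Hodge-type-for-all-I-in-Omega-P}(\ref{lemma-item-HW-consists-of-Hodge-classes-for-I-in-Omega-P}) as a special case, whereas the paper cites that lemma as input. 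The trade-off is that the paper's proof is arguably more transparent about what the invariant classes actually are.

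One step that you and the paper both leave implicit, and which you should at least flag, is the passage from invariance under the integral group $\Spin(V)_P$ to invariance under the real lifts $\tilde I_\theta \in \Spin(V_\RR)_P$: this requires that the arithmetic group $\Spin(V)_P$ be Zariski-dense in $\Spin(V_\RR)_P$, which holds here because $\Spin(V_\QQ)_P \cong SL_n(K)$ and its integral points form a lattice (Borel density), but it is a genuine (if routine) input that your "by definition of $\Spin(V)_P$-invariance" elides.
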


\begin{proof}
A class $\alpha$ in $\wedge^2(V_\QQ)$ is of type $(1,1)$ with respect to a complex structure $I$, if and only if $I(\alpha)=\alpha$.
Hence the statement holds for classes in $\wedge^2(V_\QQ)^{\Spin(V)_P}$. The subring $\left(\wedge^*(V_\QQ)\right)^{\Spin(V)_P}$ 
consists of the direct sum of the rational $2$-dimensional subspace of Hodge-Weil classes in $\wedge^{2n}(V_\QQ)^{\Spin(V)_P}$ and
the space of powers of  classes in the one-dimensional $\wedge^2(V_\QQ)^{\Spin(V)_P}$, by Lemma \ref{lemma-Spin-V-P-invariant-classes-are-Hodge}. 
Hence, the statement follows from Lemma \ref{lemma-Hodge-Weil-classes-remain-of-Hodge-type-for-all-I-in-Omega-P}(\ref{lemma-item-HW-consists-of-Hodge-classes-for-I-in-Omega-P}).
\end{proof}

\hide{
\begin{example}
\label{example-with-positive-definite-g-I}
(??? this example was generalized and conceptualized  in Proposition \ref{prop-polarized-abelian-variety-of-Weil-type}
???)
Let $\{e_1, e_2, \dots, e_6\}$ be a basis of $H^1(X,\ZZ)$ and $\{f_1, f_2, \dots, f_6\}$ a dual basis of $H^1(\hat{X},\ZZ)$. 
Let $W_1$ be the maximal isotropic subspace spanned over $K$ by 
\[
\{e_1+\sqrt{-d}e_2, -\sqrt{-d}f_1+f_2, e_3+\sqrt{-d}e_4, -\sqrt{-d}f_3+f_4, e_5+\sqrt{-d}e_6, -\sqrt{-d}f_5+f_6\}.
\] 
Then $W_1$ is odd  associated to the odd pure spinor 
\[
u_1=(e_1+\sqrt{-d}e_2)\wedge(e_3+\sqrt{-d}e_4)\wedge(e_5+\sqrt{-d}e_6).
\] 
Similarly, $W_2:=\sigma(W_1)$ is associated to the odd pure spinnor $u_2:=\sigma(u_1)$.
Clearly, $W_1+W_2=V_K.$
Let $L_2$ be the subspace of $V_\RR$ spanned by
\[
\{e_1+df_1, e_2+f_2, e_3+df_3,e_4+f_4,e_5+df_5,e_6+f_6\}.
\]
Set $L_1:=L_2^\perp$. Then $L_2$ is positive definite and $L_1$ is negative definite.
The intersection $L_{2,\CC}\cap W_{1,\CC}$ is $3$-dimensional spanned by 
\[
\{
e_1+df_1+\sqrt{-d}(e_2+f_2), \ 
e_3+df_3+\sqrt{-d}(e_4+f_4), \ 
e_5+df_5+\sqrt{-d}(e_6+f_6)
\}.
\]
$L_{2,\CC}=(L_{2,\CC}\cap W_{1,\CC})+(L_{2,\CC}\cap W_{2,\CC})$ and similarly for $L_{1,\CC}$.
Set $v:=e_1+f_1$. Then $m_v$ acts on $V$ as minus the reflection by $V$ and induces an isomorphism
$m_v:S^-\rightarrow S^+$. 
The subspace $m_v(W_1)$ is an even maximal isotropic subspace with even pure spinor
\[
m_v(u_1)=(e_3+\sqrt{-d}e_4)\wedge(e_5+\sqrt{-d}e_6)+
\sqrt{-d}e_1\wedge e_2\wedge (e_3+\sqrt{-d}e_4)\wedge(e_5+\sqrt{-d}e_6).
\]
Set $w=\frac{1}{2}[m_v(u_1)+\sigma(m_v(u_1))]$. Then
\[
w=[e_3\wedge e_4-d(e_4\wedge e_6)] -d\left[
e_1\wedge e_2 \wedge [e_3\wedge e_6 + e_4\wedge e_5
\right]=[e_3\wedge e_4-d(e_4\wedge e_6)] -d\left[
e_{45}^*+e_{36}^*
\right],
\]
$Pf(X_{12})=d$, $Pf(Y_{12})=d^2$, and
$J(w)=Pf(X_{12})Pf(Y_{12})=d^3$.
The subspace $m_v(L_1)$ is positive definite. Let $h$ be the isometry of $V_\RR$ acting on $m_v(L_1)$ by $1$ and on $m_v(L_2)$ by $-1$. 
Let $f$ act on $m_v(W_1)$ by multiplication by $\sqrt{-d}$ and on $m_v(W_2)$ by multiplication by $-\sqrt{-d}$.
Set $I:=\sqrt{d}h\circ f^{-1}$.  Then $I$ is a complex structure on $V_\RR$ with $V^{1,0}=m_v\left[(W_{1,\CC}\cap L_{1,\CC})+L_{2,\CC}\cap W_{2,\CC}\right]$. The form $g_I$ is positive definite, since the $\sqrt{d}$-eigenspace $m_v(L_1)$ of $f\circ I$ is negative definite.
\end{example}

\begin{question}
Describe the image $\iota(\Omega_P)$ in $Gr(n,W_{1,\CC})$ in terms of the signature of the restriction to a $n$-dimensional subspace $U$ of $W_{1,\CC}$ of the following quadratic form on $W_{1,\CC}$. $W_1$ is a $K$-vector space which is isomorphic as a $2n$-dimensional  $\QQ$-vector space to $V_\QQ$. Pulling back the quadratic form (\ref{eq-pairing-on-V}) on $V_\QQ$ we get one on $W_1$.
\end{question}
}

%
\section{Equivalences of derived categories}
\label{section-equivalences-of-derived-categories}

In Section \ref{sec-cohomological-action-of-derived-equivalences} we recall that the cohomological action $\Phi^H:H^*(X,\ZZ)\rightarrow H^*(X,\ZZ)$, 
of an auto-equivalence $\Phi$ of the derived category of an abelian variety $X$, corresponds to an element of the image of $\Spin(V)$, 
$V:=H^1(X\times\hat{X},\ZZ)$, via the spin representation, and that its image via the vector representation is
an an element of $SO^+(V)$ preserving the Hodge structure.
In section \ref{sec-Spin-eqivariance-of-convolutions} we consider two representations of $\Spin(V)$ on $S=H^*(X,\ZZ)$, the spin representation $m$, and its conjugate $m^\dagger$ via the main anti-automorphism $\tau$. We recall that the Mukai pairing is invariant with respect to the $m\otimes m$-representation on $S\otimes S$, but the Poincar\'{e} pairing is invariant with respect to $m\otimes m^\dagger.$ Hence, the latter needs to be considered in the context of composition of correspondences.

%
\subsection{The cohomological action factors through $\Spin(V)$}
\label{sec-cohomological-action-of-derived-equivalences}
Given two smooth projective varieties $X$ and $Y$ and an object $F$ in the bounded derived category $D^b(X\times Y)$ of coherent sheaves, denote by 
\[
\Phi_F:D^b(X)\rightarrow D^b(Y)
\]
the integral functor given by $\Phi_F(\bullet):=R\pi_{Y,*}(F\otimes L\pi_X^*(\bullet))$, where the tensor product  is the left derived functor. Denote by 
\[
\Psi_F:D^b(Y)\rightarrow D^b(X)
\]
the integral functor given by $\Psi_F(\bullet):=R\pi_{X,*}(F\otimes L\pi_Y^*(\bullet))$.
Given a third smooth projective variety $Z$ and an object $G$ in $D^b(Y\times Z)$, the composition 
$\Phi_G\circ \Phi_F$ is isomorphic to the integral transform $\Phi_{F\ast G}$ given by the convolution
\[
F\ast G := R\pi_{13,*}(L\pi_{12}^*(F)\otimes L\pi_{23}^*G),
\]
where $\pi_{ij}$ is the projection onto the product of the $i$-th and $j$-th factors of $X\times Y\times Z$, numbered from left to right. 
Note that $F\ast G$ is given by the following formula as well:
\[
F\ast G := R\pi_{14,*}(L\pi_{12}^*(F)\otimes L\pi_{23}^*\StructureSheaf{\Delta_{Y}} \otimes L\pi_{34}^*G),
\]
where $\pi_{ij}$ are now projections from $X\times Y\times Y \times Z$ and 
$\Delta_{Y}$ is the diagonal in $Y\times Y$. 
Denote by $F_R$ the object $F^\vee\otimes \pi_X^*\omega_X[\dim(X)]$. Then $\Psi_{F_R}:D^b(Y)\rightarrow D^b(X)$ is the right adjoint functor of $\Phi_F$ and is an inverse,  if $\Phi_F$ is an equivalence. If $\Phi_F$ is an equivalence, then  so is $\Phi_{F_R}$ \cite[Rem. 7.7]{huybrechts-derived-categories-book}. Given an object $E$ in $D^b(X)$ Grothendieck-Verdier-Duality yields the isomorphism 
\begin{equation}
\label{eq-Phi-F-R-is-conjugate-of-Phi-F}
\Phi_{F_R}(E)\cong \left(\Phi_F(E^\vee)\right)^\vee,
\end{equation}
where $E^\vee$ is the derived dual of $E$. 

Let $X$ be an abelian variety. 
The group $\Aut(D^b(X))$, of auto-equivalences, 
fits in the exact sequence
\begin{equation}
\label{eq-short-exact-sequence-of-auto-equivalences}
0\rightarrow 2\ZZ\times X\times \hat{X}\rightarrow \Aut(D^b(X)) \RightArrowOf{ch} \Spin_{Hdg}(V_X)\rightarrow 0,
\end{equation}
where $\Spin_{Hdg}(V_X)$ is the subgroup of $\Spin(V_X)$ preserving the Hodge structure of $V_X$,
and the homomorphism $ch$ sends an auto-equivalence $\Phi_G$ with kernel $G\in D^b(X\times X)$ to the homomorphism
$\phi_G:H^*(X,\ZZ)\rightarrow H^*(X,\ZZ)$, given by
\[
\phi_G(\bullet):=\pi_{2,*}(\pi_1^*(\bullet)\cup ch(G)),
\]
\cite[Prop. 4.3.7 and Cor. 4.3.8]{golyshev-luntz-orlov}. An even integer $2k$ in the factor $2\ZZ$ of the kernel of $ch$ corresponds to an even shift functor mapping an object $F$ to $F[2k]$. The factor $X$ in the kernel of $ch$ acts via translations and the factor $\hat{X}$ via tensorization by line bundles. Equality (\ref{eq-Phi-F-R-is-conjugate-of-Phi-F}) corresponds on the cohomological level to the equality 
\begin{equation}
\label{eq-phi-G-R-is-tau-conjugate-of-phi-G}
\phi_{G_R}=\tau\circ\phi_G\circ\tau,
\end{equation}
where $\tau$ is the involution given in (\ref{eq-Mukai-pairing}). We will see below in Equation (\ref{eq-invariance-of-Poincare-pairing}) that $\phi_{G_R}$ is the adjoint of $\phi_G^{-1}$ with respect to the Poincar\'{e} pairing. In contrast, the cohomological action of $\Psi_{G_R}$ is the adjoint of $\phi_G$ with respect to the Mukai pairing, as the inverse is the adjoint of an isometry and $\phi_G$ is an isometry with respect to the Mukai pairing.

Assume next that $X$, $Y$, and $Z$ are $n$ dimensional abelian varieties. Then the canonical line bundle and the Todd classes are all trivial and so 
\[
ch(F\ast G) = \pi_{14,*}(\pi_{12}^*ch(F)\cup \pi_{23}^*ch(\StructureSheaf{\Delta_Y})\cup\pi_{34}^*ch(G)).
\]
In other words, it is the contraction of the two middle factors in the K\"{u}nneth decomposition 
of $H^*(X\times Y\times Y\times Z,\QQ)$ via the Poincar\'{e} pairing $H^*(Y,\QQ)\otimes H^*(Y,\QQ)\rightarrow \QQ$,
given by $(s,t)\mapsto \int_Ys\cup t.$
The latter pairing is not $\Spin(V_Y)$-invariant, where $V_Y:=H^1(Y,\ZZ)\oplus H^1(\hat{Y},\ZZ)$. 
Consequently, the cohomological convolution is not invariant with respect to the diagonal
$\Spin(V_Y)$ action on the two middle factors in the tensor product $S_X\otimes S_Y\otimes S_Y\otimes S_Z$
of the spin representations.

%
\subsection{$\Spin(V)$-equivariance of convolutions}
\label{sec-Spin-eqivariance-of-convolutions}
The $\Spin(V_Y)$-invariance is restored by changing the $\Spin(V_Y)$-action on the right K\"{u}nneth factor of
$H^*(X\times Y)\cong H^*(X)\times H^*(Y)$ or on the left K\"{u}nneth factor of $H^*(Y\times Z)$, as we will show in Corollary \ref{cor-Spin-V-Y-invariance} below.
Let $m:\Spin(V)\rightarrow GL(S)$ be the Spin representation and denote by
\begin{equation}
\label{eq-m-dagger}
m^\dagger: \Spin(V)\rightarrow GL(S)
\end{equation}
the homomorphism $m^\dagger_g:=\tau m_g\tau$, where $\tau$ is given in (\ref{eq-Mukai-pairing}). We have
\begin{equation}
\label{eq-invariance-of-Poincare-pairing}
\int_X m^\dagger_g(s)\cup m_g(t)=(m_g(\tau(s)),m_g(t))_S=(\tau(s),t)_S=\int_X s\cup t,
\end{equation}
for all $s,t\in S$ and $g\in \Spin(V)$, 
where the second equality follows from the $\Spin(V)$-invariance of the Mukai pairing (\ref{eq-Mukai-pairing}). We conclude that the Poincar\'{e} pairing is invariant\footnote{Equivalently, $m_{g^{-1}}^\dagger$ is the left adjoint of $m_g$ with respect to the super-symmetric Poincar\'{e} pairing.}
with respect to the action of $\Spin(V)$ on $S\otimes S$ by
$g\mapsto m^\dagger_g\times m_g.$
Consequently, the Spin-equivariance of the convolution is restored if we let $\Spin(V_X)\times \Spin(V_Y)$ act on $H^*(X\times Y)=S_X\otimes S_Y$ via $m\otimes m^\dagger$ and similarly 
let $\Spin(V_Y)\times \Spin(V_Z)$ act on $H^*(Y\times Z)$ via $m\times m^\dagger$. Note that the action on the product $Y\times Y$ of the second and third factors of $X\times Y \times Y \times Z$ is via
$m^\dagger\times m$ explaining the computation (\ref{eq-invariance-of-Poincare-pairing}).

Observe that the following analogue of equality (\ref{eq-invariance-of-Poincare-pairing}) holds as well.
\begin{equation}
\label{eq-second-invariance-of-Poincare-pairing}
\int_X m_g(s)\cup m^\dagger_g(t)=\int_X s\cup t,
\end{equation}
for all classes $s,t\in H^*(X)$ and all $g\in \Spin(V)$. Indeed, if $s$ and $t$ are both even, then so are $m_g(s)$ and $m^\dagger_g(t)$ and the above follows from 
(\ref{eq-invariance-of-Poincare-pairing}) and the symmetry of the Poincar\'{e} pairing on even cohomology, if $s$ and $t$ are both odd, then so are $m_g(s)$ and $m^\dagger_g(t)$ and
\[
\int_X s\cup t=-\int_X t\cup s\stackrel{(\ref{eq-invariance-of-Poincare-pairing})}{=}
-\int_X m_g^\dagger(t)\cup m_g(s)=\int_X m_g(s)\cup m^\dagger_g(t),
\]
and if one is odd and the other even, then both sides of (\ref{eq-second-invariance-of-Poincare-pairing}) vanish. 

Let $PD:H^*(X,\QQ)\rightarrow H^*(X,\QQ)^*$ be given by $PD(s):=\int_X(\bullet\cup s)$.
Equation (\ref{eq-invariance-of-Poincare-pairing}) yields 
$PD(m_h(s))=PD(s)\circ m^\dagger_{h^{-1}}$, for all $h\in \Spin(V)$.
\[
\xymatrix{
H^*(X) \ar[r]^{PD}\ar[d]_{m_h} &
H^*(X)^* \ar[d]^{(\bullet)\circ m^\dagger_{h^{-1}}}
\\
H^*(X) \ar[r]_{PD} & H^*(X)^*.
}
\]

Given a class $\gamma\in H^*(X\times Y,\QQ)$, let $\gamma_*:H^*(X)\rightarrow H^*(Y)$ be the associated correspondence homomorphism $\gamma_*(\bullet):=\pi_{Y,*}(\pi_X^*(\bullet)\cup \gamma)$. 

\begin{lem}
\label{lemma-action-of-m-h-tensor-1}
The following equalities hold for all $h\in \Spin(V_X)$ and $g\in \Spin(V_Y)$.
\begin{eqnarray}
\nonumber
[(m_h\otimes m_g^\dagger)(\gamma)]_*&=&m^\dagger_g\circ \gamma_*\circ m^\dagger_{h^{-1}},
\\
\label{eq-equivariance-of-mapping-correspondence-to-homomorphism}
{}[(m^\dagger_h\otimes m_g)(\gamma)]_*&=&m_g\circ \gamma_*\circ m_{h^{-1}}.
\end{eqnarray} 
\end{lem}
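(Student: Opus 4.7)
The plan is a straightforward unwinding of the Künneth decomposition together with the two Poincar\'{e}-duality equivariance identities noted just before the Lemma. Write $\gamma=\sum_i a_i\otimes b_i$ with $a_i\in H^*(X,\QQ)$ and $b_i\in H^*(Y,\QQ)$ under $H^*(X\times Y,\QQ)\cong H^*(X,\QQ)\otimes H^*(Y,\QQ)$. The projection formula then gives, for any $\alpha\in H^*(X,\QQ)$,
\[
\gamma_*(\alpha)\;=\;\pi_{Y,*}\bigl(\pi_X^*\alpha\cup\gamma\bigr)\;=\;\sum_i\Bigl(\int_X\alpha\cup a_i\Bigr)\,b_i\;=\;\sum_i PD(a_i)(\alpha)\,b_i,
\]
so $\gamma_*$ is the composition $H^*(X)\to H^*(Y)$ determined by $\sum_i PD(a_i)\otimes b_i\in H^*(X)^*\otimes H^*(Y)$. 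This reduces the proposition to tracking how $PD$ intertwines $m$ and $m^\dagger$.

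The first intertwining identity, namely $PD\circ m_h=(\bullet\circ m^\dagger_{h^{-1}})\circ PD$, is exactly the content of the commutative diagram displayed after equation (\ref{eq-second-invariance-of-Poincare-pairing}), and is a direct consequence of (\ref{eq-invariance-of-Poincare-pairing}). The analogous identity $PD\circ m^\dagger_h=(\bullet\circ m_{h^{-1}})\circ PD$ follows in the same fashion from (\ref{eq-second-invariance-of-Poincare-pairing}): setting $s\mapsto m^\dagger_h(s)$ in (\ref{eq-second-invariance-of-Poincare-pairing}) and using $m^\dagger_h m^\dagger_{h^{-1}}=\mathrm{id}$ (which holds because $m^\dagger$ is a group homomorphism, being the conjugate of $m$ by the involution $\tau$) yields $\int_X s\cup m^\dagger_h(t)=\int_X m_{h^{-1}}(s)\cup t$, i.e.\ $PD(m^\dagger_h(t))=PD(t)\circ m_{h^{-1}}$.

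Now I combine the two pieces. For the first equality, apply $[(m_h\otimes m^\dagger_g)(\gamma)]_*$ to $\alpha$:
\[
[(m_h\otimes m^\dagger_g)(\gamma)]_*(\alpha)\;=\;\sum_i PD(m_h(a_i))(\alpha)\,m^\dagger_g(b_i)\;=\;\sum_i PD(a_i)\bigl(m^\dagger_{h^{-1}}(\alpha)\bigr)\,m^\dagger_g(b_i),
\]
where the first equality uses the Künneth description of $\gamma_*$ above and the second uses $PD\circ m_h=(\bullet\circ m^\dagger_{h^{-1}})\circ PD$. Pulling $m^\dagger_g$ outside the sum (it is $\QQ$-linear) and recognizing the remaining sum as $\gamma_*(m^\dagger_{h^{-1}}(\alpha))$ gives $m^\dagger_g\circ\gamma_*\circ m^\dagger_{h^{-1}}$, as required. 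The second equality is proved by the identical manipulation but using $PD\circ m^\dagger_h=(\bullet\circ m_{h^{-1}})\circ PD$ in place of the first intertwining identity.

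There is no real obstacle; the only small subtlety is that $m^\dagger$ is a genuine representation (so that $(m^\dagger_h)^{-1}=m^\dagger_{h^{-1}}$), which is immediate from the definition $m^\dagger_h=\tau m_h\tau$ and $\tau^2=\mathrm{id}$. Everything else is bookkeeping between the Künneth formula, the projection formula, and the two Poincar\'{e}-duality-equivariance identities already recorded in the excerpt.
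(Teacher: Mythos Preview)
Your proof is correct and follows essentially the same approach as the paper's: both arguments write $\gamma$ via the K\"unneth decomposition, express $\gamma_*$ as $\sum PD(a_i)\otimes b_i$, and then apply the intertwining identities $PD\circ m_h=(\bullet\circ m^\dagger_{h^{-1}})\circ PD$ and $PD\circ m^\dagger_h=(\bullet\circ m_{h^{-1}})\circ PD$ coming from (\ref{eq-invariance-of-Poincare-pairing}) and (\ref{eq-second-invariance-of-Poincare-pairing}). The only cosmetic difference is that the paper treats the two tensor factors separately (first $m_h\otimes 1$, then $1\otimes m^\dagger_g$), whereas you handle both at once.
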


Note that Equation (\ref{eq-equivariance-of-mapping-correspondence-to-homomorphism}) establishes 
the $\Spin(V_X)\times\Spin(V_Y)$-equivariance of the map
$H^*(X\times Y,\QQ)\rightarrow \Hom(H^*(X,\QQ),H^*(Y,\QQ))$ sending $\gamma$ to $\gamma_*$
with respect to the $m^\dagger\times m$ on the domain and the usual action on the target.

\begin{proof}
We prove the first equality.
Let $\{u_i\}$ be a basis of $H^*(X,\QQ)$ and $\{v_j\}$ a basis for $H^*(Y,\QQ)$. Write
$\gamma:=\sum a_{ij}u_i\otimes v_j$. Then $\gamma_*(\bullet)=\sum a_{ij} PD(u_i)\otimes v_j$.
We get
\[
[(m_h\otimes 1)(\gamma)]_*=\sum a_{ij}PD(m_h(u_i))\otimes v_j
\stackrel{(\ref{eq-invariance-of-Poincare-pairing})}{=}
\sum a_{ij}(PD(u_i)\circ m^\dagger_{h^{-1}})\otimes v_j=\gamma_*\circ m^\dagger_{h^{-1}}(\bullet).
\]
The identity $(1\otimes m^\dagger_g)(\gamma)=m^\dagger_g\circ\gamma_*$ is evident, 
for all $g\in\Spin(V_Y)$.

The proof of the second equality 
(\ref{eq-equivariance-of-mapping-correspondence-to-homomorphism})  is identical, using 
Equation (\ref{eq-second-invariance-of-Poincare-pairing}) instead of (\ref{eq-invariance-of-Poincare-pairing}).
\end{proof}


Let $\gamma\in H^*(X\times Y,\QQ)$ and $\delta\in H^*(Y\times Z,\QQ)$.
The following $\Spin(V_Y)$-invariance of composition of correspondence homomorphisms is an immediate corollary of Lemma \ref{lemma-action-of-m-h-tensor-1}.

\begin{cor}
\label{cor-Spin-V-Y-invariance}
The following equalities hold for all $h\in \Spin(V_Y).$
\begin{eqnarray*}
[(m_h\otimes 1)(\delta)]_*\circ [(1\otimes m_h^\dagger)(\gamma)]_*&=&\delta_*\circ\gamma_*,
\\
{}[(m^\dagger_h\otimes 1)(\delta)]_*\circ [(1\otimes m_h)(\gamma)]_*&=&\delta_*\circ\gamma_*,
\end{eqnarray*}
\end{cor}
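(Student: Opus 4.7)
The plan is to derive the corollary as a direct two-line consequence of Lemma \ref{lemma-action-of-m-h-tensor-1}, by applying each of its two displayed identities twice, once with trivial action on one K\"unneth factor and $h$ acting on the other. The key observation is that Lemma \ref{lemma-action-of-m-h-tensor-1} already packages both the ``source'' and ``target'' effect of twisting a correspondence by a spin element into pre- and post-composition by $m^\dagger$ or $m$, and these pre-/post-compositions will cancel telescopically when we convolve $\gamma$ with $\delta$ across the middle factor $H^*(Y,\QQ)$.

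Concretely, for the first identity I would specialize Lemma \ref{lemma-action-of-m-h-tensor-1} to $\gamma\in H^*(X\times Y,\QQ)$ with trivial action on $X$ and $h$ acting on $Y$, giving
\[
[(1\otimes m_h^\dagger)(\gamma)]_* \;=\; m_h^\dagger \circ \gamma_*,
\]
and then to $\delta\in H^*(Y\times Z,\QQ)$, treating $Y$ as the ``left'' factor with $h$ acting and $Z$ as the ``right'' factor with trivial action, giving
\[
[(m_h\otimes 1)(\delta)]_* \;=\; \delta_* \circ m_{h^{-1}}^\dagger.
\]
Composing these,
\[
[(m_h\otimes 1)(\delta)]_*\circ[(1\otimes m_h^\dagger)(\gamma)]_*
\;=\; \delta_*\circ m_{h^{-1}}^\dagger \circ m_h^\dagger\circ\gamma_*
\;=\; \delta_*\circ\gamma_*,
\]
since $m^\dagger$ is a group homomorphism (being the conjugate of the representation $m$ by the involution $\tau$), so $m_{h^{-1}}^\dagger \circ m_h^\dagger = m_{h^{-1}h}^\dagger = \one_S$.

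The second identity is proved identically, using the second line of Lemma \ref{lemma-action-of-m-h-tensor-1} in place of the first: it gives $[(1\otimes m_h)(\gamma)]_* = m_h\circ\gamma_*$ and $[(m_h^\dagger\otimes 1)(\delta)]_* = \delta_*\circ m_{h^{-1}}$, which composed yield $\delta_*\circ m_{h^{-1}}\circ m_h\circ\gamma_* = \delta_*\circ\gamma_*$. There is really no obstacle here: all the substantive work (the Poincar\'{e}-duality identities (\ref{eq-invariance-of-Poincare-pairing}) and (\ref{eq-second-invariance-of-Poincare-pairing}) expressing that $m^\dagger$ and $m$ are mutually adjoint under the Poincar\'{e} pairing) has already been absorbed into Lemma \ref{lemma-action-of-m-h-tensor-1}; the only thing to check at this stage is the homomorphism property of $m$ and $m^\dagger$, which is immediate from the definition $m^\dagger_g = \tau m_g\tau$ and $\tau^2=\one$.
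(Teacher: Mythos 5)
Your proof is correct and takes precisely the route the paper intends: the paper states the corollary as an ``immediate corollary'' of Lemma \ref{lemma-action-of-m-h-tensor-1} without spelling out details, and you have supplied exactly the missing two-step specialization (trivial action on one K\"unneth factor at a time) plus the telescoping cancellation using the fact that $m^\dagger_g=\tau m_g\tau$ is a group homomorphism with $\tau^2=\one$. Nothing to add.
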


\begin{rem}
\label{rem-invariance-of-w-vee}
The action of $\tau$ on $S^+_\QQ:=H^{even}(X,\QQ)$ takes the Chern character $w:=ch(F)$ of an object $F$ in $D^b(X)$ to the Chern character $w^\vee:=ch(F^\vee)$ of the dual object 
$F^\vee:=R\SheafHom(F,\StructureSheaf{X})$. 
Hence, $w^\vee$ is $\Spin(V)_w$-invariant with respect to the $m^\dagger$-action.
\end{rem}

%
\section{Orlov's derived equivalence $\Phi:D^b(X\times X)\rightarrow D^b(X\times\hat{X})$}
\label{section-Orlov-equivalence}

In Section \ref{sec-equivariance-properties-of-Orlov-equivalence}
we recall the definition of Orlov's equivalence $\Phi:D^b(X\times X)\rightarrow D^b(X\times\hat{X})$ and reduce the proof of its main $\Spin(V)$-equivariance property to the surface case. In Section \ref{sec-objects-with-Spin-invariant-Chern-class} we complete the proof of the $\Spin(V)$-equivariance property
of $\Phi$ in the abelian surface case. We also relate Orlov's equivalence to the construction of the $\Spin(7)$-invariant Cayley class in $H^4(X\times\hat{X},\ZZ)$,
when $X$ is an abelian surface, used in \cite{markman-generalized-kummers} to prove the main results of this paper for abelian fourfolds of Weil type.
In Section \ref{sec-Orlov-equal-Chevalley} we relate the cohomological isomorphism induced by Orlov's equivalence to an isomorphism
$S\otimes S\cong \wedge^*V$ constructed by Chevalley.
In Section \ref{sec-HW-classes-from-squares-of-pure-spinors} we consider a $K$-secant $P\subset \PP(H^*(X,\QQ))$ and use Orlov's equivalence to relate the Hodge-Weil classes in the middle cohomology of $X\times \hat{X}$, associated to the complex multiplication $\eta_P:K\rightarrow \End_\QQ(X\times\hat{X})$, to the plane in $H^*(X\times X,\QQ)$ spanned by the tensor squares of the two pure spinors in $P$.

%
\subsection{$\Spin(V)$-Equivariance properties of Orlov's equivalence}
\label{sec-equivariance-properties-of-Orlov-equivalence}

\hide{
\begin{example}
\label{example-F-i-in-M-2-0-n}
Let $X$ be a principally polarized abelian threefold and $\Theta$ its principal polarization. 
Choose two sheaves $F_1$, $F_2$ in the moduli space $\M(2,0,n\Theta)$ of Example 
\ref{example-gulbrandsen}, such that $F_2$ is not isomorphic to $\tau_x^*(F_1)\otimes L^{-1}$, for any $(x,L)\in X\times\hat{X}$.
Let
\[
\E^i_{F_1,F_2}:=\SheafExt^i_{\pi_{23}}\left(\pi_1^*F_1,\F_2\right)
\]
be the $i$-th relative extension sheaf over $X\times\hat{X}$, where $\F_2$ is associated to $F_2$ as in Equation  
(\ref{eq-universal-spreading-of-F}). Note that $\E^i_{F_1,F_2}$ vanishes if $i\not\in\{1,2\}$, and $\E^1_{F_1,F_2}$ is isomorphic to the dual of the pullback of $\E^2_{F_2,F_1}$ via the inversion map $(x,L)\mapsto (-x,L^{-1})$ over the Zariski open subset consisting of points $(x,L)$,
where the dimension of $\Ext^1(F_1,\tau_x^*(F_2)\otimes L^{-1})$ 
is minimal.
The techniques of \cite{gulbrandsen} should allow us to compute their rank, which is $0$ when $n=2$, their support, and hopefully their Chern classes, see Section \ref{subsec-n-equal-2-case}.
\end{example}

}


Let $X$ be an abelian variety. Let $\P$ be the normalized Poincar\'{e} line bundle over $X\times \hat{X}$.
Set $n:=\dim_\CC(X)$.
Let $\mu:X\times X\rightarrow X\times X$ be given by $(x_1,x_2)=(x_1+x_2,x_2)$. 
Let $id\times \Phi_\P: D^b(X\times\hat{X})\rightarrow D^b(X\times X)$
be the integral transform with kernel $\StructureSheaf{\Delta_X}\boxtimes \P$. Then
\begin{equation}
\label{eq-Orlov-derived-equivalence-to-XxX-from-X-times-hat-X}
\mu_*\circ (id\times \Phi_\P):D^b(X\times\hat{X})\rightarrow D^b(X\times X)
\end{equation} 
is an equivalence of categories 
which intertwines the action of $\Aut(D^b(X))$ on $D^b(X\times\hat{X})$ and $D^b(X\times X)$, where an autoequivalence $\Phi_\G$ with kernel $\G\in D^b(X\times X)$ acts on $D^b(X\times X)$ via $\Phi_\G\times \Phi_{\G^\vee[n]}$, by \cite[Cor.  9.37]{huybrechts-derived-categories-book}. 
The action of $\Phi_\G$ on $D^b(X\times\hat{X})$ is the composition of push-forward via the automorphism  associated to the element $\phi_\G$ of $\Spin(V_X)$ preserving the Hodge structure, followed by tensorization by a line bundle on $X\times\hat{X}$.
The inverse 
\begin{equation}
\label{eq-Orlov-derived-equivalence-from-XxX-to-X-times-hat-X}
\Phi:=(id\times \Psi_{\P^{-1}[n]})\circ \mu^* : D^b(X\times X) \rightarrow D^b(X\times\hat{X})
\end{equation}
 intertwines the two actions as well.
 The above functor is induced by an object in $D^b([X\times X]\times [X\times \hat{X}])$, whose Chern character yields a correspondence isomorphism 
 \begin{equation}
 \label{eq-Orlov-cohomological-isomorphism}
 \phi:S\otimes S \rightarrow \wedge^\bullet V
 \end{equation}
 on the level of cohomology.
 Note that $\Psi_{\G^\vee[n]}$ is the quasi-inverse of $\Phi_\G$, while $\Phi_{\G^\vee[n]}$ 
 is the conjugate of $\Phi_\G$ by the dualization functor from $D^b(X)$ to $D^b(X)^{op}$ (see Equation (\ref{eq-Phi-F-R-is-conjugate-of-Phi-F})).
Cohomologically, $\phi_{\G^\vee[n]}:S\rightarrow S$ satisfies $\phi_{\G^\vee[n]}=\tau\phi_\G\tau$, by (\ref{eq-phi-G-R-is-tau-conjugate-of-phi-G}).

 Let $\psi_{\P^{-1}[n]}:H^*(X,\ZZ)\rightarrow H^*(\hat{X},\ZZ)$ be the cohomological correspondence associated to $\Psi_{\P^{-1}[n]}$. Let $\tilde{\varphi}:H^*(X\times X,\ZZ)\rightarrow H^*(\hat{X}\times X,\ZZ)$ be the isomorphism (\ref{eq-tilde-varphi}).
 The following statement is proved in Section \ref{sec-categorification}.
 
 \begin{lem}
 \label{lemma-orlov-isomorphism-is-chevalley}
 $\phi=(\phi_\P\otimes\psi_{\P^{-1}[n]})\circ \tilde{\varphi}\circ (id\otimes \tau).$ 
\end{lem}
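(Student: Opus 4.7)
The plan is to verify the identity by direct computation, exploiting the explicit formulas for Orlov's kernel and for Chevalley's $\tilde{\varphi}$.

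First, I would unpack the left-hand side from the definition $\Phi=(id\times \Psi_{\P^{-1}[n]})\circ \mu^{*}$. The cohomological action of $\mu^{*}$ on a decomposable class $\alpha\otimes\beta\in H^{*}(X)\otimes H^{*}(X)=H^{*}(X\times X)$ realizes the Hopf-algebra coproduct of $S_{X}=\wedge^{*}H^{1}(X)$ arising from the group law on $X$: writing $m^{*}\alpha=\sum_{(\alpha)}\alpha_{(1)}\otimes\alpha_{(2)}$, one has
\[
\mu^{*}(\alpha\otimes\beta)=\sum_{(\alpha)}\alpha_{(1)}\otimes (\alpha_{(2)}\cdot \beta).
\]
Composing with $id\otimes \psi_{\P^{-1}[n]}$ yields an explicit formula for $\phi$, exhibiting it as an integral transform whose kernel is supported on the graph $\{y_{1}=x_{1}+x_{2}\}\subset (X\times X)\times (X\times\hat{X})$ and twisted by $ch(\P^{-1}[n])$ on the $(x_{2},y_{2})$ factors.

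Second, I would expand the right-hand side. Using $\tilde{\varphi}=\psi\circ\varphi$ with $\varphi(u\otimes v)=u\cdot [pt_{\hat{X}}]\cdot \tau(v)$ and the fact that $\tau^{2}=1$, the factor $(id\otimes\tau)$ cancels the $\tau$ in $\varphi$, giving
\[
\tilde{\varphi}\circ (id\otimes\tau)(\alpha\otimes\beta)=\psi\bigl(\alpha\cdot [pt_{\hat{X}}]\cdot\beta\bigr)\in \wedge^{*}V,
\]
where the product is taken in the Clifford algebra $C(V)$. Postcomposing with $\phi_{\P}\otimes \psi_{\P^{-1}[n]}$ yields the full right-hand side as an element of $H^{*}(\hat{X}\times X)\cong H^{*}(X\times\hat{X})$ under the super-commutative K\"{u}nneth swap.

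Third, I would match the two expressions. The key computational identity is that, in $C(V)$, iteratively applying the Clifford relation $w\cdot\theta=(w,\theta)_{V}-\theta\cdot w$ for $w\in H^{1}(X)$, $\theta\in H^{1}(\hat{X})$ decomposes $\alpha\cdot [pt_{\hat{X}}]$ as a sum of products of the form $s\cdot\alpha'$ with $s\in S_{\hat{X}}$ and $\alpha'\in S_{X}$, the signs and contractions being precisely those of the Hopf-algebra coproduct of $\alpha$ modified by the $\tau$-twist. Combined with Mukai's description, recalled in \cite[Prop. 4.3.7]{golyshev-luntz-orlov}, of $\phi_{\P}$ and $\psi_{\P^{-1}[n]}$ as the spin-representation images of elements of $\Spin(V)$ that interchange the two maximal isotropic subspaces $H^{1}(X)$ and $H^{1}(\hat{X})$ of $V$, twisted by $\exp(\pm c_{1}(\P))$, this identifies the two sides.

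The main obstacle will be sign bookkeeping: the anti-automorphism $\tau$ acts by $(-1)^{i(i-1)/2}$ on $H^{i}$, and the shift $[n]$ contributes $(-1)^{n}$, so a careful matching of signs across the coproduct, the Clifford contractions, and the Poincar\'{e} Fourier--Mukai transforms is required. A cleaner alternative is to first verify the identity when $X$ is an abelian surface, where $\psi_{\P^{-1}[n]}$ admits an elementary cohomological description, and then to extend to arbitrary $n\geq 2$ by the $\Spin(V)$-equivariance of both sides, reducing the comparison to checking equality on a single $\Spin(V)$-orbit representative.
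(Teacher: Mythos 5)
Your main line — expanding $\mu^{*}$ as a Hopf-algebra coproduct on $S_{X}$, expanding $\tilde{\varphi}\circ(id\otimes\tau)(\alpha\otimes\beta)=\psi(\alpha\cdot[pt_{\hat X}]\cdot\beta)$ by iterated Clifford commutation, and matching signs — is in spirit what the paper does. The paper packages the same calculation more economically: it first proves $\nu\circ(id\otimes\tau)=\tilde{\varphi}$ for the auxiliary map $\nu$ induced by $(\Psi_{\P^{-1}[n]}\otimes 1)\circ\mu^{*}:D^{b}(X\times X)\to D^{b}(\hat X\times X)$ (Lemma \ref{lemma-nu-equal-tilde-varphi}), carried out by an explicit monomial computation in a basis $e_{K}\wedge e_{L}$, and then obtains the stated identity by precomposing with $\phi_{\P}\otimes\psi_{\P^{-1}[n]}$ and using $\phi_{\P}\circ\psi_{\P^{-1}[n]}=\mathrm{id}$; factoring through $\nu$ isolates a single Poincar\'{e} transform and confines all the sign bookkeeping to one formula, whereas your direct comparison must keep track of $\phi_{\P}$, $\psi_{\P^{-1}[n]}$, and $[pt_{\hat X}]$ simultaneously.

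Be wary of the ``cleaner alternative'' you sketch at the end: it does not work as stated. You cannot pass from abelian surfaces to arbitrary $n$ by $\Spin(V)$-equivariance, since the lattice $V$ and hence $\Spin(V)$ change with $n$; the bootstrapping from $n=2$ that does occur in the paper (in the proof of Proposition \ref{prop-extension-class-of-decreasing-filtration-of-spin-V-representations}) is done by decomposing $V=V_{1}\oplus V_{2}\oplus V_{3}$ and a Zariski-density argument, not by equivariance. Moreover, for fixed $n$, the $\Spin(V)$-action on $S^{+}\otimes S^{+}$ is far from transitive, so checking on one orbit representative is insufficient. Most seriously, the $\Spin(V)$-equivariance of the right-hand side is not available a priori: $\tilde{\varphi}$ is equivariant only for the \emph{filtered} action on $\wedge^{*}V$ and not the graded $\rho$-action (Remark \ref{remark-non-equivariance-of-varphi-tilde}; this is exactly the $\rho$ versus $\rho'$ distinction), so establishing that $(\phi_{\P}\otimes\psi_{\P^{-1}[n]})\circ\tilde{\varphi}\circ(id\otimes\tau)$ is $(m\otimes m^{\dagger},\rho')$-equivariant would already require the conclusion of this lemma. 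Stick with the direct computation.
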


 Given $g\in\Spin(V)$, the composition 
 \begin{equation}
 \label{rho-prime-g}
 \rho'_g:=\phi(m_g\times m^\dagger_g)\phi^{-1}:\wedge^\bullet V\rightarrow \wedge^\bullet V
 \end{equation}
 does not preserve the grading, but it preserves the decreasing filtration by the subspaces
 \begin{equation}
 \label{eq-decreasing-weight-filtration}
 F_k(\wedge^\bullet V):=\oplus_{i\geq k} \wedge^iV, 
 \end{equation}
 and the induced graded action is the one induced from $\rho:\Spin(V)\rightarrow SO(V)$, by 
 \cite[Prop. 4.3.7 and Cor. 4.3.8]{golyshev-luntz-orlov}, and
 more directly by Lemma \ref{lemma-orlov-isomorphism-is-chevalley}.
 We thus get two $\Spin(V)$ actions on
 $\wedge^*V$
 \begin{eqnarray}
 \label{eq-rho-extended-to-exterior-algebra}
 \rho : \Spin(V) & \rightarrow & GL(\wedge^*V),
 \\
\label{eq-rho-prime}
\rho' : \Spin(V) & \rightarrow & GL(\wedge^*V),
\end{eqnarray}
where $\rho'_g$ is given in (\ref{rho-prime-g}) and 
$\rho_g$ acts on $\wedge^kV$ by $\wedge^k\rho_g$ and the latter $\rho_g$ is the image of $g$ via $\rho:\Spin(V)\rightarrow SO(V)$.
 
Given $g\in \Spin(V)$, there exists a topological complex line-bundle $N_g$ on $X\times \hat{X}$, such that 
\begin{equation}
\label{eq-N-g}
\rho'_g=ch(N_g)\cup\rho_g,
\end{equation} 
by \cite[Theorem 2.10]{orlov-abelian-varieties} (see also \cite[Prop. 9.39]{huybrechts-derived-categories-book}).
The equality

\begin{equation}
\label{eq-1-cocycle-identity}
c_1(N_{g_1g_2})=c_1(N_{g_1})+\rho_{g_1}(c_1(N_{g_2})),
\end{equation}
holds for all $g\in \Spin(V)$, by \cite[Exercise 9.41]{huybrechts-derived-categories-book}. 
The above equality means that the function $c_1(N_{(\bullet)}):\Spin(V)\rightarrow H^2(X\times\hat{X},\ZZ)$
is a $1$-cocycle determining a class in the first group cohomology $H^1(\Spin(V),H^2(X\times\hat{X},\ZZ))$. The latter is\footnote{Recall that given a group $G$ and a $\ZZ G$-module $M$ the group cohomology $H^k(G,M)$ is defined to be $\Ext^k_{\ZZ G}(\ZZ,M)$. Given a $1$-cocycle $\nu:G\rightarrow M$, the extention of $\ZZ$ by $M$ corresponding to $\nu$ is the abelian group $\ZZ\oplus M$, on which $g\in G$ acts via $g(z,m)=(z,g(m)+z\nu_g)$.} 
the extension class of the short exact sequence of $\Spin(V)$-modules
\[
0\rightarrow H^2(X\times\hat{X},\ZZ)\rightarrow H^{ev}(X\times\hat{X},\ZZ)/F_4(X\times\hat{X},\ZZ)\rightarrow H^0(X\times\hat{X},\ZZ)\rightarrow 0,
\]
where $F_k(X\times\hat{X},\ZZ)$, $k\geq 0$,  is the decreasing weight filtration 
(\ref{eq-decreasing-weight-filtration})
of cohomology preserved by the $\rho'$-representation (\ref{eq-rho-prime}) of $\Spin(V)$.

\begin{prop} 
\label{prop-extension-class-of-decreasing-filtration-of-spin-V-representations}
The following equality holds 
\begin{equation}
\label{eq-relating-rho-and-rho-prime}
\rho'_g=\exp\left(\frac{1}{2}[c_1(\P)-\rho_g(c_1(\P))]\right)\cup \rho_g.
\end{equation}
\end{prop}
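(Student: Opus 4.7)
The plan is to identify the $1$-cocycle $c_1(N_g):\Spin(V)\to H^2(X\times\hat X,\ZZ)$ provided by Orlov's theorem in (\ref{eq-N-g}) with the rational $1$-cocycle $\gamma_g:=\tfrac12[c_1(\P)-\rho_g(c_1(\P))]$ appearing on the right-hand side of (\ref{eq-relating-rho-and-rho-prime}). A direct manipulation shows that $\gamma_g=\tfrac12 c_1(\P)-\rho_g(\tfrac12 c_1(\P))$ is the coboundary of the $0$-cochain $\tfrac12 c_1(\P)$, so it automatically satisfies the cocycle identity (\ref{eq-1-cocycle-identity}) that also governs $c_1(N_g)$. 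Thus the content of the proposition is the equality of these two cocycles, and not just the fact that the right-hand side is well-defined.

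My primary approach is to use Lemma \ref{lemma-orlov-isomorphism-is-chevalley} to factor $\phi=(\phi_\P\otimes\psi_{\P^{-1}[n]})\circ\tilde\varphi\circ(id\otimes\tau)$ and analyze the $\Spin(V)$-equivariance of each factor separately. The key observation, recorded in Remark \ref{remark-non-equivariance-of-varphi-tilde}, is that had we defined $\tilde\varphi$ using the symmetric bilinear form $B_0^\QQ:=\tfrac12(\bullet,\bullet)_V$ in place of the integral $B_0$ of Section \ref{subsection-the-isomorphism-tilde-varphi}, we would have obtained a rational isomorphism $\tilde\varphi^\QQ$ that is fully $\Spin(V)$-equivariant for the natural $\rho$-action on $\wedge^*V_\QQ$. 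The alternating part of $B_0-B_0^\QQ$ is $\tfrac12 c_1(\P)$, and a change-of-$B_0$ computation for the map $\psi$ in (\ref{eq-psi}) shows that $\tilde\varphi$ and $\tilde\varphi^\QQ$ differ rationally by cup product with $\exp(\tfrac12 c_1(\P))$ on $\wedge^* V_\QQ$. Conjugating the $\rho$-action by $\cup\exp(\tfrac12 c_1(\P))$ then produces exactly the correction factor $\exp(\tfrac12[c_1(\P)-\rho_g(c_1(\P))])$ required in (\ref{eq-relating-rho-and-rho-prime}); the external factors $\phi_\P\otimes\psi_{\P^{-1}[n]}$ and $id\otimes\tau$ contribute no further twist in $H^2$, since $id\otimes\tau$ intertwines $m\otimes m^\dagger$ with $m\otimes m$ (the source action for which $\tilde\varphi^\QQ$ is equivariant), and $\phi_\P$ and $\psi_{\P^{-1}[n]}$ are themselves $\rho$-equivariant Fourier--Mukai correspondences.

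A possibly cleaner independent verification would be to check (\ref{eq-relating-rho-and-rho-prime}) on a set of generators of $\Spin(V)$ --- namely $\exp(H^2(X,\ZZ))$, $\exp(H^2(\hat X,\ZZ))$, and a single element inducing the cohomological Fourier--Mukai transform $\phi_\P$. For each of these, the induced autoequivalence of $D^b(X\times X)$ via (\ref{eq-Orlov-derived-equivalence-to-XxX-from-X-times-hat-X}) has an explicit Fourier--Mukai kernel, so both $\rho'_g$ and the claimed right-hand side of (\ref{eq-relating-rho-and-rho-prime}) can be evaluated by direct computation; since both sides are $1$-cocycles, agreement on generators suffices. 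The hard part of either approach is the sign and filtration bookkeeping: one must correctly track the effect of the involution $\tau$ which interchanges $m$ and $m^\dagger$, the shift $[n]$ in $\psi_{\P^{-1}[n]}$, and the interplay between $\rho_g$ and the $c_1(\P)$-twist under conjugation. The conceptual source of the formula is the identification of the alternating part of $B_0$ with $\tfrac12 c_1(\P)$ in Remark \ref{remark-non-equivariance-of-varphi-tilde}; the challenge is to propagate this identification through all the pieces of $\phi$ coherently.
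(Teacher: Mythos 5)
Your first approach, which tracks how the Chevalley isomorphism $\tilde\varphi$ depends on the choice of $B_0$, is conceptually the right instinct, and it is genuinely different from the paper's actual proof --- but as written it contains a technical gap that is not merely a bookkeeping subtlety. You assert that passing from $B_0^{\QQ}:=\tfrac12(\bullet,\bullet)_V$ to the integral $B_0$ multiplies $\tilde\varphi^{\QQ}$ by cup product with $\exp(\tfrac12 c_1(\P))$. That is false: a direct change-of-$B_0$ computation on $\psi$ shows that if $\omega:=B_0-B_0^{\QQ}\in\wedge^2 V^*$ denotes the alternating part, then $\psi=\exp(\iota_\omega)\circ\psi^{\QQ}$, a unipotent operator built from \emph{contraction} with the $2$-form $\omega$, not wedge product. (Conjugating the Clifford--Weyl generators one sees that $\exp(\xi\wedge)$ would twist $\delta'_v$ by a wedge term, which cannot equal the required contraction with $\omega(v,\bullet)$; only $\exp(\iota_\omega)$ produces the correct commutator with $L_v$.) Consequently $\tilde\varphi(m_g\otimes m_g)\tilde\varphi^{-1}=\exp(\iota_{\omega-\rho_g\omega})\circ\rho_g$, a filtration-preserving automorphism that lowers degree, which is not yet of the form $\exp(\text{(class)})\cup\rho_g$.

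Compounding this, you dismiss the external factor $\phi_\P\otimes\psi_{\P^{-1}[n]}$ as ``contributing no further twist,'' but this is exactly where the contraction becomes a cup product. By Lemma \ref{lemma-phi-P-psi-P-inverse-is-PD-up-to-sign}, $\phi_\P\otimes\psi_{\P^{-1}[n]}$ is Poincar\'e duality on $\wedge^*V$ up to sign, a degree-reversing map; while it commutes with $\rho_g$ (and in that weak sense is ``$\rho$-equivariant''), conjugating the degree-$(-2)$ operator $\iota_\alpha$ by Poincar\'e duality gives the degree-$(+2)$ operator $L_{\alpha^\sharp}$. This is precisely the mechanism by which $\exp(\iota_{\omega-\rho_g\omega})$ becomes $\exp(\tfrac12[c_1(\P)-\rho_g c_1(\P)])\cup(\bullet)$ in the final formula; it cannot be skipped. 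With that correction your argument does close up and constitutes a legitimately different proof from the paper's: the paper proves the abelian surface case $n=2$ directly via a Chern-character computation with ideal sheaves (Lemma \ref{example-conjecture-holds-for-abelian-surfaces}) and a Zariski-density argument, and then bootstraps to all $n$ by a clever group-theoretic induction (showing products of sub-orthogonal groups coming from splittings $V=\oplus V_i$ have large enough image in $SO^+(V_\QQ)$). Your corrected route is more structural and avoids the $n=2$ base-case computation entirely, at the cost of the delicate Clifford/Poincar\'e-duality bookkeeping above; your second suggestion (checking on a generating set) is closer in spirit to the paper's base case but still does not reproduce the paper's dimension-reduction argument.
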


Notice that the integral alternating bilinear form 
$c_1(\P)$ and the symmetric bilinear pairing $(\bullet,\bullet)_V$ agree modulo $2$, and so the class 
$\frac{1}{2}[c_1(\P)-\rho_g(c_1(\P))]$ is indeed integral, for every element $g\in\Spin(V)$.
The statement of the proposition is motivated by  
Remark \ref{remark-non-equivariance-of-varphi-tilde} (its relevance to the isomorphism $\phi$ is explained by 
Lemma \ref{lemma-orlov-isomorphism-is-chevalley}). 

\begin{proof} 
The special case of the statement, when $X$ is an abelian surface, is proved in 
Lemma \ref{example-conjecture-holds-for-abelian-surfaces} below. We prove that the general case follows from the surface case.

\underline{Step 1:}
If $X_1$ and $X_2$ are abelian varieties and $X=X_1\times X_2$ and the conjecture is known for $X$, then it follows for $X_i$,
since it holds for the image of $\Spin(V_{X_1})\times \Spin(V_{X_2})$ in $\Spin(V_X)$ via the identification of $V_X$ with $V_{X_1}\oplus V_{X_2}$. Indeed, the Poincar\'{e} line bundle $\P_X$ is identified with $\P_{X_1}\boxtimes\P_{X_2}$ under the natural isomorphism $X\times \hat{X}\cong [X_1\times\hat{X}_1]\times[X_2\times\hat{X}_2]$. It suffices to prove the conjecture for one abelian variety in each dimension, hence for powers $E^n$ of an elliptic curve $E$, as it is topological in nature. 
The statement for abelian varieties of dimension $n$ thus implies the statement for abelian varieties of lower dimension. 

\underline{Step 2:}
The case $n=2$ is assumed. 
We prove next that if the statement holds  for abelian varieties of dimension $2k$, $k\geq 1$, then it holds  for abelian varieties of dimension $3k$.
Write $V=V_{E^{3k}}$. Then $V=V_1\oplus V_2\oplus V_3$, where  $V_i=V_{E^k}$, for $1\leq i\leq 3$. Both representations $\rho$ and $\rho'$ of $\Spin(V)$ factor through $SO^+(V)$.
Set
\begin{eqnarray*}
G_1&:=&SO^+(V_1)\times SO^+(V_2\oplus V_3),
\\
G_2&:=&SO^+(V_2)\times SO^+(V_1\oplus V_3),
\\
G_3&:=&SO^+(V_3)\times SO^+(V_1\oplus V_2).
\end{eqnarray*} 
Let $f:G_1\times G_2\times G_3\rightarrow SO^+(V)$ send $(g_1,g_2,g_3)$ to $g_1g_2g_3$.
The subset of $SO^+(V)$, consisting of elements $g$ for which equality (\ref{eq-relating-rho-and-rho-prime}) holds, is a subgroup. Hence,
it suffices to show that the Zariski closure in $SO^+(V_\QQ)$ of the subgroup generated by the image of $f$ is the whole of 
$SO^+(V_\QQ)$. The dimension of $SO^+(V_\QQ)$ is $72k^2-6k$ and any closed subgroup of 
$SO^+(V_\QQ)$ of dimension larger than $(12k-1)(12k-2)/2$ is equal to $SO^+(V_\QQ)$, by
\cite[Theorem B]{obata}. Hence, it suffices to prove that the dimension of the Zariski closure of the image of $f$ is 
$\dim(SO^+(V_\QQ))-2$.

We regard each $G_i$ as a subgroup of $SO^+(V_\QQ)$.
Assume that $f(g_1,g_2,g_3)=1$, where $g_i\in G_i$. Then $g_1g_2$ belongs to 
$G_3$ and so it maps $V_3$ to itself. We claim that $g_2$ maps each $V_i$ to itself, $1\leq i\leq 3$.
It suffices to prove that it maps $V_3$ to itself. Assume otherwise and let $x\in V_3$, such that $g_2(x)=y_1+y_2$, with $y_1\in V_1$ and $y_2\in V_3$ and $y_1\neq 0$. Then $g_1g_2(x)=g_1(y_1)+g_1(y_2)$ and $g_1(y_2)$ belongs to $V_2\oplus V_3$ and $g_1(y_1)$ is a non-zero element of $V_1$. Hence, $g_1g_2(x)$ does not belong to $V_3$. A contradiction. We conclude that $g_2$ maps $V_3$ to itself as well, and so it maps each $V_i$ to itself. Hence, $g_1$ maps $V_3$ to itself, and hence it maps each $V_i$ to itself.
The same must thus follow also for $g_3$. We conclude that the fiber of $f:{G}_1\times{G}_2\times{G}_3\rightarrow SO^+(V_\QQ)$ over $1$ 
has dimension $2[3\dim SO^+(V_{i,\QQ})+1]=12k(4k-1)+2$. Now $\dim({G}_i)=2k[20k-3]$. Hence, the dimension of the image of $f$ is
\[
6k[20k-3]-[12k(4k-1)+2]=72k^2-6k-2=\dim SO^+(V_\QQ)-2.
\]

\underline{Step 3:} We  prove next the case $n=4$.
We decompose $V_{E^4}$ as the direct sum $V_1\oplus V_2\oplus V_3$, where $V_1=V_2=V_{E^1}$ and $V_3=V_{E^2}$.
We set $G_i$, $1\leq i\leq 3$, and $f:G_1\times G_2\times G_3\rightarrow SO^+(V)$ as in Step 2. Then
$\dim({G}_1\times {G}_2 \times {G}_3)=200,$
$\dim SO^+(V_\QQ)=120$, and the same argument as in Step 2 shows that the 
fiber $f^{-1}(1)$ consists of triples $(g_1,g_2,g_3)$, such that each $g_i$ maps each $V_j$ to itself. Hence, the 
dimension of the fiber $f^{-1}(1)$ is $82$. Again we get that the closure of the image of $f$ has codimension $2$. Hence, 
the Zariski closure of the subgroup generated by the image of $f$ is equal to $SO^+(V_\QQ)$. 

\underline{Step 4:} We complete the proof by induction on $n$. The case $n=2$ is assumed and implies the case $n=1$, by Step 1.
The case $n=3$ follows from Step 2 and the case $n=4$ from Step 3. Assume that $n\geq 4$ and the statement holds for $E^k$,
for $k\leq n$. We prove it for $E^{n+1}$. If $n$ is even, then the statement holds for $E^{3n/2}$, by Step 2 and the induction hypothesis, and so for $n+1$, by Step 1, since $n+1<3n/2$. If $n$ is odd, then $n\geq 5$, the statement holds for $E^{3(n-1)/2}$
by Step 2 and the induction hypothesis, and so for $n+1$, by Step 1, since $n+1\leq 3(n-1)/2$.
\end{proof}

%
\subsection{Objects in $D^b(X\times\hat{X})$ with $\Spin(V)_P$-invariant Chern classes}
\label{sec-objects-with-Spin-invariant-Chern-class}
Given a point $x\in X$, let $\tau_x:X\rightarrow X$ be the translation by $x$, given by $\tau_x(y)=x+y$.
Let $a:X\times X\rightarrow X$ be the addition morphism, given by $a(x,y)=x+y$.
Let $\pi_{ij}$ be the projection from $X\times X\times \hat{X}$ onto the product of the $i$-th and $j$-th factors. 
Given a coherent sheaf $F$ over $X$ we get the sheaf 
\begin{equation}
\label{eq-universal-spreading-of-F}
\F:= (\pi_{12}^*(a^*(F))\otimes\pi_{13}^*\P^{-1}
\end{equation}
over $X\times [X\times\hat{X}]$. The sheaf $\F$ restricts to $X\times\{(x,L)\}$ as
$\tau_x^*(F)\otimes L^{-1}$. If $F$ is stable in some sense and corresponds to a point $[F]$ in some fine 
moduli space $\M$ of stable coherent sheaves over $X$ with a universal sheaf $\U$ over $X\times \M$, then 
$\F$ is the tensor product of the pullback via $\pi_{23}$ of some line bundle with the
pullback of $\U$ via the morphism $id_X\times\iota_F$, where
\begin{equation}
\label{eq-iota-F}
\iota_F:X\times \hat{X}\rightarrow \M
\end{equation} 
sends $(x,L)$ to
$\tau_x^*(F)\otimes L^{-1}$. The morphism $\iota_F$ is associated to the point $[F]$ via the natural action of $X\times\hat{X}$ on $\M$.
The following statement relates the construction in \cite[Theorem 1.5 and 13.4]{markman-generalized-kummers} for proving the algebraicity of the Weil classes on abelian fourfolds of Weil-type of discriminant $1$, to our strategy in the current paper. In \cite{markman-generalized-kummers} the right hand side of Equation (\ref{eq-object-is-image-of-F-1-dual-times-F-2}) was used to construct a coherent sheaf over $X\times\hat{X}$ from a pair of sheaves on $X$, while in the current paper we use the left hand side.

\begin{lem} The isomorphism
\begin{equation}
\label{eq-object-is-image-of-F-1-dual-times-F-2}
\Phi(F_2\boxtimes F_1^\vee)\cong
R\pi_{23,*}(\pi_1^*F_1^\vee\otimes \F_2)[n],
\end{equation} 
holds for all objects $F_i$, $i=1,2$, in $D^b(X\times X)$.
\end{lem}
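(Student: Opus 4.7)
The plan is to unwind the definition of $\Phi$ given in (\ref{eq-Orlov-derived-equivalence-from-XxX-to-X-times-hat-X}) step by step, producing an explicit integral-transform formula for $\Phi(F_2\boxtimes F_1^\vee)$, and then match it to the right-hand side by exploiting commutativity of the addition map $a:X\times X\to X$.

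First I would compute $\mu^*(F_2\boxtimes F_1^\vee)$. Since $\pi_1\circ \mu = a$ and $\pi_2\circ\mu = \pi_2$ by the definition of $\mu$, the K\"{u}nneth formula gives $\mu^*(F_2\boxtimes F_1^\vee)\cong a^*F_2\otimes \pi_2^*F_1^\vee$ on $X\times X$. Next I would apply $id\times\Psi_{\P^{-1}[n]}$, viewed as an integral transform on the triple product $X\times X\times \hat{X}$: pull back by $\pi_{12}$, tensor with $\pi_{23}^*\P^{-1}$, push forward along $\pi_{13}$, and shift by $[n]$. The outcome is
\[
\Phi(F_2\boxtimes F_1^\vee) \;\cong\; R\pi_{13,*}\bigl((a\circ \pi_{12})^*F_2 \otimes \pi_2^*F_1^\vee \otimes \pi_{23}^*\P^{-1}\bigr)[n].
\]

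To pass to the stated right-hand side I would introduce the swap automorphism $\sigma:X\times X\times \hat{X}\to X\times X\times \hat{X}$ exchanging the two copies of $X$ and fixing $\hat{X}$. It satisfies $\pi_{13}\circ \sigma = \pi_{23}$, $\pi_2\circ \sigma = \pi_1$, $\pi_{23}\circ \sigma = \pi_{13}$, and, crucially, $a\circ \pi_{12}\circ \sigma = a\circ \pi_{12}$ because $a$ is symmetric. Using the identity $R\pi_{13,*}= R\pi_{23,*}\circ \sigma^*$, which holds for the isomorphism $\sigma$ via $R\sigma_*\sigma^*=\mathrm{id}$, the formula above becomes
\[
R\pi_{23,*}\bigl((a\circ \pi_{12})^*F_2 \otimes \pi_1^*F_1^\vee \otimes \pi_{13}^*\P^{-1}\bigr)[n],
\]
and the factor $(a\circ \pi_{12})^*F_2 \otimes \pi_{13}^*\P^{-1}$ is precisely $\F_2$ as defined in (\ref{eq-universal-spreading-of-F}).

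The main obstacle I anticipate is purely bookkeeping: keeping track of which projection is which among $\pi_1,\pi_2,\pi_{12},\pi_{13},\pi_{23}$, and placing the shift $[n]$ at the right stage. No base change or projection formula is needed, since $\sigma$ is an isomorphism and all remaining manipulations are formal.
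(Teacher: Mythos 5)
Your proof is correct and follows essentially the same route as the paper: unwind the definition of $\Phi$ via the integral-transform formalism, compute $\mu^*(F_2\boxtimes F_1^\vee)=a^*F_2\otimes\pi_2^*F_1^\vee$, and apply the Fourier--Mukai kernel. The only difference is presentational: the paper works on the fourfold $X\times X\times X\times\hat X$ with the kernel $\StructureSheaf{\Delta_X}\boxtimes\P^{-1}[n]$ and absorbs the relabeling of the two $X$-factors silently when contracting against the diagonal, whereas you work directly on the triple product and make the swap explicit via the involution $\sigma$ (using the symmetry of $a$). Both are the same argument; your version just spells out the step the paper leaves to the reader. One small nit: the justification ``$R\sigma_*\sigma^*=\mathrm{id}$'' is not quite the point -- what you actually use is $\pi_{13}=\pi_{23}\circ\sigma$ together with $R\sigma_*=\sigma^*$ (valid since $\sigma$ is an involutive isomorphism), but the conclusion $R\pi_{13,*}=R\pi_{23,*}\circ\sigma^*$ is right.
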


\begin{proof}
Let $p_i$ be the projection from $X\times X\times X\times \hat{X}$ onto the $i$-th factor. 
\begin{eqnarray*}
(id\times\Psi_{\P^{-1}[n]})(\mu^*(F_2\boxtimes F_1^\vee))) & \cong &
\\
Rp_{34,*}\left\{
p_2^*(F_1^\vee)\otimes p_{12}^*(a^*(F_2))\otimes p_{13}^*(\StructureSheaf{\Delta_X})\otimes p_{24}^*(\P^{-1}[n])
\right\} &\cong&
\\
R\pi_{23,*}\left\{
\pi_1^*(F_1^\vee)\otimes \pi_{12}^*(a^*(F_2))\otimes \pi_{13}^*\P^{-1}
\right\}[n] &\cong& 
R\pi_{23,*}(\pi_1^*F_1^\vee\otimes \F_2)[n].
\end{eqnarray*}
\end{proof}

\begin{defi}
\label{def-secant-square-object}
When $ch(F_i)$ belongs to the $K$-secant $P$, for $i=1,2$, 
we will refer to the image (\ref{eq-object-is-image-of-F-1-dual-times-F-2}) of $F_2\boxtimes F_1^\vee$ via Orlov's equivalence as a {\em secant$^{\boxtimes 2}$-object} in $D^b(X\times\hat{X})$.
\end{defi}

Let $F_1$, $F_2$ be objects in $D^b(X)$, such that $ch(F_i)$ belongs to the $K$-secant $P$, for $i=1,2$, such that the $2$-form $\Xi_P$ in Corollary \ref{cor-abelian-variety-of-weil-type} is ample.
Denote by $H^2(X\times\hat{X},\QQ)_P$
the direct sum of all non-trivial irreducible $\Spin(V)_P$-subrepresentations of $H^2(X\times\hat{X},\QQ).$ Then
$H^2(X\times\hat{X},\QQ)=H^2(X\times\hat{X},\QQ)_P+\QQ\Xi_P$, 
by Lemma \ref{lemma-Spin-V-P-invariant-classes-are-Hodge}.
Let $k$ be the minimal non-negative integer, such that $ch_k(\Phi(F_2\boxtimes F_1^\vee))\neq 0.$ 

\begin{lem}
\label{ch-3-alpha-is-second-partial-of-J}
Assume that $k<\dim_\CC(X)$. 
There exists a unique class $\ell$ of type $(1,1)$ in $H^2(X\times\hat{X},\QQ)_P$, such that 
the class $\alpha:=\exp(\ell)ch(\Phi(F_2\boxtimes F_1^\vee))$ is $\Spin(V)_P$-invariant. 
The class $\ell$ depends on the secant line $P$, but not on the choice of $F_i$, $i=1,2$.
\end{lem}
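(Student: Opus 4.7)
The plan is to exhibit an explicit candidate for $\ell$ that manifestly depends only on $P$, prove directly that $\exp(\ell)ch(E)$ is $\rho(\Spin(V)_P)$-invariant, and then establish uniqueness within $H^2(X\times\hat{X},\QQ)_P$ by a Hard Lefschetz argument. Write $E:=\Phi(F_2\boxtimes F_1^\vee)$. Using the decomposition $H^2(X\times\hat{X},\QQ)=H^2(X\times\hat{X},\QQ)_P\oplus\QQ\Theta_P$ recorded just above the lemma, split $c_1(\P)=[c_1(\P)]_P+c'\Theta_P$ with $[c_1(\P)]_P\in H^2(X\times\hat{X},\QQ)_P$ and $c'\in\QQ$, and set
\[
\ell:=-\tfrac{1}{2}[c_1(\P)]_P.
\]
Proposition \ref{prop-extension-class-of-decreasing-filtration-of-spin-V-representations} together with the $\rho'$-invariance of $ch(E)$ implies, by the very computation in the proof of Corollary \ref{cor-kappa-class-is-Spin-V-P-invariant}, that $\beta:=\exp(-\tfrac{1}{2}c_1(\P))\cup ch(E)$ is $\rho(\Spin(V)_P)$-invariant. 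Since $\exp(\tfrac{1}{2}c'\Theta_P)$ is invariant as well,
\[
\alpha=\exp(\ell)\cup ch(E)=\exp(\tfrac{1}{2}c'\Theta_P)\cup\beta
\]
is $\rho(\Spin(V)_P)$-invariant. The class $\ell$ is of type $(1,1)$ because $c_1(\P)$ is, and the summand $H^2(X\times\hat{X},\QQ)_P$ is a sub-Hodge-structure: the complex structure $I$ of $X\times\hat{X}$ belongs to $\Omega_P\subset\rho(\Spin(V_\RR)_P)$ by Remark \ref{rem-complex-structure-lifts-to-Spin-V-RR} and the proof of Lemma \ref{lemma-coadjoint-orbit-embedds-as-open-subset-of-Grassmannian}, so $I$ preserves the isotypic decomposition. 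Manifestly $\ell$ is independent of $F_1$ and $F_2$.

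For uniqueness, compare leading degrees in the identity $\rho'_g(ch(E))=ch(E)$ to obtain $\rho_g(ch_k)=ch_k$ for every $g\in\Spin(V)_P$. Since $k<n=\dim_\CC X$, Lemma \ref{lemma-Spin-V-P-invariant-classes-are-Hodge} and the note following it show that the invariant subspace $H^{2k}(X\times\hat{X},\QQ)^{\Spin(V)_P}$ is one-dimensional and spanned by $\Theta_P^k$, so $ch_k=c\Theta_P^k$ with $c\neq 0$ by the definition of $k$. Now suppose $\ell_1,\ell_2\in H^2(X\times\hat{X},\QQ)_P$ both yield $\rho$-invariant classes $\alpha_i=\exp(\ell_i)ch(E)$, and set $\Delta:=\ell_1-\ell_2$. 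The lowest non-zero graded summand of the $\rho$-invariant class $\alpha_1-\alpha_2$ lies in degree $2k+2$ and equals $\Delta\cup ch_k=c\,\Delta\cup\Theta_P^k$, which is therefore itself $\rho(\Spin(V)_P)$-invariant.

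The heart of the argument, and its main obstacle, is deducing $\Delta=0$ from the invariance of $\Delta\cup\Theta_P^k$. I combine two ingredients. Since $\Theta_P$ is ample on the $2n$-dimensional abelian variety $X\times\hat{X}$ and $k\leq n-1\leq 2n-2$, the Lefschetz decomposition yields that cup-product with $\Theta_P^k$ is injective
\[
\cup\,\Theta_P^k\ :\ H^2(X\times\hat{X},\QQ)\ \longrightarrow\ H^{2k+2}(X\times\hat{X},\QQ).
\]
Moreover this map is $\Spin(V)_P$-equivariant, so it carries the sum of non-trivial irreducible subrepresentations $H^2(X\times\hat{X},\QQ)_P$ injectively into a $\Spin(V)_P$-subrepresentation containing no non-zero invariants. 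Hence the $\rho$-invariant class $\Delta\cup\Theta_P^k$ must vanish, and injectivity forces $\Delta=0$. This yields uniqueness, and together with existence identifies the required $\ell$ as $-\tfrac{1}{2}[c_1(\P)]_P$, depending only on $P$.
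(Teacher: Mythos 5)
Your explicit formula $\ell=-\tfrac12[c_1(\P)]_P$ is correct, your invariance computation is correct, and your uniqueness argument (lowest graded summand plus Hard Lefschetz) is essentially the same as the paper's. The serious issue is the logical order: you invoke Proposition~\ref{prop-extension-class-of-decreasing-filtration-of-spin-V-representations} for the existence part, but in the paper that Proposition is \emph{proved from} this Lemma. Concretely, the proof of Lemma~\ref{example-conjecture-holds-for-abelian-surfaces} (the base case $n=2$ of the Proposition) cites Remark~\ref{remark-k-equal-0-case} --- which depends on Lemma~\ref{ch-3-alpha-is-second-partial-of-J} --- to identify $\ell$ as the projection of $-\tfrac12 c_1(\P)$ to $H^2(X\times\hat{X},\QQ)_P$, and then uses the relation $c_1(N_g)=\rho_g(\ell)-\ell$, which is established \emph{inside the proof} of Lemma~\ref{ch-3-alpha-is-second-partial-of-J}, to conclude $c_1(N_g)=\tfrac12[c_1(\P)-\rho_g(c_1(\P))]$. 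Substituting your proof would therefore introduce a cycle: Lemma~\ref{ch-3-alpha-is-second-partial-of-J} $\Rightarrow$ Remark~\ref{remark-k-equal-0-case} $\Rightarrow$ Lemma~\ref{example-conjecture-holds-for-abelian-surfaces} $\Rightarrow$ Proposition~\ref{prop-extension-class-of-decreasing-filtration-of-spin-V-representations} $\Rightarrow$ your proof of Lemma~\ref{ch-3-alpha-is-second-partial-of-J}.

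The paper's proof deliberately avoids this by using only Equation~(\ref{eq-N-g}), i.e.\ the bare existence of some topological line bundle $N_g$ with $\rho'_g=ch(N_g)\cup\rho_g$ (a citation to Orlov), and \emph{not} the explicit formula for $c_1(N_g)$. It constructs $\ell$ abstractly from the coset $\beta_{k+1}+\beta_k\cup H^2(X\times\hat{X},\QQ)$ being $\Spin(V)_P$-invariant, then derives $c_1(N_g)=\rho_g(\ell)-\ell$ as an output of the argument --- which is precisely the input needed later for the surface case of the Proposition. So the two proofs land on the same $\ell$, but only the paper's can occupy this slot in the logical architecture. Your argument works perfectly well as a \emph{consistency check} or as an alternative proof in a context where Proposition~\ref{prop-extension-class-of-decreasing-filtration-of-spin-V-representations} is established by independent means, but it cannot replace the paper's proof as written. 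Everything after the first paragraph of your write-up (in particular the Lefschetz injectivity of $\cup\,\Theta_P^k$ on $H^2$ and the observation that $\Theta_P^k\cup H^2(X\times\hat{X},\QQ)_P$ contains no invariants) is correct and matches the paper's uniqueness argument in substance.
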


\begin{proof}
Set $w_i:=ch(F_i)$. 
The class $w_2\otimes w_1^\vee=ch(F_2\boxtimes F_1^\vee)$ is $\Spin(V)_P$-invariant with respect to the diagonal action via $m\times m^\dagger$ on $S_X\otimes S_X$, by Remark \ref{rem-invariance-of-w-vee}.
Let $\beta$ be the class $ch(\Phi(F_2\boxtimes F_1^\vee))$ in $H^*(X\times\hat{X},\QQ)$. Then $\beta$ is $\Spin(V)_P$-invariant with respect to the $\rho'$ representation.
Set $\beta_i:=ch_i(\Phi(F_2\boxtimes F_1^\vee))$.  
Given $g\in \Spin(V)_P$ there exists a topological complex line-bundle $N_g$ on $X\times \hat{X}$, such that 
\begin{equation}
\label{eq-N-g-beta}
\beta=ch(N_g)\rho_g(\beta),
\end{equation}
by Equation (\ref{eq-N-g}).
Note that $\beta_k$ is $\Spin(V)_P$-invariant with respect to  $\rho$, by the minimality of $k$,  and is hence a non-zero scalar multiple of $\Xi_P^k$, by the assumption that $k<\dim_\CC(X)$. In particular, the homomorphism  $\beta_k\cup(\bullet):H^2(X\times\hat{X},\QQ)\rightarrow H^{2k+2}(X\times\hat{X},\QQ)$ is injective, as $\Xi_P$ is ample.

The coset
$\beta_{k+1}+\beta_k\cup H^2(X\times\hat{X},\QQ)$ is $\Spin(V)_P$-invariant.
Hence, $\beta_{k+1}$  belongs to the sum of $\beta_k\cup H^2(X\times\hat{X},\QQ)$ and the $\Spin(V)_P$-invariant subspace $H^{2k+2}(X\times\hat{X},\QQ)^{\Spin(V)_P}$.
Thus, there exists a uniques class $\ell\in H^2(X\times\hat{X},\QQ)_P$, such that 
$\gamma:=\beta_{k+1}+\beta_k \ell$ is $\Spin(V)_P$-invariant. Given $g\in\Spin(V)_P$, 
Equation (\ref{eq-N-g-beta}) yields the first equality below. The invariance of $\beta_k$ and $\gamma$ yields  the  second equality.
\begin{eqnarray*}
\rho_g(\beta_{k+1})&=&-c_1(N_g)\beta_k+\beta_{k+1}=\gamma-[c_1(N_g)+\ell]\beta_k
\\
\rho_g(\beta_{k+1})&=&\rho_g(\gamma-\beta_k\ell)=\gamma-\beta_k\rho_g(\ell).
\end{eqnarray*}
We conclude that 
\[
c_1(N_g)=\rho_g(\ell)-\ell,
\] 
for all $g\in \Spin(V)_P$, by the injectivity of the cup product with $\beta_k$. In particular, $\ell=0$, if and only if $c_1(N_g)=0$,
for all $g\in\Spin(V)_P$. Furthermore, $\ell$ is determined by $P$ and is independent of the choices of $F_i$ with $ch(F_i)\in P$, $i=1,2$. Indeed, if $c_1(N_g)=\rho_g(\ell')-\ell'$, for all $g\in \Spin(V)_P$, for some $\ell'\in H^2(X\times\hat{X},\QQ)_P$,
then $\ell-\ell'$ is a $\Spin(V)_P$-invariant class in $H^2(X\times\hat{X},\QQ)_P$, hence $\ell'=\ell$.

Set $\alpha:=\exp(\ell)\beta$. Then $\alpha_{k+1}=\gamma$ and $\alpha$ is $\Spin(V)_P$-invariant, since
$
\rho_g(\alpha)=\rho_g(\exp(\ell))\rho_g(\beta)=\exp(\rho_g(\ell))ch(N_g^{-1})\beta=
\exp(\rho_g(\ell))\exp(\ell-\rho_g(\ell))\beta=\exp(\ell)\beta=\alpha.
$
\hide{
Case 1: If $\beta_1$ vanishes, then $\beta_2$
is $\Spin(V)_w$-invariant as well. It follows that $\beta_2$ is a scalar multiple of $\Theta_w^2$.

Case 2: If $\beta_1$ does not vanish, then the coset
$\beta_2+\Theta_w\cup H^2(X\times\hat{X},\QQ)$ is $\Spin(V)_w$-invariant.
Hence, $\beta_2$  belongs to the sum of $\Theta_w\cup H^2(X\times\hat{X},\QQ)$ and the $\Spin(V)_w$-invariant subspace $(\wedge^4V)^{\Spin(V)_w}$.
Now, $\wedge^4V_K\cong \oplus_{i=0}^4[\wedge^iW_1\otimes \wedge^{4-i}W_1^*]$, and the 
$\Spin(V)_w$-invariant subspace is a one-dimensional subspace of $\wedge^2W_1\otimes \wedge^2 W_1^*$ spanned by 
$\Theta_w^2$. It follows that 
$\beta_2$ belongs to $\Theta_w\cup H^2(X\times\hat{X},\QQ)$. The homomorphism
$\Theta_w\cup(\bullet): H^2(X\times\hat{X},\QQ)\rightarrow H^4(X\times\hat{X},\QQ)$ is injective, since $\Theta_w$ is ample.
Hence, there exists a uniques class $\ell\in H^{1,1}(X\times\hat{X},\QQ)$, such that 
$\beta_2=-\beta_1\cup \ell.$ Equation (\ref{eq-N-g}) yields $c_1(N_g)=\ell-\rho_g(\ell)$, for all $g\in \Spin(V)_w$.
Set $\alpha:=\exp(\ell)\cup \beta$. Then $\alpha_2$ vanishes and $\alpha$ is $\Spin(V)_w$-invariant, since
$
\rho_g(\alpha)=\rho_g(\exp(\ell))\rho_g(\beta)=\exp(\rho_g(\ell))ch(N_g)\beta=
\exp(\rho_g(\ell))\exp(\ell-\rho_g(\ell))\beta=\exp(\ell)\beta=\alpha.
$
}
\end{proof}

\begin{rem}
\label{remark-k-equal-0-case}
Assume $k=0$, so that the rank $r$ of $\Phi(F_2\boxtimes F_1^\vee)$ is non-zero. 
Set $\beta_1:=c_1(\Phi(F_2\boxtimes F_1^\vee))$. The class 
\[
\kappa(\Phi(F_2\boxtimes F_1^\vee)):=\exp(-\beta_1/r)ch(\Phi(F_2\boxtimes F_1^\vee))
\] 
is then $\Spin(V)_P$-invariant, by Lemma \ref{ch-3-alpha-is-second-partial-of-J}. In this case $\ell=(t\Xi_P-\beta_1)/r$, where $t$ is the unique scalar, such that $\ell$ belongs to $H^2(X\times\hat{X},\QQ)_P$.
\end{rem}

\begin{lem}
\label{example-conjecture-holds-for-abelian-surfaces}
Proposition \ref{prop-extension-class-of-decreasing-filtration-of-spin-V-representations} holds in  case $X$ is an abelian surface. 
\end{lem}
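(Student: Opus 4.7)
The plan is to reduce Equation (\ref{eq-relating-rho-and-rho-prime}) to a statement about $1$-cocycles on $\Spin(V)$ with values in $H^2(X\times\hat X,\ZZ)$. By Equation (\ref{eq-N-g}) and the fact that $ch(N_g)=\exp(c_1(N_g))$, the full identity is equivalent to
\[
c_1(N_g) \;=\; \tfrac{1}{2}\bigl(c_1(\P)-\rho_g(c_1(\P))\bigr) \quad \text{for all } g\in\Spin(V),
\]
and the cocycle relation (\ref{eq-1-cocycle-identity}) presents the left-hand side as a $1$-cocycle in $H^1(\Spin(V),H^2(X\times\hat X,\ZZ))$. The right-hand side is visibly (minus) the coboundary of $\tfrac{1}{2}c_1(\P)$, so both sides are cocycles and it suffices to verify their agreement on a generating set of $\Spin(V)$.

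Next I would use Lemma \ref{lemma-orlov-isomorphism-is-chevalley} to replace $\phi$ by the Chevalley isomorphism $\tilde\varphi$ twisted by the factors $\phi_\P$, $\psi_{\P^{-1}[n]}$, and $id\otimes\tau$. The outer twists are intertwiners of spin representations and contribute no non-equivariance, so the class $c_1(N_g)$ measures exactly the failure of $\tilde\varphi$ to commute with the $\Spin(V)$-action through $m\otimes m^\dagger$ on the source and $\rho$ on the target. By Remark \ref{remark-non-equivariance-of-varphi-tilde}, this failure is controlled by the antisymmetric part $\bar{B}_0 = \tfrac{1}{2}c_1(\P)$ of the bilinear form used in the definition (\ref{eq-psi}) of $\psi$: working over $\QQ$ and replacing $B_0$ by its symmetric part $\tfrac{1}{2}(\bullet,\bullet)_V$ gives a fully $\rho$-equivariant variant $\tilde\varphi_{\mathrm{sym}}$ of Chevalley's map, and the operator $\tilde\varphi^{-1}\circ\tilde\varphi_{\mathrm{sym}}$ respects the filtration $F^k(\wedge^*V)$ with graded piece determined by the class $\bar{B}_0$.

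Verifying the cocycle identity on generators of $\Spin(V)$ then becomes a direct computation in the Clifford algebra. I would take $\alpha$ to range over the three irreducible summands $\wedge^2 H^1(X)$, $H^1(X)\otimes H^1(\hat X)$, and $\wedge^2 H^1(\hat X)$ of the Lie algebra $\wedge^2 V\subset C(V)^{\mathrm{even}}$, whose exponentials $g=\exp(\alpha)$ generate $\Spin(V)$. For each, one computes $\rho_g(c_1(\P))$ directly from the action on $V$ and compares with the explicit form of $\tilde\varphi\circ(m_g\otimes m^\dagger_g)\circ\tilde\varphi^{-1}$ through formula (\ref{eq-psi}) for $\psi$. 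The main obstacle will be the mixed generators $\alpha \in H^1(X)\otimes H^1(\hat X)$, where the contraction operators $\delta_v$ interact nontrivially with the wedge operators $L_v$ and the $\tfrac{1}{2}(c_1(\P)-\rho_g(c_1(\P)))$ term emerges only after a cancellation against the antisymmetric part of $B_0$. The abelian surface case is tractable because the cohomology of $X\times\hat X$ is concentrated in bounded degrees, so the truncation of $\exp$ appearing in the Clifford manipulations terminates at small order and the identity can be checked term by term.
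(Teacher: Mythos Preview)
Your approach is genuinely different from the paper's and, if it goes through, would prove Proposition \ref{prop-extension-class-of-decreasing-filtration-of-spin-V-representations} directly in all dimensions rather than only for surfaces.

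The paper's argument is geometric. It takes $F_1,F_2$ to be ideal sheaves of length-$n$ subschemes on the surface $X$ and imports from \cite[Prop.~11.2]{markman-generalized-kummers} an explicit computation of the first graded pieces of $ch\bigl(\Phi(F_2\boxtimes F_1^\vee)\bigr)$, in particular $c_1=-n\,c_1(\P)$. Combined with Remark \ref{remark-k-equal-0-case} this identifies the class $\ell$ of Lemma \ref{ch-3-alpha-is-second-partial-of-J} as the projection of $-\tfrac12 c_1(\P)$ to $H^2(X\times\hat X,\QQ)_P$, whence $c_1(N_g)=\tfrac12\bigl(c_1(\P)-\rho_g(c_1(\P))\bigr)$ for every $g$ in the stabilizer $\Spin(V)_P$ of any negative-definite rational plane $P$ containing some $w_n=(1,0,-n)$. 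The cocycle identity (\ref{eq-1-cocycle-identity}) propagates this to the subgroup generated by all such stabilizers, and Zariski density of that subgroup in $\Spin(V_\QQ)$ finishes the proof.

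Your route is purely representation-theoretic: compare $\tilde\varphi$ with its $\rho$-equivariant variant $\tilde\varphi_{\mathrm{sym}}$ from Remark \ref{remark-non-equivariance-of-varphi-tilde} and read off the discrepancy as a coboundary of $\tfrac12 c_1(\P)$. This is a sound strategy, but your proposal stops short of the actual computation (you acknowledge the mixed generators as ``the main obstacle'' without resolving them), and your justification for restricting to surfaces---that the cohomology sits in bounded degrees---is a red herring. Nothing in your outline uses $n=2$; the operators $L_v+\delta_v$ and the comparison $\psi\circ\psi_{\mathrm{sym}}^{-1}$ have the same structure in every dimension. So either your Clifford-algebra calculation goes through uniformly in $n$, making the paper's reduction steps in the proof of Proposition \ref{prop-extension-class-of-decreasing-filtration-of-spin-V-representations} superfluous, or there is a hidden difficulty you have not yet located. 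The honest content of your proposal is a plan for a direct proof of the full Proposition; what the paper gains by specializing to surfaces is access to the concrete geometric input from \cite{markman-generalized-kummers}, which your approach does not need but has not replaced with a completed computation.
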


\begin{proof}
Choose $F_1$ and $F_2$ to be ideal sheaves of length $n$ subschemes with Chern character $w_n=(1,0,-n)$, $n\geq 1$. 
Set $E:=R\pi_{23,*}(\pi_1^*F_1^\vee\otimes \F_2)$.
The first three graded summands of the Chern character of $E$ are
\[
ch(E)=
-2n-nc_1(\P)+
\left[-\frac{n}{2}c_1(\P)^2+n^2\pi_X^*[pt_X]+\pi_{\hat{X}}^*[pt_{\hat{X}}]\right] + \cdots,
\]
by the proof of \cite[Prop. 11.2]{markman-generalized-kummers} (the definition of $\iota_F$ there is ours, given in (\ref{eq-iota-F}),  composed with the automorphism of $X\times\hat{X}$ of multiplication by $-1$ and that automorphism acts as the identity on the even cohomology, so that the result there applies here as well).
The first Chern class of the object 
$E:=R\pi_{23,*}(\pi_1^*F_1^\vee\otimes \F_2)$ in Lemma \ref{ch-3-alpha-is-second-partial-of-J} is $\beta_1=-nc_1(\P)$.
On the other hand, the $K$-secant $P$ can be chosen to be $\span_\QQ\{w_n,h\}$, where $h\in w_n^\perp$ and $(h,h)_{S^+}<0$.  
The class $\ell$ is the projection\footnote{The $\Spin(V)_P$-invariant class $\Xi_P$ clearly depends on $P$ (under the identification of $\wedge^2V_\QQ$ and $\wedge^2S^+_\QQ$ the class $\Xi_P$ is a scalar multiple of $w_n\wedge h$, 
by \cite[(12.5)]{markman-generalized-kummers}).
So $\beta_1$ need not be a scalar multiple of $\Xi_P$.
} 
of $-\frac{c_1(\P)}{2}$ to $H^2(X\times\hat{X},\QQ)_P,$ by Remark \ref{remark-k-equal-0-case}. 
We conclude the equality
\begin{equation}
\label{eq-c-1-N-g}
c_1(N_g)=\frac{1}{2}\left[c_1(\P)-\rho_g(c_1(\P))\right],
\end{equation}
for all $g\in\Spin(V)_P$, for every negative definite rational plane $P$ containing $w_n$, for some $n\geq 1$. 
The cocycle identity (\ref{eq-1-cocycle-identity}) implies that 
Equation (\ref{eq-c-1-N-g}) holds for every element $g$ of the subgroup $\Gamma$ of $\Spin(V)$ generated by the union 
\[
\cup \{\Spin(V)_P \ : \ P \ \mbox{is a negative definite rational plane containing} \ w_n, \ \mbox{for some} \ n\geq 1\}.
\]
Equation (\ref{eq-relating-rho-and-rho-prime}) holds for all $g\in \Gamma$. 
Passing to $\QQ$ coefficients, Equation (\ref{eq-relating-rho-and-rho-prime}) holds for a Zariski closed subgroup of $\Spin(V_\QQ)$ and in particular for $g$ in the Zariski closure of $\Gamma$.
The latter is the whole of $\Spin(V_\QQ)$ (see for example \cite[Th. 2.1]{verbitsky}).
\end{proof}

\hide{
Let $\Sigma_w\subset \Omega_w$ be the complex $6$-dimensional subvariety consisting of complex structures $I$ on $V_\RR=H^1(X,\RR)\oplus H^1(\hat{X},\RR)$ obtained by varying the complex structure on $X$ keeping the class of the principal polarization $\Theta$ on $X$ of Hodge type. Given $t\in \Omega_w$, let $I_t$ be the complex structure on $V_\RR$ and denote by $A_t$ the abelian variety $(V_\RR/V_\ZZ,I_t)$.  
We get a natural graded isomorphism $\eta_t:H^*(A_t,\ZZ)\rightarrow \wedge^*V^*_\ZZ$. 
\begin{cor}
\label{cor-hat-J-ww-is-algebraic}
Given $g\in \Spin(V)_w$ and $t\in g(\Sigma_w)$, the class $\eta_t^{-1}(\hat{J}_{ww})$ in $H^{3,3}(A_t,\QQ)$ is algebraic.
\end{cor}

\begin{proof}
The class $\eta_{g^{-1}(t)}^{-1}(\hat{J}_{ww})$ is algebraic, by Lemma \ref{ch-3-alpha-is-second-partial-of-J}.
The element $g$ induces a Hodge isometry $\tilde{g}:=\eta_{g^{-1}(t)}\circ\eta_t^{-1}:H^1(A_t,\ZZ)\rightarrow H^1(A_{g^{-1}(t)},\ZZ)$, hence an isomorphism $G:A_t\rightarrow A_{g^{-1}(t)}$, such that 
$G_*=\tilde{g}$, and so $G^*(\eta_{g^{-1}(t)}^{-1}(\hat{J}_{ww}))=\eta_t^{-1}(\hat{J}_{ww})$. Hence, the latter is algebraic as well.
\end{proof}

Note that Corollary \ref{cor-hat-J-ww-is-algebraic} holds also if we replace $g\in \Spin(V)_w$ by $g\in \Spin(V_\QQ)_w$, as the proof goes through with $G$ an isogeny instead of an isomorphism.

\begin{question}
What is the closure of ${\displaystyle \bigcup_{g\in \Spin(V)_w}g(\Sigma_w)}$ in $\Omega_w$ in the  classical topology?
\end{question}

Set $\G_{F_1,F_2}:=\E^2_{F_1,F_2}$. We have a surjective homomorphism from the fiber of $\G_{F_1,F_2}$ at $(x,L)$ onto
$\Ext^2(F_1,\tau_x^*(F_2)\otimes L^{-1})$, by cohomology and base change,  as 
$\Ext^i(F_1,\tau_x^*(F_2)\otimes L)$ vanishes for $i>2$.
Set $w:=ch(F_1)$. Let $\Theta_w\in \wedge^2V^*$ be the $\Spin(V)_w$-invariant class in 
Corollary \ref{cor-abelian-variety-of-weil-type}. Let $\hat{J}_{ww}\in \wedge^6_+V^*$ be the $\Spin(V)_w$-invariant class in Lemma \ref{lemma-hat-J}.

\begin{conj}
\begin{enumerate}
\item
$\G_{F_1,F_2}$ is supported on a subscheme of codimension $1$ in $X\times \hat{X}$, the cohomology classes
$ch_i\left(\G_{F_1,F_2}\right)$ are $\Spin(V)_w$-invariant, for all $i$, 
and
$ch_3\left(\G_{F_1,F_2}\right)$ is a linear combination 
\begin{equation}
\label{eq-ch-3}
ch_3\left(\G_{F_1,F_2}\right)=s\hat{J}_{ww}+t\Theta_w^3, 
\end{equation}
with $s,t\in\QQ$ and $s\neq 0$.
\item
The pair $(X\times\hat{X},\G_{F_1,F_2})$ deforms with $X\times\hat{X}$
to a pair $(A,\G)$, for every abelian $6$-fold $A$ in the period domain $\Omega_w$, given in
(\ref{eq-Omega}).
\end{enumerate}
\end{conj}
}

Let $\Spin(V_K)_{\ell_1,\ell_2}$ be the group appearing in Lemma \ref{lemma-Spin-V-P-invariant-classes-are-Hodge}.

\begin{lem}
\label{lemma-Spin-V-ell-1-ell-2-invariance-conditions}
Let $E$ be an object of $D^b(X\times \hat{X})$ of non-zero rank $r$ and let $G$ be a subgroup of $\Spin(V)$. 
\begin{enumerate}
\item
\label{lemma-item-G-invariance-conditions}
The class $ch(E)$ is $G$-$\rho'$-invariant, if and only if both $\kappa(E)$ and $c_1(E)-\frac{r}{2}c_1(\P)$ are $G$-$\rho$-invariant.  
\item
\label{lemma-item-Spin-V-ell-1-ell-2-invariance-conditions} 
Assume that $ch(E)$ is $\Spin(V)_P$-$\rho'$-invariant. Then $ch(E)$ is $\Spin(V_K)_{\ell_1,\ell_2}$-$\rho'$-invariant, if and only if $\kappa(E)$ is 
$\Spin(V_K)_{\ell_1,\ell_2}$-$\rho$-invariant.
\end{enumerate}
\end{lem}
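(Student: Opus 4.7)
\medskip
\noindent\textbf{Proof proposal.}
The plan is to deduce both parts from Proposition~\ref{prop-extension-class-of-decreasing-filtration-of-spin-V-representations}, which gives
\[
\rho'_g \;=\; \exp\!\left(\tfrac{1}{2}\bigl[c_1(\P)-\rho_g(c_1(\P))\bigr]\right)\cup \rho_g,
\]
together with the character computation of Lemma~\ref{lemma-Spin-V-P-invariant-classes-are-Hodge}.

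For part~(\ref{lemma-item-G-invariance-conditions}), I will do a direct bookkeeping in the exponential. Set $\alpha:=\frac{c_1(E)}{r}-\frac{c_1(\P)}{2}$, so that $r\alpha=c_1(E)-\frac{r}{2}c_1(\P)$, and write $ch(E)=\exp(c_1(E)/r)\kappa(E)$. Since $\rho_g$ is an algebra automorphism, expanding $\rho'_g(ch(E))$ and comparing with $ch(E)$ reduces the equality $\rho'_g(ch(E))=ch(E)$ to
\[
\exp\bigl(\rho_g(\alpha)-\alpha\bigr)\,\rho_g(\kappa(E))\;=\;\kappa(E).
\]
The reverse implication is immediate. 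For the forward implication, note that $\kappa(E)$ has degree~$0$ component $r\neq 0$ and vanishing degree~$1$ component (by the definition of $\kappa$). Hence the degree~$1$ part of the displayed equation reads $r(\rho_g(\alpha)-\alpha)=0$, so $\rho_g(\alpha)=\alpha$; substituting back yields $\rho_g(\kappa(E))=\kappa(E)$. Invariance of $\alpha$ is equivalent to invariance of $r\alpha=c_1(E)-\frac{r}{2}c_1(\P)$, completing part~(\ref{lemma-item-G-invariance-conditions}).

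For part~(\ref{lemma-item-Spin-V-ell-1-ell-2-invariance-conditions}), I will apply part~(\ref{lemma-item-G-invariance-conditions}) twice. The hypothesis says $ch(E)$ is $\Spin(V)_P$-$\rho'$-invariant; by part~(\ref{lemma-item-G-invariance-conditions}), this forces the class $c_1(E)-\frac{r}{2}c_1(\P)\in H^2(X\times\hat X,\QQ)$ to lie in $(\wedge^2 V_\QQ)^{\Spin(V)_P}$, using the identification $H^2(X\times\hat X,\QQ)\cong \wedge^2 V_\QQ$. By Lemma~\ref{lemma-Spin-V-P-invariant-classes-are-Hodge}, this subspace is one-dimensional and is moreover the trivial character of $\Spin(V_K)_{\ell_1,\ell_2}$. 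Consequently $c_1(E)-\frac{r}{2}c_1(\P)$ is automatically $\Spin(V_K)_{\ell_1,\ell_2}$-$\rho$-invariant. Applying the criterion of part~(\ref{lemma-item-G-invariance-conditions}) to the group $G=\Spin(V_K)_{\ell_1,\ell_2}$, the $\rho'$-invariance of $ch(E)$ under $\Spin(V_K)_{\ell_1,\ell_2}$ becomes equivalent to the $\rho$-invariance of $\kappa(E)$ alone.

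The only nontrivial input beyond part~(\ref{lemma-item-G-invariance-conditions}) is the character statement in Lemma~\ref{lemma-Spin-V-P-invariant-classes-are-Hodge} for $j=1$: since $n\geq 2$, we have $1\neq n$, and the lemma asserts that $(\wedge^{2}V_\QQ)^{\Spin(V)_P}$ is the trivial $\Spin(V_K)_{\ell_1,\ell_2}$-character. This is the key point that makes part~(\ref{lemma-item-Spin-V-ell-1-ell-2-invariance-conditions}) a genuine simplification, and it is also the only step where the dimensional hypothesis on $X$ enters implicitly through Lemma~\ref{lemma-Spin-V-P-invariant-classes-are-Hodge}.
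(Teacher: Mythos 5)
Your proof is correct and follows essentially the same route as the paper's. For part (1), the paper invokes the commutative diagram relating $\rho$ and $\rho'$ to note that $\rho'_g$-invariance of $ch(E)$ is equivalent to $\rho_g$-invariance of $\beta:=ch(E)\cup\exp(-\tfrac12 c_1(\P))$, then observes $\beta=\kappa(E)\exp(\beta_1/r)$ with $\beta_1=c_1(E)-\tfrac{r}{2}c_1(\P)$; your direct computation via $\alpha=\beta_1/r$ and the explicit formula for $\rho'_g$ is just an unfolded version of the same argument. Part (2) matches the paper's proof exactly, including the crucial appeal to the character statement of Lemma~\ref{lemma-Spin-V-P-invariant-classes-are-Hodge} for $j=1\neq n$.
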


\begin{proof}
(\ref{lemma-item-G-invariance-conditions}) Proposition \ref{prop-extension-class-of-decreasing-filtration-of-spin-V-representations} implies 
that for all $g\in \Spin(V)$, $ch(E)$ is $\rho'_g$-invariant, if and only if $ch(E)\cup\exp(-\frac{1}{2}c_1(\P))=r+[c_1(E)-\frac{r}{2}c_1(\P)]+\cdots$ is $\rho_g$-invariant. The latter is $\rho_g$-invariant, if and only if 
$c_1(E)-\frac{r}{2}c_1(\P)$ and $\kappa(E)$ are both $\rho_g$-invariant.

(\ref{lemma-item-Spin-V-ell-1-ell-2-invariance-conditions}) 
Lemma \ref{lemma-Spin-V-P-invariant-classes-are-Hodge} implies that the $\Spin(V)_P$-$\rho$-invariant  subspace $H^2(X\times\hat{X},\ZZ)^{\Spin(V)_P}$
is also $\Spin(V_K)_{\ell_1,\ell_2}$-$\rho$-invariant. Hence, the statement follows from part (\ref{lemma-item-G-invariance-conditions}).
\end{proof}

%
\subsection{Orlov's equivalence induces Chevalley's isomorphism $S\otimes S\cong \wedge^*V$}
\label{sec-Orlov-equal-Chevalley}
\label{sec-categorification}
Let $\beta:=\{e_1, \dots, e_{2n}\}$ be a basis of $H^1(X,\ZZ)$ and let $\{f_1, \dots, f_{2n}\}$ be the dual basis of $H^1(\hat{X},\ZZ)$.
Define $\mu:X\times X\rightarrow X\times X$ by $\mu(x,y)=(x+y,y)$. 
Then
\begin{eqnarray*}
\mu^*(\pi_1^*(e_i))&=&\pi_1^*(e_i)+\pi_2^*(e_i),
\\
\mu^*(\pi_2^*(e_i))&=&\pi_2^*(e_i).
\end{eqnarray*}
Let $\nu:H^*(X\times X,\ZZ)\rightarrow H^*(\hat{X}\times X,\ZZ)$ be the isomorphism induced by the equivalence
$(\Psi_{\P^{-1}[n]}\otimes 1)\circ\mu^*:D^b(X\times X)\rightarrow D^b(\hat{X}\times X)$.
Let $\tilde{\varphi}:H^*(X\times X,\ZZ)\rightarrow H^*(\hat{X}\times X,\ZZ)$ be the isomorphism (\ref{eq-tilde-varphi}).
\begin{lem}
\label{lemma-nu-equal-tilde-varphi}
The equality $\nu\circ(id\otimes \tau)=\tilde{\varphi}$ holds.
\end{lem}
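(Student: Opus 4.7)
The plan is to verify the equality by direct computation on an integer basis. Fix dual bases $\{e_1,\dots,e_{2n}\}$ of $H^1(X,\ZZ)$ and $\{f_1,\dots,f_{2n}\}$ of $H^1(\hat{X},\ZZ)$; then $S=H^*(X,\ZZ)$ has basis $\{e_K\}_{K\subseteq\{1,\dots,2n\}}$, and it suffices to compare $\nu(e_K\otimes\tau(e_L))$ with $\tilde{\varphi}(e_K\otimes e_L)$ for all $K,L$.

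First, I would unwind $\tilde{\varphi}(e_K\otimes e_L)=\psi(e_K\cdot[pt_{\hat{X}}]\cdot\tau(e_L))$ from the definitions (\ref{eq-Chevalley-varphi}) and (\ref{eq-tilde-varphi}). With the specific choice $B_0(v_1,v_2)=\theta_2(w_1)$, the contraction operator $\delta_{f_j}$ on $\wedge^*V$ vanishes identically, while $\delta_{e_i}$ is the derivation annihilating all $e_k$ and sending $f_j\mapsto\delta_{ij}$. Consequently $\psi$ sends any Clifford normal-form product $e_M\cdot f_N$ (with the $e$'s to the left of the $f$'s, both index sets distinct) to the wedge product $e_M\wedge f_N\in\wedge^*V$. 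One then reduces $e_K\cdot[pt_{\hat{X}}]\cdot\tau(e_L)$ to such normal form by iterating the Clifford relation $f_je_i=\delta_{ij}-e_if_j$, yielding an explicit expression in $\wedge^*V$.

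Next, I would compute $\nu(e_K\otimes e_L)$ directly. Since $\mu(x,y)=(x+y,y)$, the pullback $\mu^*$ acts on cohomology via the Hopf-algebra coproduct of $H^*(X)$ on the first tensor factor:
\[
\mu^*(e_K\otimes e_L)=\sum_{K=A\sqcup B}\epsilon(A,B)\,e_A\otimes(e_B\cdot e_L),
\]
with $\epsilon(A,B)$ the shuffle sign. Then $\Psi_{\P^{-1}[n]}\otimes 1$ applies the cohomological Fourier-Mukai transform on the first factor, which from $c_1(\P)=\sum_i e_if_i$ and the expansion $c_1(\P)^k/k!=(-1)^{k(k-1)/2}\sum_{|M|=k}e_M\wedge f_M$ evaluates to
\[
\psi_{\P^{-1}[n]}(e_A)=(-1)^n(-1)^{|A^c|}(-1)^{|A^c|(|A^c|-1)/2}\,\epsilon(A,A^c)\,f_{A^c}.
\]
In particular $\psi_{\P^{-1}[n]}(1)=[pt_{\hat{X}}]$ and $\psi_{\P^{-1}[n]}(e_i)=(-1)^{i-1}f_{I_i}$, where $I_i:=\{1,\dots,2n\}\setminus\{i\}$. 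The sample cases
\begin{align*}
\nu(1\otimes 1)&=[pt_{\hat{X}}]=\tilde{\varphi}(1\otimes 1),\\
\nu(e_i\otimes 1)&=e_i\wedge[pt_{\hat{X}}]+(-1)^{i-1}f_{I_i}=\tilde{\varphi}(e_i\otimes 1),\\
\nu(1\otimes e_i)&=e_i\wedge[pt_{\hat{X}}]=\tilde{\varphi}(1\otimes e_i)
\end{align*}
are immediate.

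Finally, the general case reduces to a purely combinatorial identity equating two sums of products of shuffle and Koszul signs, which one verifies by induction on $|K|+|L|$ using the Clifford-versus-wedge relation established in the first step. The main obstacle throughout is coherent sign bookkeeping: the shift $[n]$ in $\Psi_{\P^{-1}[n]}$, the sign in $ch(\P^{-1})=\exp(-c_1(\P))$, the Koszul sign $(-1)^{k(k-1)/2}$ in $c_1(\P)^k/k!$, the twist by $\tau$ in (\ref{eq-tilde-varphi}), and the shuffle signs arising from both $\mu^*$ and the Clifford rearrangement of $e_K\cdot[pt_{\hat{X}}]\cdot\tau(e_L)$ must all combine coherently. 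The asymmetric choice $B_0(v_1,v_2)=\theta_2(w_1)$---as opposed to the transposed choice $\theta_1(w_2)$ of \cite[Sec. 3.3]{chevalley}---is made precisely so that these cancellations occur, as flagged by the footnote following the definition of $B_0$ in Section \ref{subsection-the-isomorphism-tilde-varphi}.
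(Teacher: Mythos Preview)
Your overall plan is the paper's plan: fix dual bases, expand $\nu(e_K\otimes e_L)$ via $\mu^*$ and the Fourier--Mukai action, expand $\tilde{\varphi}(e_K\otimes e_L)=\psi(e_K\cdot[pt_{\hat{X}}]\cdot\tau(e_L))$ via the Clifford/wedge dictionary, and match the two closed-form sums term by term. Your formula for $\psi_{\P^{-1}[n]}(e_A)$ is correct and agrees with the paper's $\sigma_K\epsilon_{K,K^c}f_{K^c}$ with $\sigma_K=(-1)^{k(k+3)/2}$; your sample cases are fine.

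However, one step is wrong and would derail the computation. The assertion that $\psi$ sends a Clifford normal-form product $e_M\cdot f_N$ to $e_M\wedge f_N$ is false in general: for example $\psi(e_1\cdot f_1)=(L_{e_1}+\delta_{e_1})(f_1)=e_1\wedge f_1+1$. The point is that $\psi'(e_i)=L_{e_i}+\delta_{e_i}$ acts nontrivially on any $f_i$ to its right, so having the $e$'s to the left of the $f$'s is exactly the wrong normal form for your purpose. The paper sidesteps this by applying $\psi'$ from right to left: $\psi'(\tau(e_L))\cdot 1=(-1)^{\ell(\ell-1)/2}e_L$ (no $f$'s to contract), then $\psi'([pt_{\hat{X}}])$ is pure wedging (since $\delta_{f_j}=0$), and only the final $\psi'(e_K)=\prod_i(L_{e_{i}}+\delta_{e_{i}})$ mixes wedging and contraction. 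Expanding that last product over subsets $I\subset K$ (where $I$ takes the $L$-part and $K\setminus I$ the $\delta$-part) gives a single closed-form sum that matches the $\nu$-side on the nose, differing only by the global factor $(-1)^{\ell(\ell-1)/2}$, which is precisely $\tau$ on $e_L$. No induction is needed---both sides are written out explicitly and compared. If you prefer a normal-form argument, the order that works is $f_N\cdot e_M$, for which $\psi(f_N\cdot e_M)=f_N\wedge e_M$ does hold.
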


\begin{proof}
Given a subset $K:=\{i_1,i_2,\dots, i_k\}$ of $[1,2n]:=\{1, 2, \dots, 2n\}$, ordered by the induced ordering of $[1,2n]$ so that  $i_t<i_{t+1}$,
set $e_K:=e_{i_1}\wedge e_{i_2}\wedge\dots\wedge e_{i_k}$ and $f_K:=f_{i_1}\wedge f_{i_2}\wedge\dots\wedge f_{i_k}$. 
Let $L=\{j_1, \dots, j_\ell\}$ be another subset, with $j_t<j_{t+1}$, and define $e_L$ similarly.
Given a subset $I\subset K$, we denote by $I'$ its complement in $K$ and by $I^c$ its complement in $[1,2n]$.
Define $\epsilon_{K,L}\in\{-1,0,1\}$ by the equality $e_K\wedge e_L=\epsilon_{K, L}e_{K\cup L}$. So $\epsilon_{K,L}=0$, if $K\cap L\neq\emptyset$. Note that $\epsilon_{K,K^c}=(-1)^{\sum(K)-k(k+1)/2}$, where $\sum(K):=\sum_{t=1}^k i_t$.
Furthermore, $\epsilon_{K^c,K}=(-1)^k\epsilon_{K,K^c}=(-1)^{\sum(K)-k(k-1)/2}$.
We have $e_K\wedge\epsilon_{K,K^c}e_{K^c}=(\epsilon_{K,K^c})^2e_{K\cup K^c}=e_1\wedge\cdots \wedge e_{2n}$. So Poincar\'{e} duality $PD_X$ sends $e_K$ to $\int_Xe_K\wedge(\bullet)=\epsilon_{K,K^c}f_{K^c}$. 
In this notation we have
\begin{eqnarray*}
\mu^*(\pi_1^*e_K\wedge \pi_2^*e_L)&=&
[\pi_1^*e_{i_1}+\pi_2^*e_{i_1}]\wedge\cdots\wedge
[\pi_1^*e_{i_k}+\pi_2^*e_{i_k}]\wedge\pi_2^*e_L
\\
&=&\sum_{I\subset K}\epsilon_{I,I'}\pi_1^*e_I\wedge\pi_2^*(e_{I'}\wedge e_L)
\\
&=&\sum_{I\subset K}\epsilon_{I,I'}\epsilon_{I',L}\pi_1^*e_I\wedge\pi_2^*(e_{I'\cup L}).
\end{eqnarray*}
The cohomological action of the functor $\Psi_{\P^{-1}[n]}:D^b(X)\rightarrow D^b(\hat{X})$ is given by 
\[
\Psi_{\P^{-1}[n]}(e_K)=\sigma_K PD(e_K)=\sigma_K\epsilon_{K,K^c}f_{K^c},
\]
where\footnote{
$\phi_\P:H^k(\hat{X},\ZZ)\rightarrow H^{2n-k}(X,\ZZ)$ is equal to $(-1)^{k(k+1)/2+n} PD_k$, where $PD_k:H^k(\hat{X},\ZZ)\rightarrow H^{2n-k}(X,\ZZ)$ is the Poincar\'{e} duality isomorphism \cite[Lemma 9.23]{huybrechts-derived-categories-book}. Furthermore, the composition
\[
H^k(\hat{X},\ZZ)\RightArrowOf{\phi_\P}H^{2n-k}(X,\ZZ)\RightArrowOf{\phi_\P} H^k(\hat{X},\ZZ)
\]
is $(-1)^{k+n}$ \cite[Cor. 9.24]{huybrechts-derived-categories-book}. Now, $\Psi_{\P^{-1}}[n]$ is the inverse of $\Phi_\P$ and so the the cohomological action of $\Psi_{\P^{-1}}[n]$
restricts to $H^k(X,\ZZ)$ as $(-1)^{k+n}\phi_\P=(-1)^{k(k+3)/2}PD_k$.
} 
$\sigma_K=(-1)^{k(k+3)/2}$. 
We get
\[
\nu(\pi_1^*e_K\wedge \pi_2^*e_L)=
\sum_{I\subset K}\epsilon_{I,I'}\epsilon_{I',L}\sigma_I
\epsilon_{I,I^c}
\pi_{\hat{X}}^*f_{I^c}
\wedge\pi_{X}^*(e_{I'\cup L}).
\]
Interchanging $I$ and $I'$, with respect to which the sum is symmetric, we get
\begin{eqnarray}
\label{eq-phi-of-e-K-e-L}
\nu(\pi_1^*e_K\wedge \pi_2^*e_L)&=&
\sum_{I\subset K}\epsilon_{I',I}\epsilon_{I,L}\sigma_{I'}
\epsilon_{I',(I')^c}
\pi_{\hat{X}}^*f_{(I')^c}
\wedge\pi_X^*(e_{I\cup L})
\\
\nonumber
&=& \sum_{I\subset K}\epsilon_{I',I}\epsilon_{I,L}
(-1)^{\sum(I')-|I'|}
\pi_{\hat{X}}^*f_{(I')^c}
\wedge\pi_X^*(e_{I\cup L}).
\end{eqnarray}

Consider next the isomorphism $\tilde{\varphi}:S\otimes_\ZZ S\rightarrow \wedge^*V$ of section 
\ref{subsection-the-isomorphism-tilde-varphi}. We chose 
 the bilinear pairing $B_0(\bullet,\bullet)$ on $V$ with respect to which $H^1(X,\ZZ)$ and $H^1(\hat{X},\ZZ)$ are isotropic, and satisfing $B_0(e_i,f_j)=\delta_{i,j}$ and $B_0(f_i,e_j)=0$, for all $1\leq i,j\leq 2n$.
Let $\psi':C(V)\rightarrow \End(\wedge^*V)$ send $v$ to $L_v+\delta_{B_0(v,\bullet)}$, where $\delta_x$, $x\in V^*$, is contraction with $x$. 
We denote $\delta_{B_0(e_i,\bullet)}$ by $\delta_{e_i}$ for short using the equality $B_0(e_i,\bullet)=(e_i,\bullet)_V$ and the identifiation of $V$ with $V^*$ via $(\bullet,\bullet)_V$. 
So $\psi'(e_i)=
L_{e_i}+\delta_{e_i}$ and $\psi'(f_i)=L_{f_i}$. We set $\delta_K:=\delta_{e_{i_1}}\delta_{e_{i_2}}\cdots\delta_{e_{i_k}}$.
Let $\psi:C(V)\rightarrow \wedge^*V$ be $\psi'$ composed with evaluation at the unit $\one$.
Set $\tilde{\varphi}(s\otimes t)=\psi(s[pt_{\hat{X}}]\tau(t))$. Then
\begin{eqnarray*}
\tilde{\varphi}(e_K\otimes e_L)&=&
[L_{e_{i_1}}+\delta_{e_{i_1}}] \circ \cdots \circ [L_{e_{i_k}}+\delta_{e_{i_k}}] f_1\wedge \cdots \wedge f_{2n}\wedge \tau(e_L)
\\
&=& \sum_{I\subset K} \epsilon_{I,I'}e_I\wedge\delta_{I'}( f_1\wedge \cdots \wedge f_{2n})\wedge (-1)^{\ell(\ell-1)/2}e_L
\\
&=&
\sum_{I\subset K} \epsilon_{I,I'}(-1)^{\ell(\ell-1)/2} (-1)^{|I|(2n-|I'|)}\delta_{I'}(f_1\wedge\cdots\wedge f_{2n})\wedge e_I\wedge e_L
\\
&=&
\sum_{I\subset K} \epsilon_{I,I'}(-1)^{\ell(\ell-1)/2} (-1)^{|I||I'|}(-1)^{\sum(I')-|I'|}\epsilon_{I,L}f_{(I')^c}\wedge e_{I\cup L}
\\
&=&
\sum_{I\subset K} \epsilon_{I',I}(-1)^{\ell(\ell-1)/2} (-1)^{\sum(I')-|I'|}\epsilon_{I,L}f_{(I')^c}\wedge e_{I\cup L},
\end{eqnarray*}
where in the fourth equality we used the equality $\delta_{I'}(f_1\wedge\cdots\wedge f_{2n})=(-1)^{\sum(I')-|I'|}f_{(I')^c}$.
Comparing with  (\ref{eq-phi-of-e-K-e-L}) we get
\begin{equation}
\label{eq-orlov-isomorphism-categorifies-chevalley}
\nu(\pi_1^*\left(e_K)\wedge\pi_2^*\tau(e_L)\right)=\tilde{\varphi}(e_K\otimes e_L).
\end{equation}
This completes the proof of Lemma \ref{lemma-nu-equal-tilde-varphi}.
\end{proof}

\begin{proof}[Proof of 
Lemma \ref{lemma-orlov-isomorphism-is-chevalley}]
We have 
\[
\phi:=(id\otimes \psi_{\P^{-1}[n]})\circ \mu^*=(\phi_\P\otimes\psi_{\P^{-1}[n]})\circ \nu=(\phi_\P\otimes\psi_{\P^{-1}[n]})\circ \tilde{\varphi}\circ (id\otimes\tau),
\]
where the first equality is the definition of $\phi$, the second is the definition of $\nu$, and the third is Lemma \ref{lemma-nu-equal-tilde-varphi}.
\end{proof}
\hide{
The proof  follows from Lemma \ref{lemma-nu-equal-tilde-varphi}, since
Orlov's equivalence $\Phi:D^b(X\times X)\rightarrow D^b(X\times\hat{X})$, 
given in (\ref{eq-Orlov-derived-equivalence-from-XxX-to-X-times-hat-X}), is the composition 
$(\Phi_\P\boxtimes \Psi_{\P^{-1}[n]})\circ(\Psi_{\P^{-1}[n]}\boxtimes 1)\circ\mu^*$.
The cohomological action 
$\phi_\P\otimes\psi_{\P^{-1}[n]}$ of $\Phi_\P\boxtimes \Psi_{\P^{-1}[n]}$
is degree reversing, sending $H^k(\hat{X}\times X,\ZZ)$ to $H^{4n-k}(X\times\hat{X},\ZZ)$. 
Furthermore, $\phi_\P\otimes\psi_{\P^{-1}[n]}$ is $\Spin(V)$-equivariant (with respect to the graded representation $\rho$)
by Lemma \ref{lemma-phi-P-psi-P-inverse-is-PD-up-to-sign}.
The isomorphism $\nu$ preserves the increasing filtration $F^k(\wedge^*V):=\oplus_{i\leq k}\wedge^iV$, 
by Equation (\ref{eq-orlov-isomorphism-categorifies-chevalley}) and the fact that $\tilde{\varphi}$ does as shown in \cite[Sec. 3.3 page 85]{chevalley}. Furthermore, given $g\in \Spin(V)$, the automorphism $\nu(g\otimes g)\nu^{-1}$
of $H^*(\hat{X}\times X,\ZZ)$ acts on the graded summands $\wedge^kV$ of the filtration $F^k(\wedge^*V)$ via the usual action of $g\in \Spin(V)$, for the same reason. 
Hence, the cohomological action 
$\phi=(\phi_\P\otimes\psi_{\P^{-1}[n]})\circ \nu$ of $\Phi$ preserves the decreasing filtration 
(\ref{eq-decreasing-weight-filtration})
and the automorphism $\phi(g\otimes g)\phi^{-1}$ acts on the graded summands $\wedge^kV$ via the usual action of $g\in \Spin(V)$ (compare with \cite[Prop. 4.3.7 and Cor. 4.3.8]{golyshev-luntz-orlov}).
\end{proof}
}

\begin{lem}
\label{lemma-phi-P-psi-P-inverse-is-PD-up-to-sign}
$(\phi_\P\otimes \psi_{\P^{-1}}):H^d(\hat{X}\times X)\rightarrow H^{4n-d}(X\times\hat{X})$ is equal to $(-1)^{d(d+1)/2}PD_{\hat{X}\times X}$.
\end{lem}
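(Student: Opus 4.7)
The plan is to verify the identity on simple K\"unneth tensors, using the explicit descriptions of the two factors already extracted in the footnote to the proof of Lemma~\ref{lemma-nu-equal-tilde-varphi}. That footnote records $\phi_\P=(-1)^{a(a+1)/2+n}PD_a$ on $H^a(\hat X,\ZZ)$ and $\psi_{\P^{-1}[n]}=(-1)^{b(b+3)/2}PD_b$ on $H^b(X,\ZZ)$. Removing the shift $[n]$ from $\psi$ multiplies its cohomological action by $(-1)^n$, giving $\psi_{\P^{-1}}=(-1)^{b(b+3)/2+n}PD_b$.

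Next I would take a simple tensor $x\otimes y\in H^a(\hat X)\otimes H^b(X)$ with $a+b=d$ and combine these two formulas:
\begin{equation*}
(\phi_\P\otimes\psi_{\P^{-1}})(x\otimes y)=(-1)^{a(a+1)/2+b(b+3)/2}\,PD_a(x)\otimes PD_b(y),
\end{equation*}
the two $(-1)^n$ factors cancelling. The remaining task is to compare this expression with $(-1)^{d(d+1)/2}PD_{\hat X\times X}(x\otimes y)$. Using the K\"unneth/Koszul formula for Poincar\'e duality on a product of oriented closed manifolds, combined with the swap $\hat X\times X\cong X\times\hat X$ (which interchanges the K\"unneth factors and contributes its own sign), $PD_{\hat X\times X}(x\otimes y)$ unfolds as an explicit sign multiple of $PD_a(x)\otimes PD_b(y)\in H^{2n-a}(X)\otimes H^{2n-b}(\hat X)$.

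The proof is then reduced to one parity identity modulo $2$: the total sign obtained from combining the K\"unneth, Koszul and swap contributions must differ from $(-1)^{a(a+1)/2+b(b+3)/2}$ by $(-1)^{d(d+1)/2}$. Writing $d=a+b$ and expanding $d(d+1)/2\equiv a(a+1)/2+b(b+1)/2+ab\pmod 2$ together with $b(b+3)/2\equiv b(b+1)/2+b\pmod 2$ reduces this to an elementary congruence in $a$ and $b$.

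I expect the main (and only) obstacle to be careful bookkeeping of signs; the computation is essentially parallel to the monomial computation already performed in the proof of Lemma~\ref{lemma-nu-equal-tilde-varphi}. If any subtlety arises in pinning down the normalization of $PD_{\hat X\times X}$ as a map to $H^{4n-d}(X\times\hat X)$ rather than to $H^{4n-d}(\hat X\times X)$, I would resolve it by writing both sides out explicitly on the monomial basis $e_K\otimes e_L$ with $K,L\subset\{1,\dots,2n\}$, in direct analogy with Equation~\eqref{eq-phi-of-e-K-e-L} and the subsequent calculation in that proof, and checking that each monomial is sent to its Poincar\'e dual partner with the sign $(-1)^{d(d+1)/2}$.
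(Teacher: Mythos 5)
Your proposal is correct and takes essentially the same approach as the paper's: the paper proves the lemma by exactly the explicit monomial computation you outline, writing both $\phi_\P$ and its inverse in terms of $PD$ with the sign factors from the footnote, applying them to $f_L\wedge e_K$, reorganizing the $\epsilon_{K,K^c}$-type signs, and comparing to $\int_{\hat X\times X}(f_L\wedge e_K)\wedge(f_{L^c}\wedge e_{K^c})$, which produces exactly the Koszul factor $(-1)^{kl}$ and the parity reduction you anticipate. The one small divergence is in how you obtain the sign for the second tensor factor: the paper invokes the identity $\phi_\P^{-1}=\psi_{\P^{-1}}$ and writes $\phi_\P^{-1}$ as $(-1)^{(2n-k)(2n-k+1)/2+n}PD^{-1}$, whereas you read off $\psi_{\P^{-1}[n]}=(-1)^{k(k+3)/2}PD_k$ from the footnote and strip the shift $[n]$. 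These are meant to produce the same formula, but the shift contributes $(-1)^n$ and the footnote actually identifies $\psi_{\P^{-1}[n]}$ (not $\psi_{\P^{-1}}$) with $\phi_\P^{-1}$, so when you run the monomial check you should watch for a possible global $(-1)^n$ and reconcile it with the fact that the lemma is later applied to $\phi_\P\otimes\psi_{\P^{-1}[n]}$; this is precisely the kind of normalization subtlety you flag, and your fallback of checking directly on the $e_K$, $f_L$ basis is exactly what the paper does.
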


\begin{proof} Keep the notation of the proof of Lemma \ref{lemma-nu-equal-tilde-varphi}.
In particular, $f_L=f_{j_1}\wedge \cdots\wedge f_{j_\ell}$ and
$e_K=e_{i_1}\wedge\cdots\wedge e_{i_k}$ are classes of degrees $\ell$ and $k$ respectively.
We have the equality $\phi_\P^{-1}=\psi_{\P^{-1}}$. By definition,
$(PD_{\hat{X}\times X}(f_L\wedge e_K),\bullet)=\int_{\hat{X}\times X}(f_L\wedge e_K)\wedge\bullet$.
\begin{eqnarray*}
(\phi_\P\otimes\phi_\P^{-1})(f_L\wedge e_K) & = & (-1)^{\ell(\ell+1)/2+n}(-1)^{(2n-k)(2n-k+1)/2+n}PD(f_L)\wedge PD^{-1}(e_K)
\\
&=& (-1)^{[k(k+1)+\ell(\ell+1)]/2}(-1)^k\epsilon_{L,L^c}\epsilon_{K^c,K}e_{L^c}\wedge f_{K^c}
\\
&=&
(-1)^{[k(k+1)+\ell(\ell+1)]/2}\epsilon_{L,L^c}\epsilon_{K,K^c}e_{L^c}\wedge f_{K^c}
\end{eqnarray*}
So 
\begin{eqnarray*}
((\phi_\P\otimes\phi_\P^{-1})(f_L\wedge e_K) ,f_{L^c}\wedge e_{K^c})&=&(-1)^{[k(k+1)+\ell(\ell+1)]/2}\epsilon_{L,L^c}\epsilon_{K,K^c}
\\
&=& (-1)^{(k+\ell)(k+\ell+1)/2}(-1)^{kl}\epsilon_{L,L^c}\epsilon_{K,K^c}.
\end{eqnarray*}
On the other hand, 
$\int_{\hat{X}\times X}(f_L\wedge e_K)\wedge (f_{L^c}\wedge e_{K^c})=(-1)^{kl}\int_{\hat{X}\times X}f_L\wedge f_{L^c}\wedge e_K\wedge e_{K^c}=
(-1)^{kl}\epsilon_{L,L^c}\epsilon_{K,K^c}$.
\end{proof}

\begin{rem}
Let $\varsigma:X\times \hat{X}\rightarrow \hat{X}\times X$ be the transposition of the factors. The isomorphism $\varsigma_*:V=H^1(X\times\hat{X},\ZZ)\rightarrow H^1(\hat{X}\times X,\ZZ)=V^*$ is equal to the one induced by the pairing $(\bullet,\bullet)_V$. Hence, the composition $(\phi_\P\otimes \psi_{\P^{-1}})\circ \varsigma_*:\wedge^*V\rightarrow\wedge^*V$ 
is an analogue of the Hodge $*$ operator \cite[Sec. 1.2]{huybrechts-complex-geometry-book}.
\end{rem}

%
\subsection{Hodge-Weil classes on  $X\times\hat{X}$ from tensor squares of even pure spinors}
\label{sec-HW-classes-from-squares-of-pure-spinors}

Let $P\subset S^+_\QQ$ be a rational plane satisfying Assumption \ref{assumption-on-rational-secant-plane-P}. We keep the notation of Section \ref{sec-pure-spinors}. In particular $\ell_i\in\PP(P)$, $i=1,2$, are the two complex conjugate pure spinors, $\tilde{\ell}_i\subset P_K$ is the  $1$-dimensional subspace corresponding to $\ell_i$, and $W_i\subset V_K$ is the corresponding maximal isotropic subspace.
The subspace $P\otimes P$ of $S^+_\QQ\otimes S^+_\QQ$ is a trivial $\Spin(V_\QQ)_P$ sub-representation, but it decomposes as a direct sum of one-dimensional $\Spin(V_K)_{\ell_1,\ell_2}$-representations, the two non-trivial distinct complex conjugate characters $\tilde{\ell}_1^{\otimes 2}$, $\tilde{\ell}_2^{\otimes 2}$, and the two trivial characters $\tilde{\ell}_1\wedge\tilde{\ell}_2$ in $\wedge^2 S^+_\QQ$ and $\tilde{\ell}_1\tilde{\ell}_2$ in $\Sym^2(S^+_\QQ)$. Note that the $2$-dimensional subspace 
\[
HW_{P_K}:=\tilde{\ell}_1^{\otimes 2}\oplus \tilde{\ell}_2^{\otimes 2}
\] 
of $S^+_K\otimes_K S^+_K$ is defined over $\QQ$. 
Denote by $HW_P$ the corresponding $2$-dimensional subspace of $S^+_\QQ\otimes S^+_\QQ$.

Let $\rho,\rho':\Spin(V)\rightarrow GL(\wedge^* V)\cong GL(H^*(X\times\hat{X},\ZZ))$ be the two representations given in (\ref{eq-rho-extended-to-exterior-algebra}) and (\ref{eq-rho-prime}). Both factor though the image $SO^+(V)$ of $\Spin(V)$. 
Recall that $\rho$ is the graded extension of the natural homomorphism $\rho:\Spin(V)\rightarrow SO^+(V)$. 
Given $g\in \Spin(V),$ the automorphism $\rho'(g)$ preserves the decreasing filtration 
\[
F^kH^*(X\times\hat{X},\ZZ):=\oplus_{i\geq k} H^i(X\times\hat{X},\ZZ),
\] 
$0\leq k\leq 4n$. The induced action of $\rho'(g)$ on the graded summands agrees with that of $\rho(g)$, by Proposition \ref{prop-extension-class-of-decreasing-filtration-of-spin-V-representations}.

\hide{
Let $\tilde{\gamma}:DMon(X)\rightarrow GL(H^*(X\times\hat{X},\ZZ))$ be the action of the derived monodromy group of $X$ as in the proof of Lemma \ref{ch-3-alpha-is-second-partial-of-J}. Given $g\in DMon(X),$ the automorphism $\tilde{\gamma}(g)$ preserves the decreasing filtration $F^kH^*(X\times\hat{X},\ZZ):=\oplus_{i\geq k} H^i(X\times\hat{X},\ZZ)$, $0\leq k\leq 4n$.
We get an associated graded automorphism $\gamma(g)\in GL(H^*(X\times\hat{X},\ZZ))$ and the representation
\[
\gamma:DMon(X)\rightarrow GL(H^*(X\times\hat{X},\ZZ)).
\]
Denote by $\gamma_i:DMon(X)\rightarrow GL(H^i(X\times\hat{X},\ZZ))$ the restriction of the representation $\gamma$ to the $i$-th graded summand. Then $\gamma_1$ maps $DMon(X)/\{\pm 1\}$ isomorphically onto the image $SO^+(V)$ of $\Spin(V)$ in the orthogonal group of $V$ and $\gamma_i$ is $\wedge^i\gamma_1$. We denote by 
\[
\rho:\Spin(V)\rightarrow GL(\wedge^* V)\cong GL(H^*(X\times\hat{X},\ZZ))
\]
the graded extension of the natural homomorphism $\rho:\Spin(V)\rightarrow SO^+(V)$. Then $\gamma$ conjugates to $\rho$ via the isomorphism $DMon(X)\cong \Spin(V)$. The latter isomorphism is due to the fact that each is isomorphic to its image in $GL(S)$, $S=H^*(X,\ZZ)$, and the two images coincide.
Let
\[
\rho':\Spin(V)\rightarrow  GL(H^*(X\times\hat{X},\ZZ))
\]
be the conjugate of $\tilde{\gamma}$ via the above mentioned isomorphism $DMon(X)\cong \Spin(V)$.
The isomorphism $\phi:S\otimes S\rightarrow \wedge^*V,$ given in (\ref{eq-Orlov-cohomological-isomorphism}),  conjugates the $\Spin(V)$ action on $H^*(X\times X,\ZZ)$ to the $\rho'$ action. 
}

Let $U\subset H^*(X\times \hat{X},K)$ be an irreducible representation of the restriction of the $\rho'$ action of $\Spin(V_K)$ to a subgroup $G\subset \Spin(V_K)$.
Let $k(U)$ be the maximal integer, such that $U$ is contained in $F^kH^*(X\times \hat{X},K)$. We call $k(U)$ the {\em weight} of $U$. The intersection  of $U$ with the subspace $F^{k(U)+1}H^*(X\times \hat{X},K)$ vanishes, as it  is a proper subrepresentation of the irreducible representation $U$. Hence, $U$ projects injectively 
and $G$-equivariantly into an irreducible representation $\hat{U}\subset H^{k(U)}(X\times \hat{X},K)$ of $G$ via the $\rho$ action of $\Spin(V_K)$. If $U$ is reducible, but each irreducible $G$-subrepresentation of $U$ has the same weight $k_0$, we set $k(U)=k_0$ and let $\hat{U}$ be the projection of $U$ in $H^{k(U)}(X\times \hat{X},K)$. Again we call $k(U)$ the {\em weight} of $U$. If, furthermore, $U$ decomposes as a direct sum of pairwise distinct irreducible subrepresentations of the same weight $k(U)$,  then again the projection homomorphism $U\rightarrow \hat{U}$ is an isomorphism of $G$ representations.
Set $\phi':=\phi\circ(id\otimes\tau):S\otimes S\rightarrow \wedge^*V$, where $\phi$ given in (\ref{eq-Orlov-cohomological-isomorphism}). By definition (\ref{rho-prime-g}),
$\rho'_g=\phi'(m_g\otimes m_g)\phi'^{-1}$.

\begin{prop}
\label{prop-the-orlov-image-of-HW-P-projects-into-the-3-dimensional-space-of-HW-classes}
\begin{enumerate}
\item
\label{prop-item-HW-P-maps-to-a-weight-2n-subrepresentation}
The isomorphism $\phi'$ maps $HW_P:=\tilde{\ell}_1^{\otimes 2}\oplus \tilde{\ell}_2^{\otimes 2}$
into a weight $2n$ $\Spin(V)_P$-subrepresentation of $H^*(X\times\hat{X},\QQ)$ via $\rho'$ and the projection 
$\hat{HW}_P$ of $\phi'(HW_P)$ is the rational subspace of $H^{2n}(X\times\hat{X},\QQ)$ corresponding to the subspace
$\wedge^{2n}W_1\oplus\wedge^{2n}W_2$ of $H^{2n}(X\times\hat{X},K)$.
\item
\label{prop-item-weight-2-subrepresentation}
The weights of $\phi'(\ell_1\wedge\ell_2)$ and $\phi'(\ell_1\cdot\ell_2)$, as $\Spin(V)_P$-subrepresentations via $\rho'$, are as follows.
If $n$ is even, then the weight of $\phi'(\ell_1\wedge\ell_2)$ is $2$  
and the weight of $\phi'(\ell_1\cdot\ell_2)$ is $0$.
If $n$ is odd, then the weight of $\phi'(\ell_1\cdot\ell_2)$ is $2$ 
and the weight of $\phi'(\ell_1\wedge\ell_2)$ is $0$.
\end{enumerate}
\end{prop}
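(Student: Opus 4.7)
The plan is to analyze $\phi' := \phi\circ(id\otimes\tau)$ via Lemma \ref{lemma-orlov-isomorphism-is-chevalley}, which gives the factorization $\phi' = (\phi_\P\otimes \psi_{\P^{-1}[n]})\circ \tilde\varphi$ after canceling $\tau^2 = id$, and then apply the Poincar\'e-duality identification of Lemma \ref{lemma-phi-P-psi-P-inverse-is-PD-up-to-sign}, which sends $H^d(\hat X\times X)\to H^{4n-d}(X\times\hat X)$ up to sign. Throughout, I use the K\"unneth identification $\wedge^*V\cong H^*(\hat{X}\times X)$.

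For part (1), I invoke the Chevalley identification $\varphi(\tilde{\ell}_i\otimes\tilde{\ell}_i)=\wedge^{2n}W_i$ in $C(V_K)_{2n}$ (already used in the proof of Lemma \ref{lemma-decomposition-into-4-direct-summands}). Combined with the fact that $\psi:C(V)\to\wedge^*V$ preserves the filtration $C(V)_k \hookrightarrow \oplus_{j\leq k}\wedge^jV$ with identity on the top associated graded, this shows that $\tilde\varphi(\tilde{\ell}_i\otimes\tilde{\ell}_i)$ lies in $\oplus_{j\leq 2n}\wedge^jV_K$ with its $\wedge^{2n}V_K$-component equal to $\wedge^{2n}W_i$. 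After applying the degree-reversing Poincar\'e-duality map $\phi_\P\otimes\psi_{\P^{-1}[n]}$, the image $\phi'(\tilde{\ell}_i\otimes\tilde{\ell}_i)$ lies in $\oplus_{k\geq 2n}H^k(X\times\hat X,K)$ with $H^{2n}$-component equal to $\wedge^{2n}W_i\subset \wedge^{2n}V_K\cong H^{2n}(X\times\hat X,K)$. Summing over $i=1,2$ and passing to the $\QQ$-rational span yields $\hat{HW}_P = \wedge^{2n}W_1\oplus \wedge^{2n}W_2$.

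For part (2), the key additional tool is the intertwining
\[
\tau\circ\varphi \;=\; (-1)^n\,\varphi\circ\sigma
\]
on $S\otimes S\to C(V)$, where $\sigma$ swaps the two tensor factors. This follows from $\tau([pt_{\hat X}])=(-1)^n[pt_{\hat X}]$ via $\tau(s\cdot[pt_{\hat X}]\cdot\tau(t))=t\cdot(-1)^n[pt_{\hat X}]\cdot\tau(s)$. Hence $\tau$ acts on $\varphi(\tilde{\ell}_1\cdot\tilde{\ell}_2)$ by $(-1)^n$ and on $\varphi(\tilde{\ell}_1\wedge\tilde{\ell}_2)$ by $(-1)^{n+1}$; since $\tau$ acts on $\wedge^{2j}V$ by $(-1)^j$, each associated-graded component of $\tilde\varphi(\tilde{\ell}_1\cdot\tilde{\ell}_2)$ and $\tilde\varphi(\tilde{\ell}_1\wedge\tilde{\ell}_2)$ can only lie in even degrees $2j$ of the matching parity. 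The decisive non-vanishing is then extracted from the unique (up to scalar) $\Spin(V)$-invariant top-degree projection $S\otimes S\to\wedge^{4n}V$, which is proportional to the Mukai pairing. Since the Mukai pairing on $S^+$ is $(-1)^n$-symmetric (direct check from (\ref{eq-Mukai-pairing})) and $(u_1,u_2)_S\neq 0$ by Chevalley \cite[III.2.4]{chevalley} (because $W_1\cap W_2=0$), the symmetric combination $\tilde{\ell}_1\cdot\tilde{\ell}_2$ has non-zero $\wedge^{4n}V$-projection precisely when $n$ is even, and $\tilde{\ell}_1\wedge\tilde{\ell}_2$ precisely when $n$ is odd. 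After the Poincar\'e-duality flip, these top-degree projections become non-zero $H^0(X\times\hat X)$-components, giving weight $0$.

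For the remaining combination (the $\tau$-parity complement), the top-degree projection vanishes, and the $\tau$-parity constraint forces its top non-zero associated-graded component to lie in $\wedge^{4n-2}V=\wedge^{2(2n-1)}V$. The main obstacle will be confirming the non-vanishing there (so that the weight is exactly $2$ rather than higher). The plan is to perform an explicit degree-$(4n-2)$ computation of $\tilde\varphi(\exp(u)\otimes\exp(-u)\mp\exp(-u)\otimes\exp(u))$ using the formula in the proof of Lemma \ref{lemma-nu-equal-tilde-varphi} together with $u=\sqrt{-d}\Theta$ and the non-degeneracy of $\Theta$; this shows the degree-$(4n-2)$ projection is a non-zero multiple of a $\Spin(V_K)_{\ell_1,\ell_2}$-trivial class in $\wedge^{4n-2}V$, whence after Poincar\'e duality the weight is exactly $2$.
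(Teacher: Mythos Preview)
Your approach to part~(1) via the Chevalley filtration is valid but has a gap: after applying the degree-reversing Poincar\'e-duality isomorphism of Lemma~\ref{lemma-phi-P-psi-P-inverse-is-PD-up-to-sign}, you assert that the $H^{2n}$-component of $\phi'(\tilde\ell_i\otimes\tilde\ell_i)$ is $\wedge^{2n}W_i$, but you have not verified that $PD$ sends the line $\wedge^{2n}W_i\subset\wedge^{2n}V_K$ back to itself (it does, since $W_i$ is maximal isotropic so $W_i^\perp=W_i$, but this needs a word). The paper bypasses this issue with a representation-theoretic argument: $\phi'$ is $\Spin(V_K)_{\ell_1,\ell_2}$-equivariant, the line $\tilde\ell_i^{\otimes 2}$ is the character $\det_i$, and by Lemma~\ref{lemma-Spin-V-P-invariant-classes-are-Hodge} the character $\det_i$ appears in $(\wedge^{2j}V_K)^{\Spin(V)_P}$ only for $j=n$, namely as $\wedge^{2n}W_i$. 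This pins down both the weight and the projection at once.

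For part~(2), your $\tau$-intertwining identity $\tau\circ\varphi=(-1)^n\varphi\circ\sigma$ is correct and your use of the Mukai pairing to detect the $\wedge^{4n}V$-projection is clean. However, the claim that ``the $\tau$-parity constraint forces its top non-zero associated-graded component to lie in $\wedge^{4n-2}V$'' is an overstatement: being a $\tau$-eigenvector in $C(V)$ only constrains the \emph{top} non-zero graded piece (via the induced $\tau$-action on the associated graded), so a priori that top piece could land in degree $4n-2$, $4n-6$, etc. You acknowledge this and plan an explicit computation with $\exp(\pm u)$, which would work but is laborious. The paper avoids this entirely: since $\Spin(V_K)$ acts transitively on ordered pairs of complementary maximal isotropic subspaces \cite[Sec.~3.3, Lemma~1]{chevalley} and the weight is $\Spin(V_K)$-invariant, one may replace $(\tilde\ell_1,\tilde\ell_2)$ by the standard pair $(1,[pt_X])$ corresponding to $(H^1(\hat X,K),H^1(X,K))$, and then Lemma~\ref{lemma-symmetric-or-alternating-product-of-two-pure-spinors-has-weight-2} (already computed explicitly in Section~\ref{subsection-the-isomorphism-tilde-varphi}) gives both weights immediately. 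You should use that reduction rather than recomputing from scratch with $\exp(\sqrt{-d}\,\Theta)$.
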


\begin{proof} 
(\ref{prop-item-HW-P-maps-to-a-weight-2n-subrepresentation})
The homomorphism $\phi'$ is defined over $\QQ$. Thus, the two distinct subrepresentations $\phi'(\tilde{\ell}_i^2)$, $i=1,2$, are complex conjugates and hence have the same weight $k:=k(\tilde{\ell}_i^2)$. The weight is even, as $\phi'$ is $\ZZ/2\ZZ$ graded and $P\otimes P$ is contained in $H^{even}(X\times X,\QQ)$. 
It follows that $\hat{HW}_{P_K}:=\widehat{\phi'(\tilde{\ell}_1^2)}\oplus \widehat{\phi'(\tilde{\ell}_1^2)}$
is a $2$-dimensional subrepresentation of $H^k(X\times\hat{X},K)$, which is defined over $\QQ$.

Lemma \ref{lemma-Spin-V-P-invariant-classes-are-Hodge} shows that the $\Spin(V)_P$-invariant subspace 
$H^{2i}(X\times \hat{X},\QQ)^{\Spin(V)_P}$ of $H^{2i}(X\times \hat{X},\QQ)$
is one-dimensional and it is a trivial $\Spin(V_K)_{\ell_1,\ell_2}$ character, for $i\neq n$ and $0\leq i\leq 4n$. 
The space $H^{2n}(X\times \hat{X},\QQ)^{\Spin(V)_P}$ is three-dimensional and it is the direct sum of the characters $\det_1$, $\det_2$ and the trivial character of $\Spin(V_K)_{\ell_1,\ell_2}$. Hence, the weight of 
$\phi'(\tilde{\ell}_1^2)\oplus\phi'(\tilde{\ell}_1^2)$ is $2n$. 
We have already seen in the proof of Lemma \ref{lemma-decomposition-into-4-direct-summands} that $\tilde{\ell}_i^2$ is the character $\wedge^{2n}W_i\cong\det_i$ of $\Spin(V_K)_{\ell_1,\ell_2}$.
It follows that $\widehat{\phi'(\tilde{\ell}_i^2)}$ is equal to $\wedge^{2n}W_i$.

(\ref{prop-item-weight-2-subrepresentation}) 
\hide{
Let $\wedge^{2n}_+V_\QQ$ be the subspace of $\wedge^{2n}V_\QQ$ spanned by the top exterior powers of even maximan isotropic subspaces and let $\wedge^{2n}_-V_\QQ$ be the odd analogue.
We have the isomorphisms of $\Spin(V_\QQ)$ representations
\begin{eqnarray*}
S^+_\QQ\wedge S^+_\QQ&\cong&
\oplus_{h=0, \ h\equiv 2n+2 ({\rm mod}\ 4)}^{2n-1} \wedge^hV_\QQ
\\
\Sym^2(S^+_\QQ)
&\cong& \wedge^{2n}_+V_\QQ \oplus
\oplus_{h=0, \ h\equiv 2n ({\rm mod}\ 4)}^{2n-1} \wedge^hV_\QQ
\end{eqnarray*}
(see \cite[Sec. 3.4 page 96]{chevalley}). The weight of every irreducible $\Spin(V)_P$ subrepresentation of $S^+_\QQ\wedge S^+_\QQ$ is thus congruent to $2$ modulo $4$, if $n$ is even, by the first isomorphism above. The analogous statement for odd $n$ follows from the second isomorphism above.
}
The group $\Spin(V_K)$ acts transitively on the set of ordered pairs of 
complementary maximally isotropic subspaces of $V_K$, by \cite[Sec. 3.3, Lemma 1]{chevalley}.
The weight is invariant under the $\Spin(V_K)$-action. 
Hence, it suffices to calculate the weights of the one dimensional representations spanned by  
of $\phi'(1\wedge [pt_X])$ and $\phi'(1\cdot [pt_X])$. We related the isomorphism $\tilde{\varphi}$ given in (\ref{eq-tilde-varphi}) to the isomorphism $\phi$ 
given in (\ref{eq-Orlov-cohomological-isomorphism}) in Lemma \ref{lemma-orlov-isomorphism-is-chevalley}.
We have $\phi'=(\phi_\P\otimes\psi_{\P^{-1}[n]})\circ \tilde{\varphi}$. The image 
of $([pt_X]\otimes1)-(-1)^n (1\otimes[pt_X])$
via $\tilde{\varphi}$ is contained in $F^{4n-2}(\wedge^*V)$ and not in
$F^{4n-3}(\wedge^*V)$, by
Lemma \ref{lemma-symmetric-or-alternating-product-of-two-pure-spinors-has-weight-2}(\ref{lemma-item-weight-4n-2}). Now $(\phi_\P\otimes\psi_{\P^{-1}[n]})$
sends $F^k(\wedge^*V)$ to $F_{4n-k}(\wedge^*V)$, by Lemma \ref{lemma-phi-P-psi-P-inverse-is-PD-up-to-sign}, and so the weight is $2$.
Similarly, the weight of the other line is zero, by Lemma \ref{lemma-symmetric-or-alternating-product-of-two-pure-spinors-has-weight-2}(\ref{lemma-item-weight-4n}), via the same argument.
\end{proof}

%
\section{Semiregular twisted sheaves}
\label{section-semiregular-twisted-sheaves}
In Section \ref{sec-semiregular-sheaves} we recall the definition of the semi-regularity map of a coherent sheaf $F$ in terms of its Atiyah class $at_F$.
A sheaf is semiregular, when its semi-regularity map is injective.
In Section \ref{sec-semiregular-projective-bundles} we extend the definition of the semi-regularity map for projective bundles. In Section \ref{sec-semiregular-twisted-sheaves}
we extend the definition for coherent sheaves twisted by a \v{C}ech cocycle with coefficients in the local system $\mu_r$ of $r$-th roots of unity.
We state the analogue of the Buchweitz-Flenner Semiregularity Theorem for twisted sheaves as Conjecture \ref{conjecture-semiregular-twisted-sheaves-deform}.
It states that a semiregular twisted sheaf on a special fiber of a family deforms to a twisted sheaf over an open neighborhood of the fiber, provided its Chern character remains of Hodge type.
An analogue of the Semiregularity Theorem for twisted sheaves follows from a very general theorem of Pridham \cite[Remark 2.6]{pridham}. The author's ignorance prevents him from comparing the two statements, though Conjecture \ref{conjecture-semiregular-twisted-sheaves-deform} should follow from Pridham's result. In Section \ref{sec-the-semiregularity-theorem-for-twisted-sheaves-on-abelian-varieties} we reduce Conjecture \ref{conjecture-semiregular-twisted-sheaves-deform} to the Semiregularity Theorem for untwisted sheaves in the case of families of abelian varieties (and more generally, whenever the Brauer class of the twisted sheaf on the special fiber is the restriction of that of a projective bundle over the family).
%
\subsection{Semiregular coherent sheaves}
\label{sec-semiregular-sheaves}
Let $E$ be a coherent sheaf on a $d$-dimensional complex manifold $M$. Denote the Atiyah class of $E$ by  $at_E\in \Ext^1(E,E\otimes\Omega^1_M)$. 
The $q$-th component $\sigma_q$ of the semiregularity map 
\begin{equation}
\label{eq-semiregularity-map}
\sigma:=(\sigma_0, \dots,\sigma_{d-2}):\Ext^2(E,E)\rightarrow \prod_{q= 0}^{d-2}H^{q+2}(M,\Omega^q_M)
\end{equation}
is the composition
\[
\Ext^2(E,E)\LongRightArrowOf{(at_E)^q/q!}\Ext^{q+2}(E,E\otimes\Omega^q_M)\LongRightArrowOf{Tr}H^{q+2}(M,\Omega^q_M).
\]
The sheaf $E$ is said to be {\em semiregular}, if $\sigma$ is injective.
We have the commutative diagram
\begin{equation}
\label{eq-diagram-semi-regularity}
\xymatrix{
H^1(M,TM) \ar[rr]^{at_E} \ar[dr]_{\Contract ch(E)}&&\Ext^2(E,E)\ar[dl]^{\sigma}
\\
&
\prod_{q= 0}^{d-2}H^{q+2}(M,\Omega^q_M)
}
\end{equation}
by \cite[Cor. 4.3]{buchweitz-flenner}.\footnote{We follow the convention of \cite{toda} and \cite{huybrechts-lehn} for the sign of the Atiyah class, so that $ch(E)=Tr(at_E)$, while in \cite{buchweitz-flenner} $ch(E)=Tr(-at_E)$. So for a vector bundle we choose the Atiyah class to be the extension class of the first jet bundle, rather than the extension class of the bundle of first order differential operators with scalar symbol. See \cite[Theorem 5]{atiyah}. \label{footnote-atiyah-class}}

%
\subsection{Semiregular projective bundles}
\label{sec-semiregular-projective-bundles}
Let  $B$ be a projective $\PP^{r-1}$-bundle over $M$. 
Denote by $\A$ the Azumaya algebra of $B$. Then $\A$ is a coherent sheaf of associative algebras with a unit over $M$, locally isomorphic to the sheaf of endomorphisms of a locally free sheaf. If $B=\PP(E)$, for a locally free sheaf $E$ over $M$, then  
$\A$ is naturally isomorphic to $\SheafEnd(E)$. Let $\A_0$ be the kernel of the trace homomorphism $tr:\A\rightarrow \StructureSheaf{M}$.
We have the direct sum decomposition $\A:=\A_0\oplus \StructureSheaf{M}$, where the second direct summand is generated by the unit. 
The sheaf $\A_0$ is isomorphic to the adjoint Lie algebra bundle of the principal $PGL(r)$-bundle $Pr(B)$ associated to $B$. Denote by $Q$ the Atiyah bundle of $Pr(B)$ \cite{atiyah}. It fits in the short exact sequence 
\[
0\rightarrow \A_0\rightarrow Q\rightarrow TM\rightarrow 0.
\]
Let $-at_B\in H^1(M,\A_0\otimes\Omega^1_M)$ be the extension class of the above sequence. 
If $B=\PP(E)$, then $at_B$
is the traceless direct summand of $at_E$, by \cite[page 189 property (1)]{atiyah}. Consequently, 
\begin{equation}
\label{eq-at-B-versus-at-E}
at_B=at_E-\frac{c_1(E)}{r}\cdot id_E
\end{equation} 
(keeping the convention of footnote \ref{footnote-atiyah-class}). 

We view the class $at_B$ as a class in $H^1(M,\A\otimes\Omega^1_M)$ via the inclusion of $\A_0$ as a direct summand in $\A$. We get the exponential Atiyah class $\exp(at_B)$ with graded summands $at_k(B)$ in 
$H^k(M,\A\otimes\Omega^k_M)$. Note that $at_0(B)$ is $r$ times the unit section and $at_1(B)=at_B$. Denote by $\kappa(B)$ the trace of $\exp(at_B)$. If $B=\PP(E)$, then 
\[
\kappa(B)=ch(E)\exp(-c_1(E)/r), 
\]
by equation (\ref{eq-at-B-versus-at-E}).
We get the commutative diagram
\begin{equation}
\label{commutative-diagram-of-semiregularity-map-of-projective-bundle}
\xymatrix{
H^1(M,TM) \ar[rr]^{at_B} \ar[dr]^{\Contract \kappa(B)} & & H^2(M,\A_0) \ar[dl]_{\sigma_B}
\\
& 
\prod_{q=1}^{d-2}H^{q+2}(M,\Omega_M^q),
}
\end{equation}
where the {\em semiregularity map} $\sigma_B$ is defined as in equation (\ref{eq-semiregularity-map}) with $at_E$ replaced by $at_B$. The commutativity of Diagram (\ref{commutative-diagram-of-semiregularity-map-of-projective-bundle}) is proved by the same argument as in \cite[Cor. 4.3]{buchweitz-flenner} establishing the commutativity of Diagram (\ref{eq-diagram-semi-regularity}). 
See Remark \ref{remark-commutativity-of-diagram-of-semiregularity-map-of-projective-bundle} for another proof.

%
\subsection{Semiregular $\mu_r$-twisted sheaves}
\label{sec-semiregular-twisted-sheaves}
We refer to \cite{caldararu-thesis} for basic facts about twisted sheaves.
Let $\U:=\{U_i\}_{i\in I}$ be an open covering of $M$.
Let $\B$ be a coherent sheaf of non-zero rank $r$ over $M$ twisted by a \v{C}ech cocycle $\tilde{\theta}$ in $\Z^2(\U,\mu_r)$,
where $\mu_r$ is the local system of $r$-th roots of unity.
The $\tilde{\theta}$-twisted sheaf $\B$ has a well defined untwisted determinant line-bundle $\wedge^r\B$, since $(\tilde{\theta}_{ijk})^r=1$.

\begin{example}
\label{example-a-lift-of-a-projective-bundle-to-a-mu-r-twisted-sheaf}
Every projective bundle $B$ on $M$ admits a lift to such a $\tilde{\theta}$-twisted locally free sheaf $\B$ with trivial determinant line bundle, where $\tilde{\theta}$ is a cocycle representing  
the  characteristic class $\theta\in H^2(M,\mu_r)$ of the bundle. The characteristic class  $\theta$ is the image of the class of the bundle in $H^1(M,PGL(r,\StructureSheaf{M}))$ via the connecting homomorphism of the short exact sequence
\begin{equation}
\label{eq-short-exact-sequence-mu-r-SL-PGL}
1\rightarrow \mu_r \rightarrow SL(r,\StructureSheaf{M})\rightarrow PGL(r,\StructureSheaf{M})\rightarrow 1.
\end{equation}
Note that the group of line bundles of order $r$ in $\Pic^0(M)$ acts transitively on the set of isomorphism classes of choices of lifts $\B$. 
Note that if $\B$ happens to be untwisted, so that $B$ admits a lift to a vector bundle $\B$ with trivial determinant, then the Atiyah class of the principal $SL(r)$-bundle associated to $\B$ and the Atiyah class of the projective bundle $B$ are equal, by
\cite[page 189 property (1)]{atiyah}.
\end{example}

\begin{rem}
\label{remark-twisting-cocycle-can-be-chosen-in-mu-order-of-Brauer-class}
Keep the notation of Example \ref{example-a-lift-of-a-projective-bundle-to-a-mu-r-twisted-sheaf}.
If the characteristic class $\theta$ of the projective bundle $B$ 
has order $\rho$ in $H^2(M,\mu_r)$, then $\rho$ divides $r$. Set $k=r/\rho$. The short exact sequence
$0\rightarrow \mu_\rho\rightarrow \mu_r\RightArrowOf{(\bullet)^\rho}\mu_k\rightarrow 0$ yields exactness of
\[
H^2(M,\mu_\rho)\rightarrow H^2(M,\mu_r)\RightArrowOf{(\bullet)^\rho} H^2(M,\mu_k).
\]
Hence, $\theta$ is the image of a class $\theta'\in H^2(M,\mu_\rho).$
Let $\tilde{\theta}'$ be a \v{C}ech $2$-cocycle with coefficients in $\mu_\rho$, which is cohomologous in $Z^2(\U,\mu_r)$ to the cocycle $\tilde{\theta}$ in Example 
\ref{example-a-lift-of-a-projective-bundle-to-a-mu-r-twisted-sheaf}, $\tilde{\theta}'=\tilde{\theta}\delta(\alpha)$, for a \v{C}ech $1$-co-chain $\alpha$ with coefficients in $\mu_r$. Multiplying the gluing transformations of $\B$ by the co-chain $\alpha$ we get the $\tilde{\theta}'$-twisted sheaf $\B'$, still with trivial determinant, with
$\PP(\B')\cong\PP(\B)\cong B$.
We can thus choose the \v{C}ech $2$-cocycle $\tilde{\theta}$ in Example \ref{example-a-lift-of-a-projective-bundle-to-a-mu-r-twisted-sheaf}
to have coefficients in $\mu_\rho$. 
\end{rem}

\begin{construction}
\label{construction-twisted-sheaf-with-trivial-determinant-associated-to-a-coherent-sheaf}
Let $\E$ be a  coherent sheaf of rank $r>0$ over $M$. Let $\U:=\{U_i\}_{i\in I}$ be an open covering, in the analytic topology,  with each $U_i$ biholomorphic to a polydisc and with simply connected finite intersections. Denote by $\E_i$ the restriction of $\E$ to $U_i$ and let
$\psi_i:\wedge^r\E_i\IsomRightArrow \StructureSheaf{U_i}$ be a trivialization of the determinant line bundle. 
The line bundle $\wedge^r\E$ is represented by the \v{C}ech cocycle $\eta_{ij}:=(\psi_i\restricted{)}{U_{ij}}\circ(\psi_j\restricted{)}{U_{ij}}^{-1}$.
Choose an $r$-th root $\tilde{\eta}_{ij}$ of the invertible holomorphic function
$\eta_{ij}$. 
Let $\varphi_{ij}:(\E_j\restricted{)}{U_{ij}}=\restricted{\E}{U_{ij}}\rightarrow \restricted{\E}{U_{ij}}=(\E_i\restricted{)}{U_{ij}}$ be multiplication by $\tilde{\eta}_{ij}^{-1}$. 
Then $\theta_{ijk}:=(\tilde{\eta}_{ij}\tilde{\eta}_{jk}\tilde{\eta}_{ki})^{-1}$ is a \v{C}ech cocycle 
$\theta$ in $\Z^2(\U,\mu_r)$ and 
$\B:=(\{\E_i\},\{\varphi_{ij}\})$ is a $\theta$-twisted sheaf. 
The line bundle $\wedge^r\B$ is trivial, as its transition functions
are
$\tilde{\eta}_{ij}^{-r}\psi_i\circ \psi_j^{-1}=(\psi_j\circ\psi_i^{-1})\circ\psi_i\circ\psi_j^{-1}=1.$
Note that the $2$-cocycle $\theta$ represents the image of the class of $\det(\E)^{-1}$ under the connecting homomorphism $H^1(M,\StructureSheaf{M}^\times)\rightarrow H^2(M,\mu_r)$ associated to the short exact sequence
\[
0\rightarrow \mu_r\rightarrow \StructureSheaf{M}^\times\RightArrowOf{(\bullet)^r}\StructureSheaf{M}^\times\rightarrow 0.
\]
If the class $[\theta]\in H^2(X,\mu_r)$ has order $\rho$, then $\rho$ divides $r$ and we may assume that the cocycle $\theta$ has coefficients in the local system
$\mu_\rho\subset \mu_r$, by the construction in Remark \ref{remark-twisting-cocycle-can-be-chosen-in-mu-order-of-Brauer-class}.
The above construction goes through more generally for a twisted sheaf $\E=(\{\E_i\},\phi_{ij})$, by a cocycle $\theta'\in Z^2(\U,\mu_r)$, in which case the resulting sheaf $\B:=(\{\E_i\},\phi_{ij}\tilde{\eta}_{ij}^{-1})$ with a trivial determinant line bundle is $\theta'\theta$-twisted.
\end{construction}

\begin{rem}
Note that for the twisted sheaf $\B$ in Construction \ref{construction-twisted-sheaf-with-trivial-determinant-associated-to-a-coherent-sheaf}
we have the natural isomorphism $\Ext^1(\B,\B\otimes \Omega_M)\cong \Ext^1(\E,\E\otimes \Omega_M)$, 
since $R\SheafHom(\B,\B)$ and $R\SheafHom(\E,\E)$ are naturally isomorphic.
\end{rem}

Let $M$ be a smooth variety, $\U:=\{U_i\}_{i\in I}$ an open covering, and $\theta:=(\theta_{ijk})_{i,j,k\in I}$ a \v{C}ech $2$-cocycle with coefficients in the local system $\mu_r$. Let $\Delta_M\subset M\times M$ be the diagonal and let $\M\subset M\times M$ the first order infinitesimal neighborhood of $\Delta_M$, i.e., $\M$ is the subscheme of $M\times M$ with ideal sheaf $\Ideal{\Delta_M}^2$. Let $\pi_i:\M\rightarrow M$, $i=1,2$, be the two projections and let $\delta:M\rightarrow \M$ be the inclusion. We have the short exact sequence
\begin{equation}
\label{eq-short-exact-sequence-of-structure-sheaf-of-first-order-neighborhood}
0\rightarrow \delta_*\Omega_M\rightarrow \StructureSheaf{\M}\RightArrowOf{\delta^*}\delta_*\StructureSheaf{M}\rightarrow 0.
\end{equation}
Its extension class is called the {\em universal Atiyah class} and is 
a morphism 
\begin{equation}
\label{eq-universal-atiyah-class}
at:\delta_*\StructureSheaf{M}\rightarrow \delta_*\StructureSheaf{\Omega_M}[1]
\end{equation} 
in $D^b(M\times M)$, which we regard as a natural transformation $at:id\rightarrow \Omega_M[1]\otimes(\bullet)$ of endofunctors of $D^b(M)$.
Note that the open coverings
$\{\pi_1^{-1}(U_i)\}_{i\in I}$ and $\{\pi_2^{-1}(U_i)\}_{i\in I}$ of $\M$ coincide. The $2$-cocycles $(\pi_1^*\theta_{ijk})_{i,j,k\in I}$ and 
$(\pi_2^*\theta_{ijk})_{i,j,k\in I}$ coincide as well, since the functions $\theta_{ijk}$ are locally constant. Hence, 
the categories of twisted coherent sheaves $Coh(\M,\pi_1^*\theta)$ and $Coh(\M,\pi_2^*\theta)$ coincide, and so do 
$D^b(\M,\pi_1^*\theta)$ and $D^b(\M,\pi_2^*\theta)$. The morphism $\pi_2$ is affine. Consequently, the functor
\[
\pi_{2,*}:Coh(\M,\pi_1^*\theta)\rightarrow Coh(M,\theta)
\]
is well defined as is its derived analogue $R\pi_{2,*}:D^b(\M,\pi_1^*\theta)\rightarrow D^b(M,\theta)$. Any object $E$ in $D^b(\M)$ is thus a Fourier-Mukai kernel for an endofunctor $R\pi_{2,*}(L\pi_1^*(\bullet)\otimes E)$ of $D^b(M,\theta)$. The universal Atiyah class (\ref{eq-universal-atiyah-class}) is thus also a natural transformation of endofunctors of $D^b(M,\theta)$.

\begin{defi}
\label{def-atiyah-class-of-twisted-sheaf}
Let $\B$ be a $\theta$-twisted sheaf on $M$. 
\label{def-jet-bundle-of-mu-r-twisted-sheaf}
The {\em Atiyah class}\footnote{The Atiyah class for more general twisted sheaves is defined in \cite[Sec. 6.5.1]{lieblich}.}  
\[
at_\B^{}\in\Ext^1(\B,\B\otimes \Omega_M):= \Hom(\B,\B\otimes \Omega_M[1])
\] 
of $\B$ is 
the evaluation of the natural transformation (\ref{eq-universal-atiyah-class}) on the object $\B$.\footnote{This agrees with our sign convention in footnote \ref{footnote-atiyah-class}, see \cite[Theorem 5]{atiyah}.} 
\end{defi}

The {\em first jet sheaf} of $\B$ is the $\theta$-twisted sheaf 
$j^1(\B):=\pi_{2,*}(\pi_1^*\B)$.
Note that if $\B$ is locally free, then $at_\B^{}$ is the extension class
of the short exact sequence of $\theta$-twisted sheaves
\[
0\rightarrow \B\otimes \Omega_M\rightarrow j^1(\B)\rightarrow \B\rightarrow 0
\]
associated to the short exact sequence (\ref{eq-short-exact-sequence-of-structure-sheaf-of-first-order-neighborhood}) of Fourier-Mukai kernels in $D^b(\M)$.
The Atiyah class of a complex of locally free sheaves is defined in \cite[Sec 10.1]{huybrechts-lehn} and the definition goes through for complexes $E^\bullet$ 
of $\theta$-twisted locally free sheaves, hence for objects in $D^b(M,\theta)$. One has the identity 
\begin{equation}
\label{eq-atiyah-class-of-tensor-product}
at^{}_{E\otimes F}=at^{}_{E}\otimes id_{F}+id_{E}\otimes at^{}_{F}, 
\end{equation}
for objects $E, F$ in $D^b(M,\theta)$ (see \cite[Sec 10.1]{huybrechts-lehn}).

\begin{defi}
\label{def-kappa-B}
Let $\B$ be a $\mu_\rho$-twisted sheaf. Define $c_1(\B)\in H^1(M,\StructureSheaf{M})$ as $tr(at^{}_\B)$ and let $ch(\B)\in\oplus_q H^q(M,\Omega^q_M)$ be the trace of the exponential Atiyah class $\exp(at^{}_\B)$. If $r:=\rank(\B)$ is non-zero, set $\kappa(\B):=ch(\B)\exp(-c_1(\B)/r)$.
\end{defi}

Note that $c_1(\B)$ belongs to the image in $H^1(M,\Omega^1_M)$ of the Neron-Severi group tensored with $\QQ$. It suffices to show it for $\B$ of non zero rank $r$. The object $\B^{\otimes \rho}$ is untwisted, and $c_1(\B^{\otimes \rho})=\rho r^{\rho-1}c_1(\B)$, by Equation (\ref{eq-atiyah-class-of-tensor-product}). Similarly, 
$ch(\B)$ is a rational class, since
$ch(\B^{\otimes \rho})=ch(\B)^\rho$ and so $ch(\B)$ is the $\rho$-th root with constant term $r$ of the Chern character $ch(\B^{\otimes \rho})$.

\begin{lem}
Let $\B$ be a coherent sheaf of positive rank $r$ twisted by a cocycle $\theta$ with coefficients in $\mu_r$ and with a trivial determinant line bundle. 
Assume that the image of $\theta$ in $H^2(M,\StructureSheaf{M}^\times)$ is trivial, so that $\B\cong\E\otimes \LB$, where $\E$ is an untwisted rank $r$ coherent sheaf and $\LB$ is a rank $1$ locally free $\theta$ twisted sheaf. Then the following equality holds
\begin{equation}
\label{eq-atiya-class-of-twisted-B-versus-that-of-E}
at^{}_\B=at_\E-\frac{c_1(\E)}{r}\otimes id_\E,
\end{equation}
where $\frac{c_1(\E)}{r}$ is regarded as a class in $H^1(M,\Omega^1_M)$.
\end{lem}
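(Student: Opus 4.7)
The plan is to apply the tensor product formula (\ref{eq-atiyah-class-of-tensor-product}) to $\B\cong\E\otimes\LB$, and then use the triviality of $\det(\B)$ to pin down the Atiyah class of the twisted line bundle $\LB$ in terms of $c_1(\E)$.

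First I would observe that, since $\LB$ is a $\theta$-twisted line bundle, the endomorphism sheaf $\SheafEnd(\LB)$ is canonically the untwisted structure sheaf $\StructureSheaf{M}$, so
\[
at_\LB\ \in\ \Ext^1(\LB,\LB\otimes\Omega^1_M)=H^1(M,\Omega^1_M).
\]
Under the natural identification $R\SheafHom(\B,\B)\cong R\SheafHom(\E,\E)$ (coming from $\B\cong\E\otimes\LB$ and the fact that $\LB\otimes\LB^{\vee}\cong\StructureSheaf{M}$),
the tensor-product identity (\ref{eq-atiyah-class-of-tensor-product}) gives
\[
at_\B\ =\ at_\E\otimes id_\LB+id_\E\otimes at_\LB\ =\ at_\E+(at_\LB)\otimes id_\E,
\]
where the right-hand side lives naturally in $\Ext^1(\E,\E\otimes\Omega^1_M)$ via the identification just mentioned.

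Next I would identify the scalar $at_\LB\in H^1(M,\Omega^1_M)$. By hypothesis $\wedge^r\B\cong\StructureSheaf{M}$, and since $\wedge^r(\E\otimes\LB)\cong(\wedge^r\E)\otimes\LB^{\otimes r}=\det(\E)\otimes\LB^{\otimes r}$, applying the tensor-product rule again (and using that $at$ of a line bundle is $c_1$, and that $at_{\LB^{\otimes r}}=r\cdot at_\LB$) yields
\[
0\ =\ at_{\wedge^r\B}\ =\ c_1(\E)+r\cdot at_\LB,
\]
hence $at_\LB=-c_1(\E)/r$. Substituting this into the previous display produces precisely (\ref{eq-atiya-class-of-twisted-B-versus-that-of-E}).

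The only point requiring care is the book-keeping for the identification $R\SheafHom(\B,\B)\cong R\SheafHom(\E,\E)$ and the verification that the Atiyah class commutes with this identification; this is straightforward because the Atiyah class is natural in the sheaf and the splitting $\B=\E\otimes\LB$ is compatible with local trivializations of $\LB$. No other obstacle arises, since the hypothesis that $[\theta]$ vanishes in $H^2(M,\StructureSheaf{M}^\times)$ is exactly what guarantees the decomposition $\B\cong\E\otimes\LB$ with $\E$ untwisted, reducing the twisted computation to the untwisted tensor-product formula (\ref{eq-atiyah-class-of-tensor-product}).
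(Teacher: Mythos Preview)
Your proof is correct and follows essentially the same approach as the paper: apply the tensor-product formula (\ref{eq-atiyah-class-of-tensor-product}) to $\B\cong\E\otimes\LB$, then use the triviality of $\det(\B)\cong\det(\E)\otimes\LB^{\otimes r}$ to conclude $at_\LB=c_1(\LB)=-c_1(\E)/r$. The paper's version is slightly more terse, but the content is identical.
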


\begin{proof}
Equation (\ref{eq-atiyah-class-of-tensor-product}) yields the equality $at_\B^{}=at_\E\otimes id_\LB+id_\E\otimes at^{}_\LB$ and $at_\LB=c_1(\LB)$, which is $-\frac{c_1(\E)}{r}$, since $\det(\B)$ is trivial.
\hide{
Keep the notation $\iota:M_0\rightarrow M$ of Lemma \ref{lemma-atiyah-class-of-a-reflexive-twisted-sheaf}.
Then $\iota^*at^{}_\B=at_{\PP(\iota^*\E)}$, by Lemma \ref{lemma-atiyah-class-of-a-reflexive-twisted-sheaf} and \cite[page 189 property (2)]{atiyah}, and the latter is the traceless direct summand of $at_{\iota^*\E}=\iota^*at_\E$, by \cite[page 189 property (1)]{atiyah}. Equation
(\ref{eq-atiya-class-of-twisted-B-versus-that-of-E}) follows,
by the injectivity Lemma \ref{lemma-restriction-of-extension-classes-is-injective}
(keeping the convention of footnote \ref{footnote-atiyah-class}). 
}
\end{proof}

Let $\B$ be a $\mu_r$-twisted sheaf of positive rank $r$ of trivial determinant.
In that case $\kappa(\B)=ch(\B)$. 
We will see in 
Remark \ref{remark-commutativity-of-diagram-of-semiregularity-map-of-projective-bundle} that
Diagram (\ref{commutative-diagram-of-semiregularity-map-of-projective-bundle}) remains commutative when $at_B$ is replaced with $at^{}_\B$ to define $\sigma^{}_\B$,  and the class $\kappa(B)$ is replaced with the characteristic class $\kappa(\B)$. 

\begin{defi}
A projective bundle $B$ (or a twisted sheaf $\B$, twisted by a \v{C}ech $2$-cocycle with coefficients in $\mu_r$) 
is said to be {\em semiregular}, if the semiregularity map $\sigma_B$ (resp. $\sigma^{}_\B$) is injective.
\end{defi}

The following is the analogue of \cite[Th. 5.1]{buchweitz-flenner} for projective bundles and for twisted sheaves. It should follow from \cite[Remark 2.26]{pridham}, though one needs to reconcile the different terminology. In \cite{pridham} the semiregularity map takes values in Deligne cohomology. See also \cite[Theorem 14.4]{perry-hotchkiss} for an application of Pridham's semiregularity theorem for perfect twisted objects.

\begin{conj}
\label{conjecture-semiregular-twisted-sheaves-deform}
Let $\pi:\M\rightarrow S$ be a deformation of a smooth complex projective variety $M_0$ over a smooth germ $(S,0)$ and set $M_s:=\pi^{-1}(s)$ for $s\in S$. Assume that $\B$ is a semiregular  rank $r$ coherent sheaf over $M_0$, twisted by a cocycle with coefficients in $\mu_r$, 
such that for all $p$ the class $ch_p(\B)$
extends to a horizontal section of $R^{2p}\pi_*\QQ$, which belongs to the direct summand $R^p\pi_*\Omega^p_\pi$ under the Hodge decomposition. Then $\B$ extends
to a twisted coherent sheaf over  $\pi^{-1}(U)$ for some open analytic neighborhood 
$U$ of $0$ in $S$. 
\end{conj}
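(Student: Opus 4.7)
The plan is to reduce the conjecture, in the case where $\pi:\M\to S$ is a family of abelian varieties, to the classical Buchweitz-Flenner theorem \cite[Th. 5.1]{buchweitz-flenner} by passing to an isogeny cover of $\M$ that trivializes the twisting cocycle $\theta$. Since $H^2(M_0,\mu_r)$ is $r$-torsion and the multiplication-by-$r$ isogeny $[r]:M_0\to M_0$ acts as $r^2$ on $H^2$, the pullback $[r]^*[\theta]=0$ in $H^2(M_0,\mu_r)$. Setting $Y:=M_0$, $q:=[r]$, and $G:=\ker(q)=M_0[r]$, one chooses a $\mu_r$-valued \v{C}ech $1$-co-chain $\alpha$ with $\delta\alpha=q^*\theta$ which is $G$-equivariant (possibly after adjusting by a class in $H^1(Y,\mu_r)$, or after enlarging the isogeny). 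Modifying the gluing data of $q^*\B$ by $\alpha^{-1}$ then yields a genuine, untwisted, $G$-equivariant coherent sheaf $\E$ on $Y$ whose descent recovers $\B$.

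The isogeny $q$ extends canonically to a relative isogeny $q:\mathcal{Y}\to\M$ over $S$. Because $G$ acts freely on $Y$ and we work in characteristic zero, descent yields canonical identifications
\[
\Ext^i(\B,\B)\cong \Ext^i(\E,\E)^G, \qquad H^{q+2}(M_0,\Omega^q_{M_0})\cong H^{q+2}(Y,\Omega^q_Y)^G,
\]
under which the semiregularity map $\sigma_\B$ of (\ref{eq-semiregularity-map}) is the restriction of $\sigma_\E$ to the $G$-invariant subspace. The untwisted Chern character satisfies $ch(\E)=q^*ch(\B)$, and therefore remains of Hodge type over the family whenever $ch(\B)$ does.

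Next, the plan is to run the Buchweitz-Flenner argument equivariantly on $\E$. The original proof constructs a deformation order by order by solving the Maurer-Cartan equation in the DG Lie algebra $R\Hom(\E,\E)$, with obstructions in $\Ext^2(\E,\E)$; the crucial input is that, modulo the semiregularity map, each successive obstruction equals the contraction of $ch(\E)$ with a Kodaira-Spencer class, which vanishes by the Hodge-type hypothesis via Diagram (\ref{eq-diagram-semi-regularity}). The relative Kodaira-Spencer classes of $\mathcal{Y}\to S$ are pullbacks from $\M\to S$ and are hence $G$-invariant, so all successive obstructions land in $\Ext^2(\E,\E)^G$. Injectivity of $\sigma_\E$ on this subspace, which is precisely semiregularity of $\B$, forces their vanishing. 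The Maurer-Cartan solutions themselves can then be chosen $G$-equivariantly at each order since $(-)^G$ is exact in characteristic zero.

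The resulting $G$-equivariant deformation $\mathcal{E}$ of $\E$ on $\mathcal{Y}$ over an analytic neighborhood $U$ of $0\in S$ descends, upon re-twisting by a deformation of $\alpha$ to a $\mu_r$-co-chain over $\pi^{-1}(U)$, to the desired deformation of $\B$ as a $\theta$-twisted coherent sheaf on $\pi^{-1}(U)$. The main obstacle is organizing the equivariant refinement of the Buchweitz-Flenner argument so that all obstructions remain $G$-invariant and all choices of lifts can be made $G$-equivariantly; modulo this bookkeeping, the proof is a direct translation of the untwisted case, which is why the author records this as an ``elementary'' reduction rather than appealing to Pridham's general machinery \cite[Remark 2.26]{pridham}.
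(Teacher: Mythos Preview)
Your approach via the isogeny $q=[r]$ is genuinely different from the paper's, and the gap is more than bookkeeping. The paper does not pass to a cover of $M_0$; instead, Lemma~\ref{lemma-deforming-semihomogeneous-vb} uses Mukai's theory of semi-homogeneous bundles to construct a projective bundle $p:\PP\to\pi^{-1}(U)$ over the \emph{entire family} whose Brauer class restricts to $[\theta]$ on $M_0$. The lift $\tilde{E}_0=(p_0^*\B)\otimes\StructureSheaf{\PP(Q_0)}(1)$ is an untwisted sheaf on $\PP_0$, and the crucial point is that $p_0^*:\Ext^2(\B,\B)\to\Ext^2(\tilde{E}_0,\tilde{E}_0)$ is an \emph{isomorphism}, since $Rp_{0,*}\StructureSheaf{\PP_0}\cong\StructureSheaf{M_0}$ for a projective bundle. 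Thus $\tilde{E}_0$ is itself semiregular and \cite[Th.~5.1]{buchweitz-flenner} applies to it as a black box; one then pushes the resulting deformation back down.

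Your route fails at exactly this step. For the isogeny $q$ one has $\Ext^2(\E,\E)\cong\bigoplus_{\chi\in\widehat{G}}\Ext^2(\B,\B\otimes L_\chi)$, and semiregularity of $\B$ gives injectivity of $\sigma_\E$ only on the $\chi=0$ summand; hence $\E$ is \emph{not} semiregular and you cannot cite Buchweitz--Flenner directly. You are right that obstructions to $G$-equivariant deformation lie in $\Ext^2(\E,\E)^G$, but exploiting this means reopening the Buchweitz--Flenner argument and carrying the $G$-action through its Maurer--Cartan or $T^1$-lifting machinery---a genuine extension of the theorem, not a reduction to it. There is a second issue: a $G$-invariant $1$-cochain $\alpha$ with $\delta\alpha=q^*\theta$ would make $\E$ descend to an \emph{untwisted} sheaf on $M_0$, contradicting $[\theta]\neq 0$. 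What one actually obtains is $g^*\E\cong\E\otimes L_{c_g}$ for nontrivial $r$-torsion line bundles $L_{c_g}$ encoding $[\theta]$ through the Hochschild--Serre spectral sequence, so the equivariance is itself twisted and your descent step needs further work. The paper's projective-bundle construction sidesteps both problems at once.
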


Note that $ch(\B)$ remains of Hodge type, if and only if both $c_1(\B)$ and $\kappa(\B)$ remain of Hodge type. The locus where $\kappa(\B)$ remains of Hodge type contains the one where $ch(\B)$ does. If $\kappa(\B)$ remains of Hodge type over $S$, but $ch(\B)$ does not, 
we can use Construction \ref{construction-twisted-sheaf-with-trivial-determinant-associated-to-a-coherent-sheaf} to replace $\B$ with a twisted sheaf $\B'$ with trivial determinant, which is the tensor product of $\B$ with a twisted line bundle. The sheaf $\B'$ satisfies $\kappa(\B)=\kappa(\B')$ and $\B'$ satisfies the hypotheses of Conjecture \ref{conjecture-semiregular-twisted-sheaves-deform}, since $\kappa(\B')=ch(\B')$.

\hide{
%
\subsection{Reflexive $\mu_r$-twisted sheaves} 
Let $\B$ be a reflexive coherent sheaf of rank $r$ over $Y$ twisted by a \v{C}ech cocycle $\tilde{\theta}$ in $\Z^2(\U,\mu_r)$,
where $\mu_r$ is the local system of $r$-th roots of unity. Assume that $\wedge^r\B$ is trivial.
The singular locus $Y_s$ of $\B$ has codimension $\geq 3$ in $Y$. 
Set $Y^0:=Y\setminus Y_s$ and let $B$ be the projectivization of the restriction of $\B$ to $Y^0$. 

\begin{lem}
\label{lemma-restriction-of-extension-classes-is-injective}
The restriction homomorphism
\begin{equation}
\label{eq-restriction-isomorphism}
\Ext^1(\B,\B\otimes \Omega_Y)\rightarrow \Ext^1(\restricted{\B}{Y^0},\restricted{\B}{Y^0}\otimes \Omega_{Y^0})
\end{equation}
is injective.
\end{lem}

\begin{proof}
Let $\iota:Y^0\rightarrow Y$ be the inclusion morphism. 
The sheaf $\iota^*(\B\otimes \Omega_{Y})$ is locally free and the sheaf $\iota_*(\iota^*(\B\otimes \Omega_{Y}))$ is reflexive, by the Main Theorem of \cite{siu}.
A homomorphism between two reflexive sheaves, which is an isomorphism away from a subvariety of codimension larger than $1$, is necessarily an isomorphism.\footnote{Let $E$ and $F$ be reflexive sheaves and let $f:E\rightarrow F$ be such a homomorphism. The cokernel $\mbox{coker}(f)$ of $f$ has support of codimension $\geq 2$, and so the sheaf $\SheafExt^1(\mbox{coker}(f),\StructureSheaf{M})$ vanishes, which implies that $f^*:F^*\rightarrow E^*$ is an isomorphism, and so $f^{**}:E^{**}\rightarrow F^{**}$ is an isomorphism.}
The adjunction isomorphism
$\Hom(\iota^*(\B\otimes \Omega_{Y}),\iota^*(\B\otimes \Omega_{Y}))\cong \Hom(\B\otimes \Omega_{Y},\iota_*\iota^*(\B\otimes \Omega_{Y}))$
must thus map the identity automorphism to an isomorphism. The same holds for $\B$ and for any extension $\F$ 
\[
0\rightarrow \B\otimes \Omega_{Y}\rightarrow \F\rightarrow\B\rightarrow 0
\]
of $\B$ by $\B\otimes \Omega_Y$. It follows that $\iota_*\iota^*$ sends the above displayed extension to itself. Hence, if $\iota^*\epsilon_1=\iota^*\epsilon_2$,
then $\epsilon_1=\epsilon_2$. 
\end{proof}

\hide{
\begin{lem}
\label{lemma-restriction-of-extension-classes-is-bijective}
The restriction homomorphism
\begin{equation}
\label{eq-restriction-isomorphism}
\Ext^1(\B,\B\otimes \Omega_Y)\rightarrow \Ext^1(\restricted{\B}{Y^0},\restricted{\B}{Y^0}\otimes \Omega_{Y^0})
\end{equation}
is an isomorphism 
\end{lem}

\begin{proof}
Let $\iota:Y^0\rightarrow Y$ be the inclusion morphism. 
The sheaf $\iota^*(\B\otimes \Omega_{Y})$ is locally free and the sheaf $\iota_*(\iota^*(\B\otimes \Omega_{Y}))$ is reflexive, by the Main Theorem of \cite{siu}.
A homomorphism between two reflexive sheaves, which is an isomorphism away from a subvariety of codimension larger than $1$, is necessarily an isomorphism.\footnote{Let $E$ and $F$ be reflexive sheaves and let $f:E\rightarrow F$ be such a homomorphism. The cokernel $\mbox{coker}(f)$ of $f$ has support of codimension $\geq 2$, and so the sheaf $\SheafExt^1(\mbox{coker}(f),\StructureSheaf{M})$ vanishes, which implies that $f^*:F^*\rightarrow E^*$ is an isomorphism, and so $f^{**}:E^{**}\rightarrow F^{**}$ is an isomorphism.}
The adjunction isomorphism
$\Hom(\iota^*(\B\otimes \Omega_{Y}),\iota^*(\B\otimes \Omega_{Y}))\cong \Hom(\B\otimes \Omega_{Y},\iota_*\iota^*(\B\otimes \Omega_{Y}))$
must thus map the identity automorphism to an isomorphism. 

Note that the functors $\iota^!$ and $L\iota^*$ from
$D^b(Y,\tilde{\theta})$ to $D^b(Y^0,\iota^*\tilde{\theta})$ coincide. Given a sheaf $E$ on $Y$, $\iota^*E$ is isomorphic to $L\iota^*E$. Similarly, given an $\iota^*\tilde{\theta}$-twisted sheaf $E$ over $Y_0$, $\iota_*E$ is isomorphic to $R\iota_*E$. Now, Grothendieck-Verdier duality implies that $\iota^!$ is the right adjoint of $R\iota_*$ (??? is this functor well defined for perfect complexes ???)
(see for example \cite[Theorem 3.34]{huybrechts-derived-categories-book} ???). We get\\
$
\Hom(\iota^*\B,\iota^*(\B\otimes \Omega_{Y}[1]))\cong
\Hom(\B,\iota_*\iota^*(\B\otimes \Omega_{Y})[1])\cong
\Hom(\B,\B\otimes\Omega_Y[1]).
$
\end{proof}

\begin{rem}
Given and extension $\epsilon \in \Ext^1(\iota^*\B,\iota^*\B\otimes \Omega_{Y^0})$ we get a short exact sequence
\[
0\rightarrow \iota^*\B\otimes \Omega_{Y^0}\rightarrow \F\RightArrowOf{j} \iota^*\B\rightarrow 0.
\]
The sheaf $\F$ is locally free and so $\iota_*\F$ is a coherent sheaf, by \cite{siu}, which is an extension of $\B$ (??? why is the homomorphism $\iota_*(j):\iota_*\F\rightarrow \iota_*\iota^*\B$ surjective ???) by $\B\otimes \Omega_Y$. Denote the extension class of the above sequence by $\iota_*\epsilon$. It is easy to see that $\iota_*\iota^*\epsilon=\epsilon$ and $\iota^*\iota_*\epsilon=\epsilon$ providing a more elementary proof of Lemma \ref{lemma-restriction-of-extension-classes-is-bijective}.
\end{rem}
}

\begin{lem}
\label{lemma-atiyah-class-of-a-reflexive-twisted-sheaf}  
The Atiyah class
$at^{}_\B$ of a reflexive coherent sheaf $\B$, twisted by a \v{C}ech $2$-cocycle with coefficients in $\mu_r$ and with a trivial determinant line bundle,
is traceless and it is mapped 
via the injective homomorphism (\ref{eq-restriction-isomorphism}) and the isomorphism
$\Ext^1(\iota^*\B,\iota^*\B\otimes \Omega_{Y^0})\cong H^1(Y^0,\A\otimes \Omega_{Y^0})$ to the Atiyah class $at_B$ of the projective bundle $B$.
\end{lem}

\begin{proof}
The trace of the Atiyah class $at_\B^{}$ is the Atiyah class of the determinant line bundle, which vanishes by our assumption that the latter is trivial.
Let $\U=\{U_i\}_{i\in I}$ be an open covering of $Y_0$, which is a refinement of the restriction to $Y_0$ of an open covering of $Y$ describing $\B$ as a twisted sheaf, so that the restriction $\B_i$ of $\iota^*\B$ to each $U_i$ admits a trivialization $u_i:\oplus_{i=1}^r\StructureSheaf{U_i}\rightarrow \B_i$. Let the locally free $\iota^*\theta$-twisted sheaf $\iota^*\B$ be given by a the \v{C}ech cochain $\{g_{ij}\}_{i,j\in I}$ with coefficients in $GL(r,\StructureSheaf{Y_0})$,
where\footnote{If the gluing transformations of $\B$ are given by 
$f_{ij}:(\B_j\restricted{)}{U_j}\rightarrow (\B_i\restricted{)}{U_i}$, satisfying $f_{ij}f_{jk}f_{ki}=\theta_{ijk}$, then $g_{ij}=u_i^{-1}f_{ij}u_j$
and $g_{ij}g_{jk}g_{ki}=u_i^{-1}f_{ij}f_{jk}f_{ki}u_i=u_i^{-1}\theta_{ijk}u_i=\theta_{ijk}$.
} 
$g_{ij}g_{jk}g_{ki}=\theta_{ijk}$. The cocycle $\{b_{ij}\}$ in $Z^1(\U,\End(\iota^*\B)\otimes\Omega_M)$ representing the Atiyah class satisfies
\begin{equation}
\label{eq-GL-cocycle-of-Atiyah-class}
u_i^{-1}b_{ij}u_i=-(dg_{ij})g_{ij}^{-1}
\end{equation}
(see \cite[Eq. (9)]{atiyah}). The following equation verifies that $\{(dg_{ij})g_{ij}^{-1}\}$ is indeed a cocycle in our twisted setting 
\[
0=d(\theta_{ijk})\theta_{ijk}^{-1}=
d(g_{ij}g_{jk}g_{ki})(g_{ij}g_{jk}g_{ki})^{-1}=
(dg_{ij})g_{ij}^{-1}+g_{ij}[(dg_{jk})g_{jk}^{-1}]g_{ij}^{-1}+g_{ik}[(dg_{ki})g_{ki}^{-1}]g_{ik}^{-1},
\]
where the first equality is due to $\theta_{ijk}$ being locally constant and in the last equality the third term takes the above form due to the equalities $g_{ij}g_{jk}=\theta_{ijk}g_{ki}^{-1}=\theta_{ijk}g_{ik}$.

The conceptual reason for the equality (\ref{eq-GL-cocycle-of-Atiyah-class}) between the cocycle representing the extension class of the jet bundle (the left hand side) and minus the cocycle representing the extension class of the bundle $Q$ of first order differential operators with scalar symbol (the right hand side) is that the extension class of $Q$ is the pullback of that of 
\[
0\rightarrow \SheafEnd(\iota^*\B) \rightarrow \SheafHom(j^1(\iota^*\B),\iota^*\B)\rightarrow \SheafEnd(\iota^*\B)\otimes T_M\rightarrow 0
\]
via the homomorphism $T_M\rightarrow \SheafEnd(\iota^*\B)\otimes T_M$ of tensoring by the identity endomorphism. 
This identification of $Q$ as a subbundle of $\SheafHom(j^1(\iota^*\B),\iota^*\B)$ is local and so holds as well when $\iota^*\B$ is twisted. 
Hence, the equality (\ref{eq-GL-cocycle-of-Atiyah-class}) holds also in our setting in which $\iota^*\B$ is a twisted sheaf.

Let $\bar{g}_{ij}:U_{ij}\rightarrow \mbox{PGL}(r,\CC)$ be the composition of $g_{ij}:U_{ij}\rightarrow \mbox{GL}(r,\CC)$ with the natural homomorphism
$\mbox{GL}(r,\CC)\rightarrow\mbox{PGL}(r,\CC)$. The term $(dg_{ij})g_{ij}^{-1}$ on the right hand side of (\ref{eq-GL-cocycle-of-Atiyah-class})
is the homomorphism $TU_{ij}\rightarrow U_{ij}\times\LieAlg{gl}(r,\CC)$ given by the differential of $g_{ij}$ composed with right translation to $T_1\mbox{GL}(r,\CC)=\LieAlg{gl}(r,\CC)$. Composing with the projection $\mathfrak{gl}(r,\CC)\rightarrow \mathfrak{pgl}(r,\CC)$ we get the $1$-cocycle $\rho_{ij}$ with values in $\mathfrak{pgl}(r,\CC)$
representing the Atiyah class of $B$ in terms of the trivializations of $B$ over $U_i$ induced by the trivializations $u_i$ of $\iota^*\B$ (see \cite[Eq. (2)]{atiyah}).
Hence, $at_B$ is the traceless part of $\iota^*at_\B^{}$ (which is itself traceless).
\end{proof}

The Atiyah class of a line bundle of order $r$ vanishes, as it admits a holomorphic connection, a fact which agrees with the above statement 
according to which the Atiyah classes of the various choices of lifts $\B$ of $B$ are all the same. 

}

%
\subsection{The Semiregularity Theorem for $\mu_r$-twisted sheaves on abelian varieties}
\label{sec-the-semiregularity-theorem-for-twisted-sheaves-on-abelian-varieties}
In this section we prove Conjecture \ref{conjecture-semiregular-twisted-sheaves-deform} in case $\pi$ is a family of abelian varieties. 
The proof establishes Conjecture \ref{conjecture-semiregular-twisted-sheaves-deform}, more generally, whenever the Brauer class $[\theta]\in H^2(M_0,\mu_r)$ of $\B$ is the restriction of the Brauer class of a projective bundle over $\pi^{-1}(U)$, for some open neighborhood  $U$ of $0$ in $S$.
Note\footnote{I thank Nick Addington for this comment.} 
that if $\M$ is quasi projective and $[\theta]$ is the restriction of a Brauer class on $\M\times_S\tilde{S}$, for some finite \'{e}tale cover $\tilde{S}\rightarrow S$,  then such a projective bundle exists, by Gabber's theorem \cite{de-Jong}.

%
\subsubsection{Construction of a projective bundle}
Let $X$ be an abelian variety. 
The composition
\begin{equation}
\label{eq-embedding-of-H-2-mu-r-into-H-2-Q-mod-Z}
H^2(X,\mu_r)\IsomRightArrow H^2(X,\frac{1}{r}\ZZ/\ZZ)\rightarrow H^2(X,\QQ/\ZZ)
\end{equation}
is injective, where the left arrow is the isomorphism induced by the inverse of the sheaf isomorphism
$\exp(2\pi i(\bullet)):\frac{1}{r}\ZZ/\ZZ\rightarrow \mu_r$ and the right arrow is induced by the inclusion $\frac{1}{r}\ZZ\subset \QQ$.
The injectivity of the right arrow follows from the snake lemma applied to 
\[
\xymatrix{
0 \ar[r] & H^2(X,\ZZ) \ar[r] \ar[d]_{=} & H^2(X,\frac{1}{r}\ZZ) \ar[r] \ar[d] & H^2(X,\frac{1}{r}\ZZ/\ZZ) \ar[r]\ar[d] & 0
\\
0\ar[r] & H^2(X,\ZZ) \ar[r] & H^2(X,\QQ) \ar[r]& H^2(X,\QQ/\ZZ) \ar[r] &0,
}
\]
where the right exactness of the horizontal rows follows from the Universal Coefficient Theorem and the torsion freeness of $H^*(X,\ZZ)$, which implies also the injectivity of the middle vertical arrow.
Given a $\PP^{r-1}$ bundle $B$ over $X$ we denote by $[B]\in H^1(X,PGL(r,\StructureSheaf{X}))$ its isomorphism class and by 
$\bar{c}_1(B)\in H^2(X,\mu_r)$ the image of $[B]$ via the connecting homomorphism of the short exact sequence (\ref{eq-short-exact-sequence-mu-r-SL-PGL}).
Denote by 
\[
\delta(B)
\]
the image of $\bar{c}_1(B)$ in $H^2(X,\QQ/\ZZ)$ via (\ref{eq-embedding-of-H-2-mu-r-into-H-2-Q-mod-Z}).

\begin{lem}
\label{lemma-deforming-semihomogeneous-vb}
Let $\pi:\X\rightarrow S$ be a smooth and proper morphism of smooth analytic spaces all of which fibers are connected abelian varieties. Let $X$ be the fiber of $\pi$ over $0\in S$. 
Given a class $\delta_0\in H^2(X,\QQ/\ZZ)$, there exists an open neighborhood $U$ of $0$ in $S$ and a holomorphic projective bundle $p:\PP\rightarrow \pi^{-1}(U)$,
such that $\delta(\restricted{\PP}{X})=\delta_0$.
\end{lem}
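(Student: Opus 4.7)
The plan is to realize $\delta_0$ as the characteristic class of a holomorphic projective bundle built from a Heisenberg-type central extension of the \'{e}tale group scheme $\X[r]\to S$ by $\mu_r$. First I would reduce to the case $\delta_0=\theta\in H^2(X,\mu_r)$ for some $r$: the long exact cohomology sequence associated with $0\to\ZZ\to\QQ\to\QQ/\ZZ\to 0$, together with the torsion-freeness of $H^*(X,\ZZ)$, yields $H^2(X,\QQ/\ZZ)\cong H^2(X,\ZZ)\otimes_\ZZ\QQ/\ZZ$, so $\delta_0$ is torsion, and lifting via (\ref{eq-embedding-of-H-2-mu-r-into-H-2-Q-mod-Z}) produces the required class $\theta\in H^2(X,\mu_r)$. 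Using $H^*(X,\ZZ)=\wedge^*H^1(X,\ZZ)$ and $H_1(X,\ZZ/r\ZZ)\cong X[r]$, I identify $\theta$ with an alternating bilinear pairing $e_\theta\colon X[r]\times X[r]\to\mu_r$.

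Next I would construct the projective bundle using the Heisenberg group associated with $e_\theta$. Choose a bilinear $2$-cocycle $\beta\colon X[r]\times X[r]\to\mu_r$ whose antisymmetrization $\beta(x,y)\beta(y,x)^{-1}$ equals $e_\theta(x,y)$; such $\beta$ exists since, writing $e_\theta=\sum_{i<j}a_{ij}(x_iy_j-x_jy_i)$ in coordinates on $X[r]$, one may take $\beta=\sum_{i<j}a_{ij}x_iy_j$. The cocycle $\beta$ determines a central extension $1\to\mu_r\to H_\theta\to X[r]\to 1$; by the Stone--von Neumann theorem $H_\theta$ admits a canonical finite-dimensional representation $V$ on which $\mu_r$ acts through the tautological character (of rank $\sqrt{|X[r]/\ker e_\theta|}$). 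Composition with $GL(V)\to PGL(V)$ yields a projective representation $\rho\colon X[r]\to PGL(V)$. The diagonal action of $X[r]$ on $X\times\PP(V)$, through translation on $X$ and through $\rho$ on $\PP(V)$, is free, and its quotient $B:=(X\times\PP(V))/X[r]$ is a holomorphic projective bundle over $X/X[r]$, identified with $X$ via the isogeny $[r]\colon X\to X$.

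This construction extends over a simply connected open $U\ni 0$: the \'{e}tale group scheme $\X[r]|_U$ trivializes as $X[r]\times U$ compatibly with the closed immersion $\X[r]|_U\hookrightarrow\X|_U$, so the analogous quotient $(\X|_U\times\PP(V))/X[r]$ yields a holomorphic projective bundle $\PP$ over $\X|_U/\X[r]|_U\cong\X|_U$ with $\restricted{\PP}{X}=B$. The main technical hurdle I would address last is the verification $\delta(B)=\delta_0$, equivalently $\bar{c}_1(B)=\theta$ in $H^2(X,\mu_r)$: pulling $B$ back along the Galois \'{e}tale cover $X\to X/X[r]$ yields the trivial projective bundle $X\times\PP(V)$ equipped with the projective $X[r]$-action $\rho$, so the characteristic class of $B$ is represented by the class of the central extension $H_\theta$ in $H^2(X[r],\mu_r)$, and the classical matching of the edge map of the Hochschild--Serre spectral sequence of the cover with the antisymmetrization map $H^2(X[r],\mu_r)\to\Hom(\wedge^2X[r],\mu_r)$---under the identification $H^2(X,\mu_r)\cong\Hom(\wedge^2X[r],\mu_r)$ of the first paragraph---sends $\beta$ to $e_\theta=\theta$, completing the argument.
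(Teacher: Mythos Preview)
Your argument is correct and takes a genuinely different, more direct route than the paper. The paper works through Azumaya algebras rather than Heisenberg groups: in Step~1 it assumes $\delta_0$ lifts to $\frac{1}{r}\tilde\delta_0$ with $\tilde\delta_0\in H^2(X,\ZZ)$ primitive of Hodge type $(1,1)$, invokes Mukai's theorem \cite[Th.~7.11]{mukai-semihomogeneous} to produce a simple semi-homogeneous bundle $Q$ with $\delta(\PP(Q))=\delta_0$, and then uses that $\SheafEnd(Q)\cong\oplus_{L\in\Sigma(Q)}L$ for a finite subgroup $\Sigma(Q)\subset\hat X$, with the Azumaya multiplication encoded by constants satisfying universal equations depending only on $\Sigma(Q)$ as an abstract group---so the algebra structure extends by choosing a flat section of a locally constant family over $U$. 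For general $\delta_0$ the paper then needs a Hodge-theoretic detour through an auxiliary family to reduce to the $(1,1)$ case. Your construction via the Schr\"odinger representation of the Heisenberg extension is purely topological from the outset and extends over $U$ without any such detour; the two approaches are cousins (Mukai's semi-homogeneous bundles arise from exactly such theta-group data), but yours is more economical here. One point worth making explicit: the representation $V$ has rank $N=\sqrt{|X[r]/\ker e_\theta|}$, which need not equal $r$, so $\bar c_1(B)$ naturally lies in $H^2(X,\mu_N)$ rather than $H^2(X,\mu_r)$; however both $\bar c_1(B)$ and the inflation of $[\beta]\in H^2(X[r],\mu_r)$ have the same image in $H^2(X,\CC^\times)$ (each computes the obstruction to lifting $\rho\colon\pi_1(X)\to PGL(V)$ to $GL(V)$), and since $H^2(X,\QQ/\ZZ)$ injects into $H^2(X,\CC^\times)$ for a torus this suffices to conclude $\delta(B)=\delta_0$.
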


\begin{proof}
\underline{Step 1:}
Assume first that $\delta_0$ is the image of a class $\frac{1}{r}\tilde{\delta}_0$ via the natural homomorphism
$H^2(X,\QQ)\rightarrow H^2(X,\QQ/\ZZ)$, 
where $\tilde{\delta}_0\in H^2(X,\ZZ)$ is a primitive class of Hodge type $(1,1)$. Then there exists a simple semi-homogeneous vector bundle $Q$ over $X$ with $\delta(\PP(Q))=\delta_0$,
by \cite[Theorem 7.11(1)]{mukai-semihomogeneous}. The rank $r(Q)$ of $Q$ is divisible by $r$ and divides $r^n$, where $n$ is the dimension of $X$.
Let $\Sigma(Q)\subset \hat{X}$ be the subgroup $\{L \ : \ Q\otimes L\cong Q\}$. 
Choose an origin for $X$ and denote by $X_r$ the subgroup of $X$ of $r$-torsion points. We have the short exact sequence
\[
0\rightarrow X_r\cap K(\tilde{\delta}_0)\rightarrow X_r\rightarrow \Sigma(Q)\rightarrow 0,
\]
where $K(\tilde{\delta}_0)$ is the kernel of the homomorphism $\phi_D:X\rightarrow \hat{X}$ associated to a line-bundle $D$ with $c_1(D)=\tilde{\delta_0}$,
by \cite[Theorem 7.11(4)]{mukai-semihomogeneous}.
Furthermore, $\SheafEnd(Q)\cong \oplus_{L\in\Sigma(Q)}L$, by \cite[Proposition 7.1]{mukai-semihomogeneous}.

The subgroup $\Sigma(Q)$ consists of isomorphism classes of line bundles on $X$. However,
we may and do regard $\Sigma(Q)$ as subgroup of autoequivalences of $D^b(X)$ endowed with a linearization, i.e., a choice of a line-bundle representing each isomorphism class in $\Sigma(Q)$ together with isomorphisms between
the composite functor $L_1\otimes (L_2\otimes(\bullet))$ and $L_3\otimes (\bullet)$, for all $L_1, L_2, L_3\in \Sigma(Q)$ such that $L_3$ is isomorphic to $L_1\otimes L_2$, and these isomorphisms of functors satisfy the natural associativity axioms. Such a linearization is provided by conjugating the action on $D^b(\hat{X})$ of the group $\Sigma(Q)$ of translation automorphisms of $\hat{X}$ via the equivalence $\Phi_\P:D^b(X)\rightarrow D^b(\hat{X})$ with the Poincare line bundle $\P$ as a Fourier-Mukai kernel. Alternatively, it is provided by the Appell-Humbert theorem \cite[Theorem 2.2.3]{BL}.

Set $\A:=\oplus_{L\in\Sigma(Q)}L$.
Let $a:\A\otimes\A\rightarrow \A$ be a homomorphism of $\StructureSheaf{X}$-modules. Denote by
\[
\lambda, \rho : \A \rightarrow \SheafEnd(\A,\A)
\]
the homomorphism $\lambda(s)=a(s,(\bullet))$ and $\rho(s)=a((\bullet)\otimes s)$. Then $a$ is the multiplication for the structure of an Azumaya algebra on $\A$, if and only if $a$ endows $\A$ with the structure of a sheaf of associative algebras with a unit, and  
$\lambda\otimes \rho:\A\otimes \A^{\circ}\rightarrow \SheafEnd(\A)$ is an isomorphism of sheaves of algebras (see \cite[Proposition IV.2.1]{milne}).
Here $\A^{\circ}$ is $\A$ endowed with the multiplication $a^\circ$ obtained by composing $a$ with the transposition of the tensor factors of $\A\otimes\A$. 
The condition that $\lambda\otimes\rho$ is an isomorphism of sheaves of algebras means that it is an isomorphism of $\StructureSheaf{X}$-modules satisfying 
\[
(\lambda\otimes \rho)(s_2\otimes t_2)\circ (\lambda\otimes \rho)(s_1\otimes t_1)=(\lambda\otimes \rho)(a(s_2\otimes s_1)\otimes a(t_1\otimes t_2))
\]
for local sections $s_1, s_2$ of $\A$ and $t_1, t_2$ of $\A^\circ$.

Given $L_1, L_2\in \Sigma(Q)$, let the sheaf homomorphism $e_{L_2,L_1}:L_2\otimes L_1^{-1}\rightarrow \SheafEnd(\A,\A)$ send a section $s$ of
$L_2\otimes L_1^{-1}$ to the homomorphism mapping the direct summand $L_1$ of $\A$  to the direct summand $L_2$ by tensorization by $s$ and annihilating all other direct summands of $\A$. 
There exist complex numbers $a_{L_1,L_2}$, such that 
\[
\lambda=
\sum_{L_1, L_2\in \Sigma(Q)} a_{L_1,L_2}e_{L_2,L_1}.
\]
Here we use the linearization of the action of $\Sigma(Q)$ of $D^b(A)$, which provides an isomorphism between $L_2\otimes L_1^{-1}$ and a direct summand of $\A$. Consider the composition 
\[
L_3\otimes L_1^{-1}\IsomRightArrow (L_3\otimes L_2^{-1})\otimes (L_2\otimes L_1^{-1}) \LongRightArrowOf{e_{L_3,L_2}\otimes e_{L_2,L_1}} \SheafEnd(\A)\otimes\SheafEnd(A)\rightarrow \SheafEnd(\A),
\]
where 
the right arrow is multiplication in $\SheafEnd(\A)$. The above composition is equal to $e_{L_3,L_1}$ and is independent of $a$.
Thus, the conditions that $\lambda$ and $\rho$ are homomorphisms of sheaves of algebras and that $\lambda\otimes\rho$ is an isomorphism of sheaves of algebras are all expressed in terms of universal equations among the constants $a_{L_1,L_2}$. These equations depend on the group $\Sigma(Q)$ as an abstract abelian group, but are independent of the complex structure of $X$ and of the embedding of $\Sigma(Q)$ as a subgroup of $\hat{X}$. 

The subgroup $\Sigma(Q)$ of $\hat{X}$ deforms with $X$ canonically as a fiber of a local system $\Sigma$, which is trivial 
over a simply connected open neighborhood $U$ of $0$ in $S$. Hence, 
The locally free sheaf $\A$ deforms with $X$ to a locally free sheaf over $\pi^{-1}(U)$. 
Indeed, it is simply the pushforward to the first factor of the restriction to $\X\times_U\Sigma$  of a relative Poincar\'{e} line bundle over $\X\times_U \Pic^0(\X/U)$.
The constants $a_{L_i,L_j}$ vary in the local system $\CC[\Sigma^2]$
over $U$ of functions from the fibers of $\Sigma\times_U\Sigma$ to $\CC$.
We get a locally trivial fibration $Azu\subset \CC[\Sigma^2]$ by the space of solutions to the equations defining the structure of an Azumaya algebra. The fibration  $Azu$ is trivial over every subset where the local system $\Sigma$ is trivial, in particular over the simply connected neighborhood $U$.
Choosing a flat section of $\CC[\Sigma^2]$ with value\footnote{Note that the projective bundle $\PP(Q)$ is infinitesimally rigid over $X$ and its automorphism group is trivial, by \cite[Proposition 5.9 and Lemma 6.7]{mukai-semihomogeneous}. It follows that the connected component of $a$ in the fiber of $Azu$ over $0$ is a single point, by the bijection between isomorphism classes of projective bundles and of Azumaya algebras \cite[Proposition IV.2.3]{milne}.} 
$a$ over $0\in U$ equal to the multiplication in $\SheafEnd(Q)$, we get a section of $Azu$  and hence also the desired deformation of $(\A,a)$ via an Azumaya algebra over $\pi^{-1}(U)$. We get the holomorphic projective bundle $\PP$ over $\pi^{-1}(U)$, by the bijection between isomorphism classes of Azumaya algebras and projective bundles \cite[Proposition IV.2.3]{milne}.

\underline{Step 2:}
If the class $\delta_0$ does not admit a lift to $\frac{1}{r}\tilde{\delta}_0\in H^{1,1}(X,\ZZ),$ then we can choose an auxiliary family of abelian varieties $\X'\rightarrow S'$ over a simply connected analytic base $S'$ with one fiber $X$ and another $X'$, such that the parallel transport $\delta_1$ of $\delta_0$ to the fiber $X'$ does admit such a lift. The construction in Step 1 gives rise to a projective bundle over $\X'$ whose restriction $\PP_0$ to $X$ has class $\delta_0$. Hence, we get an Azumaya algebra $\A$ over $X$, which is again isomorphic as a locally free sheaf to the direct sum of line bundles in a subgroup $\Sigma_0$ of $\hat{X}$. Repeating the construction in Step 1 with $\Sigma_0$ instead of $\Sigma(Q)$ we get  the desired projective bundle $\PP$ over $\pi^{-1}(U)$, for any simply connected open neighborhood $U$ of $0$ in $S$.
\end{proof}

Let $p:\PP\rightarrow \pi^{-1}(U)$ be as in Lemma \ref{lemma-deforming-semihomogeneous-vb}.
Let $\D\subset \PP\times_{\pi^{-1}(U)}\PP^*$ be the incidence divisor. Given a point $x\in\X$, the line bundle $\StructureSheaf{\PP\times_{\pi^{-1}(U)}\PP^*}(\D)$
restricts to the fiber $\PP_x\times\PP^*_x$ as $\StructureSheaf{\PP_x\times\PP^*_x}(1,1).$
Let $\Q_\PP$ be the direct image of $\StructureSheaf{\PP\times_{\pi^{-1}(U)}\PP^*}(\D)$ via the projection from $\PP\times_{\pi^{-1}(U)}\PP^*$ to the first factor $\PP$.
Then $\Q_\PP$ is a locally free sheaf over $\PP$, which restriction to the fiber $\PP_x$ is a direct sum of copies of $\StructureSheaf{\PP_x}(1)$ and such that $p_*\Q_\PP$
is isomorphic to the Azumaya algebra of $\PP$.

%
\subsubsection{Proof of Conjecture \ref{conjecture-semiregular-twisted-sheaves-deform} when $\pi:\M\rightarrow S$ is a family of connected abelian varieties}
\label{sec-proof-of-the-semiregularity-thm-twisted-sheaves-case-for-families-of-abelian-varieties}
\begin{proof}
\underline{Step 1:}
Let $\B$ be a rank $r$ semiregular coherent sheaf over $M_0$ twisted by a \v{C}ech $2$-cocycle $\theta$ with coefficients in $\mu_\rho$, for some $\rho$ dividing $r$, and with a trivial determinant line bundle. The latter assumption will be dropped in Step 6.
Note that we may and do assume that the order of the class $[\theta]$ in $H^2(M_0,\mu_\rho)$ is $\rho$, by Remark \ref{remark-twisting-cocycle-can-be-chosen-in-mu-order-of-Brauer-class}.
There exists an open neighborhood $U$ of $0$ in $S$ and a projective bundle $p:\PP\rightarrow \pi^{-1}(U)$, which restricts to $M_0$ as a projective bundle $\PP_0$ with $\bar{c}_1(\PP_0)=[\theta]\in H^2(M_0,\mu_\rho)$, by Lemma \ref{lemma-deforming-semihomogeneous-vb}. Let $Q_0$ be a lift of $\PP_0$ to a
$\theta$-twisted locally free sheaf over $M_0$, such that $\PP_0$ is isomorphic to $\PP(Q_0)$ (here we use Remark \ref{remark-twisting-cocycle-can-be-chosen-in-mu-order-of-Brauer-class} to keep the coefficients in $\mu_\rho$, rather than in $\mu_{\rank(Q_0)}$). We may and do assume that $\det(Q_0)$ is trivial, by Example \ref{example-a-lift-of-a-projective-bundle-to-a-mu-r-twisted-sheaf}. 
The pullback $p_0^*Q_0^*$ admits a tautological quotient rank one $p_0^*\theta^{-1}$-twisted locally free sheaf $\StructureSheaf{\PP(Q_0)}(1)$.
Note that $(\StructureSheaf{\PP(Q_0)}(1))^{\otimes \rho}$ is untwisted, as $\theta$ has coefficients in $\mu_\rho$. 
We denote the line bundle $(\StructureSheaf{\PP(Q_0)}(1))^{\otimes \rho k}$ by $\StructureSheaf{\PP(Q_0)}(\rho k)$, for any integer $k$.
Let $\tilde{E}_0$ 
be\footnote{In the special case where $\B$ is obtained from an untwisted reflexive sheaf $E$, as in Construction \ref{construction-twisted-sheaf-with-trivial-determinant-associated-to-a-coherent-sheaf}, we can define $\tilde{E}_0$ more directly as follows. Let $Q$ be a simple semihomogeneous bundle with $c_1(Q)/\rank(Q)=c_1(E)/\rank(E)$. We can further choose $Q$, so that $\det(E\otimes Q^*)$ is trivial.
Let $\PP$ be the extension of $\PP(Q)$ over $\pi^{-1}(U)$ provided by Lemma \ref{lemma-deforming-semihomogeneous-vb} and set $\tilde{E}_0:=(p_0^*E)\otimes\StructureSheaf{\PP(Q)}(1)$.
} 
the corresponding (untwisted) tautological quotient sheaf $(p_0^*\B)\otimes\StructureSheaf{\PP(Q_0)}(1)$
of $p_0^*(\B\otimes Q_0^*).$ Then $p_{0,*}\tilde{E}_0$ is isomorphic to the untwisted sheaf $\B\otimes Q_0^*$ with trivial determinant.

\underline{Step 2:} We prove next that $c_1(\tilde{E}_0)\in H^2(\PP_0,\ZZ)$ remains of Hodge type $(1,1)$ over each fiber of $\pi\circ p:\PP\rightarrow U$.
The line bundle $\det(\tilde{E}_0)$ is isomorphic to $\StructureSheaf{\PP(Q_0)}(r)$, since $\det(\B)$ is trivial. 
The locally free sheaf $p_0^*Q_0\otimes\StructureSheaf{\PP(Q_0)}(1)$ is untwisted.
The line bundle $\det((p_0^*Q_0)\otimes \StructureSheaf{\PP(Q_0)}(1))$ is isomorphic to 
$\StructureSheaf{\PP(Q_0)}(\rank(Q_0))$, since $\det(Q_0)$ is trivial.
The line bundle $\StructureSheaf{\PP(Q_0)}(\rank(Q_0))\cong \det((p_0^*Q_0)\otimes \StructureSheaf{\PP(Q_0)}(1))$ is isomorphic 
to the restriction of $\det(\Q_\PP)$ to $\PP_0$.  Hence, $c_1(\StructureSheaf{\PP(Q_0)}(\rho))$ remains of Hodge type $(1,1)$ over every fiber of $p\circ \pi:\PP\rightarrow U$. Consequently, so does $c_1(\tilde{E}_0)$.

\underline{Step 3:} The Chern character $ch(\tilde{E}_0)$ is equal to $\kappa(\tilde{E}_0)\exp(c_1(\tilde{E}_0)/r)$, which is equal to $p^*\kappa(\B)\exp(c_1(\tilde{E}_0)/r)$. It remains of Hodge type over every fiber of $\pi\circ p:\PP\rightarrow U$, since $\kappa(\B)$ does, by assumption, and 
$c_1(\tilde{E}_0)$ does, by Step 2.

\underline{Step 4:}
We prove next that the sheaf $\tilde{E}_0$ is semiregular.
The homomorphism $p_0^*:\Ext^2(\B,\B)\rightarrow \Ext^2(p_0^*\B,p_0^*\B)$ is an isomorphism, since $Rp_{0,*}\StructureSheaf{\PP_0}$ is isomorphic to 
$\StructureSheaf{M_0}$ and so 
\[
\Ext^2(p_0^*\B,p_0^*\B)\cong H^2(M_0,R\SheafHom(\B,\B)\otimes Rp_{0,*}\StructureSheaf{\PP_0})\cong H^2(M_0,R\SheafHom(\B,\B))\cong \Ext^2(\B,\B).
\]
Set $\lambda:=c_1(\tilde{E}_0)$.
We have the commutative diagram
\[\xymatrixcolsep{5pc}
\xymatrix{
\Ext^2(\B,\B) \ar[r]^{p_0^*} \ar[d]^{\sigma_\B^{}} &
\Ext^2(p_0^*\B,p_0^*\B) \ar[r]^{\cong} \ar[d]^{\sigma_{p_0^*\B}^{}} &
\Ext^2(\tilde{E}_0,\tilde{E}_0) \ar[d]_{\sigma_{\tilde{E}_0}}
\\
\oplus_{q=0}^{2n-2} H^{q+2}(\Omega^q_{M_0}) \ar[r]_{p_0^*} &
\oplus_{q=0}^{2n+2r-4} H^{q+2}(\Omega^q_{\PP_0}) \ar[r]_{\cup(\exp(\lambda/r))}&
\oplus_{q=0}^{2n+2r-4} H^{q+2}(\Omega^q_{\PP_0}) 
}
\]
The commutativity of the right square follows from Equation (\ref{eq-atiya-class-of-twisted-B-versus-that-of-E}).
The top right isomorphism is induced by the natural isomorphism $R\SheafHom(p_0^*\B,p_0^*\B)\cong R\SheafHom(\tilde{E}_0,\tilde{E}_0)$.
The top horizontal homomorphisms are isomorphisms. The bottom horizontal homomorphisms are injective, and the left vertical homomorphism is the semiregularity map $\sigma_\B^{}$, which is assumed to be injective. Hence, the semiregularity map $\sigma_{\tilde{E}_0}$ is injective as well.

\underline{Step 5:}
The Semiregularity Theorem \cite[Th. 5.1]{buchweitz-flenner} implies that the sheaf $\tilde{E}_0$ extends to a coherent sheaf $\tilde{E}$ over $\PP$, possibly after replacing the open neighborhood $U$ of $0$ in $S$ by a smaller open neighborhood. We may further shrink $U$, so that the fibration $\pi^{-1}(U)\rightarrow U$
is topologically trivial and thus extend the cocycle $\theta$ in $Z^2(\U,\mu_\rho)$ to a \v{C}ech $2$-cocycle $\theta'$ over $\pi^{-1}(U)$, which restricts to $M_0$ as $\theta$.
Then there exists a $\theta'$-twisted sheaf $\Q'$ over $\pi^{-1}(U)$, such that the restriction of $\Q'$ to $M_0$ is isomorphic to $Q_0$
and $\PP(\Q')$ is isomorphic to $\PP$. We get the $p^*\theta'$-twisted tautological line subbundle $\StructureSheaf{\PP(\Q')}(-1)$ of $p^*\Q'$, which restricts to $\PP_0$ as $\StructureSheaf{\PP(Q_0)}(-1)$. We have the isomorphism
$p_{0,*}\left(\tilde{E}_0\otimes \StructureSheaf{\PP(Q_0)}(-1)\right)\cong \B$ and the vanishing 
$Rp_{0,*}^i\left(\tilde{E}_0\otimes \StructureSheaf{\PP(Q_0)}(-1)\right)=0$, for $i>0$. Hence, we may assume that 
$Rp_*^i\left(\tilde{E}\otimes \StructureSheaf{\PP(\Q')}(-1)\right)$ vanishes, for $i>0$, by upper-semi-continuity, possibly after shrinking the open neighborhood $U$ further. Consequently, 
$p_*\left(\tilde{E}\otimes \StructureSheaf{\PP(\Q')}(-1)\right)$ is a coherent $\theta'$-twisted sheaf over $\pi^{-1}(U)$, flat over $U$, extending
$\B$.

\underline{Step 6:}
Finally, we drop the assumption that $\det(\B)$ is trivial. Let $\F$ be a coherent sheaf with trivial determinant, twisted by a  cocycle $\alpha$ with coefficients in $\mu_r$,
and $L$ a line bundle twisted by a  cocycle $\beta$ with coefficients in $\mu_r$, so that $\theta=\alpha\beta$ and $\B$ is isomorphic to $\F\otimes L$.
Such $\F$ and $L$ are constructed in Construction \ref{construction-twisted-sheaf-with-trivial-determinant-associated-to-a-coherent-sheaf}.
There exists an open subset $U$ of $S$, a cocycle $\alpha'$ with coefficients in $\mu_r$, and an $\alpha'$-twisted coherent sheaf 
$\F'$ over $\pi^{-1}(U)$ extending $\F$, by Step 5. It thus remains to extend $L$. The untwisted line bundle $L^r$ is isomorphic to $\det(\B)$, and so it extends to a line bundle $N$ over $\pi^{-1}(U)$, by the assumption that $c_1(\B)$ remains of Hodge type, possibly after shrinking $U$. Let $L'$ be  a $\beta'$-twisted $r$-th root of $N$, where $\beta'$ is a $2$-cocycle with coefficients in $\mu_r$, as in 
Construction \ref{construction-twisted-sheaf-with-trivial-determinant-associated-to-a-coherent-sheaf}. The restriction $\bar{L}'$ of $L'$
to $M_0$ satisfies $(\bar{L}')^r\cong L^r$. Hence, the transition functions of $(\bar{L}')^{-1}\otimes L$ in the open covering in Construction 
\ref{construction-twisted-sheaf-with-trivial-determinant-associated-to-a-coherent-sheaf} form a $1$-cochain $\gamma$ with coefficients in $\mu_r$. Let $\gamma'$ be an extension of $\gamma$ to a $1$-cochain with coefficients in $\mu_r$ over $\pi^{-1}(U)$ using the topological triviality of $\pi$, as in the previous step. Multiplying the transition functions of $L'$ by $\gamma'$ we obtain a twisted line bundle extending $L$.
\end{proof}

%
\section{Secant sheaves on abelian threefolds}
\label{section-secant-sheaves-on-abelian-threefolds}
In Section \ref{sec-K-secants-on-the-generic-ppav} we enumerate the $K$-secants spanned by rational Hodge classes for a generic principally polarized abelian variety.
In Section \ref{sec-examples-of-secant-sheaves} we consider examples of ideal sheaves twisted by a line-bundle, which are secant sheaves over abelian threefolds and fourfolds.
In Section \ref{sec-rank-6-obstruction-map} we consider the ideal sheaf $F:=\Ideal{\cup_{i=1}^{d+1}C_i}$ of $d+1$ translates $C_i$ of the Abel-Jacobi 
image of a non-hyperelliptic curve $C$ of genus $3$ in its Jacobian $X=\Pic^2(C)$. We show that the obstruction map
$ob_F:HT^2(X)\rightarrow \Ext^2(F,F)$ has rank $6$, and so its kernel is a $9$-dimensional subspace of unobstructed first order deformations of the pair $(X,F)$.

In Section \ref{sec-ob-E} we set $F_1:=\Ideal{\cup_{i=1}^{d+1}C_i}(\Theta)$ and $F_2:=\Ideal{\cup_{i=1}^{d+1}\Sigma_i}(\Theta)$,
where $\Sigma_i$ are images under the natural involution $L\mapsto \omega_C\otimes L^{-1}$ of $\Pic^2(C)$ of translates of the Abel-Jacobi image of $C$.
Let the object $E:=\Phi(F_1\boxtimes F_2)$ be the image of the outer tensor product of $F_1$ and $F_2$ via Orlov's
equivalence $\Phi:D^b(X\times X)\rightarrow D^b(X\times\hat{X})$. We show that the kernel of
$ob_E:HT^2(X\times\hat{X})\rightarrow \Ext^2(E,E)$ is equal to the kernel of the homomorphism $ch(E):HT^2(E)\rightarrow H^*(X\times\hat{X},\CC)$
via the $HT^*(X\times\hat{X})$-module structure of $H^*(X\times\hat{X},\CC)$ and the action of $HT^*(X\times\hat{X})$ on $ch(F)\in HH^*(X\times\hat{X},\CC)$.

In Section \ref{sec-diaginal-deformations} we show that the isomorphism $\Phi^{HT}:HT^2(X\times X)\rightarrow HT^2(X\times\hat{X})$
maps the ``diagonally'' embedded $\ker(ob_{F_1})\subset HT^2(X)$ to a $9$-dimensional subspace of first order commutative and gerby deformations of $X\times\hat{X}$. 

%
\subsection{$K$-secants on a generic ppav}
\label{sec-K-secants-on-the-generic-ppav}
Let $(X,\Theta)$ be a principally polazixed abelian $n$-fold with a cyclic Neron-Severi group generated by $\Theta$. Let $\phi_\Theta:X\rightarrow \hat{X}$ be the isomorphism induced by $\Theta$ and set $f:=(\phi_{\Theta}^*)^{-1}:H^1(X,\ZZ)\rightarrow H^1(\hat{X},\ZZ)$. Given $w\in H^1(X,\ZZ)$ and $\theta\in H^1(\hat{X},\ZZ)$ and identifying $H^1(\hat{X},\ZZ)$ with $H^1(Z,\ZZ)^*$, we have
\begin{equation}
\label{eq-identity-expressing-anti-symmetry-of-theta}
f(w)(f^{-1}(\theta))=-\theta(w).
\end{equation}
We have 
\begin{equation}
\label{eq-two-by-two-matrix}
\End_\QQ(X\times\hat{X})\cong \End_{Hdg}(V_\QQ)=
\left\{\left(
\begin{array}{cc}
a_{11}I_X & a_{12}f^{-1}
\\
a_{21}f & a_{22}I_{\hat{X}}
\end{array}
\right) \ : \ a_{ij}\in\QQ
\right\},
\end{equation}
where $I_X$ is the identity endomorphism of $H^1(X,\QQ)$  and $I_{\hat{X}}$ is defined analogously. 
The identity (\ref{eq-identity-expressing-anti-symmetry-of-theta}) implies that 
the group $SO^+_{Hdg}(V_\QQ)$ is the subgroup of invertible elements of $\End_{Hdg}(V_\QQ)$
with $a_{11}a_{22}-a_{21}a_{12}=1.$ 
Allowing the coefficients to belong to a quadratic imaginary number field $K$ and imposing $a_{11}a_{22}-a_{21}a_{12}=1$ 
we get the group $SO^+_{Hdg}(V_K)$.

\begin{lem}
\label{lemma-orbit-of-pure-spinors}
The $SO^+_{Hdg}(V_K)$-orbit of the pure spinor $\span_K\{1\}\in \PP(S^+_K)$ is \\
$\{\span_K\{\Theta^n\}\}\cup \{\span_K\{\exp(k\Theta)\} \ : \ k\in K
\}$.
\end{lem} 

\begin{proof}
Denote the displayed matrix in (\ref{eq-two-by-two-matrix}) by $A$. If  $a_{22}=0$, then $A(H^1(\hat{X},K))=H^1(X,K)$.
Hence, in that case $A$ maps the pure spinor $H^0(X,K)=\span_K\{1\}$ of the maximal isotropic subspace $H^1(\hat{X},K)$ to the pure spinor $H^{2n}(X,K)=\span_K\{\Theta^n\}$
of $H^1(X,K)$. 
The subgroup of $SO^+_{Hdg}(V_K)$ leaving $H^1(\hat{X},K)$ invariant is the lower triangular subgroup with $a_{12}=0$. Hence, the $SO^+_{Hdg}(V_K)$-orbit
of $\span_K\{1\}$ is isomorphic to $\PP^1_K$. It follows that the latter orbit is the
rational normal curve of degree $n$ in $\PP(\span_K\{\Theta^j\! : \! 0\!\leq\! j \!\leq\! n\})$, which is 
the Zariski closure
of $\{\span_K\{\exp(k\Theta)\} \! : \! k\in K\}$.
\end{proof}

The involution $\iota$ interchanging the coefficients of $\frac{\Theta^j}{j!}$ and $\frac{\Theta^{n-j}}{(n-j)!}$ acts on the above set of pure spinors and corresponds to 
the equality $k^n\exp(k^{-1}\Theta)=\iota(\exp(k\Theta))$. The action of the element $\left(\begin{array}{cc}0&-f^{-1}\\f&0\end{array}\right)$ on $\PP(S_K)$ extends the action of $\iota$.

If $K=\QQ(\sqrt{-d})$, $d$ a positive integer, and we write $k=(\rho+\tau\sqrt{-d})/q$, with $\rho,\tau, q\in\ZZ$, $\gcd(\rho,\tau,q)=1$, $q>0$, and $\tau\neq 0$, then the non-rational pure spinors defined over $K$ in Lemma \ref{lemma-orbit-of-pure-spinors} are enumerated by $(\alpha+\tau\sqrt{-d}\beta)/q^n$, where
\begin{eqnarray*}
\alpha & = &
q^n+\rho q^{n-1}\Theta+q^{n-2}(\rho^2-\tau^2d)\frac{\Theta^2}{2} + 
q^{n-3}\rho(\rho^2-3\tau^2d)\frac{\Theta^3}{3!}
\\
&&+q^{n-4}(\rho^4-6\rho^2\tau^2d+\tau^4d^2)\frac{\Theta^4}{4!}+\dots
\\
&=&\exp\left(\frac{\rho}{q}\Theta\right)\sum_{j=0}^{\lfloor\frac{n}{2}\rfloor}(-1)^jq^{n-2j}(\tau^2d)^j\frac{\Theta^{2j}}{(2j)!}
\\
\beta& = & q^{n-1}\Theta +\rho q^{n-2}\Theta^2+q^{n-3}(3\rho^2-d\tau^2)\frac{\Theta^3}{3!}+q^{n-4}\rho(\rho^2-\tau^2d)\frac{\Theta^4}{4!}+\dots
\\
&=&\exp\left(\frac{\rho}{q}\Theta\right)\sum_{j=0}^{\lfloor\frac{n-1}{2}\rfloor}(-1)^jq^{n-1-2j}(\tau^2d)^j\frac{\Theta^{2j+1}}{(2j+1)!}
\end{eqnarray*}
and $\alpha,\beta\in H^{ev}(X,\ZZ)$. Furthermore, $\span_\ZZ\{\alpha,\beta\}$ is saturated in $H^{ev}(X,\ZZ)$ when $q=1$.

The rational $K$-secant $P:=\span_\QQ\{\alpha,\beta\}$ is spanned by Hodge classes, and so a choice of one of the two pure spinors in $\PP(P)$ determines 
the structure of an abelian variety of Weil type for $X\times\hat{X}$, by Lemma \ref{lemma-decomposition-into-4-direct-summands}. If $a\alpha+b\beta$ is the Chern character of an object $F\in D^b(X)$, then
\[
\chi(F^\vee\otimes F)=\int_X(a\alpha+b\beta)^\vee(a\alpha+b\beta)=\left\{
\begin{array}{ccl}
0 & \mbox{if} & n \ \mbox{is odd},
\\
 -2q^2(a^2\tau^2d+b^2) & \mbox{if} & n=2,
 \\
 8dq^4\tau^2(a^2\tau^2d+b^2) & \mbox{if} & n=4,
 \\
(-1)^{\frac{n}{2}}2^{n-1}d^{\frac{n}{2}-1}q^n\tau^{n-2}(a^2\tau^2d+b^2)
& \mbox{if} & n \ \mbox{is even.}
\end{array}
\right.
\]
If $n$ is even, the minimum value of $|\chi(F^\vee\otimes F)|$ above for a given imaginary quadratic number field $K$ 
is $2^{n-1}d^{\frac{n}{2}-1}$, where $d$ is the product of distinct positive primes satisfying $K=\QQ(\sqrt{-d})$, and it is obtained for $q=1$, $\tau=\pm 1$, $\rho\in \ZZ$, $a=0$, and $b=1$, provided the class $\beta$ is the Chern character of an object $F$. In that case, applying a lift of $\iota$ in $\Aut(D^b(X))$, we can get such a representative object with a non-zero rank. When $n=4$ and $\dim\Hom(F,F)=1,$
then $\dim\Ext^2(F,F)=\chi(F^\vee\otimes F)+2\dim\Ext^1(F,F)-2$, and so $\chi(F^\vee\otimes F)$ is relevant for the semi-regularity property of $F$ in the absolute or equivariant sense. 
%
\subsection{Examples of secant sheaves}
\label{sec-examples-of-secant-sheaves}

Let $C$ be a non-hyperelliptic curve of genus $3$. 
Set $X:=\Pic^2(C)$ and let $\Theta\subset X$ be the canonical divisor. The natural morphism $C^{(2)}\rightarrow \Theta$ is an isomorphism. Indeed, the morphism is injective, $\Theta$ is smooth, by Riemann's Singularity Theorem \cite[Theorem 11.2.5]{BL}, and so the morphism is an isomorphism by Zariski's Main Theorem. 
Let $AJ:C\rightarrow \Pic^1(X)$ be the Abel-Jacobi morphism.
Given a point $t\in \Pic^1(X)$ denote by $C_t\subset X$ the translate of $AJ(C)$ by $t$. Denote by $[pt]\in H^6(X,\ZZ)$ the class Poincar\'{e}-dual to a point. 
Given a subvariety $Z$ of $X$, denote by $[Z]$ the class in $H^*(X,\ZZ)$ Poncar\'{e}-dual to $Z$.
Then $[\Theta]^3/6=[pt]$ and 
\[
[C_t]=[\Theta]^2/2,
\]
by Poncar\'{e}'s formula \cite[Sec. 11.2]{BL}. We denote $[\Theta]$ by $\Theta$ as well.

Let $d$ be a positive integer. Set $\alpha:=1-\frac{d}{2}\Theta^2$ and $\beta:=\Theta-d[pt]$. We have
\[
\exp(\sqrt{-d}\Theta)=1+\sqrt{-d}\Theta-\frac{d}{2}\Theta^2-d\sqrt{-d}[pt]=
\left(1-\frac{d}{2}\Theta^2\right)+\sqrt{-d}\left(\Theta-d[pt]\right)=\alpha+\sqrt{-d}\beta.
\]
Cup product with $\exp(\sqrt{-d}\Theta)$ is an automorphism of $S_K:=H^*(X,K)$, which belongs to the image of $m:\Spin(V_K)\rightarrow GL(S_K)$,
where $m$ is given in (\ref{eq-m}). Now $1\in H^0(X,\ZZ)$ is an even pure spinor, hence, so is $\exp(\sqrt{-d}\Theta)$.
Let $t_i$, $1\leq i\leq d+1$, be distinct points of $\Pic^1(C)$ and set $C_i:=C_{t_i}$. 

\begin{lem}
\label{lemma-ch-F-i-is-on-secant-to-spinor-variety}
Assume that $C_i$, $1\leq i\leq d+1$, are pairwise disjoint. Then the following equality holds
\[
ch\left(\Ideal{\cup_{i=1}^{d+1}C_i}\otimes\StructureSheaf{X}(\Theta)\right)=
1+\Theta-\frac{d}{2}\Theta^2-d[pt]=\alpha+\beta.
\]
Consequently, $ch\left(\Ideal{\cup_{i=1}^{d+1}C_i}\otimes\StructureSheaf{X}(\Theta)\right)$ 
and $ch\left(R\SheafHom(\Ideal{\cup_{i=1}^{d+1}C_i}\otimes\StructureSheaf{X}(\Theta),\StructureSheaf{X})\right)$ 
both belong to the secant line to the spinor variety through the pure spinor $\exp(\sqrt{-d}\Theta)$ and its complex conjugate $\exp(-\sqrt{-d}\Theta)$.
\end{lem}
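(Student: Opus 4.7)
The plan is a direct computation of the Chern character using Grothendieck--Riemann--Roch, followed by a trivial verification that the resulting class lies on the prescribed secant line.

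First I would compute $ch(\StructureSheaf{C_i})$ for a single translate. Since $X$ is an abelian threefold we have $td(X)=1$, and since $C_i\subset X$ is a smooth genus $3$ curve embedded with $[C_i]=\Theta^2/2$, Grothendieck--Riemann--Roch applied to the inclusion $\iota_i:C_i\hookrightarrow X$ gives
\[
ch(\iota_{i,*}\StructureSheaf{C_i})=\iota_{i,*}\bigl(1+\tfrac{1}{2}c_1(T_{C_i})\bigr)=\tfrac{1}{2}\Theta^2+(1-g)[pt]=\tfrac{1}{2}\Theta^2-2[pt].
\]
From the short exact sequence $0\to\Ideal{C_i}\to\StructureSheaf{X}\to\StructureSheaf{C_i}\to 0$ and the fact that the $C_i$ are pairwise disjoint (so $\StructureSheaf{\cup C_i}=\bigoplus_i\StructureSheaf{C_i}$), we deduce
\[
ch\bigl(\Ideal{\cup_{i=1}^{d+1}C_i}\bigr)=1-(d+1)\bigl(\tfrac{1}{2}\Theta^2-2[pt]\bigr).
\]

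Next I would multiply by $ch(\StructureSheaf{X}(\Theta))=\exp(\Theta)=1+\Theta+\tfrac{1}{2}\Theta^2+[pt]$, using $\Theta^3=6[pt]$. Collecting terms up to degree $6$ and canceling gives exactly
\[
1+\Theta-\tfrac{d}{2}\Theta^2-d[pt]=\alpha+\beta,
\]
as claimed. The main potential obstacle is simply bookkeeping in this multiplication, but there is no subtlety: the coefficient of $\Theta^2$ becomes $\tfrac{1}{2}-\tfrac{d+1}{2}=-\tfrac{d}{2}$, and the coefficient of $[pt]$ becomes $1-3(d+1)+2(d+1)=-d$ (since the term $-\tfrac{d+1}{2}\Theta^2\cdot\Theta$ contributes $-3(d+1)[pt]$).

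Finally, recall that $\exp(\pm\sqrt{-d}\Theta)=\alpha\pm\sqrt{-d}\beta$. Setting
\[
c_{1,2}:=\tfrac{1}{2}\pm\tfrac{1}{2\sqrt{-d}}\in K,
\]
we obtain $c_1\exp(\sqrt{-d}\Theta)+c_2\exp(-\sqrt{-d}\Theta)=(c_1+c_2)\alpha+\sqrt{-d}(c_1-c_2)\beta=\alpha+\beta$, exhibiting $ch(F)$ on the rational secant line through the two conjugate pure spinors. For the derived dual, one uses the identity $ch(R\SheafHom(F,\StructureSheaf{X}))=\tau(ch(F))$, where $\tau$ acts on $H^{2p}(X,\QQ)$ by $(-1)^p$; applied to $\alpha+\beta$ this yields $\alpha-\beta$, which lies on the same secant line via the conjugate coefficients $\tfrac{1}{2}\mp\tfrac{1}{2\sqrt{-d}}$.
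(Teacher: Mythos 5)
Your proof is correct and follows essentially the same route as the paper: compute $ch(\StructureSheaf{C_i})=\tfrac12\Theta^2-2[pt]$ (you via GRR and $\iota_*\operatorname{td}(C_i)$, the paper via $\chi(\StructureSheaf{C_i})=-2$ — the same fact), subtract from $1$ using disjointness, multiply by $\exp(\Theta)$, and observe that $\alpha+\beta$ lies in the span of $\exp(\pm\sqrt{-d}\Theta)=\alpha\pm\sqrt{-d}\beta$. The only cosmetic difference is that the paper computes for a general twist $k\Theta$ and then specializes, and leaves the final "secant line" observation implicit since $\span\{\alpha,\beta\}=\span\{\exp(\sqrt{-d}\Theta),\exp(-\sqrt{-d}\Theta)\}$ is already set up just before the lemma; your explicit coefficients $c_{1,2}$ and the $\tau$-computation for the derived dual simply spell this out.
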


\begin{proof}
The equality $ch(\StructureSheaf{C_i})=\Theta^2/2-2[pt]$ holds, since $\chi(\StructureSheaf{C_i})=-2$.
Hence, given $n$ disjoint translates $C_i$ of $AJ(C)$, we get
\[
ch(\Ideal{\cup_{i=1}^nC_i})=1-n\left(\Theta^2/2-2[pt]\right)=1-\frac{n}{2}\Theta^2+2n[pt].
\]
Then 
\begin{eqnarray*}
ch(\Ideal{\cup_{i=1}^nC_i}\otimes\StructureSheaf{X}(k\Theta))&=&
(1-\frac{n}{2}\Theta^2+2n[pt])(1+k\Theta+\frac{k^2}{2}\Theta^2+k^3[pt])
\\
&=&1+k\Theta+\frac{(k^2-n)}{2}\Theta^2+(k^3-3kn+2n)[pt]).
\end{eqnarray*}
Taking $k=1$ and $n=d+1$ we get the desired equality.
\end{proof}

\begin{example}
Let $X$ be an abelian surface, let $F$ be a coherent sheaf with $w:=ch(F)$ satisfying$(w,w)_S<0$, and let
$h\in H^{ev}(X,\ZZ)$ be an algebraic class, such that $(h,w)_S=0$ and  $(h,h)_S<0$. Then $\span_\QQ\{w,h\}$ is a secant to the spinor variety inducing complex multiplication by the imaginary quadratic number field $\QQ(\sqrt{-d})$, where $d=\frac{(w,w)(h,h)}{4}$, by \cite[Prop. 1.7]{markman-generalized-kummers}. In particular, $F$ is a $\QQ(\sqrt{-d})$-secant sheaf.
\end{example}

\begin{example}
\label{example-secant-sheaf-on-jacobian-of-genus-4-curve}
When $X$ is an abelian $n$-fold, $n\geq 4$, Lemma \ref{lemma-ch-F-i-is-on-secant-to-spinor-variety} can be generalized to produce secant sheaves with a secant line inducing complex multiplication by $\QQ(\sqrt{-d})$ as follows. Let $C$ be a Brill-Noether generic curve of genus $n$ and set $X:=\Pic^{n-1}(C)$. Denote by $W_k$ the Abel-Jacobi image of $C^{(k)}$ in $\Pic^k(C)$ and by $[W_k]$ 
 the class of any translate of $W_k$ in $X$. Then $[W_k]=\Theta^{n-k}/(n-k)!$, by Poincar\'{e}'s formula. Let $t_{jk}$ be generic points in $\Pic^{n-k-1}(C)$.
Let the subscheme $Z$ of $X$ be the union 
\begin{equation}
\label{eq-Z-is-a-union-of-Brill-Noether-loci}
Z:=\cup_{k=0}^{n-2} \cup_{j=1}^{a_k}\tau_{t_{jk}}(W_k).
\end{equation}
Let $F:=\Ideal{Z}(a_{n-1}\Theta)$ be the tensor product of the ideal sheaf of $Z$ with $\StructureSheaf{X}(a_{n-1}\Theta)$. 
Let $\alpha$ be the real part of $\exp(\sqrt{-d}\Theta)$ and $\sqrt{d}\beta$ its imaginary part.
\begin{eqnarray*}
\alpha &:=& 1-d[W_{n-2}]+d^2[W_{n-4}] + \cdots + (-d)^{n/2}[pt]
\\
\beta & := & \Theta -d[W_{n-3}]+\cdots
\end{eqnarray*}
The integers $a_k$, $0\leq k\leq n-1$, in (\ref{eq-Z-is-a-union-of-Brill-Noether-loci}) should be chosen to satisfy the equation 
\begin{equation}
\label{eq-coefficients-of-a-secant-sheaf}
ch(F)=\alpha+a_{n-1}\beta.
\end{equation}
For example, when $n=4$ we may assume that only the irreducible components of $Z$ in (\ref{eq-Z-is-a-union-of-Brill-Noether-loci}), which are translates of $W_2$, intersect and every such pair interests at $\Theta^4/4=6$ points. Note that $\chi(\StructureSheaf{W_1})=\chi(\StructureSheaf{C})=-3$ and 
$\chi(\StructureSheaf{W_2})=3$. For the latter equality use that $C$ is non-hyperelliptic to conclude that $W_2$ is isomorphic to $C^{(2)}$, as well as the equalities $h^{0,1}(C^{(2)})=4$ and $h^{0,2}(C^{(2)})=6$. We get  
\begin{eqnarray*}
ch(\StructureSheaf{W_1})&=& \Theta^3/6-3[pt],
\\
ch(\StructureSheaf{W_2})&=&\Theta^2/2 -\Theta^3/3+3[pt] \ \ \mbox{(see Lemma \ref{lemma-ch-O-W-2})},
\\
ch(\StructureSheaf{Z})&=&\sum_{k=0}^{2}a_k ch(\StructureSheaf{W_k})-6\Choose{a_2}{2}[pt]
\\
ch(\Ideal{Z})&=& 1-\frac{a_2}{2}\Theta^2+\left[\frac{2a_2-a_1}{6}\right]\Theta^3+
\left[
6\Choose{a_2}{2}-3a_2+3a_1-a_0
\right][pt],
\\
ch(\Ideal{Z}(a_3\Theta)&=&
1+a_3\Theta+
\left[
\frac{a_3^2-a_2}{2}
\right]\Theta^2+
\left[
\frac{a_3^3-3a_3a_2+2a_2-a_1}{6}
\right]\Theta^3+
\\
&&
\left[
a_3^4-6a_2a_3^2+8a_2a_3-4a_1a_3+6\Choose{a_2}{2}-3a_2+3a_1-a_0
\right][pt]
\\
\alpha+a_3\beta&=&
1+a_3\Theta-\frac{d}{2}\Theta^2-\frac{da_3}{6}\Theta^3+d^2[pt]
\end{eqnarray*}
Comparing coefficients in Equation \ref{eq-coefficients-of-a-secant-sheaf}
we get $a_2=d+a_3^2$, $a_1=2(d+a_3^2)(1-a_3)$, 
\[
a_0=6a_3^4-6a_3^3+8da_3^2-6da_3+2d^2=(a_3^2+d)(6a_3^2-6a_3+2d).
\]
We get a secant ideal sheaf tensored with $\StructureSheaf{X}(a_3\Theta)$ for every choice of an integer $a_3\leq 1$.
If we choose $a_3=1$, then $a_2=d+1$, $a_1=0$, and $a_0=2d(d+1)$. 
Once again we can choose $Z$ to be invariant with respect to a cyclic subgroup $G$ of $X$ of order $d+1$. 
However, $F_d:=\Ideal{Z}(\Theta)$ is unlikely to be semiregular, and 
the semiregularity map could restrict to $\Ext^2(F_d,F_d)^{G}$ as an injective map for at most finitely many values of $d$, 
since $\dim\Ext^2(F_d,F_d)=\int_X ch(F_d)ch(F_d^\vee)+2\dim\Ext^1(F_d,F_d)-2=8d(d+1)-2+2\dim\Ext^1(F_d,F_d)$ 
grows quadratically
with $d$, while the order of $G$ is linear in $d$.
Another choice of a $K$-secant object with the same Chern character is the following. 
Let $\tau_{t_0}(W_2)$ be a translate of $W_2$ in $X$ and set $Z:=\cup_{t\in G}\tau_{t_0+t}(W_2)$. 
Let $g$ be a generator of $G$ and set $t_i:=t_0+ig$, $0\leq i\leq d$.
We choose $G$ so that 
for each pair $(i,j)$, with $0\leq i<j\leq d$, the intersection $\tau_{t_i}(W_2)\cap\tau_{t_j}(W_2)$ consists of $6$ distinct points, and this collection of $\Choose{d+1}{2}$ sets of $6$ points are pairwise disjoint. For each such pair $(i,j)$ 
choose $4$ of the $6$ points of intersection $\tau_{t_i}(W_2)\cap \tau_{t_j}(W_2)$ and let $\nu:\tilde{Z}\rightarrow Z\subset X$ be the partial normalization of $Z$ resolving the self intersection at each of the $4$ chosen points for each pair $(i,j)$. Let $F'$ be the object
\begin{equation}
\label{eq-F-1-over-jacobian-of-genus-4-curve}
\StructureSheaf{X}\RightArrowOf{\nu^*} \nu_*\StructureSheaf{\tilde{Z}}.
\end{equation}
We choose the collection of $\Choose{d+1}{2}$ sets of $4$ points to be $G$-invariant. Note that $ch(F'(\Theta))=ch(F_d))$. 
Let $q:X\rightarrow Y:=X/G$ be the quotient map and set $\bar{Z}:=q(\tau_{t_0}(W_2))$. 
It admits the partial normalization $\bar{\nu}:\tilde{Z}/G\rightarrow \bar{Z}$.
Then $Z=q^{-1}(\bar{Z}))$ is an etal\'{e} $G$-cover of $\bar{Z}$ and $F'$ is the pullback via $q$ of the object $\bar{F}':=\left[\StructureSheaf{Y}\rightarrow \bar{\nu}_*(\StructureSheaf{\tilde{Z}/G)})\right]$. 
\end{example}

\hide{
\begin{question}
Is the restriction of the semiregularity map to $\Ext^2(F',F')^{G_1}$ injective? Equivalently, is the object $\bar{F}'$ semiregular?
\end{question}

The analogous question in the genus $3$ case for the sheaf $\Ideal{\cup_{i=1}^{d+1}C_i}$ of Lemma \ref{lemma-ch-F-i-is-on-secant-to-spinor-variety} has an affirmative answer (see Lemma \ref{lemma-Yoneda-algebra-is-generated-in-degree-1} and Corollary \ref{cor-kernel-of-obstruction-is-annihilator-of-ch}, which imply the semiregularity via Lemma \ref{lemma-criterion-for-semiregularity}).
}

\begin{example}
Keep the notation of Example \ref{example-secant-sheaf-on-jacobian-of-genus-4-curve}, let $C$ be a non-hyperelliptic curve of genus $4$ with two $g^1_3$'s, $X=\Pic^3(C)$
let $e:\Theta\subset X$ be the theta divisor $W_3$, and let $W_{2,p}=\{p+p_1+p_2 \ ; \ p_1, p_2\in C\}$ be the translate of $W_2\subset \Pic^2(C)$ by the point $p$ into $X$. Then $W_{2,p}$ is contained in $\Theta$ and passes through the singularities of $\Theta$ (the two $g^1_3$'s) and is thus a Weil divisor\footnote{
Let $\iota:X\rightarrow X$ be the involution sending $L$ to $\omega_C\otimes L^{-1}$. Given two distinct points $p,p'\in C$, we have the scheme theoretic equality $W_{2,p}\cup \iota(W_{2,p'})=\Theta\cap \tau_{p'-p}(\Theta)$.
So $W_{2,p}+ \iota(W_{2,p'})$ is a cartier divisor in the linear system of $e^*\StructureSheaf{X}(\tau_{p'-p}(\Theta))$.
} 
in $\Theta$. Let $\Ideal{W_{2,p}}$ be the ideal sheaf of $W_{2,p}$ as a subvariety of $\Theta$. Choose $d$ unordered pairs of points $\{p_i,q_i\}_{i=1}^d$ in $C$, with $\{p_i,q_i\}\cap\{p_j,q_j\}=\emptyset$, for $i\neq j$. Assume that the line secant to the canonical embedding of $C$ through $p_i$ and $q_i$ does not lie on the unique quadric containing the canonical curve. 
Let $C_i=\{p+p_i+q_i\}\subset \Theta$ be 
the translate $W_1:=AJ(C)\subset \Pic^1(C)$ into $\Theta$. Then $C_i$ is contained in the smooth locus of $\Theta$. Assume that $p\not\in\{p_i,q_i\}$, for all $1\leq i\leq d$. Then $C_i\cap W_{2,p}=\{p+p_i+q_j\}$ with multiplicity\footnote{
Indeed, the scheme theoretic intersection of $C_i$ with $W_{2,p}\cup \iota(W_{2,p'})$ is equal to the scheme theoretic intersection of $C_i$ with $\tau_{p'-p}(\Theta)$, which has degree $4$, and $C_i\cap \iota(W_{2,p'})$ is a divisor in the linear system of $\omega_C(-p_i-q_i-p')$ and so of degree $3$.
}
$1$.
Set $Z_d:=W_{2,p}\cup \cup_{i=1}^d C_i$, considered as a subscheme of $\Theta$. Then $e_*\Ideal{Z_d}(\Theta)$ is a $\QQ(\sqrt{-d})$-secant sheaf. Indeed, we have:
\begin{eqnarray*}
ch(e_*(\StructureSheaf{\Theta}))&=& 1-ch(\StructureSheaf{X}(\Theta))=\Theta-\frac{\Theta^2}{2}+\frac{\Theta^3}{6}-\frac{\Theta^4}{24},
\\
ch(e_*(\StructureSheaf{\Theta}(\Theta)))&=& \Theta+\frac{\Theta^2}{2}+\frac{\Theta^3}{6}+\frac{\Theta^4}{24},
\\
ch(\StructureSheaf{W_2})&=& \frac{\Theta^2}{2}-\frac{\Theta^3}{3}+\frac{\Theta^4}{8} \ \mbox{(see Example \ref{example-secant-sheaf-on-jacobian-of-genus-4-curve})},
\\
ch(\StructureSheaf{W_2}(\Theta))&=& \frac{\Theta^2}{2}+\frac{\Theta^3}{6}+\frac{\Theta^4}{24},
\\
ch(e_*(\Ideal{W_{2,p}}(\Theta)))&=&\Theta,
\\
ch(\StructureSheaf{W_1})&=&\frac{\Theta^3}{6}-\frac{\Theta^4}{8}  \ \mbox{(see Example \ref{example-secant-sheaf-on-jacobian-of-genus-4-curve})},
\\
ch(\StructureSheaf{W_1}(\Theta))&=&\frac{\Theta^3}{6}+\frac{\Theta^4}{24}=[W_1]+[pt],
\\
ch(e_*\Ideal{Z_d}(\Theta))&=& \Theta-d\frac{\Theta^3}{6}=\beta.
\end{eqnarray*}
Set $F_d:=e_*\Ideal{Z_d}(\Theta)$.
Note that $\dim\Ext^2(F_d,F_d)=-\int_X\beta^2+2\dim\Ext^1(F_d,F_d)-2=8d+2\dim\Ext^1(F_d,F_d)-2$ grows linearly with $d$, and hence it can be semi-regular for at most finitely many values of $d$.
\end{example}

\begin{lem}
\label{lemma-ch-O-W-2}
We have 
$ch(\StructureSheaf{W_2})=\Theta^2/2 -\Theta^3/3+3[pt]$ in  the notation of Example \ref{example-secant-sheaf-on-jacobian-of-genus-4-curve}. 
\end{lem}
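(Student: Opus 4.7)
The plan is to apply Grothendieck--Riemann--Roch to the closed embedding $\alpha\colon C^{(2)}\hookrightarrow X = \Pic^3(C)$, which is an isomorphism onto a translate of $W_2$ because $C$ is non-hyperelliptic of genus $4$ (so $h^0(L)\le 1$ for any degree-$2$ line bundle $L$, forcing $D\mapsto \StructureSheaf{C}(D)$ to be injective on $C^{(2)}$). Since $X$ is abelian, $td(T_X)=1$, so GRR yields
\[
ch(\StructureSheaf{W_2}) \;=\; \alpha_*\!\bigl(td(T_{C^{(2)}})\bigr) \;=\; \alpha_*\!\left(1 - \tfrac{K_{C^{(2)}}}{2} + \chi(\StructureSheaf{C^{(2)}})\,[pt_{C^{(2)}}]\right).
\]
The degree-$2$ piece is $\alpha_*(1)=[W_2]=\Theta^2/2$ by Poincar\'e's formula, and the degree-$4$ piece is $\chi(\StructureSheaf{C^{(2)}})\,[pt_X]=3[pt]$, using $\chi(\StructureSheaf{C^{(2)}})=1-h^{0,1}+h^{0,2}=1-4+6=3$.

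All the remaining content lies in the degree-$3$ piece $-\alpha_*(K_{C^{(2)}})/2\in H^6(X,\QQ)$. To compute this I would invoke Macdonald's formula
\[
c(T_{C^{(n)}}) \;=\; (1+x)^{n-g+1}\,\exp\!\bigl(-\eta/(1+x)\bigr)
\]
for the Chern classes of the tangent bundle of a symmetric product, where $x\in H^2(C^{(2)},\ZZ)$ is the class of the divisor $\{p_0\}+C\subset C^{(2)}$ and $\eta=\alpha^*\Theta$. Specialising to $n=2$, $g=4$ gives $K_{C^{(2)}}=x+\eta$. The relevant pushforwards are then immediate: $\alpha_*(x)=[W_1]=\Theta^3/6$ by Poincar\'e's formula (the $x$-curve is mapped isomorphically by $\alpha$ to a translate of the Abel--Jacobi image $W_1\subset X$), and $\alpha_*(\eta)=\alpha_*(\alpha^*\Theta)=\Theta\cdot\alpha_*(1)=\Theta^3/2$ by the projection formula. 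Hence $\alpha_*(K_{C^{(2)}})=\Theta^3/6+\Theta^3/2=2\Theta^3/3$, so the degree-$3$ part of $ch(\StructureSheaf{W_2})$ is $-\Theta^3/3$, combining with the other pieces to give the claimed identity.

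The main technical input is thus Macdonald's identity $K_{C^{(2)}}=x+\eta$. Avoiding a direct citation, one can derive it in a self-contained way by writing $K=ax+b\eta$ and fixing $(a,b)=(1,1)$ via intersection numbers coming from adjunction: $K\cdot x = 2g-3 = 5$ on the $x$-curve (isomorphic to $C$ with $x^2=1$), $K^2=21$ from Noether's formula combined with $\chi_\text{top}(C^{(2)})=15$ and $\eta^2=\Theta^4/2=12$, and (to resolve a residual sign) $K\cdot\delta=6-\delta^2=18$ from adjunction on the doubled-point locus $\delta\cong C$ (of class $10x-2\eta$ and self-intersection $\delta^2=-12$, computed from $p^*\delta=2\Delta$ for the double cover $p\colon C\times C\to C^{(2)}$ ramified along the diagonal).
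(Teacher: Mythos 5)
Your proof is correct, and it takes a genuinely different route from the paper. The paper's proof first observes that for generic $C$ the N\'eron--Severi group of $X$ has rank $1$, so one may write $ch(\StructureSheaf{W_2})=\Theta^2/2+\lambda\Theta^3+3[pt]$ with a single unknown rational coefficient $\lambda$, and then pins down $\lambda$ by computing $\chi(\StructureSheaf{W_2}(-\Theta))$ geometrically for one specific genus-$4$ curve (the complete intersection of a quadric and a cubic in $\PP^3$), decomposing $W_2\cap\tau(\Theta)$ into three translates of Abel--Jacobi curves and counting their intersection points. Your approach instead runs Grothendieck--Riemann--Roch for the embedding $\alpha\colon C^{(2)}\hookrightarrow X$ and Macdonald's canonical class formula $K_{C^{(2)}}=x+\eta$ (the special case $K=(g-n-1)x+\eta$ with $n=2$, $g=4$), then pushes forward each generator using Poincar\'e's formula and the projection formula. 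This is uniform in $C$, needs no rank-one N\'eron--Severi assumption and no special curve, and isolates cleanly the one piece of genuine content, namely $\alpha_*(K_{C^{(2)}})=\tfrac{2}{3}\Theta^3$. The only quibble is with your ``self-contained'' sign resolution: you invoke $[\delta]=10x-2\eta$ to compute $K\cdot\delta$, but establishing this class is itself a Macdonald-type statement, so the derivation is not really independent of the cited formula. A genuinely self-contained resolution of the sign is available and shorter: the double cover $p\colon C\times C\to C^{(2)}$, ramified along the diagonal $\Delta$ with $p^*\delta=2\Delta$, gives $p^*K_{C^{(2)}}=K_{C\times C}-\Delta=(2g-2)\,p^*x-\tfrac{1}{2}p^*\delta$ numerically, hence $K_{C^{(2)}}=6x-\tfrac{1}{2}\delta$; combined with your numerical equations $K\cdot x=5$ and $K^2=21$ (or just $\delta^2=-12$, $\delta\cdot x=2$) this forces $K=x+\eta$ without quoting the class of $\delta$.
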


\begin{proof}
We have $ch(\StructureSheaf{W_2})=\Theta^2/2 +\lambda\Theta^3+3[pt]$, for some rational number $\lambda$ satisfying
\begin{equation}
\label{eq-Euler-characteristic-of-structure-sheaf-of-W-2-minus-Theta}
\chi(\StructureSheaf{W_2}(-\Theta))=\int_X\left(\Theta^2/2 +\lambda\Theta^3+3[pt]\right)
\left(1-\Theta+\Theta^2/2-\Theta^3/6+\Theta^4/24\right)=9-24\lambda,
\end{equation}
since $\dim H^{3,3}(X,\QQ)$ is the rank of the Neron-Severi group of $X$, which is $1$ for generic $C$.
Assume that $C$ is the intersection of a smooth quadric $Q$ and a cubic in $\PP^3$. So $C$ has two $g^1_3$'s, associated to the two rulings of $Q$.
Let $q_1+q_2+q_3$ be a reduced divisor in one $g^1_3$, denote by $\LB:=\omega_C(-q_1-q_2-q_3)$ the other $g^1_3$, and let $p\in C\setminus\{q_1,q_2,q_3\}$ be a point, such that $\linsys{\LB(-p)}=\{a+b\}$ for distinct points $a, b \in C\setminus\{p\}$. 
We calculate the intersection $W_2\cap \tau_{p-q_1-q_2}(\Theta)$. 
The intersection has cohomology class $\Theta^3/2$. It consists of the union of three irreducible components of class $\Theta^3/6$ each.
As a subset of $X=\Pic^2(C)$, the intersection consists of classes of effective divisors $D$ on $C$ of degree $2$, such that $D+q_1+q_2-p$ is effective. 
One irreducible component is $C_1:=\tau_p(AJ(C))$.

The complement of $C_1$ in the intersection consist of divisors $D$, such that $h^0(\StructureSheaf{C}(D+q_1+q_2)=2$ (the inequality $h^0(\StructureSheaf{C}(D+q_1+q_2)\leq2$ follows from the assumption that $C$ is not hyperelliptic). Such divisors $D$ satisfy 
\[
\omega_C(-D')\cong \StructureSheaf{C}(D+q_1+q_2),
\]
for some effective divisor $D'$. So $D+D'$ belongs to $\linsys{\omega_C(-q_1-q_2)}$. 
If $q_3$ is in the support of $D$, then $D=q_3+q$, where $q$ is any point of $C$, since $h^0(\StructureSheaf{C}(q_1+q_2+q_3))=2$.
Hence, a second irreducible component is $C_2:=\tau_{q_3}(AJ(C))$.
The third irreducible component $C_3$ consists of $D$, such that $D+q\in \linsys{\LB}$, for some $q\in C$. So $C_3=\tau_{\LB}(-AJ(C))$,
where $-AJ(C)$ is a curve in $\Pic^{-1}(C)$. 

The curves $C_1$ and $C_2$ intersect at the point corresponding to the divisor $p+q_3$. The curves $C_1$ and $C_3$ intersect at the two points
$\{p+a, p+b\}$. 
Let $P$ be the plane tangent to $Q$ at $q_3$. Then $P\cap Q$ consists of two lines through $q_3$ and $P\cap C=q_1+q_2+2q_3+a'+b'$, for some points $a', b'$ of $C$, which we may assume distinct, possibly by changing the choice of the divisor $q_1+q_2+q_3$. 
The curves $C_2$ and $C_3$ intersect at the two points $\{q_3+a', q_3+b'\}$.
We conclude that
\[
\chi(\StructureSheaf{W_2\cap \tau_{p-q_1-q_2}(\Theta)})=\chi(\StructureSheaf{C_1})+\chi(\StructureSheaf{C_2})+\chi(\StructureSheaf{C_3})-5=-14.
\]
\[
\chi(\StructureSheaf{W_2}(-\tau_{p-q_1-q_2}(\Theta)))=\chi(\StructureSheaf{W_2})-\chi(\StructureSheaf{W_2\cap \tau_{p-q_1-q_2}(\Theta)})=17.
\]
Comparing with (\ref{eq-Euler-characteristic-of-structure-sheaf-of-W-2-minus-Theta}) we get $9-24\lambda=17$, so $\lambda=-1/3$.
\end{proof}
%
\subsection{Secant sheaves on abelian threefolds  with a rank $6$ obstruction map}
\label{sec-rank-6-obstruction-map}
Keep the notation of Lemma \ref{lemma-ch-F-i-is-on-secant-to-spinor-variety}.
Set $F_1:=\Ideal{\cup_{i=1}^{d+1}C_i}\otimes\StructureSheaf{X}(\Theta)$. Let $P:=\span\{\alpha,\beta\}$ be the rational $\QQ(\sqrt{-d})$-secant plane.
Assumption \ref{assumption-on-rational-secant-plane-P} is satisfied, since $(\alpha,\beta)_S=\int_X\tau(\alpha)\cup\beta=\int_X\alpha\cup\beta=-4d\neq 0.$
Let $h$ be an ample class in the rank $1$ subgroup $H^2(X\times\hat{X},\ZZ)^{\Spin(V)_P}$. Such an ample class $h$ exists, by Proposition \ref{prop-polarized-abelian-variety-of-Weil-type}.

\begin{lem}
\label{lemma-kappa-3-is-linearly-independent-from-h-cube}
The rank of $\Phi(F_1\boxtimes F_1)$ is non-zero.
The $\Spin(V)_P$-invariant classes $h^3$ and $\kappa_3(\Phi(F_1\boxtimes F_1))$ are linearly independent.
\end{lem}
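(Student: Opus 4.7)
The plan is to diagonalize $ch(F_1)$ in the pure spinor basis of $P_K$ and then track the $\Spin(V_K)_{\ell_1,\ell_2}$-character decomposition through Orlov's equivalence, using that the non-trivial characters $\det_1,\det_2$ survive only in middle cohomology.

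First I would write $\exp(u)=\alpha+\sqrt{-d}\beta$, set $\lambda_1:=\exp(u)\in\tilde{\ell}_1$ and $\lambda_2:=\exp(-u)=\overline{\lambda_1}\in\tilde{\ell}_2$, so that $ch(F_1)=\alpha+\beta=c_1\lambda_1+c_2\lambda_2$ with $c_1:=\tfrac12(1+1/\sqrt{-d})$ and $c_2:=\bar c_1$. Because $\tau$ acts by $(-1)^{i(i-1)/2}$ on $H^i(X,\ZZ)$, a direct check gives $\tau\lambda_1=\lambda_2$, hence $ch(F_1^\vee)=c_2\lambda_1+c_1\lambda_2$. The relevant non-vanishings are $c_1c_2=(d+1)/(4d)\neq 0$ and $c_1^2-c_2^2=1/\sqrt{-d}\neq 0$. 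Since under the spin representation $m$ the line $\lambda_i$ carries the character $\tilde{\ell}_i$, and $\tau\lambda_i=\lambda_{3-i}$, the tensor $\lambda_i\otimes\lambda_j$ carries the character $\tilde{\ell}_i\tilde{\ell}_{3-j}$ under $m\otimes m^\dagger$. Expanding $ch(F_1)\otimes ch(F_1)$ therefore yields $\det_1$-component $c_1c_2\lambda_1\otimes\lambda_2$, $\det_2$-component $c_1c_2\lambda_2\otimes\lambda_1$, and trivial-character component $c_1^2\lambda_1\otimes\lambda_1+c_2^2\lambda_2\otimes\lambda_2$.

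Next I would apply Proposition \ref{prop-the-orlov-image-of-HW-P-projects-into-the-3-dimensional-space-of-HW-classes}. Since $\phi$ intertwines $m\otimes m^\dagger$ with $\rho'$, and the characters $\det_i$ appear in $(\wedge^{\bullet}V_\QQ)^{\Spin(V)_P}$ only in degree $2n=6$ with image $\wedge^6 W_i$ by Lemma \ref{lemma-Spin-V-P-invariant-classes-are-Hodge}, the $\det_i$-part of $ch(\Phi(F_1\boxtimes F_1))$ under $\rho'$ lies in $F^6$ with $H^6$-projection $c_1c_2\wedge^6 W_i$, obtained by applying part (1) of the proposition to $\phi(\lambda_1\otimes\lambda_2)=\phi'(\lambda_1\otimes\lambda_1)$ and its conjugate. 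The commutative upper square of Diagram (\ref{eq-diagram-of-rho-and-rho-prime}) says $\cup\exp(-c_1(\P)/2)$ intertwines $\rho'$ with $\rho$, and it acts as the identity on the lowest graded piece of $F^\bullet$, so $\tilde{ch}(E):=ch(E)\exp(-c_1(\P)/2)$ is $\rho$-invariant under $\Spin(V)_P$ and its $\det_i$-part again has $H^6$-projection $c_1c_2\wedge^6 W_i$, giving a non-zero Hodge-Weil class in $\hat{HW}_P$. For the rank claim I would use part (2) of the same proposition: for $n=3$ odd, $\phi'(\ell_1\wedge\ell_2)$ has weight $0$ while $\phi'(\ell_1\cdot\ell_2)$ has weight $2$, so only the antisymmetric piece of the trivial-character summand $c_1^2\phi'(\lambda_1\otimes\lambda_2)+c_2^2\phi'(\lambda_2\otimes\lambda_1)$ contributes to $H^0$; its coefficient is $(c_1^2-c_2^2)/2=1/(2\sqrt{-d})\neq 0$, so $\rank(\Phi(F_1\boxtimes F_1))\neq 0$ and $\kappa(E)$ is defined.

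Finally I would pass from $\tilde{ch}_3$ to $\kappa_3$. Writing $\kappa(E)=\tilde{ch}(E)\exp(m)$ with $m:=c_1(\P)/2-c_1(E)/r$ a $\rho$-invariant class in $H^2$ by Lemma \ref{lemma-Spin-V-ell-1-ell-2-invariance-conditions}(\ref{lemma-item-G-invariance-conditions}) spanning the one-dimensional trivial $\Spin(V_K)_{\ell_1,\ell_2}$-character of $(H^2)^{\Spin(V)_P}$, each summand of $\kappa_3-\tilde{ch}_3=\sum_{k\geq 1}\tfrac{m^k}{k!}\tilde{ch}_{3-k}$ is a product of trivial characters (by Lemma \ref{lemma-Spin-V-P-invariant-classes-are-Hodge}, since each $\tilde{ch}_j$ is $\rho$-invariant of degree $2j<6$ for $j<3$). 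Hence the $\det_i$-projection of $\kappa_3$ coincides with that of $\tilde{ch}_3$, namely $c_1c_2\wedge^6 W_i\neq 0$; since $h^3$ lies entirely in the trivial-character summand of $(H^6)^{\Spin(V)_P}$, the classes $h^3$ and $\kappa_3$ are linearly independent. The main point requiring care is the $\rho$-versus-$\rho'$ bookkeeping, which is handled cleanly by replacing $ch(E)$ with $\tilde{ch}(E)$ via the intertwining square (\ref{eq-diagram-of-rho-and-rho-prime}); the rest is pure character theory.
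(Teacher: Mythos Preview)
Your proof is correct and follows essentially the same approach as the paper: decompose $ch(F_1)$ in the pure-spinor basis, track the $\Spin(V_K)_{\ell_1,\ell_2}$-characters through Orlov's equivalence via Proposition~\ref{prop-the-orlov-image-of-HW-P-projects-into-the-3-dimensional-space-of-HW-classes}, and observe that the $\det_i$-component (with coefficient $c_1c_2=(d+1)/(4d)$) survives in $\kappa_3$ while $h^3$ carries only the trivial character. The only organizational differences are that you work with $\phi$ and the $m\otimes m^\dagger$-action (the paper uses $\phi'=\phi\circ(id\otimes\tau)$ and the $m\otimes m$-action, which amounts to applying $id\otimes\tau$ to your decomposition), and you argue constructively via the intermediate $\tilde{ch}(E)=ch(E)\exp(-c_1(\P)/2)$ and the passage $\kappa_3-\tilde{ch}_3\in\text{trivial character}$, whereas the paper argues by contradiction using Lemma~\ref{lemma-Spin-V-ell-1-ell-2-invariance-conditions} to pull the hypothetical $\Spin(V_K)_{\ell_1,\ell_2}$-invariance back to $S^+\otimes S^+$.
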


\begin{proof}
We use the notation of Proposition \ref{prop-the-orlov-image-of-HW-P-projects-into-the-3-dimensional-space-of-HW-classes}.
We have 
\[
ch(\Phi(F_1\boxtimes F_1))=
\phi(ch(F_1)\boxtimes ch(F_1))=\phi'(ch(F_1)\boxtimes \tau(ch(F_1))).
\]
Set $\lambda_1:=\exp(\sqrt{-d}\Theta)$ and $\lambda_2=\bar{\lambda}_1$, so that 
\[
ch(F_1)=\frac{1}{2}\left[
(\lambda_1+\lambda_2)+\frac{1}{\sqrt{-d}}(\lambda_1-\lambda_2)
\right]=
\frac{1}{2\sqrt{-d}}\left[(1+\sqrt{-d})\lambda_1+(-1+\sqrt{-d})\lambda_2\right]
\]
Now, $\tau$ interchanges $\lambda_1$ and $\lambda_2$. Hence,
\begin{eqnarray*}
ch(F_1)\!\boxtimes\! \tau(ch(F_1))
\!&=&\!
\frac{-1}{4d}\!\left[
[
(1\!\!+\!\!\sqrt{-d})\lambda_1\!+\!(-1\!\!+\!\!\sqrt{-d})\lambda_2
]
\boxtimes
[
(-1\!\!+\!\!\sqrt{-d})\lambda_1\!+\!(1\!\!+\!\!\sqrt{-d})\lambda_2
]
\right]
\\
\!&=&\!
\frac{d\!+\!1}{4d}[\lambda_1\!\!\boxtimes\!\!\lambda_1\!+\!\lambda_2\!\!\boxtimes\!\!\lambda_2]
\!+\!
\frac{d\!-\!1}{4d}[\lambda_1\!\!\boxtimes\!\!\lambda_2\!+\!\lambda_2\!\!\boxtimes\!\!\lambda_1]
\!+\!
\frac{\sqrt{-d}}{2d}[\lambda_2\!\!\boxtimes\!\!\lambda_1\!-\!\lambda_1\!\!\boxtimes\!\!\lambda_2]
\end{eqnarray*}

The rank of $\Phi(F_1\boxtimes F_1)$ is non-zero, by Proposition \ref{prop-the-orlov-image-of-HW-P-projects-into-the-3-dimensional-space-of-HW-classes}(\ref{prop-item-weight-2-subrepresentation}),
since the coefficient of $[\lambda_2\!\!\boxtimes\!\!\lambda_1\!-\!\lambda_1\!\!\boxtimes\!\!\lambda_2]$ is non-zero. 
We prove that the classes 
$\kappa(\Phi(F_1\boxtimes F_1))$ and $h^3$ are linearly independent, by contradiction. Assume otherwise. Then $\kappa(\Phi(F_1\boxtimes F_1))$
belongs to the subring generated by powers of $h$ and is thus $\Spin(V_K)_{\ell_1,\ell_2}$-$\rho$-invariant, by Lemma \ref{lemma-Spin-V-P-invariant-classes-are-Hodge}. Hence, $ch(\Phi(F_1\boxtimes F_1))$ is $\Spin(V_K)_{\ell_1,\ell_2}$-$\rho'$-invariant, by Lemma \ref{lemma-Spin-V-ell-1-ell-2-invariance-conditions}.
It follows that $ch(F_1)\boxtimes\tau(ch(F_1))$ is invariant under $\Spin(V_K)_{\ell_1,\ell_2}$, by the $\Spin(V)$-$\rho'$-equivariance of $\phi\circ(id\otimes\tau).$
But the last two summands 
$\lambda_1\!\!\boxtimes\!\!\lambda_2\!+\!\lambda_2\!\!\boxtimes\!\!\lambda_1$ and 
$\lambda_2\!\!\boxtimes\!\!\lambda_1\!-\!\lambda_1\!\!\boxtimes\!\!\lambda_2$
in the displayed formula above are $\Spin(V_K)_{\ell_1,\ell_2}$-invariant, while the first summand is a scalar multiple of $\lambda_1\!\!\boxtimes\!\!\lambda_1\!+\!\lambda_2\!\!\boxtimes\!\!\lambda_2$ and is a not $\Spin(V_K)_{\ell_1,\ell_2}$-invariant, by Lemma \ref{lemma-Spin-V-P-invariant-classes-are-Hodge} and Proposition \ref{prop-the-orlov-image-of-HW-P-projects-into-the-3-dimensional-space-of-HW-classes}(\ref{prop-item-HW-P-maps-to-a-weight-2n-subrepresentation}).
Hence, $ch(F_1)\boxtimes\tau(ch(F_1))$ is not $\Spin(V_K)_{\ell_1,\ell_2}$-invariant, 
since the coefficient of $\lambda_1\!\!\boxtimes\!\!\lambda_1\!+\!\lambda_2\!\!\boxtimes\!\!\lambda_2$ is non zero. A contradiction.
\end{proof}

Set $F:=\Ideal{\cup_{i=1}^nC_i}$. 
Let $U\subset (\Pic^0(C))^n$ be the Zariski open subset of points $(\ell_1, \dots, \ell_n)$, such that translating each $C_i$ by $\ell_i$ results in $n$ disjoint curves. We have a natural morphism from $U$ to the Hilbert scheme of $X$.
Explicitly, let $Z_i$ be the product $X^{i-1}\times C_i\times X^{n-i}$, for $1\leq i\leq n$. Then $F$ is the pullback of the ideal of the union $\cup_{i=1}^nZ_i$ via trhe diagonal embedding of $X$ in $X^n$. 
$\Pic^0(C)^n$ acts on $X^n$ introducing the desired map from the set $U$ to the Hilbert scheme of $X$.
The symmetric group $\fS_n$ acts freely on $U$ as follows. If $(t_1, t_2, \dots, t_n)\in \Pic^1(C)^n$ translates $C\times \cdots \times C$ to $C_1\times C_2 \times \cdots \times C_n$, then $U$ is invariant with respect to the action on $\Pic^0(X)^n$ which is conjugated by translation by $(t_1, t_2, \dots, t_n)$ to the permutation action on $\Pic^1(X)^n$ and the action on $U$ is fixed point free.
The connected component of $F$ in the moduli space of simple sheaves on $X$ contains a smooth subscheme isomorphic to $(U/\fS_n)\times\Pic^0(X)$ obtained by translating the $C_i$'s and tensoring $F$ by line bundles. 
Hence, we have a canonical injective homomorphism
\begin{equation}
\label{eq-injective-homomorphism-to-Ext-1}
H^0(X,TX)^n\oplus H^1(X,\StructureSheaf{X})\rightarrow \Ext^1(F,F)
\end{equation}
and 
$\dim \Ext^1(F,F)\geq 3n+3$. 

\begin{lem}
\label{lemma-dim-Ext-1-F-F-is-3n+3}
The homomorphism (\ref{eq-injective-homomorphism-to-Ext-1}) is an isomorphism. Consequently, $\dim Ext^1(F,F)=3n+3$.
\end{lem}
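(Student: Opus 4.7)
The inequality $\dim\Ext^1(F,F) \geq 3n+3$ is already given by the injectivity of \eqref{eq-injective-homomorphism-to-Ext-1}, so the task is to prove the reverse inequality. My strategy is to identify $\Ext^1(F,F)$ with the tangent space at $[F]$ of an explicit $(3n+3)$-dimensional smooth subscheme of the moduli space of simple rank~$1$ torsion-free sheaves on $X$, via the standard decomposition of such sheaves as (ideal sheaf) $\otimes$ (line bundle).

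The key structural point is that every rank~$1$ torsion-free sheaf $G$ on the smooth threefold $X$ is canonically of the form $G \cong \Ideal{Z'}\otimes L$, with $L := G^{\vee\vee}$ (a line bundle, since rank~$1$ reflexive sheaves on a smooth variety are invertible) and $Z'\subset X$ the closed subscheme defined by $G\otimes L^{-1}\hookrightarrow \StructureSheaf{X}$; necessarily $\codim Z'\geq 2$. For $G=F$ one has $F^{\vee\vee}=\StructureSheaf{X}$ by purity of $\Pic(X)$ across the codimension-two set $Z$. This decomposition extends to flat families over $\Spec\CC[\epsilon]/(\epsilon^2)$: given a first-order deformation $\widetilde F$ of $F$, the sheaf $\widetilde L:=\widetilde F^{\vee\vee}$ is a first-order deformation of $\StructureSheaf{X}$ (classified by $H^1(X,\StructureSheaf{X})$) and $\widetilde F\otimes \widetilde L^{-1}$ equals $\Ideal{\widetilde Z}$ for a first-order deformation $\widetilde Z$ of $Z$ in $X$ (classified by $H^0(Z,N_{Z/X})$). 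The assignment $\widetilde F\mapsto (\widetilde Z,\widetilde L)$ is inverse to $(\widetilde Z,\widetilde L)\mapsto \Ideal{\widetilde Z}\otimes\widetilde L$, producing a bijection
\[
\Ext^1(F,F) \ \stackrel{\sim}{\longleftrightarrow}\ H^0(Z, N_{Z/X}) \oplus H^1(X,\StructureSheaf{X}).
\]

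To compute $H^0(Z,N_{Z/X})=3n$ it suffices to show $h^0(C_i,N_{C_i/X})=3$ for each translate $C_i\cong C$. The normal-bundle sequence $0\to TC\to TX|_C=\StructureSheaf{C}^{\oplus 3}\to N_{C/X}\to 0$, together with $H^0(TC)=0$, yields the exact sequence
\[
0\to \CC^3\to H^0(N_{C/X})\to H^1(TC)\to H^1(\StructureSheaf{C})^{\oplus 3}.
\]
The last arrow is Serre dual to the multiplication map $H^0(K_C)\otimes H^0(K_C)^{\oplus 3}\to H^0(K_C^{\otimes 2})$ assembled from the three coordinate $1$-forms, which is surjective for a non-hyperelliptic curve of genus three by M.~Noether's theorem on the projective normality of canonical curves. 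Hence the last arrow above is injective and $h^0(N_{C/X})=3$. The same conormal sequence, applied to sections, shows that the restriction map $H^0(X,TX)\to H^0(C_i,N_{C_i/X})$ is an isomorphism, so the domain of \eqref{eq-injective-homomorphism-to-Ext-1} is identified with $H^0(Z,N_{Z/X})\oplus H^1(X,\StructureSheaf{X})$. The two identifications agree with \eqref{eq-injective-homomorphism-to-Ext-1} via the deformation description, which gives the claimed isomorphism of dimension $3n+3$.

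The main obstacle is verifying that the decomposition ``$G\cong \Ideal{Z'}\otimes L$'' is compatible with first-order flat families. One must check that for a first-order flat deformation $\widetilde F$, the rank-one reflexive hull $\widetilde F^{\vee\vee}$ remains a line bundle (rather than merely reflexive) over $\Spec\CC[\epsilon]$ and that the quotient $\widetilde F^{\vee\vee}/\widetilde F$ is flat over the base with support of relative codimension two. Both follow from the Cohen–Macaulay structure of $\Ideal{Z}$ at $Z$, which guarantees that the local Koszul resolution remains exact after the infinitesimal base change, combined with the openness of the locally free locus of a reflexive sheaf on a smooth variety.
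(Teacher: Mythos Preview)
Your approach is correct and genuinely different from the paper's. The paper argues by induction on $n$: it uses the short exact sequence $0\to\I_n\to\I_{n-1}\to\StructureSheaf{C_n}\to 0$ (with $\I_k=\Ideal{\cup_{i=1}^k C_i}$), applies $\Hom(\I_n,-)$ and $\Hom(\I_{n-1},-)$, and establishes $\dim\Ext^1(\I_n,\I_{n-1})=3n$ inductively by showing a certain connecting homomorphism vanishes. You instead compute $\Ext^1(F,F)$ in one shot via the structure of rank-$1$ torsion-free sheaves, which is cleaner conceptually and avoids the induction. Both routes share the computation $h^0(N_{C/X})=3$ via Noether's theorem.

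One remark on the execution: the double-dual step over the non-reduced base $\Spec\CC[\epsilon]$ is indeed the delicate point, and your justification (``openness of the locally free locus of a reflexive sheaf on a smooth variety'') does not literally apply, since $X\times\Spec\CC[\epsilon]$ is not smooth (not even reduced). The claim that $\widetilde F^{\vee\vee}$ is invertible there needs a separate argument. However, you can bypass this entirely. Since the injectivity of \eqref{eq-injective-homomorphism-to-Ext-1} is already given, you only need the upper bound $\dim\Ext^1(F,F)\le 3n+3$. This follows immediately from the local-to-global spectral sequence: using $\SheafHom(\Ideal{Z},\Ideal{Z})=\StructureSheaf{X}$ and $\SheafExt^1(\Ideal{Z},\Ideal{Z})\cong N_{Z/X}$ (the latter for a codimension-$2$ lci), the five-term sequence gives
\[
0\to H^1(\StructureSheaf{X})\to\Ext^1(\Ideal{Z},\Ideal{Z})\to H^0(N_{Z/X})\to H^2(\StructureSheaf{X}),
\]
hence $\dim\Ext^1\le 3+3n$. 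No inverse map needs to be constructed. (If you want the explicit splitting, the Hilbert-scheme map $H^0(N_{Z/X})\to\Ext^1(\Ideal{Z},\Ideal{Z})$, $\widetilde Z\mapsto\Ideal{\widetilde Z}$, provides a section of the edge map, forcing $d_2=0$; this gives the isomorphism without touching double duals over $\CC[\epsilon]$.)
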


\begin{proof}
It suffices to prove the inequality $\dim \Ext^1(F,F)\leq 3n+3$.
Set $\I_k:=\Ideal{\cup_{i=1}^kC_i}$. We have the short exact sequence
\begin{equation}
\label{eq-short-exact-sequence-of-F-n}
0\rightarrow \I_n\rightarrow \I_{n-1}\rightarrow \StructureSheaf{C_n}\rightarrow 0
\end{equation}
and the long exact
\[
0\rightarrow \Hom(\I_n,\I_n) \IsomRightArrow \Hom(\I_n,\I_{n-1})\RightArrowOf{0} \Hom(\I_n,\StructureSheaf{C_n})\rightarrow 
\Ext^1(\I_n,\I_n)\rightarrow \Ext^1(\I_n,\I_{n-1})
\]
We have the isomorphism $\Hom(\I_n,\StructureSheaf{C_n})\cong H^0(C_n,N_{C_n/X})$. 
The quotient of $H^0(C_n,N_{C_n/X})$ by $H^0(C_n,\restricted{TX}{C_n})$ is the kernel of the differential  of  the Torelli map 
$H^1(C_n,TC_n)\rightarrow  H^1(C_n,\restricted{TX}{C_n})\cong H^{0,1}(C_n)\otimes H^{0,1}(C_n)$, and the latter is injective for our non-hyperelliptic curve $C_n$, by  Noether's theorem. Hence, $H^0(C_n,N_{C_n/X})$ is  $3$-dimensional. It remains to prove the inequality
$\dim \Ext^1(\I_n,\I_{n-1})\leq 3n$. We will prove by induction on $n$ the equality
$\dim \Ext^1(\I_n,\I_{n-1})= 3n$.

When $n=1$, $\Ext^1(\I_n,\I_{n-1})\cong H^2(\I_1)^*\cong H^2(X,\Ideal{C_1})^*$ is $3$-dimensional. Indeed, we have more generally the short exact sequence
\[
0\rightarrow \frac{\oplus_{i=1}^n H^1(\StructureSheaf{C_i})}{H^1(\StructureSheaf{X})}
\rightarrow H^2(\I_n)\rightarrow H^2(\StructureSheaf{X})\rightarrow 0,
\]
obtained from the long exact sequence of cohomology associated to the short exact $0\rightarrow \I_n\rightarrow \StructureSheaf{X}\rightarrow \oplus_{i=1}^n\StructureSheaf{C_i}\rightarrow 0$
using the injectivity of $H^1(\StructureSheaf{X})\rightarrow \oplus_{i=1}^nH^1(C_i,\StructureSheaf{C_i})$.

Assume that $n\geq 2$  and  $\dim \Ext^1(\I_{n-1},\I_{n-2})= 3(n-1)$. Then $\dim  \Ext^1(\I_{n-1},\I_{n-1})=3n$, as shown above. 
Consider long exact sequence obtained from the short exact sequence (\ref{eq-short-exact-sequence-of-F-n})
\begin{eqnarray*}
0 \rightarrow & \Hom(\I_{n-1},\I_{n-1})& \IsomRightArrow \Hom(\I_{n-1},\StructureSheaf{C_n})
\\
\RightArrowOf{0} \Ext^1(\I_{n-1},\I_n)\rightarrow & \Ext^1(\I_{n-1},\I_{n-1})& \rightarrow \Ext^1(\I_{n-1},\StructureSheaf{C_n})
\\
\RightArrowOf{\xi} \Ext^2(\I_{n-1},\I_n)\rightarrow & \Ext^2(\I_{n-1},\I_{n-1})& \rightarrow 0. 
\end{eqnarray*}
The sheaves $\SheafExt^i(\I_{n-1},\StructureSheaf{C_n})$ vanish, for $i>0$. Hence, the local to global spectral sequence computing $\Ext^i(\I_{n-1},\StructureSheaf{C_n})$ degenerates at the $E_2$ term. We get the 
vanishing of $\Ext^i(\I_{n-1},\StructureSheaf{C_n})$, for $i>1$ and the 
isomorphism 
$\Ext^1(\I_{n-1},\StructureSheaf{C_n})\cong H^1(\StructureSheaf{C_n})$ and the latter is $3$-dimensional. 
Furthermore, the composition 
\[
H^1(\StructureSheaf{X})\cong H^1(\SheafEnd(\I_{n-1},\I_{n-1}))\rightarrow \Ext^1(\I_{n-1},\I_{n-1})\rightarrow \Ext^1(\I_{n-1},\StructureSheaf{C_n})\cong 
H^1(\StructureSheaf{C_n})
\]
is surjective. Hence, the connecting homomorphism $\xi$ vanishes and
\[
\dim\Ext^1(\I_n,\I_{n-1})=\dim\Ext^2(\I_{n-1},\I_n)=\dim\Ext^2(\I_{n-1},\I_{n-1})=3n,
\]
where the first equality is by Serre's duality, the second by the vanishing of $\xi$, and the third was established above via the induction hypothesis.
\end{proof}

Let $\Delta_X\subset X\times X$ be the diagonal. 
A class in the Hochschild cohomology $HH^i(X):=\Hom(\StructureSheaf{\Delta_X},\StructureSheaf{\Delta_X}[i])$ corresponds to a natural transformation from the identity endofunctor of $D^b(X)$ to its shift by $i$. 
The evaluation homomorphism
\begin{equation}
\label{eq-ev-F}
ev_F:HH^*(X)\rightarrow \Ext^*(F,F)
\end{equation}
is a graded algebra homomorphism. Denote by $ev_F^i:HH^i(X)\rightarrow \Ext^i(F,F)$ the restriction of $ev_F$ to $HH^i(X)$.

The second Hochschild cohomology $HH^2(X)$ parametrizes first order deformations of $D^b(X)$.
Let 
\[
ob_F: HH^2(X)\rightarrow \Ext^2(F,F)
\]
be the homomorphism $ev_F^2$. 
The kernel of $ob_F$  parametrizes those deformations along which $F$ deforms to first order, by \cite{toda}. As above, we set $F:=\Ideal{\cup_{i=1}^n C_i}$, where $n\geq 1$. Denote by $ob_F:HT^2(X)\rightarrow \Ext^2(F,F)$ also the composition of $ob_F$ above with the HKR isomorphism $HT^2(X):=H^2(\StructureSheaf{X})\oplus H^1(TX)\oplus H^0(\wedge^2TX)\cong HH^2(X)$. Then $ob_F$ is given by contraction with the exponential Atiyah class $\exp(at_E)$, by \cite[Th. A]{Huang}.

\begin{lem}
\label{lemma-rank-of-ob-F-is-at-least-6}
$\rank(ob_F)\geq 6.$
\end{lem}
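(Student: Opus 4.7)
The plan is to produce a $6$-dimensional quotient of $\Ext^2(F,F)$ onto which $ob_F$ is already surjective. Following the Buchweitz--Flenner diagram (\ref{eq-diagram-semi-regularity}) and its extension via Huang's Theorem~A cited just before the lemma, after applying appropriate trace/semiregularity maps the composition with $ob_F$ becomes contraction with $ch(F)$. Concretely, using the Caldararu--Markarian Chern character map $K_F : \Ext^{\bullet}(F,F) \to HH_{\bullet}(X)$, the composition
\[
K_F \circ ob_F : HT^2(X) \longrightarrow HH_{-2}(X)
\]
coincides, via HKR, with contraction with $ch(F)$, since the Todd class of an abelian variety is trivial. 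Under HKR, $HH_{-2}(X)$ is identified with $H^2(\StructureSheaf{X}) \oplus H^3(\Omega^1_X)$, of total dimension $3+3 = 6$. It will then suffice to show that contraction with $ch(F)$ is surjective onto this $6$-dimensional space.

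Next I would compute this contraction explicitly. Using $ch(F) = 1 - \tfrac{n}{2}\Theta^2 + 2n[pt_X]$ from the proof of Lemma \ref{lemma-ch-F-i-is-on-secant-to-spinor-variety}, I decompose $HT^2(X) = H^2(\StructureSheaf{X}) \oplus H^1(TX) \oplus H^0(\wedge^2 TX)$ and check that, for Hodge-bidegree reasons, the only nonzero contributions of $\nu \Contract ch(F)$ land in $H^2(\StructureSheaf{X}) \oplus H^3(\Omega^1_X)$. For $\nu = (\eta, \xi, \mu)$, the $H^2(\StructureSheaf{X})$-component equals $\eta - \tfrac{n}{2}\, \mu \Contract \Theta^2$; varying $\eta$ alone already yields every class in $H^2(\StructureSheaf{X})$. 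The $H^3(\Omega^1_X)$-component receives $-\tfrac{n}{2}\, \xi \Contract \Theta^2$ from $\xi \in H^1(TX)$ and a nonzero multiple of $\mu \Contract [pt_X]$ from $\mu \in H^0(\wedge^2 TX)$.

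The key point is that contraction $\mu \mapsto \mu \Contract [pt_X]$ is an isomorphism $H^0(\wedge^2 TX) \cong H^3(\Omega^1_X)$: both spaces are $3$-dimensional, and in dual bases $\{\partial_i\}$ of $H^0(TX)$ and $\{dz_i\}$ of $H^0(\Omega^1_X)$, the class $[pt_X]$ is a nonzero scalar multiple of $dz_1 \wedge dz_2 \wedge dz_3 \wedge d\bar z_1 \wedge d\bar z_2 \wedge d\bar z_3$, so contracting $\partial_i \wedge \partial_j$ produces a nonzero class in $H^{1,3}(X)$. Consequently the $H^3(\Omega^1_X)$-component of $K_F \circ ob_F$ is already surjective when restricted to the $H^0(\wedge^2 TX)$ summand. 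Combining the two components, $K_F \circ ob_F$ surjects onto a $6$-dimensional space, so $\rank(ob_F) \geq \rank(K_F \circ ob_F) = 6$.

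The main obstacle will be justifying the factorization $K_F \circ ob_F = (\bullet \Contract ch(F))$ on the full $HT^2(X)$: this is the extension of the Buchweitz--Flenner identity $\sigma \circ at_F = \Contract ch(F)$ (Diagram (\ref{eq-diagram-semi-regularity})) to the $H^2(\StructureSheaf{X})$ (gerby) and $H^0(\wedge^2 TX)$ (non-commutative) directions. It follows from combining Huang's identification $ob_F = \exp(at_F) \Contract (\bullet)$ with the compatibility of the HKR isomorphisms with the $HH^{\bullet}$-module structure on $HH_{\bullet}$, equivalently, from the description, after HKR, of the action of $HH^{\bullet}(X)$ on the Mukai vector $v(F) \in HH_0(X)$ as contraction with $ch(F)$. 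Once this factorization is in place, the remainder is the elementary linear-algebra computation above, whose only nontrivial input is the isomorphism $H^0(\wedge^2 TX) \cong H^3(\Omega^1_X)$ produced by contraction with $[pt_X]$.
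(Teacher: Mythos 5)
Your proof is correct and follows the same overall strategy as the paper: show that contraction with $ch(F)$ has rank $6$ and that $\ker(ob_F)\subset\ker(\Contract\, ch(F))$. You differ from the paper in two minor places. First, where you reconstruct the factorization through the Chern character map $K_F$ and the $HH^\bullet$-module structure, the paper simply cites \cite[Theorem~B]{Huang}, which directly gives the kernel inclusion; your route is valid but slightly more circuitous than necessary, and the identity you need is exactly Diagram~(\ref{eq-extended-diagram-semi-regularity}), which is already recorded in Section~\ref{sec-rank-6-obstruction-map} with a reference to \cite{buchweitz-flenner-HH}. Second, and more substantively, to get surjectivity onto the $H^3(\Omega^1_X)$-component you use the $H^0(\wedge^2 TX)$-summand via $\mu\mapsto\mu\Contract ch_3(F)=2n\,\mu\Contract[pt_X]$, compensating the induced change in the $H^2(\StructureSheaf{X})$-component by adjusting $\eta$; the paper instead uses the $H^1(TX)$-summand via $\xi\mapsto-\tfrac{n}{2}\xi\Contract\Theta^2$, which requires observing that this $9\to 3$ map is surjective because $\Theta$ is ample. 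Your choice is the more elementary of the two (it reduces to the isomorphism $H^0(\wedge^2 TX)\cong H^3(\Omega^1_X)$ given by contraction with the generator of $H^{3,3}$), and in fact it is the argument the paper itself uses later in the proof of Corollary~\ref{cor-kernel-of-obstruction-is-annihilator-of-ch}, where it exhibits the invertible upper-triangular $2\times2$ block $\left(\begin{smallmatrix}1 & -\frac{n}{2}\Theta^2 \\ 0 & 2n[pt]\end{smallmatrix}\right)$ on $H^2(\StructureSheaf{X})\oplus H^0(\wedge^2TX)$. So the two proofs are interchangeable, and yours has the small advantage of not needing ampleness for this step.
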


\begin{proof}
Consider the contraction homomorphism
\[
H^2(\StructureSheaf{X})\oplus H^1(TX)\oplus H^0(\wedge^2TX)
\LongRightArrowOf{\Contract ch(F)}
H^2(\StructureSheaf{X})\oplus H^3(\Omega^1_X).
\]
It restricts to the first summand as an isomorphism onto the first summand of the co-domain, as $ch_0(F)=1$, and to 
the second summand as an isomorphism onto the second summand of the co-domain, as $ch_2(F)=\frac{-n}{2}\Theta^2$.
Hence, the above homomorphism is sujective and so its kernel has co-dimension $6$. The kernel of $ob_F$ is contained in the kernel of the above homomorphism, by \cite[Theorem B]{Huang}, hence $ob_F$ has rank $\geq 6$.
\end{proof}

Diagram (\ref{eq-diagram-semi-regularity}) extends to the commutative diagram
\begin{equation}
\label{eq-extended-diagram-semi-regularity}
\xymatrix{
HT^2(M) \ar[rr]^{ob_E} \ar[dr]_{\Contract ch(E)}&&\Ext^2(E,E)\ar[dl]^{\sigma}
\\
&
\prod_{q= 0}^{d-2}H^{q+2}(M,\Omega^q_M)
}
\end{equation}
by \cite[Prop. 6.2.1 and Cor. 6.3.2]{buchweitz-flenner-HH}.
\hide{
The commutativity of (\ref{eq-extended-diagram-semi-regularity}) is proved by restricting to each of the three direct summands of $HT^2(M)=H^2(\StructureSheaf{M})\oplus H^1(TM)\oplus H^0(\wedge^2TM)$.
The commutativity of the restrictions to $H^2(\StructureSheaf{M})$ follows from the equality $ch(E)=Tr(at_E)$ and the equality
$Tr(at_E)\cup\alpha=Tr(at_E\circ (\alpha\cdot id_E))$, for all $\alpha\in H^2(\StructureSheaf{M})$. The restriction to $H^1(TM)$
is the commutative diagram (\ref{eq-diagram-semi-regularity}). The commutativity of the restriction to $H^0(\wedge^2TM)$
is equivalent to the equality
\[
(k+2)(k+1)Tr\left(\langle\xi,at_E^2\rangle\cdot at_E^k\right)=
2\langle\xi,Tr\left(at_E^{k+2}\right)\rangle,
\]
for all $\xi\in H^0(\wedge^2TM)$. When $M$ is an abelian variety, then it suffices to prove the latter equality for 
$\xi=\xi_1\wedge\xi_2$, with $\xi_i\in H^0(TM)$. In that case the above identity follows from a repeated use of the equality
\[
(j+1)Tr\left(\langle\xi,at_E\rangle\cdot at_E^j\right)=
\langle\xi,Tr\left(at_E^{j+1}\right)\rangle,
\]
where the latter is proved by the same argument as in \cite[Prop. 4.2]{buchweitz-flenner}. The general case follows from 
a computation using a \v{C}ech cocycle representing $at_E$ as in \cite[Sec. 10.1.5]{huybrechts-lehn} in order to carry the above argument locally. 
}

\begin{lem}
\label{lemma-criterion-for-semiregularity}
If the kernels of $ob_E$ and $\Contract ch(E)$ in $HT^2(M)$ are equal  and $ob_E$ is surjective, then $E$ is semiregular.
\end{lem}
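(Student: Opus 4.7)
The proof is a short diagram chase using the commutative triangle (\ref{eq-extended-diagram-semi-regularity}), which gives the factorization $\sigma \circ ob_E = \Contract ch(E)$.

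My plan is to verify injectivity of $\sigma$ directly. Suppose $\eta \in \Ext^2(E,E)$ satisfies $\sigma(\eta) = 0$. Using the surjectivity hypothesis on $ob_E$, I would lift $\eta$ to an element $\xi \in HT^2(M)$ with $ob_E(\xi) = \eta$. Applying commutativity of (\ref{eq-extended-diagram-semi-regularity}), I obtain
\[
(\Contract ch(E))(\xi) = \sigma(ob_E(\xi)) = \sigma(\eta) = 0,
\]
so $\xi \in \ker(\Contract ch(E))$. By the assumed equality $\ker(ob_E) = \ker(\Contract ch(E))$, this forces $\xi \in \ker(ob_E)$, and hence $\eta = ob_E(\xi) = 0$. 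Therefore $\sigma$ is injective, i.e., $E$ is semiregular.

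There is no real obstacle here: the entire content is packaged in the commutativity of Diagram (\ref{eq-extended-diagram-semi-regularity}), which the excerpt already cites from \cite[Prop. 6.2.1 and Cor. 6.3.2]{buchweitz-flenner-HH}. The two hypotheses are precisely what is needed to perform the two steps of the chase: surjectivity of $ob_E$ allows the lift $\eta \mapsto \xi$, and the kernel equality converts the vanishing of $\Contract ch(E)(\xi)$ into the vanishing of $\eta$ itself. No deformation-theoretic input beyond the existence of the commutative triangle is required.
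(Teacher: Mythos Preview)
Your proof is correct and is essentially the same as the paper's, just phrased as an element-wise diagram chase rather than the paper's slightly more abstract formulation (the paper observes that the hypotheses yield a unique injective factorization $\sigma'$ with $\Contract ch(E)=\sigma'\circ ob_E$, then identifies $\sigma'=\sigma$ via commutativity). The logical content is identical.
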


\begin{proof}
The hypotheses imply that there exists a unique injective map $\sigma':\Ext^2(E,E)\rightarrow \prod_{q= 0}^{d-2}H^{q+2}(M,\Omega^q_M)$, such that 
$\Contract ch(E)=\sigma'\circ ob_E.$
The equality $\sigma=\sigma'$ follows from the commutativity of Diagram (\ref{eq-extended-diagram-semi-regularity}). 
\end{proof}

\begin{rem}
\label{remark-semiregularity-map-restricts-as-an-injective-map-to-the-image-of-ob}
If we drop the  assumption that $ob_E$ is surjective in Lemma \ref{lemma-criterion-for-semiregularity} we still conclude that the semiregularity map restricts to the image of the obstruction map as an injective map.
\end{rem}

\begin{rem}
\label{rem-derived-equivariance-of-semiregularity}
Note that the hypotheses of Lemma \ref{lemma-criterion-for-semiregularity}
are invariant under derived equivalences. If $\Phi:D^b(M)\rightarrow D^b(M')$ is an equivalence of derived categories, $E$ satisfies the hypotheses of Lemma \ref{lemma-criterion-for-semiregularity}, and $\Phi(E)$ is represented by a coherent sheaf $E'$, then $E'$ satisfies the hypotheses of Lemma \ref{lemma-criterion-for-semiregularity} and is thus semiregular as well. 
The space $\prod_{q= 0}^{d-2}H^{q+2}(M,\Omega^q_M)$ in the above diagrams is the graded summand $H\Omega_{-2}(M)$ of the Hochschild homology $HH_*(M)$.
The equivalence $\Phi$ induces isomorphisms $\Phi:\Ext^2(E,E)\rightarrow \Ext^2(E',E')$ and 
$\Phi_*:H\Omega_{-2}(M)\rightarrow H\Omega_{-2}(M')$. The hypotheses of Lemma \ref{lemma-criterion-for-semiregularity} imply 
that the semiregularity map $\sigma'$ of $E'$ is the conjugate of the semiregularity map $\sigma$ of $E$,
$\sigma'\circ\Phi=\Phi_*\circ\sigma$. It is natural to expect that the latter equality holds, more generally, without the hypotheses of Lemma \ref{lemma-criterion-for-semiregularity}.
\end{rem}

\begin{lem}
\label{lemma-Ext-algebra-generated-by-Ext-1-when-n=1}
\begin{enumerate}
\item
\label{lemma-item-Ext-algebra-generated-by-Ext-1-when-n=1}
The algebra $\Ext^*(\Ideal{C_j},\Ideal{C_j})$ is generated by $\Ext^1(\Ideal{C_j},\Ideal{C_j})$.
\item
\label{lemma-item-Yoneda-algebra-is-quotient-of-Hochschild}
The homomorphism $ev_{\Ideal{C_j}}:HT^*(X)\rightarrow \Ext^*(\Ideal{C_j},\Ideal{C_j})$ is surjective and its kernel is the annihilator $\ann(ch(\Ideal{C_j}))$ of the Chern character $1-\frac{1}{2}\Theta^2+2[pt]$ of $\Ideal{C_j}$.
\[
0\rightarrow \ann(ch(\Ideal{C_j}))\cap HT^2(X)\rightarrow 
\begin{array}{c}
H^2(\StructureSheaf{X})\\
\oplus\\
H^1(TX)\\
\oplus\\
H^0(\wedge^2TX)
\end{array}
\LongRightArrowOf{\left(
\begin{array}{ccc}
1&0&-\Theta^2/2
\\
0 & -\Theta^2/2&2[pt]
\end{array}
\right)}
\begin{array}{c}
H^2(\StructureSheaf{X}) \\ \oplus \\ H^3(\Omega^1_X)
\end{array}
\rightarrow 0.
\]
\item
\label{lemma-item-semiregular}
The sheaf $\Ideal{C_j}$ is semiregular.
\end{enumerate}
\end{lem}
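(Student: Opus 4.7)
The plan is to prove (2) first, then deduce (1) and (3) from it.

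For (2), I would start with a dimension count. By Lemma \ref{lemma-dim-Ext-1-F-F-is-3n+3} with $n=1$, $\dim\Ext^1(\Ideal{C_j},\Ideal{C_j})=6$, and Serre duality on the Calabi--Yau threefold $X$ gives $\dim\Ext^i(\Ideal{C_j},\Ideal{C_j})=1,6,6,1$ for $i=0,1,2,3$. For an abelian threefold, $HT^*(X)\cong \wedge^*H^1(\StructureSheaf{X})\otimes \wedge^*H^0(TX)$ is generated as a graded algebra by $HT^1=H^1(\StructureSheaf{X})\oplus H^0(TX)$. An explicit computation of $\Contract ch(\Ideal{C_j}):HT^2(X)\to H^2(\StructureSheaf{X})\oplus H^3(\Omega^1_X)$, using $ch_0=1$, $ch_1=0$, $ch_2=-\Theta^2/2$, $ch_3=2[pt]$, produces the matrix of the statement. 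This map is surjective: its first row is already surjective via the identity on the $H^2(\StructureSheaf{X})$-summand, and its second row is surjective because contraction with $2[pt]$ identifies $H^0(\wedge^2 TX)$ with $H^3(\Omega^1_X)$ (both three-dimensional and Poincar\'e-dually paired). Hence $\dim\bigl(\ann(ch(\Ideal{C_j}))\cap HT^2\bigr)=15-6=9$.

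Next I would establish surjectivity of $ev_{\Ideal{C_j}}$. In degree $1$, $ev^1_{\Ideal{C_j}}$ coincides with the isomorphism (\ref{eq-injective-homomorphism-to-Ext-1}) for $n=1$, sending $H^1(\StructureSheaf{X})$ to line-bundle twist deformations and $H^0(TX)$ to translations. In degree $2$, Huang's theorem \cite[Thm.~B]{Huang} (equivalently the commutative diagram (\ref{eq-extended-diagram-semi-regularity})) gives $\ker(ev^2_{\Ideal{C_j}})\subseteq \ann(ch(\Ideal{C_j}))\cap HT^2$, so
\[
\dim \mathrm{im}(ev^2_{\Ideal{C_j}})\ \geq\ 15-9\ =\ 6\ =\ \dim\Ext^2(\Ideal{C_j},\Ideal{C_j}).
\]
Equality is forced, yielding both the surjectivity of $ev^2_{\Ideal{C_j}}$ and the identification of its kernel with $\ann(ch(\Ideal{C_j}))\cap HT^2$; this is exactly the displayed exact sequence. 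Since $ev_{\Ideal{C_j}}$ is a graded algebra homomorphism and $HT^3=HT^1\cdot HT^2$, its image in degree $3$ contains $\Ext^1\cdot \Ext^2$; the Serre-duality pairing $\Ext^1\otimes \Ext^2\to \Ext^3\cong \CC$ being perfect then forces $\Ext^1\cdot \Ext^2=\Ext^3$, proving surjectivity in degree $3$ as well. A parallel dimension comparison in each remaining degree (using the analogous explicit contraction matrices and Huang's containment) identifies the full kernel of $ev_{\Ideal{C_j}}$ with $\ann(ch(\Ideal{C_j}))$.

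Part (1) is then immediate: since $ev_{\Ideal{C_j}}$ is a surjective algebra homomorphism and $HT^*(X)$ is generated by $HT^1$, the target $\Ext^*(\Ideal{C_j},\Ideal{C_j})$ is generated by $ev^1_{\Ideal{C_j}}(HT^1)=\Ext^1(\Ideal{C_j},\Ideal{C_j})$. Part (3) follows from Lemma \ref{lemma-criterion-for-semiregularity}, whose two hypotheses---equality of the kernels of $ob_{\Ideal{C_j}}$ and $\Contract ch(\Ideal{C_j})$ on $HT^2$, and surjectivity of $ob_{\Ideal{C_j}}=ev^2_{\Ideal{C_j}}$---are exactly what part (2) supplies in degree $2$. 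The main obstacle is the simultaneous surjectivity and kernel identification of $ev^2_{\Ideal{C_j}}$; this rests on verifying that the displayed contraction matrix has the full rank $6$, after which Huang's containment together with the sharp Serre-duality dimension count on the Calabi--Yau threefold dictates everything else.
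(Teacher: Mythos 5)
Your proof is correct and relies on the same key ingredients as the paper's: the dimension count $\dim\Ext^1(\Ideal{C_j},\Ideal{C_j})=6$ from Lemma \ref{lemma-dim-Ext-1-F-F-is-3n+3}, Huang's containment $\ker(ev)\subset\ann(ch)$, the rank-$6$ computation of the contraction matrix, Serre duality for degree $3$, the generation of $HT^*(X)$ in degree $1$, and Lemma \ref{lemma-criterion-for-semiregularity} for part (3). The only difference is a mild reordering — you establish (2) directly by a dimension count and then derive (1), whereas the paper proves (1) first via the lower bound $\rank(ob_F)\geq 6$ from Lemma \ref{lemma-rank-of-ob-F-is-at-least-6} and then deduces (2) — together with a minor variant in exhibiting the rank-$6$ property (you use the $H^0(\wedge^2TX)\to H^3(\Omega^1_X)$ isomorphism via $[pt]$; the paper's Lemma \ref{lemma-rank-of-ob-F-is-at-least-6} uses the $H^1(TX)\to H^3(\Omega^1_X)$ surjection via $\Theta^2$); both are sound.
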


\begin{proof}
(\ref{lemma-item-Ext-algebra-generated-by-Ext-1-when-n=1})
This is the case $n=1$. In this case $\Ext^2(\Ideal{C_j},\Ideal{C_j})$ is $6$-dimensional,
by Lemma \ref{lemma-dim-Ext-1-F-F-is-3n+3}, and so $ob_{\Ideal{C_j}}$ is surjective, 
by Lemma \ref{lemma-rank-of-ob-F-is-at-least-6}. Now $HT^*(X)$ is generated by $HT^1(X)$ and $ev_{\Ideal{C_j}}$
is an algebra homomorphism.  Hence, the surjectivity of $ev^2_{\Ideal{C_j}}=ob_{\Ideal{C_j}}$ implies that 
the Yoneda product $\Ext^1(\Ideal{C_j},\Ideal{C_j})\otimes \Ext^1(\Ideal{C_j},\Ideal{C_j})\rightarrow \Ext^2(\Ideal{C_j},\Ideal{C_j})$
is surjective. The surjectivity of $\Ext^1(\Ideal{C_j},\Ideal{C_j})\otimes \Ext^2(\Ideal{C_j},\Ideal{C_j})\rightarrow \Ext^3(\Ideal{C_j},\Ideal{C_j})$ follows from Serre's duality.

(\ref{lemma-item-Yoneda-algebra-is-quotient-of-Hochschild})
The homomorphism $ev_{\Ideal{C_j}}^1:HT^1(X)\rightarrow \Ext^1(\Ideal{C_j},\Ideal{C_j})$ is an isomorphism, by Lemma
 \ref{lemma-dim-Ext-1-F-F-is-3n+3}. Hence, the homomorphism $ev_{\Ideal{C_j}}$ is surjective, 
 by part (\ref{lemma-item-Ext-algebra-generated-by-Ext-1-when-n=1}). The inclusion 
 $\ker(ev_{\Ideal{C_j}})\subset \ann(ch(\Ideal{C_j}))$ 
 follows from \cite[Theorem B]{Huang}. Both ideals are graded (homogeneous). Indeed, the homomorphism $ev_{\Ideal{C_j}}$ is graded, by definition, and contraction with $ch(\Ideal{C_j})$ maps $HT^k(X)$ to $\oplus_{q-p=k}H^{p,q}(X)$.
 The graded summands of $\ann(ch(\Ideal{C_j}))$ in $HT^0(X)$ and $HT^1(X)$ vanish.
 The equality of $\ker(ev_{\Ideal{C_j}}^2)$ and the graded summand of $\ann(ch(\Ideal{C_j}))$ in $HT^2(X)$ follows from the proof of Lemma \ref{lemma-rank-of-ob-F-is-at-least-6}. The graded summands of both ideals in $HT^3(X)$ have co-dimension $1$, since $\Ext^3(\Ideal{C_j},\Ideal{C_j})$ and $H^3(\StructureSheaf{X})$ are both one-dimensional, and the summand $H^3(\StructureSheaf{X})$ of $HT^3(X)$ surjects onto both. 
 Hence the inclusion $\ker(ev_{\Ideal{C_j}})\subset \ann(ch(\Ideal{C_j}))$ 
implies the equality of the graded summands in $HT^3(X)$ of both ideals. $HT^k(X)$ is contained in both ideals for $k>3$ for degree reasons.

(\ref{lemma-item-semiregular}) 
Follows immediately from Part (\ref{lemma-item-Yoneda-algebra-is-quotient-of-Hochschild})
and Lemma \ref{lemma-criterion-for-semiregularity}.
\hide{
Contraction with $ch(\Ideal{C_j})$ maps the direct summand $H^2(\StructureSheaf{X})$ of $HT^2(X)$ isomorphically onto the direct summand $H^2(\StructureSheaf{X})$ of $H^*(X,\CC)$ and it maps the $9$-dimensional direct summand $H^1(TX)$ of $HT^2(X)$ onto the $3$-dimensional direct summand $H^3(\Omega^1_X)$, since $ch_1(\Ideal{C_j})=0$. Hence, the kernel of $ob_{\Ideal{C_j}}$ intersects $H^2(\StructureSheaf{X})\oplus H^1(TX)$ in a $6$-dimensional subspace, by the equality of the kernels in part (\ref{lemma-item-Yoneda-algebra-is-quotient-of-Hochschild}). It follows that  $ob_{\Ideal{C_j}}$ maps $H^2(\StructureSheaf{X})\oplus H^1(TX)$ onto the $6$-dimensional 
$\Ext^2(\Ideal{C_j},\Ideal{C_j})$. The injectivity of the semi-regularity map $\sigma$ would follow once 
we prove that the intersections of the kernels of $ob_{\Ideal{C_j}}$ and of $\sigma\circ ob_{\Ideal{C_j}}$ with $H^2(\StructureSheaf{X})\oplus H^1(TX)$ are equal.

The composition $\sigma\circ ob_{\Ideal{C_j}}$ restricts to $H^1(TX)$ as the contraction with $ch(\Ideal{C_j})$, by the commutativity of Diagram (\ref{eq-diagram-semi-regularity}), and both 
$\sigma\circ ob_{\Ideal{C_j}}$ and the contraction with $ch(\Ideal{C_j})$
restrict to $H^2(\StructureSheaf{X})$ as multiplication by the rank $1$ of $\Ideal{C_j}$, since $c_1(\Ideal{C_j})=0$. 
Consequently, the intersections of the kernels of $\sigma\circ ob_{\Ideal{C_j}}$ and of the contraction with $ch(\Ideal{C_j})$
 with $H^2(\StructureSheaf{X})\oplus H^1(TX)$ are equal. The latter intersection is equal to the intersection of the kernel of 
$ob_{\Ideal{C_j}}$ with $H^2(\StructureSheaf{X})\oplus H^1(TX)$, by part (\ref{lemma-item-Yoneda-algebra-is-quotient-of-Hochschild}). Hence, indeed, the intersections of the kernels of $ob_{\Ideal{C_j}}$ and of $\sigma\circ ob_{\Ideal{C_j}}$ with $H^2(\StructureSheaf{X})\oplus H^1(TX)$ are equal.
}
\end{proof}

The proof of Lemma \ref{lemma-dim-Ext-1-F-F-is-3n+3} identifies the $i$-th direct summand in the domain of (\ref{eq-injective-homomorphism-to-Ext-1})
with $H^0(C_i,N_{C_i/X})$, $1\leq i\leq n$. Denote this direct summand by $\tilde{E}^1_i$ and set $\tilde{E}_0^1:=H^1(X,\StructureSheaf{X})$, so that the domain of (\ref{eq-injective-homomorphism-to-Ext-1})
is $\oplus_{i=0}^n \tilde{E}^1_i$. 
Note that each $\tilde{E}^1_i$ is $3$-dimensional. 
We denote by $E^1_i$ the image of $\tilde{E}^1_i$ via the isomorphism (\ref{eq-injective-homomorphism-to-Ext-1}).
The Yoneda product $\Ext^1(F,F)\otimes \Ext^2(F,F)\rightarrow \Ext^3(F,F)$ is a perfect pairing. 
Let $E^2_i$, $1\leq i\leq n$, be the subspace of $\Ext^2(F,F)$ 
annihilating $E^1_0\oplus \oplus_{j=1, j\neq i}^nE^1_i$. Let $E^2_0$ be the image of $H^2(\StructureSheaf{X})$ in $\Ext^2(F,F)$. 
Then $E^2_i$ is $3$-dimensional, for $0\leq i\leq n$. The Yoneda product restricts to $E^1_0\otimes E^2_0\rightarrow \Ext^3(F,F)$ as a perfect pairing. Indeed, the algebra homomorphism 
\[
\Ext^*(\StructureSheaf{X},\StructureSheaf{X})\rightarrow \Ext^*(F,F)
\] 
is injective, since it composes with the trace linear homomorphism 
$tr:\Ext^*(F,F)\rightarrow \Ext^*(\StructureSheaf{X},\StructureSheaf{X})$  to the identity of $\Ext^*(\StructureSheaf{X},\StructureSheaf{X})$, by \cite[10.1.3]{huybrechts-lehn} and the equality $\rank(F)=1$.
We get the direct sum decomposition
\[
\Ext^2(F,F)=\oplus_{i=0}^n E^2_i.
\]
When considering different ideal sheaves $\Ideal{\cup_{i=1}^nC_i}$ we will denote $E^i_j$  by $E^i_j(\Ideal{\cup_{i=1}^nC_i})$.
We have the isomorphism $F:=\Ideal{\cup_{i=1}^n C_i}\cong \otimes_{i=1}^n\Ideal{C_i}$, hence the functor of tensoring with $\otimes_{i=1,i\neq j}^n\Ideal{C_i}$ induces an algebra homomorphism
\[
e_j:\Ext^*(\Ideal{C_j},\Ideal{C_j})\rightarrow \Ext^*(F,F).
\]

The ring structure of the Yoneda algebra $\Ext^*(F,F)$ is determined by the following Lemma and Lemma \ref{lemma-Ext-algebra-generated-by-Ext-1-when-n=1}.

\begin{lem}
\label{lemma-Yoneda-algebra-is-generated-in-degree-1}
\begin{enumerate}
\item
\label{lemma-item-vanishing-of-Yoneda-product}
The Yoneda product maps $E^1_i\otimes E^1_j$ to zero in $\Ext^2(F,F)$, if $i\neq j$ and $1\leq i,j\leq n$.
\item
\label{lemma-item-yoneda-product-is-anti-symmetric}
The Yoneda product $\Ext^1(F,F)\otimes \Ext^1(F,F)\rightarrow \Ext^2(F,F)$ is anti-symmetric.
\item
\label{lemma-item-Yoneda-algebra-generated-by-Ext-1}
The Yoneda product maps  $(E^1_0\oplus E^1_i)\otimes (E^1_0\oplus E^1_i)$ surjectively onto $E^2_0\oplus E^2_i$, for all $1\leq i\leq n$. In particular, the algebra $\Ext^*(F,F)$ is generated by $\Ext^1(F,F)$.
\item
\label{lemma-item-image-of-e-j}
The image of $e_j$ is $\Hom(F,F)\oplus (E^1_0\oplus E^1_j) \oplus (E^2_0\oplus E^2_j) \oplus \Ext^3(F,F).$
\item
\label{lemma-item-e-j-maps-E^2-j-to-same}
The  homomorphism $e_j$ maps  $E^d_1(\Ideal{C_j})$ isomorphically onto $E^d_j(F)$, for $d=1,2$.
\end{enumerate}
\end{lem}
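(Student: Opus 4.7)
The plan is to prove the five parts in an order that exploits the tensor product decomposition $F\cong \bigotimes_{i=1}^n\Ideal{C_i}$, which is valid because the $C_i$ are pairwise disjoint (so that the underived and derived tensor products coincide and equal $\Ideal{\cup_iC_i}$). First I would prove part~(\ref{lemma-item-yoneda-product-is-anti-symmetric}). By Lemma~\ref{lemma-dim-Ext-1-F-F-is-3n+3} the component $(U/\fS_n)\times\Pic^0(X)$ of the moduli of simple sheaves is smooth at $F$ of dimension $3n+3=\dim \Ext^1(F,F)$, so every first-order deformation of $F$ is unobstructed. Hence $e\cup e=0$ for all $e\in\Ext^1(F,F)$, and polarising yields the anti-symmetry $e\cup f+f\cup e=0$.

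Next I would prove the $d=1$ case of part~(\ref{lemma-item-e-j-maps-E^2-j-to-same}). The summand $E^1_1(\Ideal{C_j})$ is canonically identified with $H^0(C_j,N_{C_j/X})$ via the Kodaira--Spencer map for the family of translates of $AJ(C)$ in $X$, and the algebra homomorphism $e_j$ is induced by tensoring an extension of $\Ideal{C_j}$ with the fixed $\bigotimes_{k\ne j}\Ideal{C_k}$. Geometrically this carries the wiggling of the single curve $C_j$ to the wiggling of $C_j$ inside $F=\Ideal{\bigcup_kC_k}$, matching the summand $E^1_j(F)$ of Lemma~\ref{lemma-dim-Ext-1-F-F-is-3n+3}. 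Hence $e_j$ restricts to an isomorphism $E^1_1(\Ideal{C_j})\xrightarrow{\sim} E^1_j(F)$.

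The core step is part~(\ref{lemma-item-vanishing-of-Yoneda-product}). Given $\alpha\in E^1_i(F)$ and $\beta\in E^1_j(F)$ with $1\le i\ne j\le n$, the previous step gives lifts $\alpha'\in H^0(C_i,N_{C_i/X})$ and $\beta'\in H^0(C_j,N_{C_j/X})$ with $\alpha=e_i(\alpha')$ and $\beta=e_j(\beta')$. Because the $C_k$ are pairwise disjoint, the local-to-global spectral sequence $H^p(\SheafExt^q(F,F))\Rightarrow\Ext^{p+q}(F,F)$ enjoys a clean decomposition: $\SheafExt^q(F,F)\cong\bigoplus_k\SheafExt^q(\Ideal{C_k},\Ideal{C_k})$ for $q\ge1$, each summand supported on $C_k$. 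The classes $\alpha'$ and $\beta'$ lift to the $C_i$- and $C_j$-supported summands of $H^0(\SheafExt^1(F,F))$ respectively, and their sheaf-level Yoneda product in $\SheafExt^2(F,F)$ vanishes because $\SheafExt^1(\Ideal{C_i},\Ideal{C_i})\otimes\SheafExt^1(\Ideal{C_j},\Ideal{C_j})$ is supported on $C_i\cap C_j=\emptyset$. Equivalently, the K\"unneth homomorphism $\Ext^1(\Ideal{C_i},\Ideal{C_i})\otimes\Ext^1(\Ideal{C_j},\Ideal{C_j})\to\Ext^2(F,F)$ sending $\alpha'\otimes\beta'$ to $e_i(\alpha')\cup e_j(\beta')$ is graded-symmetric in its two inputs (they act on disjoint factors of the derived tensor product), so the Koszul sign combines with the Yoneda anti-symmetry of part~(\ref{lemma-item-yoneda-product-is-anti-symmetric}) to force $e_i(\alpha')\cup e_j(\beta')=0$ in characteristic zero. \textbf{The main obstacle} is reconciling these Koszul signs and ruling out residual contributions in $H^1(\SheafExt^1(F,F))+H^2(\SheafHom(F,F))$; the control comes from the two-parameter flat deformation of $F$ over $k[s,t]/(s^2,t^2)$ obtained by tensoring the one-parameter deformations on disjoint supports.

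Parts~(\ref{lemma-item-Yoneda-algebra-generated-by-Ext-1}), (\ref{lemma-item-image-of-e-j}) and the $d=2$ case of~(\ref{lemma-item-e-j-maps-E^2-j-to-same}) then follow from the preceding together with Lemma~\ref{lemma-Ext-algebra-generated-by-Ext-1-when-n=1}(\ref{lemma-item-Ext-algebra-generated-by-Ext-1-when-n=1}). Since $\Ext^*(\Ideal{C_j},\Ideal{C_j})$ is generated in degree one, the image of $e_j$ in degree two is $(E^1_0\oplus E^1_j)^{\cup 2}$. Parts~(\ref{lemma-item-vanishing-of-Yoneda-product}) and~(\ref{lemma-item-yoneda-product-is-anti-symmetric}) ensure this six-dimensional subspace annihilates $\bigoplus_{k\ne 0,j}E^1_k$ under Yoneda and contains $E^2_0$ (the image of $H^2(\StructureSheaf{X})$), so a dimension count identifies it with $E^2_0\oplus E^2_j$; this yields part~(\ref{lemma-item-Yoneda-algebra-generated-by-Ext-1}) and the $d=2$ case of~(\ref{lemma-item-e-j-maps-E^2-j-to-same}). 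Assembling the image of $e_j$ degree by degree---the identity on $\Hom(\Ideal{C_j},\Ideal{C_j})=\Hom(F,F)=\CC$, the isomorphisms onto $E^1_0\oplus E^1_j$ and $E^2_0\oplus E^2_j$ in degrees one and two, and surjectivity onto the one-dimensional $\Ext^3(F,F)$ (by Serre duality matching $\Hom$'s)---completes part~(\ref{lemma-item-image-of-e-j}).
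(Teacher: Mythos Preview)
Your argument for part~(\ref{lemma-item-yoneda-product-is-anti-symmetric}) via unobstructedness of the moduli (deduced from Lemma~\ref{lemma-dim-Ext-1-F-F-is-3n+3}) is valid and neater than the paper's route. The paper instead proves~(\ref{lemma-item-vanishing-of-Yoneda-product}) first and then deduces~(\ref{lemma-item-yoneda-product-is-anti-symmetric}) by reducing to $(E^1_0\oplus E^1_i)^{\otimes 2}$ and pulling anti-commutativity back from $HT^*(X)$ through the surjection $ev_{\Ideal{C_i}}$ of Lemma~\ref{lemma-Ext-algebra-generated-by-Ext-1-when-n=1}.

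Your proof of part~(\ref{lemma-item-vanishing-of-Yoneda-product}), however, has a genuine gap. The Koszul interchange law for endomorphisms acting on distinct tensor factors gives
\[
e_i(\alpha')\cup e_j(\beta')\;=\;(-1)^{|\alpha'|\,|\beta'|}\,e_j(\beta')\cup e_i(\alpha')\;=\;-\,e_j(\beta')\cup e_i(\alpha'),
\]
which is \emph{the same} relation as the anti-symmetry you already have from part~(\ref{lemma-item-yoneda-product-is-anti-symmetric}); combining the two yields nothing new, and certainly not $e_i(\alpha')\cup e_j(\beta')=0$. Your fallback---the flat family over $\Spec k[s,t]/(s^2,t^2)$---hits the same wall: the only second-order obstruction over that base is the $st$-coefficient in the Maurer--Cartan equation, namely $\alpha\beta+\beta\alpha$, so its vanishing again only reproves anti-commutativity. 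The paper closes this gap by invoking the one-sided criterion of Artamkin \cite[Sec.~2]{artamkin}, which detects the vanishing of the unsymmetrised Yoneda composite $\tilde\xi_i\circ\tilde\xi_j$ itself, and then writes down the requisite explicit ideal sheaf over $X\times\Spec\bigl(\CC[\epsilon_1,\epsilon_2]/(\epsilon_1^2,\epsilon_2^2,\epsilon_1\epsilon_2)\bigr)$ using the disjointness of the $C_k$. You should either go through Artamkin's criterion carefully, or produce an independent argument that kills $\alpha\beta$ rather than its symmetrisation.

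Your treatment of parts~(\ref{lemma-item-Yoneda-algebra-generated-by-Ext-1})--(\ref{lemma-item-e-j-maps-E^2-j-to-same}) follows the paper's logic (injectivity of $e_j$ on $\Ext^2$ via the Serre-duality pairing, then a dimension count against $E^2_0\oplus E^2_j$) and is fine once part~(\ref{lemma-item-vanishing-of-Yoneda-product}) is secured.
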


\begin{proof}
(\ref{lemma-item-vanishing-of-Yoneda-product})
Let $\xi_i$ be a section of $H^0(N_{C_i/X})$ and $\tilde{\xi}_i$ the corresponding class in $E^1_i$. 
Set $R:=\CC[\epsilon_1,\epsilon_2]/\langle \epsilon_1^2,\epsilon_2^2,\epsilon_1\epsilon_2\rangle$. 
The product $\tilde{\xi}_i\circ\tilde{\xi}_j$ vanishes, if and only if there exists a deformation of  $\Ideal{\cup_{k=1}^nC_k}$ 
by an ideal over $X\times \Spec(R)$, which restricts to 
the first order deformation along $\xi_i$ over $\Spec(\CC[\epsilon_1,\epsilon_2]/\langle \epsilon_1^2,\epsilon_2,\rangle)$ and to the first order deformation along $\xi_j$ over
$\Spec(\CC[\epsilon_1,\epsilon_2]/\langle \epsilon_1,\epsilon_2^2\rangle)$,
by \cite[Sec. 2]{artamkin}. 
Assume that $i\neq j$.
Let $\F$ be the ideal sheaf over $X\times \Spec(R)$ consisting of elements locally of the form
$f_0+f_1\epsilon_1+f_2\epsilon_2$, where 
$
f_0\in F:=\Ideal{\cup_{k=1}^n C_k}, 
$
$f_1$ belongs to $\Ideal{\cup_{k=1,k\neq i}^nC_k}$,
$f_2$ belongs to $\Ideal{\cup_{k=1,k\neq j}^nC_k}$,
\[
(f_1\restricted{)}{C_i}+df_0(\xi_i)=0, \  \mbox{and} \ (f_2\restricted{)}{C_j}+df_0(\xi_j)=0.
\]
One easily checks that $\F$ is indeed an ideal and $\F$ clearly restricts to the ideals of the two desired first order deformations.

(\ref{lemma-item-yoneda-product-is-anti-symmetric}) The anti-symmetry would follow from that of $(E^1_0\oplus E^1_i)\otimes (E^1_0\oplus E^1_i)\rightarrow \Ext^2(F,F)$, by
Part (\ref{lemma-item-vanishing-of-Yoneda-product}). Now, $E^1_0\oplus E^1_i$ is equal to $e_i(\Ext^1(\Ideal{C_i},\Ideal{C_i}))$, $e_i$ is an algebra homomorphism, and $\Ext^1(\Ideal{C_i},\Ideal{C_i})\otimes \Ext^1(\Ideal{C_i},\Ideal{C_i})\rightarrow \Ext^2(\Ideal{C_i},\Ideal{C_i})$ is anti-symmetric, 
by the anti-symmetry of the product $HT^1(X)\otimes HT^1(X)\rightarrow HT^2(X)$ and the surjectivity of $ev_{\Ideal{C_i}}$ in Lemma \ref{lemma-Ext-algebra-generated-by-Ext-1-when-n=1}.

(\ref{lemma-item-Yoneda-algebra-generated-by-Ext-1}) 
We prove first that the image of $(E^1_0\oplus E^1_i)\otimes (E^1_0\oplus E^1_i)$ is contained in $E^2_0\oplus E^2_i$. It suffices to prove that the image of $E^1_i\otimes \Ext^1(F,F)$ is contained in $E^2_0\oplus E^2_i$. This follows from part 
(\ref{lemma-item-vanishing-of-Yoneda-product}) of the lemma as $E^1_i\otimes \Ext^1(F,F)$ annihilates $E^1_j$, for all $j\neq i$ (here we use also the anti-symmetry property in part (\ref{lemma-item-yoneda-product-is-anti-symmetric})).
Surjectivity would follow from the proof of part (\ref{lemma-item-image-of-e-j}) and 
Lemma \ref{lemma-Ext-algebra-generated-by-Ext-1-when-n=1}.

(\ref{lemma-item-image-of-e-j}) We know that $e_j$ maps $\Ext^k(\Ideal{C_j},\Ideal{C_j})$  onto $\Ext^k(F,F)$, for $k=0$ and $k=3$. We also know that $e_j(\Ext^1(\Ideal{C_j},\Ideal{C_j}))=E^1_0\oplus E^1_j.$
It remains to show that $e_j(\Ext^2(\Ideal{C_j},\Ideal{C_j}))=E^2_0\oplus E^2_j.$ Clearly, $e_j(\Ext^2(\Ideal{C_j},\Ideal{C_j}))$
is contained in the image of $(E^1_0\oplus E^1_j)\otimes (E^1_0\oplus E^1_j)$ as $e_j$ is an algebra homomorphism. Hence,
$e_j(\Ext^2(\Ideal{C_j},\Ideal{C_j}))$ is contained in $E^2_0\oplus E^2_j,$ by part (\ref{lemma-item-Yoneda-algebra-generated-by-Ext-1}). Now the fact that the restriction of $e_j$ is injective on $\Ext^1(\Ideal{C_j},\Ideal{C_j})$ implies that it is also injective on $\Ext^2(\Ideal{C_j},\Ideal{C_j})$,
since the pairing induced by the Yoneda product of their images in $\Ext^*(F,F)$ pulls back to the perfect pairing of the Yoneda product in $\Ext^*(\Ideal{C_j},\Ideal{C_j})$, as $e_j$ induces an isomorphism of $\Ext^3(\Ideal{C_j},\Ideal{C_j})$ with 
$\Ext^3(F,F)$. The equality $e_j(\Ext^2(\Ideal{C_j},\Ideal{C_j}))=E^2_0\oplus E^2_j$ follows for dimension reasons.

(\ref{lemma-item-e-j-maps-E^2-j-to-same}) The statement is clear for $d=1$. For $d=2$ it follows from part (\ref{lemma-item-image-of-e-j}) and the fact that $e_j$ is an $H^*(\StructureSheaf{X})$-algebra homomorphism.
\end{proof}

\begin{prop}
\label{prop-obstruction-map-has-rank-6}
$\rank(ob_F)=6.$
\end{prop}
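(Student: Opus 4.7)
The lower bound $\rank(ob_F)\geq 6$ is exactly Lemma \ref{lemma-rank-of-ob-F-is-at-least-6}, so the task is to show the matching upper bound. In view of Huang's inclusion $\ker(ob_F)\subseteq \ker(\Contract ch(F))$, I plan to prove the sharper statement $\ker(ob_F)=\ker(\Contract ch(F))$, which is the $9$-dimensional subspace of $HT^2(X)$ cut out by $\alpha_0=\tfrac{n}{2}(\alpha_2\Contract\Theta^2)$ and $\alpha_1\Contract\Theta^2=4(\alpha_2\Contract[pt])$.

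I will analyze $ob_F$ through the direct sum decomposition $\Ext^2(F,F)=E^2_0\oplus\bigoplus_{j=1}^n E^2_j$ of Lemma \ref{lemma-Yoneda-algebra-is-generated-in-degree-1}. For the $E^2_0$-projection I use the trace map. Using $\rank(F)=1$, $c_1(F)=0$, and $ch_2(F)=-\tfrac{n}{2}\Theta^2$, a direct computation of $\text{tr}\circ ob_F$ gives $\alpha_0-\tfrac{n}{2}(\alpha_2\Contract\Theta^2)$, so vanishing of the $E^2_0$-component recovers exactly the first defining equation of $\ker(\Contract ch(F))$.

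For each $j\geq 1$, I will exploit the factorization $F\cong\bigotimes_{i=1}^n\Ideal{C_i}$ and the Leibniz rule $at_F=\sum_i e_i(at_{\Ideal{C_i}})$ to expand $at_F^k\Contract\alpha_k$ for $k=1,2$. The crucial simplification is that $c_1(\Ideal{C_i})=0$ forces each $at_{\Ideal{C_i}}\Contract\xi$ to be traceless, hence to lie in the direct summand $E^1_1(\Ideal{C_i})$; its image under $e_i$ then lies in $E^1_i(F)$, and all cross products with factors from $E^1_k$ with $k\neq i$ are killed by Lemma \ref{lemma-Yoneda-algebra-is-generated-in-degree-1}(\ref{lemma-item-vanishing-of-Yoneda-product}). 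What remains is
\[
ob_F(\alpha)=\iota(\alpha_0)+\sum_{i=1}^n e_i\bigl(ob_{\Ideal{C_i}}(\alpha_1+\alpha_2)\bigr),
\]
so the projection to $E^2_j$ picks out only the $E^2_1(\Ideal{C_j})$-component of $ob_{\Ideal{C_j}}(\alpha)$, transported via the isomorphism of Lemma \ref{lemma-Yoneda-algebra-is-generated-in-degree-1}(\ref{lemma-item-e-j-maps-E^2-j-to-same}).

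The semiregularity of $\Ideal{C_j}$ (Lemma \ref{lemma-Ext-algebra-generated-by-Ext-1-when-n=1}(\ref{lemma-item-semiregular})), combined with the vanishings $\text{tr}|_{E^2_1(\Ideal{C_j})}=0$ and $\sigma_1|_{E^2_0(\Ideal{C_j})}=0$ (the latter because $c_1(\Ideal{C_j})=0$), forces $\sigma_1$ to restrict to an isomorphism $E^2_1(\Ideal{C_j})\to H^3(\Omega^1_X)$. The commutativity of the semiregularity diagram (\ref{eq-extended-diagram-semi-regularity}) then identifies the vanishing of $\pi_j\circ ob_F(\alpha)$ with the vanishing of the $H^3(\Omega^1_X)$-component of $\Contract ch(\Ideal{C_j})(\alpha)$, namely $\alpha_1\Contract\Theta^2=4(\alpha_2\Contract[pt])$; this condition is independent of $j$ since all $C_j$ share the same cohomology class. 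Intersecting over $j\in\{0,1,\dots,n\}$ yields exactly $\ker(\Contract ch(F))$, of dimension $9$, so $\rank(ob_F)=6$. The technical heart of the argument is the cross-term vanishing in the Leibniz expansion of $at_F^2$, which crucially uses $c_1(\Ideal{C_i})=0$ together with Lemma \ref{lemma-Yoneda-algebra-is-generated-in-degree-1}(\ref{lemma-item-vanishing-of-Yoneda-product}).
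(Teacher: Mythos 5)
Your proof is correct and it establishes the sharper statement $\ker(ob_F)=\ker(\Contract ch(F))$ directly, which the paper only records afterwards as Corollary \ref{cor-kernel-of-obstruction-is-annihilator-of-ch}. Your central identity
\[
ob_F(\alpha)=\iota(\alpha_0)+\sum_{i=1}^n e_i\bigl(ob_{\Ideal{C_i}}(\alpha_1+\alpha_2)\bigr)
\]
is the same formula the paper obtains, and your route to it --- the Leibniz rule $at_F=\sum_i e_i(at_{\Ideal{C_i}})$ together with the cross-term vanishing coming from $c_1(\Ideal{C_i})=0$ and Lemma \ref{lemma-Yoneda-algebra-is-generated-in-degree-1}(\ref{lemma-item-vanishing-of-Yoneda-product}) --- is parallel to the paper's algebraic expansion of $ev_F(\xi_1\xi_2)$ using that $HT^*(X)$ is generated in degree one. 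Where the two proofs genuinely diverge is in how this formula is exploited. The paper introduces the correction automorphism $\gamma_n$ (multiplication by $n$ on $H^2(\StructureSheaf{X})$), verifies that it carries the $9$-dimensional $\ker(ob_{\Ideal{C_j}})$ into $\ker(ob_F)$, and along the way uses translation autoequivalences to see $\ker(e_j\circ ob_{\Ideal{C_j}})$ is independent of $j$; the conclusion $\rank(ob_F)\le 6$ then follows by a pure dimension count, with no reference to semiregularity. You instead project $ob_F(\alpha)$ onto each summand of $\Ext^2(F,F)=E^2_0\oplus\bigoplus_j E^2_j$: the $E^2_0$-part via the trace, and each $E^2_j$-part via the semiregularity map of $\Ideal{C_j}$ through the diagram (\ref{eq-extended-diagram-semi-regularity}) and Lemma \ref{lemma-Ext-algebra-generated-by-Ext-1-when-n=1}(\ref{lemma-item-semiregular}). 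This makes the cutting equations of $\ker(ob_F)$ explicit and replaces the translation argument by the observation that the resulting conditions depend only on $ch(\Ideal{C_j})$. Your version leans harder on semiregularity; the paper's is leaner since it needs only the injectivity of $e_j$ and a dimension count.

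Two assertions deserve a sentence of justification rather than a bare statement. First, you assert $\text{tr}|_{E^2_1(\Ideal{C_j})}=0$ (while justifying the companion vanishing $\sigma_1|_{E^2_0}=0$). This is correct but not automatic from the direct-sum decomposition. It follows from the definition of $E^2_1$ as the annihilator of $E^1_0$ under the perfect Yoneda pairing $\Ext^1\times\Ext^2\to\Ext^3\cong\CC$: by $H^*(\StructureSheaf{X})$-linearity, $\text{tr}\bigl(\beta\cup\iota(a)\bigr)=a\cup\text{tr}(\beta)$ for $a\in H^1(\StructureSheaf{X})$, so $\beta$ annihilating $E^1_0$ forces $\text{tr}(\beta)\in H^2(\StructureSheaf{X})$ to pair trivially with all of $H^1(\StructureSheaf{X})$, hence to vanish by Serre duality. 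Without this, injectivity of $(\sigma_0,\sigma_1)$ alone would not yield injectivity of $\sigma_1|_{E^2_1}$. Second, the step ``$at_{\Ideal{C_i}}\Contract\xi$ is traceless, hence lies in $E^1_1(\Ideal{C_i})$'' similarly assumes $\ker(\text{tr}|_{\Ext^1})=E^1_1$; a cleaner phrasing is simply that $at_{\Ideal{C_i}}\Contract\xi=ev^1_{\Ideal{C_i}}(\xi)$ with $\xi\in H^0(TX)$, and $ev^1_{\Ideal{C_i}}$ carries $H^0(TX)$ onto $E^1_1$ by Lemma \ref{lemma-dim-Ext-1-F-F-is-3n+3} and the discussion following it, so the cross-product vanishing follows immediately from Lemma \ref{lemma-Yoneda-algebra-is-generated-in-degree-1}(\ref{lemma-item-vanishing-of-Yoneda-product}) without invoking trace at all.
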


\begin{proof}
We already know that $\rank(ob_F)\geq 6$, by Lemma \ref{lemma-rank-of-ob-F-is-at-least-6}.
It remains to prove that $\rank(ob_F)\leq 6$.
If $n=1$, then $\dim\Ext^2(F,F)=6$, by Lemma \ref{lemma-dim-Ext-1-F-F-is-3n+3}, and  so $\rank(ob_F)= 6$.

Denote by $ev_F:HT^*(X)\rightarrow \Ext^*(F,F)$ also the composition of (\ref{eq-ev-F})
with the HKR isomorphism $HT^*(X)\cong HH^*(X)$. The HKR isomorphism is an $H^*(\StructureSheaf{X})$-algebra isomorphism,  since the Todd class of $X$ vanishes \cite{calaque-et-al}. Hence, the latter $ev_F$ is an $H^*(\StructureSheaf{X})$-algebra homomorphism.
An element $\xi\in HT^1(X)$ decomposes uniquely as the sum $\xi'+\xi''$ with $\xi'\in H^1(\StructureSheaf{X})$ and $\xi''\in H^0(TX).$
We have the following equalities:
\begin{eqnarray}
\label{eq-relations-between-ev-F-and-ev-I-C-i}
ev_F(\xi') &=& e_j(ev_{\Ideal{C_j}}(\xi')),
\\
\nonumber
ev_F(\xi'')&=&\sum_{j=1}^n e_j(ev_{\Ideal{C_j}}(\xi'')),
\end{eqnarray}
by Lemma \ref{lemma-dim-Ext-1-F-F-is-3n+3}.
Note that  $e_j\circ ev_{\Ideal{C_j}}$ is a  composition of $H^*(\StructureSheaf{X})$-algebra homomorphisms.
If $j\neq k$, then $e_j(ev_{\Ideal{C_j}}(\xi''_1))e_k(ev_{\Ideal{C_k}}(\xi_2''))=0$, for every two elements $\xi_1, \xi_2\in HT^1(X)$, by Lemma \ref{lemma-Yoneda-algebra-is-generated-in-degree-1}. We get
\begin{eqnarray*}
ev_F(\xi_1\xi_2) &=&
\left[ev_F(\xi'_1)+\sum_{j=1}^n  e_j(ev_{\Ideal{C_j}}(\xi''_1))\right]
\left[ev_F(\xi'_2)+\sum_{j=1}^n  e_j(ev_{\Ideal{C_j}}(\xi''_2))\right]
\\ 
&=&
ev_F(\xi'_1\xi'_2)+
\sum_{j=1}^n e_j(ev_{\Ideal{C_j}}(\xi'_1\xi''_2+\xi''_1\xi'_2+\xi''_1\xi''_2))
\\ 
&=& \sum_{j=1}^n e_j\left(ev_{\Ideal{C_j}}\left(\frac{1}{n}\xi'_1\xi'_2+\xi'_1\xi''_2+\xi''_1\xi'_2+\xi''_1\xi''_2\right)\right).
\end{eqnarray*}

The algebra $HT^*(X)$ is generated by $HT^1(X)$. The element $\xi'_1\xi'_2$ belongs to $H^2(\StructureSheaf{X})$,
while the element $\xi'_1\xi''_2+\xi''_1\xi'_2+\xi''_1\xi''_2$ belongs to $H^1(TX)\oplus H^0(\wedge^2TX)$. 
Thus, the two equations (\ref{eq-relations-between-ev-F-and-ev-I-C-i}) hold also for $\xi\in HT^2(X)$ under the decomposition $\xi=\xi'+\xi''$, with $\xi'\in H^2(\StructureSheaf{X})$ and $\xi''\in H^1(TX)\oplus H^0(\wedge^2TX)$. 

Let $\tau_{ij}:D^b(X)\rightarrow D^b(X)$ be the autoequivalence induced by the translation automorphism mapping $C_j$ to $C_i$. This autoequivalence 
acts trivially on $HT^*(X)$ and $ev_{\Ideal{C_i}}=\tau_{ij}\circ ev_{\Ideal{C_j}}$. Furthermore, $e_j$ is injective, by Lemma \ref{lemma-Yoneda-algebra-is-generated-in-degree-1}.
Hence, the kernel of the composition $e_j\circ ev_{\Ideal{C_j}}:HT^*(X)\rightarrow \Ext^*(F,F)$
is independent of $j$. 
Let
\[
\gamma_n: HT^2(X)\rightarrow HT^2(X)
\]
be the automorphism multiplying the direct summand $H^2(\StructureSheaf{X})$ by $n$ and acting as the identity on 
$H^1(TX)\oplus H^0(\wedge^2TX)$. We see that $\gamma_n$ maps the kernel of $e_j\circ ob_{\Ideal{C_j}}$
 into that of $ob_F$. Now $e_j$ is injective, by Lemma \ref{lemma-Yoneda-algebra-is-generated-in-degree-1}, and $ev_{\Ideal{C_j}}^2=ob_{\Ideal{C_j}}$ has rank $6$, by the case $n=1$. Hence, $\rank(ob_F)\leq\rank(ob_{\Ideal{C_j}})=6$.
\hide{
We already know that $\rank(ob_F)\geq 6$, by Lemma \ref{lemma-rank-of-ob-F-is-at-least-6}.
It remains to prove that $\rank(ob_F)\leq 6$.

If $n=1$, then $\dim\Ext^2(F,F)=6$, by Lemma \ref{lemma-dim-Ext-1-F-F-is-3n+3}, and  so $\rank(ob_F)= 6$.
In this case $ev_F^1$ is surjective and so $ev_F$ is surjective, as both algebras $HH^*(X)$ and $\Ext^*(F,F)$ are generated in degree $1$ (for the latter use Lemma \ref{lemma-Yoneda-algebra-is-generated-in-degree-1}). 

Consider the composition
\[
H^0(TX)\rightarrow HT^1(X)\RightArrowOf{HKR} HH^1(X)\RightArrowOf{ev_F^1} \Ext^1(E,E)\rightarrow H^0(\SheafExt^1(F,F))
\]
where the first arrow is the natural inclusion, the second is the HKR-isomorphism, the third is $ev_F^1$, and the fourth is the the natural global to local homomorphism. We have the isomorphism 
$\SheafExt^1(F,F)\cong N_{\cup_{i=1}^n C_i/X}$. 
The image of the above composition 
is induced by the sheaf homomorphism $TX\rightarrow N_{\cup_{i=1}^n C_i/X}$. It follows that the image of 

\[
HH^1(X)\RightArrowOf{ev_F^1} \Ext^1(E,E)\rightarrow H^0(\SheafExt^1(F,F))\cong \oplus_{i=1}^n H^0(N_{C_i/X})
\]
is equal to the image of $H^0(X,TX)$ via the diagonal evaluation. 

The stalks at closed points of the ideal  sheaf $F$ of the smooth curve $\cup_{i=1}^n C_i$ of codimension $2$ in $X$ have projective dimension $1$. Hence, $\SheafExt^2(F,G)$ vanishes, for every coherent $\StructureSheaf{X}$-module $G$. In particular, $\SheafExt^2(F,F)$ vanishes. We get the left  exact sequence
\[
0\rightarrow H^0(\SheafEnd(F,F))\rightarrow \Ext^2(F,F)\RightArrowOf{j} H^1(\SheafExt^1(F,F)).
\]
We have the isomorphisms $\SheafEnd(F,F)\cong\StructureSheaf{X}$ and $\SheafExt^1(F,F)\cong \oplus_{i=1}^n N_{C_i/X}$.
We have seen in the proof of Lemma \ref{lemma-dim-Ext-1-F-F-is-3n+3}  that $H^0(N_{C_i/X}))$ is $3$-dimensional. So is
$H^0(\StructureSheaf{X})$. Hence, the above sequence is also right exact, by Lemma \ref{lemma-dim-Ext-1-F-F-is-3n+3}.
The ring $HH^*(X)$ is generated by $HH^1(X)$. If $\alpha$ and $\beta$ are classes of $\Ext^1(F,F)$, whose image in $H^0(\SheafExt^1(F,F))$ belong to the 
}
\end{proof}

\begin{lem}
\label{lemma-invariance-under-derived-equivalence-of-equality-of-ker-ob-and-ker-ch}
Let $\Phi:D^b(A)\rightarrow D^b(B)$ be an equivalence of derived categories of two abelian varieties and $F$ an object of $D^b(A)$. Assume that the kernel of $ob_F:HT^2(A)\rightarrow \Hom(F,F[2])$ is equal to the subspace annihilating $ch(F)$.
Then the kernel of $ob_{\Phi(F)}$ is equal to the subspace annihilating $ch(\Phi(F))$.
\end{lem}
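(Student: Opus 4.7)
The plan is to prove this by an equivariance argument: since derived equivalences of abelian varieties intertwine both the obstruction map and contraction with the Chern character, the equality $\ker(ob_F) = \ker(\Contract ch(F))$ is transported intact from $A$ to $B$.

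First, I would set up the three isomorphisms attached to $\Phi$. The equivalence $\Phi$ induces an isomorphism of Hochschild cohomology $\Phi_*^{HH}:HH^*(A)\rightarrow HH^*(B)$ as graded algebras, and an isomorphism of Hochschild homology $\Phi_*^{H}:HH_*(A)\rightarrow HH_*(B)$ which is compatible with the $HH^*$-module structure (this is classical, see e.g.\ Caldararu). Because $A$ and $B$ are abelian varieties, their tangent bundles are trivial, so the Todd classes $\mathrm{td}(A),\mathrm{td}(B)$ are both equal to $1$. Hence the Kontsevich-Caldararu correction reduces to the identity and the HKR isomorphisms $I_{HKR}:HT^*\cong HH^*$ and $HH_*\cong H\Omega_*$ are multiplicative and $HT^*$-equivariant, so that $\Phi_*^{HH}$ and $\Phi_*^{H}$ descend to isomorphisms
\[
\Phi^{HT}:HT^2(A)\IsomRightArrow HT^2(B),\qquad \Phi^{H\Omega}:H\Omega_{-2}(A)\IsomRightArrow H\Omega_{-2}(B),
\]
and $\Phi^{H\Omega}$ is $\Phi^{HT}$-linear with respect to the $HT^*$-module structures obtained by contraction.

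Next I would verify the two naturality squares. Because $ob_F$ is the natural transformation on $\Ext^2(F,F)$ determined by the action of $HH^2$ on $R\Hom(F,F)$, and $\Phi$ is an equivalence, one gets the commutative square
\[
\xymatrix{
HT^2(A)\ar[r]^{ob_F}\ar[d]_{\Phi^{HT}}^{\cong} & \Ext^2(F,F)\ar[d]^{\Phi}_{\cong}\\
HT^2(B)\ar[r]_{ob_{\Phi(F)}} & \Ext^2(\Phi(F),\Phi(F)).
}
\]
Similarly, the Chern character may be identified with the categorical Chern character in $HH_0$, and under a derived equivalence $\Phi_*^H(ch(F))=ch(\Phi(F))$; combining this with the $HH^*$-linearity of $\Phi_*^H$ yields the second commutative square
\[
\xymatrix{
HT^2(A)\ar[rr]^{\Contract ch(F)}\ar[d]_{\Phi^{HT}}^{\cong} & & H\Omega_{-2}(A)\ar[d]^{\Phi^{H\Omega}}_{\cong}\\
HT^2(B)\ar[rr]_{\Contract ch(\Phi(F))} & & H\Omega_{-2}(B).
}
\]

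With these two squares in hand the conclusion is immediate. The isomorphism $\Phi^{HT}$ sends $\ker(ob_F)$ onto $\ker(ob_{\Phi(F)})$ and $\ker(\Contract ch(F))$ onto $\ker(\Contract ch(\Phi(F)))$, because in each square the vertical maps are bijections. Since by hypothesis the two kernels in $HT^2(A)$ coincide, their images in $HT^2(B)$ coincide as well, proving the lemma.

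The main step, and the only one requiring content beyond formal equivariance, is the compatibility of the HKR isomorphisms on $A$ and $B$ with the Hochschild isomorphisms induced by $\Phi$; this compatibility uses essentially that the Todd classes of abelian varieties are trivial, which is what allows us to identify the $HT$-theoretic diagrams of $A$ and $B$ via $\Phi$ without Kontsevich-type corrections.
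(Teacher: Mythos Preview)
The proposal is correct and follows essentially the same approach as the paper: both arguments assemble a commutative diagram with $HT^2$ in the middle, the obstruction map on one side and contraction with the Chern character on the other, with all vertical arrows isomorphisms induced by $\Phi$, and then transport the equality of kernels from $A$ to $B$. The paper cites \cite{calaque-et-al} and \cite[Theorem~A]{Huang} for the commutativity of the two squares, where you invoke the $HH^*$-linearity of Hochschild homology and the naturality of the $HH^*$-action on $R\Hom(F,F)$; these are the same facts packaged slightly differently.
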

\begin{proof}
We have the commutative diagram
\[
\xymatrix{
H^*(A,\CC) \ar[d]_{\Phi^H}
& HT^2(A) \ar[d]^{\Phi^{HT}} \ar[l]_{ch(F)}\ar[r]^-{\exp(at_{F})}
& \Hom(F,F[2]) \ar[d]^\Phi
\\
H^*(B,\CC) 
& HT^2(B) \ar[l]^{ch(E)}\ar[r]_-{\exp(at_{E})}
& \Hom(E,E[2]),
}
\]
\hide{
We have the commutative diagram
\[
\xymatrix{
H^*(X\times X,\CC) \ar[d]_{\varphi}
& HT^2(X\times X) \ar[d]^{\Phi^{HT}} \ar[l]_{ch(\pi_1^*F_1\otimes\pi_2^*F_2)}\ar[r]^{\exp(at_{\pi_1^*F_1\otimes\pi_2^*F_2})}
& \Ext^2(\pi_1^*F_1\otimes\pi_2^*F_2) \ar[d]^\Phi
\\
H^*(X\times\hat{X},\CC) 
& HT^2(X\times\hat{X}) \ar[l]^{ch(E)}\ar[r]_{\exp(at_{E})}
& \Ext^2(E,E)
}
\]
}
the left square 
by \cite{calaque-et-al}, and the right square by \cite[Theorem A]{Huang}.
The vertical arrows are isomorphisms and the kernels of the two horizontal arrows in the top row are equal.
Hence, the same is true for the bottom row.
\end{proof}

\begin{rem}
The above Lemma holds for more general projective varieties, replacing $ch(F)$ by the Mukai vector $v(F):=ch(F)td(X)^{\frac{1}{2}}$ and factoring the action of $HT^*(X)$ on its module $H^*(X,\CC)$ by the Duflo operator $D:HT^*(X)\rightarrow HT^*(X)$, given by $D(\alpha)=td(X)^{\frac{1}{2}}\Contract\alpha$. For abelian varieties $td(X)^{\frac{1}{2}}=1.$
\end{rem}

\begin{cor}
\label{cor-kernel-of-obstruction-is-annihilator-of-ch} Set $F:=\Ideal{\cup_{i=1}^n C_i}(\Theta).$
The kernel of $ob_F$ is equal to the kernel of the homomorphism $(\bullet)\Contract ch(F):HT^2(X)\rightarrow H^*(X,\CC)$ of contraction with the Chern character of $F$.
\end{cor}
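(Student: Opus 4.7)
The plan is to reduce the Corollary to the untwisted ideal sheaf $F':=\Ideal{\cup_{i=1}^{n}C_i}$ via an auto-equivalence of $D^b(X)$, and then to compare the ranks of the two linear maps $ob_{F'}$ and $\Contract ch(F'):HT^2(X)\to H^*(X,\CC)$. Since $F=F'\otimes \StructureSheaf{X}(\Theta)$ and tensoring with $\StructureSheaf{X}(\Theta)$ is an auto-equivalence of $D^b(X)$, Lemma~\ref{lemma-invariance-under-derived-equivalence-of-equality-of-ker-ob-and-ker-ch} will reduce the statement to proving $\ker(ob_{F'})=\ker(\Contract ch(F'))$ on $HT^2(X)$.

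One inclusion will be immediate: the commutativity of Diagram~(\ref{eq-extended-diagram-semi-regularity}), namely $\Contract ch(F')=\sigma\circ ob_{F'}$, yields $\ker(ob_{F'})\subseteq \ker(\Contract ch(F'))$ (this is the application of \cite[Theorem B]{Huang} already used in the proof of Lemma~\ref{lemma-Ext-algebra-generated-by-Ext-1-when-n=1}(\ref{lemma-item-Yoneda-algebra-is-quotient-of-Hochschild})). It will therefore suffice to check that the two linear maps have the same rank. Since Proposition~\ref{prop-obstruction-map-has-rank-6} supplies $\rank(ob_{F'})=6$, the only remaining point to verify is $\rank(\Contract ch(F'))=6$.

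For this last step, I will observe that since $\dim_\CC X=3$, the codomain
$$
\Contract ch(F'):HT^2(X)\longrightarrow \prod_{q=0}^{\dim X-2}H^{q+2}(X,\Omega^q_X)=H^2(\StructureSheaf{X})\oplus H^3(\Omega^1_X)
$$
is only $6$-dimensional, giving the bound $\rank(\Contract ch(F'))\leq 6$. Using the formula $ch(F')=1-\tfrac{n}{2}\Theta^2+2n[pt]$ from the proof of Lemma~\ref{lemma-ch-F-i-is-on-secant-to-spinor-variety}, contraction with $ch(F')$ on $HT^2(X)=H^2(\StructureSheaf{X})\oplus H^1(TX)\oplus H^0(\wedge^2TX)$ is represented by the block matrix
\[
\begin{pmatrix}
1 & 0 & -\tfrac{n}{2}\Theta^2 \Contract \\
0 & -\tfrac{n}{2}\Theta^2\Contract & 2n[pt]\Contract
\end{pmatrix},
\]
which is precisely the matrix displayed in Lemma~\ref{lemma-Ext-algebra-generated-by-Ext-1-when-n=1}(\ref{lemma-item-Yoneda-algebra-is-quotient-of-Hochschild}) (the $n=1$ case) with its nonzero entries rescaled by $n$. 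Since $n\geq 1$, this rescaling does not affect surjectivity, so the same argument that established surjectivity for $n=1$ (where it followed from the equality $\ker(ev_{\Ideal{C_j}})=\ann(ch(\Ideal{C_j}))$ combined with the equality $\dim\Ext^2(\Ideal{C_j},\Ideal{C_j})=6$) gives surjectivity here as well. Thus $\rank(\Contract ch(F'))=6=\rank(ob_{F'})$, and the inclusion of kernels is forced to be an equality.

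There is no serious obstacle in this argument: the hard work has already been done in Proposition~\ref{prop-obstruction-map-has-rank-6} (the rank~$6$ computation for $ob_{F'}$, which exploits the algebra structure of $\Ext^\ast(F',F')$ via the homomorphisms $e_j$) and in Lemma~\ref{lemma-Ext-algebra-generated-by-Ext-1-when-n=1}(\ref{lemma-item-Yoneda-algebra-is-quotient-of-Hochschild}) (the explicit form of the contraction matrix in the $n=1$ case). The present proof amounts to combining these two facts with the reduction via tensor product by $\StructureSheaf{X}(\Theta)$ provided by Lemma~\ref{lemma-invariance-under-derived-equivalence-of-equality-of-ker-ob-and-ker-ch}.
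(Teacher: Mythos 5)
Your proof is correct and follows essentially the same route as the paper: both reduce to the untwisted ideal sheaf $F'$ via Lemma~\ref{lemma-invariance-under-derived-equivalence-of-equality-of-ker-ob-and-ker-ch}, use \cite[Theorem~B]{Huang} for the inclusion $\ker(ob_{F'})\subseteq\ker(\Contract ch(F'))$, and then force equality by computing the rank of $\Contract ch(F')$ on $HT^2(X)$ to be $6$ from the explicit $2\times 3$ matrix. The paper bounds $\dim\ker(\Contract ch(F'))\leq 9$ by showing the restriction to the $6$-dimensional subspace $H^2(\StructureSheaf{X})\oplus H^0(\wedge^2TX)$ is injective (via the invertible upper-triangular $2\times 2$ block on columns $1$ and $3$), while you show $\Contract ch(F')$ is surjective onto its $6$-dimensional target $H^2(\StructureSheaf{X})\oplus H^3(\Omega^1_X)$ --- two dual formulations of the same rank computation.
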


\begin{proof}
\hide{
The $9$-dimensional kernel of $ob_F$ is contained in the kernel $K$ of $\Contract ch(F)$, by \cite[Theorem B]{Huang}. 
It suffices to show that $\dim(K)\leq 9$. 
Assume that $\dim(K)> 9$. Then the intersection
$K\cap [H^2(\StructureSheaf{X})\oplus H^0(\wedge^2TX)]$  would be non-trivial. 
We claim that $K\cap [H^2(\StructureSheaf{X})\oplus H^0(\wedge^2TX)]=(0).$
Indeed, $ch(F)=1+\Theta-\frac{d}{2}\Theta^2-d[pt]$ and the composition of $\Contract ch(F)$ with the projection onto the two summands in the Hodge decomposition
\[
HT^2(X)\RightArrowOf{\Contract ch(F)} H^*(X,\CC)\rightarrow H^{3,1}(X)\oplus H^{1,3}(X)
\]
maps $H^2(\StructureSheaf{X}$ isomorphically into 
}
It suffices to prove the statement for $F':=\Ideal{\cup_{i=1}^n C_i}$, by Lemma \ref{lemma-invariance-under-derived-equivalence-of-equality-of-ker-ob-and-ker-ch}, as $F$ is the image of $F'$ by the autoequivalence of tensorization by $\Theta$. Now, $ch(\Ideal{\cup_{i=1}^n C_i})=1-\frac{n}{2}\Theta^2+2n[pt]$. The kernel of $ob_{F'}$ is contained in the kernel $K$ of $\Contract ch(F')$, by \cite[Theorem B]{Huang}. 
The kernel of $ob_{F'}$ is $9$ dimensional, by Proposition \ref{prop-obstruction-map-has-rank-6}.
It suffices to show that $\dim(K)\leq 9$. Assume that $\dim(K)> 9$. Then the intersection
$K\cap [H^2(\StructureSheaf{X})\oplus H^0(\wedge^2TX)]$  would be non-trivial. 
We claim that $K\cap [H^2(\StructureSheaf{X})\oplus H^0(\wedge^2TX)]=(0).$
Indeed, contraction with $ch(F')$ induces the homomorphism
\[
H^2(\StructureSheaf{X})\oplus H^0(\wedge^2TX)\rightarrow H^2(\StructureSheaf{X})\oplus H^3(\Omega^1_X)
\]
with upper triangular matrix $\left(
\begin{array}{cc}
1 & -\frac{n}{2}\Theta^2
\\
0 & 2n[pt]
\end{array}
\right),$ which is invertible
\end{proof}

\hide{
Let $\iota:HT^1(X)\rightarrow HT^1(X^n)$ be given by $\iota(\xi'+\xi'')=\sum_{i=1}^n\pi_i^*(\xi')/n+\sum_{i=1}^n\pi_i^*(\xi'')$, where
$\xi'\in H^1(\StructureSheaf{X})$ and $\xi''\in H^0(TX)$ and we regard $\pi_i^*TX$ as a direct summand of $T(X^n)$. 
Let $\Delta:X\rightarrow X^n$ be the diagonal embedding.
Lemma \ref{lemma-dim-Ext-1-F-F-is-3n+3}
yields the following commutative diagram of algebras homomorphisms. 
\[
\xymatrix{
\otimes_{i=1}^nHT^*(X) \ar[r]^{\cong} \ar[d]_{\otimes_{i=1}^n ev_{\Ideal{C_i}}} &
HT^*(X^n) \ar[d]_{ev_{\boxtimes_{i=1}^n\Ideal{C_i}}} &
HT^*(X) \ar[l]_{\wedge^*\iota} \ar[d]_{ev_F}
\\
\otimes_{i=1}^n \Ext^*(\Ideal{C_i},\Ideal{C_i}) \ar[r]_{\cong} &
\Ext^*(\boxtimes_{i=1}^n\Ideal{C_i},\boxtimes_{i=1}^n\Ideal{C_i})\ar[r]_{{\hspace{5ex}}\Delta^*} &
\Ext^*(F,F)
}
\]
All arrows except $\wedge^*\iota$ and $ev_F$ are surjective, since $\Ext^*(\Ideal{C_i},\Ideal{C_i})$ and $\Ext^*(F,F)$ are generated in degree $1$, by Lemmas  \ref{lemma-Ext-algebra-generated-by-Ext-1-when-n=1} and \ref{lemma-Yoneda-algebra-is-generated-in-degree-1}, and $\Delta^*\circ ev_{\boxtimes_{i=1}^n\Ideal{C_i}}$ is surjective, by Lemma \ref{lemma-dim-Ext-1-F-F-is-3n+3}.
The left square commutativity is just the K\"{u}nneth decomposition. The right square commutes in degree $1$, by Lemma \ref{lemma-dim-Ext-1-F-F-is-3n+3},  and so the commutativity follows since $HT^*(X)$ is generated in degree $1$.

The auto-equivalences $\tau_{ij}:D^b(X)\rightarrow D^b(X)$, $1\leq i,j\leq n$, induces isomorphisms 
$\tau_{ij}:\Ext^*(\Ideal{C_j},\Ideal{C_j})\rightarrow \Ext^*(\Ideal{C_i},\Ideal{C_i})$. 
The object $\boxtimes_{i=1}^n\Ideal{C_i}$ of $X^n$ is the image of the $\fS_n$-equivariant object $\boxtimes_{i=1}^n\Ideal{C_1}$
via the (non-equivariant) auto-equivalence $(id,\tau_{2,1},\dots,\tau_{n,1})$ of $D^b(X^n)$.
We get an $\fS_n$-action on $\Ext^*(\boxtimes_{i=1}^n\Ideal{C_i},\boxtimes_{i=1}^n\Ideal{C_i})$,  and the kernel of $\Delta^*$ in the diagram above is $\fS_n$-invariant, by Lemma \ref{lemma-Yoneda-algebra-is-generated-in-degree-1}. 
Hence, we get an $\fS_n$-action on $\Ext^*(F,F)$.
The left two vertical homomorphisms above are $\fS_n$-equivariant and $\wedge^*\iota$ is $\fS_n$-invariant. As a corollary, we get the following.
\begin{cor}
The homomorphism $ev_F:HT^*(X)\rightarrow \Ext^*(F,F)$ is 
$\fS_n$-invariant. In particular, the image of $ob_F$ is equal to the invariant subspace $\Ext^2(F,F)^{\fS_n}$.
\end{cor} 

\begin{proof}
The homomorphism $ev_F$ is $\fS_n$-invariant, by the commutativity of the right square in the diagram above.
The image of $ev^2_F$ is contained in $\Ext^2(F,F)^{\fS_n}$, since $ev^2_F$ is $\fS_n$-invariant. 
Lemma \ref{lemma-Yoneda-algebra-is-generated-in-degree-1} implies that $\Ext^2(F,F)^{\fS_n}$ is $6$-dimensional. Hence, equality follows from Proposition \ref{prop-obstruction-map-has-rank-6}.
\end{proof}

}

%
\subsection{Secant$^{\boxtimes 2}$-sheaves over $X\times\hat{X}$ with 
a $9$-dimensional space of unobstructed commutative-gerby deformations}
\label{sec-ob-E}
Let $\pi_i$, $i=1,2$, be the projections from $X\times X$ to $X$. Denote by $at_F$ the Atiyah class of $F$.
The Atiyah class $at_{\pi_1^*F}\in \Ext^1(\pi_1^*F,(\pi_1^*F)\otimes \Omega^1_{X\times X})$ of $\pi_1^*F$ is equal to the pushforward of $\pi_1^*at_F\in \Ext^1(\pi_1^*F,\pi_1^*(F\otimes\Omega^1_X))$ via the inclusion of $\pi_1^*\Omega^1_X$ as a direct summand in $\Omega^1_{X\times X}$ \cite[Prop. 3.14]{buchweitz-flenner}. Let $F_1$ and $F_2$ be the sheaves in Theorem \ref{main-theorem-introduction}.
The Atiyah class of $\pi_1^*F_1\otimes\pi_2^*F_2$ satisfied 
\begin{equation}
\label{eq-decomposition-of-atyah-class-of-secant-square-sheaf}
at_{\pi_1^*F_1\otimes\pi_2^*F_2}=at_{\pi_1^*F_1}\otimes 1+1\otimes at_{\pi_2^*F_2}.
\end{equation}

The K\"{u}nneth  decomposition of $\Ext^2(\pi_1^*F_1\otimes \pi_2^*F_2,\pi_1^*F_1\otimes \pi_2^*F_2)$ is the direct  sum
\[
[\Ext^2(F_1,F_1)\otimes\Ext^0(F_2,F_2)]\oplus [\Ext^0(F_1,F_1)\otimes\Ext^2(F_2,F_2)]\oplus [\Ext^1(F_1,F_1)\otimes\Ext^1(F_2,F_2)].
\]
We have the direct sum decomposition of $HT^2(X\times X)$
\begin{eqnarray}
\label{eq-decomposition-of-HT-2}
HT^2(X\times X)&=& 
\pi_1^*HT^2(X)
\otimes\pi_2^*HT^0(X)\oplus
\\
\nonumber
&&
\pi_1^*HT^0(X)\otimes \pi_2^*HT^2(X)
\oplus
\\
\nonumber
& &
\pi_1^*HT^1(X)
\otimes \pi_2^*HT^1(X)
\end{eqnarray}
Lemma \ref{lemma-dim-Ext-1-F-F-is-3n+3} implies that $\exp(at_F): H^1(\StructureSheaf{X})\oplus H^0(TX)\rightarrow \Ext^1(F,F)$ is injective. 
The obstruction map $ob_F:HT^2(X)\rightarrow \Ext^2(F,F)$ is the restriction to $HT^2(X)$ of the algebra homomorphism 
\[
(\bullet)\Contract \exp(at_F):HT^*(X)\rightarrow \Ext^*(F,F)
\]
(see \cite[Theorem A]{Huang}).
We see that the obstruction map $ob_{\pi_1^*F_1\otimes\pi_2^*F_2}$ maps the  summand in the $i$-th row above into the $i$-th summand of $\Ext^2(\pi_1^*F_1\otimes \pi_2^*F_2,\pi_1^*F_1\otimes \pi_2^*F_2)$ in the decomposition displayed above and 
$ob_{\pi_1^*F_1\otimes\pi_2^*F_2}$ restricts  as an injective homomorphism to the third summand. We get
\begin{equation}
\label{eq-kernel-of-obstruction-of-box-product}
\ker(ob_{\pi_1^*F_1\otimes\pi_2^*F_2})=[\pi_1^*\ker(ob_{F_1})\otimes \pi_2^*H^0(\StructureSheaf{X})]
\oplus [\pi_1^*H^0(\StructureSheaf{X})\otimes\pi_2^*\ker(ob_{F_2})].
\end{equation}

\hide{
The derived monodromy group $\Spin(V)$ of $X$ acts on both spaces 
(??? does it act on $HT^*(X)$ ???) and the action
\[
HT^*(X)\otimes H^*(X,\CC)\rightarrow HT^*(X)
\]
is $\Spin(V)$-equivariant. Hence, the stabilizer $\Spin(V)_P$ of $ch(F_i)$ leaves $\ker(ob_{F_i})$-invariant. 
Hence, the diagonal $\Spin(V)$ action leaves $\ker(ob_{\pi_1^*F_1\otimes\pi_2^*F_2})$ invariant. 
Let $\varphi:H^*(X\times X,\ZZ)\rightarrow H^*(X\times\hat{X},\ZZ)$ be the isomorphism induced by $\Phi$.
The $\varphi\circ \Spin(V)\varphi^{-1}$-action on $H^*(X\times\hat{X},\ZZ)$ is a subgroup of the derived monodromy grooup of $X\times\hat{X}$. Hence it acts also on $HT^*(X\times\hat{X})$. Again, $ch(\Phi(\pi_1^*F_1\otimes\pi_2^*F_2))$ is 
$\varphi\circ \Spin(V)_P\varphi^{-1}$-invariant. Set $E:=\Phi(\pi_1^*F_1\otimes\pi_2^*F_2)$. Then $\ker(ob_E)$ is
$\varphi\circ \Spin(V)_P\varphi^{-1}$-invariant. So the tangent to the $\varphi\circ \Spin(V)_P\varphi^{-1}$-orbit of every element of 
$\ker(ob_E)$ is contained in $\ker(ob_E)$.
}
Let $\Phi:D^b(X\times X)\rightarrow D^b(X\times\hat{X})$ be Orlov's derived equivalence (\ref{eq-Orlov-derived-equivalence-from-XxX-to-X-times-hat-X}) and set
\begin{equation}
\label{eq-E}
E:=\Phi(\pi_1^*F_1\otimes\pi_2^*F_2).
\end{equation} 

\begin{lem}
\label{lemma-kernel-of-ob-E-is-annihilator-of-ch_E}
\begin{enumerate}
\item
\label{lemma-item-kernel-of-ob-F-1-boxtimes-F-2-is-annihilator-of-ch}
The kernel of $ob_{\pi_1^*F_1\otimes\pi_2^*F_2}$ is equal to the subspace of $HT^2(X\times X)$ annihilating 
$ch(\pi_1^*F_1\otimes\pi_2^*F_2)$.
\item
\label{lemma-item-kernel-of-ob-E-is-annihilator-of-ch_E}
The kernel of $ob_{E}$ 
is equal to the subspace of $HT^2(X\times \hat{X})$ annihilating 
$ch(E)$.
\end{enumerate}
\end{lem}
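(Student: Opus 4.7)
The plan is to prove part (1) by combining the K\"unneth decomposition with Corollary \ref{cor-kernel-of-obstruction-is-annihilator-of-ch}, and then to deduce part (2) from part (1) via Lemma \ref{lemma-invariance-under-derived-equivalence-of-equality-of-ker-ob-and-ker-ch} applied to Orlov's equivalence $\Phi$.

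For part (1), set $G:=\pi_1^*F_1\otimes\pi_2^*F_2$. Both the obstruction map $ob_G$ and the contraction map $(\bullet)\Contract ch(G)$, where $ch(G)=\pi_1^*ch(F_1)\cdot\pi_2^*ch(F_2)$, respect the K\"unneth decomposition (\ref{eq-decomposition-of-HT-2}) of $HT^2(X\times X)$: for $ob_G$ this is already encoded in (\ref{eq-kernel-of-obstruction-of-box-product}), via (\ref{eq-decomposition-of-atyah-class-of-secant-square-sheaf}) and the multiplicativity of $\exp(at_G)$, and for contraction it follows from the product form of $ch(G)$. On the first two summands of (\ref{eq-decomposition-of-HT-2}) both maps reduce (up to a tensor with the rank-one identity factor) to $ob_{F_i}$ and $\Contract ch(F_i)$ on $HT^2(X)$, and equality of kernels is Corollary \ref{cor-kernel-of-obstruction-is-annihilator-of-ch} applied to $F_1$ and to $F_2$. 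The corollary applies to $F_2$ verbatim, since the $\Sigma_i$ are translates of $-AJ(C)$, which are again smooth non-hyperelliptic genus-$3$ curves in $X$, so Lemma \ref{lemma-dim-Ext-1-F-F-is-3n+3} and Proposition \ref{prop-obstruction-map-has-rank-6} carry over unchanged.

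It remains to treat the third K\"unneth summand $\pi_1^*HT^1(X)\otimes\pi_2^*HT^1(X)$, where both kernels must be shown to vanish. For $ob_G$ this is noted before (\ref{eq-kernel-of-obstruction-of-box-product}), since the restriction is $ev^1_{F_1}\otimes ev^1_{F_2}$, a tensor product of isomorphisms by Lemma \ref{lemma-dim-Ext-1-F-F-is-3n+3}. For contraction it suffices to prove that each $c_i:HT^1(X)\to H^*(X,\CC)$, $c_i(\xi):=\xi\Contract ch(F_i)$, is injective. Writing $\xi=\xi'+\xi''\in H^1(\StructureSheaf{X})\oplus H^0(TX)$ and using the derivation identity $\xi''\Contract\Theta^2=2(\xi''\Contract\Theta)\cup\Theta$, a short bidegree computation shows that the $H^{0,1}$- and $H^{1,2}$-components of $\xi\Contract(1+\Theta-\tfrac{d}{2}\Theta^2-d[pt])$ are
\[
\xi'+\xi''\Contract\Theta \quad\text{and}\quad (\xi'-d\,\xi''\Contract\Theta)\cup\Theta
\]
respectively. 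Their simultaneous vanishing, together with non-degeneracy of $\Contract\Theta:H^0(TX)\to H^{0,1}(X)$ and Lefschetz injectivity of $\cup\Theta:H^1(X,\CC)\to H^3(X,\CC)$, forces $(d+1)(\xi''\Contract\Theta)=0$, whence $\xi=0$.

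Part (2) is then immediate: $\Phi$ is a derived equivalence between abelian varieties with $\Phi(G)=E$, so Lemma \ref{lemma-invariance-under-derived-equivalence-of-equality-of-ker-ob-and-ker-ch} transports $\ker(ob_G)=\ker(\Contract ch(G))$ to $\ker(ob_E)=\ker(\Contract ch(E))$. The main obstacle is the injectivity check for $c_i$ on the third K\"unneth summand in part (1); this is the only step not handled directly by pre-existing results, and it hinges on the non-triviality of the coefficient $(d+1)$ arising from the imaginary quadratic parameter combined with Lefschetz positivity of the polarization $\Theta$.
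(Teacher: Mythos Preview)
Your approach matches the paper's: reduce part (1) to Corollary \ref{cor-kernel-of-obstruction-is-annihilator-of-ch} on the first two K\"unneth summands and to injectivity of both maps on the third, then deduce part (2) from Lemma \ref{lemma-invariance-under-derived-equivalence-of-equality-of-ker-ob-and-ker-ch}. One slip: the maps $ev^1_{F_i}:HT^1(X)\to\Ext^1(F_i,F_i)$ are injections, not isomorphisms, since $\dim\Ext^1(F_i,F_i)=3(d+1)+3>6=\dim HT^1(X)$ by Lemma \ref{lemma-dim-Ext-1-F-F-is-3n+3}; but a tensor product of injective linear maps between vector spaces is still injective, so your conclusion on the third summand is unaffected. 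Your two-component computation for the injectivity of $c_i$ is in fact more explicit than the paper's, which simply asserts that $\rank(F_i)=1$ handles $H^1(\StructureSheaf{X})$ and $ch_2(F_i)$ handles $H^0(TX)$, leaving the cross-terms implicit.
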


\begin{proof}
(\ref{lemma-item-kernel-of-ob-F-1-boxtimes-F-2-is-annihilator-of-ch})
Let $Z$ be the subspace of $HT^2(X\times X)$ annihilating $ch(\pi_1^*F_1\otimes\pi_2^*F_2)$.
The inclusion $\ker(ob_{\pi_1^*F_1\otimes\pi_2^*F_2})\subset Z$ follows from \cite[Theorem B]{Huang}.
Equation (\ref{eq-kernel-of-obstruction-of-box-product}) implies that $\ker(ob_{\pi_1^*F_1\otimes\pi_2^*F_2})$ is contained in 
\begin{equation}
\label{eq-part-of-HT2}
[\pi_1^*HT^2(X)\otimes \pi_2^*HT^0(X)]\oplus [\pi_1^*HT^0(X)\otimes \pi_2^*HT^2(X)].
\end{equation}
We have seen that the kernel of $ob_{F_i}$ is equal to the subspace of $HT^2(X)$ annihilating $ch(F_i)$ under the action of $HT^*(X)$ on its module $H^*(X,\CC)$ (Corollary \ref{cor-kernel-of-obstruction-is-annihilator-of-ch}). 
In order to prove the inclusion $Z\subset\ker(ob_{\pi_1^*F_1\otimes\pi_2^*F_2})$
it suffices to prove that $Z$ is contained in (\ref{eq-part-of-HT2}), by the decomposition (\ref{eq-decomposition-of-atyah-class-of-secant-square-sheaf}) of the Atiyah class of $\pi_1^*F_1\otimes\pi_2^*F_2$.
The homomorphism
\[
(\pi_1^*ch(F_1)\cup\pi_2^*ch(F_2)) \Contract (\bullet):HT^2(X\times X)\rightarrow H^*(X\times X,\CC)
\]
maps the third summand $\pi_1^*HT^1(X)\otimes\pi_2^*HT^1(X)$ in the decomposition (\ref{eq-decomposition-of-HT-2}) to a subspace of $H^*(X\times X,\CC)$ intersecting trivially the sum of the images of the other two summands.
Indeed, the first summand in (\ref{eq-decomposition-of-HT-2}) is mapped into $\pi_1^*H^*(X,\CC)\otimes\pi_2^*ch(F_2)$,
the second into $\pi_1^*ch(F_1)\otimes \pi_2^*H^*(X,\CC)$ and every element in the direct sum of the latter two 
is the sum of classes of the form $\pi_1^*\alpha\cup\pi_2^*\beta$, where either $\alpha$ or $\beta$ is a  Hodge class. 
On the other hand, the image of  $\pi_1^*HT^1(X)\otimes\pi_2^*HT^1(X)$ is contained in the subspace
\[
\pi_1^*[\oplus_{q-p=1}H^{p,q}(X)]\otimes \pi_2^*[\oplus_{q-p=1}H^{p,q}(X)].
\]
In order to prove that $Z$ is contained in (\ref{eq-part-of-HT2}) it suffices to prove that  the homomorphism 
$ch(F_i)\Contract (\bullet):HT^1(X)\rightarrow H^*(X,\CC)$ is injective. 
Indeed, multiplication by $1=\rank(F_i)$ induces an injective homomorphism from the subspace $H^1(\StructureSheaf{X})$ of $HT^1(X)$ to the subspace $H^1(\StructureSheaf{X})$ of  $H^1(X,\CC)$ and contraction with $ch_2(F_i)=-\frac{n}{2}\Theta^2$ induces an injective homomorphism from $H^0(TX)$ to $H^{1,2}(X)$.

(\ref{lemma-item-kernel-of-ob-E-is-annihilator-of-ch_E})
Apply Lemma \ref{lemma-invariance-under-derived-equivalence-of-equality-of-ker-ob-and-ker-ch} 
with $A=X\times X$, $B=X\times\hat{X}$, and $F=\pi_1^*F_1\otimes\pi_2^*F_2$.
\end{proof}

%
\subsection{Orlov's isomorphism $\Phi^{HT}:HT^2(X\times X)\rightarrow HT^2(X\times\hat{X})$ maps diagonal deformations to commutative-gergy ones}
\label{sec-diaginal-deformations}

The algebra $HT^*(X)$ acts on its module $H^*(X,\CC):=\oplus H^{p,q}(X)$ and embeds in $\End(H^*(X,\CC))$.
Given $\alpha\in HT^*(X)$ denote by $e_\alpha\in \End(H^*(X,\CC))$ the corresponding endomorphism.
Let $\tau$ be the involution in (\ref{eq-Mukai-pairing}).
If $\alpha$ is an element of $H^i(\wedge^jTX)$ and $x$ is a class in $H^k(X,\CC)$, then $e_\alpha(x)$ belongs to  $H^{k+i-j}(X,\CC)$. Set $t:=i-j$. We have
\[
(\tau\circ e_\alpha\circ \tau)(x) = (-1)^{\frac{(k+t)(k+t-1)}{2}}(-1)^{\frac{k(k-1)}{2}}e_\alpha(x)=(-1)^{kt+\frac{t(t-1)}{2}}e_\alpha(x).
\]
In particular, for $k$ even, we have $(\tau\circ e_\alpha\circ \tau)(x) =(-1)^{\frac{t(t-1)}{2}}e_\alpha(x)$. In particular,
\[
(\tau\circ e_\alpha\circ \tau)(ch(F)) =(-1)^{\frac{t(t-1)}{2}}e_\alpha(ch(F))
\]
Let $(\bullet)^*:HT^*(X)\rightarrow HT^*(X)$ act on $H^i(\wedge^jTX)$ by multiplication by $(-1)^{\frac{(i-j)(i-j-1)}{2}}$.
We get 
\begin{equation}
\label{eq-conjugation-of-HT-algebra-by-tau}
(\tau\circ e_\alpha\circ \tau)(ch(F)) = e_{\alpha^*}(ch(F)).
\end{equation}
Recall that $\tau(ch(F))=ch(F^\vee)$.
In particular, $e_{\alpha^*}$ annihilates $ch(F^\vee)$, if and only if $e_\alpha$ annihilates $ch(F)$.
For $(\alpha,\beta,\gamma)\in HT^2(X)=H^2(\StructureSheaf{X})\oplus H^1(TX)\oplus H^0(\wedge^2TX)$ we have
$(\alpha,\beta,\gamma)^*=(-\alpha,\beta,-\gamma).$

The {\em diagonal embedding} of $HT^2(X)$ in $HT^2(X\times X)$ is given by $\alpha\mapsto \pi_1^*(\alpha)+\pi_2^*(\alpha)$. 
We let the involution $id\otimes (\bullet)^*$ act on $HT^*(X\times X)$ via the K\"{u}nneth decomposition of the latter.

\hide{
\begin{lem}
The intersection
\begin{eqnarray*}
&&
\left[\pi_1^*HT^2(X)\oplus\pi_2^*HT^2(X)
\right]\cap 
(\Phi^{HT})^{-1}\left(
H^2(\StructureSheaf{X\times \hat{X}})\oplus H^1(T[X\times \hat{X}])
\right) 
\end{eqnarray*}
is equal to the diagonal embedding of $H^1(TX)$ and the anti-diagonal embedding of 
$H^2(\StructureSheaf{X})\oplus H^0(\wedge^2TX)$.
\end{lem}
}

Given an equivalence $F:D^b(X)\rightarrow D^b(Y)$ of the derived categories of two smooth projective varieties $X$ and $Y$ we get the graded ring isomorphism 
$F^{HT}:HT^*(X)\rightarrow HT^*(Y)$ (see \cite[Cor. 8.3]{caldararu-I} and \cite[Theorem 1.4]{calaque-et-al}). The summand $HT^2(X)$ parametrizes first order deformations of $D^b(X)$ associated to first order deformations of the abelian category of coherent sheaves on $X$ \cite{toda}.
The summand $HT^1(X)$ is the Lie algebra of the identity component of $\Aut(D^b(X))$, and $F^{HT}$ restricts to the differential of the isomorphism induced by conjugation by $F$. If $F=f_*$, for an isomorphism $f:X\rightarrow Y$, then $f_*^{HT}$ restricts to the summands $H^1(X,\StructureSheaf{X})$ and $H^0(X,TX)$
of $HT^1(X)$ as the homomorphism induced by the direct image functor composed with the isomorphism induced by the natural sheaf isomorphisms $f_*\StructureSheaf{X}\rightarrow\StructureSheaf{Y}$ and $df:f_*TX\rightarrow TY$. When $X$ is an abelian variety, the equivalence $\Phi_\P:D^b(\hat{X})\rightarrow D^b(X)$ with Fourier-Mukai kernel the Poincar\'{e} line bundle $\P$ conjugates autoequivalences associated to translation automorphisms to autoequivalences associated with tensorization by line bundles in $\Pic^0$. Hence, $\Phi_\P^{HT}:HT^1(\hat{X})\rightarrow HT^1(X)$ maps the Lie subalgebra $H^0(T\hat{X})$ of the subgroup $\hat{X}$ of translations of $\hat{X}$ to the Lie subalgebra $H^1(\StructureSheaf{X})$ of the subgroup $\Pic^0(X)$ and it maps $H^1(\StructureSheaf{\hat{X}})$ to $H^0(TX)$.

\begin{lem}
\label{lemma-Orlov's-equivalence-maps-diagonal-deformations-to-commutative-gerby-ones}
The composition 
$\Phi^{HT}\circ (id\otimes (\bullet)^*):HT^2(X\times X)\rightarrow HT^2(X\times\hat{X})$
maps the diagonal embedding of $HT^2(X)$ into $H^1(T[X\times\hat{X}])\oplus H^2(\StructureSheaf{X\times\hat{X}})$.
The image is the direct sum of the graphs of the following three homomorphisms: 
\begin{enumerate}
\item The graph in $\pi_1^*H^1(TX)\oplus \pi_2^*H^1(T\hat{X})$ of the isomorphism 
$\Psi_{\P^{-1}[n]}^{HT}:H^1(TX)\rightarrow H^1(T\hat{X})$.
\item
The graph in $\pi_1^*H^2(\StructureSheaf{X}) \oplus H^1(T[X\times\hat{X}])$
of the homomorphism 
\begin{eqnarray*}
H^2(\StructureSheaf{X})&\rightarrow& H^1(T[X\times\hat{X}])
\\
\eta_1\wedge\eta_2&\mapsto&\pi_1^*(\eta_1)\wedge\pi_2^*(\Psi_{\P^{-1}[n]}^{HT}(\eta_2))-\pi_1^*(\eta_2)\wedge\pi_2^*(\Psi_{\P^{-1}[n]}^{HT}(\eta_1)),
\end{eqnarray*}
where $\eta_i\in H^1(\StructureSheaf{X})$, $i=1,2$.
\item
The graph in $\pi_2^*H^2(\StructureSheaf{\hat{X}})\oplus H^1(T[X\times\hat{X}])$
of the homomorphism
\begin{eqnarray*}
H^2(\StructureSheaf{\hat{X}})&\rightarrow &H^1(T[X\times\hat{X}])
\\
\eta_1\wedge\eta_2&\mapsto& 
-\pi_1^*(\Phi_\P^{HT}(\eta_1))\wedge\pi_2^*(\eta_2)+\pi_1^*(\Phi_\P^{HT}(\eta_2))\wedge\pi_2^*(\eta_1),
\end{eqnarray*}
where $\eta_i\in H^1(\StructureSheaf{\hat{X}})$, $i=1,2$.
\end{enumerate}
In particular, the image of $HT^2(X)$ in $HT^2(X\times\hat{X})$ projects injectively into the direct summand $H^1(T[X\times\hat{X}])$.
\end{lem}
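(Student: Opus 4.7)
The strategy is to factor $\Phi^{HT}$ via the decomposition $\Phi=(id\times\Psi_{\P^{-1}[n]})\circ \mu^*$ of Equation (\ref{eq-Orlov-derived-equivalence-from-XxX-to-X-times-hat-X}), so that $\Phi^{HT}=(id\otimes\Psi_{\P^{-1}[n]})^{HT}\circ(\mu^*)^{HT}$, and to compute each factor on the K\"{u}nneth decomposition of $HT^*(X\times X)$. Both factors are graded ring automorphisms of $HT^*$, and since the polyvector field algebra $HT^*$ of an abelian variety is the exterior algebra on $HT^1=H^1(\StructureSheaf{})\oplus H^0(T)$, it suffices to describe their actions on $HT^1$ and extend multiplicatively.

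For $(\mu^*)^{HT}$, the action on $\pi_1^*H^1(\StructureSheaf{X})\oplus\pi_2^*H^1(\StructureSheaf{X})\subset H^1(\StructureSheaf{X\times X})$ is pullback along $\mu$; this equals the matrix $\begin{pmatrix}1&0\\1&1\end{pmatrix}$ in the basis $(\pi_1^*,\pi_2^*)$, using $\pi_1\circ\mu=a$ (the addition map) whose $H^1(\StructureSheaf{})$-pullback is the comultiplication $\pi_1^*+\pi_2^*$. On $\pi_1^*H^0(TX)\oplus\pi_2^*H^0(TX)$, conjugating the translation autoequivalence $T^*_{(a,b)}$ by $\mu^*$ yields translation by $(a-b,b)$ (from $\mu^{-1}\circ T_{(a,b)}\circ\mu(x,y)=(x+a-b,y+b)$), giving the matrix $\begin{pmatrix}1&-1\\0&1\end{pmatrix}$. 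The factor $(id\otimes\Psi_{\P^{-1}[n]})^{HT}$ acts as identity on $\pi_1^*HT^*(X)$ and as $\Psi_{\P^{-1}[n]}^{HT}$ on $\pi_2^*HT^*(X)$; on degree one the latter exchanges $H^1(\StructureSheaf{X})\leftrightarrow H^0(T\hat{X})$ and $H^0(TX)\leftrightarrow H^1(\StructureSheaf{\hat{X}})$, and is inverse to $\Phi_\P^{HT}$.

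With these formulas in hand one checks the three cases separately. For $\beta\in H^1(TX)$, a short wedge-product calculation shows that $\pi_1^*\beta+\pi_2^*\beta$ is $(\mu^*)^{HT}$-invariant, and applying $(id\otimes\Psi_{\P^{-1}[n]})^{HT}$ produces the graph in (1). For $\alpha=\eta_1\wedge\eta_2\in H^2(\StructureSheaf{X})$, expanding $(\mu^*)^{HT}(\pi_1^*\alpha-\pi_2^*\alpha)$ yields $\pi_1^*\alpha+\pi_1^*\eta_1\wedge\pi_2^*\eta_2-\pi_1^*\eta_2\wedge\pi_2^*\eta_1$, and applying $\Psi_{\P^{-1}[n]}^{HT}$ on the second factor converts the cross terms into elements of $H^1(T[X\times\hat{X}])$, producing (2). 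The case $\gamma=v_1\wedge v_2\in H^0(\wedge^2TX)$ is parallel: the crucial point is that the $+\pi_1^*\gamma$ term produced by expanding $(\mu^*)^{HT}(\pi_2^*\gamma)$ exactly cancels the original $\pi_1^*\gamma$ in the input $\pi_1^*\gamma-\pi_2^*\gamma$, so no summand in $H^0(\wedge^2T[X\times\hat{X}])$ survives, and the remaining terms, after applying $\Psi_{\P^{-1}[n]}^{HT}$ which sends $H^0(TX)$ to $H^1(\StructureSheaf{\hat{X}})$ and $H^0(\wedge^2TX)$ to $H^2(\StructureSheaf{\hat{X}})$, assemble into the graph in (3).

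The main obstacle is sign-bookkeeping: comparing $\Psi_{\P^{-1}[n]}^{HT}$ with the inverse of $\Phi_\P^{HT}$ on each bidegree summand (involving the shift $[n]$ and the dualization in $\P^{-1}$), and verifying compatibility of the involution $(\bullet)^*$ on $HT^*(X)$ with the main anti-automorphism $\tau$ on the spin representation appearing in Lemma \ref{lemma-orlov-isomorphism-is-chevalley}. The involution $(\bullet)^*$ is designed precisely so that the cancellation in case (3) above occurs and $\Phi^{HT}\circ(id\otimes(\bullet)^*)$ sends diagonal deformations into the ``commutative-gerby'' locus $H^2(\StructureSheaf{X\times\hat{X}})\oplus H^1(T[X\times\hat{X}])\subset HT^2(X\times\hat{X})$, mirroring at the Lie algebra level the role $\tau$ plays at the spin-representation level.
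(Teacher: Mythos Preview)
Your proposal is correct and follows essentially the same approach as the paper: factor $\Phi^{HT}=(id\otimes\Psi_{\P^{-1}[n]})^{HT}\circ(\mu^*)^{HT}$, compute both factors on the degree-one generators $H^1(\StructureSheaf{})\oplus H^0(T)$ of the exterior algebra $HT^*$, and then expand on each of the three summands $H^1(TX)$, $H^2(\StructureSheaf{X})$, $H^0(\wedge^2TX)$ of $HT^2(X)$. Your matrices for $(\mu^*)^{HT}$ on $H^1(\StructureSheaf{})$ and $H^0(T)$, and the cancellation mechanism you describe in case (3), match the paper's computations line for line; the only remark is that the paper carries out the sign bookkeeping directly without invoking any comparison to $\tau$, so your last paragraph identifies a potential difficulty that in practice dissolves into routine verification.
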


\begin{proof}
$\Phi^{HT}:HT^*(X\times X)\rightarrow HT^*(X\times \hat{X})$ is a graded ring isomorphism.
$HT^*(X\times X)$ is generated by $HT^1(X\times X)$.
The isomorphism $\Phi^{HT}_{\P}:D^b(\hat{X})\rightarrow D^b(X)$, associated to the Poincar\'{e} line bundle, maps 
$H^1(\StructureSheaf{\hat{X}})$ to $H^0(TX)$  and $H^0(T\hat{X})$ to $H^1(\StructureSheaf{X})$. Hence, its inverse $\Psi^{HT}_{\P^{-1}[n]}$
maps $H^0(TX)$ to $H^1(\StructureSheaf{\hat{X}})$ and $H^1(\StructureSheaf{X})$ to $H^0(T\hat{X})$. 
\hide{
 It follows that $(1\boxtimes \Psi_{\P^{-1}[n]})^{HT}:HT^2(X\times X)\rightarrow HT^2(X\times\hat{X})$ maps 
$\pi_1^*H^0(TX)\otimes \pi_2^*H^1(\StructureSheaf{X})$ to the subspace $\pi_1^*H^0(TX)\otimes \pi_2^*H^0(T\hat{X}))$
of $H^0(\wedge^2T(X\times\hat{X}))$ and it maps the subspace 
$\pi_1^*H^0(TX)\otimes \pi_2^*H^0(TX)$ of $H^0(\wedge^2T(X\times X))$
to the subspace $\pi_1^*H^0(TX)\otimes \pi_2^*H^1(\StructureSheaf{X})$ 
of $H^1(T(X\times\hat{X}))$ of the commutative first order deformations.
}

The isomorphism $(\mu^{-1})_*=\mu^*:D^b(X\times X)\rightarrow D^b(X\times\hat{X})$ induces the isomorphism
\[
(\mu^{-1})_*^{HT}:HT^1(X\times X)\rightarrow HT^1(X\times X),
\]
where $\mu^{-1}(x,y)=(x-y,y)$. 
Given $\xi_2, \xi_2\in H^0(TX)$, we have
\hide{
\begin{eqnarray*}
(\mu^{-1}_*)^{HT}(\pi_1^*\xi_1\wedge\pi_2^*\xi_2)&=&\pi_1^*(\xi_1)\wedge[-\pi_1^*\xi_2+\pi_2^*\xi_2]
\\ &=&
-\pi_1^*(\xi_1\wedge\xi_2)+\pi_1^*\xi_1\wedge\pi_2^*\xi_2,
\\
((1\boxtimes \Psi_{\P^{-1}[n]})\circ \mu^{-1}_*)^{HT}(\pi_1^*\xi_1\wedge\pi_2^*\xi_2)&=&
-\pi_1^*(\xi_1\wedge\xi_2)+\pi_1^*\xi_1\wedge\pi_2^*\eta_2.
\end{eqnarray*}
Also
}
\begin{eqnarray*}
(\mu^{-1}_*)^{HT}(\pi_1^*(\xi_1\wedge\xi_2))&=&\pi_1^*(\xi_1\wedge\xi_2).
\\
(\mu^{-1}_*)^{HT}(\pi_2^*(\xi_1\wedge\xi_2))&=&
[-\pi_1^*\xi_1+\pi_2^*\xi_1]\wedge[-\pi_1^*\xi_2+\pi_2^*\xi_2]
\\
&=& \pi_1^*(\xi_1\wedge\xi_2)-\pi_1^*\xi_1\wedge\pi_2^*\xi_2+\pi_1^*\xi_2\wedge\pi_2^*\xi_1+\pi_2^*(\xi_1\wedge\xi_2).
\end{eqnarray*}
On the other hand, given $\eta\in H^1(X,\StructureSheaf{X})$, 
\begin{eqnarray*}
\mu^*(\pi_1^*(\eta)) &=& (\pi_1\circ\mu)^*(\eta)=\pi_1^*\eta+\pi_2^*\eta,
\\
\mu^*(\pi_2^*(\eta))&=& \pi_2^*\eta.
\end{eqnarray*}
Hence, given $\xi\in H^0(X,TX)$ and $\eta_1,\eta_2\in H^1(X,\StructureSheaf{X})$,
\begin{eqnarray*}
\mu^*(\pi_1^*(\eta_1\wedge\eta_2))&=& \pi_1^*(\eta_1\wedge\eta_2)+(\pi_1^*\eta_1\wedge\pi_2^*\eta_2)-(\pi_1^*\eta_2\wedge\pi_2^*\eta_1)+\pi_2^*(\eta_1\wedge\eta_2),
\\
\mu^*(\pi_1^*(\xi\wedge\eta))&=& \pi_1^*\xi\wedge(\pi_1^*\eta+\pi_2^*\eta).
\\
\mu^*(\pi_2^*(\xi\wedge\eta))&=& [-\pi_1^*\xi+\pi_2^*\xi]\wedge \pi_2^*\eta=-(\pi_1^*\xi\wedge\pi_2^*\eta)+\pi_2^*(\xi\wedge\eta).
\end{eqnarray*}

Given $\xi_i\in H^0(TX)$,  set $\eta_i:=\Psi_{\P^{-1}[n]}^{HT}(\xi_i)\in H^1(\StructureSheaf{\hat{X}})$, $i=1,2$.
$(\mu^{-1}_*)^{HT}(\pi_2^*(\xi_1\wedge\xi_2))$ is sent via $(1\boxtimes \Psi_{\P^{-1}[n]})^{HT}$ to
\[
\Phi^{HT}(\pi_2^*(\xi_1\wedge\xi_2))=
\pi_1^*(\xi_1\wedge\xi_2)-\pi_1^*\xi_1\wedge\pi_2^*\eta_2+\pi_1^*\xi_2\wedge\pi_2^*\eta_1+\pi_2^*(\eta_1\wedge\eta_2).
\]
On the other hand, $\Phi^{HT}(\pi_1^*(\xi_1\wedge\xi_2))=\pi_1^*(\xi_1\wedge\xi_2)$. Hence,
\begin{equation}
\label{eq-Phi-HT-takes-non-commutative-anti-diagonal-deformations-to-commutative}
\Phi^{HT}(-\pi_1^*(\xi_1\wedge\xi_2)+\pi_2^*(\xi_1\wedge\xi_2))=
-\pi_1^*\xi_1\wedge\pi_2^*\eta_2+\pi_1^*\xi_2\wedge\pi_2^*\eta_1+\pi_2^*(\eta_1\wedge\eta_2).
\end{equation}
The non-commutative first order deformation $-\pi_1^*(\xi_1\wedge\xi_2)+\pi_2^*(\xi_1\wedge\xi_2)$ of $X\times X$ in $H^0(\wedge^2T(X\times X))$ is mapped to a commutative-gerby deformation of $X\times\hat{X}$.

Let $\eta'_i\in H^1(\StructureSheaf{X})$ and $\xi'_i\in H^0(T\hat{X})$ satisfy $\Psi_{\P^{-1}[n]}^{HT}(\eta'_i)=\xi'_i$, $i=1,2$. Then
\begin{eqnarray*}
\nonumber
\Phi^{HT}(\pi_1^*(\eta'_1\wedge\eta'_2))&=&
\pi_1^*(\eta'_1\wedge\eta'_2)+(\pi_1^*\eta'_1\wedge\pi_2^*\xi'_2)-(\pi_1^*\eta'_2\wedge\pi_2^*\xi'_1)+\pi_2^*(\xi'_1\wedge\xi'_2).
\\
\nonumber
\Phi^{HT}(\pi_2^*(\eta'_1\wedge\eta'_2))&=&\pi_2^*(\xi'_1\wedge\xi'_2).
\end{eqnarray*}
Hence,
\begin{equation}
\label{eq-Phi-HT-takes-gerby-anti-diagonal-deformations-to-gerby-commutative}
\Phi^{HT}(\pi_1^*(\eta'_1\wedge\eta'_2)-\pi_2^*(\eta'_1\wedge\eta'_2))=
\pi_1^*(\eta'_1\wedge\eta'_2)+(\pi_1^*\eta'_1\wedge\pi_2^*\xi'_2)-(\pi_1^*\eta'_2\wedge\pi_2^*\xi'_1).
\end{equation}

Let $\eta'\in H^1(\StructureSheaf{X})$ and $\xi'\in H^0(T\hat{X})$ satisfy $\Psi_{\P^{-1}[n]}^{HT}(\eta')=\xi'$. 
Let $\xi\in H^0(TX)$ and $\eta\in H^1(\StructureSheaf{\hat{X}})$ satisfy $\Psi_{\P^{-1}[n]}^{HT}(\xi)=\eta$.
Then
\begin{eqnarray*}
\Phi^{HT}(\pi_1^*(\xi\wedge\eta'))&=& \pi_1^*(\xi\wedge\eta')+(\pi_1^*\xi\wedge\pi_2^*\xi')).
\\
\Phi^{HT}(\pi_2^*(\xi\wedge\eta'))&=&-(\pi_1^*\xi\wedge\pi_2^*\xi')+\pi_2^*(\eta\wedge\xi').
\\
\Phi^{HT}(\pi_1^*(\xi\wedge\eta'+\pi_2^*(\xi\wedge\eta'))&=&\pi_1^*(\xi\wedge\eta')+\pi_2^*(\eta\wedge\xi').
\end{eqnarray*}
So $\Phi^{HT}$ maps the diagonal commutative first order deformations of $X\times X$ to commutative first order deformations of $X\times\hat{X}$. Furthermore, it maps the anti-diagonal non-commutative and gerby deformations of $X\times X$ to
commutative and gerby deformations of $X\times\hat{X}$, by Equations 
(\ref{eq-Phi-HT-takes-non-commutative-anti-diagonal-deformations-to-commutative}) and 
(\ref{eq-Phi-HT-takes-gerby-anti-diagonal-deformations-to-gerby-commutative}).
\end{proof}

\hide{
\begin{rem}
\label{rem-Orlov's-equivalence-maps-diagonal-deformations-to-commutative-gerby-ones}
The appearance of the anti-diagonal embedding in Lemma \ref{lemma-Orlov's-equivalence-maps-diagonal-deformations-to-commutative-gerby-ones} is explained as follows.
Let $(\alpha,\beta,\gamma)$ be an element of $H^2(\StructureSheaf{X}\oplus H^1(TX)\oplus H^0(\wedge^2TX)$.
Then $(\alpha,\beta,\gamma)$ annihilates $ch(F)$, if and only if $(-\alpha,\beta,-\gamma)$ annihilates $ch(F^\vee)$,
since $ch_{k}(F^\vee)=(-1)^kch_{k}(F)$ and
\begin{eqnarray*}
(\alpha,\beta,\gamma)\Contract ch(F) &=&
\alpha\Contract ch_0(F)+\beta\Contract ch_1(F)+\gamma\Contract ch_2(F) +
\\ 
&&
\alpha\Contract ch_1(F)+\beta\Contract ch_2(F)+\gamma\Contract ch_3(F),
\end{eqnarray*}
where the sum in the first row is in $H^{0,2}(X)$ and the sum in the second row is in $H^{1,3}(X)$. 
\end{rem}
}

Let $F_1$, $F_2$, and $E$ be as in Equation (\ref{eq-E}). Note that $ch(F_1)=ch(F_2)$. Hence, $\ker(ob_{F_1})=\ker(ob_{F_2})$, by Corollary \ref{cor-kernel-of-obstruction-is-annihilator-of-ch}.

\begin{cor}
\label{cor-Orlov's-equivalence-maps-diagonal-deformations-to-commutative-gerby-ones}
The isomorphism $\Phi^{HT}\circ (id\otimes(\bullet)^*):HT^2(X\times X)\rightarrow HT^2(X\times\hat{X})$ maps the diagonal embedding of the $9$-dimensional subspace $\ker(ob_{F_1})$ of $HT^2(X)$ to 
a $9$-dimensional subspace of $H^1(T[X\times\hat{X}])\oplus H^2(\StructureSheaf{X\times\hat{X}})$ of commutative and gerby deformations in the kernel of $ob_E$.
\end{cor}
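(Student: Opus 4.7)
My plan is to reduce the corollary to the $(\bullet)^*$-stability of $\ker(ob_{F_1})$, which in turn I would verify by an explicit description of that kernel.

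First, $\dim\ker(ob_{F_1})=9$ by Proposition \ref{prop-obstruction-map-has-rank-6} (together with Corollary \ref{cor-kernel-of-obstruction-is-annihilator-of-ch}, identifying the kernel with the annihilator of $ch(F_1)$). Lemma \ref{lemma-Orlov's-equivalence-maps-diagonal-deformations-to-commutative-gerby-ones} shows that $\Phi^{HT}\circ(id\otimes(\bullet)^*)$ carries the diagonal copy of $HT^2(X)$ inside $HT^2(X\times X)$ into the commutative-gerby summand $H^1(T[X\times\hat X])\oplus H^2(\StructureSheaf{X\times\hat X})$, and the final assertion of that lemma (injective projection onto the first factor) makes the composition injective on the diagonal. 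Thus its restriction to the $9$-dimensional subspace $\ker(ob_{F_1})$ has a $9$-dimensional image in the commutative-gerby summand.

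Next, I would use Lemma \ref{lemma-invariance-under-derived-equivalence-of-equality-of-ker-ob-and-ker-ch}, applied to $\Phi$ and the object $\pi_1^*F_1\otimes\pi_2^*F_2$, to see that $\Phi^{HT}$ maps $\ker(ob_{\pi_1^*F_1\otimes\pi_2^*F_2})$ isomorphically onto $\ker(ob_E)$. It therefore suffices to check that $\pi_1^*\alpha+\pi_2^*\alpha^*$ lies in $\ker(ob_{\pi_1^*F_1\otimes\pi_2^*F_2})$ for every $\alpha\in\ker(ob_{F_1})$. The direct-sum description of this kernel given in Equation (\ref{eq-kernel-of-obstruction-of-box-product}) breaks the required inclusion into two statements: $\alpha\in\ker(ob_{F_1})$, which is the hypothesis, and $\alpha^*\in\ker(ob_{F_2})$. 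Since $ch(F_1)=ch(F_2)$, Corollary \ref{cor-kernel-of-obstruction-is-annihilator-of-ch} gives $\ker(ob_{F_2})=\ker(ob_{F_1})$, and the whole corollary reduces to showing that $\ker(ob_{F_1})$ is preserved by $(\bullet)^*$.

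The hard part, and the only step that uses the particular form of $w:=ch(F_1)=1+\Theta-\tfrac{d}{2}\Theta^2-d[pt]$ and the ampleness of $\Theta$, is this $(\bullet)^*$-stability. Writing elements of $HT^2(X)$ as triples $(\alpha,\beta,\gamma)\in H^2(\StructureSheaf{X})\oplus H^1(TX)\oplus H^0(\wedge^2TX)$, Corollary \ref{cor-kernel-of-obstruction-is-annihilator-of-ch} says $(\alpha,\beta,\gamma)\in\ker(ob_{F_1})$ iff the $H^{0,2}$- and $H^{1,3}$-components of $(\alpha,\beta,\gamma)\Contract w$ vanish. Using the automatic vanishing $\gamma\Contract\Theta=0$ (its target $H^{-1,1}$ being zero), the Leibniz identity $\beta\Contract\Theta^2=2(\beta\Contract\Theta)\Theta$, the analogous identity $\gamma\Contract\Theta^3=3(\gamma\Contract\Theta^2)\Theta$, the formula $[pt]=\Theta^3/6$, and the hard Lefschetz injectivity of $\cup\Theta\colon H^{0,2}\to H^{1,3}$, these two equations collapse to
\[
\beta\Contract\Theta=0,\qquad \alpha=\tfrac{d}{2}\gamma\Contract\Theta^2.
\]
Both conditions are manifestly invariant under $(\alpha,\beta,\gamma)\mapsto(-\alpha,\beta,-\gamma)$, which is exactly $(\bullet)^*$ on $HT^2(X)$ by the sign formula $(-1)^{(i-j)(i-j-1)/2}$. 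Granted this, $\ker(ob_{F_1})$ is $(\bullet)^*$-stable and the corollary follows. The only genuine computation is the collapse to the two displayed equations, and it is there that the principal polarization and hard Lefschetz play an essential role.
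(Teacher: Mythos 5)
Your proof is correct and follows essentially the same path as the paper: reduce the claim to showing that $\ker(ob_{F_1})$ is stable under $(\bullet)^*$ (equivalently, $\ker(ob_{F_2})=\ker(ob_{F_2^\vee})$), then establish this by identifying $\ker(ob_{F_1})$ with the explicit subspace $\{\beta\Contract\Theta=0,\ \alpha=\tfrac{d}{2}\gamma\Contract\Theta^2\}$, which is manifestly $(\alpha,\beta,\gamma)\mapsto(-\alpha,\beta,-\gamma)$-invariant. The only presentational difference is in how that explicit description is justified: you eliminate directly from the two equations using the Leibniz identities $\beta\Contract\Theta^2=2(\beta\Contract\Theta)\Theta$, $\gamma\Contract\Theta^3=3(\gamma\Contract\Theta^2)\Theta$ and hard-Lefschetz injectivity of $\Theta\cup(\cdot)$, so the coefficient $(d+1)\neq 0$ forces $\beta\Contract\Theta=0$; the paper instead verifies the inclusion of this candidate subspace in the kernel (via the same two identities and the same injectivity) and closes by a dimension count. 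Both computations are correct and rest on the same ingredients, so this is a variant in exposition rather than a genuinely different argument.
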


\begin{proof}
\underline{Step 1:}
We claim that $\ker(ob_{F_i})=\ker(ob_{F_i^\vee})$. The kernel of $of_{F_i}$ is the annihilator of $ch(F_i)$, which is the kernel of the following homomorphism.
\[
\begin{array}{c}
H^2(\StructureSheaf{X})\\
\oplus\\
H^1(TX)\\
\oplus\\
H^0(\wedge^2TX)
\end{array}
\LongRightArrowOf{\left(
\begin{array}{ccc}
1&\Theta&-(d/2)\Theta^2
\\
\Theta & -(d/2)\Theta^2&-(d/6)\Theta^3
\end{array}
\right)}
\begin{array}{c}
H^2(\StructureSheaf{X}) \\ \oplus \\ H^3(\Omega^1_X)
\end{array}
\]
The subspace annihilating $ch(F_i^\vee)$ is the kernel of the homomorphism obtained by replacing the above matrix by
\[
\left(
\begin{array}{cc}
-1 & 0\\
0&1
\end{array}
\right)
\left(
\begin{array}{ccc}
1&\Theta&-(d/2)\Theta^2
\\
\Theta & -(d/2)\Theta^2&-(d/6)\Theta^3
\end{array}
\right)
\left(
\begin{array}{ccc}
-1&0&0
\\
0 & 1 &0
\\
0&0&-1
\end{array}
\right)
\]
Hence, it suffices to show that the kernel is the direct sum of the subspace of $H^1(TX)$ annihilating $\Theta$ and the subspace of $H^2(\StructureSheaf{X})\oplus H^0(\wedge^2TX)$ in the kernel of $(1,-\frac{d}{2}\Theta^2)$. Indeed, both are $9$ dimensional, and so it suffices to prove the inclusion of this direct sum in the subspace annihilating $ch(F_i)$. This follows from the fact that (i) the subspace of $H^1(TX)$ annihilating $\Theta$ is equal to the subspace annihilating $\Theta^2$, and (ii)  the subspace of $H^2(\StructureSheaf{X})\oplus H^0(\wedge^2TX)$ 
in the kernel of $(1,-\frac{d}{2}\Theta^2)$ is equal to the subspace in the kernel of $(\Theta,-\frac{d}{6}\Theta^3)$. Fact (i) is easy to verify. Fact (ii) 
follows from the identity $\xi\Contract c\Theta^n=(-1)^inc(\xi\Contract \Theta)\Theta^{n-1}$, for $\xi\in H^0(TX)$ and $c\in H^i(\StructureSheaf{X})$, $i\geq 0$, and the observation that $c(\xi\Contract \Theta)$ belongs to $H^{i+1}(\StructureSheaf{X})$. Indeed, the identity implies that the homomorphism $(\Theta,-\frac{d}{6}\Theta^3)$ is
the composition
\[
\begin{array}{c}
H^2(\StructureSheaf{X})\\
\oplus\\
H^0(\wedge^2TX)
\end{array}
\LongRightArrowOf{\left(
\begin{array}{cc}
1&-(d/2)\Theta^2
\end{array}
\right)}
H^2(\StructureSheaf{X})\LongRightArrowOf{\Theta\cup}H^3(\Omega^1_X)
\]
and cup product with $\Theta$ is an injective homomorphism.

\underline{Step 2:}
If $e_\alpha$ annihilates $ch(F_1)$, then it annihilates $ch(F_2)$ and also $ch(F_2^\vee)$, by Step 1, and so 
 $e_{\alpha^*}$ annihilates $ch(F_2).$
Hence, $e_{\pi_1^*(\alpha)+\pi_2^*(\alpha^*)}$ annihilates
$ch(F_1\boxtimes F_2)$. It follows that $e_{\Phi^{HT}(\pi_1^*(\alpha)+\pi_2^*(\alpha^*))}$ annihilates $ch(E)$, by \cite[Theorem 1.4]{calaque-et-al}. 
The statement thus follows from Lemmas \ref{lemma-Orlov's-equivalence-maps-diagonal-deformations-to-commutative-gerby-ones} 
and \ref{lemma-kernel-of-ob-E-is-annihilator-of-ch_E}(\ref{lemma-item-kernel-of-ob-E-is-annihilator-of-ch_E})
\end{proof}

\begin{rem}
The image of the diagonal embedding in $HT^2(X\times X)$, of the kernel in $HT^2(X)$  of $ob_{F_1}$, is mapped via $\Phi^{HT}\circ(id\otimes(\bullet)^*)$ into a subspace of $H^2(\StructureSheaf{X\times\hat{X}})\oplus H^1(T(X\times\hat{X}))$, which projects onto the tangent space in $H^1(T(X\times\hat{X}))$ of the moduli space of abelian varieties of Weil type. This will follow from Lemma \ref{lemma-huge-diagram-is-commutative}, which shows that contraction with $\exp(-c_1(E)/\rank(E))$ induces an automorphism of $HT^2(X\times\hat{X})$ mapping the kernel of $ob_E$ into the subspace of $H^1(T(X\times\hat{X}))$ tangent to the moduli space of abelian varieties of Weil type. The above automorphism is unipotent, leaving each of $H^2(\StructureSheaf{X\times\hat{X}})\oplus H^1(T(X\times\hat{X}))$ and $H^2(\StructureSheaf{X\times\hat{X}})$ invariant and inducing the identity on the graded summand. Hence, the automorphism restricts to the kernel of $ob_E$ as the projection to $H^1(T(X\times\hat{X}))$.
\end{rem}

\begin{rem}
\label{rem-interchanging-F-1-and-F-2}
All the results of Section \ref{section-secant-sheaves-on-abelian-threefolds} remain valid after interchanging the sheaves $F_1:=\Ideal{\cup_{i=1}^{d+1}C_i}(\Theta)$ and $F_2:=\Ideal{\cup_{i=1}^{d+1}\Sigma_i}(\Theta).$ The two are interchanged by the involution $\iota:X\rightarrow X$ sending $L\in X=\Pic^2(C)$ to $\omega_C\otimes L^{-1}$. The cohomological action of $\iota$ corresponds to the element in the center of $\Spin(V)$ acting as the isentity on $S^+$ and by multiplication by $-1$ on $V$ and $S^-$. It acts on $HT^{ev}(X)$, and so on $HT^2(X)$,  as the identity and on $HT^{odd}(X)$ by multiplication by $-1$.  
\end{rem}

\begin{rem}
Note that when $n=2$ and $F$ is a simple sheaf on an abelian surface $X$, then the kernel of $ob_F$ is $5$-dimensional and Orlov's derived equivalence maps it to a subspace of $H^2(\StructureSheaf{X\times\hat{X}})\oplus H^1(T(X\times\hat{X}))$, which projects onto the tangent space in $H^1(T(X\times\hat{X}))$ of the moduli space of intermediate jacobians associated to the $3$-rd cohomology of generalized kummers \cite{markman-generalized-kummers}.
\end{rem}

%
\section{A reflexive sheaf over $X\times\hat{X}$ with $\Spin(V)_P$-invariant characteristic classes}
\label{section-Jacobians-of-genus-3-curves}
Let $C$ be a non-hyperelliptic curve of genus $3$. Let  $C_{\ell_i}=AJ(C)+\ell_i$, $1\leq i\leq d+1$, be the translate in $X=\Pic^2(C)$ of the Abel-Jacobi image  $AJ(C)\subset \Pic^1(C)$ of $C$ by $\ell_i\in\Pic^1(C)$.
In Section \ref{sec-general-position} we show that $H^0(X,\Ideal{\cup_{i=1}^{d+1}C_i}(2\Theta)\otimes L)$ vanishes for all $L\in \Pic^0(X)$, provided the intersection  $\bigcap_{1\leq i< j\leq d+1}\tau_{-\ell_i-\ell_j}(\Theta)$ is empty. Furthermore, the latter condition holds for generic $C$, if the set $\{\ell_i\}_{i=1}^{d+1}$ is an orbit under translations by elements of a cyclic subgroup of $\Pic^0(C)$ of order $d+1$. Set $F_1:=\Ideal{\cup_{i=1}^{d+1}C_i}(\Theta)$ and
$F_2:=\Ideal{\cup_{i=1}^{d+1}\Sigma_i}(\Theta)$. Set $\G:=\Phi(F_2\boxtimes F_1)[-3]$, where $\Phi$ is Orlov's derived equivalence.
In Section \ref{sec-reflexive-secant-square-sheaf} we show that  $\E:=\G^\vee[-1]$ is a reflexive sheaf over $X\times\hat{X}$ of rank $8d$, for a generic choice of the curves $C_i$ and $\Sigma_j$. Furthermore, $\E$ is locally free away from $(d+1)^2$ smooth surfaces in $X\times\hat{X}$.
In Section \ref{subsection-a-semiregular-secant-square-sheaf} we choose the set $\{C_i\}_{i=1}^{d+1}$ to consist of translates of $C_1$ by a cyclic subgroup $G_1$ of $\Pic^0(C)$ and choose the set $\{\Sigma_i\}_{i=1}^{d+1}$ similarly, for a cyclic subgroup $G_2$ of $\Pic^0(C)$. We show that a tensor product of the reflexive sheaf $\E$ with a suitable twisted line bundle descends to a semiregular reflexive twisted sheaf $\B$ over a quotient $Y$ of $X\times \hat{X}$ by a group $\bar{G}$ of translations, where $\bar{G}\cong G_1\times G_2$. We then prove Theorem \ref{thm-algebraicity}, stating the algebraicity of the Hodge-Weil classes over abelian sixfolds of Weil type of discriminant $-1$, using the version of the Semiregularity Theorem for twisted sheaves proved in Section \ref{sec-proof-of-the-semiregularity-thm-twisted-sheaves-case-for-families-of-abelian-varieties}.

%
\subsection{A general position assumption}
\label{sec-general-position}
Keep the notation of Section \ref{section-secant-sheaves-on-abelian-threefolds}.
Let $\Sigma_t\subset X$ be the image of $C_t$ under the involution $\iota$ of $X=\Pic^2(C)$ sending
a line bundle $L$ to $\omega_C\otimes L^{-1}$. Then $\Sigma_t$ and $C_t$ are not algebraically equivalent for a generic non-hyperelliptic $C$. Furthermore, the cycle $C_t-\Sigma_t$ is non-torsion in the group of algebraic cycles modulo algebraic equivalence, by \cite{ceresa}.
We will not need these facts and assume only that $C$ is non-hyperelliptic. 

Choose a point $p\in C$ and use it to identify  $X$ and $\Pic^0(C)$ via tensorization with $\StructureSheaf{C}(2p)$ endowing $X$ with a group structure. 
Set 
\[
C_p:=AJ(C)+p=\{\StructureSheaf{C}(p+q)\ : \ q\in C\}.
\]
Set  $C_i=\tau_{s_i}(C_p)$, for a point $s_i\in X$, $1\leq i\leq d+1$.
Set $\Sigma_p:=\iota(C_p)$ and $\Sigma_i=\tau_{t_i}(\Sigma_p)$, for a point $t_i\in X$, $1\leq i\leq d+1$.
Assume that the curves $C_i$ are pairwise disjoint and so are the $\Sigma_i$. 

\begin{assumption}
\label{assumption-on-C-i-s}
Assume that $H^0(X,\Ideal{\cup_{i=1}^{d+1}C_i}(2\Theta)\otimes L)$ vanishes, for all $L\in \Pic^0(X)$.
\end{assumption}

Given a line bundle $L$ of degree $k$, set $C_L:=\{L(p) \ : \ p\in C\}\subset \Pic^{k+1}(C).$

\begin{lem}
Assumption \ref{assumption-on-C-i-s} holds for $d\geq 3$ for a generic choice of $C_i$'s.
Given a subset $\{\ell_i\}_{i=1}^{d+1}$ of $\Pic^1(C)$ the assumption  
holds for $\{C_{\ell_i}\}_{i=1}^{d+1}$, provided 
\begin{equation}
\label{eq-intersection-of-translates-of-theta-is-empty}
\bigcap_{1\leq i<j\leq d+1}\tau_{-\ell_i-\ell_j}(\Theta)=\emptyset.
\end{equation}
\end{lem}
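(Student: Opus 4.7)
The proof naturally splits into two parts: \textbf{(a)} the genericity statement for $d\geq 3$, which I would prove by an incidence-variety dimension count, and \textbf{(b)} the implication that condition (\ref{eq-intersection-of-translates-of-theta-is-empty}) forces the vanishing of $H^0(X,2\Theta\otimes L \otimes \Ideal{\cup C_{\ell_i}})$, which is the main technical step.

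For (a), form the incidence variety
\[
\mathcal{I} := \bigl\{(x,\ell_1,\ldots,\ell_{d+1}) \in X \times \Pic^1(C)^{d+1} \;:\; x+\ell_i+\ell_j \in \Theta \text{ for all } i<j\bigr\}.
\]
Each of the $\binom{d+1}{2}$ conditions $x+\ell_i+\ell_j\in\Theta$ cuts out a codimension-one subvariety, so $\dim\mathcal{I}\leq 3 + 3(d+1) - \binom{d+1}{2}$. The projection of $\mathcal{I}$ to $\Pic^1(C)^{d+1}$ is the failure locus of (\ref{eq-intersection-of-translates-of-theta-is-empty}), whose codimension is at least $\binom{d+1}{2}-3$. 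For $d\geq 3$ this codimension is positive, so (\ref{eq-intersection-of-translates-of-theta-is-empty}) holds on a Zariski-open dense subset of $\Pic^1(C)^{d+1}$. The cleanest verification that the expected dimension is attained is by computing a fibre of the second projection $\mathcal{I}\to X$.

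For (b), suppose $s\in H^0(X,2\Theta\otimes L\otimes \Ideal{\cup C_{\ell_i}})$ is non-zero and let $Z(s)$ denote its zero divisor, of class $2\Theta$. The heart of the argument is the

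\textbf{Key Claim.} \emph{If $s\in H^0(X,2\Theta\otimes L)$ vanishes on both $C_{\ell_i}$ and $C_{\ell_j}$ for $i\neq j$, then $\tau_{-\ell_i-\ell_j}\Theta$ is a component of $Z(s)$.}

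Granted the Key Claim, $Z(s)$ contains the $\binom{d+1}{2}$ translates $\tau_{-\ell_i-\ell_j}\Theta$ as components. These are pairwise distinct, since the stabiliser of $\Theta$ in $X$ is trivial for a generic non-hyperelliptic $C$ and condition (\ref{eq-intersection-of-translates-of-theta-is-empty}) in particular forbids the coincidence of any two of these translates (any common translate would make the intersection equal to that full translate). Writing $Z(s)=\sum_{i<j}\tau_{-\ell_i-\ell_j}\Theta+R$ with $R$ an effective divisor of class $(2-\binom{d+1}{2})[\Theta]$ forces $\binom{d+1}{2}\leq 2$, contradicting $d\geq 3$. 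Hence $s=0$.

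\textbf{Main obstacle.} Proving the Key Claim will be the hardest step. The plan is to pull $s$ back through the sum morphism $\nu_{ij}:C\times C\to X$, $(p,q)\mapsto \alpha(p)+\alpha(q)+\ell_i+\ell_j$, where $\alpha:C\hookrightarrow X$ is a fixed Abel--Jacobi embedding. The image of $\nu_{ij}$ is a translate of $\Theta$ that, via the symmetry $[-1]^*\Theta\cong\Theta$ and the identification of $\Pic^1(C)$-translations with $X$-translations by a universal shift, coincides with $\tau_{-\ell_i-\ell_j}\Theta$. Mumford's addition formula for $m^*(2\Theta)$ on $X\times X$ (in terms of $p_1^*\Theta$, $p_2^*\Theta$, and the Poincaré bundle), combined with the Künneth decomposition, expresses $\nu_{ij}^*s$ as an explicit sum of products of sections on $C\times C$. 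The vanishing of $s$ on $C_{\ell_i}$ and on $C_{\ell_j}$ translates into vanishing of $\nu_{ij}^*s$ along two natural sections of the two projections. Using the surjectivity of the multiplication map $\bigoplus_{M_1\otimes M_2=L} H^0(\Theta\otimes M_1)\otimes H^0(\Theta\otimes M_2)\twoheadrightarrow H^0(2\Theta\otimes L)$, combined with the non-hyperellipticity of $C$, should force $\nu_{ij}^*s\equiv 0$ on $C\times C$, and hence $s$ vanishes on the image $\tau_{-\ell_i-\ell_j}\Theta$. The bookkeeping of the pullback line bundles and the verification that the vanishing constraints along the two sections are linearly independent in the required sense will be the delicate point; this is essentially the classical theta-function addition formula reinterpreted in terms of the Abel--Jacobi geometry of a non-hyperelliptic curve of genus three.
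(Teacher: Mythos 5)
Your proposal has a genuine gap, and it sits exactly where you flag the difficulty: the \textbf{Key Claim} is false. The claim asserts that if a section $s$ of $2\Theta\otimes L$ vanishes on two translates $C_{\ell_i}$, $C_{\ell_j}$, then the translate $\tau_{-\ell_i-\ell_j}\Theta$ must be a component of $Z(s)$. But the divisors in $|2\Theta|$ that contain a translate $C_L$ are exactly the theta divisors $D_E$ of semistable rank-$2$ bundles $E$ with trivial determinant and $L^{-1}\hookrightarrow E$ (Narasimhan–Ramanan), and such a $D_E$ is a union of two translates of $\Theta$ \emph{only when $E$ is strictly semistable}, i.e.\ S-equivalent to $M\oplus M^{-1}$. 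When $E$ is stable, $D_E$ is not a sum of theta-translates. By Pauly's lemma (cited in the paper) a generic stable $E$ with trivial determinant contains the inverses of $8$ distinct degree-$1$ line bundles, so $D_E$ contains $8$ distinct translates $C_{L_i}$ while being irreducible. Any pair among these $8$ gives a counterexample to your Key Claim. Consequently the divisor-decomposition argument that follows (forcing $\binom{d+1}{2}\leq 2$) never gets off the ground.

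The paper's actual route avoids this by proving a strictly weaker, but correct, statement: if $D_E$ contains both $C_{L_1}$ and $C_{L_2}$, then the degree-$2$ line bundle $L_1\otimes L_2$ is \emph{effective} (Step 3), i.e.\ lies on the theta divisor in $\Pic^2(C)$. This does not say anything about components of $D_E$; it is a Brill–Noether-type constraint extracted from composing $L_2^{-1}\hookrightarrow E$ with $E\twoheadrightarrow L_1$ (or $L_1(-p)$). Applying it to all $\binom{d+1}{2}$ pairs and shifting $\ell_i\mapsto\ell_i+t$ recovers exactly the hypothesis~(\ref{eq-intersection-of-translates-of-theta-is-empty}), and then one sweeps over all line bundles algebraically equivalent to $2\Theta$ via the isogeny $t\mapsto\tau_t^*(2\Theta)$. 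Your Mumford-addition-formula strategy would at best rederive the paper's Step 3, not the Key Claim; it cannot give the stronger component statement because that statement is simply not true for stable $E$.

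A smaller remark on part (a): your inequality $\dim\mathcal{I}\leq 3+3(d+1)-\binom{d+1}{2}$ needs justification. Intersecting $\binom{d+1}{2}$ divisors in a variety gives a \emph{lower} bound on dimension, not an upper bound, so properness of the intersection has to be verified; this is what the paper's Step 1 does concretely by exhibiting the surjectivity of $t:\Pic^1(C)^4\to\Pic^2(C)^4$, which shows the four generic translates of $\Theta$ already intersect in the empty set. Your sketch acknowledges this but defers it, so part (a) is incomplete rather than wrong. The fatal issue is the Key Claim in part (b).
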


\begin{proof}
\underline{Step 1:} Note first that for a generic choice of $\ell_i$'s the intersection (\ref{eq-intersection-of-translates-of-theta-is-empty}) is empty.
It suffices to prove it for $d=3$. Now the intersection of generic $4$ translates of $\Theta$ is empty. So it suffices to observe that the morphism
 $t:\Pic^1(C)\rightarrow\Pic^2(C)$ given by
\[
t(\ell_1,\ell_2,\ell_3,\ell_4)= (\ell_1+\ell_2,\ell_1+\ell_3,\ell_1+\ell_4,\ell_2+\ell_3)
\]
is surjective. Indeed, set $T=(t_1,t_2,t_3,t_4)$ and define $f:\Pic^2(C)^4\rightarrow \Pic^2(C)$ by $f(T)=t_1+t_2-t_4$ and  $u:\Pic^2(C)^4\rightarrow\Pic^2(C)^4$ by
\[
u(T)=(f(T),2t_1-f(T),2t_2-f(T),2t_3-f(T)).
\]
Then $f(t(\ell_1,\ell_2,\ell_3,\ell_4))=2\ell_1$
and $u(t(\ell_1,\ell_2,\ell_3,\ell_4))=(2\ell_1,2\ell_2,2\ell_3,2\ell_4)$. It follows that $u\circ t$ is surjective, and hence so is $t$, as the image of $t$ must be $4$ dimensional.

\underline{Step 2:} We show next that a divisor $D$ in $\linsys{2\Theta}$ contains the translate $C_L$, $L\in \Pic^1(C)$, if and only if $D$ is the divisor $D_E$ associated to some semistable rank $2$ vector bundle $E$ on $C$ with trivial determinant  and $L^{-1}$ is a subsheaf of $E$. 
Recall that for a semistable vector bundle $E$ on $C$ with trivial determinant the set 
\[
\{M\in\Pic^2(C)  \ : \ H^0(C,E\otimes M)\neq 0\}
\]
is the set theoretic support of a divisor $D_E$ in $\linsys{2\Theta}$, which depends only on the $S$-equivalence class of $E$ 
(see \cite{NR}).
The inclusion $C_L\subset D_E$ is clear, if $L^{-1}$ is a subsheaf of $E$. Conversely, assume that $C_L$ is contained in $D$, $D\in\linsys{2\Theta}$. Then $D$ belongs to the projective subspace $\PP[H^0(\Ideal{C_L}(2\Theta))]$ of $\linsys{2\Theta}$. Now $\PP[H^0(\Ideal{C_L}(2\Theta))]$ is $3$-dimensional (we postpone the proof to Lemma \ref{lemma-global-sections-of-ideal-I-C-2Theta}). So it suffices to prove the claim that the subset of the moduli space of $S$-equivalence classes of semistable vector bundles of rank $2$ and trivial determinant, which has a representative containing $L^{-1}$ as a subsheaf, is isomorphic to $\PP^3$. Indeed, every such equivalence class $[E]$ is represented by an extension class in 
$\PP\Ext^1(L,L^{-1})\cong \PP H^0(L^2\otimes\omega_C)^*\cong\PP^3$. This is clear if $[E]$ is represented by a vector bundle containing $L^{-1}$ as a saturated subsheaf.
The semi-stable but unstable $S$-equivalence classes in $\PP\Ext^1(L,L^{-1})$ are represented by the set
$L^{-1}(q)\oplus L(-q)$, $q\in C$, by \cite[Lemma 3.6(b)]{beauville}. This is precisely the set of $S$-equivalence classes of rank $2$ semistable vector bundles of trivial determinant containing $L^{-1}$ as an unsaturated subsheaf.
Finally, the map $E\mapsto D_E$ is a linear embedding of $\PP\Ext^1(L,L^{-1})$ in $\linsys{2\Theta}$,
by \cite[Lemma 3.7]{beauville}, proving the claim.

\underline{Step 3:} We observe next that if $E$ is a semistable vector bundle with trivial determinant on $C$, $L_1$ and $L_2$ are two line bundles of degree $1$ admitting embeddings $\iota_k:L_k^{-1}\rightarrow E$ as subsheaves, and the curves $C_{L_1}$ and $C_{L_2}$ are disjoint in $\Pic^2(C)$, then the line bundle $L_1\otimes L_2$ is effective. We have two cases. Case 1: If $L_1^{-1}$ is a subbundle of $E$ we get the short exact sequence
\[
0\rightarrow L_1^{-1}\RightArrowOf{\iota_1} E\RightArrowOf{j_1} L_1\rightarrow 0
\]
and $j_1\circ \iota_2:L_2^{-1}\rightarrow L_1$ does not vanish, hence $L_1$ is isomorphic to $L_2^{-1}(p+q)$, for some points $p,q\in C$. 

Case 2: If $L_1^{-1}$ is only a subsheaf, then $L_1^{-1}(p)$ is a subbundle, for some point $p\in C$. 
We get the short exact sequence
\[
0\rightarrow L_1^{-1}(p)\rightarrow E \RightArrowOf{j_1} L_1(-p)\rightarrow 0.
\]
If $L_2^{-1}$ is a subsheaf of $L_1^{-1}(p)$, then $L_2$ is isomorphic to $L_1(q-p)$, for some point $q\in C$, and the curves $C_{L_1}$ and $C_{L_2}$ both contain $L_1(q)\cong L_2(p)$ and so are not disjoint. Hence, $j_1 \circ \iota_2$ does not vanish and $L_2^{-1}$ is isomorphic to $L_1(-p-q)$, for some point $q\in C$.

\underline{Step 4:} Assume that the intersection (\ref{eq-intersection-of-translates-of-theta-is-empty}) is empty. Then there does not exist a point $t\in \Pic^0(C)$, such that $2t+\ell_i+\ell_j\in\Theta$, for all $1\leq i<j\leq d+1$. Thus there does not exist a semi-stable vector bundle $E$ of trivial determinant, such that $D_E$ contains the union $\bigcup_{i=1}^{d+1}C_{\ell_i+t}$, for some $t\in \Pic^0(C)$, by Step 3. Hence, $H^0(X,\Ideal{\cup_{i=1}^{d+1}C_{\ell_i+t}}(2\Theta))$
vanishes, for all $t\in \Pic^0(C)$, by Step 2. It follows that $H^0(X,\Ideal{\cup_{i=1}^{d+1}C_{\ell_i}}(\tau_t^*(2\Theta)))$ vanishes,  for all $t\in \Pic^0(C)$. Finally, the morphism $\Pic^0(C)\rightarrow \Pic(X)$, given by $t\mapsto \tau_t^*(2\Theta)$, is an isogeny onto the connected component of $2\Theta$.
\end{proof}

\begin{rem}
Consider the rank $4$ vector bundle $\U$ over $\Pic^1(C)$ with fiber $\U_L:=H^0(X,\Ideal{C_L}(2\Theta))$ over $L\in\Pic^1(C)$. We have seen in the proof above that $\PP(\U_L)$ is naturally isomorphic to $\PP H^0(C,L^2\otimes\omega_C)$. Hence, one gets a morphism $\epsilon:\PP(\U)\rightarrow \linsys{2\Theta}$, by 
\cite[Lemma 3.7]{beauville}. The morphism is generically finite of degree $8$ onto its image, which is the Coble quartic and is isomorphic to the moduli space of equivalence classes of semistable rank $2$ vector bundles of trivial determinant over $C$, by  \cite{NR} and \cite[Sec. 4.1]{pauly}. The Zariski closed subset $\epsilon^{-1}(\epsilon(\PP(\U_L)))$ is thus reducible of dimension $\geq 3$. Let $\pi:\PP(\U)\rightarrow \Pic^1(C)$ be the natural projection. The proof above shows that the components of $\epsilon^{-1}(\epsilon(\PP(\U_L)))$ other that $\PP(\U_L)$ all lie in $\pi^{-1}(\tau_{L^{-1}}(\Theta))$ and so do not surject onto $\Pic^1(C)$. Pauly shows that the generic rank $2$ stable vector bundle $E$ of trivial determinant contains the inverses of $8$ distinct line bundles $L_i$, $1\leq i\leq 8$, in $\Pic^1(C)$ and $\otimes_{i=1}^8 L_i\cong \omega_C^2$ \cite[Lemma 4.2]{pauly}. The divisor $D_E$, for such $E$,  contains precisely $8$ translates $C_{L_i}$, $1\leq i\leq 8$,  of $C$.
\end{rem}

\begin{lem}
\label{lemma-emptiness-condition-holds-when-the-curves-are-an-orbit}
Let $G$ be a cyclic subgroup of $\Pic^0(C)$ of order $d+1$, $d\geq 3$. 
The emptiness condition (\ref{eq-intersection-of-translates-of-theta-is-empty}) holds and the curves $C_{L_i}$ are pairwise disjoint
for every $G$-orbit $\{L_i\}_{i=1}^{d+1}$ in $\Pic^1(C)$, for a generic $C$.
\end{lem}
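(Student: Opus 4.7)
The strategy is to translate both claims into intrinsic properties of the cyclic subgroup $G\subset\Pic^0(C)$ and then verify each on a non-empty Zariski open subset of a moduli space of pairs $(C,G)$.

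\textit{Reformulation.} Write $G=\langle g\rangle$ with $g$ of exact order $d+1$ and the $G$-orbit as $L_i=L_1+(i-1)g$ for $1\le i\le d+1$. Since $L_i-L_j=(i-j)g$ sweeps out $G\setminus\{0\}$ and $C_{L_i}\cap C_{L_j}\neq\emptyset$ iff $L_i-L_j\in C-C\subset\Pic^0(C)$, pairwise disjointness of the $C_{L_i}$ is equivalent to
\[
(G\setminus\{0\})\cap(C-C)=\emptyset.
\]
For (\ref{eq-intersection-of-translates-of-theta-is-empty}), a point $x$ lies in the intersection iff $x+\ell_i+\ell_j\in\Theta$ for all $1\le i<j\le d+1$, i.e.\ $x+2\ell_1+(i+j-2)g\in\Theta$. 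For $d\geq 2$ a direct check shows that the residues $(i+j-2)\bmod(d+1)$ exhaust $\ZZ/(d+1)$, so setting $y=x+2\ell_1$ condition (\ref{eq-intersection-of-translates-of-theta-is-empty}) becomes the intrinsic statement: no translate $y+G$ of $G$ is contained in $\Theta$. In particular, both reformulated conditions depend only on the pair $(C,G)$ and not on the chosen orbit.

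\textit{Openness and dimension count.} Let $\N\to\M_3^{\rm nh}$ be the finite \'etale cover parametrizing pairs $(C,G)$ with $G$ cyclic of order $d+1$. Each reformulated condition is open on $\N$, since its failure is defined by the non-emptiness of a closed intersection. To show the open locus is non-empty, I would use a dimension count over the universal Jacobian $\pi:\J\to\N$: it carries the $d$ sections $s_1,\dots,s_d$ giving the non-identity elements of $G$, the relative difference surface $\C-\C\subset\J$ of relative dimension $2$, and the universal theta divisor $\Theta_{\J}$. The failure locus for (1) is $\bigcup_k\pi\bigl(s_k(\N)\cap(\C-\C)\bigr)$, of expected codimension $1$ in $\N$. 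The failure locus for (2) is the image in $\N$ of $\bigcap_{g'\in G}(\Theta_{\J}-g')\subset\J$, of expected relative dimension $3-(d+1)$ over $\N$, which is negative when $d\geq 3$; in both cases the expected dimension makes the failure locus a proper closed subvariety of $\N$.

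\textit{Main obstacle.} The crux is verifying that these expected codimensions are attained, i.e., excluding the possibility that some section $s_k$ is globally contained in $\C-\C$, or that $\bigcap_{g\in G}(\Theta-g)$ is identically non-empty over $\N$. I would address this via the surjectivity of the monodromy representation $\pi_1(\M_3^{\rm nh})\to \mathrm{Sp}(H^1(C,\ZZ/(d+1)))$ on the relative $(d+1)$-torsion of $\J$, which forces the sections $s_k$ to sweep out all non-identity $(d+1)$-torsion points of $\J$ as $(C,G)$ varies; since $\C-\C$ and the fibers of $\Theta_{\J}$ have bounded cohomological degree, they cannot contain these sections identically. Alternatively, one can exhibit a single pair $(C,G)$ satisfying both conditions by specializing to a Jacobian with explicit torsion (e.g.\ of CM type), and conclude by openness.
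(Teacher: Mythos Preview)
Your reformulations are correct and clean: pairwise disjointness of the $C_{L_i}$ is equivalent to $(G\setminus\{0\})\cap(C-C)=\emptyset$, and condition~(\ref{eq-intersection-of-translates-of-theta-is-empty}) is equivalent to $\bigcap_{g\in G}\tau_g(\Theta)=\emptyset$ (your check that $\{i+j-2 \bmod (d+1)\}$ exhausts $\ZZ/(d+1)$ for $d\ge 2$ is right). You also correctly observe that both conditions depend only on the pair $(C,G)$ and are open on the parameter space $\N$. This first reduction matches the paper's opening step.

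Where your argument falls short is exactly the step you flag as the ``main obstacle'': establishing non-emptiness. The monodromy sketch does not close the gap. Surjectivity of $\pi_1(\M_3^{\rm nh})\to\mathrm{Sp}(H^1(C,\ZZ/(d+1)))$ permutes torsion points as $C$ varies, but the difference surface $C-C$ and the divisor $\Theta$ vary with $C$ as well, so monodromy does not act on the incidence loci you need to control. Your appeal to ``bounded cohomological degree'' is not a proof: a surface in a threefold can contain arbitrarily many torsion points, and there is no intersection-theoretic bound on $\#\bigl((C-C)\cap X[d+1]\bigr)$ of the kind you would need. Likewise, for condition~(2) the expected-dimension count $3-(d+1)<0$ is only heuristic; the $d+1$ translates $\tau_g(\Theta)$ are far from generic (they differ by torsion), so transversality must be proved, not assumed. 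Your ``alternative'' of exhibiting one explicit $(C,G)$ is the right instinct, but you do not carry it out.

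The paper resolves this by degeneration rather than monodromy: it lets $C$ specialize to a chain $E_1\cup E_2\cup E_3$ of elliptic curves (a boundary point of $\M_3$), chooses $G$ diagonally embedded in $G_1\times G_2\times G_3\subset\prod\Pic^0(E_i)$ with each $G_i$ cyclic of order $d+1$, and then computes $\Theta$ and the Abel--Jacobi image explicitly as unions of coordinate-type divisors. In that degenerate model the intersection $\bigcap_{g\in G}\tau_g(\Theta)$ is verified to be empty by direct combinatorics (any $d+1\ge 4$ factors force a repeated index $i$, and $D_i\cap\tau_{kg}(D_i)=\emptyset$ for $0<k<d+1$), and pairwise disjointness of the translated Abel--Jacobi curves is checked componentwise. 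Openness then propagates both conditions to a nearby smooth $(C,G)$. The degeneration approach trades your abstract framework for an explicit computation that actually completes the proof; if you want to salvage your strategy, the missing ingredient is precisely such a concrete example.
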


\begin{proof}
The emptiness condition (\ref{eq-intersection-of-translates-of-theta-is-empty}) would follow 
from the emptiness of
$
\bigcap_{g\in G}\tau_g(\Theta).
$
The latter emptiness condition is open in the moduli space of pairs $(C,G)$ and so it suffices to verify it for a boundary pair, where $C$ is a chain $E_1\cup E_2\cup E_3$ of three elliptic curves $E_i$, $1\leq i\leq 3$. Let $G_i$ be a cyclic group of order $d+1$ of $\Pic^0(E_i)$, and let $G$ be a diagonal embedding of $\ZZ/(d+1)\ZZ$ in $G_1\times G_2\times G_3\subset  \Pic^0(E_1)\times \Pic^0(E_2)\times \Pic^0(E_3)\cong \Pic^0(C)$. 
Assume that $E_2$ intersects $E_1$ at the point $p_0$ of $E_2$ and $E_2$ intersect $E_3$ and the point $p_1$ of $E_2$, and $p_1- p_0$ does not belong to $G_2$. We will explain below that 
in this case $\Theta=\cup_{i=1}^3 D_i$, where
\begin{eqnarray}
\label{eq-theta-divisor-of-a-chain-of-elliptic-curves}
D_1&:=&\{\StructureSheaf{E_1}\}\times\Pic^2(E_2)\times\Pic^0(E_3), 
\\
\nonumber
D_2&:=&\Pic^0(E_1)\times\{\StructureSheaf{E_2}(p_0+p_1)\}\times \Pic^0(E_3), 
\\ 
\nonumber
D_3&:=&\Pic^0(E_1)\times\Pic^2(E_2)\times\{\StructureSheaf{E_3}\}.
\end{eqnarray}
Choose a generator $g$ of $G$.
\[
\bigcap_{k=1}^{d+1}\tau_{kg}(\Theta)= 
\bigcup_{(i_0,\dots,i_d)\in \{1,2,3\}^{d+1}} \bigcap_{k=1}^{d+1} \tau_{kg}(D_{i_k}).
\]
Each intersection $\bigcap_{k=1}^{d+1} \tau_{kg}(D_{i_k})$ is empty, as $i_j=i_k$ for some $j<k$, since $d+1>3$, and $D_i\cap\tau_{kg}(D_i)=\emptyset$, for $0<k< d+1$, for all $i$.

We show next that the $G$-orbit of the Abel-Jacobi image of $C$ consists of pairwise distinct curves.
Consider the following embedded copy $C'$ of $C$ in $\Pic^{(0,1,0)}(C)$.
\begin{eqnarray*}
\left[\Pic^0(E_1)\times\{\StructureSheaf{E_2}(p_0)\}\times\{\StructureSheaf{E_3}\}\right]
& \cup &
\left[
\{\StructureSheaf{E_1}\}\times\Pic^1(E_2)\times \{\StructureSheaf{E_3}\}\right]
\\
&\cup &
\left[
\{\StructureSheaf{E_1}\}\times\StructureSheaf{E_2}(p_1)\}\times \Pic^0(E_3)
\right].
\end{eqnarray*}
Denote the $i$-th component above by $C'_i$. 
Then $C'_1\cap C'_2=(\StructureSheaf{E_1},\StructureSheaf{E_2}(p_0),\StructureSheaf{E_3})$,
$C'_2\cap C'_3=(\StructureSheaf{E_1},\StructureSheaf{E_2}(p_1),\StructureSheaf{E_3})$,
and $C'_1$ and $C'_3$ are disjoint. The embedding $AJ:C\rightarrow C'\subset \Pic^{(0,1,0)}(C)$ is given by 
\begin{equation}
\label{eq-AJ}
p\mapsto\left\{
\begin{array}{ccl}
(\StructureSheaf{E_1}(p-p_0),\StructureSheaf{E_2}(p_0),\StructureSheaf{E_3}) & \mbox{if} & p\in E_1,
\\
(\StructureSheaf{E_1},\StructureSheaf{E_2}(p),\StructureSheaf{E_3}) & \mbox{if} & p\in E_2,
\\
(\StructureSheaf{E_1},\StructureSheaf{E_2}(p_1),\StructureSheaf{E_3}(p-p_1)) & \mbox{if} & p\in E_3.
\end{array}
\right.
\end{equation}

For $0<k<d+1$, $C'_i\cap\tau_{kg}(C'_i)=\emptyset$, for all $i$, and  $C'_i\cap\tau_{kg}(C'_j)=\emptyset$, for $\{i,j\}=\{1,2\}$ and $\{2,3\}$. The intersection is empty also for $\{i,j\}=\{1,3\}$, since $p_1-p_0\not\in G_2$. Hence, the translates $\tau_{kg}(C')$, $0\leq k \leq d$, are pairwise disjoint.

We prove next that $C'$ is the limit of Abel-Jacobi images of smooth genus $3$ curves in a flat family that degenerates to $C$. 
Observe that the curve $C'$ is precisely the Brill-Noether locus of $L\in\Pic^{(0,1,0)}(C)$ with $h^0(L)\neq 0$. Indeed, if $L=(L_1,L_2,L_3)$ and $s$ is a non-zero global section of $L$, then $s$ is not identically zero on $E_2$. The line bundle $L_2$ is isomorphic to $\StructureSheaf{E_2}(p)$, for a unique point $p\in E_2$. If $p\not\in\{p_0,p_1\}$, then $s$ does not vanish at $p_0$ and $p_1$ and so $L_1$ and $L_2$ must both be trivial, so that $L$ is in $C'_2$. If $p=p_0$, then $s$ vanishes at $p_0$ and so it must be identically zero on $E_1$, but non-zero on $E_3$. Hence, $L_1$ is arbitrary, but $L_3$ is trivial, and so $L$ is in $C'_1$. Similarly, if $p=p_1$, then $L$ is in $C'_3$. Choose a family $\pi:\C\rightarrow S$ over a smooth one-dimensional analytic base $S$ with special fiber $C$ over $0\in S$
and generic fiber a smooth genus $3$ curve. Let $q:S\rightarrow \C$ be a section with $q(0)$ a point of $E_2\setminus\{p_0,p_1\}$. The section $q$ determines a section 
$Q:S\rightarrow \Pic(\C/S)$ with value $\StructureSheaf{C_s}(q(s))$ over $s\in S$, hence a family of abelian varieties $\Pi:\J\rightarrow S$ with connected fibers, whose generic fiber is $\Pic^1(C_s)$ and its 
special fiber is $\Pic^{(0,1,0)}(C)$. Over $\C\times_S\Pic^0(\C/S)$ we have the relative (normalized) Poincar\'{e} line bundle $\P_0$.
Let $\tau_Q: \Pic^0(\C/S)\rightarrow \J$ be the isomorphism of translation by $Q$.
Let $f_1 : \C\times_S\J\rightarrow \J$ and $f_2: \C\times_S\J\rightarrow \C$ be the projections.
Translating $\P_0$ by the section $Q$ and 
tensoring by the pullback of the line bundle $\StructureSheaf{\C}(q(S))$ over $\C$ and we get
a relative Poincar\'{e} line bundle $\P:=(id\times\tau_Q)_*(\P_0)\otimes f_2^*\StructureSheaf{\C}(q(S))$ 
over $\C\times_S\J$.  The coherent torsion sheaf $R^1f_*(\omega_{f_1}\otimes\P^{-1})$ is  supported as a line bundle on a relative curve $\C'$ over $S$, by Cohomology and Base Change. The generic fiber of $\C'$ is the Abel-Jacobi image of $C_s$ and the special fiber is $C'$.

Finally observe that $\Theta$ is the image of $\Sym^2(C')$ via the addition morphism 
\[
\Sym^2(\Pic^{(0,1,0)}(C))\rightarrow \Pic^{(0,2,0)}(C)
\] 
(the symmetric square of each irreducible component of $C'$ contracts to a curve in $\Theta$).
\end{proof}

\hide{
Set $t=
(L_1,L_2,L_3)
\in\Pic^{(0,0,0)}(C)$, with $L_i\not\cong\StructureSheaf{E_i}$, and consider the following principal polarization of $\Pic^{(0,1,0)}(C)$.
\begin{eqnarray*}
\Theta&:=& [\{\StructureSheaf{E_1}\}\times\Pic^1(E_2)\times \Pic^0(E_3)] \ \ \cup \ \ 
[\Pic^0(E_1)\times \{\StructureSheaf{E_2}(p_0)\}\times \Pic^0(E_3)] \ \ \cup
\\
& & 
[\Pic^0(E_1)\times\Pic^1(E_2)\times\{\StructureSheaf{E_3}\}].
\end{eqnarray*}
We get that $\tau_t(\Theta)\cap \Theta$ is a cycle of six elliptic curves
\begin{eqnarray*}
\Pic^0(E_1)\times\{\StructureSheaf{E_2}(p_0)\}\times  \{L_3\} 
&\cup &\{\StructureSheaf{E_1}\}\times\Pic^1(E_2)\times \{L_3\} \ \ \cup
\\
\{\StructureSheaf{E_1}\}\times \{L_2(p_0)\}\times\Pic^0(E_3)
&\cup & \Pic^0(E_1)\times\{L_2(p_0)\}\times\{\StructureSheaf{E_3}\} \ \ \cup
\\
\{L_1\}\times \Pic^1(E_2)\times \{\StructureSheaf{E_3}\} 
& \cup & \{L_1\}\times \StructureSheaf{E_2}(p_0)\}\times \Pic^0(E_3),
%
\end{eqnarray*}
each intersecting only the two adjacent in the cyclic order. Set $L_2:=\StructureSheaf{E_2}(p_1-p_0)$. Then the first three components are 
$\tau_{(\StructureSheaf{E},\StructureSheaf{E},L_3)}(C')$, and the last three components are the image of the first three under the involution 
\[
\iota'(M_1,M_2,M_3)= (L_1\otimes M_1^{-1},\StructureSheaf{E}(p_0+p_1)\otimes M_2^{-1},L_3\otimes M_3^{-1}.
\] 
The canonical line bundle of $C$ is $\omega_C=(\StructureSheaf{E_1}(p_0),\StructureSheaf{E_2}(p_0+p_1),\StructureSheaf{E_3}(p_1))$ in $\Pic^{(1,2,1)}(C)$.
The involution $\iota'$ satisfies $\iota'(L)= \omega_C\otimes L^{-1}\otimes (L_1(-p_0),\StructureSheaf{E},L_3(-p_1))$.
The involution $L\mapsto \omega_C\otimes L^{-1}$ maps each component of $\Pic(C)$ to a different component. Considering instead the involution $\iota'$, we get the analogue of the fact that when $C$ is smooth of genus 3 and $C'$ is a translate of $AJ(C)$, then the complete intersection of two
distinct translates of $\Theta$ containing $C'$ is  
the union $C'\cup\Sigma'$ of $C'$ and a translate $\Sigma'$ of $\Sigma$
(see Lemma \ref{lemma-intersection-of-C-and-Sigma-has-length-at-most-2}(\ref{case-a-of-intersection-lemma})). Note that $\tau_t(\Theta)=\iota'(\Theta)$.

Assume that $E_1$ is not isomorphic to $E_3$.
The isomorphism class of $C$ is determined by the data of (1) the isomorphism classes of $E_i$, $1\leq i\leq 3$, and (2) the pair of isomorphism classes $\{\StructureSheaf{E_2}(p_1-p_0),\StructureSheaf{E_2}(p_0-p_1)\}$, interchanged by the inversion automorphism of $E_2$. Let $E'_j$ be the $j$-th component of $\tau_t(\Theta)\cap \Theta$ in the order listed above. Then 
\begin{eqnarray*}
E'_1\cap E'_2&=&(\StructureSheaf{E_1},\StructureSheaf{E_2}(p_0),L_3)
\\
E'_2\cap E'_3&=&(\StructureSheaf{E_1},\StructureSheaf{E_2}(p_1),L_3)
\\
E'_3\cap E'_4&=&(\StructureSheaf{E_1},\StructureSheaf{E_2}(p_1),\StructureSheaf{E_3})
\\
E'_4\cap E'_5&=&(L_1,\StructureSheaf{E_2}(p_1),\StructureSheaf{E_3})
\\
E'_5\cap E'_6&=&(L_1,\StructureSheaf{E_2}(p_0),\StructureSheaf{E_3})
\\
E'_6\cap E'_1&=&(L_1,\StructureSheaf{E_2}(p_0),L_3)
\end{eqnarray*}
The difference of the middle entry of $E'_3\cap E'_2$ and $E'_2\cap E'_1$ is $\StructureSheaf{E_2}(p_1-p_0)$ and 
the difference of the middle entry of $E'_6\cap E'_5$ and $E'_5\cap E'_4$ is $\StructureSheaf{E_2}(p_0-p_1)$. 
Hence $E'_1\cup E'_2\cup E'_3$ and $E'_4\cup E'_5\cup E'_6$  are both isomorphic to $C$. 
Only the former satisfies the following property characterizing translates of the Abel-Jacobi embedding (note that 
only $E'_1\cup E'_2\cup E'_3$ and $E'_4\cup E'_5\cup E'_6$
have a middle component corresponding to varying the line bundle in the middle component of $C$). 

A characterization of translates of $AJ$: 
Consider the isomorphism $\phi_\Theta:\Pic^0(C)\rightarrow \Pic^0(\Pic^{(0,1,0)}(C))$, given by $\phi_\Theta(L)=\tau_L(\Theta)-\Theta$.
Then 
$AJ:C\rightarrow C'\subset \Pic^{(0,1,0)}(C)$ has the property that 
\[
AJ\times\phi_L:C\times\Pic^0(C)\rightarrow \Pic^{(0,1,0)}\times \Pic^0(\Pic^{(0,1,0)}(C))
\]
pulls back the Poincar\'{e} line bundle $\P$ to a universal line bundle. In other words, the restriction of $(AJ\times\phi_\Theta)^*(\P)$ to $C\times\{L\}$
is isomorphic to $L$ (see the proof of \cite[Prop. 11.3.2]{BL}). 
This property is preserved if we compose $AJ$ by a translation. It is not satisfies by  $AJ$ composed with $\iota'$.
No other union $C''$ of three consecutive components of $\tau_t(\Theta)\cap \Theta$
admits an isomorphism $AJ':C\rightarrow C''\subset \Pic^{(0,1,0)}(C)$ with the above property.

\begin{lem}
\label{lemma-C-prime-is-the-limit-of-translates-of-AJ-images}
The image $C'$ of the Abel-Jacobi morphism $AJ$ given in (\ref{eq-AJ}) is the limit of translates of the Abel-Jabobi images of smooth genus $3$ curves in a flat family.
\end{lem}

\begin{proof}
For a smooth genus $3$ curve $C$ with a choice $\Theta$ of a translate of its theta divisor in $\Pic^1(C)$ there is a divisor $D_C=\{\StructureSheaf{C}(p-q) \ : \ p,q\in C\}$ in $\Pic^0(C)$ consisting of $t$, such that $\tau_t(\Theta)\cap \Theta$ is a union of a translate $C'$ of $AJ(C)$ and a translate $\Sigma'$ of $\Sigma$ for $t\in D_C\setminus\{0\}$. 
When $C$ is a chain of three elliptic curves as above, the locus of $t\in\Pic^{(0,0,0)}(C)$ such that 
$\tau_t(\Theta)\cap\Theta$ is the union of a translate of a chain of elliptic curve and a translate of its image under $-1$ is an open subset of $\Pic^{(0,0,0)}(C)$,
but only points in the open subset $D^0$ of the divisor $D=\{(L_1,L_2,L_3) \ : \ L_2\cong \StructureSheaf{E}(p_1-p_0)\}$ with $L_1\not\cong\StructureSheaf{E}$ and $L_3\not\cong\StructureSheaf{E}$, yield the correct isomorphism class of $C$.
Choosing a flat family of triples $(J_s,\Theta_s,t_s)$, $s\in S$, of Jacobians $J_s$ of curves of arithmetic genus $3$ with a choice of a translate $\Theta_s\subset \Pic^1(C_s)$ of the $\Theta$ divisor  and a point $t_s\in D_{C_s}\subset \Pic^0(C_s)$, with $C_s$ smooth for $s\neq 0\in S$, degenerating at $s=0$ to $(\Pic^{(0,1,0)}(C),\Theta,t_0)$, with $t_0\in D^0$ (??? why does $D^0$ intersect the closure of $\cup_{s\in S\setminus\{0\}}D_{C_s}$ in $\Pic^0(\C/S)$ ???), 
we get that $\Theta_s\cap\tau_{t_s}(\Theta)$ is a family $C'_s\cup\Sigma'_s$,
where $C'_s$ is a translate of $AJ(C_s)$, and a translate $\Sigma'_s$ of $\Sigma_s$, for all $s\in S$. This shows that the Abel-Jacobi image in the proof of Lemma \ref{lemma-emptiness-condition-holds-when-the-curves-are-an-orbit} is indeed the limit of the a family of standard Abel-Jacobi images of smooth genus $3$ curves.
\end{proof}

}

%
\subsection{A reflexive secant$^{\boxtimes 2}$-sheaf over $X\times\hat{X}$}
\label{sec-reflexive-secant-square-sheaf}
We continue to assume Assumption \ref{assumption-on-C-i-s}.
Set
\begin{equation}
\label{eq-F-1-and-F-2}
F_1:=\Ideal{\cup_{i=1}^{d+1}C_i}(\Theta) \ \ \mbox{and} \ \ F_2:=\Ideal{\cup_{i=1}^{d+1}\Sigma_i}(\Theta).
\end{equation}
Note that $ch(F_1)=ch(F_2)$.  
Let $\F_2:=\pi_{12}^*(a^*(F_2))\otimes\pi_{13}^*\P^{-1}$ be the sheaf over $X\times X\times\hat{X}$  using the notation of (\ref{eq-universal-spreading-of-F}). 

\begin{assumption}
\label{assumption-intersections-of-surfaces-are-generic}
\begin{enumerate}
\item
\label{assumption-item-intersections-of-surfaces-are-generic}
We choose the curves $C_i$ and $\Sigma_j$, so that the intersection of any four of the surfaces $\Theta_{i,j}:=\Sigma_j-C_i$ in $X$
is empty and the triple intersections are zero dimensional. 
\item
\label{assumption-3s-ij-pairwise-distinct}
The 
$(d+1)^2$ points $t_j+s_i$, $1\leq i,j\leq d+1$, are distinct.\footnote{
Recall that $C_i=\tau_{s_i}(C_p)$ and $\Sigma_j=\tau_{t_j}(\Sigma_p)$.
}
\end{enumerate}
\end{assumption}

Let $f_{i,j}:C_i\times \Sigma_j\rightarrow X\times\hat{X}$ be the morphism given by 
$f_{i,j}(x,y)=(y-x,L_{i,j}(x-y))$, where $L_{i,j}(x-y)\in \Pic^0(X)$ is the line bundle
\begin{equation}
\label{eq-L-i-j}
L_{i,j}(x-y):=\StructureSheaf{X}(\tau_{2(x-y)-t_j-s_i}(\Theta)-\Theta).
\end{equation}
Denote by $\tilde{\Theta}_{i,j}$ the image of $f_{i,j}$ and observe that it is isomorphic to $\Theta_{i,j}$. Note that the surface $\pi_X(\tilde{\Theta}_{i,j})=\Theta_{i,j}$ is a translate\footnote{
Let $q_1, q_2$ be points of $C$. Consider the special case where $\tau_{s_i}$ and $\tau_{t_j}$ are both the identity.
The morphism $\pi_X\circ f_{i,j}:C_p\times \Sigma_p\rightarrow X$ sends the pair of degree $2$ line bundles $(\StructureSheaf{C}(p+q_1),\omega_C(-p-q_2))$ to $\omega_C(-2p-q_1-q_2)$, hence $\pi_X\circ f_{i,j}$ is a branched double cover onto its image $\tau_{-2p}(\Theta)$.
} 
of $\Theta$ and is thus smooth. Assumption \ref{assumption-intersections-of-surfaces-are-generic}(\ref{assumption-3s-ij-pairwise-distinct}) implies that the surfaces $\tilde{\Theta}_{i,j}$ are pairwise disjoint. 
In particular, the union 
\[
\tilde{\Theta}:=\cup_{1\leq i,j\leq d+1}\tilde{\Theta}_{i,j}
\]
is smooth.

\begin{prop}
\label{prop-local-freeness}
\begin{enumerate}
\item
\label{prop-local-freeness-E}
The sheaf cohomology $R^i\pi_{23,*}(\pi_1^*F_1\otimes \F_2)$ of the object 
\[
\G:=\Phi(F_2\boxtimes F_1)[-3]\stackrel{(\ref{eq-object-is-image-of-F-1-dual-times-F-2})}{=}R\pi_{23,*}(\pi_1^*F_1\otimes \F_2)
\]
vanishes, for $k=0$ and for $k> 2$. The sheaf $\G_1:=R^1\pi_{23,*}(\pi_1^*F_1\otimes \F_2)$ over $X\times\hat{X}$ is reflexive of rank $8d$ and it is locally free away from $\tilde{\Theta}$ and $\tilde{\Theta}$ 
is the set theoretic support of 
the sheaf $\G_2:=R^2\pi_{23,*}(\pi_1^*F_1\otimes \F_2)$.
\item
\label{prop-local-freeness-G}
The object $\G^\vee[-1]$ 
is represented by the reflexive coherent sheaf $\E$ of rank $8d$, which is isomorphic to $\G_1^*$ and is hence locally free away from $\tilde{\Theta}$. The sheaf $\G_2$ is isomorphic to $\SheafExt^1(\E,\StructureSheaf{X\times\hat{X}})$.
\end{enumerate}
\end{prop}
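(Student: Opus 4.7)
The plan is to analyze $\G := R\pi_{23,*}(\pi_1^*F_1 \otimes \F_2)$ by a fiber-by-fiber computation, relying on the Appendix to ensure that $\pi_1^*F_1 \otimes \F_2$ coincides with the derived tensor product and is a sheaf on $X \times X \times \hat{X}$ whose restriction to the fiber over $(x,L) \in X\times \hat{X}$ has the form $\Ideal{Z_x} \otimes M_{x,L}$, for a $1$-dimensional subscheme $Z_x \subset X$ containing $\cup_i C_i \cup \tau_x(\cup_j \Sigma_j)$ and a line bundle $M_{x,L}$ of Chern class $2\Theta$. Cohomology-and-base-change then identifies the fibers of $\G_i := R^i\pi_{23,*}(\pi_1^*F_1 \otimes \F_2)$ with $H^i(X, \Ideal{Z_x} \otimes M_{x,L})$.

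First I would verify the boundary vanishings. The vanishing $\G_i = 0$ for $i \geq 3$ is automatic from the $3$-dimensionality of the fibers. The vanishing $\G_0 = 0$ follows from Assumption \ref{assumption-on-C-i-s}, since $H^0(X, \Ideal{Z_x}\otimes M_{x,L})$ embeds into $H^0(X, \Ideal{\cup_i C_i}(2\Theta) \otimes L')$ for some $L' \in \Pic^0(X)$, and the latter vanishes. A Chern character computation using Lemma \ref{lemma-ch-F-i-is-on-secant-to-spinor-variety} gives $\chi(F_1 \otimes F_2 \otimes L^{-1}) = -8d$, yielding the numerical identity $\dim H^1 - \dim H^2 = 8d$ at every fiber.

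Next I would show that $\G_2$ is a torsion sheaf with set-theoretic support exactly $\tilde{\Theta}$, forcing $\G_1$ to have generic rank $8d$. The short exact sequence $0 \to \Ideal{Z_x}\otimes M_{x,L} \to M_{x,L} \to M_{x,L}|_{Z_x} \to 0$ combined with the Kodaira-type vanishing $H^{\geq 1}(X, M_{x,L}) = 0$ (since $M_{x,L}$ has ample Chern class $2\Theta$ on the abelian threefold $X$) gives $H^2(X, \Ideal{Z_x} \otimes M_{x,L}) \cong H^1(Z_x, M_{x,L}|_{Z_x})$. For $x$ outside $\cup_{i,j}\Theta_{i,j}$, the scheme $Z_x$ is a disjoint union of genus-$3$ curves on which $M_{x,L}$ restricts to a degree-$6$ line bundle, whose $H^1$ vanishes by Serre duality. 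To pin down the support of $\G_2$ as the codimension-$4$ surface $\tilde{\Theta} = \cup_{i,j}\tilde{\Theta}_{i,j}$, I would use Assumption \ref{assumption-intersections-of-surfaces-are-generic} on the general position of the $\Theta_{i,j}$ together with the parametrization $f_{i,j}$, whose twist $L_{i,j}(c-s)$ precisely records the line bundle $L$ for which a nontrivial class in $H^1(Z_x, M_{x,L}|_{Z_x})$ appears. Standard cohomology-and-base-change then delivers local freeness of $\G_1$ on the complement of $\tilde{\Theta}$.

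For part (\ref{prop-local-freeness-G}), Grothendieck--Verdier duality along the smooth morphism $\pi_{23}$ (whose relative canonical bundle is trivial, as $X$ is abelian) gives $\G^\vee[3] \cong R\pi_{23,*} R\SheafHom(\pi_1^*F_1 \otimes \F_2, \StructureSheaf{X \times X \times \hat{X}})$, and combining this with the two-step distinguished triangle $\G_1[-1] \to \G \to \G_2[-2]$ yields a dual triangle $\G_2^\vee[2] \to \G^\vee \to \G_1^\vee[1]$. Reflexivity of $\G_1$, which I would establish from its local freeness outside the codimension-$4$ subvariety $\tilde{\Theta}$ of the smooth $6$-fold $X\times\hat{X}$ via Serre's $S_2$ criterion, together with the codimension-$4$ support of $\G_2$, forces the cohomology of $\G^\vee[-1]$ to concentrate in a single degree as a reflexive sheaf $\E \cong \G_1^*$ of rank $8d$, with $\SheafExt^1(\E,\StructureSheaf{X\times\hat{X}}) \cong \G_2$ emerging from the standard duality for a reflexive sheaf locally free in codimension $\geq 4$. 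The main obstacle will be the precise set-theoretic identification of the support of $\G_2$ with the full $\tilde{\Theta}$ rather than some proper subset or thickening, which requires tracking the $L$-dependence through the parametrization $f_{i,j}$ together with the transversality afforded by Assumption \ref{assumption-intersections-of-surfaces-are-generic}.
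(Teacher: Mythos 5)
Your plan for part (1) is the same fiberwise cohomology‐and‐base‐change strategy the paper uses, including the boundary vanishings and the short exact sequence reducing $H^2$ to $H^1$ of the restriction to $Z_x$. However, you compress the identification of the set‐theoretic support of $\G_2$ into a single sentence invoking Assumption \ref{assumption-intersections-of-surfaces-are-generic} and the parametrizations $f_{i,j}$. The genuine work in the paper is exactly here: for $x\in\Theta_{i,j}$ (a codimension‐$3$ condition on $X\times\hat X$) but $(x,L)\notin\tilde\Theta_{i,j}$, one must show that $H^1(\StructureSheaf{Z_{red}}(2\Theta)\otimes L')$ still vanishes. This requires the enumeration of the possible connected‐component types of $Z_{red}$ (chains of up to three curves), the computation of canonical bundles of those chains via Lemma \ref{lemma-LB-s-t}, and a short‐exact‐sequence induction peeling off components; the claim that the nonvanishing locus is the codimension‐$4$ surface $\tilde\Theta$ rather than a thickening or a union with a larger piece does not follow from transversality alone.

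For part (\ref{prop-local-freeness-G}) you take a genuinely different route — Grothendieck–Verdier duality plus dualizing the truncation triangle — while the paper first represents $\G$ by a bounded complex $K^\bullet$ of locally free sheaves, uses the fiberwise vanishings to truncate it to a two‐term complex $K^1\to K^2$ with injective differential, and reads everything off from $0\to(K^2)^*\to(K^1)^*\to\E\to 0$. Your version has two gaps. First, you establish reflexivity of $\G_1$ from ``local freeness outside the codimension‐$4$ subvariety $\tilde\Theta$ via Serre's $S_2$ criterion''; this implication is false. A coherent sheaf on a smooth variety can be locally free outside arbitrarily high codimension without being $S_2$ (e.g.\ $\StructureSheaf{X}\oplus\StructureSheaf{Z}$ for $Z$ of any codimension). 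One must separately produce depth $\geq 2$ along $\tilde\Theta$; in the paper this comes for free because $\G_1=\ker(d_1\colon K^1\to K^2)$ is a saturated subsheaf of a locally free sheaf, hence reflexive. Second, the claim that the triangle $\G_2^\vee[1]\to\G^\vee[-1]\to\G_1^\vee$ forces $\G^\vee[-1]$ to be a single sheaf is not spelled out: $\G_1^\vee$ has cohomology $\G_1^*$ in degree $0$ and potentially $\SheafExt^i(\G_1,\StructureSheaf{})$ in degrees $1$ through $3$, while $\G_2^\vee[1]$ has cohomology in degrees $\geq 3$, so a priori several degrees could survive. The required cancellation is automatic once one observes that $\G$ has Tor‐amplitude $[1,2]$ (from $K^\bullet$), hence $\G^\vee[-1]$ has Tor‐amplitude $[-1,0]$, and the degree‐$(-1)$ piece $\ker(d_1^*)$ vanishes because $d_1^*$ is a generically injective map between locally free sheaves; but that again goes through the two‐term complex you do not invoke.
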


\begin{rem}
\label{remark-all-the-results-for-E-hold-for-G}
Note that the object $\G$ is related to the object $E$ in (\ref{eq-E}) by interchanging $F_1$ and $F_2$. All the results of section \ref{section-secant-sheaves-on-abelian-threefolds} for $E$ holds for $\G$ as well, by Remark \ref{rem-interchanging-F-1-and-F-2}.
\end{rem}

The proof of Proposition \ref{prop-local-freeness} requires  the following lemmas.

\begin{lem}
\label{lemma-intersection-of-C-and-Sigma-has-length-at-most-2}
\begin{enumerate}
\item
\label{lemma-item-length-at-most-2}
Given $t\in \Pic^0(X)$, the intersection $C_i\cap \tau_t(\Sigma_j)$ is either empty, or a subscheme of length $2$.
\item
\label{lemma-item-length-is-2-iff}
The intersection subscheme $C_i\cap \tau_t(\Sigma_j)$ has length $2$, if and only if 
there exists $u\in\Pic^0(C)$, such that $C_i\cup \tau_t(\Sigma_j)$ is contained in $\tau_u(\Theta)$. 
\item
\label{lemma-item-one-or-two-theta-translates}
If  $C_i\cup \tau_t(\Sigma_j)$ is contained in $\tau_u(\Theta)$, then 
one of the following holds.
\begin{enumerate}
\item
\label{case-a-of-intersection-lemma}
The union $C_i\cup \tau_t(\Sigma_j)$ is the complete intersection $\tau_u(\Theta)\cap\tau_{u'}(\Theta)$ 
of a unique pair of translates of $\Theta$. 
Furthermore, $(\tau_u\iota\tau_{-u})(C_i)\neq\tau_t(\Sigma_j)$.
\item
\label{case-b-of-intersection-lemma}
The divisor $\tau_u(\Theta)$ is the unique translate of $\Theta$ containing $C_i\cup \tau_t(\Sigma_j)$. 
Furthermore, $(\tau_u\iota\tau_{-u})(C_i)=\tau_t(\Sigma_j)$ and the canonical line bundle of $C_i\cup \tau_t(\Sigma_j)$ is the restriction of $\StructureSheaf{X}(2\tau_u(\Theta))$.
\end{enumerate}
\end{enumerate}
\end{lem}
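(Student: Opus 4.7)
\smallskip

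The plan is to reduce everything, by translating $C_i$ to $C_p$ and replacing $t$ accordingly, to analyzing $C_p\cap\tau_v(\Sigma_p)$ for a single parameter $v\in\Pic^0(C)$. A point of this intersection is a line bundle $L\in\Pic^2(C)$ admitting two expressions $L=\StructureSheaf{C}(p+q_1)$ and $L=\omega_C(-p-q_2)+v$, equivalent to $q_1+q_2\in|\omega_C+v-2p|$. Since $C$ is non-hyperelliptic, any degree-$2$ linear system has $h^0\leq 1$, so there is at most one effective divisor $q_1+q_2$, giving at most two points of intersection, namely $\StructureSheaf{C}(p+q_1)$ and $\StructureSheaf{C}(p+q_2)$. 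This proves (\ref{lemma-item-length-at-most-2}), and shows moreover that the scheme-theoretic length is $2$ precisely when $|\omega_C+v-2p|$ is non-empty, i.e., (by Riemann--Roch and Serre duality applied to $\chi=0$ degree-$2$ bundles) when $|2p-v|$ is non-empty on $C$.

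For (\ref{lemma-item-length-is-2-iff}) I would use Martens' theorem (or the classical argument via the Gauss map): a translate of $AJ(C)$ in $\Pic^2(C)$ is contained in $\Theta$ if and only if it is of the form $s+AJ(C)$ for some $s\in C$. Applied to both $C_p$ and $\tau_v(\Sigma_p)$, this yields $C_p\subset\tau_u(\Theta)$ iff $u=p-r$ for some $r\in C$, and $\tau_v(\Sigma_p)\subset\tau_u(\Theta)$ iff $u=v-p+r'$ for some $r'\in C$ (using $\iota$ to swap $\Sigma$ and $C$). Combining the two conditions gives $r+r'=2p-v$, so the set of valid $u$ is non-empty iff $|2p-v|$ is non-empty, matching the criterion for length-$2$ intersection obtained above.

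For (\ref{lemma-item-one-or-two-theta-translates}), I distinguish according to whether the unique divisor in $|2p-v|$ is reduced or not. In case (\ref{case-a-of-intersection-lemma}), $r\neq r'$ and the two distinct solutions $u=p-r$, $u'=p-r'$ furnish two distinct translates $\tau_u(\Theta)$, $\tau_{u'}(\Theta)$ whose complete intersection (of codimension $2$ in the $3$-fold $X$) is a curve of cohomology class $\Theta^2=2[C_p]=[C_i]+[\tau_t(\Sigma_j)]$; since it contains the curve $C_i\cup\tau_t(\Sigma_j)$ of the same class, equality holds. The required inequality $(\tau_u\iota\tau_{-u})(C_i)\neq\tau_t(\Sigma_j)$ is a direct computation from $u=p-r$ vs.\ the value $u=p-r'$ that would produce equality. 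In case (\ref{case-b-of-intersection-lemma}), $r=r'$ and there is a unique $u$, and a direct calculation with $\iota_u(L):=\omega_C+2u-L$ gives $\iota_u(C_i)=\tau_t(\Sigma_j)$.

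The most delicate step is the formula for the canonical bundle in case (\ref{case-b-of-intersection-lemma}): I would approach it via adjunction applied on $\tau_u(\Theta)$, namely $\omega_{C_i\cup\tau_t(\Sigma_j)}\cong\bigl(\omega_{\tau_u(\Theta)}\otimes\StructureSheaf{\tau_u(\Theta)}(C_i\cup\tau_t(\Sigma_j))\bigr)|_{C_i\cup\tau_t(\Sigma_j)}$, together with $\omega_{\tau_u(\Theta)}\cong\StructureSheaf{\tau_u(\Theta)}(\tau_u(\Theta))|_{\tau_u(\Theta)}$ (since $\omega_X$ is trivial). The hard part is to identify the class of $C_i\cup\tau_t(\Sigma_j)$ as a divisor on the smooth surface $\tau_u(\Theta)$; in case (\ref{case-a-of-intersection-lemma}) it is tautologically the restriction of $\tau_{u'}(\Theta)$, but in case (\ref{case-b-of-intersection-lemma}) one needs to show it is numerically (hence linearly, up to torsion one must track) equivalent to the restriction of $\tau_u(\Theta)$ itself. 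I would argue this either by a flat limit degenerating a case-(\ref{case-a-of-intersection-lemma}) configuration into the case-(\ref{case-b-of-intersection-lemma}) one (so the class is preserved), or by using the $\iota_u$-invariance of $\tau_u(\Theta)$ together with the fact that $\iota_u$ exchanges $C_i$ and $\tau_t(\Sigma_j)$ to pin down the linear equivalence class on the surface $\tau_u(\Theta)$.
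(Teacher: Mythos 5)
Your overall architecture — translate to reduce to $C_p\cap\tau_v(\Sigma_p)$, parametrize intersection points by the linear system $|\omega_C+v-2p|$, and then match the criterion for non-emptiness against the characterization of translates of $\Theta$ containing $C_p$ and $\tau_v(\Sigma_p)$ — is a genuinely clean route and tracks the paper's logic (the paper proves the $\Theta$-containment criterion directly rather than quoting Martens, but it is the same statement). Two points, however, need more work than you have given them.

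First, in part~(\ref{lemma-item-length-at-most-2}) your argument with the degree-$2$ linear system $|\omega_C+v-2p|$ bounds the \emph{set} of intersection points by $2$, but the lemma asserts the \emph{scheme-theoretic} length is exactly $2$ whenever the intersection is non-empty. That statement contains both a lower bound (the intersection is not a single reduced point) and a precise upper bound (no tangency of order $\geq 3$), and neither follows from counting effective divisors in a linear system. You could close the gap by observing that the scheme-theoretic intersection $C_p\cap\tau_v(\Sigma_p)$ is the fiber over $\omega_C+v-2p$ of the degree-$2$ finite flat map $C\times C\to C^{(2)}\to \Pic^2(C)$, and that when $h^0=1$ the differential of the Abel--Jacobi map is injective so the fiber of $C^{(2)}\to\Pic^2(C)$ is reduced — but you do not say any of this. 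The paper instead derives the $\Theta$-containment first (your part~(\ref{lemma-item-length-is-2-iff})) and then computes the intersection number of the two curves in the \emph{smooth surface} $\Theta\cong C^{(2)}$, getting $2$ immediately. As written, you assert the scheme length without proof, and you also prove~(\ref{lemma-item-length-at-most-2}) before you have the surface containment that would make it easy.

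Second, in part~(\ref{lemma-item-one-or-two-theta-translates})(\ref{case-b-of-intersection-lemma}) the canonical-bundle identification is only sketched. The parenthetical ``(so the class is preserved)'' in your flat-limit proposal is not a proof: as $u'\to u$ both the nodal curve $Z'=C_i\cup\tau_t(\Sigma_j)$ and the ambient surface $\tau_u(\Theta)$ move, and there is no a priori reason that a linear-equivalence class on the surface specializes to the expected one. What the paper actually does is cleaner and you should do the same: in case~(\ref{case-a-of-intersection-lemma}) the adjunction you wrote gives $\omega_{Z'}\cong\StructureSheaf{X}(\tau_u(\Theta)+\tau_{u'}(\Theta))|_{Z'}$, hence $h^0\bigl(Z',\,\omega_{Z'}(-\tau_u(\Theta)-\tau_{u'}(\Theta))\bigr)=1$; by upper semi-continuity of $h^0$ in the flat family, $h^0\bigl(Z',\,\omega_{Z'}(-2\tau_u(\Theta))\bigr)\neq 0$ in the limiting case~(\ref{case-b-of-intersection-lemma}), and since $\omega_{Z'}(-2\tau_u(\Theta))$ has degree $0$ on each irreducible component, a non-zero section forces it to be trivial. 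That degree computation is the missing step that turns ``numerically trivial'' into ``trivial.''

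The rest — the criterion $C_p\subset\tau_y(\Theta)\Leftrightarrow y=p-r$, the dual criterion for $\Sigma_p$, the derivation $r+r'\in|2p-v|$, the dichotomy on whether that divisor is reduced, and the computation showing $(\tau_u\iota\tau_{-u})(C_i)=\tau_t(\Sigma_j)\Leftrightarrow r=r'$ — is sound and matches the paper's argument in substance.
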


\begin{proof}
Given $p\in C$, set $C_p:=C_{AJ(p)}$.  We may assume, without loss of generality, that $C_i=C_p$, for some $p\in C$. 
Let $s$ be a point of $\Pic^0(C)$, such that $\tau_t(\Sigma_j)=\tau_s(\Sigma_p)$.

\underline{Step 0:} We prove part (\ref{lemma-item-one-or-two-theta-translates})  with the exception of the uniqueness in part (\ref{case-a-of-intersection-lemma}) which is postponed to Step 4 below.
Observe  the equivalence 
\begin{equation}
\label{equivalence-of-inclusions}
C_p\cup \tau_s(\Sigma_p)\subset \tau_u(\Theta)
\Leftrightarrow
C_p\cup \tau_s(\Sigma_p)\subset \tau_{s-u}(\Theta).
\end{equation}
Indeed, the right inclusion is obtained by applying $\tau_s\circ\iota$ to both sides of the left inclusion and using the equalities $\iota(\Theta)=\Theta$ and $\tau_s\circ\iota=\iota\circ\tau_{-s}$.

If $s\neq 2u$, then the union $C_p\cup \tau_s(\Sigma_p)$ is contained in the two distinct translates $\tau_u(\Theta)$ and
$\tau_{s-u}(\Theta)$ and so the union must be their complete intersection, as the cohomology classes of both are equal. If $\tau_u(\Theta)$ is the unique translate of $\Theta$ containing $C_p\cup \tau_s(\Sigma_p)$, then
$s=2u$ and so $(\tau_u\iota\tau_{-u})(C_p)=\tau_{2u}(\iota(C_p))=\tau_s(\Sigma_p)$. 
If there exist two distinct translates of $\Theta$ containing $C_p\cup \tau_s(\Sigma_p)$, then the latter is their complete intersection, by the equality of the cohomology classes. 

Assume that $(\tau_u\iota\tau_{-u})(C_p)=\tau_s(\Sigma_p)$. Then $\tau_{2u}(\Sigma_p)=\tau_s(\Sigma_p)$. Hence, $s=2u$.
We will see 
in Step 3 that $\tau_u(\Theta)$ is the unique translate of $\Theta$ containing both $C_p$ and $\tau_{2u}(\Sigma_p)$. This would prove the inequality in part \ref{case-a-of-intersection-lemma}.

In both cases $Z':=C_i\cup\tau_t(\Sigma_j)$ is a divisor on a smooth surface $\tau_u(\Theta)$, and so its canonical sheaf is a line bundle.
In case (\ref{case-a-of-intersection-lemma}) the canonical line bundle $\omega_{Z'}$  is the restriction of
$\StructureSheaf{X}(\tau_u(\Theta)+\tau_{u'}(\Theta))$, as the normal bundle is the restriction of 
$\StructureSheaf{X}(\tau_u(\Theta))\oplus \StructureSheaf{X}(\tau_{u'}(\Theta))$.
Hence, $H^0(Z',\omega_{Z'}(-\tau_u(\Theta)-\tau_{u'}(\Theta)))$ is one dimensional. Case (\ref{case-b-of-intersection-lemma})
is a limit case and we conclude that $H^0(Z',\omega_{Z'}(-2\tau_u(\Theta)))$ does not vanish, by semi-continuity. The line bundle 
$\omega_{Z'}(-2\tau_u(\Theta))$ restricts to each irreducible component of $Z'$ as a line bundle of degree $0$, hence the existence of a non-zero global section implies that it is the trivial line-bundle.

\underline{Step 1:}
Let $q\in C$ be a point not equal to $p$ and set $y=p-q$. We prove first the equalities
\begin{eqnarray}
\label{eq-intersection-is-union-of-two-curves}
\Theta\cap \tau_y(\Theta)&=&C_p\cup \Sigma_q,
\\
\label{eq-C-p-and-Sigma-q-intersect-at-two-points}
C_p\cap\Sigma_q&=&\{\StructureSheaf{C}(p+r),\StructureSheaf{C}(p+t)\},
\end{eqnarray}
such that $\omega_C\cong\StructureSheaf{C}(p+q+r+t)$. The points $r$ and $t$ are the two other points on the intersection of the line through $\varphi_{\omega_C}(p)$ and $\varphi_{\omega_C}(q)$ with the canonical curve $\varphi_{\omega_C}(C)$. The curves $C_p$ and $\Sigma_q$ are tangent at $\StructureSheaf{C}(p+r)$, if $r=t$. 

Proof of (\ref{eq-intersection-is-union-of-two-curves}):
It suffices to prove the inclusion $(C_p\cup \Sigma_q)\subset (\Theta\cap \tau_y(\Theta))$, as the cohomology classes of both sides are equal. 
The curves $C_p$ and $C_q$ are contained in $\Theta$ and  $\iota(\Theta)=\Theta$, hence $\Sigma_p$ and $\Sigma_q$ are  contained in $\Theta$.
Now $C_q$ is contained in $\Theta$ and so $C_p=\tau_y(C_q)$ is contained in $\tau_y(\Theta)$. 
In addition, $\iota(\tau_{-y}(\Theta))=\tau_y(\iota(\Theta))=\tau_y(\Theta)$, and so 
\begin{equation}
\label{eq-Sigma-q-is-translate-of-Sigma-p-by-p-q}
\Sigma_q=\iota(C_q)=\iota(\tau_{-y}(C_p))=\tau_y(\iota(C_p))=\tau_y(\Sigma_p)
\end{equation}
is contained in  $\tau_y(\Theta).$

Proof of (\ref{eq-C-p-and-Sigma-q-intersect-at-two-points}):
\[
p+r\in\Sigma_q\Leftrightarrow \exists t\in C, \ \omega_C(-q-t)\cong\StructureSheaf{C}(p+r)
\Leftrightarrow 
\exists t\in C, \  \omega_C\cong\StructureSheaf{C}(p+q+r+t).
\]
If $r=t$ the curves $C_p$ and $\Sigma_q$ are tangent at $\StructureSheaf{C}(p+r)$, since their intersection number in $\Theta$ is $2$.

\underline{Step 2:}
We show next that given $y\in\Pic^0(C)$, the curve $C_p$ is contained in $\tau_y(\Theta)$, if and only if $y=\StructureSheaf{C}(p-r)$, for some $r\in C$. 
\begin{equation}
\label{eq-criterion-for-inclusion}
C_p\subset \tau_y(\Theta) \Leftrightarrow \exists r\in C, \ y=\StructureSheaf{C}(p-r).
\end{equation}
The ``if'' implication is clear. We prove the ``only if'' implication. Assume that $C_p$ is contained in $\tau_y(\Theta)$.
Let $L$ be the line bundle represented by $-y$. Then $\tau_{-y}(C_p)$ is contained in $\Theta$, if and only if the line bundle $\StructureSheaf{C}(p+q)\otimes L$ is effective, for all $q\in C$.  In this case the point $p$ will belong to
the support of the effective divisor in the linear system $|\StructureSheaf{C}(p+q)\otimes L|$, for some $q$ as we vary $q$ in $C$.
Hence, if $C_p$ is contained in $\tau_y(\Theta)$, then $L\cong \StructureSheaf{C}(r-q)$, for some points $q, r\in C$.
Furthermore, $\StructureSheaf{C}(r-q+p+q')$ is effective, for all $q'\in C$. So $q$ is a base point of
$\StructureSheaf{C}(r+p+q')$, for the generic $q'\in C$ for which $h^0(\StructureSheaf{C}(r-q+p+q'))=1$ (i.e., for $q'\in C$, such that the images of $r, p, q'$ on the canonical curve are not colinear, or if $p=r$, then for $q'\in C$, such that $q'$ is not on the line tangent to the canonical curve at $p$). 
For such generic $q'$ the base locus is $\{r, p, q'\}$. 
Hence $q$ is equal to one of $r$ or $p$. If $q=p$ we are done. If $q=r$, then $L$ is trivial and we are done. 

Applying $\iota$ to (\ref{eq-criterion-for-inclusion}) and replacing $y$ by $-y$ we get
\begin{equation}
\label{eq-Sigma-p-is-contained-in-theta-translate-iff}
\Sigma_p\subset \tau_{y}(\Theta) \Leftrightarrow \exists r\in C, \ y=r-p.
\end{equation}

Combing (\ref{eq-criterion-for-inclusion}) and (\ref{eq-Sigma-p-is-contained-in-theta-translate-iff}) we see that $\Theta$ is the only translate of $\Theta$ containing both $C_p$ and $\Sigma_p$. Indeed, if $p-r=r'-p$, for points $r,r'\in C$, then $2p=r+r'$ and so $p=r=r'$, since $C$ is non-hyperelliptic.

\underline{Step 3:}
Combining the results of the above two steps we conclude that if $\Theta\cap\tau_t(\Theta)=C_p\cup C'$, for some $t\in \Pic^0(C)$ and some curve $C'$, then $t=p-q$, for some $q\in C$, and $C'=\Sigma_q$. Furthermore, the scheme $C_p\cap\Sigma_q$
has length $2$. 

We prove next that if $C_p\subset \tau_u(\Theta)$, then $\tau_u(\Theta)$ is the unique translate of $\Theta$ containing $C_p\cup \tau_{2u}(\Sigma_p)$. Indeed, the assumed inclusion implies that $u=p-r$, for some $r\in C$, by (\ref{eq-criterion-for-inclusion}). 
Hence, $\tau_u(\Sigma_p)=\Sigma_r\subset\Theta$ and so $\tau_{2u}(\Sigma_p)\subset\tau_u(\Theta)$.
Assume that 
$
C_p\cup \tau_{2u}(\Sigma_p)\subset \tau_u(\Theta)\cap\tau_t(\Theta).
$
Applying $\tau_{-u}$ to both sides we get the inclusion
$
C_r\cup\Sigma_r\subset \Theta\cap \tau_{t-u}(\Theta).
$
Thus, $t-u=r-r=0$, by the previous paragraph. So $t=u$.

\underline{Step 4:} We show next that 
if $C_p\cup\tau_s(\Sigma_p)=\tau_{t_1}(\Theta)\cap\tau_{t_2}(\Theta)$, for some $t_1, t_2\in\Pic^0(C)$, then 
\begin{enumerate}
\item
\label{step-4-item-1}
$t_1=p-q_1$ and $t_2=p-q_2$, for some $q_1, q_2\in C$, 
\item
\label{step-4-item-2}
$s=2p-q_1-q_2$ and $s\neq 0$,
\item
\label{step-4-item-3}
$C_p\cap\tau_s(\Sigma_p)=\left\{p+r, p+t
\right\}$, where $\StructureSheaf{C}(q_1+q_2+r+t)\cong\omega_C$. Furthermore, $C_p$ is tangent to $\tau_s(\Sigma_p)$ at
$p+r$, if $r=t$.
\end{enumerate}
This proves the uniqueness of the pair of translates of $\Theta$ whose complete intersection is $C_p\cup\tau_s(\Sigma_p)$ in Part (\ref{case-a-of-intersection-lemma}).

(\ref{step-4-item-1}) follows from Equation (\ref{eq-criterion-for-inclusion}).

(\ref{step-4-item-2}) 
Applying $\tau_{-t_1}$ we get that 
$C_{q_1}\cup \tau_{s-t_1}(\Sigma_p)=\Theta\cap \tau_{q_1-q_2}(\Theta)$. So $\tau_{s-t_1}(\Sigma_p)=\Sigma_{q_2}$, 
by (\ref{eq-intersection-is-union-of-two-curves}) and $s-t_1=p-q_2$, by (\ref{eq-Sigma-q-is-translate-of-Sigma-p-by-p-q}). This proves the equality in (\ref{step-4-item-2}). If $s=0$, then $p=q_1=q_2$ and so $t_1=t_2$, which contradicts the assumed equality 
$C_p\cup\tau_s(\Sigma_p)=\tau_{t_1}(\Theta)\cap\tau_{t_2}(\Theta)$.

(\ref{step-4-item-3}) We have
$C_{q_1}\cap \tau_{s-t_1}(\Sigma_p)=C_{q_1}\cap \tau_{p-q_2}(\Sigma_p)=C_{q_1}\cap \Sigma_{q_2}=\{q_1+r, q_1+t\}$, 
where $\StructureSheaf{C}(q_1+q_2+r+t)\cong\omega_C$. The first equality follows from (\ref{step-4-item-1})  and (\ref{step-4-item-2}), the second equality from (\ref{eq-Sigma-q-is-translate-of-Sigma-p-by-p-q}), and the third equality from Step 1. If $r=t$ the two curves $C_{q_1}$ and $\Sigma_{q_2}$ are tangent at $q_1+r$, by Step 1. Hence, $C_p$ is tangent to $\tau_s(\Sigma_p)$ at $p+r$.


\hide{
The following inclusions are equivalent:
\begin{eqnarray*}
C_p\cup \tau_{q'-q}(\Sigma_p) & \subset & \tau_{p-q}(\Theta)=\tau_{p-q}(\iota(\Theta))=\iota(\tau_{q-p}(\Theta))
\\
\Sigma_p\cup \iota(\tau_{q'-q}(\Sigma_p))& \subset &\tau_{q-p}(\Theta)
\\
\Sigma_p\cup \tau_{q-q'}(C_p)& \subset &\tau_{q-p}(\Theta)
\\
\tau_{q'-q}(\Sigma_p)\cup C_p& \subset &\tau_{q'-p}(\Theta)=\tau_{q'+q-2p}(\tau_u(\Theta)).
\end{eqnarray*}
The assumed uniqueness of $u$ implies that $q'+q=2p$. Hence, $p=q'=q$. So $s=0$ and $u=0$. 
The equality $(\tau_u\iota\tau_{-u})(C_p)=\tau_s(\Sigma_p)$ follows.

and $(\tau_u\iota\tau_{-u})(C_p)=\tau_s(\Sigma_p)$. As above we get that $u=p-q$, for some $q\in C$. The inclusion $\tau_{s-u}(\Sigma_p)\subset\Theta$ is equivalent to $\tau_{u-s}(C_p)\subset\Theta$, which implies that $u-s=p-q'$, for some $q'\in C$. Applying $\iota\tau_{-u}$ to the equality 
$(\tau_u\iota\tau_{-u})(C_p)=\tau_s(\Sigma_p)$ and using $\tau_{-u}(C_p)=C_q$ we get
\[
C_q=\iota(\tau_{s-u}(\Sigma_p))=\tau_{u-s}(C_p).
\]
So $p-q'=u-s=q-p$. Thus, $p=q=q'$. So $s=u=0$. This proves the uniqueness of $u$ in 
the ``only if'' direction of part (\ref{lemma-item-length-is-2-iff}) of the lemma if (\ref{case-b-of-intersection-lemma}) holds. 
}

\hide{
\underline{Step 5:} Suppose that $C_p\cap\tau_s(\Sigma_p)$ contains a length $2$ subscheme $Z$. So either $Z$ consists of  
two distinct points $\StructureSheaf{C}(p+r)$ and
$\StructureSheaf{C}(p+t)$, or a tangency point $\StructureSheaf{C}(p+r)=\StructureSheaf{C}(p+t)$. 
We prove that $C_p\cup\tau_s(\Sigma_p)$ is 
contained in a translate
of $\Theta$.
Let $s'\in\Pic^0(C)$ be the point corresponding to $\omega_C^{-1}\otimes  \StructureSheaf{C}(r+t+2p)$.
So $s'=2p-q_1-q_2$, where $q_1+q_2$ is the effective divisor satisfying $\omega_C\cong \StructureSheaf{C}(r+t+q_1+q_2)$.
Then $Z$ in contained also in $C_p\cap \tau_{s'}(\Sigma_p)$, by Step 4(\ref{step-4-item-3}). Furthermore, 
\[
C_p\cup\tau_{s'}(\Sigma_p)\subset \tau_{p-q_1}(\Theta)\cap \tau_{p-q_2}(\Theta).
\]
Indeed, $\tau_{q_1-p}(\tau_{s'}(\Sigma_p))=\tau_{p-q_2}(\Sigma_p)=\Sigma_{q_2}$, by (\ref{eq-Sigma-q-is-translate-of-Sigma-p-by-p-q}), and $\Sigma_{q_2}$  is contained in $\Theta$. Similarly, $\tau_{q_2-p}(\tau_{s'}(\Sigma_p))$   is contained in $\Theta$.
We claim that $s=s'$. 
Observe that $\tau_s(\Sigma_p)\cap \tau_{s'}(\Sigma_p)$ contains the subscheme $Z$. Applying $\iota$ we get that 
$\tau_{-s}(C_p)\cap \tau_{-s'}(C_p)$ contains $\iota(Z)$. Hence, $C_p\cap \tau_{s-s'}(C_p)$ contains a length $2$ subscheme. 
We conclude that $s=s'$, by Lemma \ref{lemma-two-translates-of-C}. 
}


\underline{Step 5:} 
If $C_p\cap \tau_s(\Sigma_p)$ is non-empty, then $C_p\cup \tau_s(\Sigma_p)$ 
is contained in a translate of $\Theta$, by Lemma \ref{lemma-intersection-is-either-empty-or-of-length-2}.
It remains to prove that in this case the length of the subscheme $C_p\cap \tau_s(\Sigma_p)$ is $2$. As in the proof of Step 4(3) we can translate so that the subscheme in question is $C_{q_1}\cap \Sigma_{q_2}$ for two points $q_1, q_2\in C$. Now the intersection number of these two curves in $\Theta$ is $2$. This completes the proof of Parts (\ref{lemma-item-length-at-most-2}) and (\ref{lemma-item-length-is-2-iff}).
\end{proof}

\hide{
\begin{lem}
\label{lemma-two-translates-of-C}
Two distinct translates of $AJ(C)$ in $\Pic^1(C)$ intersect along a subscheme of length at most $1$.
\end{lem}

\begin{proof}
Let $t$ be a point of $\Pic^0(C)$ corresponding to a line bundle $L$. If there exist two points $p_i\in C$ such that $\StructureSheaf{C}(p_i)$ belongs to $AJ(C)\cap \tau_t(AJ(C)),$ for $i=1,2$, 
then $\StructureSheaf{C}(p_i)\cong \StructureSheaf{C}(q_i)\otimes L$, for some $q_i\in C$. Hence, $L\cong \StructureSheaf{C}(p_i-q_i)$, for $i=1,2$.
Thus $\StructureSheaf{C}(p_1+q_2)$ is isomorphic to $\StructureSheaf{C}(p_2+q_1)$. 
It follows that the divisors $p_1+q_2$ and $p_2+q_1$ are equal, since $C$ is non-hyperelliptic.
If $p_1\neq p_2$, then $q_1=p_1$ and $q_2=p_2$ and so $L$
 must be the trivial line bundle. It remains to show that 
if $p_1=p_2$ the two translates are not tangent at $\StructureSheaf{C}(p_1)$. Translating by $q_1$ into $\Pic^2(C)$ we need to show that $C_{p_1}$ and $C_{q_1}$ are not tangent at $\StructureSheaf{C}(p_1+q_1)$. Indeed, both are contained in $\Theta$, which is isomorphic to $C^{(2)}$, and the intersection number of $C_{p_1}$ and $C_{q_1}$ in $\Theta$ is $1$, being half the intersection of $[C\times \{p_1\}\cup\{p_1\}\times C]$ with $[C\times \{q_1\}\cup\{q_1\}\times C]$ in $C\times C$.
\end{proof}
}

\begin{lem}
\label{lemma-intersection-is-either-empty-or-of-length-2}
Let $t$ be a point of $\Pic^0(C)$. If $C_p\cap \tau_t(\Sigma_p)$ is non-empty, then 
$2p-t\sim a+b$, for a unique effective divisor $a+b$, $a,b\in C$. In that case 
the union $Z_{p,t}:=C_p\cup\tau_t(\Sigma_p)$ is contained in each of 
$\tau_{p-a}(\Theta)$ and $\tau_{p-b}(\Theta)$. 
\end{lem}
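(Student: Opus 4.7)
The plan is to derive the lemma directly from the group-theoretic criteria already established in the preceding Lemma~\ref{lemma-intersection-of-C-and-Sigma-has-length-at-most-2}, particularly Equations~(\ref{eq-criterion-for-inclusion}) and~(\ref{eq-Sigma-p-is-contained-in-theta-translate-iff}), together with one application of Riemann--Roch on the non-hyperelliptic genus $3$ curve $C$.

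First I would translate the non-emptiness assumption into a linear equivalence on $C$. Let $L$ be the line bundle of class $t\in\Pic^0(C)$. A point of $C_p\cap\tau_t(\Sigma_p)$ corresponds to a pair $(r,\tilde r)\in C\times C$ with $\StructureSheaf{C}(p+r)\cong \omega_C(-p-\tilde r)\otimes L$, equivalently $L\cong \StructureSheaf{C}(2p+r+\tilde r)\otimes\omega_C^{-1}$. Rearranging yields the degree-$2$ linear equivalence
\[
2p-t \;\sim\; K_C-r-\tilde r
\]
on $C$. So non-emptiness forces $2p-t$ to be represented by an effective divisor, namely a section of $\omega_C(-r-\tilde r)$.

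Next I would verify the existence and uniqueness of $a+b$. By Serre duality, $h^0(\omega_C(-r-\tilde r)) = h^0(\StructureSheaf{C}(r+\tilde r))$, and by Riemann--Roch this common value equals $1$, since $C$ is non-hyperelliptic of genus $3$ and hence carries no $g^1_2$. Thus $2p-t$ is linearly equivalent to a unique effective divisor $a+b$, with $a,b\in C$, and it satisfies $a+b+r+\tilde r\sim K_C$.

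Finally I would verify the two containments. For $C_p\subset\tau_{p-a}(\Theta)$, apply the criterion~(\ref{eq-criterion-for-inclusion}) with $y=p-a$, which holds because $a\in C$. For $\tau_t(\Sigma_p)\subset\tau_{p-a}(\Theta)$, translate by $-(p-a)$ and apply~(\ref{eq-Sigma-p-is-contained-in-theta-translate-iff}): we need $p-a-t = r'-p$ for some $r'\in C$, i.e.\ $r'\sim 2p-a-t$. But $2p-t\sim a+b$ gives $2p-a-t\sim b$, so $r'=b\in C$ works. The statement for $\tau_{p-b}(\Theta)$ follows by interchanging $a$ and $b$. No step looks technically hard; the only real content beyond previous bookkeeping is the Riemann--Roch computation ensuring both the existence and the uniqueness of $a+b$, and this is where the non-hyperelliptic hypothesis on $C$ is essential.
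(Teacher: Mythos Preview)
Your proof is correct and follows essentially the same approach as the paper's. Both arguments translate the non-emptiness of $C_p\cap\tau_t(\Sigma_p)$ into the linear equivalence $2p-t\sim K_C-r-\tilde r$ (the paper routes this through auxiliary points $q,r,s,u$ with $r+s\in\Sigma_p$, but arrives at the same relation), invoke Riemann--Roch plus non-hyperellipticity to get existence and uniqueness of the effective $a+b$, and then verify the two theta-translate containments; the only difference is that the paper checks $\tau_t(\Sigma_p)\subset\tau_{p-a}(\Theta)$ by explicitly rewriting $\tau_t(\Sigma_p)=\tau_{p-a}(\Sigma_b)$ via~(\ref{eq-Sigma-q-is-translate-of-Sigma-p-by-p-q}), whereas you appeal directly to criterion~(\ref{eq-Sigma-p-is-contained-in-theta-translate-iff}).
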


\begin{proof}
\underline{Step 1:}
Assume that $C_p\cap \tau_t(\Sigma_p)\neq \emptyset$. Then $t$ belongs to the surface $C_p-\Sigma_p$ in $\Pic^0(C)$, i.e., 
$t=p+q-r-s$, where $r+s$ belongs to $\Sigma_p$. The fact that the divisor $r+s$ belongs to $\Sigma_p$ means that there exists a point $u\in C$, such that $\StructureSheaf{C}(p+u+r+s)\cong \omega_C$.

\underline{Step 2:} We show next that there exists a unique effective divisor $a+b$, $a,b\in C$, such that $a+b+q\sim p+r+s$. 
The dimension of $H^0(C,\StructureSheaf{C}(p+r+s))$ is $2$, by Step 1. 
So $H^0(C,\StructureSheaf{C}(p+r+s-q))$ is $1$-dimensional, since $C$ is not hyperelliptic. Hence, 
$p+r+s-q\sim a+b$, for a unique effective divisor $a+b$, $a,b\in C$. 

\underline{Step 3:}
We have
\[
\tau_{-t}(\tau_{p-a}(\Sigma_b))=\tau_{r+s-q-a}(\Sigma_b)\stackrel{(\ref{eq-Sigma-q-is-translate-of-Sigma-p-by-p-q})}{=}
\tau_{r+s-q-a}(\tau_{p-b}(\Sigma_p))=\tau_{r+s+p-q-a-b}(\Sigma_p)=\Sigma_p,
\]
where the last equality follows from Step 2. We conclude the equality $\tau_t(\Sigma_p)=\tau_{p-a}(\Sigma_b).$
Hence,
\[
C_p\cup\tau_t(\Sigma_p)=\tau_{p-a}(C_a)\cup \tau_{p-a}(\Sigma_b) \subset \tau_{p-a}(\Theta).
\]
Interchanging the roles of $a$ and $b$ we get also that $C_p\cup\tau_t(\Sigma_p)$ is contained in $\tau_{p-b}(\Theta)$.
Finally we have
\[
a+b\sim p-q+r+s\sim (p-q)+(p+q-t)=2p-t.
\]
Uniqueness of $a+b$ follows, since $C$ is assumed non-hyperelliptic.
\end{proof}

\begin{rem}
\label{rem-intersection-is-either-empty-or-of-length-2}
Keep the notation of Lemma \ref{lemma-intersection-is-either-empty-or-of-length-2}. If $C_p\cap\tau_t(\Sigma_p)$ is non-empty,
then the canonical line bundle  $\omega_{Z_{p,t}}$ is the restriction of $\StructureSheaf{X}(\tau_{p-a}(\Theta)+\tau_{p-b}(\Theta))$, 
by Lemma \ref{lemma-intersection-of-C-and-Sigma-has-length-at-most-2}(\ref{lemma-item-one-or-two-theta-translates}). Hence 
$\omega_{Z_{p,t}}$ is isomorphic to $\StructureSheaf{X}(3\Theta-\tau_{-t}(\Theta))$, by the Theorem of the Square.
\end{rem}

As a corollary, we get the following.

\begin{lem}
\label{lemma-LB-s-t}
Let $s$ and $t$ be points of $\Pic^0(C)$. If $\tau_s(C_p)\cap \tau_t(\Sigma_p)$ is non-empty, then 
the following statements hold.
\begin{enumerate}
\item
\label{lemma-item-s-t+2p=a+b}
$s-t+2p\sim a+b$, $a,b\in C$, for a unique effective divisor $a+b$. 
\item
\label{lemma-item-formula-for-LB}
The union $Z_{p,s,t}:=\tau_s(C_p)\cup\tau_t(\Sigma_p)$ is contained in each of 
$\tau_{p+s-a}(\Theta)$ and $\tau_{p+s-b}(\Theta)$ and its canonical line bundle  $\omega_{Z_{p,s,t}}$ is the restriction of 
$\LB_{s,t}:=\StructureSheaf{X}(3\Theta-\tau_{-t-s}(\Theta))$. 
\item
\label{lemma-item-restriction-of-L-s-t-to-C-p}
Let $D_{p,s,t}$ be the degree $2$ divisor on $\tau_s(C_p)$ corresponding to the length $2$ subscheme $\tau_s(C_p)\cap \tau_t(\Sigma_p)$. Denote by $D'_{p,s,t}$ the analogous degree $2$ divisor on $\tau_t(\Sigma_p)$. 
The line bundle $\LB_{s,t}$ is the unique line bundle in its connected component of $\Pic(X)$, which restriction to $\tau_s(C_p)$ is $\omega_{\tau_s(C_p)}(D_{p,s,t})$. The line bundle $\LB_{s,t}$ is also the unique line bundle in its connected component of $\Pic(X)$, which restriction to $\tau_t(\Sigma_p)$ is $\omega_{\tau_t(\Sigma_p)}(D'_{p,s,t})$. 
\end{enumerate}
\end{lem}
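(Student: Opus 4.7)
The plan is to reduce everything to Lemma~\ref{lemma-intersection-is-either-empty-or-of-length-2} by translating by $-s$, and then verify the final identifications of line bundles by the Theorem of the Square and an adjunction argument on the smooth surface $\tau_{p+s-a}(\Theta)$.

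First I would observe that $\tau_s(C_p)\cap\tau_t(\Sigma_p)$ is non-empty iff, after translating by $-s$, $C_p\cap\tau_{t-s}(\Sigma_p)$ is non-empty. Applying Lemma~\ref{lemma-intersection-is-either-empty-or-of-length-2} with $t$ replaced by $t-s$ then yields a unique effective divisor $a+b$ with $2p-(t-s)\sim a+b$, which is precisely statement~(\ref{lemma-item-s-t+2p=a+b}), and the same lemma gives $C_p\cup\tau_{t-s}(\Sigma_p)\subset \tau_{p-a}(\Theta)\cap\tau_{p-b}(\Theta)$. Translating back by $s$ gives the pair of inclusions in statement~(\ref{lemma-item-formula-for-LB}).

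Next I would compute $\omega_{Z_{p,s,t}}$. Translation by $-s$ is an isomorphism of $Z_{p,s,t}$ with $Z_{p,t-s}$ (in the notation of the earlier lemma), so by Remark~\ref{rem-intersection-is-either-empty-or-of-length-2} the canonical bundle is the restriction of $\StructureSheaf{X}(\tau_{p+s-a}(\Theta)+\tau_{p+s-b}(\Theta))$. By the Theorem of the Square,
\[
\tau_{p+s-a}(\Theta)+\tau_{p+s-b}(\Theta)\;\sim\;\tau_{2p+2s-a-b}(\Theta)+\Theta,
\]
and substituting $a+b\sim 2p+s-t$ gives $2p+2s-a-b\sim s+t$, so $\omega_{Z_{p,s,t}}$ is the restriction of $\StructureSheaf{X}(\tau_{s+t}(\Theta)+\Theta)$. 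Applying the Theorem of the Square once more, $\tau_{s+t}(\Theta)+\tau_{-s-t}(\Theta)\sim 2\Theta$, so this bundle is isomorphic to $\StructureSheaf{X}(3\Theta-\tau_{-s-t}(\Theta))=\LB_{s,t}$, yielding~(\ref{lemma-item-formula-for-LB}).

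For statement~(\ref{lemma-item-restriction-of-L-s-t-to-C-p}), I would use the standard adjunction formula for a nodal (or, in the tangent case, non-reduced) union of two smooth curves meeting transversally (resp.\ with tangency) on a smooth surface: if $Z=C_1\cup C_2$ sits inside the smooth surface $\tau_{p+s-a}(\Theta)$, then $\omega_Z|_{C_1}\cong \omega_{C_1}(C_1\cap C_2)$. Applied to $C_1=\tau_s(C_p)$ and $C_2=\tau_t(\Sigma_p)$ this gives $\LB_{s,t}|_{\tau_s(C_p)}\cong\omega_{\tau_s(C_p)}(D_{p,s,t})$, and symmetrically for $\tau_t(\Sigma_p)$. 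Finally, uniqueness of $\LB_{s,t}$ within its connected component of $\Pic(X)$ follows from the injectivity of the restriction $\Pic^0(X)\to \Pic^0(\tau_s(C_p))\cong\Pic^0(C)$, which is (up to the canonical identification $\hat X\cong\Pic^0(C)$) the identity map, hence injective.

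The main obstacle I anticipate is keeping the translations and signs consistent across the two Theorem of the Square applications; once the identity $a+b\sim 2p+s-t$ is in hand, the reduction to $\Theta+\tau_{s+t}(\Theta)$ and then to $3\Theta-\tau_{-s-t}(\Theta)$ is forced. The adjunction step is standard provided one carefully treats the possibly non-reduced tangency case via semi-continuity, as is already done in Lemma~\ref{lemma-intersection-of-C-and-Sigma-has-length-at-most-2}(\ref{case-b-of-intersection-lemma}).
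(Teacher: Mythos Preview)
Your proof is correct and follows essentially the same route as the paper: reduce to Lemma~\ref{lemma-intersection-is-either-empty-or-of-length-2} and Remark~\ref{rem-intersection-is-either-empty-or-of-length-2} by translating by $-s$, then use the Theorem of the Square to rewrite $\tau_{p+s-a}(\Theta)+\tau_{p+s-b}(\Theta)$ as $3\Theta-\tau_{-s-t}(\Theta)$, and finally obtain the restriction to each component via adjunction for the curve $Z_{p,s,t}$ on the smooth surface $\tau_{p+s-a}(\Theta)$, with uniqueness from the fact that restriction $\Pic^0(X)\to\Pic^0(\tau_s(C_p))$ is an isomorphism. The only cosmetic difference is that you carry out the Theorem of the Square in two steps where the paper does it in one.
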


\begin{proof}
Part (\ref{lemma-item-s-t+2p=a+b}) follows from Lemma \ref{lemma-intersection-is-either-empty-or-of-length-2}.

Part (\ref{lemma-item-formula-for-LB}): The statement is a translate by $\tau_s$ of that of 
Lemma \ref{lemma-intersection-is-either-empty-or-of-length-2} 
and Remark \ref{rem-intersection-is-either-empty-or-of-length-2}. The canonical line bundle $\omega_{Z_{p,s,t}}$ is thus  the restriction of 
$
\StructureSheaf{X}(\tau_{p+s-a}(\Theta)+\tau_{p+s-b}(\Theta)).
$
The latter is linearly equivalent to $3\Theta-\tau_{-t-s}(\Theta)$, by the Theorem of the Square.

Part (\ref{lemma-item-restriction-of-L-s-t-to-C-p}): 
$\LB_{s,t}$ restricts to $Z_{p,s,t}$ as the canonical line bundle $\omega_{Z_{p,s,t}}$, by part (\ref{lemma-item-formula-for-LB}),
and the latter restricts to $\tau_s(C_p)$ as $\omega_{\tau_s(C_p)}(D_{p,s,t})$. If $\LB'$ is another translate of $\StructureSheaf{X}(2\Theta)$, which restricts to $\tau_s(C_p)$ as $\omega_{\tau_s(C_p)}(D_{p,s,t})$, then
$\LB'$ is isomorphic to $\LB_{s,t}$, since the restriction homomorphism $\Pic(X)\rightarrow \Pic(\tau_s(C_p))$ induces an isomorphism from each connected component of $\Pic(X)$ onto the corresponding connected component of $\Pic(\tau_s(C_p))$.
The same argument proves the analogous statement for $\tau_t(\Sigma_p)$.
\end{proof}

\begin{proof}[Proof of Proposition \ref{prop-local-freeness}]
Reduction of part (\ref{prop-local-freeness-G}) to part (\ref{prop-local-freeness-E}). The following tensor products are in the derived category.
The object $\pi_1^*F_1\otimes\F_2$ in $D^b(X\times X\times\hat{X})$ is the tensor product of the line bundle $\pi_1^*(\StructureSheaf{X}(\Theta))\otimes \pi_{12}^*(a^*(\StructureSheaf{X}(\Theta)))\otimes\pi_{13}^*\P^{-1}$ and the object $\pi_1^*(\Ideal{\cup_{i=1}^{d+1}C_i})\otimes \pi_{12}^*(a^*(\Ideal{\cup_{j=1}^{d+1}\Sigma_j}))$. 
Each of the factors in the latter tensor product is the ideal sheaf of a subscheme flat over $X\times\hat{X}$ with respect to $\pi_{23}$.
The object $\pi_1^*(\Ideal{\cup_{i=1}^{d+1}C_i})\otimes \pi_{12}^*(a^*(\Ideal{\cup_{j=1}^{d+1}\Sigma_j}))$
restricts to the fiber of $\pi_{23}$ over $(x_2,L)\in X\times\hat{X}$ as the object $\Ideal{\cup_{i=1}^{d+1}C_i}\otimes \Ideal{\cup_{j=1}^{d+1}\tau_{-x_2}(\Sigma_j)}$, which is isomorphic to the ideal sheaf of a subscheme of $X$, by Lemma \ref{lemma-vanishing-of-tor-sheaves} (the torsion sheaves 
$\SheafTor_k\left(\Ideal{\cup_{i=1}^{d+1}C_i}, \Ideal{\cup_{j=1}^{d+1}\tau_{-x_2}(\Sigma_j)}\right)$ vanish, for $k\neq 0$). Hence, the object 
$\pi_1^*(\Ideal{\cup_{i=1}^{d+1}C_i})\otimes \pi_{12}^*(a^*(\Ideal{\cup_{j=1}^{d+1}\Sigma_j}))$ is isomorphic to a coherent sheaf over $X\times X\times\hat{X}$, which is flat over $X\times \hat{X}$ with respect to $\pi_{23}$. Consequently,  so is $\pi_1^*F_1\otimes\F_2$. 

There exists over $X\times\hat{X}$ a complex of locally free sheaves of finite rank 
\[
K^\bullet :
K^0\RightArrowOf{d_0}  \cdots \RightArrowOf{d_{p-1}} K^p \RightArrowOf{d_p} K^{p+1} \RightArrowOf{d_{p+1}} \cdots \RightArrowOf{d_{n-1}} K^n
\]
representing the object $\G:=R\pi_{23,*}(\pi_1^*F_1\otimes\F_2)$ in $D^b(X\times\hat{X})$. Furthermore, for every subscheme $B$ of $X\times\hat{X}$ we have
\[
R\pi_{23,*}(\pi_1^*F_1\otimes\F_2\otimes \pi_{23}^*\StructureSheaf{B})\cong (K^\bullet)\otimes \StructureSheaf{B},
\]
by cohomology and base change (see the theorem in section 5 of \cite{mumford-abelian-varieties}).

The restriction of $\G$ to the fiber $\pi_{23}^{-1}((x,L))$ is $F_1\otimes \tau_x^*(F_2)\otimes L^{-1}$.
We will prove part (\ref{prop-local-freeness-E}) by showing that 
\begin{enumerate}
\item[(i)]
$H^i(X,F_1\otimes\tau_x^*(F_2)\otimes L^{-1})$ vanishes for $i\neq 1$ for all $(x,L)\in [X\times \Pic^0(X)]\setminus\tilde{\Theta}$.
\item[(ii)]
For $(x,L)\in \tilde{\Theta}$ the cohomology 
$H^i(X,F_1\otimes\tau_x^*(F_2)\otimes L^{-1})$ vanishes if and only if  $i\not\in \{1,2\}$. 
\end{enumerate}
It follows that $d_0$ is fiberwise injective and thus $K^1/Im(d_0)$ is locally free. We may thus assume that $K^0=0.$ Furthermore, if $n>2$, then the rightmost homomorphism $d_{n-1}:K^{n-1}\rightarrow K^n$ is fiberwise surjective, by the vanishing of 
$H^n(X,F_1\otimes\tau_x^*(F_2)\otimes L^{-1})$. Hence, $\ker(d_{n-1})$ is locally free. We may thus assume that $K^p=0$, for $p>2$.  The complex is thus $K^1\RightArrowOf{d_1} K^2$ and the cokernel of $d_1$ is supported, set theoretically, on $\tilde{\Theta}$. Hence, the object $\G^\vee[-1]$ is represented by the complex
$(K^\bullet)^*[-1]:=(K^2)^*\RightArrowOf{d_1^*} (K^1)^*$, where $(K^1)^*$ is in degree $0$ and $d_1^*$ is an injective sheaf homomorphism, whose cokernel $\E$ has rank equal to the rank $8d$ of the kernel of $d_1$. 

Applying $\R\SheafHom(\bullet,\StructureSheaf{X\times\hat{X}})$ to the short exact sequence
\[
0\rightarrow (K^2)^*\RightArrowOf{d_1^*} (K^1)^* \rightarrow \E\rightarrow 0
\]
we get the exact sequence
\[
0\rightarrow \G_1 \rightarrow K^1 \RightArrowOf{d_1} K^2 \rightarrow \SheafExt^1(\E,\StructureSheaf{X\times\hat{X}}) \rightarrow 0.
\]
We conclude that $\G_1\cong \E^*$,
$\G_2\cong \SheafExt^1(\E,\StructureSheaf{X\times\hat{X}})$, and $\SheafExt^i(\E,\StructureSheaf{X\times\hat{X}})=0,$ for $i>2$. 
The sheaf $\G_1$ is a saturated subsheaf of $K^1$, and is thus reflexive. The codimension of the support of 
$\SheafExt^1(\E,\StructureSheaf{X\times\hat{X}})$ is $4$. 
We see that if $\SheafExt^i(\E,\StructureSheaf{X\times\hat{X}})$ does not vanish, then the codimension of its support 
 is larger than $i+1$, for all $i>0$. Hence $\E$ is reflexive, by \cite[Prop. 1.1.10(3')]{huybrechts-lehn}.

Proof of part (\ref{prop-local-freeness-E}).
It suffices to prove (i) and (ii) above.
The tensor product $F_1\otimes \tau_x^*F_2$ is isomorphic to $\Ideal{Z_x}(\Theta+\tau_{-x}(\Theta))$ for a subscheme $Z_x$ of $X$ supported on the union of the curves $C_i$ and $\tau_{-x}(\Sigma_j)$, $1\leq i,j\leq d+1$, possibly with embedded points at points of intersections of $C_i$ and $\Sigma_j':=\tau_{-x}(\Sigma_j)$, by Lemma \ref{lemma-vanishing-of-tor-sheaves}. 
Set $L':=L^{-1}(\tau_{-x}(\Theta)-\Theta)$.
We have the short exact sequence
\begin{equation}
\label{eq-short-exact-sequence-of-O-Z-x-twisted-by-line-bundle}
0\rightarrow F_1\otimes \tau_x^*(F_2)\otimes L^{-1}
\rightarrow \StructureSheaf{X}(2\Theta)\otimes L'\RightArrowOf{} \StructureSheaf{Z_x}(2\Theta)\otimes L'\rightarrow 0.
\end{equation}
$H^i(\StructureSheaf{X}(2\Theta)\otimes L')$ vanishes, for $i>0$, and $H^0(\StructureSheaf{X}(2\Theta)\otimes L')$ is $8$-dimensional. We have $\chi(\StructureSheaf{Z_x}(2\Theta)\otimes L')=8(d+1)$. The later statement is clear when the curves $C_i$ and $\Sigma_j'$ are disjoint for all $1\leq i,j \leq d+1$. In this case $Z_x$ is the disjoint union of these $2d+2$ curves,
\[
\StructureSheaf{Z_x}(2\Theta)\otimes L'\cong 
\left[
\left(\oplus_{i=1}^{d+1}\StructureSheaf{C_i}\right)
\oplus
\left(\oplus_{i=1}^{d+1}\StructureSheaf{\Sigma'_i}\right)\right]\otimes \StructureSheaf{X}(2\Theta)\otimes L',
\]
The restriction of $\StructureSheaf{X}(2\Theta)\otimes L'$ to each of $C_i$ and $\Sigma_j'$ has degree $6$ and thus Euler characteristic $4$. Hence, $\chi(\StructureSheaf{Z}(2\Theta)\otimes L')=8(d+1)$ as claimed.

It suffices to prove the two vanishing and one non-vanishing statements below. 
\begin{eqnarray}
\label{eq-vanishing-of-global-sections-of-tensor-product}
H^0(F_1\otimes \tau_x^*(F_2)\otimes L^{-1})&=&0, \ \ \forall L\in \Pic^0(X),
\\
\label{eq-vanishing-of-H-1}
H^1(\StructureSheaf{Z_x}(2\Theta)\otimes L')&=&0, \ \ \forall (x,L)\in X\times \hat{X}\setminus \tilde{\Theta}.
\\
\label{eq-non-vanishing-of-H-1-for-x-L-in-tilde-Theta}
h^2(F_1\otimes \tau_x^*(F_2)\otimes L)&\neq&0, \ \ \forall (x,L)\in \tilde{\Theta}.
\end{eqnarray}
Statements (i) and (ii) above would follow from the above three statements.
The vanishing of $H^i(F_1\otimes \tau_x^*(F_2)\otimes L)$, for $i\geq 2$ and $(x,L)\not\in \tilde{\Theta}$, would then follow from the vanishing (\ref{eq-vanishing-of-H-1}) and the long exact sheaf cohomology sequence associated to the short exact sequence (\ref{eq-short-exact-sequence-of-O-Z-x-twisted-by-line-bundle}).
The vanishing for $i=3$ and $(x,L)\in \tilde{\Theta}$ follows from the vanishing of $H^2(\StructureSheaf{Z_x}(2\Theta)\otimes L')$ and 
the latter long exact sheaf cohomology sequence.

The vanishing (\ref{eq-vanishing-of-global-sections-of-tensor-product}) follows from Assumption \ref{assumption-on-C-i-s}, as $\cup_{i=1}^{d+1}C_i$ is a subscheme of $Z_x$.

The vanishing (\ref{eq-vanishing-of-H-1}) is clear, when the curves $C_i$ and $\Sigma_j'$ are disjoint, for $1\leq i,j\leq d+1,$
by Riemann-Roch.

Let $Z_{red}:=(\cup_{1\leq i\leq d+1}C_i)\cup (\cup_{1\leq j\leq d+1}\Sigma_j')$ be the reduced induced subscheme of $Z_x$.
We have the short exact sequence
\[
0\rightarrow \StructureSheaf{Z_{tor}}(2\Theta)\otimes L'\rightarrow \StructureSheaf{Z_x}(2\Theta)\otimes L'\rightarrow \StructureSheaf{Z_{red}}(2\Theta)\otimes L'\rightarrow 0.
\]
The sheaf $\StructureSheaf{Z_{tor}}(2\Theta)\otimes L$ has zero-dimensional support, hence
$H^1(\StructureSheaf{Z_{tor}}(2\Theta)\otimes L)$ vanishes. The vanishing (\ref{eq-vanishing-of-H-1}) would thus follow from the vanishing of $H^1(\StructureSheaf{Z_{red}}(2\Theta)\otimes L)$. 

The locus in $X$ of points $x$, such that $\tau_{-x}(\Sigma_j)$ and $C_i$ meet is the surface $\Theta_{i,j}=\Sigma_j-C_i$.
The connected components of $Z_{red}$ are thus all of the following type, 
by Lemma \ref{lemma-intersection-of-C-and-Sigma-has-length-at-most-2} and Assumption \ref{assumption-intersections-of-surfaces-are-generic}(\ref{assumption-item-intersections-of-surfaces-are-generic}).
\begin{enumerate}
\item
\label{case-of-a-smooth-curve}
A smooth curve of genus $3$.
\item
\label{case-of-two-components-meeting-along-a-length-2-subscheme}
Case $x\in\Theta_{i,j}$ and $x\not\in\Theta_{i',j}$ if $i\neq i'$ and $x\not\in\Theta_{i,j'}$ if $j\neq j'$. In that case one connected component is 
the union of $C_i$ and $\tau_{-x}(\Sigma_j)$ meeting along a length $2$ subscheme.
\item
\label{case-of-a-chain1}
Case (a)  $x\in \Theta_{i_1,j}\cap \Theta_{i_2,j}\cap \cdots \cap \Theta_{i_k,j}$, where $k\in \{2,3\}$ and $i_1, \dots, i_k$ are pairwise distinct, or (b)
$x\in \Theta_{i,j_1}\cap \Theta_{i,j_2}\cap \cdots\cap \Theta_{i,j_k}$, where $k\in \{2,3\}$ and $i_1, \dots, i_k$ are pairwise distinct.
\item
\label{case-of-a-chain2}
Case $x\in \Theta_{i_1,j_1}\cap \Theta_{i_2,j_1}\cap \Theta_{i_2,j_2}$, where $i_1\neq i_2$ and $j_1\neq j_2$.
\end{enumerate}

\begin{figure}[hb]
  \centering
  \includegraphics[width=15cm]{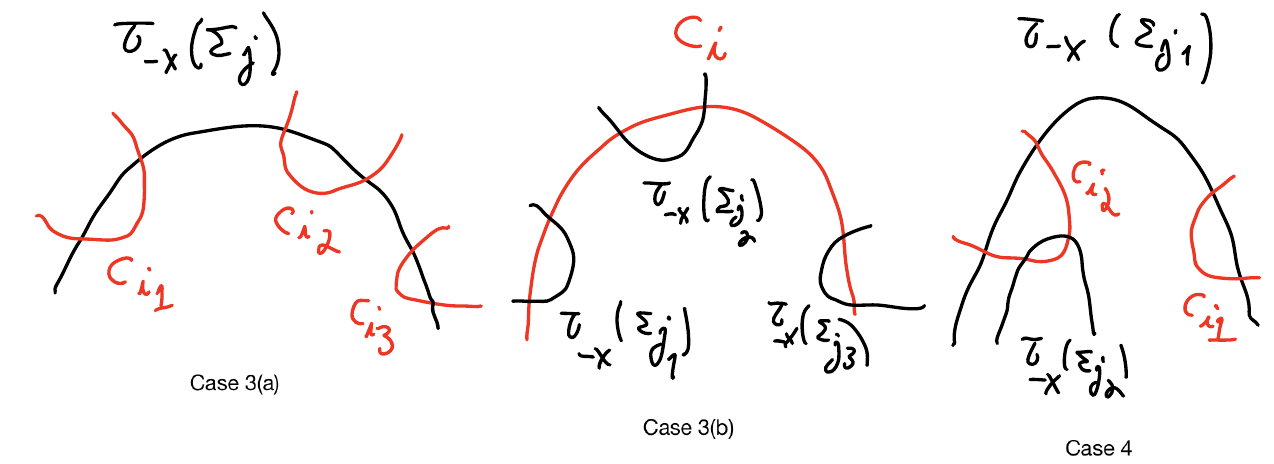}
  \label{fig:test}
\end{figure}

The vanishing (\ref{eq-vanishing-of-H-1}) in case (\ref{case-of-a-smooth-curve}) is clear.
In case (\ref{case-of-two-components-meeting-along-a-length-2-subscheme}) and $(x,L)\in \tilde{\Theta}_{i,j}$ 
the line bundle $\StructureSheaf{Z_{red}}(2\Theta)\otimes L'$ restricts to the canonical line bundle of the connected component $C_i\cup\tau_{-x}(\Sigma_j)$ of $Z_{red}$,
by Lemma \ref{lemma-LB-s-t}.
Hence, $H^1(\StructureSheaf{Z_x}(2\Theta)\otimes L')$ does not vanish in this case. The nonvanishing
(\ref{eq-non-vanishing-of-H-1-for-x-L-in-tilde-Theta}) follows from the long exact sheaf cohomology sequence associated to the short exact sequence (\ref{eq-short-exact-sequence-of-O-Z-x-twisted-by-line-bundle}). The nonvanishing (\ref{eq-non-vanishing-of-H-1-for-x-L-in-tilde-Theta}) follows for all $(x,L)\in\tilde{\Theta}$, by semi-continuity.

We prove next the vanishing (\ref{eq-vanishing-of-H-1}) in cases (\ref{case-of-a-chain1}) and (\ref{case-of-a-chain2}) when $(x,L)$ does not belong to $\tilde{\Theta}$.
Let $T$ be a connected component of $Z_{red}$.
In Case (\ref{case-of-a-chain1})(b) if we remove $C_i$ from $T$ we get the disjoint union of $\tau_{-x}(\Sigma_{j_\ell})$, $1\leq \ell\leq k$. In Case (\ref{case-of-a-chain1})(a) if we remove $\tau_{-x}(\Sigma_j)$ from $T$ 
we get the disjoint union of $C_{i_\ell}$, $1\leq \ell\leq k$.
In case (\ref{case-of-a-chain2})  $C_{i_2}\cup\tau_{-x}(\Sigma_{j_1})$
is a subscheme of the connected component $T$ of $Z_{red}$,  $C_{i_2}\cap\tau_{-x}(\Sigma_{j_1})\neq\emptyset$,
and if we remove $C_{i_2}$ and $\tau_{-x}(\Sigma_{j_1})$ from $T$ 
we get a disconnected union $T''$ of $C_{i_1}$ and $\tau_{-x}(\Sigma_{j_2})$. Denote by $T'$ the union of the components removed (in cases 3(a), 3(b), and (4)). 
We have the short  exact sequence
\[
0\rightarrow \StructureSheaf{T''}(-D)\rightarrow \StructureSheaf{T}\rightarrow \StructureSheaf{T'}\rightarrow 0,
\]
where the divisor $D$ is associated to the intersection subscheme of the connected components of $T'$ with $T''$.
If  $(x,L)$ does not belong to $\tilde{\Theta}$, then $H^1(\StructureSheaf{T'}(2\Theta)\otimes L')$ vanishes. This is clear if $T'$ is a smooth curve, as the line bundle has degree $6$. If $T'= C_{i_2}\cup\tau_{-x}(\Sigma_{j_1})$ the vanishing follows from the fact that $(x,L)$ does not belong to $\tilde{\Theta}_{i_2,j_1}$ and Lemma \ref{lemma-LB-s-t}(\ref{lemma-item-formula-for-LB}).
Indeed, Lemma \ref{lemma-LB-s-t}(\ref{lemma-item-formula-for-LB}) yields that
$\omega_{C_{i_2}\cup\tau_{-x}(\Sigma_{j_1})}=\omega_{\tau_{s_{i_2}}(C_p)\cup\tau_{t_{j_1}-x}(\Sigma_p)}$ is the restriction of 
$\StructureSheaf{X}(3\Theta-\tau_{x-s_{i_2}-t_{j_1}}(\Theta))$, while $(x,L)\not\in \tilde{\Theta}_{i_2,j_1}$ yields the difference of the line bundles in the second step below.
\begin{eqnarray*}
\StructureSheaf{C_{i_2}\cup\tau_{-x}(\Sigma_{j_1})}(2\Theta)\otimes L' &\cong&
\StructureSheaf{C_{i_2}\cup\tau_{-x}(\Sigma_{j_1})}(\Theta+\tau_{-x}(\Theta))\otimes L^{-1}
\\
&\not\cong &
\StructureSheaf{C_{i_2}\cup\tau_{-x}(\Sigma_{j_1})}(2\Theta+\tau_{-x}(\Theta)-\tau_{2x-t_{j_1}-s_{i_2}}(\Theta))
\\
&\cong &\StructureSheaf{C_{i_2}\cup\tau_{-x}(\Sigma_{j_1})}(3\Theta-\tau_{x-t_{j_1}-s_{i_2}}(\Theta)).
\end{eqnarray*}

The vanishing of $H^1(\StructureSheaf{T''}(-D)\otimes\StructureSheaf{X}(2\Theta)\otimes L')$ is seen as follows.
Each connected component of $T''$ is a smooth genus $3$ curve meeting precisely one connected component of $T'$, 
the divisor $D$ has degree $2$ on each connected component of $T''$, and 
Lemma \ref{lemma-LB-s-t}(\ref{lemma-item-restriction-of-L-s-t-to-C-p}) implies that $\StructureSheaf{X}(3\Theta-\tau_{x-s_{i_2}-t_{j_1}}(\Theta))$
restricts to each component $C''$ of $T''$ as $\omega_{C''}(D'')$, where $D''$ is the part of the divisor $D$ supported on $C''$.
On the other hand, the assumption that $(x,L)\not\in \tilde{\Theta}$ implies that $\StructureSheaf{X}(2\Theta)\otimes L'$ is not isomorphic to 
$\StructureSheaf{X}(3\Theta-\tau_{x-s_{i_2}-t_{j_1}}(\Theta))$. Hence, the line bundle $\StructureSheaf{C''}(-D'')\otimes\StructureSheaf{X}(2\Theta)\otimes L'$
has degree $4$ but is not isomorphic to $\omega_{C''}$.

The vanishing of $H^1(\StructureSheaf{T}(2\Theta)\otimes L')$ follows from the vanishings of $H^1(\StructureSheaf{T'}(2\Theta)\otimes L')$ and $H^1(\StructureSheaf{T''}(-D)\otimes\StructureSheaf{X}(2\Theta)\otimes L')$. We conclude that $H^1(\StructureSheaf{Z_{red}}(2\Theta)\otimes L')$ vanishes.
As observed above, it implies the vanishing (\ref{eq-vanishing-of-H-1}).

Cases (\ref{case-of-a-chain1}) and (\ref{case-of-a-chain2}) lie in the closure of 
Case (\ref{case-of-two-components-meeting-along-a-length-2-subscheme}) and so $H^1(\StructureSheaf{Z_x}(2\Theta)\otimes L')$ does not vanish in these cases for $(x,L)\in\tilde{\Theta}$ 
by semi-continuity. 
\end{proof}

Let $p$ be a point of $C$, let $C_p\subset X=\Pic^2(C)$ be the translate of $AJ(C)$ by $p$, let $t\in\Pic^0(C)$, and set $C_t\:=\tau_t(C_p)$.  Set $\Theta_t:=\tau_t(\Theta)$.

\begin{lem}
\label{lemma-global-sections-of-ideal-I-C-2Theta}
The equality $\dim H^0(X,\Ideal{C_t}(2\Theta)\otimes L)=4$ holds, for all line bundles $L\in\Pic^0(X)$.
\end{lem}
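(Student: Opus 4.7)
First, I would reduce to the untwisted case. Since the isogeny $\phi_{2\Theta}\colon X\to\hat{X}$ is surjective, for any $L\in\Pic^0(X)$ there is $s\in X$ with $\tau_s^*\StructureSheaf{X}(2\Theta)\cong\StructureSheaf{X}(2\Theta)\otimes L$, and pullback by $\tau_{-s}$ identifies $h^0(X,\Ideal{C_t}(2\Theta)\otimes L)$ with $h^0(X,\Ideal{C_{t'}}(2\Theta))$ for some translate $C_{t'}$ of $AJ(C)$. Hence it suffices to prove $h^0(X,\Ideal{C_t}(2\Theta))=4$ for every translate $C_t$.

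Next, I would apply the restriction sequence
\[
0\to \Ideal{C_t}(2\Theta)\to\StructureSheaf{X}(2\Theta)\to \restricted{\StructureSheaf{X}(2\Theta)}{C_t}\to 0.
\]
We have $h^0(X,2\Theta)=8$ with all higher cohomology vanishing, and $\restricted{\StructureSheaf{X}(2\Theta)}{C_t}$ is a degree $2[C_t]\cdot\Theta=6$ line bundle on the genus-$3$ curve $C_t\cong C$ (so $h^0=4$, $h^1=0$ by Riemann-Roch). Combined with the vanishings $h^2=h^3=0$ for $\Ideal{C_t}(2\Theta)$, obtained via Serre duality on $X$ together with the Ext spectral sequence for $\Ideal{C_t}$ using $\SheafExt^1(\Ideal{C_t},\StructureSheaf{X})=\omega_{C_t}$, this yields $h^0-h^1=4$. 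The lemma thus reduces to $h^1(\Ideal{C_t}(2\Theta))=0$, equivalently the surjectivity of the restriction map $\rho\colon H^0(X,2\Theta)\to H^0(C_t,\restricted{(2\Theta)}{C_t})$.

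For the lower bound $h^0\ge 4$, I invoke Beauville's linear embedding \cite[Lemma 3.7]{beauville}: for $L_0\in\Pic^1(C)$ with $C_{L_0}=C_t$, the assignment $\epsilon\mapsto D_{E_\epsilon}$ defines a linear embedding $\PP\Ext^1(L_0,L_0^{-1})\hookrightarrow|2\Theta|$, where $E_\epsilon$ is the rank-$2$ vector bundle determined by the extension class $\epsilon$, and the theta divisor $D_{E_\epsilon}$ contains $C_{L_0}=C_t$. Since $\Ext^1(L_0,L_0^{-1})\cong H^0(C,L_0^2\otimes\omega_C)^*$ has dimension $4$ (a degree-$6$ line bundle on a genus-$3$ curve), the image is a linear $\PP^3\subset \PP[H^0(\Ideal{C_t}(2\Theta))]$, giving $h^0\ge 4$.

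The hard part will be establishing the matching upper bound $h^0\le 4$. My approach is to analyze the incidence variety $\mathcal{I}:=\{(L,D)\in\Pic^1(C)\times|2\Theta|:C_L\subset D\}$ with its projections $p_1,p_2$. Appealing to Beauville's identification of the Coble quartic $\mu(\mathcal{SU}_C(2))\subset|2\Theta|$ as a $6$-dimensional hypersurface, together with the observation that every divisor in $|2\Theta|$ containing an Abel-Jacobi translate $C_L$ arises as $D_E$ for some semistable rank-$2$ bundle $E$ of trivial determinant with $L^{-1}\hookrightarrow E$, the image $p_2(\mathcal{I})$ lies in this $6$-dimensional Coble quartic; moreover, the generic fiber $\{L:L^{-1}\hookrightarrow E\}$ of $p_2$ is finite for a general stable $E$, so $\dim\mathcal{I}\le 6$. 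If $h^0(\Ideal{C_{L_0}}(2\Theta))\ge 5$ for some $L_0$, then $p_1^{-1}(L_0)$ would have dimension $\ge 4$, forcing $\dim\mathcal{I}\ge 3+4=7$, a contradiction. Hence $h^0=4$ at every $L_0$.
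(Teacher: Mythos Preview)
Your reduction to the untwisted case, the restriction sequence, and the lower bound $h^0\ge 4$ via Beauville's linear embedding are all fine. The upper bound argument, however, has two genuine gaps.

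\textbf{Circularity.} You assert that every divisor $D\in|2\Theta|$ containing some $C_L$ arises as $D_E$ for a semistable rank-$2$ bundle $E$ with $L^{-1}\hookrightarrow E$, and use this to conclude $p_2(\mathcal I)\subset$ Coble quartic. But in the paper's logical structure this equivalence is a \emph{consequence} of Lemma~\ref{lemma-global-sections-of-ideal-I-C-2Theta}, not an input to it: the proof (Step~2 of the lemma on Assumption~\ref{assumption-on-C-i-s}) argues that $D\in\PP H^0(\Ideal{C_L}(2\Theta))$, that this linear system is $3$-dimensional by the present lemma, and hence coincides with Beauville's $\PP^3$. Without the lemma there is no independent reason the incidence image lies in the Coble quartic.

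\textbf{Invalid dimension count.} Even granting $\dim\mathcal I\le 6$, the step ``$p_1^{-1}(L_0)$ has dimension $\ge 4$ forces $\dim\mathcal I\ge 3+4=7$'' is wrong: one large fiber over a single point does not bound the total dimension from below. Upper semicontinuity of $h^0$ only tells you the locus $\{L:h^0\ge 5\}$ is closed, possibly a proper closed subvariety, and nothing rules out an extra component of $\mathcal I$ sitting over it.

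The paper's proof is entirely different and avoids the moduli theory. It exploits the inclusion $C_t\subset\Theta_t$: writing $L(2\Theta)\cong\StructureSheaf{X}(\Theta_t+\Theta_s)=:\StructureSheaf{X}(D)$, one sets up a $3\times 3$ diagram relating $\Ideal{C_t}(D)$ to $\StructureSheaf{\Theta_t}(\restricted{D}{\Theta_t}-C_t)$ and $\StructureSheaf{X}(\Theta_s)$. The upper bound $h^0(\Ideal{C_t}(D))\le 4$ then reduces to $h^0(\StructureSheaf{\Theta_t}(\restricted{D}{\Theta_t}-C_t))=3$, which follows from Kodaira vanishing once one checks that $(\Theta_s\restricted{)}{\Theta_t}-C_t$ is ample on $\Theta_t\cong C^{(2)}$; the latter is verified by pulling back to $C\times C$ where the class becomes $C\times\{p\}+\{p\}\times C$. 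This is a direct cohomological computation on the surface $\Theta_t$, with no recourse to the Coble quartic or incidence varieties.
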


\begin{proof}
Let $s\in\Pic^0(C)$ be such that $L(2\Theta)$ is isomorphic to $\StructureSheaf{X}(\Theta_t+\Theta_s)$. 
Set $D:=\Theta_s+\Theta_t$. Note the inclusion $C_t\subset \Theta_t$.
Denote by $\restricted{D}{C_t}$ and
$\restricted{D}{\Theta_t}$ the restriction of the divisor class.
Consider the following diagram with short exact rows and columns. 
\[
\xymatrix{\StructureSheaf{X}(\Theta_s)\ar[r]^= \ar[d] & \StructureSheaf{X}(\Theta_s)\ar[r] \ar[d] & 0 \ar[d]
\\
\Ideal{C_t}(D) \ar[r] \ar[d]& \StructureSheaf{X}(D) \ar[r] \ar[d]&\StructureSheaf{C_t}(\restricted{D}{C_t})\ar[d]_{=}
\\
\StructureSheaf{\Theta_t}(\restricted{D}{\Theta_t}-C_t) \ar[r] & \StructureSheaf{\Theta_t}(\restricted{D}{\Theta_t})\ar[r] &\StructureSheaf{C_t}(\restricted{D}{C_t})
}
\]
The degree of $\restricted{D}{C_t}$ is $6$ and so $h^0(\StructureSheaf{C_t}(\restricted{D}{C_t}))=4$.
The equality $h^0(\StructureSheaf{X}(D))=8$ implies that $h^0(\Ideal{C_t}(D))$ is at least $4$, by the middle horizontal 
short exact sequence. Consider the vertical left short exact sequence. The dimensions $h^i(\StructureSheaf{X}(\Theta_s))$ are $1$ for $i=0$ and $0$ for $i>0$. Hence, it suffices to prove that $H^0(\StructureSheaf{\Theta_t}(\restricted{D}{\Theta_t}-C_t))$ is $3$-dimensional, as it would follow that $h^0(\Ideal{C_t}(D))\leq 4$.

The middle vertical short exact sequence implies that $H^0(\StructureSheaf{\Theta_t}(\restricted{D}{\Theta_t}))$ is $7$-dimensional, since $h^1(\StructureSheaf{X}(\Theta_s))=0$. Hence, the equality
$h^0(\StructureSheaf{\Theta_t}(\restricted{D}{\Theta_t}-C_t))=3$ 
would follow from the exactness of the bottom horizontal sequence once we prove the vanishing of $h^1(\StructureSheaf{\Theta_t}(\restricted{D}{\Theta_t}-C_t))$. 
It suffices to prove that  $\StructureSheaf{\Theta_t}(\restricted{D}{\Theta_t}-C_t)\otimes\omega_{\Theta_t}^{-1}$ is ample, by Kodaira's vanishing. Now, $\omega_{\Theta_t}\cong \StructureSheaf{X}(\Theta_t\restricted{)}{\Theta_t}$. It remains to prove the ampleness of $(\Theta_s\restricted{)}{\Theta_t}-C_t$. Ampleness depends only on the numerical class and so we may  consider the case where $C_t=C_p$, $\Theta_t=\Theta$, $\Theta_s\cap\Theta=C_p+\Sigma_p.$
The divisor $\Theta$ is $\iota$-symmetric and $\Sigma_p=\iota(C_p)$, so ampleness of $\Sigma_p$ follows from that of $C_p$.
Ampleness of $C_p$ is seen via 
the isomorphism of
$\Theta$ with $C^{(2)}$ and the ampleness of the pullback of $C_p$ to the cartesian square $C^2$. Indeed, 
 the pullback of $C_p$ to $C^2$ is $C\times\{p\}\cup\{p\}\times C$, which is ample.
\end{proof}

\begin{rem}
\begin{enumerate}
\item
The linear systems in the above Lemma all have base loci larger than $C_t$. 
Let $r\in\Pic^0(C)$ be such that $L(2\Theta)$ is isomorphic to $\StructureSheaf{X}(2\Theta_r)$. Then every divisor in 
$|L(2\Theta)|$ is $\iota'$-invariant, where $\iota':=\tau_r\iota\tau_{-r}$. Hence, the dimension of
$H^0(X,\Ideal{C_t\cup \iota'(C_t)}(2\Theta)\otimes L)$ is $4$ as well. 
\item
Reducible divisors in the linear system $|\Ideal{C_t}(2\Theta)\otimes L|$ are described in \cite[Prop. 11.9.1(a)]{BL}, when $C_t\cup \iota'(C_t)$ is the complete intersection of two translates of $\Theta$. They are related to trisecants of $\varphi_{L(2\Theta)}(X)$ in \cite[Prop. 11.9.3]{BL}.
\end{enumerate}
\end{rem}

\hide{
\begin{lem}
\label{lemma-union-of-two-translates-of-Theta-contains-union-of-translates-of-AJ-curve}
Let $\tau_{t_1}(C_p)$ and $\tau_{t_2}(C_p)$ be two disjoint translates with $t_1, t_2\in\Pic^0(C)$.
The following statements are equivalent.
\begin{enumerate}
\item
\label{lemma-item-two-disjoint-translates-of-C-are-contained-in-two-translates-of-Theta}
The union $\tau_{t_1}(C_p)\cup \tau_{t_2}(C_p)$ is contained in a divisor of the form 
$\tau_s(\Theta)+\tau_{-s}(\Theta)\in \linsys{2\Theta}$, for some $s\in \Pic^0(C)$. 
\item
\label{lemma-item-2p-minus-t1-t2-is-effective}
The divisor $2p+t_1+t_2$ is linearly equivalent to an effective divisor $q_1+q_2$, $q_1,q_2\in C$. 
\item
\label{lemma-item-translate-of-first-intersects-the-reflection-of-translate-of-second}
The curves $\tau_{t_1}(C_p)$ and $\iota(\tau_{t_2}(C_p)$ intersect.
\end{enumerate}
In this case
the pair $\{s,-s\}$ is uniquely determined by $(t_1,t_2)$ 
\begin{equation}
\label{eq-s-minus-s}
\{s,-s\}=\{t_1-q_1+p,t_2-q_2+p\}.
\end{equation}
If, furthermore, $q_1\neq q_2$, then $\dim H^0(X,\Ideal{\tau_{t_1}(C_p)\cup \tau_{t_2}(C_p)}(2\Theta))=1.$
\end{lem}

\begin{proof}
The equivalence of (\ref{lemma-item-2p-minus-t1-t2-is-effective})and  (\ref{lemma-item-translate-of-first-intersects-the-reflection-of-translate-of-second}) follows from Lemma \ref{lemma-LB-s-t}. We prove the equivalence of   (\ref{lemma-item-two-disjoint-translates-of-C-are-contained-in-two-translates-of-Theta}) and (\ref{lemma-item-2p-minus-t1-t2-is-effective}).
The curves $\tau_{t_1}(C_p)$ and $\tau_{t_2}(C_p)$ can not be both contained in one translate of $\Theta$, as they are disjoint.
Equation (\ref{eq-criterion-for-inclusion}) implies that $\tau_t(C_p)$ is contained in $\Theta$, if and only if $t\sim q-p$, for some $q\in C$.
We see that $\tau_t(C_p)\subset \tau_s(\Theta)\Leftrightarrow t-s\sim q-p$, for some $q\in C$.
Hence, $\tau_{t_1}(C_p)\subset \tau_s(\Theta)$ and $\tau_{t_2}(C_p)\subset \tau_{-s}(\Theta)$, if and only if 
$t_1-q_1+p\sim s\sim -p+q_2-t_2$, for some $q_1,q_2\in C$. The right hand side is equivalent to $2p+t_1+t_2\sim q_1+q_2$, for some $q_1,q_2\in C$ and $(s,-s)=(t_1-q_1+p,t_2-q_2+p)$. 

Observe next that $\tau_{-t_2}(\Sigma_p)=\tau_{-t_2}(\iota(C_p))=\iota(\tau_{t_2}(C_p))\subset \iota(\tau_{-s}(\Theta))=\tau_s(\iota(\Theta))=\tau_s(\Theta).$ Hence, $\tau_s(\Theta)$ contains both $\tau_{t_1}(C_p)$ and $\tau_{-t_2}(\Sigma_p)$. It follows that 
\[
\tau_{t_1}(C_p)\cup \tau_{-t_2}(\Sigma_p)\subset\tau_s(\Theta)\cap\tau_{p+t_1-q_2}(\Theta), 
\]
by Lemma \ref{lemma-LB-s-t}(\ref{lemma-item-formula-for-LB}). 

Assume that $q_1\neq q_2$ and set $u:=p+t_1-q_2$. Then $\tau_{t_1}(C_p)\cup \tau_{-t_2}(\Sigma_p)$ is the complete intersection
$\tau_s(\Theta)\cap\tau_u(\Theta)$, by Lemma \ref{lemma-intersection-of-C-and-Sigma-has-length-at-most-2}.
Note that $u\not\sim -s$, as otherwise the two translates $\tau_{t_i}(C_p)$, $i=1,2$ would be contained in $\tau_{-u}(\Theta)$ and would intersect. 

Assume that there exists another divisor $D$ in $\linsys{2\Theta}$ containing $\tau_{t_1}(C_p)$ and $\tau_{t_2}(C_p)$ and
$D\neq \tau_s(\Theta)+\tau_{-s}(\Theta)$.
Every divisor in the linear system is symmetric, and so $D$ contains $\tau_{t_1}(C_p)\cup \tau_{-t_2}(\Sigma_p)$.
Thus $D\cap \tau_s(\Theta)=\tau_{t_1}(C_p)\cup \tau_{-t_2}(\Sigma_p)\cup B$,
where $B$ is a curve in $\tau_s(\Theta)$ in the linear system
$\linsys{\StructureSheaf{\tau_s(\Theta)}(2\Theta-\tau_u(\Theta))}$, which is $\linsys{\StructureSheaf{\tau_s(\Theta)}(\tau_{-u}(\Theta))}$
and consists of the single divisor $\tau_s(\Theta)\cap\tau_{-u}(\Theta)$. Indeed, we have the short exact sequence
\[
0\rightarrow \StructureSheaf{X}(\tau_{-u}(\Theta)-\tau_s(\Theta))\rightarrow 
\StructureSheaf{X}(\tau_{-u}(\Theta))\rightarrow
\StructureSheaf{\tau_s(\Theta)}(\tau_{-u}(\Theta)\cap \tau_s(\Theta))\rightarrow 0,
\]
and $H^i(\StructureSheaf{X}(\tau_{-u}(\Theta)-\tau_s(\Theta)))=0$, for all $i$.
We get the equalities
\[
D\cap\tau_s(\Theta)=\tau_{t_1}(C_p)\cup \tau_{-t_2}(\Sigma_p)\cup B=\tau_{s}(\Theta)\cap[\tau_u(\Theta)\cup\tau_{-u}(\Theta)].
\]
Hence, $D$ is the zero divisor of a section in $\span\{\sigma_s,\sigma_u\},$
where $\sigma_s$ is a section of $\StructureSheaf{X}(2\Theta)$ with zero divisor $\tau_s(\Theta)+\tau_{-s}(\Theta)$
and $\sigma_u$ is a section with zero divisor $\tau_u(\Theta)+\tau_{-u}(\Theta)$. We may thus assume that $D=\tau_u(\Theta)+\tau_{-u}(\Theta).$ But this divisor is again the sum of two translates of $\Theta$, which contradicts the uniqueness of such a divisor containing
$\tau_{t_1}(C_p)\cup \tau_{t_2}(C_p)$ in
Equation (\ref{eq-s-minus-s}). Hence, such a divisor $D$ does not exist.
\end{proof}

\begin{lem}
If the disjoint union $\tau_t\left(\cup_{i=1}^{d+1}\tau_{t_i}(C_p)\right)$ is contained in the union $\tau_s(\Theta)+\tau_{-s}(\Theta)$, for some $s,t\in X$,
then 
\[
2t\in\bigcap_{1\leq i<j\leq d+1} \tau_{-t_i-t_j-2p}(\Theta).
\]
In particular, if the intersection above is empty, then such a pair $(s,t)$ does not exist.
\end{lem}

\begin{proof}
Lemma \ref{lemma-union-of-two-translates-of-Theta-contains-union-of-translates-of-AJ-curve} implies that $2p+t_1+t_2+2t$ is effective, for all $1\leq i<j\leq d+1$.
\end{proof}

\begin{rem}
Assumption \ref{assumption-on-C-i-s} would follow for $d\geq 3$ from the above lemma, provided we prove that if the disjoint union $\tau_{t_1}(C_p)\cup\tau_{t_2}(C_p)$ is contained in a divisor in $\linsys{2\Theta}$, then this divisor is of the form  $\tau_s(\Theta)+\tau_{-s}(\Theta)$, for some $s\in X$, and if we exhibit $t_1$, $t_2$, $t_3$, $t_4$, such that the intersection
\[
\bigcap_{1\leq i<j\leq 4}\tau_{t_i+t_j-2p}(\Theta)
\]
is empty.
\end{rem}

}

\hide{
\begin{lem}
\label{lemma-ideal-of-two-translates-tensor-two-theta-translates}
Let $C_1$ and $C_2$ be two  disjoint curves in $X$, which are translates of $AJ(C)$. There exists a line bundle $L\in \Pic^0(X)$, such that
$H^0(X,\Ideal{C_1\cup C_2}(2\Theta)\otimes L)$ is $2$-dimensional. 
\end{lem}
\begin{proof}
We may assume, without loss of generality, that $C_1=C_p$ and $C_2=\tau_t(C_p)$. Choose two distinct points $q_1, q_2\in C$. Then $C_p$ is contained in 
$\tau_{p-q_i}(\Theta)$ and $C_2$ in $\tau_{p-q_i+t}(\Theta)$. So $C_1\cup C_2$ is contained in both
$\tau_{p-q_1}(\Theta)+\tau_{p-q_2+t}(\Theta)$ and $\tau_{p-q_2}(\Theta)+\tau_{p-q_1+t}(\Theta)$. Thus, 
$h^0(\Ideal{C_1\cup C_2}(2\Theta)\otimes L)\geq 2,$ for $L:=\StructureSheaf{X}(\tau_{2p+t-q_1-q_2}(\Theta)-\Theta)$.

It remains to show that $h^0(\Ideal{C_1\cup C_2}(2\Theta)\otimes L)\leq 2$. 
The space $H^0(\Ideal{C_1\cup C_2}(2\Theta)\otimes L)$ is the intersection
\[
H^0(\Ideal{C_1}(2\Theta)\otimes L)\cap H^0(\Ideal{C_2}(2\Theta)\otimes L)
\]
in $H^0(L(2\Theta))$. The dimension of the latter intersection  is equal to the co-dimension of
$H^0(\Ideal{C_1}(2\Theta)\otimes L)+ H^0(\Ideal{C_2}(2\Theta)\otimes L)$ in $H^0(L(2\Theta))$. The latter co-dimension is equal to the dimension of the intersection 
\[
W:=[H^0(L(2\Theta))/H^0(\Ideal{C_1}(2\Theta))]^*\cap [H^0(L(2\Theta))/H^0(\Ideal{C_2}(2\Theta))]^*
\]
in $H^0(L(2\Theta))^*$. 
Set $L_i:=L(2\Theta\restricted{)}{C_i}$, $i=1,2$. 
Note that $[H^0(L(2\Theta))/H^0(\Ideal{C_i}(2\Theta))]^*$ is naturally isomorphic to $H^0(C_i,L_i)^*$.
The inequality $\dim(W)\leq 2$ would follow once we prove the (???) emptiness of $\PP(W)\cap\varphi_{L_i}(C_i)$.
We denote by $L_i$ also the pullback of $L_i$ to $C$ via the translation isomorphism.
Choose line bundles $M_i$ in $\Pic^1(C)$, such that $L_i\cong \omega_C\otimes M_i^2$. Then 
$\PP{H^0(C,L_i)}^*$ is isomorphic to $\PP\Ext^1(M_i,M_{i}^{-1})$ and it parametrizes rank $2$ semistable bundles $E$ over $C$, which are extensions of $M_i$ by $M_i^{-1}$, by \cite{beauville}. 
The construction yields an embedding of $\PP{H^0(C,L_i)}^*$ in the $6$-dimensional moduli space $\M_C(2,0)$ of rank $2$ semi-stable vector bundles on $C$ of trivial determinant, where $\M_C(2,0)$ is embedded in $|L(2\Theta)|^*$ as the Coble quartic hypersurface \cite[Sec. 2.3]{pauly}. 
Furthermore, $E$ is stable, if and only if the extension class does not belong to $\varphi_{L_i}(C)$. The semi-stable locus in
$\M_C(2,0)$ is the Kummer, namely the image of the morphism $\varphi_{L(2\Theta)}:X\rightarrow |L(2\Theta)|^*$.
Hence, the set theoretic intersection in $|L(2\Theta)|^*$ of each of $\PP{H^0(C_i,L_i)}^*$ with the Kummer is equal to $\varphi_{L(2\Theta)}(C_i)$, for $i=1,2$. We get the set theoretic equality
\[
\PP(W)\cap \varphi_{L(2\Theta)}(C_1)=\PP(W)\cap \varphi_{L(2\Theta)}(X)=\PP(W)\cap \varphi_{L(2\Theta)}(C_2).
\]
(???) If the scheme theoretic  intersection in $|L(2\Theta)|^*$ of $\PP{H^0(C_2,L_2)}^*$ with the Kummer is equal\footnote{
Equivalently, $H^0(\Ideal{C_2}(2\Theta)\otimes L)$ generates the subsheaf $\Ideal{C_2\cup \iota'(C_2)}(2\Theta)\otimes L$,
where $\iota'$ is the Galois involution of $\varphi_{L(2\Theta)}:X\rightarrow |L(2\Theta)|^*$.
} 
to $\varphi_{L(2\Theta)}(C_2)$, then the above right displayed equality is valid scheme theoretically. 
We get that $\PP(W)\cap \varphi_{L(2\Theta)}(C_1)$ is a subscheme of $\PP(W)\cap \varphi_{L(2\Theta)}(C_2)$ and hence also of
$\varphi_{L(2\Theta)}(C_1)\cap \varphi_{L(2\Theta)}(C_2)$.

The morphism $\varphi_{L(2\Theta)}:X\rightarrow |L(2\Theta)|^*$ is of degree $2$ onto the Kummer variety. 
Denote by $\iota'$ the Galois involution. The intersection scheme $\varphi_{L_1}(C)\cap \varphi_{L_2}(C)$
is isomorphic to $C_1\cap \iota'(C_2)$, which has length at most $2$, by Lemma \ref{lemma-intersection-of-C-and-Sigma-has-length-at-most-2}. 
If $\dim(W)=3$, then the plane $\PP(W)$ will intersect $\varphi_{L_1}(C)$ along a length $6$ subscheme, contradicting the scheme theoretic inclusion of $\PP(W)\cap \varphi_{L(2\Theta)}(C_1)$ in $\varphi_{L(2\Theta)}(C_1)\cap \varphi_{L(2\Theta)}(C_2)$.
\end{proof}
}

\hide{
\begin{rem}
\begin{enumerate}
\item
{\bf Higher dimensional components of the Hilbert scheme of curves:}
If $C_1$ and $C_2$ are distinct translates of $C$ but intersect at a point $\ell$, then $\SheafTor_i(\Ideal{C_1},\Ideal{C_2})$ vanishes, for $i\neq 0$ and 
$\Ideal{C_1}\otimes\Ideal{C_2}$ 
is a subsheaf of $\Ideal{C_1\cup C_2}$ with quotient supported at the point $\ell$ of intersection. This means that 
$\Ideal{C_1}\otimes\Ideal{C_2}$ is the ideal sheaf of a subscheme with an embedded point. Thus, 
$\Ideal{C_1}\otimes\Ideal{C_2}$ deforms to the ideal sheaf of the union of $\Ideal{C_1\cup C_2}$ and a point away from $C_1\cup C_2$. Thus, 
$\Ideal{C_1}\otimes\Ideal{C_2}$ belongs to the intersection of two components of the Hilbert scheme, one of which of dimension $8$, while if $C_1$ and $C_2$ are disjoint, $\Ideal{C_1\cup C_2}$ belongs to a 6 dimensional component of the Hilbert scheme. 
\item
{\bf Comparison of two components of the Hilbert scheme:}
Let ${\mathcal H}$  be the component of the Hilbert scheme of $X$, whose generic point is the ideal sheaf of two disjoint translates of $AJ(C)$. Clearly, ${\mathcal H}$ is birational to the irreducible component $X^{[2]}$ of the Hilbert scheme whose generic point is the ideal sheaf of a length $2$ subscheme of $X$ (here we abuse notation, more canonically the birational isomorphism is with $\Pic^1(C)^{[2]}$, while $X=\Pic^2(C)$). Let $C_t\subset X$, $t\in \Pic^1(C)$, be such a translate.
We have seen in the proof of Lemma \ref{lemma-dim-Ext-1-F-F-is-3n+3} that $H^0(X,TX)$ maps isomorphically onto $H^0(C_t,N_{C_t/X})$. Let $\tilde{\xi}$ be a non-zero section of $H^0(X,TX)$ mapping to the section $\xi$ of $H^0(C_t,N_{C_t/X})$. 
The spaces $H^0(X,TX)$ and $H^0(C,\omega_C)^*$ are naturally isomorphic. Let $[\tilde{\xi}]\in \PP{H^0(C,\omega_C)^*}$ be the point corresponding to $\tilde{\xi}$.  Let $\tilde{C}_\xi\subset X$ be the subscheme with ideal sheaf the kernel of the composition
\[
\Ideal{C_t}\rightarrow \Ideal{C_t}/\Ideal{C_t}^2\IsomRightArrow N^*_{C_t/X}\RightArrowOf{\xi}\StructureSheaf{C_t}.
\]
The canonical morphism $\varphi_{\omega_C}$ maps a point $p\in C$ to the line $T_pC$ in $H^0(X,TX)\cong H^0(C,\omega_C)^*$.
If $[\tilde{\xi}]$ does not belong to the canonical curve $\varphi_{\omega_C}(C)$, then
$\xi$ does not vanish at any point and the above composition is surjective.
We get the short exact sequence
\[
0\rightarrow \Ideal{\tilde{C}_\xi}\rightarrow \Ideal{C_t}\rightarrow \StructureSheaf{C_t}\rightarrow 0.
\]
In particular, the Euler characteristic of $\Ideal{\tilde{C}_\xi}$ is equal to that of $\Ideal{C_1\cup C_2}$ for two disjoint translates of $C$. This suggests that the birational map from $X^{[2]}$ to ${\mathcal H}$ is regular at the point $(t,[\tilde{\xi}])$ on the diagonal divisor $X\times \PP{H^0(X,TX)}$ of $X^{[2]}$. 
If, however, $[\tilde{\xi}]$ belongs to $\varphi_{\omega_C}(C)=\varphi_{\omega_{C_t}}(C_t)$ and $[\tilde{\xi}]=\varphi_{\omega_{C_t}}(p)$, then the above composition is not surjective at $p$. The component ${\mathcal H}$ contains as limits also ideals of subschemes containing $\tilde{C}_\xi$ as well as an embedded point. 
Is the limit unique, as in the previous part of this remark, and the latter scheme structure at $p$ is determined by $\xi$, or does the scheme structure involves additional choices? 
One limit, with an embedded point at $[\tilde{\xi}]=q+t\in C_t$, is obtained as the limit of $\Ideal{C_t}\otimes \Ideal{C_{t+q-q'}}$ as we let the point $q'$ approach $q$ in $C$. This limit is well defined, by the properness of ${\mathcal H}$ and smoothness of $C$. It has  an embedded point at $q+t$, since $\Ideal{C_t}\otimes \Ideal{C_{t+q-q'}}$ is the ideal of a subscheme with an embedded point at $C_t\cap C_{t+q-q'}=\{q+t\}$.
It seems plausible that the birational morphism from $\H$ to $X^{[2]}$ is regular, and the above question asks for a description of its fibers over the $4$-dimensional subscheme $X\times \varphi_{\omega_C}(C)$ of the diagonal divisor $X\times \PP{H^0(X,TX)}$ of $X^{[2]}$. 
\item
One may hope that the component ${\mathcal H}$ deforms as a projective variety along generalized deformations of $X$ to a component of objects in the deformed derived category, so that the deformation of $D^b(X)$ is recovered by a monad construction as in \cite{markman-mehrotra}. One expects to get a $9$-dimensional family of projective varieties deforming ${\mathcal H}$, by the first order calculation in Proposition \ref{prop-obstruction-map-has-rank-6}. If an additional structure appears in the previous part of this remark, then its deformations may account for the $3$ additional parameters in completing to a $9$-dimensional family the $6$-dimensional family of projective deformations of $X$ as a Jacobian of a genus $3$ curve. On the other hand, it is possible that the variety ${\mathcal H}$ varies only in a $6$-dimensional family, and the $3$ additional parameters are encoded in the monad data.
\end{enumerate}
\end{rem}

}

%
\subsection{A semiregular reflexive secant$^{\boxtimes 2}$-sheaf}
\label{subsection-a-semiregular-secant-square-sheaf}
Set $n:=d+1$.
Choose $\Ideal{\cup_{i=1}^n C_i}$ and 
$\Ideal{\cup_{i=1}^n\Sigma_i}$ to each be equivariant with respect to a subgroup $G_i$, $i=1,2$, of $X$ of order $n$, where $G_1$ permutes the connected components $\{\Sigma_i\}_{i=1}^n$ transitively and $G_2$ 
permutes the connected components $\{C_i\}_{i=1}^n$ transitively. 

\begin{lem}
\label{lemma-G_1-and-G_2-can-be-chosen-to-satisfy-assumption-intersections-of-surfaces-are-generic}
A generic $C$ admits subgroups $G_1$ and $G_2$ of $\Pic^0(C)$, such that Assumption \ref{assumption-intersections-of-surfaces-are-generic} holds for a $G_1$ orbit $\{C_i\}_{i=1}^n$ of translates of $C_p$ and a $G_2$ orbit $\{\Sigma_i\}_{i=1}^n$ of translates of $\Sigma_p$.
\end{lem}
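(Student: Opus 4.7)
My plan is to split Assumption~\ref{assumption-intersections-of-surfaces-are-generic} into its two parts and establish each separately: the first by a direct group-theoretic computation and the second by combining openness in moduli with an explicit verification on a degeneration.

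For Assumption~\ref{assumption-intersections-of-surfaces-are-generic}(\ref{assumption-3s-ij-pairwise-distinct}), I would observe that since $\{C_i\}_{i=1}^n$ is a $G_1$-orbit of $C_p$ and $\{\Sigma_j\}_{j=1}^n$ a $G_2$-orbit of $\Sigma_p$, the translation points $s_i$ form a coset of $G_1$ and the $t_j$ form a coset of $G_2$. Their sums $s_i+t_j$ therefore form a coset of $G_1+G_2$, which has exactly $n^2$ distinct elements if and only if $G_1\cap G_2=\{0\}$. Since for every smooth genus-$3$ curve $\Pic^0(C)[n]\cong(\ZZ/n\ZZ)^6$ contains many pairs of cyclic order-$n$ subgroups with trivial intersection, this part imposes no restriction on $C$.

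For Assumption~\ref{assumption-intersections-of-surfaces-are-generic}(\ref{assumption-item-intersections-of-surfaces-are-generic}), I would begin with the equality $\Theta_{i,j}=\Sigma_j-C_i=\tau_{t_j-s_i}(\Theta_0)$ with $\Theta_0:=\Sigma_p-C_p$, so that the $n^2$ surfaces $\Theta_{i,j}$ are translates of $\Theta_0$ by elements of a coset of $G_1+G_2\subset X$. Writing $\Sigma_p=\iota(C_p)$ and invoking Riemann's singularity theorem, together with the generic degree $2$ of the difference map $\Sigma_p\times C_p\to\Theta_0$ supplied by Lemma~\ref{lemma-intersection-of-C-and-Sigma-has-length-at-most-2}, one computes $[\Theta_0]=[\Theta]$ in $H^2(X,\ZZ)$. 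The assertion then reduces to the statement that the $n^2$ translates of a theta divisor indexed by a coset of $G_1+G_2\subset X$ have empty four-fold intersections and zero-dimensional three-fold intersections.

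Both intersection conditions are open on the moduli space $\M'$ of triples $(C,G_1,G_2)$ with $G_1\cap G_2=\{0\}$, which is an \'etale cover of $\M_3$. Indeed, the locus where a given four-fold intersection inside an abelian variety is nonempty is the image of a proper morphism and hence closed, while the locus where a given three-fold intersection has positive dimension is closed by upper semicontinuity of fiber dimension. Intersecting the finitely many resulting open conditions yields an open subset $U\subseteq\M'$, and since $\M_3$ is irreducible and $\M'$ is \'etale over it, any nonempty open subset of $\M'$ projects to a dense open subset of $\M_3$. Hence the lemma reduces to showing $U\neq\emptyset$. To produce a point of $U$, I would specialize in a flat family to a chain of three elliptic curves $C_0=E_1\cup E_2\cup E_3$, as in the proof of Lemma~\ref{lemma-emptiness-condition-holds-when-the-curves-are-an-orbit}, so that $\Pic^0(C_0)\cong E_1\times E_2\times E_3$ and the theta divisor decomposes as in (\ref{eq-theta-divisor-of-a-chain-of-elliptic-curves}). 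Choosing cyclic subgroups $G_1\subset\Pic^0(E_1)$ and $G_2\subset\Pic^0(E_3)$ of order $n$ gives $G_1\cap G_2=\{0\}$ automatically, and the $n^2$ translates $\Theta_0+g$, $g\in G_1+G_2$, can be enumerated irreducible component by irreducible component on the special fiber; the four-fold emptiness and three-fold zero-dimensionality then reduce, on each triple of components, to the disjointness of distinct nontrivial translates within a single elliptic factor together with $G_1\cap G_2=\{0\}$. The main obstacle is the combinatorial bookkeeping on the nodal fiber---tracking how each of the $n^2$ translates of $\Theta_0$ meets each component of the reducible $\Theta$ and which triples and quadruples become concurrent---but the analysis closely parallels that of Lemma~\ref{lemma-emptiness-condition-holds-when-the-curves-are-an-orbit} and, combined with openness, produces the required point of $U$ in the interior.
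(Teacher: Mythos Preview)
Your overall strategy matches the paper's: reduce Part~(\ref{assumption-3s-ij-pairwise-distinct}) to $G_1\cap G_2=\{0\}$, identify $\Theta_{i,j}$ as a translate of the theta divisor, invoke openness in moduli, and verify on the degeneration to a chain of elliptic curves used in Lemma~\ref{lemma-emptiness-condition-holds-when-the-curves-are-an-orbit}. The paper does exactly this.

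However, your specific choice on the special fiber is wrong, and this is not merely ``combinatorial bookkeeping''. You take $G_1\subset\Pic^0(E_1)$ and $G_2\subset\Pic^0(E_3)$, so every element of $G_1+G_2$ has trivial $E_2$-component. But by (\ref{eq-theta-divisor-of-a-chain-of-elliptic-curves}) the component $D_2=\Pic^0(E_1)\times\{\StructureSheaf{E_2}(p_0+p_1)\}\times\Pic^0(E_3)$ is fixed by any translation with trivial $E_2$-component. Hence $\tau_g(D_2)=D_2$ for every $g\in G_1+G_2$, so all $n^2$ translates $\tau_g(\Theta)$ share the common $2$-dimensional component $D_2$. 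Every four-fold intersection contains $D_2$ and is nonempty; every three-fold intersection is at least $2$-dimensional. Your point is not in $U$.

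The paper fixes this by choosing $G_1,G_2$ so that the restriction $\Pic^0(C)\to\Pic^0(E_i)$ is injective on the subgroup $\langle G_1,G_2\rangle$ for \emph{each} $i=1,2,3$. Then every nonzero $g\in G_1+G_2$ has nonzero projection to each $E_i$, so each $D_i$ is disjoint from $\tau_g(D_i)$, and the required emptiness and finiteness follow. This diagonality condition is precisely what the parallel with Lemma~\ref{lemma-emptiness-condition-holds-when-the-curves-are-an-orbit} demands; there too the single group $G$ is embedded diagonally in $\Pic^0(E_1)\times\Pic^0(E_2)\times\Pic^0(E_3)$.
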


\begin{proof}
Assumption \ref{assumption-intersections-of-surfaces-are-generic}(\ref{assumption-3s-ij-pairwise-distinct}) is equivalent to the condition that the intersection $G_1\cap G_2$ is trivial, which we assume.
We have already observed that the surface $\Theta_{i,j}:=\Sigma_j-C_i$ in $\Pic^0(C)$ is a translate of the $\Theta$ divisor. Hence, it suffices to prove the existence of $G_1$ and $G_2$, such that the intersection of any four translates
$\tau_{g_1+g_2}(\Theta)$, $(g_1,g_2)\in G_1\times G_2$, is empty and any three translates have finite intersection. The property is open in moduli, and so it suffices to prove it for the degenerate case, where $C$ is a chain of elliptic curves $E_1$, $E_2$, and $E_3$, as in the proof of Lemma \ref{lemma-emptiness-condition-holds-when-the-curves-are-an-orbit}. The curves $E_1$ and $E_2$ intersect at a point $\{p_0\}$, the curves $E_2$ and $E_3$ intersect at a point $p_1$, and the divisor $\Theta$ is  given in (\ref{eq-theta-divisor-of-a-chain-of-elliptic-curves}). Choose $G_1$ and $G_2$ so that $G_1\cap G_2=\{0\}$
and for each $i$ the restriction homomorphism $\Pic^0(C)\rightarrow \Pic^0(E_i)$ restricts to the subgroup $\langle G_1, G_2\rangle$ generated by $G_1$ and $G_2$ as an injective homomorphism.
Each of the three irreducible components $D_i$ of $\Theta$ is disjoint from its translate $\tau_{g_1+g_2}(D_i)$, if $(g_1,g_2)\neq(0,0)$. 
Hence, the intersection of any four translates is empty and the intersection $\tau_{g_1+g_2}(\Theta)\cap \tau_{g'_1+g'_2}(\Theta)\cap \tau_{g''_1+g''_2}(\Theta)$ of three translates is the union of
$\tau_{g_1+g_2}(D_i)\cap \tau_{g'_1+g'_2}(D_j)\cap \tau_{g''_1+g''_2}(D_k)$, with pairwise distinct $i, j, k$. Each of the latter intersections consists of precisely one point. Hence, all triple intersections 
$\tau_{g_1+g_2}(\Theta)\cap \tau_{g'_1+g'_2}(\Theta)\cap \tau_{g''_1+g''_2}(\Theta)$
are finite.
\end{proof}

Set $E':=\Ideal{\cup_{i=1}^n\Sigma_i}\boxtimes \Ideal{\cup_{i=1}^n C_i}$.
The image of the obstruction map
\[
ob_{E'}:HT^2(X\times X)\rightarrow \Ext^2(E',E')
\]
is $\Ext^2(E',E')^{G_1\times G_2}$. The inclusion $Im(ob_{E'})\subset \Ext^2(E',E')^{G_1\times G_2}$
follows from the $G_i$-equivariance of $ob_{\Ideal{\cup_{i=1}^n C_i}}$ and $ob_{\Ideal{\cup_{i=1}^n \Sigma_i}}$ and the fact that both groups act trivially on $HT^2(X\times X)$.
The inclusion $\Ext^2(E',E')^{G_1\times G_2}\subset Im(ob_{E'})$ follows from the surjectivity of 
$HT^j(X)\rightarrow \Ext^j(\Ideal{\cup_{i=1}^n C_i},\Ideal{\cup_{i=1}^n C_i})^{G_1}$, for $j\leq 2$, and the analogous surjectivity for 
$\Ideal{\cup_{i=1}^n \Sigma_i}$, which in turn follows from Lemmas \ref{lemma-Ext-algebra-generated-by-Ext-1-when-n=1} and  \ref{lemma-Yoneda-algebra-is-generated-in-degree-1}.
The composition 
\[
\tilde{\Phi}:=\Phi\circ ([\Theta\boxtimes\Theta]\otimes)=(id\times \Psi_{\P^{-1}[3]})\circ\mu^*\circ([\Theta\boxtimes\Theta]\otimes),
\] 
of 
Orlov's derived equivalence $\Phi$ with tensorization by the line bundle $\Theta\boxtimes\Theta$, conjugates the subgroup $G_1\times G_2$ of $X\times X$ to a subgroup $G$ of the identity component 
$X\times\hat{X}\times \Pic^0(X\times\hat{X})$
of the group of autoequivalences of the derived category of $X\times\hat{X}$. The group $G$ is calculated below in Equation (\ref{eq-elements-of-G}).

The sheaf $E'$ admits a natural $G_1\times G_2$ linearization $\lambda'$ yielding the $G_1\times G_2$-equivariant sheaf $(E',\lambda')$. Hence, 
the image $\G=\Phi(\Ideal{\cup_{i=1}^n\Sigma_i}(\Theta)\boxtimes \Ideal{\cup_{i=1}^n C_i}(\Theta))[-3]$ in $D^b(X\times \hat{X})$ admits a $G$-linearization $\lambda:=\tilde{\Phi}(\lambda')$ yielding a $G$-equivariant object $(\G,\lambda)$ with respect to the action of $G$ on $D^b(X\times \hat{X})$. 

\begin{lem}
\label{lemma-image-of-ob-E-is-the-G-invariant-subspace}
The image of the obstruction homomorphism $ob_\G:HH^2(X\times X)\rightarrow \Hom(\G,\G[2])$ is 
$\Hom(\G,\G[2])^G:=\Hom((\G,\lambda),(\G,\lambda)[2])$. 
\end{lem}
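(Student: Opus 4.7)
The plan is to transport the already-established statement about $E'$ across the derived equivalence $\tilde{\Phi}$. Both tensorization by the line bundle $\Theta\boxtimes\Theta$ and Orlov's equivalence $\Phi$ are equivalences of derived categories, hence so is $\tilde{\Phi}:D^b(X\times X)\to D^b(X\times\hat{X})$. By \cite[Cor. 8.3]{caldararu-I} (see also \cite[Theorem 1.4]{calaque-et-al}), $\tilde{\Phi}$ induces an isomorphism $\tilde{\Phi}^{HH}:HH^*(X\times X)\to HH^*(X\times\hat{X})$ of graded algebras, and by \cite[Theorem A]{Huang} the obstruction maps intertwine: for every object $F$ in $D^b(X\times X)$ one has
\[
ob_{\tilde{\Phi}(F)}\circ\tilde{\Phi}^{HH}=\tilde{\Phi}\circ ob_F,
\]
where on the right, $\tilde{\Phi}$ denotes the induced isomorphism $\Ext^2(F,F)\to \Ext^2(\tilde{\Phi}(F),\tilde{\Phi}(F))$. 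Applied to $F=E'$, and observing that $\G=\tilde{\Phi}(E')[-3]$ so that the identification $\Ext^2(\G,\G)\cong\Ext^2(\tilde{\Phi}(E'),\tilde{\Phi}(E'))$ is canonical (shifts act trivially on Hochschild cohomology and on the obstruction map), we obtain a commutative diagram
\[
\xymatrix{
HH^2(X\times X) \ar[r]^-{ob_{E'}} \ar[d]_{\tilde{\Phi}^{HH}}^\cong & \Ext^2(E',E') \ar[d]^{\tilde{\Phi}}_\cong
\\
HH^2(X\times\hat{X}) \ar[r]_-{ob_\G} & \Ext^2(\G,\G).
}
\]

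The second step is to match the equivariance structures. By construction, $\tilde{\Phi}$ conjugates $G_1\times G_2$, acting on $D^b(X\times X)$, to $G$, acting on $D^b(X\times\hat{X})$, via the Rouquier isomorphism of identity components of autoequivalence groups. The linearization $\lambda=\tilde{\Phi}(\lambda')$ on $\G$ is the image of $\lambda'$ under this transport, hence the induced action of $G$ on $\Ext^2(\G,\G)$ corresponds under the right-hand vertical isomorphism to the action of $G_1\times G_2$ on $\Ext^2(E',E')$. In particular $\tilde{\Phi}$ maps $\Ext^2(E',E')^{G_1\times G_2}$ isomorphically onto $\Ext^2(\G,\G)^G$.

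Combining the two steps with the input recalled immediately before the lemma, that $\operatorname{Im}(ob_{E'})=\Ext^2(E',E')^{G_1\times G_2}$, yields
\[
\operatorname{Im}(ob_\G)=\tilde{\Phi}(\operatorname{Im}(ob_{E'}))=\tilde{\Phi}\bigl(\Ext^2(E',E')^{G_1\times G_2}\bigr)=\Ext^2(\G,\G)^G,
\]
as required. The only delicate point, which I would treat as the main obstacle, is the compatibility in the second step: one must verify that the transport of the linearization $\lambda'$ through $\tilde{\Phi}$ really does conjugate the natural $G_1\times G_2$-action on $\Ext^2(E',E')$ to the natural $G$-action on $\Ext^2(\G,\G)$. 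This is essentially formal, but requires unwinding the construction of $G$ via the Rouquier isomorphism associated to $\tilde{\Phi}$ and checking that the induced action on Yoneda Ext groups is compatible with the functorial action of $\tilde{\Phi}$; the two key ingredients are that $\tilde{\Phi}$ is an equivalence (so it commutes with composition of endomorphisms) and that the action of any autoequivalence on Ext groups is defined by functoriality, so conjugation of autoequivalences by $\tilde{\Phi}$ translates directly into conjugation of the induced linear actions.
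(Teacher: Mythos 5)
Your proof is correct and takes essentially the same route as the paper's, which compresses the argument to a single sentence citing the equality $\operatorname{Im}(ob_{E'})=\Ext^2(E',E')^{G_1\times G_2}$ established immediately before the lemma; you spell out the transport across $\tilde{\Phi}$ (Hochschild isomorphism, compatibility of obstruction maps, and matching of the equivariance structures via the Rouquier isomorphism) that the paper leaves implicit.
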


\begin{proof}
The statement follows from the equality, observed above, of the image of $ob_{E'}$ and $\Hom((E',\lambda'),(E',\lambda')[2])^{G_1\times G_2}$.
\end{proof}

The action of $G$ on $HH^*(X\times\hat{X})$ is trivial, so the latter is also the 
Hochschild cohomology of $D^b_G(X\times\hat{X}).$
The semi-regularity map 
\[
\sigma: \Hom(\G,\G[2])\rightarrow \prod_{q=0}^4 H^{q+2}(\Omega^q_{X\times\hat{X}})
\]
restricts to an injective homomorphism from 
$\Hom(\G,\G[2])^G$, by Remark \ref{remark-semiregularity-map-restricts-as-an-injective-map-to-the-image-of-ob}, Lemma \ref{lemma-kernel-of-ob-E-is-annihilator-of-ch_E}, and Remark \ref{remark-all-the-results-for-E-hold-for-G}.
In that sense $(\G,\lambda)$ is semi-regular.
The group $G$ is a finite subgroup of $X\times\hat{X} \times \Pic^0(X\times\hat{X})$ and so it deforms with $D^b(X\times\hat{X})$ to an action on every polarized abelian variety in the same connected component. Below we choose, instead, to pass to an action on the derived category induced only by automorphisms of $X\times\hat{X}$.
\hide{
So an equivariant version of the unobstructedness result of \cite[Th. 5.1]{buchweitz-flenner}
should yield the deformability of $[X\times\hat{X},(\G,\lambda)]$ 
to a pair $[A,(\G',\lambda')],$
for every polarized abelian variety $A$ of Hodge-Weil type in  the connected component of $X\times\hat{X}$, at least in  some open neighborhood of the latter. 
We need, however, to deform $\E:=\G^\vee[-1]$ as a twisted sheaf allowing also gerby deformations of $X\times\hat{X}$.
}

Let $\bar{G}$ be the projection of $G$ to $X\times \hat{X}$, considered as the group of translation automorphisms of $X\times \hat{X}$. 

\begin{lem} 
\label{lemma-the-projection-of-G-to-bar-G-is-an-isomorphism} 
If the intersection $G_1\cap G_2$ does not contain\footnote{This condition is satisfied for $G_1$ and $G_2$ as in 
Lemma \ref{lemma-G_1-and-G_2-can-be-chosen-to-satisfy-assumption-intersections-of-surfaces-are-generic}, as then $G_1\cap G_2=\{0\}$.} 
an element of order $2$, then 
the projection $p:G\rightarrow \bar{G}$ is an isomorphism. 
\end{lem}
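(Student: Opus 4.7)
The plan is to identify the composition
\[
G_1\times G_2\;\xrightarrow{\cong}\;G\;\xrightarrow{p}\;\bar{G}\subset X\times\hat{X}
\]
explicitly by computing the Rouquier isomorphism of $\tilde{\Phi}=(id\times\Psi_{\P^{-1}[n]})\circ\mu^*\circ([\Theta\boxtimes\Theta]\otimes-)$ on the translation autoequivalence $\tau_{(g_1,g_2),*}$, and to check that its kernel is exactly the $2$-torsion of $G_1\cap G_2$. Note that $G_1\times G_2\to G$ is an isomorphism (translations by nonzero elements give distinct autoequivalences, and conjugation by an equivalence preserves the group of autoequivalences), so computing $\ker(p)$ reduces to computing the kernel of the resulting homomorphism $G_1\times G_2\to X\times\hat{X}$.

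I would track the translation $\tau_{(g_1,g_2),*}$ through the three factors of $\tilde{\Phi}$. Conjugation by $[\Theta\boxtimes\Theta]\otimes-$ preserves the translation part and, via the identity $\tau_a^*L\otimes L^{-1}\cong\phi_L(a)$ for a symmetric line bundle $L$, introduces a tensorization factor by the $\Pic^0$-class $\pi_1^*\phi_\Theta(g_1)\otimes\pi_2^*\phi_\Theta(g_2)$. Conjugation by $\mu^*$, with $\mu(x_1,x_2)=(x_1+x_2,x_2)$, sends the translation to $\tau_{\mu^{-1}(g_1,g_2),*}=\tau_{(g_1-g_2,g_2),*}$ and, using that $\Pic^0(X)$-classes pull back additively under the sum morphism (Theorem of the Square), sends the tensorization factor to $\pi_1^*\phi_\Theta(g_1)\otimes\pi_2^*\phi_\Theta(g_1+g_2)$. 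Finally, the Fourier–Mukai equivalence $\Psi_{\P^{-1}[n]}$ applies the standard Rouquier swap on the second factor: translation by $g_2\in X$ becomes tensorization by a Poincaré-type line bundle in $\Pic^0(\hat{X})=X$, and tensorization by $\phi_\Theta(g_1+g_2)\in\Pic^0(X)=\hat{X}$ becomes translation by $\phi_\Theta(g_1+g_2)\in\hat{X}$.

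Assembling these three steps, the element of $G$ corresponding to $(g_1,g_2)$ has translation part
\[
\bar{g}(g_1,g_2)\;=\;\bigl(g_1-g_2,\;\phi_\Theta(g_1+g_2)\bigr)\;\in\;X\times\hat{X}.
\]
Since $\phi_\Theta$ is an isomorphism (as $\Theta$ is a principal polarization), the kernel of this map consists of pairs with $g_1=g_2$ and $g_1+g_2=0$, i.e.\ $\ker(p)=(G_1\cap G_2)[2]$. The hypothesis that $G_1\cap G_2$ contains no element of order $2$ forces $\ker(p)=0$, yielding the isomorphism $p:G\to\bar{G}$. The principal obstacle is bookkeeping sign and direction conventions in the three Rouquier computations (notably in the Fourier–Mukai swap); however, the conclusion is robust to such ambiguities, since only the vanishing of the two coordinates of $\bar{g}$ is used, and the fact that one coordinate is a ``difference'' while the other comes from a ``sum'' is forced by how $\mu$ mixes the factors together with the additivity of $\Pic^0$.
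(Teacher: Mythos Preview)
Your proof is correct and follows essentially the same approach as the paper: both track the translation autoequivalence $\tau_{(g_1,g_2),*}$ through the three factors of $\tilde{\Phi}$ via the Rouquier isomorphism, arriving at the translation part $(g_1-g_2,\phi_\Theta(g_1+g_2))\in X\times\hat{X}$ and concluding that the kernel of $p$ is precisely the $2$-torsion of $G_1\cap G_2$. The paper carries out the same three computations with the same intermediate results, including the explicit line-bundle part which you correctly note is irrelevant for the projection to $\bar{G}$.
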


\begin{proof}
Note that elements of the subgroup $G$ of the identity component $X\times\hat{X}\times\Pic^0(X\times\hat{X})$ of $\Aut(D^b(X\times\hat{X}))$ are determined by their action on 
sky-scraper sheaves and line bundles.
Given $x\in X$, denote by $\tau_x$ the translation automorphism of $X$. 
Set $L_x:=\Theta\otimes\tau_{x,*}(\Theta)^{-1}$. The third isomorphism below is due to the Theorem of the Square. 
\[
L_x\cong\Theta\otimes\tau_{x,*}(\Theta)^{-1}\cong
[\tau_{x,*}(\Theta)\otimes\Theta^{-1}]^{-1}\cong
\tau_{-x,*}(\Theta)\otimes\Theta^{-1}\cong 
\tau_x^*(\Theta)\otimes\Theta^{-1}=:\phi_\Theta(x).
\]
The autoequivalence $\tau_x^\Theta:=(\Theta\otimes)\circ\tau_{x,*}\circ(\Theta^{-1}\otimes)$ of $D^b(X)$
corresponds to the element $(\tau_{x,*},L_x)$ of $X\times \Pic^0(X)$.

Let $(x_1,x_2)\in G_1\times G_2$. 
The compositions $\mu^*\circ (\tau_{x_1}^\Theta\boxtimes\tau_{x_2}^\Theta)  \circ\mu_*$ and
$\mu^{-1}_*\circ (\tau_{x_1}^\Theta\boxtimes\tau_{x_2}^\Theta)  \circ\mu_*$ are equal. We have:
\[
\mu^{-1}((\tau_{x_1},\tau_{x_2})(\mu(x,y)))=\mu^{-1}(x+y+x_1,y+x_2)=(x+x_1-x_2,y+x_2).
\]
Hence,  $\mu^*\circ (\tau_{x_1},\tau_{x_2})_*  \circ\mu_*=(\tau_{x_1-x_2},\tau_{x_2})_*.$

Given a line bundle $M$ over $X\times X$, we have 
$\mu^*\circ (M\otimes)\circ \mu_*\cong (\mu^*M\otimes)\circ \mu^*\circ\mu_*\cong (\mu^*M\otimes)$.
Now, $\mu^*(\pi_1^*L_{x_1}\otimes \pi_2^*L_{x_2})\cong a^*L_{x_1}\otimes \pi_2^*L_{x_2}$, where $a:X\times X\rightarrow X$ is the addition. Now $a^*L_{x_1}\cong \pi_1^*L_{x_1}\otimes \pi_2^*L_{x_1}$ and $L_{x_1}\otimes L_{x_2}\cong L_{x_1+x_2},$ by the theorem of the square.
We get
\[
\mu^*\circ (\tau_{x_1}^\Theta\boxtimes\tau_{x_1}^\Theta)  \circ\mu_*\cong
((\pi_1^*L_{x_1}\otimes \pi_2^*(L_{x_1+x_2}))\otimes)\circ (\tau_{x_1-x_2},\tau_{x_2})_*.
\]

Let $\P_x$ be the restriction of the Poincar\'{e} line bundle to $\{x\}\times \hat{X}$. 
The autoequivalence $\Psi_{\P^{-1}[3]}\circ \tau_{x,*}\circ\Phi_\P$ of $D^b(\hat{X})$ is isomorphic to tensorization by the line bundle $\P_{-x}$, as we have
\begin{eqnarray*}
\Psi_{\P^{-1}[3]}(\tau_{x,*}(\Phi_\P(\P^{-1}_y[3])))
&\cong&\Psi_{\P^{-1}[3]}(\tau_{x,*}(\Phi_\P(\Psi_{\P^{-1}[3]}(\CC_y))))
\cong\Psi_{\P^{-1}[3]}(\tau_{x,*}(\CC_y))
\\
&\cong&\Psi_{\P^{-1}[3]}(\CC_{x+y})\cong\P^{-1}_{x+y}[3]\cong
\P^{-1}_y[3]\otimes\P^{-1}_x.
\end{eqnarray*}
The autoequivalence $\Psi_{\P^{-1}[3]}\circ (L_x\otimes) \circ\Phi_\P$ is translation $\tau_{L_x,*}$ 
by the point of $\hat{X}$ corresponding to the isomorphism class of $L_x$, as we have
\[
\Psi_{\P^{-1}[3]}((L_x\otimes\Phi_\P(\CC_{L_y})))\cong \Psi_{\P^{-1}[3]}((L_x\otimes L_y))\cong
\Psi_{\P^{-1}[3]}(\Phi_\P(\CC_{L_{x+y}}))\cong \CC_{L_{x+y}}.
\] 
We get the isomorphism
\begin{equation}
\label{eq-elements-of-G}
\tilde{\Phi}\circ (\tau_{x_1},\tau_{x_2})_*\circ \tilde{\Phi}^{-1}\cong
((\pi_1^*L_{x_1}\otimes \pi_2^*\P_{-x_2})\otimes)\circ (\tau_{x_1-x_2},\tau_{L_{x_1+x_2}})_*
\end{equation}
The element  $(\tau_{x_1-x_2},\tau_{L_{x_1+x_2}})$ of $\bar{G}$ is the identity, if and only if $x_1=x_2$ and $x_1+x_2=0$,
so that $x_1$ is a point of order $2$ of $G_1\cap G_2$.
\end{proof} 

Assume that $G_1$ and $G_2$ are chosen as in Lemma \ref{lemma-G_1-and-G_2-can-be-chosen-to-satisfy-assumption-intersections-of-surfaces-are-generic}.
In particular, $G_1\cap G_2=\{0\}$. 

\begin{lem}
If $d$ is even, then 
the divisibility $\mbox{div}(\det(\G))$ is relatively prime to 
the order $(d+1)^2$ of $G$. 
\end{lem}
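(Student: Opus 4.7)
The plan is to combine the $\Spin(V)_P$-equivariance of Orlov's equivalence with an intersection argument rather than a direct Grothendieck--Riemann--Roch computation of $c_1(\G)$.

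First, by Proposition \ref{prop-local-freeness}, the reflexive sheaf $\E=\G^\vee[-1]$ has rank $8d$, and the higher cohomology sheaf $\G_2$ is supported in codimension $4$. Hence $\rank(\G)=-8d$ and $\det(\G)\cong\det(\E)$, so $\mathrm{div}(\det(\G))=\mathrm{div}(c_1(\G))$ in $NS(X\times\hat X)$.

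Next I would show that $c_1(\G)$ has a specific structural form. Since $ch(F_1),ch(F_2)\in P$ (Lemma \ref{lemma-ch-F-i-is-on-secant-to-spinor-variety}) are fixed by $m_g$ for $g\in\Spin(V)_P$, the class $\tau(ch(F_1))$ is fixed by $m_g^\dagger=\tau m_g\tau$. Writing $ch(\G)=-\phi(ch(F_2)\otimes ch(F_1))$ and using the intertwining property of $T:=\phi\circ(\mathrm{id}\otimes\tau)$ between $m\otimes m^\dagger$ and $\rho'$ (Proposition \ref{prop-extension-class-of-decreasing-filtration-of-spin-V-representations} together with (\ref{rho-prime-g})), one concludes $\rho'_g(ch(\G))=ch(\G)$ for every $g\in\Spin(V)_P$. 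Extracting the degree-two component of this identity via $\rho'_g=\exp\bigl(\tfrac12[c_1(\P)-\rho_g(c_1(\P))]\bigr)\rho_g$ and using $\rank(\G)=-8d$, one deduces that $c_1(\G)+4d\,c_1(\P)$ is $\rho_g$-invariant for every $g\in\Spin(V)_P$. By Lemma \ref{lemma-Spin-V-P-invariant-classes-are-Hodge}, the invariant subspace $H^2(X\times\hat X,\QQ)^{\Spin(V)_P}$ is one-dimensional, spanned by the polarization $\Theta_P$, so
\begin{equation*}
c_1(\G)=-4d\cdot c_1(\P)+s\cdot\Theta_P\qquad\text{for some } s\in\QQ.
\end{equation*}
A direct calculation from definition (\ref{eq-Xi}) shows that the K\"unneth component of $\Theta_P$ in $H^1(X)\otimes H^1(\hat X)$ vanishes and that the integer coefficients appearing in its $H^2(X)\oplus H^2(\hat X)$ components are built from $\{1,d\}$. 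In particular, $\Theta_P$ and $c_1(\P)$ are $\ZZ$-linearly independent in $NS(X\times\hat X)$.

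Finally, suppose a prime $p$ divides $\gcd(\mathrm{div}(\det(\G)),(d+1)^2)$. Then $p\mid d+1$, and since $d$ is even, $p$ is odd with $p\nmid 2d$. By primitivity of $c_1(\P)$ in $NS(X\times\hat X)$ and the coprimality to $p$ of the coefficients of $\Theta_P$ established above, there exists an integer curve class $[C]\in H_2(X\times\hat X,\ZZ)$ with $c_1(\P)\cdot[C]=1$ and $\Theta_P\cdot[C]=0$. Pairing yields $c_1(\G)\cdot[C]=-4d$, and the divisibility assumption forces $p\mid 4d$, contradicting $\gcd(4d,d+1)=\gcd(-4,d+1)=1$. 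Hence $\gcd(\mathrm{div}(\det(\G)),(d+1)^2)=1$. The main obstacle is the production of the curve class $[C]$: once one verifies the explicit K\"unneth decomposition of $\Theta_P$ from (\ref{eq-Xi}) and the rank-$3$ Néron--Severi lattice of $X\times\hat X$ generated (for generic Jacobian $X=\Pic^2(C)$) by $\pi_X^*\Theta$, $\pi_{\hat X}^*\hat\Theta$, and $c_1(\P)$, the existence of $[C]$ reduces to an elementary lattice calculation with no hidden $p$-torsion obstruction.
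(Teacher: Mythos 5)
Your proof is correct, but it takes a genuinely different route than the paper's. You compute the explicit K\"unneth structure of $c_1(\G)$ from the $\Spin(V)_P$-equivariance of Orlov's map (via Lemma \ref{lemma-Spin-V-ell-1-ell-2-invariance-conditions} applied with $r=\rank\G=-8d$), obtaining $c_1(\G)=-4d\,c_1(\P)+s\,\Theta_P$ with $\Theta_P$ concentrated in the K\"unneth pieces $H^2(X)\oplus H^2(\hat X)$, and then extract the divisibility obstruction by pairing with a $2$-cycle. One remark: the lattice calculation you defer is in fact simpler than your sketch suggests, and the appeal to a generic rank-$3$ N\'eron--Severi group and to the coprimality of the coefficients of $\Theta_P$ is unnecessary. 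The intersection pairing $H^2\times H^{10}\to\ZZ$ respects the K\"unneth decompositions, and $H^1(X)\otimes H^1(\hat X)$ pairs perfectly with $H^5(X)\otimes H^5(\hat X)$ and trivially with the other summands of $H^{10}$. Since $c_1(\P)$ is primitive in $H^1(X)\otimes H^1(\hat X)$, a class $[C]\in H^5(X)\otimes H^5(\hat X)$ with $c_1(\P)\cdot[C]=1$ exists for purely topological reasons, and $\Theta_P\cdot[C]=0$ is then automatic because $\Theta_P$ lies in the complementary K\"unneth summands; no hypothesis on $\Pic(X\times\hat X)$ and no $p$-torsion analysis is required. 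By contrast, the paper's proof never computes $c_1(\G)$ at all. It uses the $G$-equivariance isomorphism $\G\cong\tau_{p(g),*}(\G)\otimes q(g)$ to deduce $\phi_{\det(\G)}(p(g))\cong q(g)^{8d}$, and then observes that since $\gcd(8d,(d+1)^2)=1$ (here is where $d$ even enters) the map $g\mapsto q(g)^{8d}$ is injective on $G$, hence $\bar G\cap\ker(\phi_{\det(\G)})=(0)$, which is exactly the coprimality of $|\bar G|$ with $\mathrm{div}(\det(\G))$. Your approach buys the explicit formula for the primitive part of $c_1(\G)$, which may be useful elsewhere; the paper's argument is shorter, purely group-theoretic, and avoids any K\"unneth analysis of $\Theta_P$.
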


\begin{proof}
Let $q:G\rightarrow \hat{G}\subset \Pic^0(X\times\hat{X})$ be the projection.
Both $p:G\rightarrow \bar{G}$ and $q:G\rightarrow \hat{G}$ are isomorphism.
The order $(d+1)^2$ of $\hat{G}$ is relatively prime to the rank $8d$ of $\G$ and so the map
$G\rightarrow\Pic(X\times\hat{X})$, given by $g\mapsto \det(\G\otimes q(g))=\det(\G)\otimes q(g)^{\rank(\G)}$ is injective. The $G$-equivariance of $\G$ implies 
that the map $G\rightarrow \Pic(X\times\hat{X})$, given by $g\mapsto \det(\tau_{p(g),*}(\G))$ must be injective as well, since for $g\in G$ we have
\begin{eqnarray*}
\G&\cong& g(\G)\cong \tau_{p(g),*}(\G)\otimes q(g),
\\
\det(\G)&\cong & \tau_{p(g),*}(\det(\G))\otimes q(g)^{8d}.
\end{eqnarray*}
Let $\phi_{\det(\G)}:X\times\hat{X}\rightarrow \Pic^0(X\times\hat{X})$ be the homomorphism sending a point $y$ to $\tau_y^*\det(\G)\otimes\det(\G)^{-1}$.
We have
\begin{eqnarray*}
\phi_{\det(\G)}(p(g))&\cong& \phi_{\det(\G)}(-p(g))^{-1}:= [\tau_{p(g),*}(\det(\G))\otimes\det(\G)^{-1}]^{-1}\cong q(g)^{8d}. 
\end{eqnarray*}
It follows that $\bar{G}$ intersects trivially the kernel of $\phi_{\det(\G)}$.
Hence, the order of $\bar{G}$ is relatively prime to the divisibility of $\det(\G)$.
\end{proof}

Regardless of the parity of $d$, the intersection $\bar{G}\cap \ker(\phi_{\det(\G)})$ consists of the elements of $\bar{G}$ of order dividing $\gcd(d+1,8)$, by proof of the above lemma.


Given $g\in G$, let $(x_g,L_g)\in (X\times\hat{X})\times \Pic^0(X\times\hat{X})$ be the point, such that the autoequivalence $g$ of $D^b(X\times\hat{X})$
is isomorphic to $L_g\otimes \tau_{x_g,*}$. The group $G$ has exponent $n=d+1$. 
We identify $G$ with the corresponding subgroup of $X\times\hat{X}\times \Pic^0(X\times\hat{X})$. 
This identification provides a choice of an isomorphism $L_g^n\cong \StructureSheaf{X\times\hat{X}}$, for every $g\in G$, since $g^n=L_g^n\otimes\tau_{nx_g,*}$ is the identity endofunctor and $nx_g$ is the identity element of the group $X\times\hat{X}$, so that $\tau_{x_g,*}^n$ is the identity endofunctor. 
The rank $r$ of $E$ is $8d$. 
If $d$ is even, then $\gcd(r,n)=\gcd(8d,d+1)=1$ and there exists a positive integer $a$, such that $ar\equiv -1$ modulo $n$.
Set $D:=\det(\G)$.

\begin{lem}
\label{lemma-if-d-is-even-E-descends}
If $d$ is even,\footnote{Note that $\QQ(\sqrt{-d})=\QQ(\sqrt{-4d})$, so the assumption that $d$ is even does not restrict the compex multiplications we can treat.} then the object $\G\otimes D^a$ admits a $\bar{G}$-linearization.
\end{lem}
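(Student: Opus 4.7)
The plan is to transfer the existing $G$-linearization of $\G$ into a $\bar{G}$-linearization of $\G\otimes D^a$ by absorbing the non-trivial $\Pic^0$-twists present in the $G$-action into the extra factor $D^a$, exploiting the arithmetic condition $ar\equiv -1\pmod n$ together with the canonical trivializations $L_g^n\cong\StructureSheaf{X\times\hat{X}}$ supplied by the inclusion $G\subset X\times\hat{X}\times\Pic^0(X\times\hat{X})$. Observe first that the assumption $d$ even forces $d+1$ odd, hence $\gcd(r,n)=\gcd(8d,d+1)=\gcd(8,d+1)=1$, guaranteeing the existence of $a$.

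Unpacking the $G$-linearization $\lambda$ of $\G$ gives isomorphisms $\lambda_g:L_g\otimes\tau_{x_g,*}(\G)\to\G$ satisfying the cocycle $\lambda_{gh}=\lambda_g\circ(\id_{L_g}\otimes\tau_{x_g,*}(\lambda_h))$, where we use the identification $L_{gh}\cong L_g\otimes L_h$ coming from the fact that $g\mapsto L_g$ is a group homomorphism (because $G$ is a \emph{subgroup} of $X\times\hat{X}\times\Pic^0(X\times\hat{X})$, not merely a set of autoequivalences). Taking determinants of $\lambda_g$ yields compatible isomorphisms $\det(\lambda_g):L_g^r\otimes\tau_{x_g,*}(D)\to D$; raising them to the $a$-th power and tensoring with $\lambda_g$ produces
\[
\nu_g\ :\ L_g^{1+ra}\otimes\tau_{x_g,*}(\G\otimes D^a)\ \longrightarrow\ \G\otimes D^a,
\]
satisfying the analogous cocycle. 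This is nothing more than the $G$-linearization on $\G\otimes D^a$ induced from $\lambda$, now expressed through the autoequivalences $L_g^{1+ra}\otimes\tau_{x_g,*}$.

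The key point is then to trivialize the twist $L_g^{1+ra}$. Because every $g\in G$ has order dividing $n$ and $g\mapsto L_g$ is a homomorphism, the canonical isomorphisms $L_g^n\cong\StructureSheaf{X\times\hat{X}}$ mentioned in the paper are \emph{multiplicative} in $g$: the trivialization of $L_{gh}^n$ agrees with the tensor product of those of $L_g^n$ and $L_h^n$ under $L_{gh}\cong L_g\otimes L_h$. Since $1+ra\equiv 0\pmod n$, this delivers canonical isomorphisms $\epsilon_g:L_g^{1+ra}\to\StructureSheaf{X\times\hat{X}}$ with $\epsilon_{gh}=\epsilon_g\otimes\epsilon_h$. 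Composing $\nu_g$ with $\epsilon_g^{-1}\otimes\id$ gives isomorphisms
\[
\mu_g\ :\ \tau_{x_g,*}(\G\otimes D^a)\ \longrightarrow\ \G\otimes D^a,
\]
which inherit the cocycle identity from that of $\{\nu_g\}$ and the multiplicativity of $\{\epsilon_g\}$. As $p:G\to\bar{G}$ is an isomorphism (Lemma \ref{lemma-the-projection-of-G-to-bar-G-is-an-isomorphism}), the family $\{\mu_g\}$ is precisely a $\bar{G}$-linearization of $\G\otimes D^a$.

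The only subtle point, and hence the ``main obstacle,'' is verifying the multiplicativity of the canonical trivializations $L_g^n\cong\StructureSheaf{X\times\hat{X}}$, but this is immediate from the fact that the inclusion $G\hookrightarrow X\times\hat{X}\times\Pic^0(X\times\hat{X})$ is a group homomorphism and $nx_g=0$ in $X\times\hat{X}$; no deeper input is required.
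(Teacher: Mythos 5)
Your proof is correct and takes essentially the same approach as the paper: tensor the given $G$-linearization $\lambda_g$ with the $a$-th power of its determinant to obtain an isomorphism $\G\otimes D^a \to \tau_{x_g,*}(\G\otimes D^a)\otimes L_g^{ar+1}$, and then use $ar+1\equiv 0\pmod n$ together with the canonical trivialization $L_g^n\cong\StructureSheaf{X\times\hat{X}}$ (which exists because $g^n$ is the identity functor) to kill the line-bundle twist. You add a useful explicit verification that the trivializations $\epsilon_g$ are multiplicative in $g$ and hence preserve the cocycle identity, a step the paper treats as implicit.
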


\begin{proof}
The linearization isomorphisms $\lambda_g:\G\rightarrow \tau_{x_g,*}(\G)\otimes L_g$, $g\in G$,
induce the isomorphisms
\begin{eqnarray*}
\wedge^r\lambda_g:D&\rightarrow & \tau_{x_g,*}(D)\otimes L_g^r,
\\
\lambda_g\otimes (\wedge^r \lambda_g)^a : \G\otimes D^a
&\rightarrow & \tau_{x_g,*}(\G\otimes D^a)\otimes L_g^{ar+1}.
\end{eqnarray*}
Now, tensorization with $L_g^{ar+1}$ is a power of the identity endofunctor,
as noted above. Hence, $\lambda\otimes (\wedge^r\lambda)^a$ is a $\bar{G}$-linearization for $\G\otimes D^a$.
\end{proof}

We proceed without any assumption on the parity of $d$. Set $U:=X\times\hat{X}\setminus \tilde{\Theta}$ and let $\tilde{\iota}:U\rightarrow X\times\hat{X}$ be the inclusion. 
Clearly, $U$ is $\bar{G}$-invariant. The reflexive sheaf $\E$ of Proposition \ref{prop-local-freeness}
is locally free over $U$ and we denote by $\PP(\tilde{\iota}^*\E)$ the projectivization of the restriction of $\E$ to $U$. 
Let $\pi:\PP(\tilde{\iota}^*\E)\rightarrow U$ be the natural projection. 

Set $G^\vee:=\{(x_g,L_g^{-1}) \ : \ g\in G\}$, so that the functor $L_g^{-1}\otimes\tau_{x_g,*}$ belongs to $G^\vee$, if and only if $L_g\otimes\tau_{x_g,*}$ belongs to $G$. 
The $G$-linearization $\lambda$ of $\G$ induces a $G$-linearization on the first sheaf cohomology $\G^1$ of $\G$, which we denote by $\lambda$ as well.
The isomorphisms $\lambda_g:L_g\otimes\tau_{x_g,*}(\G^1)\rightarrow \G^1$ yield the isomorphisms
$(\lambda_g^\vee)^{-1}:L_g^{-1}\otimes\tau_{x_g,*}(\E)\rightarrow \E$, as $\E$ is defined to be $(\G^1)^*$. In particular, we get a $G^\vee$-linearization $\nu:=(\lambda^\vee)^{-1}$ of $\E$.
The linearization of
$\E$ induces a $\bar{G}$-action on $\PP(\tilde{\iota}^*\E)$, so that $\pi$ is $\bar{G}$-equivariant. 
Set $Y:=[X\times\hat{X}]/\bar{G}$ and let $q:X\times \hat{X}\rightarrow Y$ be the quotient morphism. 
Set $Y^0:=U/\bar{G}$ and let $\iota:Y^0\rightarrow Y$ be the inclusion.
Then $\PP(\tilde{\iota}^*\E)$ descends to a projective bundle $B$ over $Y^0$. Denote by $\A$ the Azumaya algebra of $B$. 
It extends to a reflexive sheaf over $Y$, by the Main Theorem of \cite{siu}. We denote the extension to the whole of $Y$ by $\A$ as well.
Then $\A$ is a coherent sheaf of associative algebras with a unit over $Y$, locally isomorphic to the sheaf of endomorphisms of a reflexive sheaf,  and $q^*\A$ is naturally isomorphic to $\SheafEnd(\E)$. Let $\A_0$ be the kernel of the trace homomorphism $tr:\A\rightarrow \StructureSheaf{Y}$.
We have the direct sum decomposition $\A:=\A_0\oplus \StructureSheaf{Y}$, where the second direct summand is generated by the unit. Set $r:=\rank(\E)$. 

\begin{lem}
\label{lemma-E-descends-to-a-reflexive-twisted-sheaf-B-over-Y}
There exists a reflexive sheaf $\B$ over $Y$ twisted by a $2$-cocycle with coefficients in $\mu_r$ and with trivial determinant, which is locally free over $Y_0$ and such that 
$\PP(\iota^*\B)\cong B$.
\end{lem}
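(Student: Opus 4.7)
The plan is to first build the twisted sheaf on $Y^0$, where the projective bundle $B$ is actually defined, and then extend it to a reflexive twisted sheaf on $Y$ using the fact that the complement $Y\setminus Y^0$ has codimension two. Recall that $q$ is \'{e}tale (as $\bar{G}$ acts freely by translations) and that $\tilde{\Theta}$ is a union of smooth surfaces in the $6$-fold $X\times\hat{X}$, so $Y\setminus Y^0$ has codimension $2$ in the smooth variety $Y$.

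First I would choose an open cover $\U=\{V_i\}_{i\in I}$ of $Y^0$ in the analytic topology, sufficiently fine so that each $V_i$ is biholomorphic to a polydisc, the finite intersections are simply connected, and $B$ restricts over each $V_i$ to the projectivization $\PP(E_i)$ of a rank $r$ locally free sheaf $E_i$. Because $\Pic(V_i)$ is trivial we may normalize each $E_i$ to have $\det(E_i)\cong\StructureSheaf{V_i}$ by tensoring with a suitable line bundle. Over each intersection $V_{ij}$ there exists an isomorphism $\varphi_{ij}:E_j|_{V_{ij}}\to E_i|_{V_{ij}}$ inducing the identity on $\PP(E_j)\cong B|_{V_{ij}}\cong\PP(E_i)$, determined up to a nonvanishing scalar factor on $V_{ij}$. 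Since both sides have trivial determinant, $\det(\varphi_{ij})\in\StructureSheaf{V_{ij}}^\times$, and $V_{ij}$ simply connected, we can (after rescaling $\varphi_{ij}$ by an $r$-th root of $\det(\varphi_{ij})$) arrange $\det(\varphi_{ij})\equiv 1$. Then $\varphi_{ij}\varphi_{jk}\varphi_{ki}$ acts as a scalar $\theta_{ijk}\in\StructureSheaf{V_{ijk}}^\times$ whose $r$-th power is $1$, hence $\theta:=(\theta_{ijk})\in Z^2(\U,\mu_r)$. The data $\B_0:=(\{E_i\},\{\varphi_{ij}\})$ thus defines a $\theta$-twisted locally free sheaf on $Y^0$ with trivial determinant and $\PP(\B_0)\cong B$.

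Next I would extend $\B_0$ to $Y$. Using the same construction applied to an analytic open cover $\U'=\{W_\alpha\}$ of $Y$ refining $\U$ on $Y^0$, we obtain an extension $\theta'\in Z^2(\U',\mu_r)$ of $\theta$: since $\mu_r$ is a locally constant sheaf and $Y\setminus Y^0$ has codimension $\geq 2$ in the smooth variety $Y$, any locally constant function on a punctured connected analytic neighborhood extends over the puncture, so the cocycle extends. Then I would set $\B:=(\iota_*\B_0)^{\vee\vee}$, the reflexive hull of the pushforward $\iota_*\B_0$, which is canonically a $\theta'$-twisted sheaf on $Y$; by the Main Theorem of \cite{siu} and the codimension $\geq 2$ condition, $\B$ is a reflexive twisted sheaf which agrees with $\B_0$ on $Y^0$. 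Since $\det(\B)$ is a reflexive rank-one sheaf on the smooth variety $Y$, hence a line bundle, and its restriction to $Y^0$ is trivial, triviality extends across the codimension-$2$ locus $Y\setminus Y^0$, yielding $\det(\B)\cong\StructureSheaf{Y}$.

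The step that requires the most care is the normalization $\det(\varphi_{ij})=1$ producing a cocycle with values in $\mu_r$ rather than merely in $\StructureSheaf{}^\times$; this rests on the simple connectedness of intersections $V_{ij}$ in the chosen cover, which in turn relies on sufficient fineness of the cover in the analytic topology, so the argument is genuinely analytic rather than algebraic. Everything else—reflexive extension across codimension $2$, triviality of the determinant, and the identification $\PP(\iota^*\B)\cong B$—follows formally from the standard Siu extension theorem and our constructions.
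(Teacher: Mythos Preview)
Your proof is correct, with one numerical slip: since $\tilde{\Theta}$ is a union of smooth \emph{surfaces} in the $6$-fold $X\times\hat{X}$, the complement $Y\setminus Y^0$ has codimension $4$, not $2$. This does not affect the argument, as only codimension $\geq 2$ is used. The phrase ``using the same construction'' in your extension step is also slightly misleading, since $B$ exists only over $Y^0$; what you actually use is that for a fine cover $\{W_\alpha\}$ of $Y$ the triple intersections $W_{\alpha\beta\gamma}\cap Y^0$ remain connected, so the locally constant $\mu_r$-valued cocycle extends.

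Your route differs from the paper's. You work intrinsically on $Y$: apply the standard lift of a projective bundle to a $\mu_r$-twisted locally free sheaf with trivial determinant (essentially Example~\ref{example-a-lift-of-a-projective-bundle-to-a-mu-r-twisted-sheaf}) on $Y^0$, then extend across the high-codimension locus via Siu. The paper instead works upstairs on $X\times\hat{X}$: it chooses an open cover of $Y$ whose preimage under $q$ splits into sheets, pushes the reflexive sheaf $\E$ down sheet-by-sheet, and glues the local pieces using the $G^\vee$-linearization $\nu$ of $\E$. This produces a twisted reflexive sheaf $\B'$ directly on all of $Y$ (no separate extension step), and only afterwards is the cocycle normalized to $\mu_r$ by dividing out $r$-th roots of the determinant transitions. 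The paper's approach makes the relation $q^*\B\cong\E\otimes(\text{twisted line bundle})$ transparent by construction, which is exactly what is invoked immediately afterward in equation~(\ref{eq-at-tw-B-versus-at-E}); your construction gives the same relation (since $q^*B\cong\PP(\tilde{\iota}^*\E)$ and both sides are reflexive extensions from $U$), but this deserves an explicit remark. Your argument is more portable---it uses only $B$, not its specific origin---while the paper's avoids the extension step entirely by exploiting that $\E$ is already defined and reflexive on the whole of $X\times\hat{X}$.
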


\begin{proof}
\underline{Step 1:} We first construct a twisted reflexive sheaf $\B'$ which is locally free over $Y_0$ and such that 
$\PP(\iota^*\B')\cong B$. Choose a covering $\bar{\U}:=\{\bar{U}_j\}_{j\in \bar{I}}$ of $Y$, open in the analytic topology and consisting of subsets biholomorphic to open polydiscs, such that $q^{-1}(\bar{U}_j)$
is a disjoint union of open subsets to each of which $q$ restricts as an injective map, and such that if $U_i$ is a connected component of $q^{-1}(\bar{U}_{\bar{i}})$ and
$U_j$ is a connected component of $q^{-1}(\bar{U}_{\bar{j}})$ and $\bar{U}_{\bar{i}}\cap \bar{U}_{\bar{j}}\neq \emptyset$, then there exists a unique element $g_{ij}\in\bar{G}$, such that $\tau_{g_{ij}}(U_j)$ intersects $U_i$.
Let $L_{ij}$ be the unique line bundle, such that $(g_{ij},L_{ij})$ is an element of $G^\vee$ corresponding to the autoequivalence $L_{ij}\otimes\tau_{g_{ij},*}$.
Denote by $\U:=\{U_i\}_{i\in I}$ the open covering of $X\times X$ obtained by the connected components of $q^{-1}(U_j)$, $j\in\bar{I}$. Given $i\in I$, denote by $\bar{i}\in\bar{I}$ the index of $q(U_i)$. Let $\V:=\{V_i\}_{i\in I}$ be the open covering of $Y$ with $V_i=\bar{U}_{\bar{i}}$. Note that the restriction $q_i:U_i\rightarrow V_i$ of $q$ is biholomorphic. If non-empty, the intersection  $V_{ij}:=V_i\cap V_j$ is the isomorphic image of $U_i\cap \tau_{g_{ij}}(U_j)$ via $q$.

Let $\E_i$ be the restriction of $\E$ to $U_i$. Set $\B'_i:=q_{i,*}(\E_i)$. If $V_{ij}\neq\emptyset$, let
$
\rho_{ij}:\B'_{\restricted{j}{V_{ij}}}\rightarrow \B'_{\restricted{i}{V_{ij}}}
$
be the composition
\[
\B'_{\restricted{j}{V_{ij}}}=
q_{j,*}(\E_j{\restricted{)}{V_{ij}}}\RightArrowOf{q_*(\bar{\lambda}_{ij})}
q_{j,*}(\tau_{g_{ji,*}}(\E_i)\restricted{)}{V_{ij}}=
q_{i,*}(\E_i\restricted{)}{V_{ij}}=
\B'_{\restricted{i}{V_{ij}}}, 
\]
where 
$\bar{\lambda}_{ij}:\restricted{\E}{U_j\cap\tau_{g_{ji}}(U_i)}\rightarrow (\tau_{g_{ji},*}\E\restricted{)}{U_j\cap\tau_{g_{ji}}(U_i)}$ is induced by the 
isomorphism $\nu_{g_{ji}}:\E\rightarrow (\tau_{g_{ji},*}\E)\otimes L_{ji}$ of the 
$G^\vee$-linearization 
of $\E$ and by a choice of a trivialization of $L_{ji}$ over $U_j$. The sheaves $\B'_i$ are clearly locally free over $V_i\cap Y_0$. If $V_{ijk}:=V_i\cap V_j\cap V_k$ is non-empty, then 
the composition $\rho_{ij}\rho_{jk}\rho_{ki}$ is an invertible holomorphic function times the identity automorphism, since $g_{ij}g_{jk}g_{ki}$ is the identity in $\bar{G}$ and so $\bar{\lambda}_{ij}\circ\tau_{g_{ij}}^*(\bar{\lambda}_{jk})\circ\tau_{g_{ik}}^*(\bar{\lambda}_{ki})$ is an invertible holomorphic function times the identity automorphism. 
The twisted sheaf $\B':=\{\B'_i,\rho_{ij}\}_{i,j\in I}$ is locally free over $Y_0$ and
the projectivisation of $\iota^*\B'$ is naturally isomorphic to $B$.

\underline{Step 2:} We change the gluing transformations of the twisted sheaf $\B'$ by a $1$-cochain with coefficients in $\StructureSheaf{Y}^\times$ to obtain a twisted sheaf $\B$ twisted by a $2$-cocycle with coefficients in $\mu_r$. Choose trivializations $\psi_i:\det(\B'_i)\rightarrow \StructureSheaf{U_i}$.
Note that the determinant line bundle is defined for every complex of coherent sheaves, and for our reflexive sheaves $\det(\B'_i)$ is simply the pushforward of 
$\wedge^r(\B'_{\restricted{i}{V_i\cap Y_0}})$ from $V_i\cap Y_0$ to $V_i$. We get the holomorphic functions
$\eta_{ij}:=\psi_i\circ \det(\rho_{ij})\circ \psi_j^{-1}$ over $V_{ij}$. 
We have
\begin{eqnarray*}
\eta_{ij}\eta_{jk}\eta_{ki}&=&
\psi_i\circ \det(\rho_{ij})\psi_j^{-1}
\psi_j\circ \det(\rho_{jk})\psi_k^{-1}
\psi_k\circ \det(\rho_{ki})\psi_i^{-1}
\\
&=&\psi_i\det(\rho_{ij}\rho_{jk}\rho_{ki})\psi_i^{-1}=\det(\rho_{ij}\rho_{jk}\rho_{ki}).
\end{eqnarray*}
Choose $r$-th roots $\tilde{\eta}_{ij}$ of $\eta_{ij}$ (here we further assume that the open covering $\bar{\U}$ was chosen so that $\bar{U}_{ij}$ are simply connected or empty). Set $\rho''_{ij}:=\tilde{\eta}_{ij}^{-1}\rho_{ij}$. Then
$\rho''_{ij}\rho''_{jk}\rho''_{ki}=(\tilde{\eta}_{ij}\tilde{\eta}_{jk}\tilde{\eta}_{ki})^{-1}\rho_{ij}\rho_{jk}\rho_{ki}$ is again a holomorphic invertible function times the identity automorphism, and 
\begin{equation}
\label{eq-cocycle-condition-for-det-B'}
\det(\rho''_{ij})\det(\rho''_{jk})\det(\rho''_{ki})=(\tilde{\eta}_{ij}\tilde{\eta}_{jk}\tilde{\eta}_{ki})^{-r}\det(\rho_{ij}\rho_{jk}\rho_{ki})=
(\eta_{ij}\eta_{jk}\eta_{ki})^{-1}\det(\rho_{ij}\rho_{jk}\rho_{ki})
=1.
\end{equation}
Hence, $\rho''_{ij}\rho''_{jk}\rho''_{ki}=\theta_{ijk}\one$, where $\one$ is the identity endomorphism of $\B'_{\restricted{i}{V_{ijk}}}$ and 
the $2$-cocycle $\{\theta_{ijk}\}$ has coefficients in $\mu_r$. We set 
$\B:=\{\B'_i,\rho_{ij}''\}_{i,j\in I}$.

\underline{Step 3:} 
The gluing transformations of  $\det(\B)=\{\wedge^r\B'_i,\det(\rho_{ij}'')\}$ satisfy the co-cycle condition, by equation (\ref{eq-cocycle-condition-for-det-B'}).
Hence, $\det(\B)$ is an untwisted line bundle.
We have the equality 
$\psi_i\circ \eta_{ij}^{-1}\det(\rho_{ij}))\circ \psi_j^{-1}=1$, by definition of $\eta_{ij}$. Hence, the trivializations $\psi_i$ from Step 2 of the restrictions $\det(\B'_i)$ of $\det(\B)$ to $V_i$ glue to an isomorphism $\psi:\det(\B)\rightarrow \StructureSheaf{Y}$.
\end{proof}


Let $\B$ be a twisted sheaf as in Lemma \ref{lemma-E-descends-to-a-reflexive-twisted-sheaf-B-over-Y}.
We have the equality
\begin{equation}
\label{eq-at-tw-B-versus-at-E}
q^*at^{}_\B=at_\E-\frac{c_1(\E)}{r}\cdot id_\E,
\end{equation} 
by Equation (\ref{eq-atiya-class-of-twisted-B-versus-that-of-E}).

\begin{rem}
If $d$ is even, then the Brauer class of $\B$ in $H^2(Y,\StructureSheaf{Y}^*)$ is trivial, since $(\G\otimes D^a,\lambda\otimes(\wedge^r\lambda)^a)$ descends
to an object in $D^b(Y)$, whose derived dual $\bar{\E}$ is represented by an untwisted reflexive sheaf, by Lemma \ref{lemma-if-d-is-even-E-descends}.
In this case $\B$ can be chosen to be associated to $\bar{\E}$ via Construction \ref{construction-twisted-sheaf-with-trivial-determinant-associated-to-a-coherent-sheaf}.
\end{rem}

Consider the exponential Atiyah class $\exp(at^{}_\B)$ with graded summands $at_k(\B)$ in 
$\Hom(\B,\B\otimes\Omega_Y^k[k])$.
Note that $at^{}_0(\B)$ is $r$ times the unit section and $at^{}_1(\B)=at^{}_\B$. Note that $\kappa(\B)$ is the trace of $\exp(at^{}_\B)$ and $q^*\kappa(\B)=ch(\E)\exp(-c_1(\E)/r)$, by equation (\ref{eq-at-tw-B-versus-at-E}).

Set $M:=X\times\hat{X}$ and $\lambda:=c_1(\E)/r$.
Let $\Contract\exp(\lambda):H^1(TM)\rightarrow HT^2(M)$ be the composition 
\begin{equation}
\label{eq-action-of-tensorization-by-a-line-bundle-on-HT}
H^1(TM)\RightArrowOf{\subset} 
\begin{array}{c}
H^2(\StructureSheaf{M})
\\
H^1(TM)
\\
H^0(\wedge^2TM)
\end{array}
\LongRightArrowOf{\Contract\left(
\begin{array}{ccc}
1 & \lambda & \lambda^2/2
\\
0 & 1 & \lambda
\\
0 & 0 & 1
\end{array}
\right)}
\begin{array}{c}
H^2(\StructureSheaf{M})
\\
H^1(TM)
\\
H^0(\wedge^2TM)
\end{array}
\end{equation}
which has image in $H^2(\StructureSheaf{M})\oplus H^1(TM)$.

\begin{lem}
\label{lemma-action-of-tensorization-by-a-lb-on-HT}
The following diagram is commutative
\[
\xymatrix{
HT^2(M) \ar[r] \ar[d]_{\Contract \exp(\lambda)} & \End(H\Omega^*(M))\ar[d]^{Ad_{\exp(-\lambda)}} &\zeta\ar@{|->}[d]
\\
HT^2(M)\ar[r] & \End(H\Omega^*(M)) & 
\exp(-\lambda)\cup[\zeta\circ(\exp(\lambda)\cup(\bullet))]
}
\]
where the horizontal arrows are defined via the $HT^*(M)$-module structure of $H\Omega^*(M)$ and the right vertical arrow is the conjugation by cup product with $\exp(-\lambda)$.
\end{lem}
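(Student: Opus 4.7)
The plan is to verify the stated identity by treating it as a purely formal identity in the module structure of $H\Omega^*(M)$ over $HT^*(M)$, and then reducing everything to a single graded commutation relation between contraction with a polyvector and cup product with a $1$-form. Since both the top and bottom horizontal arrows in the diagram are $\CC$-linear in $\eta$, it suffices to check the claim on each summand of the decomposition $HT^2(M)=H^2(\StructureSheaf{M})\oplus H^1(TM)\oplus H^0(\wedge^2TM)$ separately.

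The key ingredient is the Cartan-type identity
\[
\iota_\xi\circ L_\lambda \;=\; L_\lambda\circ \iota_\xi \;\pm\; \iota_{\lambda\Contract\xi},
\]
valid for a polyvector field $\xi$ and a $1$-form $\lambda$, which follows at the level of \v{C}ech cocycles from the Leibniz rule $\iota_v(\lambda\wedge\alpha)=\lambda(v)\alpha-\lambda\wedge\iota_v\alpha$ together with graded-commutativity of the cup product on $H^*(\Omega^*)$. Iterating this relation and collecting terms produces the telescoping identity
\[
\iota_\eta\circ L_{\exp(\lambda)}\;=\;L_{\exp(\lambda)}\circ \iota_{\exp(\lambda)\Contract\eta},
\]
where the right-hand side $\exp(\lambda)\Contract\eta$ is exactly the image described by the unipotent matrix in (\ref{eq-action-of-tensorization-by-a-line-bundle-on-HT}). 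Multiplying both sides by $L_{\exp(-\lambda)}$ on the left gives precisely the commutativity of the diagram.

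Concretely, on the summand $H^2(\StructureSheaf{M})$ the claim is tautological, since $\lambda\Contract a=0$ and $a$ acts by cup product, which commutes with $L_{\exp(\pm\lambda)}$. On the summand $H^0(\wedge^2TM)$ the polyvector degree is $2$, so iterating the commutation relation twice produces exactly the three terms $c+\lambda\Contract c+\tfrac{1}{2}\lambda^2\Contract c$ appearing in the top row of the matrix. On $H^1(TM)$ one iterates once, matching the two terms $b+\lambda\Contract b$ in the middle row; here the exponential series truncates since $\lambda^{\,2}\Contract b=0$ for degree reasons.

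The main obstacle will be bookkeeping the Koszul signs arising both from the de Rham wedge grading and from the \v{C}ech cohomology grading in the module action of $HT^*(M)$ on $H\Omega^*(M)$. These must conspire so that the identity is uniform across the three summands of different polyvector parities; I expect this to follow from the specific sign conventions implicit in the Hochschild/HKR identification for abelian varieties (where $\mathrm{td}(M)=1$ so no Duflo twist intervenes), but it is the step that requires the most care.
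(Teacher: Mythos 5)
Your proof is correct and follows essentially the same strategy as the paper: decompose $HT^2(M)$ by polyvector degree and apply the Leibniz rule for contraction against cup product with $\exp(\lambda)$, checking the identity summand by summand (the paper writes out the $H^1(TM)$ case explicitly and declares the $H^0(\wedge^2TM)$ case similar, while you package the same iteration into the telescoping formula $\iota_\eta\circ L_{\exp\lambda}=L_{\exp\lambda}\circ\iota_{\exp(\lambda)\Contract\eta}$). One small slip: the three terms $c+\lambda\Contract c+\tfrac12\lambda^2\Contract c$ and the two terms $b+\lambda\Contract b$ correspond to the third and second \emph{columns} of the unipotent matrix (the images of $c$ and $b$), not to its rows.
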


\begin{proof}
An element $\zeta\in H^2(\StructureSheaf{M})\subset HT^2(M)$ is mapped to itself by $\Contract\lambda$ and similarly, its action on $H\Omega^*(M)$  commutes with cup product with $\exp(-\lambda)$ and so it is mapped to itself by $Ad_{\exp(-\lambda)}$. Given $\zeta\in H^1(TM)$ and $\gamma\in H\Omega^*(M)$ we have
\begin{eqnarray*}
(Ad_{\exp(-\lambda)}(\zeta))(\gamma)&=&\exp(-\lambda)\cup\left\{\zeta\Contract(\exp(\lambda)\cup\gamma)\right\}
\\
&=& \exp(-\lambda)\cup\left\{
\zeta\Contract \left[
\gamma+\lambda\cup\gamma+\frac{\lambda^2}{2}\cup\gamma+\frac{\lambda^3}{3!}\cup\gamma\cdots
\right]
\right\}
\\
&=&
\exp(-\lambda)\cup\left\{
\exp(\lambda)\cup(\zeta\Contract\gamma)+\exp(\lambda)\cup(\zeta\Contract \lambda)\cup\gamma
\right\}
\\
&=& (\zeta\Contract \lambda)\cup\gamma +(\zeta\Contract\gamma).
\end{eqnarray*}
The proof for $\zeta\in H^0(\wedge^2TM)$ is similar.
\end{proof}

\begin{lem}
\label{lemma-huge-diagram-is-commutative}
The following diagram is commutative.
\begin{equation}
\label{diagram-huge}
\xymatrix{
HT^2(M) \ar@/_8pc/[dddddrr]_{\Contract ch(\E)} \ar[rrrr]^{ob_\E:=\exp(at_\E)} &&&& \Ext^2(\E,\E)^{G^\vee} \ar@/^8pc/[dddddll]^{\sigma_\E}
\\
&H^1(TM) \ar[ul]^{\Contract\exp(-\lambda)}_{(\ref{eq-action-of-tensorization-by-a-line-bundle-on-HT})} \ar[rr]^{at_\E-\lambda\cdot id_\E} \ar@/_10pc/[dddr]^{\Contract\kappa(\E)} &&\Ext^2_0(\E,\E)^{G^\vee} \ar[ur]_{\subset} \ar@/^10pc/[dddl]_{q^*\sigma^{}_\B}
\\
& H^1(TY) \ar[u]_{q^*} \ar[rr]^{at^{}_\B} \ar[dr]_{\Contract\kappa(\B)} && H^2(Y,\A_0\!) \ar[u]^{q^*} \ar[dl]^{\sigma^{}_\B}
\\
&& \prod_{q=1}^4H^{q+2}(\Omega_Y^q) \ar[d]_{q^*}
\\
&& \prod_{q=1}^4H^{q+2}(\Omega_M^q) \ar[d]_{\cup\exp(\lambda)}
\\
&& \prod_{q=0}^4H^{q+2}(\Omega_M^q)
}
\end{equation}
\end{lem}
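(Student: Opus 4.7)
\textit{Plan.} The large diagram (\ref{diagram-huge}) decomposes into several commuting pieces that I will verify in turn. First, the outermost triangle involving $HT^2(M)$, $\Ext^2(\E,\E)^{G^\vee}$, and $\prod_{q=0}^{4} H^{q+2}(\Omega^q_M)$, with edges $ob_\E = \exp(at_\E)$, $\sigma_\E$, and $\Contract ch(\E)$, is commutative by diagram (\ref{eq-extended-diagram-semi-regularity}) applied to $\E$, restricted to the $G^\vee$-invariant subspace. The analogous inner triangle at $Y$, with vertices $H^1(TY)$, $H^2(Y,\A_0)$, and $\prod_{q=1}^{4} H^{q+2}(\Omega^q_Y)$ and edges $at^{}_\B$, $\sigma^{}_\B$, and $\Contract \kappa(\B)$, is the twisted version (\ref{commutative-diagram-of-semiregularity-map-of-projective-bundle}) of the same identity for the projective bundle $B$, or equivalently, the twisted sheaf $\B$ with trivial determinant.

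Next, the square linking the two levels — reading $q^* \circ at^{}_\B = (at_\E - \lambda \cdot id_\E) \circ q^*$ on $H^1(TY)$ — is precisely the content of Equation (\ref{eq-at-tw-B-versus-at-E}) combined with naturality of the Atiyah class under the étale map $q$. The compatibility of the $\kappa$-contractions $\Contract\kappa(\E) \circ q^* = q^* \circ \Contract\kappa(\B)$ on $H^1(TY)$ follows from the identity $\kappa(\E) = q^*\kappa(\B)$, which in turn comes from taking the trace in the factorization
\begin{equation*}
\exp(at_\E) = \exp(\lambda \cdot id_\E) \cdot \exp(at_\E - \lambda \cdot id_\E),
\end{equation*}
valid because $\lambda \cdot id_\E$ commutes with $at_\E$ in the Yoneda algebra; combined with $Tr\exp(at_\E - \lambda \cdot id_\E) = \exp(-\lambda) \cup ch(\E) = q^*\kappa(\B)$.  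The compatibility of the two semiregularity maps through $q^*$, namely $q^* \circ \sigma^{}_\B = \sigma_\E \circ q^*$ on the image of $H^2(Y,\A_0)$, follows from naturality of the trace under $q$ together with the definition of $\sigma^{}_\B$ via $at^{}_\B$ and of $\sigma_\E$ on $\Ext^2_0(\E,\E)$ via $at_\E - \lambda\cdot id_\E$.

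The final and most delicate piece is the ``recentering'' identity linking the left vertical strip of the diagram: for $\eta \in HT^2(M)$, the outer composition $\Contract ch(\E)$ should factor through $H^1(TM)$ via $\Contract \exp(-\lambda)$, then $\Contract\kappa(\E)$, and finally cup product by $\exp(\lambda)$ to land in the full graded space $\prod_{q=0}^{4} H^{q+2}(\Omega^q_M)$. This is exactly Lemma \ref{lemma-action-of-tensorization-by-a-lb-on-HT}: conjugation by $\cup\exp(\lambda)$ on $H\Omega^*(M)$ intertwines contraction with $\eta$ and contraction with $(\Contract \exp(\lambda))(\eta)$, so inverting gives the desired factorization. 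Combining this with the trace identity $\sigma_\E(\xi) = \sum_q Tr(\xi \cdot at_\E^{\,q}/q!) = \exp(\lambda)\cup\sum_q Tr(\xi \cdot (at_\E - \lambda \cdot id_\E)^q/q!)$ for $\xi \in \Ext^2_0(\E,\E)$ (using again that $\lambda\cdot id_\E$ commutes with everything and the binomial expansion of $at_\E^{\,q} = ((at_\E - \lambda id_\E) + \lambda id_\E)^q$), together with the square and triangles of the previous paragraph, yields commutativity of the two remaining ``upper'' and ``lower'' triangles. The main obstacle is the bookkeeping of the cup-product shifts by $\exp(\pm\lambda)$ across the three horizontal layers, but this becomes formal once the factorization $\exp(at_\E) = \exp(\lambda\cdot id_\E)\cdot \exp(at_\E - \lambda\cdot id_\E)$ and Lemma \ref{lemma-action-of-tensorization-by-a-lb-on-HT} are in hand.
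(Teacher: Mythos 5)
Your decomposition of the diagram is genuinely different from the paper's, and it is worth comparing the two before pointing out the gap.

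The paper proves commutativity of the outer triangle (via diagram (\ref{eq-extended-diagram-semi-regularity})), the upper trapezoid and leftmost square (via the conceptual argument that tensoring by a line bundle $L$ with $c_1(L)=\lambda$ is a derived autoequivalence acting on $HT^2(M)$ as in (\ref{eq-action-of-tensorization-by-a-line-bundle-on-HT}), trivially on $\Ext^2(\E,\E)$, and conjugating $at_{\E\otimes L^{-1}}$ to $at_\E$; then Zariski density to pass from integral $\lambda$ to $\lambda=c_1(\E)/r$), the rightmost square (via (\ref{eq-at-tw-B-versus-at-E})), and declares the three inner squares clear. Crucially, the paper \emph{does not} assume commutativity of the inner triangle for $\B$; Remark \ref{remark-commutativity-of-diagram-of-semiregularity-map-of-projective-bundle} then observes that the twisted-sheaf version of Diagram (\ref{commutative-diagram-of-semiregularity-map-of-projective-bundle}) follows \emph{as a corollary} of Lemma \ref{lemma-huge-diagram-is-commutative}. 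Your proposal inverts this logic: you take the inner triangle (the twisted-sheaf version of (\ref{commutative-diagram-of-semiregularity-map-of-projective-bundle})) as a known input and work outward. You also substitute a direct algebraic identity — the factorization $\exp(at_\E)=\exp(\lambda\cdot id_\E)\exp(at_\E-\lambda\cdot id_\E)$ together with the centrality of $\lambda\cdot id_\E$ and the binomial rearrangement — for the paper's line-bundle-plus-Zariski-density argument. That substitution is a real improvement: it is valid for arbitrary rational $\lambda$ without any density step, and it gives $\kappa(\E)=q^*\kappa(\B)$ and the trace identity $\sigma_\E(\xi)=\exp(\lambda)\cup\sum_q \operatorname{Tr}(\xi\,(at_\E-\lambda\cdot id_\E)^q/q!)$ in one stroke.

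There is, however, a genuine gap. You appeal to the commutativity of the inner triangle for the $\mu_r$-twisted sheaf $\B$ over $Y$ as if it were an independently established fact — ``the twisted version (\ref{commutative-diagram-of-semiregularity-map-of-projective-bundle}) \ldots or equivalently, the twisted sheaf $\B$ with trivial determinant.'' But in this paper, Section \ref{sec-semiregular-projective-bundles} only proves (\ref{commutative-diagram-of-semiregularity-map-of-projective-bundle}) for genuine projective bundles $B$ (which here only exists on $Y^0=Y\setminus q(\tilde\Theta)$); for the twisted sheaf $\B$ defined over all of $Y$, Section \ref{sec-semiregular-twisted-sheaves} explicitly defers the corresponding commutativity to Remark \ref{remark-commutativity-of-diagram-of-semiregularity-map-of-projective-bundle}, which in turn relies on the very lemma you are proving. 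So either your proof is circular, or you need a separate argument showing that the Buchweitz–Flenner-style computation for projective bundles extends across the codimension-$4$ locus $q(\tilde\Theta)$ to the reflexive twisted sheaf $\B$ — and the relevant cohomology groups $H^{q+2}(\Omega^q_Y)$ for $q\geq 1$ do not restrict isomorphically to $Y^0$, so this is not immediate. The paper's choice to prove the outer triangle, the trapezoid, and the bridging squares, and to \emph{derive} the inner triangle, is precisely how it sidesteps this issue. You should either adopt that ordering of the faces, or supply an independent proof of the inner triangle for $\B$ (e.g., by checking directly that the Buchweitz–Flenner argument goes through for $\mu_r$-twisted reflexive sheaves with trivial determinant). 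Once the inner triangle is sourced properly, your algebraic factorization handles the remaining pieces correctly.
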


\begin{proof}
The image of $ob_\E$ in $\Ext^2(\E,\E)$ is $\Ext^2(\E,\E)^{G^\vee}$, by Lemma \ref{lemma-image-of-ob-E-is-the-G-invariant-subspace}.
The commutativity of the outer triangle of the diagram follows from the commutativity of diagram (\ref{eq-extended-diagram-semi-regularity}).
 If the class $\lambda$ is the first Chern class of a line bundle $L$, then the right arrow in (\ref{eq-action-of-tensorization-by-a-line-bundle-on-HT}) is the action
of the derived equivalence of tensorization by $L$ on $HT^2(M)$, by Lemma \ref{lemma-action-of-tensorization-by-a-lb-on-HT}. In that case the functor $L^{-1}\otimes(\bullet)$ acts on $HT^2(M)$ as in Lemma \ref{lemma-action-of-tensorization-by-a-lb-on-HT}, by \cite{calaque-et-al},  and trivially on $\Ext^2(\E,\E)$, as we identify the latter with $\Ext^2(\E\otimes L^{-1},\E\otimes L^{-1})$, and the functor $L\otimes (\bullet)$ conjugates the Atiyah class $at_{\E\otimes L^{-1}}=at_\E-\lambda\cdot id_\E$ to $at_\E$. 
The commutativity of the upper trapezoid of Diagram (\ref{diagram-huge}) follows in this case. If $\lambda$ is not integral, then the commutativity of the upper trapezoid follows from the integral case, as the vanishing of the difference of the two linear homomorphisms
in $\Hom(H^1(TM),\Ext^2(\E,\E))$ for integral values of $\lambda$ implies the vanishing of the differences over the Zariski closure of the $\Pic(M)$-orbits in $\Hom(H^1(TM),\Ext^2(\E,\E))$. 

 The cohomological action of $L\otimes(\bullet)$ on $H^*(M,\CC)$ is multiplication by $\exp(\lambda)$.
 When $\lambda=c_1(L)$, for some line bundle $L$, and 
$\zeta\in HT^2(M)$  Lemma \ref{lemma-action-of-tensorization-by-a-lb-on-HT} and the equality $ch(\E)=\exp(\lambda)\cup\kappa(\E)$ yield the equality
 \[
 \exp(-\lambda)\cup(\zeta\Contract ch(\E))=(\zeta\Contract \exp(\lambda))\Contract \kappa(\E).
 \]
 Substitute $\zeta\Contract \exp(-\lambda)$ for $\zeta$ above and multiply on the left by $\exp(\lambda)$  to get the equivalent equality
 \[
(\zeta\Contract \exp(-\lambda))\Contract ch(\E)=\exp(\lambda)\cup( \zeta\Contract \kappa(\E)).
 \]
 The commutativity of the leftmost square follows in this case. It follows in general, by the argument used above taking the  Zariski closure of the $\Pic(M)$-orbit. 
 
 The commutativity of the rightmost square follows from Equation (\ref{eq-at-tw-B-versus-at-E}).
The commutativity of the three inner squares is clear. 
\end{proof}

\begin{rem}
\label{remark-commutativity-of-diagram-of-semiregularity-map-of-projective-bundle}
Note that Diagram (\ref{commutative-diagram-of-semiregularity-map-of-projective-bundle}) appears as the inner triangle in the above diagram. 
Lemma \ref{lemma-huge-diagram-is-commutative} provides an alternative proof for the commutativity of Diagram (\ref{commutative-diagram-of-semiregularity-map-of-projective-bundle}), as it
follows from the commutativity of the outer triangle and the six squares in Diagram (\ref{diagram-huge}), which implies the commutativity of the intermediate and inner triangles (as all arrows labeled $q^*$ are isomorphisms and the bottom vertical arrow is injective). 
\end{rem}

\begin{lem}
\label{lemma-semiregularity-of-twisted-sheaf-B}
The twisted reflexive sheaf $\B$ is semiregular.
\end{lem}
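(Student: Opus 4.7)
The plan is to perform a diagram chase in the commutative Diagram \eqref{diagram-huge} of Lemma \ref{lemma-huge-diagram-is-commutative}, using the fact that the semiregularity map of $\E$ is injective on the image of the obstruction map $ob_\E$, and then to pull this injectivity down to $Y$ via the quotient morphism $q:M\to Y$.

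First I would verify that $\sigma_\E$ restricts to an injective map on $\Ext^2(\E,\E)^{G^\vee}=\mathrm{image}(ob_\E)$. The outer triangle of Diagram \eqref{diagram-huge} states $\sigma_\E\circ ob_\E=(\,\bullet\,)\Contract ch(\E)$, while Lemma \ref{lemma-kernel-of-ob-E-is-annihilator-of-ch_E}(\ref{lemma-item-kernel-of-ob-E-is-annihilator-of-ch_E}) (applied to $\G$ and transported to $\E=\G^\vee[-1]$ via Remark \ref{remark-all-the-results-for-E-hold-for-G} and derived duality) gives $\ker(ob_\E)=\ker((\,\bullet\,)\Contract ch(\E))$. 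Hence $\sigma_\E$ factors to an injective map from $\mathrm{image}(ob_\E)=\Ext^2(\E,\E)^{G^\vee}$, exactly as recorded in Remark \ref{remark-semiregularity-map-restricts-as-an-injective-map-to-the-image-of-ob}. In particular, the composition of $\sigma_\E$ with the inclusion $\Ext^2_0(\E,\E)^{G^\vee}\subset \Ext^2(\E,\E)^{G^\vee}$ is injective.

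Next I would analyse the vertical arrows labeled $q^*$ that appear on the right half and at the bottom of Diagram \eqref{diagram-huge}. Since $\bar G$ acts freely on $M$ and $q:M\to Y$ is an étale Galois cover with group $\bar G$, we have canonical isomorphisms $H^i(Y,\F)\cong H^i(M,q^*\F)^{\bar G}$ for any coherent sheaf $\F$ on $Y$. Applied to $\F=\A_0$ (whose pullback is $\SheafEnd_0(\E)$, since the $L_g$-twists cancel in $\SheafEnd(\E)$) and to $\F=\Omega^q_Y$ (pulling back to $\Omega^q_M$), this shows that $q^*: H^2(Y,\A_0)\to \Ext^2_0(\E,\E)^{G^\vee}$ is an isomorphism, using Lemma \ref{lemma-the-projection-of-G-to-bar-G-is-an-isomorphism} to identify the $\bar G$-action with the $G^\vee$-action on $\Ext^*(\E,\E)$; and that $q^*:H^{q+2}(\Omega^q_Y)\to H^{q+2}(\Omega^q_M)$ is injective. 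The bottom arrow $\cup\exp(\lambda)$ of Diagram \eqref{diagram-huge} is obviously an injection (in fact an isomorphism onto the summand with $q\ge 1$).

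Finally I would combine these: suppose $x\in H^2(Y,\A_0)$ satisfies $\sigma^{}_\B(x)=0$. Chasing the bottom-right portion of Diagram \eqref{diagram-huge} clockwise from $H^2(Y,\A_0)$ gives $\bigl(\cup\exp(\lambda)\bigr)\circ q^*\circ\sigma^{}_\B(x)=0$, while chasing counter-clockwise yields $\sigma_\E(q^*x)$, where $q^*x$ is regarded as an element of $\Ext^2_0(\E,\E)^{G^\vee}\subset \Ext^2(\E,\E)^{G^\vee}$. Commutativity thus forces $\sigma_\E(q^*x)=0$; by the injectivity of $\sigma_\E$ on $\Ext^2(\E,\E)^{G^\vee}$ established in the first step, $q^*x=0$, and by the injectivity (in fact, an isomorphism) of $q^*$ on $H^2(Y,\A_0)$ we conclude $x=0$. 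This shows $\sigma^{}_\B$ is injective on the traceless summand $H^2(Y,\A_0)$ of $\Ext^2(\B,\B)$. Since the scalar summand $H^2(Y,\StructureSheaf{Y})$ is detected by $\sigma^{}_{\B,0}=Tr$, which is visibly injective on that summand, the full semiregularity map of $\B$ is injective. The only delicate ingredient is really the identification of the image of $ob_\E$ as $\Ext^2(\E,\E)^{G^\vee}$ (Lemma \ref{lemma-image-of-ob-E-is-the-G-invariant-subspace}) together with the $G^\vee$-descent identification $q^*\A_0\cong\SheafEnd_0(\E)$; once these are in place the argument is a one-line diagram chase.
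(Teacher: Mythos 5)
Your proof is correct and follows essentially the same route as the paper: identify the image of $ob_\E$ with $\Ext^2(\E,\E)^{G^\vee}$, deduce injectivity of $\sigma_\E$ on that image from the equality $\ker(ob_\E)=\ker((\bullet)\Contract ch(\E))$, and chase the right-hand squares of Diagram (\ref{diagram-huge}) using that the $q^*$ arrows are isomorphisms. The only quibble is the closing remark about a ``scalar summand'' $H^2(Y,\StructureSheaf{Y})$: by definition (see Diagram (\ref{commutative-diagram-of-semiregularity-map-of-projective-bundle})) the semiregularity map $\sigma_\B$ of a twisted sheaf is already a map out of the trace-free space $H^2(Y,\A_0)$ with no $\sigma_0$ component, so that extra step is superfluous, though harmless.
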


\begin{proof}
The image of $ob_\E$ in $\Ext^2(\E,\E)$ is $\Ext^2(\E,\E)^{G^\vee}$, by Lemma \ref{lemma-image-of-ob-E-is-the-G-invariant-subspace}.
\hide{ 
It suffices to prove that the image of $ob_{\pi_1^*\Ideal{\cup_{i=1}^n C_i}\otimes\pi_2^*\Ideal{\cup_{j=1}^n\Sigma_j}}$ is
$\Ext^2(\pi_1^*\Ideal{\cup_{i=1}^n C_i}\otimes\pi_2^*\Ideal{\cup_{j=1}^n\Sigma_j})^{G_1\times G_2}$.
The latter reduces to showing that (i) the image of $ob_{\Ideal{\cup_{i=1}^n C_i}}$ is equal to 
$\Ext^2(\Ideal{\cup_{i=1}^n C_i},\Ideal{\cup_{i=1}^n C_i})^{G_1}$, (ii) the analogous statement for $ob_{\Ideal{\cup_{i=1}^n \Sigma_j}}$, 
(iii) that 
\[
\exp(at_{\Ideal{\cup_{i=1}^n C_i}}):HT^0(X)\rightarrow \Ext^1(\Ideal{\cup_{i=1}^n C_i},\Ideal{\cup_{i=1}^n C_i})^{G_1}
\] 
is surjective, and (iv) the analogous statement for $\Ideal{\cup_{j=1}^n \Sigma_j}$.
Now the equality of the image of $ob_{\Ideal{\cup_{i=1}^n C_i}}$ with $\Ext^2(\Ideal{\cup_{i=1}^n C_i},\Ideal{\cup_{i=1}^n C_i})^{G_1}$ follows from Lemma \ref{lemma-Yoneda-algebra-is-generated-in-degree-1} and 
Proposition \ref{prop-obstruction-map-has-rank-6}, establishing (i).
These two results also show that the natural map from the direct summand $H^1(\StructureSheaf{X})$ of $HT^1(X)$ into 
$\Ext^1(\Ideal{\cup_{i=1}^n C_i},\Ideal{\cup_{i=1}^n C_i})$ together with the image of $at_{\Ideal{\cup_{i=1}^n C_i}}:H^1(TX)\rightarrow \Ext^1(\Ideal{\cup_{i=1}^n C_i},\Ideal{\cup_{i=1}^n C_i})$ span $\Ext^1(\Ideal{\cup_{i=1}^n C_i},\Ideal{\cup_{i=1}^n C_i})^{G_1}$, establishing (iii). The proofs of (ii) and (iv) are analogous.
}
Consequently, the restriction of $\sigma_\E$ to $\Ext^2(\E,\E)^{G^\vee}$  is injective, by Remark \ref{remark-semiregularity-map-restricts-as-an-injective-map-to-the-image-of-ob}, Lemma \ref{lemma-kernel-of-ob-E-is-annihilator-of-ch_E}(\ref{lemma-item-kernel-of-ob-E-is-annihilator-of-ch_E}), and Remark \ref{remark-all-the-results-for-E-hold-for-G}. The injectivity of the restriction of $q^*\sigma^{}_{\B}$ to $\Ext^2(\E,\E)^{G^\vee}$ follows by the commutativity of the rightmost square of diagram (\ref{diagram-huge}). The injectivity of $\sigma^{}_\B$ follows by the commutativity of the square with edges $\sigma^{}_\B$ and $q^*\sigma^{}_{\B}$, as the arrows labeled $q^*$ are all isomorphisms.
\end{proof}

\begin{proof}[Proof of Theorem \ref{thm-algebraicity}]
The discriminant of the Hermitian form is $-1$, by Lemma \ref{lemma-the-discriminant-is-minus-1-to-the-n}.
The algebraicity of $\kappa_3(\B)$ follows in a non-empty open subset in moduli from the semiregularity of $\B$, proved in Lemma \ref{lemma-semiregularity-of-twisted-sheaf-B}, and Conjecture \ref{conjecture-semiregular-twisted-sheaves-deform}, as explained in the paragraph preceding the statement of Theorem \ref{thm-algebraicity}. The conjecture was verified in our case of families of abelian varieties in Section 
\ref{sec-proof-of-the-semiregularity-thm-twisted-sheaves-case-for-families-of-abelian-varieties}.
The algebraicity of the Hodge-Weil classes follows from that of $\kappa_3(\B)$ and Theorem \ref{main-theorem-introduction}(\ref{thm-item-K-translates-of-kappa-3-and-h-cube-span-HW}), since given a polarized abelian variety of Weil type $(A,\eta,h)$ the rational endomorphisms $\eta(K)\subset \End_\QQ(A)$
act on $H^*(A,\QQ)$ via algebraic correspondences. 
The locus in moduli where the Hodge-Weil classes are algebraic is a countable union of closed algebraic susbsets \cite[Sec. 4.2]{voisin}. Hence, the locus contains the whole irreducible component of moduli of deformations of $(X\times\hat{X},\eta,h)$.
\end{proof}

\hide{
%
\section{A period domain of non-commutative deformations of $n$-dimensional  compact complex tori}

\begin{question}
\begin{enumerate}
\item
Lemma 
\ref{lemma-Orlov's-equivalence-maps-diagonal-deformations-to-commutative-gerby-ones} suggests that there exists a $15$-dimensional period domain of $6$-dimensional complex tori $Y$ corresponding to commutative-gerby deformations of $X\times\hat{X}$, which in turn correspond to all generalized deformations of $3$-dimensional compact complex tori, at least in the formal neighborhood of each $Y$ of the form $Y=X\times\hat{X}$. This should generalize to $n$-dimensional complex tori $X$ and a $n(2n-1)$-dimensional (over $\CC$) period domain $\D$. When $n=2$ we would get a $6$-dimensional period domain containing the $5$-dimensional period domains of generalized Kummers as codimension $1$ loci where a Mukai vector on $X$ remains algebraic. The $n(2n-1)$-dimensional period domain $\D$ should parametrize $2n$-dimensional compact complex tori $Y$ for which the symmetric bilinear pairing $(\bullet,\bullet)_V$ on $V:=H^1(Y,\ZZ)$ remains of Hodge type and induces an isomorphism $Y\cong \hat{Y}$. The period domain $\D$ should be the adjoint orbit of the complex structure $I$ of $X\times\hat{X}$ under the full real orthogonal group $SO(V_\RR)$. This adjoint orbit is isomorphic to an open subset of the grassmanian of $2n$-dimensional isotropic subspaces of the $4n$-dimensional $V_\RR$ transversal to their complex conjugate. The stabilizer of $I$ is $GL(2n,\RR)$ and so the dimension of $\D$ over $\RR$  is  indeed $\dim(SO(V_\RR))-\dim(GL(2n,\RR))=\frac{4n(4n-1)}{2}-(2n)^2=4n^2-2n=2n(2n-1)$.

Algebraically, the fact that the pairing $(\bullet,\bullet)_V$ remains of Hodge type is encoded by the line bundle $\LB$ over $Y\times Y$, which is the pullback of the Poincar\'{e} line bundle over $Y\times \hat{Y}$ via the isomorphism $Y\cong \hat{Y}$ induced by $(\bullet,\bullet)_V.$
\item
The universal complex torus $\T\rightarrow \D$, of relative dimension $6$,  should be thought of as the universal family of the identity components of the group of autoequivalences of NC-deformations of $3$-dimensional compact complex tori. Here NC means not necessarily commutative.
\item
The period domains of $2n$-dimensioonal abelian varieties of Weil type naturally embed in $\D$ as subloci, where the Chern character of the sheaf $E=\Phi(F^\vee\boxtimes F)$ over $X\times \hat{X}$ remains of Hodge type, for some secant sheaf $F$ on $X$.
\item
A monad on $Y\times Y$, which recovers the derived category of the $3$-dimensional $X'$, whenever $Y$ is $X'\times\hat{X'}$ for some complex torus $X'$, should exist over the moduli space of abelian varieties of Weil type, as the class $ch(F_i)$ remains algebraic. 
The middle characteristic class of the twisted sheaf  in the monad data should restrict to $Y\times\{pt\}$ as an analogue of the Cayley class in the case $n=2$. 
\hide{
\item
For a general (non-commutative) formal deformation $X'$ of $X$ there should be an object $\StructureSheaf{\Delta_{X'}}$
in $D^b(X'\times X')$ which is the Fourier-Mukai kernel of the identity. So over a general gerby complex torus $Y$ in the period domain $\D$, there should be an object which is the image of $\StructureSheaf{\Delta_{X'}}$ via the equivalence $\Phi':D^b(X'\times X')\rightarrow D^b(Y)$ deforming Orlov's equivalence $\Phi:D^b(X\times X)\rightarrow D^b(X\times\hat{X})$.
ndeed, $\mu^*(\StructureSheaf{\Delta_{X}})\cong \mu^{-1}_*(\StructureSheaf{\Delta_{X}})$. Now $\mu^{-1}(\Delta_X)=\{0\}\times X$. So $\Phi(\StructureSheaf{\Delta_{X}})\cong \pi_1^*(\StructureSheaf{0})\boxtimes\Phi_{\P}^{-1}(\StructureSheaf{X})\cong
\StructureSheaf{0\times\hat{0}}[\pm n]$ is a shift of a sky-scraper sheaf of the origin of $X\times\hat{X}$.
}
\item
Every $Y$ in the period domain $\D$ should come with an involutive autoequivalence corresponding to the deformed transposition of the factors $\sigma(x,y)=(y,x)$ of $X\times X$. 
Describe this autoequivalence for $X\times\hat{X}$. 
Setting $\Phi=(id\boxtimes\Psi_{\P^{-1}[n]})\circ\mu^*:D^b(X\times X)\rightarrow D^b(X\times\hat{X})$ as above,
\[
\xymatrix{
D^b(X\times X) \ar[d]^{\sigma_*}\ar[r]^{\Phi} & D^b(X\times\hat{X})\ar[d]_{\Phi\circ\sigma_*\circ\Phi^{-1}}
\\
D^b(X\times X) \ar[r]_{\Phi}&D^b(X\times\hat{X}).
}
\]
Now $\Phi\circ\sigma_*\circ\Phi^{-1}=[(id\boxtimes\Psi_{\P^{-1}[n]})\mu^*]\sigma_*[\mu_*(id\boxtimes \Phi_\P)]$. Note the equality $\mu^*\sigma_*\mu_*=(\mu^{-1}\sigma\mu)_*$ and $(\mu^{-1}\sigma\mu)(x,y)=(-x,x+y)$.
Calculate first the cohomological involution associated to $\Phi\circ\sigma_*\circ\Phi^{-1}$ using \cite[Page 91]{chevalley}.
\[
\tau(uf\tau(v))=(-1)^{2n(2n-1)/2}vf\tau(u).
\]
where $u\otimes v\mapsto uf\tau(v)$ is $\tilde{\varphi}$ in Lemma \ref{lemma-nu-equal-tilde-varphi}. So
\[
\tilde{\varphi}\sigma_*\tilde{\varphi}^{-1}=(-1)^{2n(2n-1)/2}\tau.
\]
\end{enumerate}
\end{question}

Let $\Delta:X\times X\rightarrow (X\times X)^n$ and $\delta:X\rightarrow X^n$ be the diagonal embeddings.
The ideal $\Ideal{\cup_{i=1}^n C_i}$ is the pullback via $\Delta^*$ of the exterior product $\Ideal{C_1}\boxtimes\cdots\boxtimes\Ideal{C_n}$. 
Consider the commutative diagram
\[
\xymatrix{
D^b((X\times X)^n) \ar[r]^{\Phi^{\boxtimes n}} \ar[d]_{\Delta^*} & D^b((X\times\hat{X})^n) 
\ar[d]^{\Phi\circ \Delta^*\circ(\Phi^{\boxtimes n})^{-1}}
\\
D^b(X\times X) \ar[r]_{\Phi} & D^b(X\times\hat{X})
}
\]
Orlov's derived equivalence $\Phi$ should deform in a formal neighborhood of $X\times X$ over the germ of the moduli space of generalized deformations of $X$. Hence, we expect the functor $\Phi\circ \Delta^*\circ(\Phi^{\boxtimes n})^{-1}$
to deform over the period domain $\D$. 
\begin{question}
Calculate the Fourier-Mukai kernel of  $\Phi\circ \Delta^*\circ(\Phi^{\boxtimes n})^{-1}$.
\end{question}

Let $d=\dim(X)$.
$\Phi\circ \Delta^*\circ(\Phi^{\boxtimes n})^{-1}=[(id\times\Psi_{\P^{-1}[d]})\circ\mu^*]\circ \Delta^* \circ[\mu_*\circ (id\times\Phi_\P)]^{\boxtimes n}$. Now $\mu^*\circ\Delta^*\circ (\mu_*)^{\boxtimes n}=\Delta^*,$
as $(\mu^{-1})^n\circ \Delta\circ\mu=\Delta$. Hence,
\[
\Phi\circ \Delta^*\circ(\Phi^{\boxtimes n})^{-1}\cong \delta^*\boxtimes (\Phi_\P^{-1}\circ \delta^*\circ \Phi_\P^{\boxtimes n})
\]
Now, the Fourier-Mukai kernel of $\Phi_\P^{-1}\circ \delta^*\circ \Phi_\P^{\boxtimes n}:D^b(\hat{X}^n)\rightarrow D^b(\hat{X})$ 
is supported as a line bundle on the graph of the summation morphism $m:\hat{X}^n\rightarrow\hat{X}$. So the Fourier-Mukai kernel of $\Phi\circ \Delta^*\circ(\Phi^{\boxtimes n})^{-1}$ in $(X\times\hat{X})^{n+1}\cong X^{n+1}\times \hat{X}^{n+1}$
is supported as a line bundle on the product $\Gamma(\delta)\times \Gamma(m)$ of the graphs of $\delta$ and $m$. Note that they have dimensions $d$ and $nd$ respectively.

\begin{rem}
The subspaces of $HT^2(X)$ of first order deformations along which $\Ideal{C_i}$ and $\Ideal{\cup_{i=1}^nC_i}$
deform project to the same subspace of $H^1(TX)\oplus H^0(\wedge^2TX)$. This suggests that the formal germ of the moduli space of generalized deformations of $X$ along which these sheaves can be deformed are the same, but the gerby structures are different. The same should hold for the diagonal embedding into deformations of $X\times X$. However, the derived equivalence $\Phi:D^b(X\times X)\rightarrow D^b(X\times\hat{X})$  maps the diagonal embedding of the subspace $H^2(\StructureSheaf{X})$
of $HT^2(X)$ into a subspace of $HT^2(X\times\hat{X})$, which projects non-trivially into $H^1(X\times\hat{X},T[X\times\hat{X}])$.
So we can not expect to deform the images of both $\Ideal{C_i}$ and $\Ideal{\cup_{i=1}^nC_i}$ simultaneously. This should correspond to the fact that the object represented by the structure sheaf of the subvariety $\Gamma(\delta)\times \Gamma(m)$ would not deform globally.
\end{rem}

}

\hide{
%
\section{Abelian varieties with complex multiplication}
Let $K$ be a CM-field, i.e., a totally complex quadratic extension of a totally real field $F$. We do not assume that $K$ is a Galois extension  of $\QQ$. In this section 
we generalize to the case of abelian varieties with complex multiplication by $K$ the construction in sections \ref{section-abelian-2n-folds-of-Weil-type-from-rational-secants} to \ref{section-Orlov-equivalence}. 
We associate to a pair $F_1$, $F_2$ of secant sheaves on an abelian $n$-fold $X$ with multiplication by the totally real field $F$ an object $E$ in the derived category 
of the abelian $2n$-folds $A:=X\times\hat{X}$ with complex multiplication $\eta:K\rightarrow\End_\QQ(A)$, such that a Hodge-Weil class on $X\times\hat{X}$ is a characteristic class of $E$, and such that $\kappa(E)$ remains of Hodge type, under deformations of $(A,\eta)$ (Proposition \ref{prop-kappa-class-of-image-of-secant-class-yields-a-HW-class}).


%
\subsection{A summary of the construction}
The Galois involution $\iota$  in $Gal(K/F)$ is a central element of $Gal(K/\QQ)$ and for every embedding $\sigma:K\rightarrow \CC$ we have $\sigma(\iota(t))=\overline{\sigma(t)}$, for all $t\in K$, where the right hand side is complex conjugation \cite[Sec. 4]{deligne-milne}. 
Set 
\[
e:=\dim_\QQ(K).
\] 
Let $\Sigma:=\Hom(K,\CC)$ be the set of complex embeddings. Note that the cardinality of $\Sigma$ is $e$.
The group $Gal(K/\QQ)$ acts on $\Sigma$ by $g(\sigma)=\sigma\circ g^{-1}$, for all $g\in Gal(K/\QQ)$ and $\sigma\in\Sigma$. The latter action is transitive. It is free if $K$ is a Galois extension of $\QQ$.

Given an abelian variety $A$ and an embedding
$\eta:K\rightarrow \End_\QQ(A)$, we get a decomposition
\[
H^1(A,\CC)=\oplus_{\sigma\in\Sigma}H^1_\sigma(A),
\]
where $\eta(K)$ acts on $H^1_\sigma(A)$ via the embedding $\sigma$. The complex multiplication $\eta$ corresponds to an embedding $\eta:K\rightarrow \End(H^1(A,\QQ))$ preserving the Hodge structure.
Set 
\[
d:=\dim_K(H^1(A,\QQ)).
\] 
The dimension of $H^1_\sigma(A)$ is $d$, for all $\sigma\in \Sigma$. 
Set $H^{1,0}_\sigma(A):=H^1_\sigma(A)\cap H^{1,0}(A)$ and
$H^{0,1}_\sigma(A):=H^1_\sigma(A)\cap H^{0,1}(A)$. Assume that for every $\sigma\in\Sigma$, we have 
\begin{equation}
\label{eq-condition-for-HW-to-consist-of-Hodge-classes}
\dim(H^{1,0}_\sigma(A))=\dim(H^{0,1}_\sigma(A))=\frac{d}{2}.
\end{equation}
Then $\wedge_K^dH^1(A,\QQ)$ is an $e$-dimensional subspace $HW$ of $H^{\frac{d}{2},\frac{d}{2}}(A,\QQ)$, which is a one-dimensional $K$ vector space
\cite[Prop. 4.4]{deligne-milne}.
The challenge is then to prove the algebraicity of the classes in $HW$. Classes in $HW$ are called {\em Weil classes} in the literature (see \cite{Moonen-Zarhin-Weil-classes}).
Again, it suffices to prove the algebraicity of one non-zero class in $HW$,
as $K$ acts via algebraic correspondences. If $d=2$, then $HW$ consists of rational $(1,1)$ classes, which are algebraic.
Hence, we may assume that $d>2$.

Let $X$ be an abelian variety of dimension 
\[
n:=de/4
\] 
and set $A=X\times\hat{X}$. 
Let $\hat{\Sigma}:=\Sigma/\iota$ be the set of pairs of complex conjugate embeddings of $K$. 
We will construct the embedding $\eta:K\rightarrow \End_\QQ(A)$, 
starting with a given real multiplication $\hat{\eta}:F\rightarrow \End_\QQ(X)$.

Let $W$  be the image in 
\[
V_{\hat{\eta},K}:=[H^1(X,\QQ)\oplus H^1(X,\QQ)^*]\otimes_FK
\]
of $H^1(\hat{X},\QQ)\otimes_FK$ under an element 
\begin{equation}
\label{eq-g}
g_0
\end{equation} 
of a $K$-linear spin group, defined below in (\ref{eq-Spin-V-K}), so that $W$ is maximal isotropic.
Let $\bar{W}$ be the image of $W$ via the non-trivial element $\iota$ in $Gal(K/F)$. 
Then $\bar{W}$ is the complex conjugate of $W$, under every embedding of $K$ in $\CC$. Assume that $W\cap\bar{W}=(0)$. Then $W$ is naturally isomorphic to 
$H^1(A,\QQ)$, as a vector space over $\QQ$, and so determines an action of $K$ on $H^1(A,\QQ)$ (section \ref{sec-complex-multiplication--by-CM-field-K}).

A {\em complex multiplication type} (CM-type) consists of a choice of an element $\sigma$, for each  $\hat{\sigma}\in\hat{\Sigma}$ (see \cite[Sec. 5]{deligne-milne}). 
For each CM-type $T$ we get an even maximal isotropic subspace $W_T$ of $H^1(A,\CC)$ and so a pure spinor $\ell_T\in \PP(S^+_\CC)$.
The set $\{\ell_T\}$, as $T$ varies over all CM-types, spans a rational subspace $B$ of $S^+_\QQ$ of dimension $2^{e/2}$ (Lemma \ref{lemma-B-is-rational}).
A {\em $B$-secant sheaf} is a sheaf $F$ on $X$ with $ch(F)$ in $B$. Given two secant sheaves $F_1$ and $F_2$ we get the object $E:=\Phi(F_1\boxtimes F_2^\vee)$ in $D^b(X\times\hat{X})$ via Orlov's derived equivalence $\Phi$. Assume that $\rank(E)\neq 0$ \dots(???)

%
\subsection{Linear algebra over $F$}
We have the decomposition 
\[
H^1(X,\RR)=H^1(X,\QQ)\otimes_\QQ\RR\cong \oplus_{\hat{\sigma}\in\hat{\Sigma}} H^1_{\hat{\sigma}}(X),
\]
since
$
F\otimes_\QQ\RR \cong \prod_{\hat{\sigma}\in\hat{\Sigma}}\RR.
$
The dimensions $\dim_\RR H^1_{\hat{\sigma}}(X)$ are the same for all $\hat{\sigma}\in \hat{\Sigma}$ and are equal to $d=\dim_FH^1(X,\QQ)$.
We have the equality
\[
\dim(H^{1,0}_{\hat{\sigma}}(X))=\dim(H^{0,1}_{\hat{\sigma}}(X)),
\]
for all $\hat{\sigma}\in\hat{\Sigma}$, since the $\hat{\eta}(F)$-action commutes with the complex structure of $X$, by assumption, and with complex conjugation, as $F$ is totally real.

%
\subsubsection{Trace forms}
We have the isomorphism of vector spaces over $\QQ$
\[
\Hom_F(H^1(X,\QQ),F) \rightarrow \Hom_\QQ(H^1(X,\QQ),\QQ)
\]
sending $h\in \Hom_F(H^1(X,\QQ),F)$ to $tr_{F/\QQ}\circ h$.
We denote by 
\[
\hat{\eta}:F\rightarrow \End(V_\QQ)
\]
the scalar multiplication action of $F$ on $H^1(X,\QQ)\oplus \Hom_F(H^1(X,\QQ)_{\hat{\eta}},F)$.
Explicitly, $\hat{\eta}(a)$ acts on $\Hom_F(H^1(X,\QQ)_{\hat{\eta}},F)$ by 
$\hat{\eta}_a(\theta)=\theta\circ a$.
We defined $V_{\hat{\eta}}$ as $V_\QQ$ with its $F$-vector space structure. 
Define the $\hat{\eta}(F)$-bilinear pairing 
\[
(\bullet,\bullet)_{V_{\hat{\eta}}} : V_\QQ\otimes_{\hat{\eta}(F)} V_\QQ\rightarrow F,
\]
by $((w_1,\theta_1),(w_2,\theta_2))_{V_{\hat{\eta}}}=\theta_1(w_2)+\theta_2(w_1)$, where $\theta_i$ is regarded as an element of $\Hom_F(V_{\hat{\eta}},F)$.
The $\QQ$-valued pairing 
$(\bullet,\bullet)_{V_\QQ}$ 
is, by definition, 
the composition $tr_{F/\QQ}\circ (\bullet,\bullet)_{V_{\hat{\eta}}}.$ The $\hat{\eta}(F)$-bilinearity of $(\bullet,\bullet)_{V_{\hat{\eta}}}$ implies that $\hat{\eta}(a)$ is 
self-dual with respect to the pairing
$(\bullet,\bullet)_{V_\QQ}$.

Consider the $\RR$-bilinear $\RR$-valued pairing on $F\otimes_\QQ\RR$ given by
\[
(f_1\otimes r_1,f_2\otimes r_2)=r_1r_2tr_{F/\QQ}(f_1f_2),
\]
for $f_i\in F$ and $r_i\in \RR$, $i=1,2$.
The decomposition $F\otimes_\QQ\RR=\oplus_{\hat{\sigma}\in\hat{\Sigma}}\hat{\sigma}$ consists of pairwise orthogonal lines with respect to the above pairing. Set 
\begin{eqnarray*}
V_{\hat{\sigma},\RR}&:=&H^1_{\hat{\sigma}}(X)\oplus H^1_{\hat{\sigma}}(\hat{X}).
\end{eqnarray*}

\begin{lem}
\label{lemma-orthogonal-direct-sum-decomposition-V-hat-sigma}
The decomposition $V\otimes_\ZZ\RR=\oplus_{\hat{\sigma}\in\hat{\Sigma}} V_{\hat{\sigma},\RR}$ consists of pairwise orthogonal subspaces with respect to $(\bullet,\bullet)_{V_\RR}.$
\end{lem}

\begin{proof}
This follows from the self-duality of
 $\hat{\eta}(a)$, for $a\in F$. Explicitly, the pairing $(\bullet,\bullet)_{V_\RR}:V_\RR\otimes V_\RR\rightarrow \RR$  factors as an $\hat{\eta}(F)$-bilinear pairing 
$(\bullet,\bullet)_{V_{\hat{\eta}}}$ with values in $F\otimes_\QQ\RR$ composed with $tr_{F/\QQ}\otimes id_\RR:F\otimes_\QQ\RR\rightarrow \RR$. Now, given $x\in V_{\hat{\sigma}_1,\RR}$, $y\in V_{\hat{\sigma}_2,\RR}$, and $q\in F$ a primitive element,  we have
\begin{eqnarray*}
q(x,y)_{V_{\hat{\eta}}}&=&(\hat{\eta}_q x,y)_{V_{\hat{\eta}}}=(\hat{\sigma}_1(q)x,y)_{V_{\hat{\eta}}}=
\hat{\sigma}_1(q)(x,y)_{V_{\hat{\eta}}}
\\
q(x,y)_{V_{\hat{\eta}}}&=&(x,\hat{\eta}_q y)_{V_{\hat{\eta}}}= (x,\hat{\sigma}_2(q)y)_{V_{\hat{\eta}}}=\hat{\sigma}_2(q)(x,y)_{V_{\hat{\eta}}},
\end{eqnarray*}
where multiplication by $q$ on the left above corresponds to multiplication in the left tensor factor of $F\otimes_\QQ\RR$ and multiplication by 
$\hat{\sigma}_i(q)$ on the right above corresponds to multiplication in the right tensor factor of $F\otimes_\QQ\RR$. Hence, if $\hat{\sigma}_1\neq\hat{\sigma}_2$, then
$(x,y)_{V_{\hat{\eta}}}=0.$ Consequently, its trace $(x,y)_{V_\RR}$ vanishes as well.
\end{proof}

The $j$-th cartesian power $(F^\times)^j$ of the multiplicative group $F^\times$ acts on the $j$-th tensor power over $\QQ$ of $H^1(X,\QQ)$ and hence
on $\wedge^j_\QQ H^1(X,\QQ)$ and on $\wedge^j_\QQ H^1(X,\QQ)\otimes_\QQ\RR\cong\wedge^j_\RR H^1(X,\RR)$.
The latter decomposes as a direct sum of characters subspaces 
\[
\wedge^j_\RR H^1(X,\RR)=
\oplus_{(\hat{\sigma}_1, \dots, \hat{\sigma}_j)\in (\hat{\Sigma})^j}\left(\wedge^j_\RR H^1(X,\RR)\right)_{(\hat{\sigma}_1, \dots, \hat{\sigma}_j)}
\]
The vector space $\wedge^j_\QQ H^1(X,\QQ)$ decomposes into subrepresentations of $(F^\times)^j$ defined over $\QQ$. The subrepresentation $\wedge^j_FH^1(X,\QQ)$ of $\wedge^j_\QQ H^1(X,\QQ)$  corresponds to the subrepresentation
\[
\oplus_{\hat{\sigma}\in\hat{\Sigma}}\wedge^j_\RR H^1_{\hat{\sigma}}(X)= 
\oplus_{\hat{\sigma}\in\hat{\Sigma}}\left(\wedge^j_\RR H^1(X,\RR)\right)_{(\hat{\sigma}, \dots, \hat{\sigma})}.
\]
of $\wedge^j_\RR H^1(X,\RR)$. Given $\alpha_i\in \wedge^{j_i}_F H^1(X,\QQ)$, $i=1,2$, then $\alpha_1\wedge_F\alpha_2$ 
is the projection of $\alpha_1\wedge_\QQ\alpha_2$ to the direct summand $\wedge_F^{j_1+j_2}H^1(X,\QQ)$.

%
\subsection{Spin groups over $F$ and real embeddings of $F$}
We define 
\[
\Spin(V_{\hat{\eta}})
\] 
as the Spin group associated to the $F$-valued $F$-bilinear pairing $(\bullet,\bullet)_{V_{\hat{\eta}}}$.

%
\subsubsection{A homomorphism from $\Spin(V_{\hat{\eta}})$ to $\Spin(V_\RR)$}
\label{sec-homomorphism-from-Spein-V-hat-eta-to-Spin-V-RR}

The tensor product $V_{\hat{\eta}}\otimes_\QQ\RR$, with the induced bilinear pairing, decomposes as an orthogonal direct sum $\oplus_{\hat{\sigma}\in \hat{\Sigma}}V_{\hat{\sigma},\RR}$, by Lemma \ref{lemma-orthogonal-direct-sum-decomposition-V-hat-sigma}. Hence, $C(V_\QQ)\otimes_\QQ\RR$ is isomorphic to the tensor product 
$\otimes_{\hat{\sigma}\in\hat{\Sigma}}C(V_{\hat{\sigma},\RR})$, where $V_{\hat{\sigma},\RR}$ is endowed with the restriction of the bilinear pairing of $V_\RR$ and
 the tensor product is over $\RR$ in the category of $\ZZ_2$-graded algebras \cite[Lemma V.1.7]{lam}.
 In the current section \ref{sec-homomorphism-from-Spein-V-hat-eta-to-Spin-V-RR} we will denote $V_{\hat{\sigma},\RR}$ by $V_{\hat{\sigma}}$.
The composition
\[
C(V_\QQ)\rightarrow C(V_\RR)\IsomRightArrow \otimes_{\hat{\sigma}\in\hat{\Sigma}}C(V_{\hat{\sigma}})
\]
is a $\QQ$-algebra homomorphism. 

Consider the Clifford algebra 
\[
C(V_{\hat{\eta}}):=\oplus_{j\geq 0} V_{\hat{\eta}}^{\otimes_F j}/\langle u\otimes_F v+ v\otimes_F u - (u,v)_{V_{\hat{\eta}}}
\rangle,
\]
where the scalar $(u,v)_{V_{\hat{\eta}}}$ is in $F$. 
The embedding
\[
(\hat{\sigma})_{\hat{\sigma}\in\hat{\Sigma}}:
V_{\hat{\eta}}\rightarrow V_\RR=\oplus_{\hat{\sigma}\in\hat{\Sigma}}V_{\hat{\sigma}}
\]
is isometric, if the pairing on the right hand side is given by $(u,v)_{V_{\hat{\eta}}}=((u_{\hat{\sigma}},v_{\hat{\sigma}})_{V_{\hat{\sigma}}})_{\hat{\sigma}\in\Hat{\Sigma}}$
as an element in $F\otimes_\QQ\RR\cong\oplus_{\hat{\sigma}\in\Hat{\Sigma}}\RR$. 
For each $\hat{\sigma}\in\hat{\Sigma}$ we have an embedding $\hat{\sigma}:C(V_{\hat{\eta}})\rightarrow C(V_\RR)\IsomRightArrow \otimes_{\hat{\sigma}\in\hat{\Sigma}}C(V_{\hat{\sigma}})$, sending $g\in C(V_{\hat{\eta}})$ to $g$ tensored with $1_{\hat{\sigma}_1}\in C(V_{\hat{\sigma}_1})$, for all $\hat{\sigma}_1\neq\hat{\sigma}$. The diagonal embedding of $V_\QQ$ in $V_\RR$ sends $v\in V_\QQ\subset C(V_\QQ)$ to $\sum_{\hat{\sigma}\in\hat{\Sigma}}\hat{\sigma}(v)\in \otimes_{\hat{\sigma}\in\hat{\Sigma}}C(V_{\hat{\sigma}})$ (see the proof of \cite[Lemma V.1.7]{lam}). It does not extends to an algebra homomorphism
$
C(V_{\hat{\eta}})\rightarrow C(V_\RR).
$
The vector space $V_\RR$ is also a module over $F\otimes_\QQ\RR\cong\prod_{\hat{\sigma}\in\hat{\Sigma}}\RR$ and we can form the Clifford algebra of $V_\RR$ over the latter ring, in which $C(V_{\hat{\eta}})$ embeds. Nevertheless, we do get a diagonal multiplicative group homomorphism from the group 
$C(V_{\hat{\eta}})^{even,\times}$, of invertible even\footnote{Note that even tensor factors  commute in the tensor product $\otimes_{\hat{\sigma}\in\hat{\Sigma}}C(V_{\hat{\sigma}})$ in the category of $\ZZ_2$-graded algebras.} elements in $C(V_{\hat{\eta}})$, into $C(V_\RR)^{even,\times}.$
\begin{equation}
\label{eq-diagram-of-spin-groups}
\xymatrix{
C(V_{\hat{\eta}})^{even,\times} \ar[r] &
\prod_{\hat{\sigma}\in\hat{\Sigma}}C(V_{\hat{\sigma}})^{even,\times}
\ar[r] &
\left[\otimes_{\hat{\sigma}\in\hat{\Sigma}}C(V_{\hat{\sigma}})\right]^{even,\times} \ar[r]^-\cong
&
C(V_\RR)^{even,\times}
\\
\Spin(V_{\hat{\eta}}) \ar[r] \ar[u]^{\cup} &
\prod_{\hat{\sigma}\in\hat{\Sigma}}\Spin(V_{\hat{\sigma}})
\ar[rr] \ar[u]^{\cup} & &
\Spin(V_\RR). \ar[u]^{\cup}
}
\end{equation}
The composition 
\begin{equation}
\label{eq-homomorphism-of-even-miltiplicative-Clifford-groups}
C(V_{\hat{\eta}})^{even,\times}\rightarrow C(V_\RR)^{even,\times},
\end{equation}
of the top horizontal homomorphisms above, restricts to 
$Nm_{F/\QQ}:F^\times\rightarrow \QQ^\times\subset\RR^\times$. In particular, it is not injective.


\hide{
The direct sum
$\oplus_{\hat{\sigma}\in\hat{\Sigma}}V_{\hat{\sigma}}$, considered as a subspace of $V_\QQ\otimes_\QQ(F\otimes_\QQ\RR)$, is equal to the image of
$V_\QQ\otimes_\QQ F$ via
\begin{equation}
\label{eq-embedding-of-V-tensor-F}
V_\QQ\otimes_\QQ F\LongRightArrowOf{(id_{V_\QQ}\otimes \hat{\sigma})_{\hat{\sigma}\in\hat{\Sigma}}} V_\QQ\otimes_\QQ \left(\prod_{\hat{\sigma}\in\hat{\Sigma}}\RR\right)
\cong V_\QQ\otimes_\QQ (F\otimes_\QQ \RR).
\end{equation}
}

%
\subsubsection{A homomorphism from $\Spin(V_{\hat{\eta}})$ to $\Spin(V_\QQ)$}
Let $\{e_{\hat{\sigma},1}, \dots, e_{\hat{\sigma},d}\}$ be an orthogonal basis of $V_{\hat{\sigma},\RR}$ satisfying $(e_{\hat{\sigma},i},e_{\hat{\sigma},i})\in\{2,-2\}$. Then $m_{e_{\hat{\sigma},i}}\in C(V_\RR)$ belongs to the Clifford group and acts on $V_\RR$ as minus the reflection in the hyperplane orthogonal to $e_{\hat{\sigma},i}$. Note that $m_{e_{\hat{\sigma},i}}m_{e_{\hat{\sigma},j}}=-m_{e_{\hat{\sigma},i}}m_{e_{\hat{\sigma},j}}$, if $i\neq j$, but their images in $O(V_\RR)$ commute.
Set 
\begin{equation}
\label{eq-g-hat-sigma}
g_{\hat{\sigma}}:=\prod_{i=1}^dm_{e_{\hat{\sigma},i}}.
\end{equation} 
Choose an embedding $\hat{\sigma}_0:F\rightarrow \RR$ and let $\tilde{F}\subset \RR$ be the Galois closure of $\hat{\sigma}_0(F)$ over $\QQ$. Diagram (\ref{eq-diagram-of-spin-groups}) holds, with $V_\RR$ replaced by $V_{\tilde{F}}$.
The element $g_{\hat{\sigma}}$ belongs to the Clifford subgroup of $C(V_{\tilde{F}})^{even,\times}$, which belongs to $\Spin(V_{\tilde{F}})$ if and only if $d/2$ is even, since the signature of $V_{\hat{\sigma}}$ is $(d/2,d/2)$. Note that $g_{\hat{\sigma}}^2=(-1)^{d/2}$.
The element 
$g_{\hat{\sigma}}$ maps to $SO(V_{\tilde{F}})$ acting on $V_{\hat{\sigma}}$ via multiplication by  $-1$ and it acts as the identity on $V_{\hat{\sigma}'}$, 
for $\hat{\sigma}'\neq \hat{\sigma}$. 

Let $\Spin(V_\QQ)_{\{g_{\hat{\sigma}}\}}$ be the subgroup 
of $\Spin(V_\QQ)$ consisting of elements, which commute in $C(V_{\tilde{F}})^{even,\times}$ with $g_{\hat{\sigma}}$, for all $\hat{\sigma}\in\hat{\Sigma}$.
Let
\begin{equation}
\label{eq-Spin-V-QQ-hat-eta}
\Spin(V_\QQ)_{\hat{\eta}}
\end{equation}
be the commutator subgroup of the commutator subgroup of $\Spin(V_\QQ)_{\{g_{\hat{\sigma}}\}}$ (the second derived subgroup).
The group $\Spin(V_\QQ)_{\hat{\eta}}$ is independent of the choice of the bases $\{e_{\hat{\sigma},i}\}$,
since $g_{\hat{\sigma}}$ is determined, up to sign,  by its image $\rho_{g_{\hat{\sigma}}}$ in $SO(V_{\tilde{F}})$ and $\rho_{g_{\hat{\sigma}}}$ is independent of the choice of the bases. 

Let $SO(V_{\hat{\eta}})$ be the subgroup of $SO(V_\QQ)$ preserving the $F$-valued pairing $(\bullet,\bullet)_{V_{\hat{\eta}}}.$ 
Let  $SO_+(V_{\hat{\eta}})$  be the image of $\Spin(V_{\hat{\eta}})$ in $SO(V_{\hat{\eta}})$.
Let $SO_+(V_{\hat{\sigma}})$ be the image of $\Spin(V_{\hat{\sigma}})$ in $SO(V_{\hat{\sigma}})$. We have the short exact sequences
\begin{eqnarray*}
0\rightarrow SO_+(V_\QQ)\rightarrow &SO(V_\QQ)&\rightarrow \QQ^\times/\QQ^{\times,2}\rightarrow 0,
\\
0\rightarrow SO_+(V_{\hat{\eta}})\rightarrow &SO(V_{\hat{\eta}})&\rightarrow F^\times/F^{\times,2}\rightarrow 0,
\\
0\rightarrow SO_+(V_{\hat{\sigma}})\rightarrow &SO(V_{\hat{\sigma}})&\rightarrow \tilde{F}^\times/\tilde{F}^{\times,2}\rightarrow 0,
\end{eqnarray*}
where $\QQ^{\times,2}$, $F^{\times,2}$, and $\tilde{F}^{\times,2}$ are the groups of squares of non-zero elements 
(see \cite[II.3.7]{chevalley}). Furthermore, $SO_+(\bullet)$ is the commutator subgroup of $SO(\bullet)$, for $\bullet=V_\QQ$, $V_{\hat{\eta}}$, and $V_{\hat{\sigma}}$ \cite[II.3.9]{chevalley}. 
Hence, for every $g\in \Spin(V_\QQ)_{\hat{\eta}}$, $\rho_g$ restricts to the subspace $V_{\hat{\sigma}}$ of $V_{\tilde{F}}$ 
as an element of $SO_+(V_{\hat{\sigma}})$.

\hide{
Following is another characterization of $\Spin(V_\QQ)_{\hat{\eta}}$. The Galois group $Gal(\tilde{F}/\QQ)$ acts on the Clifford algebra $C(V_{\tilde{F}})\cong C(V_\QQ)\otimes_\QQ\tilde{F}$ via its action on the second tensor factor. The group $\Spin(V_{\tilde{F}})$ is $Gal(\tilde{F}/\QQ)$-invariant.
The group $\Spin(V_\QQ)_{\hat{\eta}}$ is the subgroup of $\Spin(V_{\tilde{F}})$ consisting of $Gal(\tilde{F}/\QQ)$-invariant elements $g$, such that $\rho_g$
leaves invariant each summand in the decomposition $V_{\tilde{F}}=\oplus_{\hat{\sigma}\in\hat{\Sigma}}V_{\hat{\sigma}}$ and its restriction to each $V_{\hat{\sigma}}$ belongs to $SO_+(V_{\hat{\sigma}})$. 
Indeed, 
let $g$ be an element of $\Spin(V_\QQ)_{\hat{\eta}}$. For each $\hat{\sigma}\in\hat{\Sigma}$ choose an element $h_{\hat{\sigma}}\in\Spin(V_{\hat{\sigma}})$ mapping to the restriction of $\rho_{g}$ to $V_{\hat{\sigma}}$. Set $g':=\otimes_{\hat{\sigma}\in\hat{\Sigma}}h_{\hat{\sigma}}\in C(V_{\tilde{F}})$. Then $g'$ belongs to $\Spin(V_{\tilde{F}})$ and $\rho_g=\rho_{g'}$ in $SO_+(V_{\tilde{F}})$. Thus, $g'=\pm g$ and 
so $g'$ belongs to $\Spin(V_\QQ)_{\hat{\eta}}$.
It follows that $g$ is a $Gal(\tilde{F}/\QQ)$-invariant element of $\Spin(V_{\tilde{F}})$ such that $\rho_g$ preserves the decomposition $V_{\tilde{F}}=\oplus_{\hat{\sigma}\in\hat{\Sigma}}V_{\hat{\sigma}}$. Conversely, let $g$ be a $Gal(\tilde{F}/\QQ)$-invariant element of $\Spin(V_{\tilde{F}})$, which restriction to each $V_{\hat{\sigma}}$ belongs to $SO_+(V_{\hat{\sigma}})$. Choose an element $h_{\hat{\sigma}}\in\Spin(V_{\hat{\sigma}})$ mapping to the restriction of $\rho_{g}$ to $V_{\hat{\sigma}}$. Set $g':=\otimes_{\hat{\sigma}\in\hat{\Sigma}}h_{\hat{\sigma}}\in C(V_{\tilde{F}})$. Then $g'$ belongs to $\Spin(V_{\tilde{F}})$ and $\rho_g=\rho_{g'}$ in $SO_+(V_{\tilde{F}})$. Thus, $g'=\pm g$. Now $h_{\hat{\sigma}}$ commutes with $g_{\hat{\sigma}}$, for all $\hat{\sigma}\in\hat{\Sigma}$.
Hence, so does $g$. We conclude that $g$ belongs to $\Spin(V_\QQ)_{\hat{\eta}}$. 
}

\begin{lem}
\label{lemma-Spin-V-hat-eta}
The  homomorphism (\ref{eq-homomorphism-of-even-miltiplicative-Clifford-groups}) maps $\Spin(V_{\hat{\eta}})$ onto 
$\Spin(V_\QQ)_{\hat{\eta}}$.
The resulting homomorphism 
\begin{equation}
\label{eq-homomorphism-from-Spin-V-hat-eta-to-commutator-in-Spin-V-QQ}
\aleph:\Spin(V_{\hat{\eta}})\rightarrow \Spin(V_\QQ)_{\hat{\eta}}
\end{equation}
is injective, if $\dim_\QQ(F)$ is odd, and its kernel is $-1$, if $\dim_\QQ(F)$ is even.
The diagram
\[
\xymatrix{
\Spin(V_{\hat{\eta}})\ar[r]^{\aleph} \ar[d]_\rho&
\Spin(V_\QQ)_{\hat{\eta}}\ar[r]^{\subset} & \Spin(V_\QQ) \ar[d]_\rho
\\
SO_+(V_{\hat{\eta}}) \ar[rr]_-\subset & & SO_+(V_\QQ)
}
\]
is commutative.
\end{lem}

\begin{proof}
The homomorphism  (\ref{eq-homomorphism-of-even-miltiplicative-Clifford-groups})  factors through $C(V_{\tilde{F}})^{even,\times}$.
The group $Gal(\tilde{F})$ acts on $C(V_{\tilde{F}})=C(V_\QQ)\otimes_\QQ\tilde{F}$ via its action on the second tensor factor and the subset $C(V_{\tilde{F}})^{even,\times}$ is $Gal(\tilde{F}/\QQ)$-invariant. The homomorphism (\ref{eq-homomorphism-of-even-miltiplicative-Clifford-groups}) is $Gal(\tilde{F})$-invariant, as the top left homomorphism in the diagram is the diagonal homomorphism. Hence, the image of $C(V_{\hat{\eta}})^{even,\times}$   in $C(V_{\tilde{F}})^{even,\times}$, via the homomorphism (\ref{eq-homomorphism-of-even-miltiplicative-Clifford-groups}),
is contained in $C(V_\QQ)^{even,\times}$. The image of $Spin(V_{\hat{\eta}})$ is thus contained in $\Spin(V_\QQ)$, by the commutativity of the diagram (\ref{eq-diagram-of-spin-groups}).
The last statement about the kernel follows from the equality $Nm_{F/\QQ}(-1)=(-1)^{\dim_\QQ(F)}$. 

The group $\Spin(V_{\hat{\eta}})$ is equal to its commutator subgroup, being a simple group \cite[21.50]{milne-algebraic-groups}. 
The element $g_{\hat{\sigma}}$ belongs to the center of 
$\left[\otimes_{\hat{\sigma}\in\hat{\Sigma}}C(V_{\hat{\sigma}})\right]^{even,\times}\cap\left[\otimes_{\hat{\sigma}\in\hat{\Sigma}}C(V_{\hat{\sigma}})^{even}\right]$, by \cite[II.2.4]{chevalley}.
Hence, the group $\Spin(V_{\hat{\eta}})$ is mapped via (\ref{eq-homomorphism-of-even-miltiplicative-Clifford-groups}) into 
$\Spin(V_\QQ)_{\hat{\eta}}$.
\hide{
Let $SO_+(V_\QQ,(\bullet,\bullet)_{V_{\hat{\eta}}})$ be the subgroup of $SO_+(V_\QQ)$ preserving the pairing $(\bullet,\bullet)_{V_{\hat{\eta}}}.$ Elements 
of $SO_+(V_\QQ,(\bullet,\bullet)_{V_{\hat{\eta}}})$ are precisely the elements of $SO_+(V_\QQ)$ leaving invariant $V_{\hat{\sigma}}$, for all $\hat{\sigma}\in\hat{\Sigma}$. The image of the homomorphism $\Spin(V_{\hat{\eta}})\rightarrow SO_+(V_\QQ)$ is\footnote{The norm homomorphism from
the Clifford group of $(\bullet,\bullet)_{V_{\hat{\eta}}}$
has values in $F^\times/F^{\times,2}$, where  $F^{\times,2}$ is the group of squares of invertible elements, 
$\Spin(V_{\hat{\eta}})$ is the intersection of the kernel of the norm homomorphism with the even Clifford group,
and  $SO_+(V_\QQ,(\bullet,\bullet)_{V_{\hat{\eta}}})$  is the image of  $\Spin(V_{\hat{\eta}})\rightarrow SO(V_\QQ,(\bullet,\bullet)_{V_{\hat{\eta}}})$.
} 
$SO_+(V_\QQ,(\bullet,\bullet)_{V_{\hat{\eta}}})$, by definition (??? the group $SO_+(V_\QQ,(\bullet,\bullet)_{V_{\hat{\eta}}})$ is defined above differently ???)). Note that $SO(V_\QQ,(\bullet,\bullet)_{V_{\hat{\eta}}})/SO_+(V_\QQ,(\bullet,\bullet)_{V_{\hat{\eta}}})$
is isomorphic to $F^\times/F^{\times,2}$, 
by \cite[II.3.7]{chevalley}, and its kernel is generated by $-1$. The homomorphism $\Spin(V_\QQ)_{\hat{\eta}}\rightarrow SO_+(V_\QQ)$ has the same image $SO_+(V_\QQ,(\bullet,\bullet)_{V_{\hat{\eta}}})$ and its kernel is generated by $-1$. The homomorphism $\Spin(V_{\hat{\eta}})\rightarrow SO_+(V_\QQ,(\bullet,\bullet)_{V_{\hat{\eta}}})$
factors through $\Spin(V_\QQ)_{\hat{\eta}}$. Hence, the homomorphism (\ref{eq-homomorphism-from-Spin-V-hat-eta-to-commutator-in-Spin-V-QQ}) is an isomorphism, if $\dim_\QQ(F)$ is odd and its image is an index $2$ subgroup if $\dim_\QQ(F)$ is even.
}

It remains to prove the surjectivity of (\ref{eq-homomorphism-from-Spin-V-hat-eta-to-commutator-in-Spin-V-QQ}).
The pairing $(\bullet,\bullet)_{\hat{\eta}}$ is the unique $F$-valued symmetric $F$-bilinear pairing on $V_{\hat{\eta}}$, such that $tr_{F/\QQ}\circ (\bullet,\bullet)_{\hat{\eta}}=(\bullet,\bullet)_V$. Indeed, $f\mapsto tr_{F/\QQ}\circ f :\Hom_F(V_{\hat{\eta}},F)\rightarrow \Hom_\QQ(V_\QQ,\QQ)$ is an isomorphism of $\QQ$-vector spaces and the pairing $(\bullet,\bullet)_{\hat{\eta}}$ corresponds to an element of the subspace 
$\Hom_F(V_{\hat{\eta}},\Hom_F(V_{\hat{\eta}},F))$ of $\Hom_\QQ(V_{\hat{\eta}},\Hom_F(V_{\hat{\eta}},F))$. Hence, if $g$ is an isometry of $(V_\QQ,(\bullet,\bullet)_V)$, which commutes with $\hat{\eta}(F)$, then $g$ is an $F$-linear isometry of $(V_{\hat{\eta}},(\bullet,\bullet)_{V_{\hat{\eta}}})$. Indeed, in that case $tr_{F/\QQ}(g(x),g(y))_{V_{\hat{\eta}}}=(g(x),g(y))_V=(x,y)_V$, and 
$(g(\bullet),g(\bullet))_{V_{\hat{\eta}}}$ is an $F$-valued symmetric $F$-bilinear pairing on $V_{\hat{\eta}}$.
Hence, for $g\in \Spin(V_\QQ)_{\{g_{\hat{\sigma}}\}}$, the isometry $\rho_g$ of $(V_\QQ,(\bullet,\bullet)_V)$ is also an isometry of $(\bullet,\bullet)_{V_{\hat{\eta}}}$. Thus, if $g$ belongs to $\Spin(V)_{\hat{\eta}}$, then $\rho_g$ belongs to the commutator subgroup $SO_+(V_{\hat{\eta}})$ of $O(V_{\hat{\eta}})$,
where $O(V_{\hat{\eta}})$ is the subgroup of $O(V_\QQ)$ of $F$-linear automorphisms preserving the $F$-valued pairing $(\bullet,\bullet)_{V_{\hat{\eta}}}.$ 
Consequently, $\rho_g=\rho_{\tilde{g}}$, for some  $\tilde{g}\in\Spin(V_{\hat{\eta}})$, by \cite[II.3.8]{chevalley}. So $g=\pm\aleph(\tilde{g})$. 
If $\dim_\QQ(F)$ is odd, then $\aleph(-1)=-1$, and so $g$ belongs to the image of $\aleph$. 
If  $\dim_\QQ(F)$ is even, then the above argument shows that the image of $\aleph$ has index at most $2$ in the first derived subgroup of $\Spin(V_\QQ)_{\{g_{\hat{\sigma}}\}}$. If the index is $2$, then the image of $\aleph$ is contained in the second derived subgroup and is a normal subgroup of the first derived subgroup with an abelian quotient, and so the image of $\aleph$ is equal to the second derived subgroup, which is $\Spin(V)_{\hat{\eta}}$.
\end{proof}

\hide{
In other words, $\Spin(V)_{\hat{\eta}}$ is isomorphic to the subgroup of 
$\Spin(H^1_{\hat{\sigma}}(X)\oplus H^1_{\hat{\sigma}}(\hat{X}))$ preserving the lattice $V$, where $V_{\hat{\sigma}}:=H^1_{\hat{\sigma}}(X)\oplus H^1_{\hat{\sigma}}(\hat{X})$ is endowed with the obvious $\RR$-bilinear symmetric pairing. The composition 
\[
V_\QQ\rightarrow V_\QQ\otimes_\QQ F \rightarrow V_{\hat{\sigma}},
\]
of the inclusion with the projection onto the eigenspace,
should be (??? define the righthand side as a $\QQ$-vector space ???) a $\QQ$-linear isomorphism of  vector spaces 
such that the bilinear pairing on $V_\QQ$ is the composition of the $F$-bilinear (???) pairing on the right hand side with $tr_{F/\QQ}:F\rightarrow \QQ$. 
}

%
\subsubsection{Tensor product factorizations of pure spinors}
Set $S_{\hat{\sigma}}:=\wedge^*H^1_{\hat{\sigma}}(X)$. 
Note the isomorphisms $S_\RR:=\wedge^*H^1(X,\RR)\cong\otimes_{\hat{\sigma}\in \hat{\Sigma}}S_{\hat{\sigma}}$,
\[
S_\RR\otimes S_\RR\cong \otimes_{\hat{\sigma}\in \hat{\Sigma}}(S_{\hat{\sigma}}\otimes_\RR S_{\hat{\sigma}}),
\]
and $C(V_\RR)\cong \otimes_{\hat{\sigma}\in\hat{\Sigma}}C(V_{\hat{\sigma}})$. 
The isomorphism $\varphi:S_\RR\otimes S_\RR\rightarrow C(V_\RR)$, given in (\ref{eq-Chevalley-varphi}),  factors as the tensor product of 
\[
\varphi_{\hat{\sigma}}:S_{\hat{\sigma}}\otimes S_{\hat{\sigma}}\rightarrow 
C(V_{\hat{\sigma}}).
\] 

Given maximal isotropic subspaces $W_{\hat{\sigma}}$ of $V_{\hat{\sigma}}$, for all $\hat{\sigma}\in \hat{\Sigma}$, set $W:\oplus_{\hat{\sigma}\in \hat{\Sigma}}W_{\hat{\sigma}}$. Let $\ell_{\hat{\sigma}}\subset S^+_{\hat{\sigma}}$ be the line spanned by a pure spinor of $W_{\hat{\sigma}}$ and by $\ell\subset S^+_\RR$ 
the line spanned by a pure spinor of $W$. Then 
\begin{equation}
\label{eq-tensor-product-decomposition-of-pure-spinor}
\ell=\bigotimes_{\hat{\sigma}\in \hat{\Sigma}}\ell_{\hat{\sigma}}.
\end{equation}
Given another set of maximal isotropic subspaces $W'_{\hat{\sigma}}$ of $V_{\hat{\sigma}}$, for all $\hat{\sigma}\in \hat{\Sigma}$, with pure spinors $\ell'_{\hat{\sigma}}$, we have
$\varphi(\ell\otimes\ell')=\otimes_{\hat{\sigma}\in \hat{\Sigma}}\varphi_{\hat{\sigma}}(\ell_{\hat{\sigma}}\otimes\ell'_{\hat{\sigma}}).$

Similarly, the isomorphism $\psi:C(V_\RR)\rightarrow \wedge^*V_\RR$, given in (\ref{eq-psi}), decomposes as a tensor product of
$\psi_{\hat{\sigma}}:C(V_{\hat{\sigma}})\rightarrow \wedge^*V_{\hat{\sigma}}$ and so $\tilde{\varphi}:=\psi\circ\varphi$ decomposes as the tensor product of 
\begin{equation}
\label{eq-tilde-varphi-hat-sigma}
\tilde{\varphi}_{\hat{\sigma}}:=\psi_{\hat{\sigma}}\circ \varphi_{\hat{\sigma}}:S_{\hat{\sigma}}\otimes S_{\hat{\sigma}}\rightarrow \wedge^*V_{\hat{\sigma}}.
\end{equation}

\hide{
maps $S_{\hat{\sigma}_0}\otimes S_{\hat{\sigma}_0}$ to 
$C(V_{\hat{\sigma}_0})\otimes \otimes_{\{\hat{\sigma} \ : \ \hat{\sigma}\neq\hat{\sigma}_0\}}\wedge^{top}H^1_{\hat{\sigma}}(\hat{X})$,
where we regard $\wedge^{top}H^1_{\hat{\sigma}}(\hat{X})$ as a line in $C(V_{\hat{\sigma}})$.
Given a pure spinor $\ell$ in $S^+_{\hat{\sigma}_0}$ of a maximal isotropic subspace $W$ in $V_{\hat{\sigma}_0}$, the isomorphism $\varphi$ maps 
$\ell\otimes\ell$ to 
the tensor product of $\wedge^{top}W$ (??? $\varphi_{\hat{\sigma}_0}(\ell\otimes\ell)$ ???) with $\otimes_{\{\hat{\sigma} \ : \ \hat{\sigma}\neq\hat{\sigma}_0\}}\wedge^{top}H^1_{\hat{\sigma}}(\hat{X})$, which projects, in the appropriate graded summand, onto
the top wedge product of the maximal isotropic subspace $W\oplus \oplus_{\{\hat{\sigma} \ : \ \hat{\sigma}\neq\hat{\sigma}_0\}} H^1_{\hat{\sigma}}(\hat{X})$.
The pure spinor $\ell$, as a line on the left hand side, is mapped to its tensor product with the pure spinors $1_{\hat{\sigma}}$, $\hat{\sigma}\neq\hat{\sigma}_0$, on the right hand side.
}

\begin{rem}
Let $W$ be a maximal isotropic $F$-subspace of $V_{\hat{\eta}}$. Then $W_\RR=\oplus_{\hat{\sigma}\in\hat{\Sigma}}W_{\hat{\sigma}}$, and 
$W_{\hat{\sigma}}$ is maximal isotropic in $V_{\hat{\sigma}}$. The $e/2$-dimensional $\QQ$-subspace $\wedge^d_FW$ of $\wedge^d_\QQ V_\QQ$ spans over $\RR$ the subspace
spanned by the lines $\wedge^dW_{\hat{\sigma}}$. Hence, the subspace $(\wedge^d_FW)_\RR$ of $C(V_\RR)$ is 
the subspace spanned by $\varphi_{\hat{\sigma}}(\ell_{\hat{\sigma}}\otimes \ell_{\hat{\sigma}})$, $\hat{\sigma}\in\hat{\Sigma}$, where $\ell_{\hat{\sigma}}$ is the pure spinor of $W_{\hat{\sigma}}$ in $S_{\hat{\sigma}}$. In particular, considering $W=H^1(\hat{X},\QQ)$,
$\varphi_{\hat{\sigma}}(1\otimes 1)$ spans $\wedge^{top}H^1_{\hat{\sigma}}(\hat{X})$.
\end{rem}

The isomorphism $\tilde{\varphi}:S_\RR\otimes S_\RR\rightarrow \wedge^*V_\RR$ maps $S_{\hat{\sigma}_0}\otimes S_{\hat{\sigma}_0}$ to 
$\wedge^*V_{\hat{\sigma}_0}\otimes\otimes_{\{ \hat{\sigma} \ : \ \hat{\sigma}\neq\hat{\sigma}_0\}}\wedge^{top}H^1_{\hat{\sigma}}(\hat{X})$, where we now regard $\wedge^{top}H^1_{\hat{\sigma}}(\hat{X})$ as a line in $\wedge^*V_{\hat{\sigma}}$. The isomorphism $\phi:S_\RR\otimes S_\RR\rightarrow \wedge^*V_\RR$ maps $S_{\hat{\sigma}_0}\otimes S_{\hat{\sigma}_0}$ to 
$\wedge^*V_{\hat{\sigma}_0}\otimes\otimes_{\{ \hat{\sigma} \ : \ \hat{\sigma}\neq\hat{\sigma}_0\}}\wedge^{top}H^1_{\hat{\sigma}}(X)$.

Set $S_{\hat{\eta}}:=S_\QQ\cap[\sum_{\hat{\sigma}\in\hat{\Sigma}}S_{\hat{\sigma}}]$, $S^+_{\hat{\eta}}:=S^+_\QQ\cap[\sum_{\hat{\sigma}\in\hat{\Sigma}}S^+_{\hat{\sigma}}]$, and define $S^-_{\hat{\eta}}$ analogously. These are subspaces of $S_\QQ:=\wedge^*_\QQ H^1(X,\QQ)$. The latter does not have a natural structure as an $F$-vector space, but it is naturally a representation of the  group $F^\times$. 
If we let $f\in F^\times$ act on $\theta\in H^1(X,\QQ)^*$ by  $(f,\theta(\bullet))\mapsto \theta(f^{-1}(\bullet))$, then $\wedge^{top}H^1(X,\RR)^*$ has the character $\prod_{\hat{\sigma}\in\hat{\Sigma}}\hat{\theta}^{-d/2}$. Identify $C(V_\RR)$ with $\End(S_\RR)$ via the isomorphism $m$.
The isomorphism $\varphi:S_\RR\otimes S_\RR\rightarrow C(V_\RR)$ has weight $\prod_{\hat{\sigma}\in\hat{\Sigma}}\hat{\theta}^{-d/2}$, which is the lowest weight in $C(V_\RR)$. The weights in $S_{\hat{\sigma}}\otimes_\RR S_{\hat{\sigma}}$ are 
$\{\hat{\theta}^j \ : \ 0\leq j \leq d\}$ and it is characterized as the subspace of $S_\RR\otimes S_\RR$ containing all characters with these weights.

\hide{
Set $(S\otimes S)_{\hat{\eta}}:=(S_\QQ\otimes_\QQ S_\QQ)\cap [\sum_{\hat{\sigma}\in\hat{\Sigma}}S_{\hat{\sigma}}\otimes_\RR S_{\hat{\sigma}}]$.

\begin{lem}
\label{lemma-Chevalley-isomorphism-over-F} (??? needs to be corrected using Remark \ref{rem-image-via-varphi-of-top-wedge-over-F-of-a-maximal-isotropic-F-subspace} ???)
The isomorphism 
\[
\phi\circ (id\otimes\tau):S_\QQ\otimes_\QQ S_\QQ\rightarrow \wedge^*V_\QQ,
\] 
where $\phi$ is
given in (\ref{eq-phi-introduction}), 
restricts to an isomorphism of $(S\otimes S)_{\hat{\eta}}$ onto (???).
\end{lem}

\begin{proof}
The isomorphism $\phi\circ (id\otimes\tau)$ is $\Spin(V)$-equivariant, by Proposition \ref{prop-the-orlov-image-of-HW-P-projects-into-the-3-dimensional-space-of-HW-classes}, and is hence also $\Spin(V_{\hat{\eta}})$-equivariant. Hence, it maps the irreducible $\Spin(V_{\hat{\eta}})$-sub-representation
$S_{\hat{\eta}}\otimes_F S_{\hat{\eta}}$ to a $\Spin(V_{\hat{\eta}})$-sub-representation of $\wedge^*_FV_{\hat{\eta}}$. (??? show that the restriction is an isomorphism of $F$-vector spaces and that the multiplicity of $\wedge^*_FV_{\hat{\eta}}$ in $\wedge^*_\QQ V_\QQ$ is $1$ ???)
Note that $\wedge^*_FV_{\hat{\eta}}\cong \wedge^*_FH^1(X,\QQ)\otimes_F \wedge^*_FH^1(\hat{X},\QQ)$
\end{proof}

}
%
\subsection{Complex multiplication by $K$ from a maximal isotropic subspace of $H^1(X\times\hat{X},\QQ)\otimes_F K$}
\label{sec-complex-multiplication--by-CM-field-K}

We have the isomorphism
\[
\Hom_K(H^1(X,\QQ)\otimes_FK,K)\IsomRightArrow \Hom_\QQ(H^1(X,\QQ)\otimes_FK,\QQ), 
\]
sending $h\in\Hom_K(H^1(X,\QQ)\otimes_FK,K)$ to $tr_{K/\QQ}\circ h$. We get the $K$ bilinear pairing $(\bullet,\bullet)_{V_{\hat{\eta},K}}:V_{\hat{\eta},K}\otimes_K V_{\hat{\eta},K}\rightarrow K$ analogous to $(\bullet,\bullet)_{V_{\hat{\eta}}}$. 
We define 
\begin{equation}
\label{eq-Spin-V-K}
\Spin(V_{\hat{\eta},K})
\end{equation}
as the associated spin group. Let
\begin{equation}
\label{eq-Spin-V-Q-hat-eta-K}
\Spin(V_{\hat{\eta},K})_\QQ
\end{equation}
be the subgroup of $\Spin(V_{\hat{\eta},K})$ leaving the subset $V_\QQ$ of $V_{\hat{\eta},K}$  invariant.

\begin{rem}
The group $\Spin(V_{\hat{\eta}})$ is a subgroup of $\Spin(V_{\hat{\eta},K})_\QQ$. 
The difference is explained by the following diagram. The Galois involution $\iota$ acts on $C(V_{\hat{\eta}}\otimes_FK)\cong C(V_{\hat{\eta}})\otimes_FK$
via its action on the second tensor factor $K$. The subset $\Spin(V_{\hat{\eta},K})$ of $C(V_{\hat{\eta}}\otimes_FK)$ is $\iota$-invariant and is thus endowed with a $Gal(K/F)$-action.
\[
\xymatrix{
0 \ar[r] & \{\pm 1\} \ar[r] \ar[d]^\cong & \Spin(V_{\hat{\eta}}) \ar[r] \ar[d]^\cong & SO(V_{\hat{\eta}}) \ar[r] \ar[d]^\cong &F^\times/F^{\times, 2} \ar[r] \ar[d] & 0
\\
0 \ar[r] & \{\pm 1\} \ar[r] \ar[d]^\cong & \Spin(V_{\hat{\eta},K})^\iota \ar[r] \ar[d]^{\cap} & SO(V_{\hat{\eta}}\otimes_F K)^\iota \ar[r] \ar[d]^= &
[K^\times/K^{\times,2}]^\iota \ar[d]^=
\\
0 \ar[r] & \{\pm 1\} \ar[r] \ar[d]^\cong &\Spin(V_{\hat{\eta},K})_\QQ \ar[r] \ar[d]^{\cap} & SO(V_{\hat{\eta}}\otimes_F K)^\iota \ar[r] \ar[d]^\cap &
[K^\times/K^{\times,2}]^\iota \ar[d]^\cap
\\
0 \ar[r] & \{\pm 1\} \ar[r] &\Spin(V_{\hat{\eta},K}) \ar[r] & SO(V_{\hat{\eta}}\otimes_F K) \ar[r] & K^\times/K^{\times,2} \ar[r] & 0
}
\]
The top and bottom rows are exact, by \cite[II.3.7]{chevalley}. The third row is exact, by definition of $\Spin(V_{\hat{\eta},K})_\QQ$. 
The second row is not exact at $SO(V_{\hat{\eta}}\otimes_F K)^\iota$, since the homomorphism $F^\times/F^{\times, 2}\rightarrow K^\times/K^{\times,2}$ is not injective. Hence, $\Spin(V_{\hat{\eta}})$ is a proper subgroup of $\Spin(V_{\hat{\eta},K})_\QQ$ of index equal to the cardinality of the kernel of $F^\times/F^{\times, 2}\rightarrow K^\times/K^{\times,2}$.
\end{rem}

We choose the element $g_0$ in (\ref{eq-g}) to be an element of $\Spin(V_{\hat{\eta},K})$. 
Set 
\[
W:=\rho_{g_0}(H^1(\hat{X},\QQ)\otimes_FK).
\]
Its pure spinor in $\wedge^*_K [H^1(X,\QQ)\otimes_FK]\cong [\wedge^*_F [H^1(X,\QQ)]\otimes_FK]$ is $m_{g_0}(1)$. 
Define $\eta:K\rightarrow \End(V_{\hat{\eta},K})$, so that $\eta(t)$
acts on $W$ via its $K$-subspace structure and $\eta(t)$ acts on $\bar{W}$ via scalar multiplication by $\iota(t)$, where $\iota$ is the generator of $Gal(K/F)$. The vector space 
$V_{\hat{\eta}}=H^1(X,\QQ)\oplus H^1(X,\QQ)^*$ is an $F$-subspace of $V_{\hat{\eta},K}$ of half the dimension. 
It is equal to\footnote{Here we abuse notation and write $\bar{w}$ for $(id_{V_\QQ}\otimes \iota)(w)$, where $\iota$ is the generator of $Gal(K/F)$. 
} 
\begin{equation}
\label{eq-V-QQ-is-isomorphic-to-W-T}
V_{\hat{\eta}}=\{w+\bar{w} \ : \ w\in W\},
\end{equation}
provided 
\begin{equation}
\label{eq-W-T-and-its-conjugate-are-complementary}
W\cap \bar{W}=(0),
\end{equation} 
which we assume. 
Hence, the subspace $V_{\hat{\eta}}$ is invariant under $\eta(t)$ and we get an embedding
\[
\eta:K\rightarrow \End_\QQ\left[H^1(X,\QQ)\oplus H^1(X,\QQ)^*\right].
\]
Extending coefficient linearly from $\QQ$ to $\CC$ we get the embedding
\[
\eta:K\rightarrow \End_\CC\left[H^1(X,\CC)\oplus H^1(\hat{X},\CC)\right].
\]

\begin{lem}
\label{lemma-units-are-isometries}
The endomorphism $\eta(\iota(t))$ of $V_\QQ$ is the adjoint of $\eta(t)$ with respect to both $(\bullet,\bullet)_{V_{\hat{\eta},K}}$ and $(\bullet,\bullet)_{V_\QQ}$, for all $t\in K$. Consequently, $\eta(t)$ belongs to the image of $\Spin(V_{\hat{\eta},K})_\QQ$ in $SO(V_\QQ)$, if and only if $t\iota(t)=1$.
\end{lem}

\begin{proof}
It suffices to prove the statement for $\eta:K\rightarrow V_{\hat{\eta},K}$ and the $K$ valued bilinear pairing $(\bullet,\bullet)_{V_{\hat{\eta},K}}$, since
$(x,y)_{V_\QQ}=tr_{K/\QQ}((x,y)_{V_{\hat{\eta},K}})$, for all $x,y\in V_\QQ$. Let $x\in W$ and $y\in\bar{W}$. We have
\begin{eqnarray*}
(\eta(t)x,y)_{V_{\hat{\eta},K}}&=&(tx,y)_{V_{\hat{\eta},K}}=t(x,y)_{V_{\hat{\eta},K}}=(x,ty)_{V_{\hat{\eta},K}}=(x,\iota^2(t)y)_{V_{\hat{\eta},K}}
\\
&=& (x,\eta(\iota(t)y)_{V_{\hat{\eta},K}},
\end{eqnarray*}
where the first and last equality are by definition of $\eta$. The statement follows for all $x,y\in V_{\hat{\eta},K}$, since $W$ and $\bar{W}$ are complementary and isotropic.

If $\eta(t)$ belongs to the image of $\Spin(V_{\hat{\eta},K})$, then it is an isometry and so $\eta(t)\eta(\iota(t))=id$. Hence, $t\iota(t)=1$. Conversely, if 
$t\iota(t)=1$, then $\eta(t)$ act on the complementary isotropic subspaces $W$ and $\bar{W}$ via scalar multiplication by $t$ and $t^{-1}$ and so belongs to the image of 
$\Spin(V_{\hat{\eta},K})$. As $\eta(t)$ leaves $V_\QQ$ invariant, it belongs to the image of $\Spin(V_{\hat{\eta},K})_\QQ$.
\end{proof}

\begin{rem}
\label{rem-isometry-as-primitive-element}
Note that there exists an element $t\in K$ satisfying $K=\QQ(t)$ and $t\iota(t)=1$, by \cite[Theorem 3.7]{huybrechts-k3-book}. The statement of \cite[Theorem 3.7]{huybrechts-k3-book} assumes that the CM-field is associated to the transcendental sublattice of a lattice of $K3$-type, but the proof of the existence of such a primitive element is valid for any CM-field.
\end{rem}

Let $K_-$ be the $-1$-eigenspace of $\iota:K\rightarrow K$. Given $t\in K_-$, set
\begin{equation}
\label{eq-Xi-t}
\Xi_t(x,y):= (\eta(t)x,y).
\end{equation}

\begin{cor}
\label{cor-Neron-Severi-group}
$\Xi_t$ is anti-symmetric and for $s\in K$ we have
\[
\Xi_t(\eta(s)x,\eta(s)y)=\Xi_t(\hat{\eta}(s\iota(s))x,y).
\]
We get the injective
homomorphism $\Xi:K_-\rightarrow \wedge^2_FV^*_\QQ$, given by $t\mapsto \Xi_t.$
\end{cor}

\begin{proof} We have
\[
\Xi_t(x,y)=(\eta(t)x,y)_V\stackrel{\mbox{Lemma \ref{lemma-units-are-isometries}}}{=}
(x,\eta(\iota(t))y)_V=-(\eta(t)y,x)=-\Xi_t(y,x).
\]
Hence $\Xi_t$ is anti-symmetric. 
We have $\Xi_t(\eta(f)x,y)=\Xi_t(x,\eta(f)y)$, for all $f\in F$, by Lemma \ref{lemma-units-are-isometries} again.
Hence $\Xi_t$ belongs to $\wedge^2_FV^*$. 
\end{proof}

Composing the homomorphism in Corollary \ref{cor-Neron-Severi-group} 
with the isomorphism $V\cong V^*$ we get an injective homomorphism from $K_-$ to $\wedge^2_FH^1(X\times \hat{X},\QQ)$.
Note that the Neron-Severi group of a simple abelian variety with complex multiplication by $K$ has rank $\dim_\QQ(F)=\dim_\QQ(K_-)$, by \cite[Prop. 5.5.7]{BL}.

We have two isomorphisms $V_{\hat{\sigma},\RR}\otimes_FK\rightarrow V_{\hat{\sigma},\RR}\otimes_\RR\CC$, one via $id_{V_{\hat{\sigma},\RR}}\otimes\sigma$, the other via $id_{V_{\hat{\sigma}},\RR}\otimes\bar{\sigma}$, where $\sigma$ is an embedding restricting to $F$ as $\hat{\sigma}$. 
A CM-type $T$ thus provides an embedding of $V_\RR\otimes_FK=\oplus_{\{\hat{\sigma}\in\hat{\Sigma}\}}V_{\hat{\sigma},\RR}\otimes_FK$ into $V_\RR\otimes_\RR\CC$. Precomposing with the obvious embedding of $V_\QQ\otimes_FK$ into $V_\RR\otimes_FK$ and post composing with the isomorphism $V_\RR\otimes_\RR\CC\cong V_\QQ\otimes_\QQ\CC$ we get the embedding
\begin{equation}
\label{eq-embedding-of-V-otimes-FK-in-V-CC}
e_T:V_\QQ\otimes_F K\rightarrow V_\QQ\otimes_\QQ \CC.
\end{equation}
Set
\begin{equation}
\label{eq-W-T}
W_{T,\CC}:=\span_\RR(e_T(W)).
\end{equation}
Note that $W_{T,\CC}$ is a complex subspace, as $\span_\RR(e_T(W))=\span_\CC(e_T(W))$.

The subspace 
$V_{\hat{\sigma},\RR}\otimes_FK$ of $(V_{\hat{\eta}}\otimes_\QQ\RR)\otimes_F K$ decomposes as the direct sum of its intersections 
$W_{\hat{\sigma}}:=[V_{\hat{\sigma},\RR}\otimes_FK]\cap[W\otimes_\QQ\RR]$ and 
$\bar{W}_{\hat{\sigma}}:=[V_{\hat{\sigma},\RR}\otimes_FK]\cap[\bar{W}\otimes_\QQ\RR]$, since $W$ and $\bar{W}$ are complementary $\hat{\eta}(F)$-subspaces of $V_{\hat{\eta}}\otimes_F K$.
Set
\begin{eqnarray}
\label{W-T-hat-sigma}
W_{T,\hat{\sigma}}&:=&[id_{V_{\hat{\sigma}},\RR}\otimes T(\hat{\sigma})](W_{\hat{\sigma}}),
\\
\nonumber
\bar{W}_{T,\hat{\sigma}}&:=&[id_{V_{\hat{\sigma}},\RR}\otimes T(\hat{\sigma})](\bar{W}_{\hat{\sigma}}).
\end{eqnarray}
 We have $W_{T,\CC}=\oplus_{\hat{\sigma}\in\hat{\Sigma}}W_{T,\hat{\sigma}}$ and
 $\bar{W}_{T,\CC}=\oplus_{\hat{\sigma}\in\hat{\Sigma}}\bar{W}_{T,\hat{\sigma}}$.
 
\hide{
We get the commutative diagram
\[
\xymatrix{
K\ar[r]^-\eta \ar[dr]_{\eta}& \End_\QQ\left(V_{\hat{\eta}}\right) \ar[d]
\\
& \End_\CC(V_{\hat{\eta}}\otimes_\QQ\CC),
}
\]
where the vertical homomorphism sends $a$ to $a\otimes id_\CC$ 
Hence, 
$V_{\hat{\eta}}$ is invariant under $\eta(K)$. 
In other words, $H^1(X,\QQ)\oplus H^1(\hat{X},\QQ)$ is invariant under the $\eta(K)$ action via
\[
\eta:K\rightarrow \End_\CC\left[H^1(X,\CC)\oplus H^1(\hat{X},\CC)\right].
\]
}

%
\subsection{The Galois group action on the set $\T_K$ of CM-types}
\label{sec-Galois-action-on-set-of-CM-types}
Let $\sigma_0:K\rightarrow\CC$ be an embedding. Let $\tilde{K}$ be the Galois closure of $\sigma_0(K)$ in $\CC$. The subfield $\tilde{K}$ of $\CC$ is independent of $\sigma_0$. The set of embedding of $K$ in $\tilde{K}$ are in bijection to the set $\Sigma$ of its embeddings in $\CC$, as any embedding of $K$ in $\CC$ factors through $\tilde{K}$. Define $\tilde{F}\subset \RR$ similarly. Let $\T'$ be the set of unital homomorphisms of $F$-algebras
\[
T:K\rightarrow F\otimes_\QQ\tilde{K}.
\]
Above, inclusion of $F$ in $F\otimes_\QQ\tilde{K}$ is via $f\mapsto f\otimes 1$. 
We have the decomposition $F\otimes_\QQ\tilde{F}\cong \oplus_{\hat{\sigma}\in\hat{\Sigma}}\hat{\sigma}$, where we denote by $\hat{\sigma}$ both a field embedding $\hat{\sigma}:F\rightarrow \tilde{F}$ and a $1$-dimensional  vector space over $\tilde{F}$ endowed with an $F$ vector space structure, such that for all $f\in F$ and $v\in \hat{\sigma}$ the equality $fv=\hat{\sigma}(f)v$ holds.
We have the isomorphisms
\[
F\otimes_\QQ\tilde{K}\cong
(F\otimes_\QQ\tilde{F})\otimes_{\tilde{F}}\tilde{K}\cong
\oplus_{\hat{\sigma}\in\hat{\Sigma}}\hat{\sigma}\otimes_{\tilde{F}}\tilde{K}.
\]
Note that the direct summand $\hat{\sigma}\otimes_{\tilde{F}}\tilde{K}$ is naturally isomorphic to $\tilde{K}$.
Composing an element $T:K\rightarrow F\otimes_\QQ\tilde{K}$ of $\T'$ with the projection onto the direct summand $\hat{\sigma}\otimes_{\tilde{F}}\tilde{K}$ we get an embedding $\sigma:K\rightarrow \tilde{K}$, which restricts to $F$ as $\hat{\sigma}$. Hence, the set $\T'$ is in natural bijection with the set $\T_K$ of CM-types.

The group $Gal(\tilde{K}/\QQ)$ acts on $F\otimes_\QQ\tilde{K}$ via its action on the right tensor factor. Hence $Gal(\tilde{K}/\QQ)$ acts on $\T'$ via 
$g(T):=(id_F\otimes g)\circ T$, for all $g\in Gal(\tilde{K}/\QQ)$ and $T\in \T'$. Let us describe the corresponding action on $\T_K$.
Let $r:\Sigma\rightarrow \hat{\Sigma}$ send $\sigma:K\rightarrow \tilde{K}$ to its restriction to $F$.
Then $\T_K=\{f:\hat{\Sigma}\rightarrow \Sigma \ : \ r\circ f=id_{\hat{\Sigma}}\}$. Given $g\in Gal(\tilde{K}/\QQ)$, define $m_g:\Sigma\rightarrow\Sigma$ 
by $m_g(\sigma):=g\circ\sigma$. Denote by $\hat{g}\in Gal(\tilde{F}/\QQ)$ the restriction of $g$ to $\tilde{F}$. Define
$m_{\hat{g}}:\hat{\Sigma}\rightarrow\hat{\Sigma}$ by $m_{\hat{g}}(\hat{\sigma})=\hat{g}\circ\hat{\sigma}$. Then $Gal(\tilde{K}/\QQ)$ acts on $\T_K$ sending
$f\in \T_K$ to
\[
g(f) := m_g\circ f\circ m_{\hat{g}^{-1}}.
\]

Given $T\in \T'$,
we get the embedding
\[
e_T:V_\QQ\otimes_F K\RightArrowOf{id_V\otimes T} V_\QQ\otimes_F(F\otimes_\QQ\tilde{K})\cong V_\QQ\otimes_\QQ\tilde{K}.
\]
The embedding in (\ref{eq-embedding-of-V-otimes-FK-in-V-CC}) naturally factors through $e_T$ above, hence the use of the same notation. 
Given an $n$-dimensional subspace $W$ of $V_\QQ\otimes_F K$, 
let 
\[
W_T
\] 
be the subspace of $V_\QQ\otimes_\QQ\tilde{K}$ spanned over $\tilde{K}$ by $e_T(W)$. 
The set 
\[
\{
(id_V\otimes g)(W_T) \ : \ g \in Gal(\tilde{K}/\QQ)
\}
\]
determines a reduced subscheme of $Gr(n,V_\QQ\otimes_\QQ\tilde{K})$ defined over $\QQ$. 
The set
\[
\{W_T \ : \ T\in\T'\}
\]
is the union of $Gal(\tilde{K}/\QQ)$-orbits, hence it corresponds to a reduced subscheme of $Gr(n,V_\QQ\otimes_\QQ\tilde{K})$ defined over $\QQ$. 
Assume that $W_T$ is an even maximal isotropic subspace of $V_\QQ\otimes_\QQ\tilde{K}$, for all $T\in \T'$.
Let $\ell_T\in \PP(S^+_{\tilde{K}})$ be the pure spinor of $W_T$. 
The isomorphism between the spinor variety in $\PP(S^+_{\tilde{K}})$ and the even component of the grassmannian of maximal isotropic subspaces of $V_\QQ\otimes_\QQ\tilde{K}$ is defined over $\QQ$. Hence, the set
$\{\ell_T \ : \ T\in\T'\}$ corresponds to a reduced subscheme defined over $\QQ$ and its span in $S^+_{\tilde{K}}$ is of the form $B\otimes_\QQ\tilde{K}$,
for some subspace $B$ of $S^+_\QQ$.

%
\subsection{A linear space $B$ secant to the spinor variety}
We have two $K$-actions on $V_\QQ\otimes_FK$. Each of $W$ and $\bar{W}$ is invariant with respect to both. The two actions coincide on $W$ and are $\iota$-conjugate on $\bar{W}$.
The $2$-dimensional  vector $K$-subspace $\wedge^d_KW\oplus\wedge^d_K\bar{W}$ of $\wedge^d_K(V_\QQ\otimes_FK)\cong(\wedge^d_F V_\QQ)\otimes_FK$
is $\iota$-invariant and is hence of the form $HW\otimes_FK$ for a $2$-dimensional $F$-subspace $HW$ of
$\wedge^d_FV_\QQ\subset H^d(X\times\hat{X},\QQ)$. Now $HW\otimes_FK$ is also a subspace of $\wedge^d_{\eta(K)}[V_\QQ\otimes_FK]$, and so 
$HW$ is a $1$-dimensional $\eta(K)$-subspace of $\wedge^d_{\eta(K)}V_\QQ$. The latter is one-dimensional over $K$ and so we get the equality  
\[
HW=\wedge^d_{\eta(K)}H^1(X\times\hat{X},\QQ).
\] 
We have the equality
\begin{eqnarray*}
HW\otimes_\QQ\CC&=&\oplus_{\sigma\in T} \left(
\wedge^d_\CC[W_{T,\CC}]_\sigma \oplus \wedge^d_\CC[\bar{W}_{T,\CC}]_{\bar{\sigma}}
\right),
\end{eqnarray*}
for every CM-type $T$, where 
the characters are of the $\eta$ action of $K$
and $W_{T,\CC}$ is given in (\ref{eq-W-T}).


\hide{
%
\subsection{New strategy - pure $F$-spinors}
Let $P\subset S^+_\QQ$ be the $e$-dimensional $\QQ$-subspace, such that $P_\RR$ is spanned by the pure spinors $\ell_{\hat{\sigma}}$ of 
$W_{\hat{\sigma}}$, $\hat{\sigma}\in\hat{\Sigma}$, and their conjugates $\bar{\ell}_{\hat{\sigma}}$. Note that $\ell_{\hat{\sigma}_0}$ is the pure spinor of the maximal isotropic subspace $W_{\ell_{\hat{\sigma}_0}}\oplus_{\ell_{\hat{\sigma}}\neq \ell_{\hat{\sigma}_0}}H^1_{\ell_{\hat{\sigma}}}(\hat{X})$.
Denote $\ell_{\hat{\sigma}}$ by $\ell_\sigma$, if $\sigma\in T$, and set $\ell_\sigma:=\bar{\ell}_{\hat{\sigma}}$, if $\sigma\in \iota(T)$.

Use Remark \ref{rem-image-via-varphi-of-top-wedge-over-F-of-a-maximal-isotropic-F-subspace} to relate the image via Orlov's equivalence of the subspace $P\otimes_\QQ P$ of $S^+_\QQ \otimes_\QQ S^+_\QQ$ (namely, of the subspace spanned by $\ell_{\sigma_1}\otimes\ell_{\sigma_2}$, $\sigma_1,\sigma_2\in\Sigma$) to 
the subspace $HW$ of $\wedge^d_FH^1(X\times\hat{X},\QQ)$. Now choose $k$-secant sheaves $F_i$ with $ch(F_i)$ in $P$.
(??? not helpful ???)

}

%
\subsubsection{Pure spinors}
Given an embedding $\sigma\in\Sigma$ of $K$, denote by $V_{\sigma,\CC}$ the subspace of $V_\QQ\otimes_\QQ\CC$ on which the $\eta$ action of $K$ acts via the character $\sigma:K\rightarrow\CC$. 
Let
\begin{equation}
\label{eq-V-sigma}
V_\sigma
\end{equation}
be the analogous direct summand of $V_\QQ\otimes_\QQ\tilde{K}$, where the algebraic closure $\tilde{K}$ of $K$ in $\CC$ is introduced in section \ref{sec-Galois-action-on-set-of-CM-types}.
If $\sigma$ belongs to a CM-type $T$, then 
\[V_{\sigma,\CC}=W_{T,\hat{\sigma}}
\ \  \mbox{and} \ \ V_{\bar{\sigma},\CC}=\bar{W}_{T,\hat{\sigma}},
\]
where $W_{T,\hat{\sigma}}$ is given in (\ref{W-T-hat-sigma}).
We have the direct sum decomposition $V_\CC=\oplus_{\sigma\in\Sigma}V_{\sigma,\CC}$
and
\begin{equation}
\label{eq-decomposition-of-W-T-CC}
W_{T,\CC}=\oplus_{\{\hat{\sigma}\in\hat{\Sigma}\}}V_{T(\hat{\sigma}),\CC}.
\end{equation} 
\hide{
The composition $V_\QQ\rightarrow V_\CC\rightarrow V_{\sigma,\CC}$, of the natural embedding with the projection on the direct summand, 
maps $V_\QQ$ isomorphically onto $V_\sigma$. The direct sum $\oplus_{\sigma\in\Sigma}V_\sigma$, considered as a $\QQ$-subspace of $V_\QQ\otimes_\QQ(K\otimes_\QQ\CC)$,
is isomorphic to the image of $V_\QQ\otimes_\QQ K$ via the analogue of (\ref{eq-embedding-of-V-tensor-F}).
}

\begin{rem}
If $K$ is a Galois extension of $\QQ$, then $Gal(K/\QQ)$ acts transitively on the set of characters via $\sigma\mapsto \sigma\circ g^{-1}$, for all $g\in Gal(K/\QQ)$.
In that case the subfield $\sigma(K)\subset \CC$ is independent of $\sigma$ and we may identify $K$ with this subfield by choosing an embedding $\sigma_0$.
The subspace $V_\sigma$ is then the subspace of $V_\QQ\otimes_\QQ \sigma_0(K)$, where $\eta(a)$ acts via $id_{V_\QQ}\otimes \sigma(a)$, for all $a\in K$. Similarly, the subspaces $V_{\hat{\sigma}}$ are the analogous subspaces of $V_\QQ\otimes_\QQ \hat{\sigma}_0(F)$.
\end{rem}

\hide{
For each CM-type $T'$ we define the subspace $\tilde{W}_{T'}$ of $V_\QQ\otimes_\QQ K$ (not $V_{\hat{\eta},K}$) as the
direct sum  
\[
\tilde{W}_{T'}:= \oplus_{\sigma\in T'}V_\sigma.
\] 
We define the subspace $W_{T'}$ of $V_{\hat{\eta},K}$ as the image of $\tilde{W}_{T'}$ via the natural homomorphism 
\begin{equation}
\label{eq-mapping-tilde-W-T-to-W-T}
V_\QQ\otimes_\QQ K
\rightarrow V_\QQ\otimes_F K=:V_{\hat{\eta},K}
\end{equation}
mapping $v\otimes(f_1+f_2\sqrt{-q})$ to $\hat{\eta}_{f_1}(v)+\hat{\eta}_{f_2}(v)\otimes\sqrt{-q}$, for all $v\in V_\QQ$ and $f_1, f_2\in F$.
(??? Check that for the originally chosen CM-type $T$ we do get the previously defined $W_T$. Note that $\dim_\QQ(V_\sigma)=\dim_\QQ(V_\QQ)=\dim_\QQ(W_T)$. We may need to change the definition and consider instead the image of $V_\QQ$ via its diagonal embedding into $\tilde{W}_{T'}$ and map it to $V_{\hat{\eta},K}$ via (\ref{eq-mapping-tilde-W-T-to-W-T})???)
(??? 
Is $\tilde{W}_T$ isotropic? If indeed it is, what is its pure spinor and how is it related to that of $W_T$?   ???)
}

For each CM-type $T$ 
consider the subspace $W_{T,\CC}$ of
$V_\QQ\otimes_\QQ\CC$.

\begin{lem}
\label{lemma-W-T-is-maximal-isotropic}
The subspace $W_{T,\CC}$ of $V_\CC$ is maximal isotropic, for every CM-type $T$.
\end{lem}

\begin{proof}
If $\hat{\sigma}_1\neq \hat{\sigma}_2$, then $V_{\sigma_1,\CC}$ and $V_{\sigma_2,\CC}$ are orthogonal, since $V_{\hat{\sigma}_1}$ and $V_{\hat{\sigma}_2}$ are.
Ditto for $V_{\bar{\sigma}_1,\CC}$ and $V_{\sigma_2,\CC}$ and for  $V_{\bar{\sigma}_1,\CC}$ and $V_{\bar{\sigma}_2,\CC}$. The subspace 
$V_{\sigma,\CC}$ is isotropic, since $W_{T,\CC}$ is, and $V_{\bar{\sigma},\CC}$ is isotropic, since $\bar{W}_{T,\CC}$ is.
Hence, $W_{T,\CC}$ is  a maximal isotropic subspace of 
$V_\QQ\otimes_\QQ\CC$
for every CM-type $T$.  
\end{proof}

\hide{
We set (??? these lines are not needed ???)
\[
P_{T'}:=\ell_{W_{T',\CC}}+\ell_{\bar{W}_{T',\CC}}.
\]
If $T'\neq T$, then $P_{T'}$ is a $2$-dimensional $\RR$-subspace of $H^{ev}(X,\RR)$. For our initially chosen CM-type $T$, $P_T$ is a $2$-dimensional $\QQ$-subspace (??? NO, it need not be defined over $\QQ$ ???) of $H^{ev}(X,\QQ)$. If $W_{T,\CC}$ is the image of $H^1(\hat{X},\QQ)\otimes_F K$ via the isometry $g_0$ in Example \ref{example-isometry-g-0}, then 
\[
P_T=\span_\QQ\left\{
\left(1-\hat{\eta}_q(\Theta\wedge_F\Theta)+\dots\right),
\left(\Theta-\hat{\eta}_q(\Theta\wedge_F\Theta\wedge_F\Theta)+\dots\right)
\right\}
\]
(see Equation (\ref{eq-pure-spinor-rho-g-0-of-1})).

}

Denote by $\T_K$ the set of $2^{e/2}$ complex multiplication types. 
The element $\iota$ of $Gal(K/\QQ)$ induces an involution of $\T_K$. Let $\ell_T\subset H^{ev}(X,\CC)$ be the one-dimensional subspace spanned by a pure spinor corresponding to the subspace $W_{T,\CC}$, $T\in\T_K$.

\begin{lem}
\label{lemma-B-is-rational}
The linear subspace of $H^{ev}(X,\CC)$ spanned by $\ell_T$, for all $T\in\T_K$, is of the form $B\otimes_\QQ\CC$, where $B$ is a rational subspace 
\[
B\subset H^{ev}(X,\QQ).
\] 
\end{lem}

\begin{proof}
The statement follows from Lemma \ref{lemma-W-T-is-maximal-isotropic} and the discussion in Section \ref{sec-Galois-action-on-set-of-CM-types}.
\end{proof}

Let $\ell_\sigma\in S^+_{\hat{\sigma}}\otimes_\RR\CC$ be the pure spinor of the maximal isotropic subspace $V_\sigma$ of $V_{\hat{\sigma}}$.
Under the tensor decomposition (\ref{eq-tensor-product-decomposition-of-pure-spinor}) of pure spinors of $F$-invariant maximal isotropic subspaces of $V_\CC$,
we have
\begin{equation}
\label{eq-tensor-factorization-of-ell-T}
\ell_T=\otimes_{\hat{\sigma}\in\hat{\Sigma}}\ell_{T(\hat{\sigma})},
\end{equation}
by the equality (\ref{eq-decomposition-of-W-T-CC}).
Denote by $P_{\hat{\sigma}}\subset S^+_{\hat{\sigma}}$ the secant plane spanned by $\ell_{\sigma}$ and $\ell_{\bar{\sigma}}$.
Then 
\begin{equation}
\label{eq-tensor-factorization-of-B}
B_\RR=\otimes_{\hat{\sigma}\in\hat{\Sigma}}P_{\hat{\sigma}}.
\end{equation}

\hide{
\begin{rem}
Let $\tilde{K}$ be the (???)  Galois closure of $K$ over $\QQ$ and $H\subset Gal(\tilde{K}/\QQ)$ the subgroup leaving all elements of $K$ invariant. 
Let $\tilde{\Sigma}$ be the set of complex embeddings of $\tilde{K}$, so that $\Sigma=\tilde{\sigma}/H$. Then $Gal(\tilde{K}/\QQ)$ acts transitively on $\Sigma$, by
$\sigma\mapsto \sigma\circ g^{-1}$, $g\in Gal(\tilde{K}/\QQ)$. The stabilizer of each element of $\Sigma$ is $H$. 
The subfield $\tilde{K}':=\tilde{\sigma}(\tilde{K})$ of $\CC$ is independent of $\tilde{\sigma}\in\tilde{\Sigma}$, since $Gal(\tilde{K}/\QQ)$ acts transitively on $\tilde{\Sigma}$.
The subspace $V_{\sigma,\CC}$ is defined over $\sigma(K)$ (and hence over $\tilde{K}'$), i.e., is associated to the eigenspace $[V_\QQ\otimes_\QQ \sigma(K)]_{\sigma(t)}$ of $\eta(t)$.
Hence, the subspace $W_{T',\CC}$ is defined over $\tilde{K}'$, for all $T'\in \T_K$. It follows that each line
$\wedge^{2n}_\CC W_{T',\CC}$ is defined over $\tilde{K}'$. The group $Gal(\tilde{K}/\QQ)$ acts on $V_\QQ\otimes_\QQ \tilde{K}'$ and
on $(\wedge^{2n}_\QQ V_\QQ)\otimes \tilde{K}'\cong \wedge^{2n}_{\tilde{K}'}[V_\QQ\otimes_\QQ\tilde{K}']$. 

The subspace $\wedge^{2n}W_{T',\CC}$ of $\wedge^{2n}V_\CC$ is characterized by the property that for $a\in K$, $\wedge^{2n}\eta(a)$ acts on $\wedge^{2n}W_{T',\CC}$ via $\prod_{\sigma\in T'}(\sigma(a))^d$. The set of characters
$\{\prod_{\sigma\in T'}(\sigma)^d \ : \ T'\in \T_K\}$ is $Gal(\tilde{K}/\QQ)$-invariant. Hence, the subspace
\[
\oplus_{T'\in\T_K}\wedge^{2n}W_{T',\CC}
\]
is defined over $\QQ$, as is the $0$-subscheme of $\PP(\wedge^{2n}V_\CC)$ of points
\[
\{\PP(\wedge^{2n}W_{T',\CC}) \ : \ T'\in\T_K\}.
\]
It follows that the $0$-dimensional subscheme $\{\ell_{W_{T',\CC}}\otimes\ell_{W_{T',\CC}}  \ : \ T'\in\T_K\}$
of $H^{ev}(X,\CC)\otimes H^{ev}(X,\CC)$, consisting of tensor squares of pure spinors, is defined over $\QQ$. 

We need to work with $K$-valued characters of the subgroup $\Spin(V_{\hat{\eta},K})_{\ell_\bullet}$ of $\Spin(V_{\hat{\eta},K})$ fixing every point the subset $\{\ell_{W_{T',\CC}}  \ : \ T'\in\T_K\}$. (???)

Let $t\in K$ be a primitive element satisfying $t\iota(t)=1$, as in Remark \ref{rem-isometry-as-primitive-element}.
There exists an element $\tilde{t}\in \Spin(V_{\hat{\eta},K})_\QQ$, such that $\rho_{\tilde{t}}=\eta(t)$, by Lemma \ref{lemma-units-are-isometries}.
The subspaces $V_{\sigma,\CC}$, $\sigma\in\Sigma$, of $V_\CC$, given in (\ref{eq-V-sigma-CC}), are the the eigenspaces of $\rho_{\tilde{t}}$ with eigenvalues 
$\sigma(t)$, since $t$ is a primitive element of $K$.
(???)
\EndProof
\end{rem}
}

%
\subsubsection{An isotypic decomposition of the $\Spin(V_\QQ)_\eta$-invariant space $B\otimes B$}
\label{sec-isotypic-decomposition}
Let 
\[
\Spin(V_\QQ)_\eta
\]
be the subgroup of the group $\Spin(V_\QQ)_{\hat{\eta}}$, given in (\ref{eq-Spin-V-QQ-hat-eta}),
leaving invariant the subspace $V_\sigma$, for all $\sigma\in\Sigma$.
Let $\Spin(V)_\eta$ be the analogous subgroup of $\Spin(V)_{\hat{\eta}}$. We will see that $\Spin(V)_\eta/\{\pm 1\}$ is an arithmetic subgroup of a product of $e/2$ copies of the unitary group $U(d,d)$ (see Equation 
(\ref{eq-factorization-of-Spin-V-tilde-K-eta-mod-pm1}) and Lemma \ref{lemma-H-t-is-hermitian} below).

Set $V_{\tilde{K}}:=V_\QQ\otimes_\QQ\tilde{K}$. Let  $\Spin(V_{\tilde{K}})_{\{g_{\hat{\sigma}}\}}$ be the subgroup of $\Spin(V_{\tilde{K}})$ consisting of elements $g$ commuting with $g_{\hat{\sigma}}$, given in (\ref{eq-g-hat-sigma}), for all $\hat{\sigma}\in\hat{\Sigma}$. Let 
$\Spin(V_{\tilde{K}})_{\hat{\eta}}$ be the commutator subgroup of $\Spin(V_{\tilde{K}})_{\{g_{\hat{\sigma}}\}}$.
Let  
\[
\Spin(V_{\tilde{K}})_\eta
\] 
be the subgroup of $\Spin(V_{\tilde{K}})_{\hat{\eta}}$ consisting of elements $g$ 
such that $V_\sigma$ is $\rho_g$-invariant, for all $\sigma\in\Sigma$. Note that $\Spin(V_\QQ)_\eta$ is contained in the $Gal(\tilde{K}/\QQ)$-invariant subgroup of $\Spin(V_{\tilde{K}})_\eta$, where the $Gal(\tilde{K}/\QQ)$-action is induced by that on $C(V_{\tilde{K}})\cong C(V_\QQ)\otimes_\QQ\tilde{K}$ via the action on the second tensor factor.

Let 
\[
\Spin(V)_{\eta,B}
\]
be the subgroup of $\Spin(V)_{\eta}$ fixing every point of the subspace $B$ of $H^{ev}(X,\QQ)$. Denote by
$\Spin(V_{\tilde{K}})_{\eta,B}$ the analogous subgroup of $\Spin(V_{\tilde{K}})_\eta$.
When $F=\QQ$, the group $\Spin(V_{\tilde{K}})_\eta$ specializes to the group $\Spin(V_K)_{\ell_1,\ell_2}$ in (\ref{eq-spin-V-K-ell-1-ell-2})
and the group $\Spin(V)_{\eta,B}$ specializes to $\Spin(V)_P$, given in (\ref{eq-Spin(V)_P}).

The group $\Spin(V_{\tilde{K}})_\eta$ leaves invariant each of the subspaces $W_T$, $T\in \T_K$, by Equation (\ref{eq-decomposition-of-W-T-CC}).
Hence, each of the lines $\ell_T$ in $H^{ev}(X,\tilde{K})$ is $\Spin(V_{\tilde{K}})_\eta$-invariant. Thus, the commutator subgroup of $\Spin(V_{\tilde{K}})_\eta$
is contained in $\Spin(V_{\tilde{K}})_{\eta,B}$. 

\[
\xymatrix{
\Spin(V)_{\hat{\eta}} \ar[r]^\subset & \Spin(V_\QQ)_{\hat{\eta}} &
\Spin(V_{\hat{\eta}})\ar[l]_{(\ref{eq-homomorphism-from-Spin-V-hat-eta-to-commutator-in-Spin-V-QQ})} \ar[r]^-\subset&
\Spin(V_{\hat{\eta},K})_\QQ \ar[r]^-\subset & \Spin(V_{\hat{\eta}}\otimes_FK)
\\
\Spin(V)_\eta \ar[r]^\subset \ar[u]^\cup & \Spin(V_\QQ)_\eta \ar[r]^\subset \ar[u]^\cup & \Spin(V_{\tilde{K}})_\eta 
\\
\Spin(V)_{\eta,B}\ar[r]_\subset \ar[u]^\cup & 
\Spin(V_\QQ)_{\eta,B}\ar[r]_\subset\ar[u]^\cup &
\Spin(V_{\tilde{K}})_{\eta,B}\ar[u]^\cup.
}
\]
Note that $W_T\cap W_{T'}=V_\sigma$, if $\sigma$ is the only common value of $T$ and $T'$. Hence,
$\Spin(V)_{\eta,B}$ is equal to the subgroup of $\Spin(V)_{\hat{\eta}}$ fixing every point of $B$.

For each $\sigma\in\Sigma$ let $\Spin(V_\sigma\oplus V_{\bar{\sigma}})_\eta$ be the subgroup of $\Spin(V_\sigma\oplus V_{\bar{\sigma}})$
leaving each of $V_\sigma$ and $V_{\bar{\sigma}}$ invariant.
We have a natural homomorphism 
\begin{equation}
\label{eq-factorization-of-Spin-V-tilde-K-B}
\prod_{\hat{\sigma}\in\hat{\Sigma}}\Spin(V_\sigma\oplus V_{\bar{\sigma}})_{\eta,P_{\hat{\sigma}}}
\rightarrow \Spin(V_{\tilde{K}})_{\eta,B}
\end{equation}
induced by the isomorphism
\[
C(V_{\tilde{K}})\cong\otimes_{\hat{\sigma}\in\hat{\Sigma}}C(V_\sigma\oplus V_{\bar{\sigma}}),
\] 
where the tensor product is in the category of $\ZZ_2$-graded algebras, as in Diagram (\ref{eq-diagram-of-spin-groups}). Above $\sigma$ restricts to $F$ as $\hat{\sigma}$. 
Now, $\Spin(V_\sigma\oplus V_{\bar{\sigma}})_{\eta,P_{\hat{\sigma}}}$ is isomorphic to the group $SL(V_\sigma)$ of linear transformations of $V_\sigma$, as a $d$-dimensional vector space over $\tilde{K}$, of determinant $1$. An elements $g$ of $SL(V_\sigma)$ acts on $V_{\bar{\sigma}}$ by $(g^*)^{-1}$, under the identification of $V_{\bar{\sigma}}$ with $V_\sigma^*$ via the bilinear pairing $(\bullet,\bullet)_V$. The proof is identical to that of Lemma 
\ref{lemma-Spin-V-K-is-SL-n-K}.

Similarly, we have the homomorphism  
\[
\prod_{\hat{\sigma}\in\hat{\Sigma}}\Spin(V_\sigma\oplus V_{\bar{\sigma}})_\eta
\rightarrow
\Spin(V_{\tilde{K}})_\eta ,
\]
where $\Spin(V_\sigma\oplus V_{\bar{\sigma}})_\eta/\{\pm 1\}$ is isomorphic to $GL(V_\sigma)$, by the proof of \cite[Lemma 1]{igusa}. An elements $g$ of $GL(V_\sigma)$ acts on $V_{\bar{\sigma}}$ by $(g^*)^{-1}$, under the identification of $V_{\bar{\sigma}}$ with $V_\sigma^*$ via the bilinear pairing $(\bullet,\bullet)_V$.

\begin{equation}
\label{eq-diagram-of-Spin-groups-over-K-tilde}
\xymatrix{
\prod_{\hat{\sigma}\in\hat{\Sigma}}C(V_\sigma\oplus V_{\bar{\sigma}})^{even,\times} \ar[r] &
\left[\otimes_{\hat{\sigma}\in\hat{\Sigma}}(V_\sigma\oplus V_{\bar{\sigma}})\right]^{even,\times}
\ar[r]^-\cong &
C(V_{\tilde{K}})^{even,\times}
\\
\prod_{\hat{\sigma}\in\hat{\Sigma}}\Spin(V_\sigma\oplus V_{\bar{\sigma}})_\eta 
\ar[rr] \ar[u]^-\cup
& &
\Spin(V_{\tilde{K}})_\eta
\ar[u]^{\cup}.
}
\end{equation}

\hide{
We have the left exact sequence
\[
1\rightarrow \Spin(V_{\tilde{K}})_\eta \rightarrow \prod_{\hat{\sigma}\in\hat{\Sigma}}G(V_\sigma\oplus V_{\bar{\sigma}})_\eta\RightArrowOf{j} \tilde{K}^\times\times \ZZ/2\ZZ,
\]
where 
\[
j((g_{\hat{\sigma}})_{\hat{\sigma}\in\hat{\Sigma}})=\left(\prod_{\hat{\sigma}\in\hat{\Sigma}}N(g_{\hat{\sigma}}),\sum_{\hat{\sigma}\in\hat{\Sigma}}p(g_{\hat{\sigma}})\right),
\]
$N:G(V_\sigma\oplus V_{\bar{\sigma}})\rightarrow\tilde{K}^\times$ is the norm\footnote{
The norm character $N:G(V_{\tilde{K}})\rightarrow \tilde{K}$ is defined by 
$N(g)=g\tau(g)$ as an element of the center $\tilde{K}$ of the Clifford algebra $C(V_{\tilde{K}})$. 
If $g=v_1\cdots v_k$, for $v_i\in V_{\tilde{K}}$, then $N(g)=\prod_{i=1}^k\frac{(v_i,v_i)}{2}$. See \cite[Sec. II.2.3]{chevalley}.
}
character,
and $p:G(V_\sigma\oplus V_{\bar{\sigma}})\rightarrow \ZZ/2\ZZ$ is the parity character. Denote by 
\[
N_{\hat{\sigma}}:\Spin(V_{\tilde{K}})_\eta\rightarrow \tilde{K}^\times \ \mbox{and} \ 
p_{\hat{\sigma}}:\Spin(V_{\tilde{K}})_\eta\rightarrow \ZZ/2\ZZ
\] 
the restriction of the norm and parity characters of the factor $G(V_\sigma\oplus V_{\bar{\sigma}})_\eta$. 
Let the character $\det_\sigma$ be the composition  
\[
\Spin(V_{\tilde{K}})_\eta \rightarrow G(V_\sigma\oplus V_{\bar{\sigma}})_\eta\rightarrow O(V_\sigma\oplus V_{\bar{\sigma}})_\eta\rightarrow GL(V_\sigma)\RightArrowOf{\det} \tilde{K}^\times.
\]
}
The bottom horizontal homomorphism in (\ref{eq-diagram-of-Spin-groups-over-K-tilde}) induces the isomorphism
\[
\prod_{\hat{\sigma}\in\hat{\Sigma}}\left[\Spin(V_\sigma\oplus V_{\bar{\sigma}})_\eta /\{\pm 1\}\right]
\rightarrow
\Spin(V_{\tilde{K}})_\eta/\{\pm 1\}.
\]
We get the isomorphism
\begin{equation}
\label{eq-factorization-of-Spin-V-tilde-K-eta-mod-pm1}
\Spin(V_{\tilde{K}})_\eta/\{\pm 1\}\cong \prod_{\hat{\sigma}\in\hat{\Sigma}}GL(V_\sigma),
\end{equation}
where each $\sigma$ is chosen to restrict to $F$ as $\hat{\sigma}$.

Given $\sigma_0\in\Sigma$, 
let the character $\det_{\sigma_0}$ be the composition  
\[
\Spin(V_{\tilde{K}})_\eta \rightarrow \Spin(V_{\tilde{K}})_\eta/\{\pm 1\} 
\IsomRightArrow
\prod_{\hat{\sigma}\in\hat{\Sigma}}GL(V_\sigma)
\rightarrow GL(V_{\sigma_0}) \RightArrowOf{\det} \tilde{K}^\times,
\]
where 
the third arrow is the projection onto the factor associated to $\sigma_0$.
The character $\ell_T$ of $\Spin(V_{\tilde{K}})_\eta$ satisfies 
\[
\ell_T\otimes\ell_T\cong \bigotimes_{\hat{\sigma}\in\hat{\Sigma}}\mbox{det}_{T(\hat{\sigma})}
\]
More generally, we have

\begin{lem}
\label{lemma-tensor-product-of-two-ells}
${\displaystyle \ell_{T_1}\otimes\ell_{T_2}\cong \bigotimes_{\{\hat{\sigma} \ : \ T_1(\hat{\sigma})=T_2(\hat{\sigma})\}}\mbox{det}_{T_1(\hat{\sigma})}.
}$
\end{lem}

\begin{proof}
The statement follows from the factorization (\ref{eq-tensor-factorization-of-ell-T}) and the fact that $\det_{\bar{\sigma}}\otimes\det_\sigma$ is the trivial character.
\end{proof}

\begin{cor}
\label{cor-ell-T-are-linearly-independent}
The lines $\ell_T$, $T\in \T_K$, in $S^+_{\tilde{K}}$ are linearly independent over $\tilde{K}$.
\end{cor}

\begin{proof}
The group $\Spin(V_{\tilde{K}})_\eta$ leaves $B\otimes_\QQ\tilde{K}$ invariant and acts on the lines $\{\ell_T \ : \ T\in\T_K\}$ as characters. These characters are distinct, by 
Lemma \ref{lemma-tensor-product-of-two-ells}. 
\end{proof}


The image $m_g$, of $g\in \Spin(V)_{\eta,B}$ via the even half spin representation, leaves invariant each pure spinor $\ell_{T}$.
Given an element $g$ of $\Spin(V)_{\eta,B}$, its image $\rho_g$ via the vector representation
leaves each of $W_T$, $T\in\T_K$,  invariant and acts on each with trivial determinant. Furthermore, $\rho_g$ commutes with $\hat{\eta}(F)$ and thus leaves invariant each of the subspaces $V_{\hat{\sigma},\RR}$ of $V_\RR$, for all $\hat{\sigma}\in\hat{\Sigma}$. Hence, $\rho_g$ leaves invariant each of the subspaces $V_{\sigma,\CC}$, for all $\sigma\in \Sigma$. 
Denote by $\rho_{g,\sigma}$ the restriction of $\rho_g$ to $V_{\sigma,\CC}$.  Note that $\det_\sigma(g)=\det(\rho_{g,\sigma})$.

\begin{lem}
For $g\in\Spin(V_\QQ)_{\eta,B}$ we have $\det(\rho_{g,\sigma})=1,$ for all $\sigma\in\Sigma$.
\end{lem}

\begin{proof}
Fix $\sigma_0\in\Sigma$ restricting to $F$ as $\hat{\sigma}_0\in\hat{\Sigma}$. 
Choose CM-types $T_1$ and $T_2$, such that $T_1(\hat{\sigma}_0)=T_2(\hat{\sigma}_0)$ and $T_1(\hat{\sigma})\neq T_2(\hat{\sigma})$, for $\hat{\sigma}\neq\hat{\sigma}_0$. 
Then $\ell_{T_1}\otimes\ell_{T_2}$ is the character $\det_{\sigma_0}$, by Lemma \ref{lemma-tensor-product-of-two-ells}. Thus, $\det_{\sigma_0}(g)=1$, for $g\in \Spin(V_\QQ)_{\eta,B}$, 
since $m_g$ fixes every point in $B$ and so $m_g\otimes m^\dagger_g$ (???) fixes every point in $B\otimes B$.
\end{proof}



%
\subsubsection{A $\Spin(V)_{\eta,B}$-invariant hermitian form}
Let $t$ be a non-zero element of the $(-1)$-eigenspace $K_-$ of $\iota:K\rightarrow K$. 
\hide{
Choose a CM-type $T$. 
Consider the $F\otimes_\QQ\tilde{K}$-valued form $H_{t,T}:V_\QQ\times V_\QQ\rightarrow F\otimes_\QQ\tilde{K}\cong \prod_{\hat{\sigma}\in\hat{\Sigma}}\tilde{K}$ whose $\hat{\sigma}$ component is given by
\[
H_{t,T}(x_{\hat{\sigma}},y_{\hat{\sigma}}):= \hat{\sigma}(t\iota(t))(x_{\hat{\sigma}},y_{\hat{\sigma}})_V+\sigma(t)((\eta(t)x)_{\hat{\sigma}},y_{\hat{\sigma}})_V,
\]
where $\sigma=T(\hat{\sigma}):K\rightarrow \tilde{K}$, 
$x=\sum_{\hat{\sigma}\in\hat{\Sigma}}x_{\hat{\sigma}}$ with $x_{\hat{\sigma}}\in V_{\hat{\sigma}}$, and $y_{\hat{\sigma}}$ is defined similarly.
We have $H_{t,T}(x,y)=\iota(H_{t,T}(y,x))$, by Corollary \ref{cor-Neron-Severi-group}. 

\begin{lem}
The form $H_{t,T}$ is hermitian,  i.e., 
\begin{equation}
\label{eq-K-linearity-in-second-variable}
H_{t,T}(x_{\hat{\sigma}},\eta(\lambda)y_{\hat{\sigma}})=\sigma(\lambda)H_{t,T}(x_{\hat{\sigma}},y_{\hat{\sigma}}), 
\end{equation}
for $\lambda\in K$, where $\sigma=T(\hat{\sigma})$. Furthermore, $H_{t,T}$ is $\Spin(V_\QQ)_{\eta,B}$-invariant.
\end{lem}

\begin{proof}
The form $H_{t,T}$ is $\Spin(V_\QQ)_{\eta,B}$-invariant, since $(\bullet,\bullet)_V$ is and $\eta(t)\rho_g(x)=\rho_g(\eta(t)x)$, by definition of $\Spin(V_\QQ)_{\eta,B}$.
We prove the $K$-linearity (\ref{eq-K-linearity-in-second-variable}) next.
Write $\lambda=a+bt$ with $a, b\in F$. The form $H_{t,T}$ on $V_{\hat{\sigma}}$ is $F$-bilinear, i.e., Equation (\ref{eq-K-linearity-in-second-variable}) holds for $\lambda\in F$. Hence, it suffices to prove the case $\lambda=t$. We have
\begin{eqnarray*}
H_{t,T}(x_{\hat{\sigma}},\eta(t)y_{\hat{\sigma}})&=&-\hat{\sigma}(t^2)(x_{\hat{\sigma}},\eta(t)y_{\hat{\sigma}})+
\sigma(t)(\eta(t)x_{\hat{\sigma}},\eta(t)y_{\hat{\sigma}})
\\
&\stackrel{\mbox{Cor. \ref{cor-Neron-Severi-group}}}{=}&
-\sigma(t)^2(x_{\hat{\sigma}},\eta(t)y_{\hat{\sigma}})+\sigma(t)(-\hat{\sigma}(t^2)x_{\hat{\sigma}},y_{\hat{\sigma}})
\\
&=&\sigma(t)\left[
\hat{\sigma}(-t^2)(x_{\hat{\sigma}},y_{\hat{\sigma}})-\sigma(t)(x_{\hat{\sigma}},\eta(t)y_{\hat{\sigma}})
\right]
\\
&\stackrel{\mbox{Cor. \ref{cor-Neron-Severi-group}}}{=}& \sigma(t)\left[
\hat{\sigma}(-t^2)(x_{\hat{\sigma}},y_{\hat{\sigma}})+\sigma(t)(\eta(t)x_{\hat{\sigma}},y_{\hat{\sigma}})
\right]
=\sigma(t)H_{t,T}(x_{\hat{\sigma}},y_{\hat{\sigma}}).
\end{eqnarray*}
\end{proof}

We saw in Section \ref{sec-Galois-action-on-set-of-CM-types} that the CM-type $T$ determines and embedding $T:K\rightarrow F\otimes_\QQ\tilde{K}$.
Equation (\ref{eq-K-linearity-in-second-variable}) becomes
\[
H_{t,T}(x,\eta(\lambda)y)_V=T(\lambda)H_{t,T}(x,y)_V.
\]
}

Let $H_t:V_\QQ\times V_\QQ\rightarrow K$ be given by
\begin{equation}
\label{eq-H-t}
H_t(x,y) :=
(-t^2)(x,y)_{V_{\hat{\eta}}}+t(\eta(t)x,y)_{V_{\hat{\eta}}}.
\end{equation}

\begin{lem}
\label{lemma-H-t-is-hermitian}
The form $H_t$ is hermitian, i.e., it satisfies $H_t(x,y)=\iota(H_t(y,x))$ and $H_t(x,\eta(\lambda)y)=\lambda H_t(x,y)$. Furthermore, $H_t$ is $\Spin(V_\QQ)_\eta$-invariant.
\end{lem}

\begin{proof}
The equality $H_t(x,y)=\iota(H_t(y,x))$ follows from Lemma \ref{lemma-units-are-isometries}. The form $H_t$ is $F$-bilinear. Hence, the equality 
$H_t(x,\eta(\lambda)y)=\lambda H_t(x,y)$, for all $\lambda\in K$, would follow from the case $\lambda=t$. We have
\begin{eqnarray*}
H_t(x,\eta(t)y)&=& -t^2(x,\eta(t)y)_{V_{\hat{\eta}}}+t(\eta(t)x,\eta(t)y)_{V_{\hat{\eta}}}
\\
&\stackrel{\mbox{Lemma \ref{lemma-units-are-isometries}}}{=}&
-t^2(x,\eta(t)y)_{V_{\hat{\eta}}}+t(\hat{\eta}(-t^2)x,y)_{V_{\hat{\eta}}}
\\
&=&-t^2(x,\eta(t)y)_{V_{\hat{\eta}}}+-t^3(x,y)_{V_{\hat{\eta}}}
\\
&\stackrel{\mbox{Lemma \ref{lemma-units-are-isometries}}}{=}&
t\left[
-t^2(x,y)_{V_{\hat{\eta}}}+t(\eta(t)x,y)_{V_{\hat{\eta}}}
\right].
\end{eqnarray*}

The pairing $(\bullet,\bullet)_{V_{\hat{\eta}}}$ is $\Spin(V_{\hat{\eta}})$-invariant. It follows that it is also $\Spin(V_\QQ)_{\hat{\eta}}$-invariant, by Lemma \ref{lemma-Spin-V-hat-eta}, which implies that the two groups have the same Zariski closure in $GL(V_\QQ)$. Now $\Spin(V_\QQ)_\eta$ is a subgroup of $\Spin(V_\QQ)_{\hat{\eta}}$, by definition.
The form $H_t$ is $\Spin(V_\QQ)_\eta$-invariant, since $(\bullet,\bullet)_{V_{\hat{\eta}}}$ is and $\eta(t)\rho_g(x)=\rho_g(\eta(t)x)$, by definition of $\Spin(V_\QQ)_\eta$.
\end{proof}

\hide{
\begin{proof}
It suffices to prove the equality $H_{t,T}=T\circ H_t$.
The embedding $T:K\rightarrow F\otimes_\QQ\tilde{K}$ restricts to $F$ as the standard embedding of $F$ in $F\otimes_\QQ\tilde{F}$. 
Now,
\begin{eqnarray*}
H_{t,T}(x,y)&=&T(-t^2)\left[T\left((x,y)_{V_{\hat{\eta}}}\right)+T(t)T\left((\eta(t)x,y)_{V_{\hat{\eta}}}\right)
\right]
\\
&=&T\left[
(-t^2)(x,y)_{V_{\hat{\eta}}}+t(\eta(t)x,y)_{V_{\hat{\eta}}}
\right]=T(H_t(x,y)).
\end{eqnarray*}
\end{proof}
}

%
\subsubsection{If $B$ is spanned by Hodge classes then $HW$ is as well}
In this section we show that if the secant space $B$ is spanned by Hodge classes, then condition 
(\ref{eq-condition-for-HW-to-consist-of-Hodge-classes}) is satisfied for the $K$-action $\eta$ on 
$H^1(X\times\hat{X},\QQ)$, and so the subspace $HW$ of $H^d(X\times\hat{X},\QQ)$ consists of Hodge classes.

\begin{lem}
Assume that $B$ is spanned by Hodge classes. Then the complex structure $I_{X\times\hat{X}}:H^1(X\times\hat{X},\RR)\rightarrow H^1(X\times\hat{X},\RR)$
of $X\times\hat{X}$ commutes with the action of $\eta(K)$. 
Furthermore, 
\begin{equation}
\label{eq-dim-V-sigma-0-1=dim-V-sigma-1-0}
\dim(V_{\sigma,\CC}^{0,1})=\dim(V_{\sigma,\CC}^{1,0}),
\end{equation}
for all $\sigma\in\Sigma$, and consequently $HW$ is spanned by Hodge classes.
\end{lem}

\begin{proof}
Set $I:=I_{X\times\hat{X}}$. Let $T\in\T_K$ be a CM-type. We get the two complex conjugate lines $\ell_T$ and $\ell_{\iota(T)}$ in $B$, each spanned by a Hodge class. 
The proof of Lemma \ref{lemma-decomposition-into-4-direct-summands} establishes that each of $W_{T,\CC}$ and $\bar{W}_{T,\CC}=W_{\iota(T),\CC}$ is $I$ invariant and the dimensions of the $I$-eigenspaces in each are equal,
\begin{eqnarray}
\label{eq-W-1-0=W-0-1}
\dim(W^{1,0}_{T,\CC})&=&\dim(W^{0,1}_{T,\CC})=2n
\\
\dim(\bar{W}^{1,0}_{T,\CC})&=&\dim(\bar{W}^{0,1}_{T,\CC})=2n.
\nonumber
\end{eqnarray} 
The endomorphisms in $\eta(F)$ commute with $I$, by assumption. Hence, the subspace $V_{\hat{\sigma},\RR}:=H^1(X\times\hat{X},\RR)_{\hat{\sigma}}$ is $I$-invariant, for all $\hat{\sigma}$ in $\hat{\Sigma}$. We get the equality
\begin{equation}
\label{eq-dim-V-hat-sigma-1-0-is-d}
\dim(V_{\hat{\sigma},\CC}^{1,0})=\dim(V_{\hat{\sigma},\CC}^{0,1})=d,
\end{equation}
for all $\hat{\sigma}$ in $\hat{\Sigma}$.

The following equalities hold for all $\sigma\in T$
\begin{eqnarray*}
V_{\sigma,\CC} & = & [V_{\hat{\sigma},\RR}\otimes_\RR\CC]\cap W_{T,\CC},
\\
V_{\bar{\sigma},\CC} & = & [V_{\hat{\sigma},\RR}\otimes_\RR\CC]\cap \bar{W}_{T,\CC},
\end{eqnarray*}
by construction. Hence, the $\eta(K)$ eigenspaces $V_{\sigma,\CC}$ are $I$-invariant, for all $\sigma\in\Sigma$.

We know the equality
\begin{equation}
\label{eq-sum-of-dimensions}
\sum_{\sigma\in T}\dim(V_{\sigma,\CC}^{0,1})=\sum_{\sigma\in T}\dim(V_{\sigma,\CC}^{1,0})=2n,
\end{equation}
by equality (\ref{eq-W-1-0=W-0-1}).
We prove Equation (\ref{eq-dim-V-sigma-0-1=dim-V-sigma-1-0}) by contradiction. 
Assume that $\dim(V^{1,0}_{\sigma_0,\CC})>\dim(V^{0,1}_{\sigma_0,\CC})$, for some $\sigma_0\in\Sigma$.  Then $\dim(V^{1,0}_{\bar{\sigma}_0,\CC})<\dim(V^{0,1}_{\bar{\sigma}_0,\CC})$, 
and so $\dim(V^{1,0}_{\sigma_0,\CC})>d/2>\dim(V^{1,0}_{\bar{\sigma}_0,\CC})$,
by (\ref{eq-dim-V-hat-sigma-1-0-is-d}). Thus, changing the value of a CM-type $T$ at $\sigma_0$ changes the sum in (\ref{eq-sum-of-dimensions}). This contradicts the fact that the sum is $2n$, for all CM-types $T$.
Equation (\ref{eq-dim-V-sigma-0-1=dim-V-sigma-1-0}) thus hold for all $\sigma\in\Sigma$.
The equalities (\ref{eq-dim-V-sigma-0-1=dim-V-sigma-1-0}) hold for all $\sigma\in\Sigma$, if and only if $HW$ consists of rational Hodge classes, by \cite[Prop. 4.4]{deligne-milne}.
\end{proof}

%
\subsection{The $\Spin(V)_{\eta,B}$-invariant subalgebra}
Let $\A:=(\wedge^*V_\QQ)^{\Spin(V)_{\eta,B}}$ be the subalgebra of $(\wedge^*V_\QQ)$
of $\Spin(V)_{\eta,B}$-invariant classes with respect to the $\rho$-action given in (\ref{eq-rho-extended-to-exterior-algebra}). 
Note the inclusion $\tilde{\phi}(B\otimes B)\subset \A$, where $\tilde{\phi}$ is given in (\ref{eq-tilde-phi}).
By $\wedge^*V_{\tilde{F}}$ we will refer to $\wedge^*_{\tilde{F}}V_{\tilde{F}}$, where $V_{\tilde{F}}=V_\QQ\otimes_\QQ\tilde{F}$.

\begin{prop}
\label{prop-generator-for-the-invariant-subalgebra}
\begin{enumerate}
\item
\label{lemma-item-Spin-V-eta-B-invariant-2-forms}
The subspace $(\wedge^2V_\QQ)^{\Spin(V)_{\eta,B}}$ is equal to the image of the injective homomorphism $\Xi:K_-\rightarrow  \wedge^2V_\QQ$ of Corollary (\ref{cor-Neron-Severi-group}). This subspace is thus $(e/2)$-dimensional over $\QQ$ and is naturally a $1$-dimensional  vector space over $F$, and it
consists of $\Spin(V)_\eta$-invariant classes. 
\item
\label{lemma-item-generator-for-the-invariant-subalgebra}
The subalgebra $\A$ is abelian and is generated by the $(e/2)$-dimensional $\QQ$-vector space 
$(\wedge^2V_\QQ)^{\Spin(V)_{\eta,B}}$
and the $e$-dimensional $\QQ$-vector space 
$
HW=\wedge^d_{\eta(K)}V_\QQ.
$
\item
\label{lemma-item-decomposition-of-HW-into-characters}
The subspace  $HW\otimes_\QQ\tilde{K}$ decomposes as a direct sum of the characters $\det_\sigma$, $\sigma\in\Sigma$, of $\Spin(V_{\tilde{K}})_\eta$, each with multiplicity $1$.
\item
\label{lemma-item-generatyors-for-the-invariant-subalgebra-R}
The subalgebra $\R:=(\wedge^*V_\QQ)^{\Spin(V)_\eta}$ is generated by $(\wedge^2V_\QQ)^{\Spin(V)_{\eta,B}}$.
\end{enumerate}
\end{prop}


\begin{proof}
(\ref{lemma-item-generator-for-the-invariant-subalgebra}) \underline{Step 1:}
We prove first that the subalgebra $(\wedge^*V_{\tilde{F}})^{\Spin(V)_{\eta,B}}$ is abelian and is generated by the $(e/2)$-dimensional $\tilde{F}$-vector space 
$(\wedge^2V_\QQ)^{\Spin(V)_{\eta,B}}\otimes_\QQ{\tilde{F}}$
and the $e$-dimensional $\tilde{F}$-vector space 
$
HW\otimes_\QQ{\tilde{F}}.
$

We have the isomorphism
\[
\wedge^*V_{\tilde{F}}\cong \bigotimes_{\hat{\sigma}\in\hat{\Sigma}}
\wedge^*V_{\hat{\sigma}}.
\]
The $\Spin(V)_{\eta,B}$-invariant subalgebra is equal to the $\Spin(V_{\tilde{F}})_{\eta,B}$-invariant subalgebra. Using the factorization 
(\ref{eq-factorization-of-Spin-V-tilde-K-B}) of $\Spin(V_{\tilde{F}})_{\eta,B}$ we get the isomorphism
\begin{equation}
\label{eq-factorization-of-Spin-V-B-invariant-subalgebra-over-tilde-K}
(\wedge^*V_{\tilde{F}})^{\Spin(V)_{\eta,B}}\cong
\bigotimes_{\hat{\sigma}\in\hat{\Sigma}}
(\wedge^*V_{\hat{\sigma}})^{\Spin(V_{\hat{\sigma}})_{P_{\hat{\sigma}}}}.
\end{equation}
The argument of Lemma \ref{lemma-Spin-V-P-invariant-classes-are-Hodge} applies, replacing $\QQ$ by $\tilde{F}$ and $K$ by $\tilde{K}$, and yields that 
\[
\dim_{\tilde{F}}\left((\wedge^kV_{\hat{\sigma}})^{\Spin(V_{\hat{\sigma}})_{P_{\hat{\sigma}}}}\right)=
\left\{
\begin{array}{ccl}
0&\mbox{if}&k \ \mbox{is odd}
\\
1&\mbox{if} & k \ \mbox{is even and} \ k\neq d
\\
3&\mbox{if}& k=d.
\end{array}
\right.
\]
Furthermore, $(\wedge^dV_{\hat{\sigma}})^{\Spin(V_{\hat{\sigma}})_{P_{\hat{\sigma}}}}$ decomposes as a direct sum of three one-dimensional   
$\Spin(V_{\hat{\sigma}}\otimes_{\tilde{F}}\tilde{K})_\eta$-invariant subspaces corresponding to the trivial character and the characters $\det_\sigma$ and $\det_{\bar{\sigma}}$, where $\sigma$ restricts to $\hat{\sigma}$. The direct sum of the latter two is of the form $HW_{\hat{\sigma}}\otimes_{\tilde{F}}\tilde{K}$
of a $2$-dimensional $\tilde{F}$ subspace $HW_{\hat{\sigma}}$. 
Then $HW\otimes_\QQ\tilde{K}=\oplus_{\hat{\sigma}\in\hat{\Sigma}}HW_{\hat{\sigma}}$.
Choose an element $\Xi_{\hat{\sigma}}$ spanning $(\wedge^2V_{\hat{\sigma}})^{\Spin(V_{\hat{\sigma}})_{P_{\hat{\sigma}}}}$. Then $\Xi_{\hat{\sigma}}^{d/2}$ spans that trivial character in $(\wedge^dV_{\hat{\sigma}})^{\Spin(V_{\hat{\sigma}})_{P_{\hat{\sigma}}}}$ and
the symmetric square of the subspace $HW_{\hat{\sigma}}$ is mapped onto $\wedge^{2d}V_{\hat{\sigma}}$, which is spanned by $\Xi_{\hat{\sigma}}^d$.
The tensor factors on the righthand side of (\ref{eq-factorization-of-Spin-V-B-invariant-subalgebra-over-tilde-K}) are all even, hence they commute with respect to wedge-product. We conclude that $\A_{\tilde{F}}:=(\wedge^*V_{\tilde{F}})^{\Spin(V)_{\eta,B}}$ is an abelian graded subalgebra of $\wedge^*V_{\tilde{F}}$ and it is generated by $HW\otimes_\QQ\tilde{F}$ and its graded summand $\A_{\tilde{F}}^2$ in degree $2$.

We conclude that $(\wedge^2V_{\tilde{F}})^{\Spin(V)_{\eta,B}}$ is an $(e/2)$-dimensional $\tilde{F}$-vector space with basis 
$\{\Xi_{\hat{\sigma}} \ : \ \hat{\sigma}\in\hat{\Sigma}\}$. Taking $Gal(\tilde{F}/\QQ)$ invariants we get that
$(\wedge^2V_\QQ)^{\Spin(V)_{\eta,B}}$ is an $(e/2)$-dimensional $\QQ$-vector space. Furthermore,
\[
(\wedge^2V_\QQ)^{\Spin(V)_{\eta,B}}=\{\Xi_t \ : \ t\in K_-\},
\]
 where $\Xi_t$ is given in (\ref{eq-Xi-t}).

\underline{Step 2:} Choose a basis $\{\beta_1, \dots, \beta_{e/2}\}$ of $\A^2_\QQ:=(\wedge^2V_\QQ)^{\Spin(V)_{\eta,B}}$ and a basis 
$\{\delta_1, \dots, \delta_e\}$ of $HW$. Then every element of $(\wedge^*V_{\tilde{F}})^{\Spin(V)_{\eta,B}}$ is the sum 
$\gamma:=\sum_i c_if_i$, where $c_i\in\tilde{F}$ and $f_i$ is a polynomial in the basis elements $\beta_j$'s and $\delta_k$'s. 
These two bases consist of rational classes. 
We may  further assume that the $f_i$'s are linearly independent over $\QQ$. Then they are also linearly independent over $\tilde{F}$,
since the homomorphism $\wedge^*_\QQ V_\QQ\rightarrow (\wedge^*_\QQ V_\QQ)\otimes_\QQ\tilde{F}$, given by $f\mapsto f\otimes 1$, sends linearly independent subsets over $\QQ$ to linearly independent subsets over $\tilde{F}$.
Hence, if $\gamma$ is $Gal(\tilde{F}/\QQ)$-invariant, then so is each of the coefficients $c_i$. 
Consequently, the algebra $(\wedge^*V_\QQ)^{\Spin(V)_{\eta,B}}$ is generated by $\A^2_\QQ$ and $HW$.

(\ref{lemma-item-Spin-V-eta-B-invariant-2-forms})
Let us verify the $\Spin(V)_\eta$-invariance of $(\wedge^2V_\QQ)^{\Spin(V)_{\eta,B}}$.  For $g\in\Spin(V)_\eta$ and $x,y\in V_\QQ$ we have
\[
\Xi_t(\rho_g(x),\rho_g(y)):=(\eta(t)\rho_g(x),\rho_g(y))_V=(\rho_g(\eta(t)x),\rho_g(y))_V=(\eta(t)x,y)_V=\Xi_t(x,y).
\]

(\ref{lemma-item-decomposition-of-HW-into-characters}) Clear.

(\ref{lemma-item-generatyors-for-the-invariant-subalgebra-R}) 
Assume that $x\in\wedge^*_\QQ V_\QQ$ is $\Spin(V)_\eta$-invariant. Then each of its factors $x_{\hat{\sigma}}$ in $(\wedge^*V_{\hat{\sigma}})^{\Spin(V)_{\hat{\sigma}}}$, with respect to the factorization (\ref{eq-factorization-of-Spin-V-B-invariant-subalgebra-over-tilde-K}), is $\Spin(V)_\eta$-invariant, as different factors contribute linearly independent characters over $\ZZ$. Hence, $x_{\hat{\sigma}}$ belongs to the subalgebra generated by the one-dimensional subspace $\Xi_{\hat{\sigma}}$ of $\wedge^2V_{\hat{\sigma}}$. Thus, $x$ is a rational class of the subalgebra generated by 
$(\wedge^2V_{\tilde{F}})^{\Spin(V)_{\eta,B}}$. Arguing as in Step 2 of the proof of part (\ref{lemma-item-generator-for-the-invariant-subalgebra}) we conclude that $x$ belongs to $\R$.
\end{proof}

Let $\Sigma'\subset\Sigma$ be a subset, such that $\Sigma'\cap\iota(\Sigma')=\emptyset$. 
Set $\det_{\Sigma'}:=\otimes_{\{\sigma\in\Sigma'\}}\det_\sigma.$
Let $k$ be the cardinality of $\Sigma'$.
Denote by $(\wedge^{dj}V_{\tilde{K}})_{\Sigma'}$ the isotypic subspace of $\wedge^{dj}V_{\tilde{K}}$ corresponding to the character $\det_{\Sigma'}$ of
$\Spin(V_{\tilde{K}})_\eta$. Note that $(\wedge^{dj}V_{\tilde{K}})_{\Sigma'}$ vanishes, if $j<k$ or $j>e-k$, and it is one-dimensional if $j=k$ or if $j=e-k$.
Let $(\wedge^{dj}V_{\tilde{K}})_k$ be the direct sum of all $(\wedge^{dj}V_{\tilde{K}})_{\Sigma'}$, for subsets $\Sigma'\subset\Sigma$ as above of cardinality $k$.
The dimension of $(\wedge^{dk}V_{\tilde{K}})_k$ is $\Choose{e/2}{k}2^k$.
Note that $(\wedge^{dk}V_{\tilde{K}})_k$ is contained in the subalgebra generated by $HW$ and 
$(\wedge^{d}V_{\tilde{K}})_d=HW\otimes_\QQ\tilde{K}.$

\begin{lem}
If $d>2$, then $HW$ is contained in the primitive cohomology with respect to every $\Spin(V)_{\eta,B}$-invariant ample class. 
More generally, the same is true for $(\wedge^{dk}V_{\tilde{K}})_k$.
\end{lem}

\begin{proof}
Let $h$ be a class in $(\wedge^2V_\QQ)^{\Spin(V)_{\eta,B}}$. Then $h$ is $\Spin(V_\QQ)_\eta$-invariant, 
by Proposition \ref{prop-generator-for-the-invariant-subalgebra}(\ref{lemma-item-Spin-V-eta-B-invariant-2-forms}).
%
Thus $h^{1+d(e-2k)/2}(\wedge^{dk}V_{\tilde{K}})_k$ is contained in 
$(\wedge^{d(e-k)+2}V_{\tilde{K}})_k$, which vanishes.
\end{proof}



%
\subsection{An adjoint orbit in $\Spin(V_\RR)_{\eta,B}$ as a period domain of abelian varieties of Weil type}
\label{sec-adjoint-orbit-n-Spin-V-RR-eta-B}
(??? generalize Section \ref{sec-period-domains} ???)

The adjoint orbit $\Omega_B$ of the complex structure $I_{X\times\hat{X}}$ in 
\[
\Spin(V_\RR)_{\eta,B}/\{\pm 1\}\cong \prod_{\hat{\sigma}\in\hat{\Sigma}}SO(V_{\hat{\sigma},\RR})_{P_{\hat{\sigma}}},
\]
is isomorphic to an open subset of 
$\prod_{\hat{\sigma}\in\hat{\Sigma}}Gr(d/2,V_{\sigma,\CC})$
in the analytic topology, where $\sigma\in\Sigma$ is a choice of one of the two characters which restricts to $F$ as $\hat{\sigma}$. The adjoint orbit $\Omega_B$ parametrizes polarized abelian  varieties $(A,\eta,h)$ with complex multiplication by $K$.

%
\subsection{Hodge-Weil classes, pure spinors, and Orlov's equivalence}
%
\subsubsection{A grading of the tensor square of the secant space $B$}
We have the decomposition $B\otimes_\QQ\tilde{K}=\oplus_{T\in\T_K}\ell_T$, by Corollary \ref{cor-ell-T-are-linearly-independent}.
Given $T,T'\in\T_K$, set $T\cap T':=\{\hat{\sigma}\in\hat{\Sigma} \ : \ T(\hat{\sigma})=T'(\hat{\sigma})\}$.
Let $|T\cap T'|$ denote the cardinality of $T\cap T'$. 
Then $\dim_{\tilde{K}}(W_T\cap W_{T'})=\sum_{\hat{\sigma}\in T\cap T'}\dim_{\tilde{K}}(V_{T(\hat{\sigma})})=d|T\cap T'|$.

The subspace
\[
BB_{i,\tilde{K}} := \bigoplus_{\{(T,T')\in \T_K\times\T_K \ : \ |T\cap T'|=i\}}\ell_T\otimes\ell_{T'}
\]
of $S^+_\QQ\otimes_\QQ S^+_\QQ\otimes_\QQ \tilde{K}$ is $Gal(\tilde{K}/\QQ)$-invariant, hence of the form $BB_i\otimes_\QQ\tilde{K}$, for a subspace $BB_i$ of $S^+_\QQ\otimes_\QQ S^+_\QQ$.
The vector space $B\otimes_\QQ B$ decomposes as the direct sum
\[
B\otimes_\QQ B = \oplus_{i=0}^{e/2} BB_i.
\]
Note that the characters $\ell_{T_1}\otimes\ell_{T'_1}$ and $\ell_{T_2}\otimes\ell_{T'_2}$ of $\Spin(V_\QQ)_\eta$ are isomorphic, if and only if 
$T_1\cap T_1'=T_2\cap T_2'$, by Lemma \ref{lemma-tensor-product-of-two-ells}. Consequently, the characters $\ell_{T_1}\otimes\ell_{T'_1}$ and $\ell_{T_2}\otimes\ell_{T'_2}$ are different, if $|T_1\cap T'_1|\neq |T_2\cap T'_2|.$

\begin{lem}
\label{lemma-on-lines-tensor-ell-T-ell-T-prime}
If $|T\cap T'|=1$, then the line $\tilde{\varphi}(\ell_T\otimes\ell_{T'})$ belongs to $F^{d(e-1)}(\wedge^*V_\CC)$ and it projects onto the line
$\wedge^{d(e-1)}[W_T+W_{T'}]$ in $\wedge^{d(e-1)}V_\CC$. The image of the composition 
\begin{equation}
\label{eq-composition-from-BB-1-to-wedge-de-d}
BB_1\RightArrowOf{\tilde{\varphi}}F^{d(e-1)}(\wedge^*V_\QQ)\rightarrow \wedge^{d(e-1)}V_\QQ
\end{equation}
is an $e$-dimensional subspace, which is the graded summand of degree $d(e-1)$ of the subalgebra $\langle HW\rangle$ of $\wedge^*V_\QQ$ generated by $HW$.
\end{lem}

Note that $\dim_\QQ(B)=2^{\dim_\QQ(F)}=2^{e/2}$ and so $\dim_\QQ(B\otimes_\QQ B)=2^e$. 
The character $\det_\sigma$ of $\Spin(V_{\tilde{K}})_\eta$ appears in $BB_k\otimes_\QQ\tilde{K}$, 
if and only if $k=1$ and its multiplicity in $BB_1\otimes_\QQ\tilde{K}$ is $2^{(\frac{e}{2}-1)}$. 
We see that $\dim_\QQ(BB_1)=e2^{(\frac{e}{2}-1)}$
and so the composition (\ref{eq-composition-from-BB-1-to-wedge-de-d}) is injective, if and only if $e=2$.

\begin{proof}
If $d/2$ is even, then the line $\ell_\sigma\ell_{\bar{\sigma}}$ in the subspace $\Sym^2P_{\hat{\sigma}}$ of $S^+_{\hat{\sigma}}\otimes S^+_{\hat{\sigma}}$ is mapped via $\tilde{\varphi}_{\hat{\sigma}}$, given in (\ref{eq-tilde-varphi-hat-sigma}), to a line in $\wedge^*V_{\hat{\sigma}}$ which 
projects onto the top graded summand $\wedge^{2d}V_{\hat{\sigma}}$, by Lemma \ref{lemma-symmetric-or-alternating-product-of-two-pure-spinors-has-weight-2}. 
In that case the image $\tilde{\varphi}_{\hat{\sigma}}(\ell_\sigma\wedge \ell_{\bar{\sigma}})$, of the line $\ell_\sigma\wedge \ell_{\bar{\sigma}}$ in the subspace $\wedge^2P_{\hat{\sigma}}$ of $S^+_{\hat{\sigma}}\otimes S^+_{\hat{\sigma}}$, 
is contained in $F^{2d-2}(\wedge^*V_{\hat{\sigma}})$, by Lemma \ref{lemma-symmetric-or-alternating-product-of-two-pure-spinors-has-weight-2}. 

If $d/2$ is odd, then the line $\ell_\sigma\wedge \ell_{\bar{\sigma}}$ in the subspace $\wedge^2P_{\hat{\sigma}}$ of $S^+_{\hat{\sigma}}\otimes S^+_{\hat{\sigma}}$ is mapped via $\tilde{\varphi}_{\hat{\sigma}}$ to a line in $\wedge^*V_{\hat{\sigma}}$ which 
projects onto the top graded summand $\wedge^{2d}V_{\hat{\sigma}}$, by Lemma \ref{lemma-symmetric-or-alternating-product-of-two-pure-spinors-has-weight-2}. 
In that case the line $\tilde{\varphi}_{\hat{\sigma}}(\ell_\sigma \ell_{\bar{\sigma}})$ is contained in $F^{2d-2}(\wedge^*V_{\hat{\sigma}})$.

The line $\ell_\sigma\otimes\ell_\sigma$  is mapped via $\tilde{\varphi}_{\hat{\sigma}}$ to a line in $\wedge^*V_{\hat{\sigma}}$ which belongs to $F^d(\wedge^*V_{\hat{\sigma}})$, but not to $F^{d-1}(\wedge^*V_{\hat{\sigma}})$, and 
projects onto the line $\wedge^dV_\sigma$ in $\wedge^dV_{\hat{\sigma}}$. Assume that $T,T'\in \T_K$ satisfy $|T\cap T'|=1$. Say $T(\hat{\sigma}_0)=T'(\hat{\sigma}_0)=\sigma_0$. Then
\[
\ell_T\otimes\ell_{T'}=(\ell_{\sigma_0}\otimes\ell_{\sigma_0})\otimes\bigotimes_{\{\hat{\sigma}\in\hat{\Sigma} \ : \ \hat{\sigma}\neq\hat{\sigma}_0\}}(\ell_{T(\hat{\sigma})}\otimes \ell_{T'(\hat{\sigma})}),
\]
by (\ref{eq-tensor-factorization-of-ell-T}).
The line $\tilde{\varphi}(\ell_T\otimes\ell_{T'})$ thus belongs to $F^{d(e-1)}(\wedge^*V_\CC)$ and projects onto the line\footnote{
It follows that the isomorphism $\phi:S^+_\CC\otimes S^+_\CC\rightarrow \wedge^*V_\CC$, of Lemma \ref{lemma-orlov-isomorphism-is-chevalley}, maps the line $\ell_T\otimes\ell_{T'}$
into a line in $F_d(\wedge^*V_\CC)$, which projects onto the line $\wedge^d(W_T\cap W_{T'})$ in $HW\otimes_\QQ\CC.$ 
}
$\wedge^{d(e-1)}[W_T+W_{T'}]$ in $\wedge^{d(e-1)}V_\CC$. The last statement follows also from the proof of \cite[III.3.3]{chevalley}.
We conclude that 
$
\tilde{\varphi}(BB_1)\otimes_\QQ\CC
$
is contained in $F^{d(e-1)}(\wedge^*V_\CC)$ and it projects onto
$
\oplus_{\sigma\in\Sigma}\wedge^{d(e-1)}[(V_\sigma)^\perp].
$
In particular, the image of the composition (\ref{eq-composition-from-BB-1-to-wedge-de-d})
is $e$-dimensional.
\end{proof}

\begin{rem}
Let $KB_1$ be the kernel of the composition (\ref{eq-composition-from-BB-1-to-wedge-de-d}).
Let us describe $KB_1$ explicitly.
It suffices to describe $KB_1\otimes_\QQ\tilde{K}$. Given $\sigma\in\Sigma$, let
$(\T_K\times \T_K)_{\sigma}$ be the set of ordered pairs $(T,T')\in \T_K\times \T_K$, such that $T\cap T'=\{\sigma\}$.
The equality $W_T+W_{T'}=\wedge^{d(e-1)}[\oplus_{\{\sigma'\in\Sigma \ : \sigma'\neq\sigma\}}V_{\sigma'}]=(V_{\sigma})^\perp$ holds, 
for each pair $(T,T')\in(\T_K\times \T_K)_{\sigma}$. The line $\ell_T\otimes\ell_{T'}$ is canonically isomorphic to the line 
$\wedge^{d(e-1)}(V_{\sigma}^\perp)$, for each pair $(T,T')\in(\T_K\times \T_K)_{\sigma}$, by
Lemma \ref{lemma-on-lines-tensor-ell-T-ell-T-prime}. In particular, the lines $\ell_{T_1}\otimes\ell_{T'_1}$
and $\ell_{T_2}\otimes\ell_{T'_2}$ are canonically isomorphic,
for two pairs $(T_1,T'_1)$ and $(T_2,T'_2)$ in $(\T_K\times \T_K)_{\sigma}$. Choosing a non-zero vector $t_{\sigma}$ in $\wedge^{d(e-1)}[(V_{\sigma})^\perp]$, for each
$\sigma\in\Sigma$, 
we get a non-zero vector $\lambda_{(T,T')}$ in $\ell_T\otimes\ell_{T'}$ mapping to $t_{\sigma}$ via (\ref{eq-composition-from-BB-1-to-wedge-de-d}), for each 
$(T,T')$ with $|T\cap T'|=1$. The subspace $KB_1\otimes_\QQ\tilde{K}$ is explicitly given by
\begin{equation}
\label{eq-explicit-equations-for-KB-1}
KB_1\otimes_\QQ\tilde{K} \ = \ 
\left\{
\sum_{\{(T,T') \ : \ |T\cap T'|=1\}}c_{(T,T')}\lambda_{(T,T')} \ : \
\sum_{(T,T')\in(\T_K\times \T_K)_{\sigma}}c_{(T,T')}=0, \ \ \forall \sigma\in\Sigma
\right\},
\end{equation}
where $c_{(T,T')}\in \tilde{K}$. 
\EndProof
\end{rem}

\begin{question}
(???) Let $m_{g_0}(1)\in[\wedge^{ev}_FH^1(X,\QQ)]\otimes_FK$ be the pure spinor of $W$.
Then $(id\otimes T(\hat{\sigma}))(m_{g_0}(1))$ is an element of 
$[\wedge^{ev}H^1_{\hat{\sigma}}(X)]\otimes_{\tilde{F}}\tilde{K}$ and 
\[
\otimes_{\hat{\sigma}\in\hat{\Sigma}}(id\otimes T(\hat{\sigma}))(m_{g_0}(1))
\]
is an explicit class $\lambda_T$ in $\ell_T$. 
Can we take $\lambda_{(T,T')}$ in the above remark to be $\lambda_T\otimes\lambda_{T'}$?
I.e., are the two cosets $\tilde{\varphi}(\lambda_{T_1}\otimes\lambda_{T'_1})+F^{d(e-1)}(\wedge^*V_\CC)$ and
$\tilde{\varphi}(\lambda_{T_2}\otimes\lambda_{T'_2})+F^{d(e-1)}(\wedge^*V_\CC)$
equal, for any two pairs $(T_i,T'_i)\in(\T_K\times\T_K)_\sigma$, $i=1,2$?
\end{question}

\begin{lem}
\label{lemma-on-the-phi-image-of-lines-tensor-ell-T-ell-T-prime}
If $|T\cap T'|=1$, then the line $\phi(\ell_T\otimes\ell_{T'})$ belongs\footnote{Should it be the line $(\phi\circ (id\otimes\tau))(\ell_T\otimes\ell_{T'})$?
}
 to $F_d(\wedge^*V_\CC)$ and it projects onto the line
$\wedge^d[W_T\cap W_{T'}]$ in $\wedge^dV_\CC$. The image of the composition 
\begin{equation}
\label{eq-composition-from-BB-1-to-wedge-d}
BB_1\RightArrowOf{\phi}F_d(\wedge^*V_\QQ)\rightarrow \wedge^dV_\QQ
\end{equation}
is $HW$.
\end{lem}

\begin{proof}
The statement follows from Lemma \ref{lemma-on-lines-tensor-ell-T-ell-T-prime}.
(???) We need to check that $\phi_\P\otimes\psi_{\P^{-1}[n]}$ restricts to each graded summand of $\wedge^*V_\QQ$ 
as Poincar\'{e} duality, up to sign (???). Poincar\'{e} duality maps $\langle HW\rangle_{d(e-1)}$ to $\langle HW\rangle_d$ and that it maps $\wedge^d[W_T+ W_{T'}]$ to $\wedge^d[W_T\cap W_{T'}]$. 
Indeed, Poincar\'{e} duality maps the top exterior power $\wedge^t_{\tilde{K}}Z$ of a $t$-dimensional subspace $Z$ of $V_{\tilde{K}}$ to the top exterior power 
$\wedge^{de-t}_{\tilde{K}}(Z^\perp)$, where $Z^\perp$ is the orthogonal complement with respect to the wedge product pairing on $\wedge^*V_{\tilde{K}}$.
Now note that 
$W_T\cap W_{T'}$ is the orthogonal complement of $W_T+ W_{T'}$ with respect to wedge product pairing and 
given $\sigma\in\Sigma$, the subspace $V_\sigma$ is the orthogonal complement of $\oplus_{\{\sigma'\in\Sigma \ : \ \sigma'\neq\sigma\}}V_{\sigma'}$.
The subspace $\langle HW\rangle_d$ is the direct sum of the top exterior powers of the former lines and 
$\langle HW\rangle_{d(e-1}$ is the direct sum of the top exterior powers of the latter lines.
\end{proof}

\hide{
Consider the decreasing filtration $F_k(B\otimes_\QQ B):=\oplus_{i\geq k}BB_i$
and the increasing filtration $F^k\wedge^*V_\QQ:=\oplus_{i\leq k}\wedge^iV_\QQ$.
The isomorphism $\tilde{\varphi}:S_\QQ\otimes_\QQ S_\QQ\rightarrow \wedge^*V_\QQ$, given in (\ref{eq-tilde-varphi}),
maps $F_k(B\otimes_\QQ B)$ into $F^{d(e-k)}(\wedge^*V_\QQ)$, but the image of $F_k(B\otimes_\QQ B)$ is not contained in $F^{d(e-k)-1}(\wedge^*V_\QQ)$, by \cite[III.3.3]{chevalley}. 
We get the induced homomorphism $BB_k\rightarrow \wedge^{d(e-k)}V_\QQ$. 
The proof of \cite[III.3.3]{chevalley} shows furthermore that the image of $\ell_T\otimes\ell_{T'}$ in $\wedge^{d(e-k)}V_\QQ$ is the top exterior power of 
the subspace $W_T+ W_{T'}$ of $V_{\tilde{K}}$.
The associated homomorphism $BB_k\otimes_\QQ\tilde{K}\rightarrow \wedge^{d(e-k)}V_{\tilde{K}}$
is furthermore $\Spin(V_{\tilde{K}})_\eta$-equivariant. 
The image of the induced homomorphism 
$BB_1\otimes_\QQ\tilde{K}\rightarrow \wedge^{d(e-1)}V_{\tilde{K}}$ is thus 
\[
\bigoplus_{\sigma\in\Sigma}
\wedge^{d(e-1)}\left(\bigoplus_{\{\sigma' \ : \ \sigma'\neq \sigma\}}V_{\sigma'}
\right).
\] 
The isomorphism $\phi\circ (id\otimes\tau)$, $\phi$ as in Lemma \ref{lemma-orlov-isomorphism-is-chevalley}, maps 
$F_k(B\otimes_\QQ B)$ into $F_{dk}(\wedge^*V_\QQ)$ and the induced homomorphism on the graded summands maps 
$BB_1$ onto $HW$.

}

The character $\det_{\sigma_1}\otimes \cdots \otimes \det_{\sigma_k}$, with $\hat{\sigma}_1, \dots, \hat{\sigma}_k$ distinct in $\hat{\Sigma}$,
appears only in $BB_k\otimes_\QQ\tilde{K}$ and it appears in $BB_k\otimes_\QQ\tilde{K}$ with multiplicity $2^{(\frac{e}{2}-k)}$.
Denote by 
\[
BB_{T\cap T'}
\] 
the subspace of $BB_k\otimes_\QQ\tilde{K}$, $k=|T\cap T'|$, such that $BB_{T\cap T'}$ is the direct sum of all 
characters of $\Spin(V_{\tilde{K}})_\eta$ isomorphic to $\otimes_{\{\hat{\sigma}\in T\cap T'\}}\det_{T(\hat{\sigma})}$. 
There are $2^k\Choose{e/2}{k}$ pairwise non-isomorphic such characters, so the dimension of $BB_k$ is $\Choose{e/2}{k}2^{e/2}$.
%
\subsubsection{Hodge-Weil classes from pure spinors} 
Denote by $\langle HW\rangle$ the $\QQ$-subalgebra of $\wedge^*V_\QQ$ generated by the $e$-dimensional subspace $HW$ of $\wedge^dV_\QQ$. 
The algebra is graded and we set $\langle HW\rangle_k:=\langle HW\rangle\cap \wedge^kV_\QQ.$ The graded summand $\langle HW\rangle_k$ vanishes, if $d\not| \ k$, and
\[
\dim_\QQ \langle HW\rangle_{dk}=\Choose{e}{k},
\]
for $0\leq k\leq e$.
The multiplicity of $\det_\sigma$ in $\langle HW\rangle_{dk}\otimes_\QQ\tilde{K}$ is $0$, if $k$ is even, and it is $\Choose{(e/2)-1}{i}$, if $k=2i+1$, for $0\leq i\leq \frac{e}{2}-1$. Hence, the multiplicity of $\det_\sigma$ in $\langle HW\rangle\otimes_\QQ\tilde{K}$ is equal to its multiplicity in $B\otimes_\QQ B\otimes_\QQ\tilde{K}$. 

\hide{
\begin{lem}
The isomorphism
\[
\tilde{\phi}:=\exp\left(\frac{1}{2}c_1(\P)\right)\cup \phi\circ (id\otimes \tau):H^*(X\times X,\QQ)\rightarrow H^*(X\times\hat{X},\QQ),
\]
which is  $\Spin(V)$-equivariant with respect to $(m\otimes m^\dagger,\rho)$ 
by Proposition \ref{prop-extension-class-of-decreasing-filtration-of-spin-V-representations}, 
satisfies
\begin{equation}
\label{equality-in-degree-de-over-2}
\tilde{\phi}(BB_{e/2})\subset \langle HW\rangle_{de/2},
\end{equation}
where the right hand side is the graded summand of $\langle HW\rangle$ in degree $de/2$. 
\end{lem}

\begin{proof}
The isomorphism $\tilde{\phi}$ extends to a $\Spin(V_{\tilde{K}})_\eta$-equivariant one from $S_\QQ\otimes_\QQ S_\QQ\otimes\tilde{K}$ onto
$\wedge^*V_{\tilde{K}}$. The subspace $BB_{e/2}\otimes_\QQ\tilde{K}$ is the sum of all 
characters of $\Spin(V_{\tilde{K}})_\eta$ isomorphic to $\ell_T\otimes\ell_T$, for some $T\in\hat{\Sigma}$.
The subspace $\langle HW\rangle_{de/2}\otimes_\QQ\tilde{K}$ of $\wedge^*V_{\tilde{K}}$ is the sum of all characters of $\Spin(V_{\tilde{K}})_\eta$ isomorphic to
tensor products $\otimes_{\sigma\in \Sigma'}\det_\sigma$, for some subset $\Sigma'\subset\Sigma$ of cardinality $e/2$, which projects onto (??? no ???) $\hat{\Sigma}$ under the restriction map $\Sigma\rightarrow\hat{\Sigma}$. The two sets of characters are equal, 
by Lemma \ref{lemma-tensor-product-of-two-ells}.
\end{proof}
}


Let $c$ be a class in $B\otimes_\QQ B$. 
Write $c=\sum_i c_i$, with $c_i$ in $BB_i$ and assume that $c_1$ does not belong to $KB_1$.
Let $\phi(c)_i$ be the graded summand of $\phi(c)$ in $\wedge^{2i}V_\QQ$. Set $r:=\phi(c)_0$, considered as a rational number via the isomorphism $\wedge^0V_\QQ\cong \QQ$. Assume that $r\neq 0$. Set
$\kappa(\phi(c)):=\phi(c)\exp\left(-\phi(c)_1/r\right)$. Assume that $d>2$. 

\begin{prop}
\label{prop-kappa-class-of-image-of-secant-class-yields-a-HW-class}
\begin{enumerate}
\item
\label{lemma-item-kappa-phi-c-is-invariant}
The class $\kappa(\phi(c))$ is $\Spin(V)_{\eta,B}$-invariant. 
\item
\label{lemma-item-kappa-phi-c-0-is-invariant}
The class $\kappa(\phi(c_0))$ belongs to $\R$. 
\item
\label{lemma-item-difference-projects-to-HW}
The difference
$\kappa(\phi(c))-\kappa(\phi(c_0))$ belongs to $F_d(\wedge^*V_\QQ)$ and its projection to $\wedge^dV_\QQ$ is a non-zero element of $HW$.
\end{enumerate}
\end{prop}

\begin{proof}
(\ref{lemma-item-kappa-phi-c-is-invariant}) 
The class $\kappa(\phi(c))$ is $\Spin(V)_{\eta,B}$-invariant, by the same argument proving Corollary \ref{cor-kappa-class-is-Spin-V-P-invariant}.

(\ref{lemma-item-kappa-phi-c-0-is-invariant})
The class $\phi(c_k)$ belongs to $F_{dk}(\wedge^*V_\QQ)$ and $d>2$, by assumption. Hence, $\phi(c)_0=\phi(c_0)_0$ and 
$\phi(c)_1=\phi(c_0)_1$. 
We thus have
\[
\kappa(\phi(c))=\kappa(\phi(c_0))+\phi(c-c_0)\exp\left(-\phi(c)_1/r\right).
\]
The class $\kappa(\phi(c_0))$ is $\Spin(V)_\eta$-invariant, by the same argument proving Corollary \ref{cor-kappa-class-is-Spin-V-P-invariant},
and is hence in $\R$, by Proposition \ref{prop-generator-for-the-invariant-subalgebra}(\ref{lemma-item-generatyors-for-the-invariant-subalgebra-R}). 

(\ref{lemma-item-difference-projects-to-HW})
The class $\phi(c-c_0)\exp\left(-\phi(c)_1/r\right)$ belongs to $F_d(\wedge^*V_\QQ)$
and its projection to $\wedge^dV_\QQ$ is equal to that of $\phi(c-c_0)$, which in turn is equal to the projection of $\phi(c_1)$. 
Now, the projection of $\phi(c_1)$ to $\wedge^dV_\QQ$ is a non-zero element of $HW$, by Lemma \ref{lemma-on-the-phi-image-of-lines-tensor-ell-T-ell-T-prime}.
\end{proof}

The following Corollary is an immediate consequence of Proposition \ref{prop-kappa-class-of-image-of-secant-class-yields-a-HW-class}. It is the reduction of the algebraicity of the Weil classes on abelian varieties with complex multiplication to the variational Hodge conjecture. 
(???) Generalize the following statement for deformations $(A,\eta',h')$ of $(X\times\hat{X},\eta,h)$ parametrized by points in the period domain $\Omega_B$  in Section 
\ref{sec-adjoint-orbit-n-Spin-V-RR-eta-B}. 

\begin{cor}
Assume that the class $c$ is algebraic. 
If the algebraic class $\kappa(\phi(c))$ in $H^d(X\times\hat{X},\QQ)$ remains algebraic in $H^d(A,\QQ)$, then every class in $HW(A,\eta)$ is algebraic.
\end{cor}

\hide{
Let $c$ be a class in $B\otimes_\QQ B$. 
Write
$c=\sum_i c_i$, with $c_i$ in $BB_i$ and assume that $c_1$ does not belong to $KB_1$.

\begin{lem}
There exists a class $\delta\in \R$, such that 
$\tilde{\varphi}(c)-\delta$ belongs to $F^{d(e-1)}(\wedge^*V_\QQ)$ and its projection to $\wedge^{d(e-1)}V_\QQ$ is a non-zero class in 
$\langle HW\rangle_{d(e-1)}$. 
\end{lem}

\begin{proof}
The summand $c_0$ spans a trivial $\Spin(V_\QQ)_\eta$-character.
Hence, there exists a class $\alpha\in \R$, such that the $\Spin(V_\QQ)_\eta$-invariant class 
$\tilde{\varphi}(c_0)-\alpha$ belongs to $F^{d(e-1)}(\wedge^*V_\QQ)$, by Proposition \ref{prop-generator-for-the-invariant-subalgebra} (??? the lemma is in terms of the $\rho$-action, but $\tilde{\varphi}$ is not equivariant with respect to this action ???).
Let $\beta$ be the projection of $\tilde{\varphi}(c_0)-\alpha$ to $\wedge^{d(e-1)}V_\QQ$.
The class $\beta$ is $\Spin(V_{\tilde{K}})_\eta$-invariant and so in $\R$, by Lemma \ref{lemma-generators-of-the-module-of-Spin-V-B-invariant-classes}. 
The summand $\tilde{\varphi}(c_i)$ belongs to $F^{d(e-i)}(\wedge^*V_\QQ)$. Hence, $\tilde{\varphi}(c-c_0)$ belongs to $F^{d(e-1)}(\wedge^*V_\QQ)$
and its image in $\wedge^{d(e-1)}V_\QQ$ is equal to the image of $\tilde{\varphi}(c_1)$, 
which is an element $\gamma$ of $\langle HW\rangle_{d(e-1)}$, by Lemma \ref{lemma-on-lines-tensor-ell-T-ell-T-prime}. The element $\gamma$ 
is non-zero,  by assumption. The 
vector $\tilde{\varphi}(c)-\alpha=(\tilde{\varphi}(c_0)-\alpha)+\tilde{\varphi}(c-c_0))$ is thus in $F^{d(e-1)}(\wedge^*V_\QQ)$ and it projects to $\wedge^{d(e-1)}V_\QQ$ as 
the sum of a $\Spin(V_{\tilde{K}})_\eta$-invariant class $\beta$ and a non-zero class $\gamma$ in $\langle HW\rangle_{d(e-1)}$.
The statement follows by setting $\delta=\alpha+\beta$.
\end{proof}
}



%
\subsection{An example}
\label{sec-example-isometry-g-0}

The subspace $\wedge_F^kH^1(X,\QQ)$ consists of elements $\gamma$ of $\wedge_Q^kH^1(X,\QQ)\subset H^1(X,\QQ)^{\otimes k}$, such that the endomorphism 
 $1 \otimes \cdots \otimes \hat{\eta}_q \otimes \cdots \otimes 1$, with $\hat{\eta}_q$ in the $j$-th tensor factor, maps $\gamma$ to the same element $\hat{\eta}_q(\gamma)$, regardless of the choice of $j$. The condition is always satisfied for $k=1$, and so wedge products over $\QQ$ of such classes need not be such.

\begin{lem}
\label{lemma-wedge-2-F-H-1-contains-an-ample-class}
If $X$ is simple and $\End_\QQ(X)=F$, then 
$H^{1,1}(X,\QQ)$ is contained in the subspace $\wedge^2_FH^1(X,\QQ)$ of $H^2(X,\QQ)$. 
\end{lem}

\begin{proof}
We have 
$\dim_\QQ H^{1,1}(X,\QQ)=\dim_\QQ(F)$, by
\cite[Prop. 5.5.7]{BL}. In this case $NS(X)\otimes_\ZZ\QQ$ is isomorphic to $\End_\QQ(X)$, by 
\cite[Prop. 5.2.1]{BL}. Let $L_0$ be a polarization on $X$ and let $L$ be a line bundle.
Denote their hermitian forms by $H_0$ and $H$. Let $X=V/\Lambda$ and $\hat{X}=\Omega/\hat{\Lambda}$, where $\Omega:=\Hom_{\bar{\CC}}(V,\CC)$ and
$\hat{\Lambda}=\{\omega \ : \ Im(\omega(\lambda))\in\ZZ, \forall \lambda\in\Lambda\}$.
Let $\phi_{H_0}:V\rightarrow\Omega$ be given by $\phi_{H_0}(v)=H_0(v,\bullet)$ and define $\phi_H$ similarly. Then $\phi_H$ is the analytic representation of $\phi_L$.
Set $\theta:=Im(H)$ and $\theta_0:=Im(H_0)$. Let $f:=\phi_H^{-1}\phi_{H_0}\in\End_\QQ(X)$.
Then $\phi_{H_0}(x)=\phi_H(f(x))$ and $H_0(x,\bullet)=H(f(x),\bullet)$, for all $x\in V$. Thus,
\begin{eqnarray*}
\theta(f(x),y)&=&Im H(f(x),y)=Im H_0(x,y)=-Im H_0(y,x) = -Im H(f(y),x)
\\
&=&Im H(x,f(y))=\theta(x,f(y)).
\end{eqnarray*}
So $\theta$ belongs to $\wedge^2_F \Lambda_\QQ^*=\wedge^2_FH^1(X,\QQ)$.
\end{proof}

Assume that $X$ is simple and $\End_\QQ(X)=F$.
 Then the subspace $\wedge^2_FH^1(X,\QQ)$ of $H^2(X,\QQ)$ contains a Hodge class $\Theta$, by Lemma \ref{lemma-wedge-2-F-H-1-contains-an-ample-class}.
Contraction with $\Theta$ induces an $\hat{\eta}(F)$-equivariant homomorphism $\Theta\Contract: H^1(X,\QQ)^*\rightarrow H^1(X,\QQ)$.
The element $g$ of $\Spin(V_\QQ)$ corresponding to cup product with $\exp(\Theta)$ acts on $V_\QQ:=H^1(X,\QQ)\oplus H^1(X,\QQ)^*$ 
by $\rho_g(w,\xi)=(w+\Theta\Contract \xi,\xi)$. It follows that $\rho_g$ is $\hat{\eta}(F)$-equivariant. Hence, the induced action of $\rho_g$ on $\wedge^*V_\QQ$ commutes with the induced action of $\hat{\eta}(F)$, where $\hat{\eta}_f$ acts via $\wedge^k\hat{\eta}_f$ on $\wedge^kV_\QQ$, for all $f\in F$.
Note that the nilpotent element of square zero $\rho_g-id_{V_\QQ}$ depends linearly on $\Theta$.
We thus define the element $g_0\in\Spin(V_{\hat{\eta},K})$ corresponding to $\exp(\sqrt{-q}\Theta)$ as a lift of the isometry of $V_\QQ\otimes_F K$ corresponding to $id_{V_\QQ\otimes_F K}+(\rho_g-id)\otimes\sqrt{-q}$, where $q$ is an element of $F$ such that $K=F[\sqrt{-q}]$. 
Explicitly, decompose elements of $V_\QQ\otimes_F K$ as  $(v_1,v_2\otimes\sqrt{-q})$ with $v_i\in V_\QQ$. Then 
\begin{eqnarray*}
(\rho_g-id)(v_1,v_2\otimes\sqrt{-q})&=& ((\rho_g-id)(v_1),(\rho_g-id)(v_2)\otimes\sqrt{-q})
\\
\sqrt{-q}(v_1,v_2\otimes\sqrt{-q})&=&(\hat{\eta}_{-q}(v_2),v_1\otimes\sqrt{-q}),
\\
(\sqrt{-q}\circ (\rho_g-id))(v_1,v_2\otimes\sqrt{-q})&=&(\hat{\eta}_{-q}((\rho_g-id)(v_2)),(\rho_g-id)(v_1)\otimes\sqrt{-q})
\end{eqnarray*}
Set 
\begin{eqnarray*}
\nu&:=&(\sqrt{-q}\circ (\rho_g-id)),
\\
\rho_{g_0}&:=&\exp(\nu).
\end{eqnarray*} 
In block matrix form we have:
\[
\nu=\left(
\begin{array}{cc}
0 & -\hat{\eta}_q(\rho_g-id))
\\
\rho_g-id & 0
\end{array}
\right), \ \mbox{and} \
\exp(\nu)=\left(
\begin{array}{cc}
1 & -\hat{\eta}_q(\rho_g-id))
\\
\rho_g-id & 1
\end{array}
\right).
\]
The lift $g_0$ of $\rho_{g_0}$ is unique up to multiplication by $-\one_S$ (acting as multiplication by $-1$ on the spin representation), and we choose $g_0$ so that $g_0-\one_S$ is nilpotent. The induced action of $\nu$ on $\wedge^*_{K}[V_\QQ\otimes_F K]$ is nilpotent, but no longer of square zero. The induced action of $\rho_{g_0}$ on $\wedge^*_{K}[V_\QQ\otimes_F K]$ is still $\exp(\nu)$. The maximal isotropic subspace $W=\rho_{g_0}(H^1(\hat{X},\QQ)\otimes_F K)$ is the graph of
$\restricted{\nu}{}:H^1(\hat{X},\QQ)\otimes_FK\rightarrow H^1(X,\QQ)\otimes_FK$ sending $\xi\in H^1(\hat{X},\QQ)$ to
$(\Theta\rfloor\xi)\otimes\sqrt{-q}$. Clearly, $W\cap \iota(W)=(0)$.

We get the induced action $m_{g_0}$ on $\wedge^*_K [H^1(X,\QQ)\otimes_F K]$. The pure spinor of the maximal isotropic subspace $H^1(\hat{X},\QQ)$ is $1\in H^{ev}(X,\QQ)$. 
The pure spinor of $W$ is $m_{g_0}(1)\in 
\wedge^*_K[H^1(X,\QQ)\otimes_FK]\subset H^*(X,\QQ)\otimes_F K$. The element $m_{g_0}(1)$
is given by
\begin{eqnarray}
\nonumber
m_{g_0}(1)&=&1+\Theta\otimes\sqrt{-q} -\hat{\eta}_q(\Theta\wedge_{K}\Theta)-\hat{\eta}_q(\Theta\wedge_{K}\Theta\wedge_{K}\Theta)\otimes\sqrt{-q}+\dots
\\
\label{eq-pure-spinor-rho-g-0-of-1}
&=& \left(1-\hat{\eta}_q(\Theta\wedge_{K}\Theta)+\dots\right)+
\left(\Theta-\hat{\eta}_q(\Theta\wedge_{K}\Theta\wedge_{K}\Theta)+\dots\right)\otimes\sqrt{-q}
\end{eqnarray}
The $j$-th wedge product $\Theta\wedge_{K}\cdots\wedge_{K} \Theta$ 
is equal to $\Theta\wedge_F\cdots\wedge_F\Theta$, where we consider $\wedge^{2j}_FH^1(X,\QQ)$
as an $F$-subspace of $\wedge^{2j}_K[H^1(X,\QQ)\otimes_FK]$ via the isomorphism 
\[
[\wedge^{2j}_FH^1(X,\QQ)]\otimes_F K\cong \wedge^{2j}_K[H^1(X,\QQ)\otimes_FK].
\]
The product $\Theta\wedge_F\cdots\wedge_F\Theta$ is the projection of $\Theta\wedge_\QQ\cdots\wedge_\QQ\Theta$ into the 
direct summand $\wedge^{2j}_FH^1(X,\QQ)$ of $\wedge^{2j}_\QQ H^1(X,\QQ)$. Write 
$\Theta=\sum_{\hat{\sigma}\in\hat{\Sigma}}\Theta_{\hat{\sigma}}$, where $\Theta_{\hat{\sigma}}$ is in
$\wedge^2H^1_{\hat{\sigma}}(X)$. Under the embedding of $\wedge^{2j}_\QQ H^1(X,\QQ)$ in $\wedge^{2j}_\RR H^1(X,\RR)$ we get the equality $\Theta\wedge_F\cdots\wedge_F\Theta=\sum_{\hat{\sigma}\in\hat{\Sigma}}\Theta_{\hat{\sigma}}\wedge_\RR\cdots\wedge_\RR\Theta_{\hat{\sigma}}$.

\hide{
Note that for $\rho_{g_0}$ to act on $H^1(X\times\hat{X},\CC)$ we need to choose an embedding 
$H^*(X,\QQ)\otimes_F K\rightarrow H^1(X\times\hat{X},\CC)$ to determine the action of $\sqrt{-q}$. So whether $\rho_{g_0}$ is an automorphism of the Hodge structure would depend on this embedding. Our choice is such that $H^1(X\times\hat{X},\QQ)$ is invariant with respect to the $\rho_{g_0}$ action (??? NO ???).
This depends on equality (\ref{eq-W-T-and-its-conjugate-are-complementary}) which $\rho_{g_0}$ needs to satisfy.
We are thus able to induce an action on $H^*(X\times\hat{X},\QQ)\cong\wedge^*_\QQ V_\QQ$ and similarly on $H^*(X,\QQ)$. 
The construction is somewhat subtle. The element $\rho_{g_0}$ maps $H^1(\hat{X},\QQ)\otimes_F K$ to a maximal isotropic subspace $W$ of $V_\QQ\otimes_F K$ defined over $K$. We then use $W$ and a choice of CM-type $T$ to define the embedding of $H^1(X\times\hat{X},\QQ)\otimes_F K$ in $H^1(X\times\hat{X},\CC)$ so that $H^1(X\times\hat{X},\QQ)$ becomes a $K$ vector space. Only then we are able to lift $\rho_{g_0}$ to an element of $\Spin(V_\QQ)$ acting on $H^*(X\times\hat{X},\QQ)=\wedge^*_\QQ V_\QQ$ rather than an element of  $\Spin(V_{\hat{\eta},K})$ acting on $\wedge^*_K [V_\QQ\otimes_F K].$ 
}
\hide{
Let $g_0$ be cup product with $\exp(\sqrt{-q}\Theta)$, where $q$ is an element of $F$ such that $K=F[\sqrt{-q}]$. 
Note that cup product with $\Theta$ commutes 
with $\hat{\eta}_q$, by our choice of $\Theta$.
We have
\[
\exp(\sqrt{-q}\Theta)=
\left(
1-\hat{\eta}_q\circ\Theta^2/2+\hat{\eta}_q^2\circ\Theta^4/4!+\dots
\right)+
\sqrt{-q}\left(
\Theta-\hat{\eta}_q\circ\Theta^3/3!+\hat{\eta}_q^2\circ\Theta^5/5!+\dots
\right)
\]
Powers of $\Theta$ above denote the endomorphisms of $H^*(X,\QQ)$ of cup product with these powers and $\circ$ denotes composition of endomorphisms. 
(??? the above doesn't make sense, since the $\hat{\eta}_q$ action on $\wedge^kH^1(X,\QQ)$ does depend on the tensor factor in $H^1(X,\QQ)^{\otimes k}$ we place it. It acts only on the subspace $\wedge^k_FH^1(X,\QQ)$. On the other hand, the cup product with the exponential of a $2$-form in $\wedge^2_F(X,\QQ)$ induces an isometry of $H^1(X,\QQ)\oplus \Hom_F(H^1(X,\QQ),F)$, which induces an isometry of $V_\QQ$, which should correspond to an element of $\Spin(V_\QQ)$, which should then act on the spin representation $H^*(X,\QQ)$. Assume that $F$ is Galois over $\QQ$. An example of a class $\alpha$, which belongs to $\wedge^k_FH^1(X,\QQ)$ is the sum of a Galois orbit of classes $\alpha_{\hat{\theta}}$ in $\wedge^kH^1(X)_{\hat{\theta}}$. Then $\hat{\eta}_q(\alpha)=\sum_{\hat{\theta}\in\hat{\Sigma}}\hat{\theta}(q)\alpha_{\hat{\theta}}$. Wedge powers of $\alpha$ are not of this type. ???)
Note that for $\exp(\sqrt{-q}\Theta)$ to act on $H^{ev}(X,\CC)$ we need to choose an embedding 
$H^*(X,\QQ)\otimes_F K\rightarrow H^*(X,\CC)$ to determine the action of $\sqrt{-q}$. So whether $g_0$ is an automorphism of the Hodge structure would depend on this embedding.
}

The secant space $B$ in this example is computed via the factorization (\ref{eq-tensor-factorization-of-B}). Write 
$\Theta=\sum_{\hat{\sigma}\in\hat{\Sigma}}\Theta_{\hat{\sigma}}$, where $\Theta_{\hat{\sigma}}$ is in
$\wedge^2H^1_{\hat{\sigma}}(X)$. 
$V_\CC=\oplus_{\hat{\sigma}\in\hat{\Sigma}}V_{\hat{\sigma},\RR}\otimes_\RR\CC$, where
$V_{\hat{\sigma},\RR}=H^1_{\hat{\sigma}}(X)\oplus H^1_{\hat{\sigma}}(\hat{X})$. Choose a $\sigma\in\Sigma$ restricting to $F$ as $\hat{\sigma}$. 
The isometry $\rho_{g_0}$ restricts to $V_{\hat{\sigma},\RR}\otimes_\RR\CC$ as $\exp(\sigma(\sqrt{-q})\Theta_{\hat{\sigma}})$. We get the pure spinor in
$S^+_{\hat{\sigma}}$ corresponding to the element $\exp(\sigma(\sqrt{-q})\Theta_{\hat{\sigma}})$. Given a CM-type $T$, we get the complex numbers
$[T(\hat{\sigma})(\sqrt{-q})]$ and 
the product in $S^+_\CC$
\begin{equation}
\label{eq-set-of-pure-spinnors-spanning-B}
\bigwedge_{\hat{\sigma}\in \hat{\Sigma}}\exp\left([T(\hat{\sigma})(\sqrt{-q})]\Theta_{\hat{\sigma}}\right)=
\exp\left(\sum_{\hat{\sigma}\in \hat{\Sigma}}[T(\hat{\sigma})(\sqrt{-q})]\Theta_{\hat{\sigma}}
\right)
\end{equation}
is a pure spinor spanning $\ell_T$. The secant space $B\otimes_\QQ\CC$ is the span of all these pure spinors in $S^+_\CC$. Equivalently, $B\otimes_\QQ\RR$ is the product $\otimes_{\hat{\sigma}\in\hat{\Sigma}}P_{\hat{\sigma}}$ in $S^+_\RR$, where 
\[
P_{\hat{\sigma}}\otimes_\RR\CC:=\span_\CC\{\exp(\sigma(\sqrt{-q})\Theta_{\hat{\sigma}}), \exp(\bar{\sigma}(\sqrt{-q})\Theta_{\hat{\sigma}})\}
\]
is a plane in $S^+_{\hat{\sigma}}$. The equality $\tau(P_{\hat{\sigma}})=P_{\hat{\sigma}}$, where $\tau$ is the dualization involution  as in (\ref{eq-Mukai-pairing}), implies the equality
\[
\tau(B)=B.
\]

We claim that $B$ is spanned by Hodge classes. It suffices to prove that 
the $2$-form $\Theta_{\hat{\sigma}}$ is a $(1,1)$-form, for every $\hat{\sigma}\in\hat{\Sigma}$. Now, the $F$ action on $H^1(X,\QQ)$ preserves the Hodge structure, by assumption.
Hence, $H^{1,1}(X,\CC)$ is $F^\times$-invariant, where the $F^\times$-action is via $\wedge^2\hat{\eta}$. The two form $\Theta$ is of type $(1,1)$, by assumption. Hence, $\Theta=\sum_{\{\hat{\sigma}_1,\hat{\sigma}_2\}\subset\hat{\Sigma}}\Theta_{\hat{\sigma}_1\otimes\hat{\sigma}_2}$, where 
the summand $\Theta_{\hat{\sigma}_1\otimes\hat{\sigma}_2}$ is in the isotypic subspace for the character $\hat{\sigma}_1\otimes\hat{\sigma}_2$
of $F^\times$ and is of type $(1,1)$. Our choice of $\Theta$ in $\wedge^2_FH^1(X,\CC)$ means that $\Theta_{\hat{\sigma}_1\otimes\hat{\sigma}_2}$ is non-zero, if and only if $\hat{\theta}_1=\hat{\theta}_2$, and the summand $\Theta_{\hat{\sigma}}$ in $\wedge^2H^1_{\hat{\sigma}}(X)$ is the summand 
$\Theta_{\hat{\sigma}\otimes\hat{\sigma}}$. Hence, indeed $\Theta_{\hat{\sigma}}$ is of type $(1,1)$.

\begin{example}
\label{example-K-is-extention-of-F-by-sqrt-of-a-negative-rational-number}
Consider the special case where the element $q$ of $F$ actually belongs to $\QQ$ (and is positive as $K$ is totally complex), though $e/2:=\dim_\QQ(F)>1$. 
Choose $\sqrt{-q}$ to be the square root in the upper half plane. 
The set (\ref{eq-set-of-pure-spinnors-spanning-B}) of $2^{e/2}$ pure spinors becomes
\[
\exp\left(\sqrt{-q}\sum_{\hat{\sigma}\in \hat{\Sigma}}\pm\Theta_{\hat{\sigma}}\right).
\]
Two of those, namely $\exp\left(\pm\sqrt{-q}\Theta\right)$, are defined over the quadratic imaginary number field $K_1:=\QQ(\sqrt{-q})$ contained in $K$ and 
span a secant plane $P_{K_1}$ in $S^+_\CC$ defined over $\QQ$.
\end{example}

\begin{example}
Following is a simple example of a class $c$ in $B\otimes B$ satisfying the hypothesis of Proposition \ref{prop-kappa-class-of-image-of-secant-class-yields-a-HW-class} for a CM-field $K$ with $[\QQ:K]=4$.
Keep the notation of Example \ref{example-K-is-extention-of-F-by-sqrt-of-a-negative-rational-number} and assume furthermore that $F=\QQ(\sqrt{t})$, where $t$ is a positive integer, which is not a perfect square and is relatively prime to $q$. Then $Gal(K/\QQ)$ is isomorphic to $(\ZZ/2\ZZ)^2$. Let $\gamma\in Gal(K/\QQ)$ send $\sqrt{t}$ to $-\sqrt{t}$ and
$\sqrt{-q}$ to itself. Then $\gamma$ interchanges $\Theta_{\hat{\sigma}_1}$ and $\Theta_{\hat{\sigma}_2}$, since $\Theta=\Theta_{\hat{\sigma}_1}+\Theta_{\hat{\sigma}_2}$ is $\gamma$-invariant. Hence,
$\sqrt{t}(\Theta_{\hat{\sigma}_1}-\Theta_{\hat{\sigma}_2})$ is rational, 
$\sqrt{-q}(\Theta_{\hat{\sigma}_1}-\Theta_{\hat{\sigma}_2})$ is $\iota\circ\gamma$ invariant, and the pure spinors 
$\exp(\pm \sqrt{-q}(\Theta_{\hat{\sigma}_1}-\Theta_{\hat{\sigma}_2}))$ are defined over the imaginary quadratic number subfield $K_2:=\QQ(\sqrt{-tq})$ of $K$.
Let $P_{K_2}$ be the plane spanned by these two spinors.
The $\QQ$-linear  $Gal(K/\QQ)$-action on the second factor $K$ of $B\otimes_\QQ K$ permutes\footnote{This permutation action on 
the four $K$-basis elements of $B\otimes_\QQ K$
defines 
a new $Gal(K/\QQ)$-action on $B\otimes_\QQ K$ via $K$-linear transformations. The latter is not natural, as it changes if we rescale the basis elements by a scalar in $K$.
} 
the four pure spinors (\ref{eq-set-of-pure-spinnors-spanning-B}). 
This is a lift of the action of $Gal(K/\QQ)$ on the set $\T_K$ of CM-types in the sense that the map $T\mapsto \ell_T$ is $Gal(K/\QQ)$-equivariant.
The set $\T_K$ consists of two $Gal(K/\QQ)$-orbits.
The two latter pure spinors constitute the orbit of elements fixed by $\iota\circ\gamma$. 
The secant plane $P_{K_1}$ in Example \ref{example-K-is-extention-of-F-by-sqrt-of-a-negative-rational-number} is 
spanned by the $Gal(K/\QQ)$-orbit of pure spinors fixed by $\gamma$. 

Let $\alpha_i$ be a rational class of non-zero rank in $P_{K_i}$, $i=1,2$, and set $c:=\alpha_1\otimes \alpha_2$. We claim that $c$ satisfies the hypothesis of Proposition \ref{prop-kappa-class-of-image-of-secant-class-yields-a-HW-class}, namely 
$c_1$ does not belong to $KB_1$. Indeed,
let $\T_K^\gamma$ be the set of CM-types fixed by $\gamma$ and define $\T_K^{\iota\circ\gamma}$ similarly. For each $\sigma\in\Sigma$ there exists a unique CM-type $T\in \T_K^\gamma$ of which $\sigma$ is a value. The same holds for $\T_K^{\iota\circ\gamma}$. 
Hence, the left hand side of each of the linear homogeneous equations of $KB_1$ in Equation (\ref{eq-explicit-equations-for-KB-1}), for each $\sigma\in\Sigma$, is a sum consisting of precisely one non-zero term $c_{(T,T')}$.
\EndProof
\end{example}

}

\hide{

%
\section{The moduli space $\M(2,0,n\Theta^2)$}
\label{subsec-n-equal-2-case}

Let $(X,\Theta)$ be a principally polarized abelian threefold with a cyclic Neron-Severi group.
A non-empty subset of the moduli space  $\M(2,0,n\Theta^2)$, $n\geq 1$, of rank $2$ slope-stable sheaves with trivial determinant and $c_2(E)=n\Theta^2$, consist of rank $2$ vector bundles, which are the extension
\begin{equation}
\label{eq-F-a}
0\rightarrow \StructureSheaf{X}\RightArrowOf{\iota_a} F_a \RightArrowOf{j_a} \Ideal{Y_a}(2\Theta)\rightarrow 0,
\end{equation}
where $Y_a$ is the disjoint union $\cup_{i=1}^nY_{a_i}$, and $Y_{a_i}=\Theta_{a_i}\cap \Theta_{-a_i}$, and
$a_i$, $1\leq i\leq n$ are generic points of $X$, by \cite[Theorem 2.3]{gulbrandsen}. 
The canonical line bundle of $Y_a$ is isomorphic to $\StructureSheaf{Y_{a}}(2\Theta)$.
Hence, $\deg(\omega_{Y_{a_i}})=\int_X2\Theta^3=12$ and each $Y_{a_i}$ is a  smooth curve of genus $7$. 

Consider, for simplicity, the case $n=2$. Given $x\in X$, set $\Theta_x:=\tau_x(\Theta)$.
Then $F_a$ is the middle cohomology of a complex $K_a$  given by 
\begin{equation}
\label{eq-monad}
\StructureSheaf{X}\RightArrowOf{\phi_a} 
\oplus_{i=1}^2 [\StructureSheaf{X}(\Theta_{-a_i})\oplus \StructureSheaf{X}(\Theta_{a_i})]
\RightArrowOf{\psi_a} \StructureSheaf{X}(2\Theta),
\end{equation}
with $\phi_a$ injective and $\psi_a$ surjective \cite[Prop. 3.2]{gulbrandsen}. Give $x\in X$, choose $\theta_{x}$ to be a non-zero section of $\StructureSheaf{X}(\Theta_x)$. The morphisms in the above complex are 
$\phi_a=(\theta_{-a_1},\theta_{a_1},\theta_{-a_2},\theta_{a_2})$ and
\[
\psi_a=(\theta_{a_1},-\theta_{-a_1},\theta_{a_2},-\theta_{-a_2}), 
\]
so that 
$\psi_a(c_1\theta_{-a_1},c_2\theta_{a_1},c_3\theta_{-a_2},c_4\theta_{a_2})=(c_1-c_2)\theta_{a_1}\theta_{-a_1}+(c_3-c_4)\theta_{a_2}\theta_{-a_2}$. 
The space 
$\Ext^2(F_a,\tau_x^*(F_b)\otimes L^{-1})$ is then the second hypercohomology group of the complex 
\[
\C_\bullet :=  \SheafHom(K_a,\tau_x^*(K_b)\otimes L).
\]
Note that $H^2(\C_0)$ vanishes, for generic $(x,L)$, since $H^2(L)=0$ for non-trivial $L$, and $H^2(L(\tau_x^*(\Theta_{\pm b_i})-\Theta_{\pm a_i}))$ vanishes if $L(\tau_x^*(\Theta_{\pm b_i})-\Theta_{\pm a_i})$ is non-trivial.
Furthermore, $H^1(\C_1)$ vanishes, for every $L$ and $x$, by Kodaira's Vanishing Theorem.
Note that $\C_2=\SheafHom(\StructureSheaf{X},L(2\Theta_{-x}))$, and so the space of degree $2$ complex homomorphisms $H^0(\C_2)$ is $8$ dimensional.

\begin{lem}
\label{lemma-rank-0}
For generic $(x,L)$ and $(a,b)$ all degree $2$ complex homomorphisms in $H^0(\C_2)$ are homotopic to zero. 
\end{lem}

\begin{proof}
Let $f_i^{\pm}$ be a non-zero section of $H^0(\tau_x^*(L(\Theta_{\pm b_i})))=H^0(L(\Theta_{-x\pm b_i}))$.
Let $g_i^{\pm}$ be a non-zero section of $H^0(L(2\Theta_{-x}-\Theta_{\pm a_i}))$.
A section $s\in H^0(L(2\Theta_{-x}))$ is homotopic to zero as a degree $2$ complex homomorphism, if and only if there exist homomorphisms 
\[
f=(c_1f_1^-,c_2f_1^+,c_3f_2^-,c_4f_2^+)\in \Hom(\StructureSheaf{X},\oplus_{i=1}^2[L(\Theta_{-x-b_i})\oplus L(\Theta_{-x+b_i})])
\]
and
\[
g=(d_1g^+_1,d_2g^-_1,d_3g^+_2,d_4g^-_2)\in
\Hom(\oplus_{i=1}^2 [\StructureSheaf{X}(\Theta_{-a_i})\oplus \StructureSheaf{X}(\Theta_{a_i})],L(2\Theta_{-x}))
\]
satisfying $s=\tau_x^*(\psi_b)\circ f+g\circ \phi_a$. Now,
\[
\tau_x^*(\psi_b)\circ f=
c_1f_1^-\tau_x^*(\theta_{b_1})-c_2f_1^+\tau_x^*(\theta_{-b_1})+
c_3f_2^-\tau_x^*(\theta_{b_2})-c_4f_2^+\tau_x^*(\theta_{-b_2})
\]
and
\[
g\circ \phi_a=
d_1g_1^+\theta_{-a_1}+d_2g_1^-\theta_{a_1}+d_3g_2^+\theta_{-a_2}+d_4g_2^-\theta_{a_2}.
\]
We see that $\tau_x^*(\psi_b)\circ f+g\circ \phi_a$ is a linear combination of the $8$ sections 
\begin{equation}
\label{eq-8-sections}
\{f_1^-\tau_x^*(\theta_{b_1}), f_1^+\tau_x^*(\theta_{-b_1}), f_2^-\tau_x^*(\theta_{b_2}), 
f_2^+\tau_x^*(\theta_{-b_2}), 
g_1^+\theta_{-a_1}, g_1^-\theta_{a_1}, g_2^+\theta_{-a_2}, g_2^-\theta_{a_2}
\}
\end{equation}
of $L(2\Theta_{-x})$. 

Let $\ell\in X$ be the point, such that $L$ is isomorphic to $\StructureSheaf{X}(\Theta_\ell-\Theta)$.
We have the isomorphisms
$L(2\Theta_{-x}-\Theta_{a_i})\cong 
\StructureSheaf{X}((\Theta_{\ell-x}-\Theta_{-x})+2\Theta_{-x}-\Theta_{a_i})\cong
\StructureSheaf{X}(\Theta_{\ell-a_i-2x})$. Hence, we can take $g^{\pm}_i=\theta_{\ell\mp a_i-2x}$.
Similarly, $f_i^\pm=\theta_{\ell-x\pm b_i}$. The eight sections displayed above are thus
\begin{eqnarray*}
\{
\theta_{\ell-x-b_1}\theta_{-x+b_1},\theta_{\ell-x+b_1}\theta_{-x-b_1},
\theta_{\ell-x-b_2}\theta_{-x+b_2}, \theta_{\ell-x+b_2}\theta_{-x-b_2},
\\
\theta_{\ell-a_1-2x}\theta_{a_1}, \theta_{\ell+a_1-2x}\theta_{-a_1},
\theta_{\ell-a_2-2x}\theta_{a_2}, \theta_{\ell+a_2-2x}\theta_{-a_2}
\}
\end{eqnarray*}
Consider the Kummer morphism $\kappa:X\rightarrow |L(2\Theta_{-x})|$ given by
$\kappa(y)=\theta_{\ell-x-y}\theta_{-x+y}$. It has degree $2$ and satisfies $\kappa(y)=\kappa(\ell-y)$.
Then the set of lines spanned by the above eight sections correspond to the points
\begin{equation}
\label{eq-8-points-in-the-image-of-the-kummer-map}
\{
\kappa(b_1),\kappa(-b_1),\kappa(b_2),\kappa(-b_2),\kappa(a_1+x),\kappa(-a_1+x),\kappa(a_2+x),\kappa(-a_2+x).
\}
\end{equation}
These $8$ sections are linearly independent, hence a basis, for generic $(x,L)$ and $(a,b)$, since the image of $\kappa$ in non-degenerate
(see the proof of \cite[Lemma 5.1]{gulbrandsen}).
\end{proof} 

Let $\ell\in X$ be the point, such that $L$ is isomorphic to $\StructureSheaf{X}(\Theta_\ell-\Theta)$.

\begin{lem}
\label{lemma-relative-extension-sheaf-does-not-vanish}
The sheaf $\G_{F_a,F_b}$ has rank $0$ and its support has co-dimension $1$. Its set theoretic support contains the following loci:
\begin{enumerate}
\item
The locus of points $(x,L)$, where
$x=\pm b_i\pm a_j$, $1\leq i,j\leq 2$ and $\ell$ is arbitrary, a total of 16 three dimensional abelian subvarieties of $X\times \hat{X}$.
\item
\label{lemma-item-second-locus}
 The locus
$x=\ell\pm b_i\pm a_j$, $1\leq i,j\leq 2$, a total of 16 three dimensional abelian subvarieties of $X\times \hat{X}$.
\item 
The locus of points $(x,\StructureSheaf{X})$, where $x$ is arbitrary.
\end{enumerate}
\end{lem}

\begin{proof}
The rank of $\G_{F_a,F_b}$ is zero, by Lemma \ref{lemma-rank-0}.
Set 
\[
L_{\pm b_i}:=\pi_{23,*}\left(
 \pi_{12}^*a^*\StructureSheaf{X}(\Theta_{\pm b_i})
\otimes\pi_{13}^*\P
\right)
\]
\[
M_{\pm b_i}:=\pi_{23,*}\left(
 \pi_{12}^*a^*\StructureSheaf{X}(\Theta_{\pm b_i})
\right)
\]
\[
G_{\pm a_i}:=\pi_{23,*}\left(
 \pi_{12}^*a^*\StructureSheaf{X}(2\Theta)\otimes\pi_{13}^*\P \otimes \pi_1^*\StructureSheaf{X}(-\Theta_{\pm a_i})
 \right)
\]
Set $\Gamma_{\pm a_i}:=H^0(X,\StructureSheaf{X}(\Theta_{\pm a_i}))$.
The support of $\G_{F_a,F_b}$ contains the locus, where the set (\ref{eq-8-points-in-the-image-of-the-kummer-map}) is not linearly independent. The latter locus is the degeneracy locus of the tautological product followed by  evaluation homomorphism
$ev$ between two rank $8$ vector bundles, from 
\[
\oplus_{i=1}^2[(L_{-b_i}\otimes M_{b_i})\oplus (L_{b_i}\otimes M_{-b_i})]
\oplus
\oplus_{i=1}^2 [(G_{a_i}\otimes_\CC  \Gamma_{a_i})\oplus (G_{-a_i}\otimes_\CC  \Gamma_{-a_i})]
\]
to
\[
\pi_{23,*}\left(
 \pi_{12}^*a^*[\StructureSheaf{X}(2\Theta)]\otimes\pi_{13}^*\P
\right).
\]
Hence, if non-empty, the degeneracy locus of $ev$ is a divisor.
The set (\ref{eq-8-points-in-the-image-of-the-kummer-map}) is not linearly independent 
if $x=b_1-a_1$, since then $\kappa(a_1+x)=\kappa(b_1)$, the first and fifth lines are equal. 
Similarly, if $x=\ell-b_1-a_1$, then $\kappa(a_1+x)=\kappa(\ell-b_1)=\kappa(b_1)$ and again the first and fifth lines are equal. The rest of the cases again deal with cases where one of the first four lines is equal to one of the last four lines in the set (\ref{eq-8-points-in-the-image-of-the-kummer-map}).

The group $E_2^{0,2}=E_1^{0,2}=H^2(\C_0)$ is isomorphic to $E_\infty^{0,2}$, since the spectral sequence degenerate at the $E_3$ sheet, by the argument in the proof of \cite[Sec. 5.1]{gulbrandsen}, and the differentials 
$d_1^{-1,2}:E_1^{-1,2}\rightarrow E_1^{0,2}=H^2(\C_0)$ and
$d_2:E_2^{0,2}\rightarrow E_2^{2,1}$ both vanish by Kodaira's Vanishing Theorem and Serre's Duality.
When $L$ is trivial, then $\Ext^2(\StructureSheaf{X},L)=\Ext^2((K_a)_0,(\tau_x^*K_b\otimes L)_0)$ is a $3$-dimensional direct summand of $H^2(\C_0)$. Similarly, when $\ell=2x$,
then  $\Ext^2(\StructureSheaf{X}(2\Theta),L(2\Theta_{-x})=\Ext^2((K_a)_2,(\tau_x^*K_b\otimes L)_2)$ is a direct summand of dimension $3$. 
When $L$ is non-trial, 
\[
H^2(\C_0)\cong\Ext^2(\oplus_{i=1}^2[\StructureSheaf{X}(\Theta_{-a_i})\oplus \StructureSheaf{X}(\Theta_{a_i})],
\oplus_{i=1}^2[\StructureSheaf{X}(\Theta_{\ell-x-b_i})\oplus \StructureSheaf{X}(\Theta_{\ell-x+b_i})]).\]
The locus where $H^2(\C_0)$ does not vanish is equal to the 16 abelian subvarieties in the locus (\ref{lemma-item-second-locus}) above, and the rank of $H^2(\C_0)$ at the generic point of each is $3$.
\end{proof}

(???) 
We need to add the locus where $H^3(\C_{-1})$ does not vanish, and which contribute to 
$\Ext^2(F_a,\tau_x^*(F_b)\otimes L)$. For the latter, use the version of Serre's Duality
in \cite[Lemma 4.2]{gulbrandsen} to relate the $E_\infty^{-1,3}$ term of the spectral sequence for $\Ext^2(F_a,\tau_x^*(F_b)\otimes L)$ and  the $E_\infty^{1,0}$ term for $\Ext^1(\tau_x^*(F_b)\otimes L,F_a)\cong
\Ext^1(F_b,\tau_{-x}^*(F_a)\otimes L^{-1})$. Here we deal with degree $1$ morphisms of complexes, and the condition that a pair of homomorphisms 
\[
f\in \Hom(\StructureSheaf{X},\oplus_{i=1}^2[\tau_{-x}^*(L(\Theta_{-b_i}))\oplus \tau_{-x}^*(L(\Theta_{b_i}))])
\]
and 
\[
g\in \Hom(\oplus_{i=1}^2[\StructureSheaf{X}(\Theta_{-a_i})\oplus \StructureSheaf{X}(\Theta_{a_i})],\tau_{-x}^*(L^{-1}(2\Theta)))
\]
 yield a morphism of complexes, again boils down to linear dependence of a set analogous to 
 (\ref{eq-8-sections}), which holds on a codimension $1$ locus.

%
\subsection{Some computations, which may not be needed}

\begin{lem}
\label{lemma-cohomologies-of-ideal-sheaf}
$\dim H^i(\Ideal{Y_a}\otimes L)=0$, for $i\neq 2$, and $\dim H^2(\Ideal{Y_a}\otimes L)=6n$.
\end{lem}

\begin{proof}
Use the long exact sequence of sheaf cohomology associated to the short exact sequence
$0\rightarrow\Ideal{Y_a}\otimes L \rightarrow L \rightarrow \restricted{L}{Y_a}$ and the vanishing of $H^i(L)$, for all $i$, to get the isomorphism
$H^2(\Ideal{Y_a}\otimes L)\cong H^1(Y_a,\restricted{L}{Y_a})$. Next use the fact that $Y_a$ consists of $n$ connected components, each of which is a curve of genus $7$ and Riemann-Roch.
\end{proof}

\begin{lem}
$\dim H^i(X,F_b\otimes L)=0$, if $i\neq 1$, and $\dim H^1(X,F_b\otimes L)=6n-8$.
\end{lem}

\begin{proof}
We have the short exact sequence
\[
0\rightarrow L(-2\Theta)\rightarrow F_b(-2\Theta)\otimes L \rightarrow \Ideal{Y_b}\otimes L \rightarrow 0.
\]
The cohomologies $H^i(L(-2\Theta))$ vanish, for $i\neq 3$, and $\dim H^3(L(-2\Theta))=8$. Using the isomorphisms
$H^3(F_b(-2\Theta)\otimes L)\cong H^0(F_b^*(2\Theta)\otimes L^{-1})^*\cong H^0(F_b\otimes L^{-1})^*=0,$
we get the short exact sequence
\[
0\rightarrow H^2(F_b(-2\Theta)\otimes L) \rightarrow H^2(\Ideal{Y_b}\otimes L) \rightarrow H^3(L(-2\Theta))\rightarrow 0.
\]
We get that $\dim H^2(F_b(-2\Theta)\otimes L)=6n-8$, by Lemma \ref{lemma-cohomologies-of-ideal-sheaf}.
The statement follows from the isomorphism $F_b(-2\Theta)\cong F_b^*$ and Serre's Duality.
\end{proof}

\begin{question}
\label{question-dimenstion-over-Y-a}
Note that $\chi\left(F_b\otimes \restricted{L}{Y_a}\right)=0$, so 
\[
\dim H^0(Y_a,F_b\otimes \restricted{L}{Y_a})=\dim H^1(Y_a,F_b\otimes \restricted{L}{Y_a}).
\]
What is that dimension?
\end{question}

\begin{proof}
The monad description of $F_b$ in \cite[Prop. 3.2]{gulbrandsen} should be relevant. 
\end{proof}

\begin{question}
Let $F_a$ and $F_b$ be two rank $2$ slope stable vector bundles of the form (\ref{eq-F-a})
and $L\in \Pic^0(X)$ a non-trivial line-bundle. Compute
$\dim\Ext^1(F_a,F_b\otimes L).$ 
{\bf A partial answer is given in section \ref{subsec-n-equal-2-case} when $n=2$ and the dimension jumps as we vary $L$.}
\end{question}

Let $\epsilon_a\in \Ext^1(\Ideal{Y_a}(2\Theta),\StructureSheaf{X})$ be the extension class of (\ref{eq-F-a}). We have the long exact sequence
\begin{eqnarray}
\label{eq-long-exact-sequence-of-exts}
0&\rightarrow& \Hom(\Ideal{Y_a}(2\Theta),F_b\otimes L) \RightArrowOf{j_a^*}
\Hom(F_a,F_b\otimes L) \RightArrowOf{\iota_a^*}
H^0(F_b\otimes L) \RightArrowOf{\circ\epsilon_a}
\\
\nonumber
&\rightarrow & \Ext^1(\Ideal{Y_a}(2\Theta),F_b\otimes L)  \rightarrow 
\Ext^1(F_a,F_b\otimes L) \rightarrow H^1(F_b\otimes L) \rightarrow 
\\
\nonumber
&\rightarrow & \Ext^2(\Ideal{Y_a}(2\Theta),F_b\otimes L)  \rightarrow 
\Ext^2(F_a,F_b\otimes L) \rightarrow H^2(F_b\otimes L) \rightarrow 
\\
\nonumber
&\rightarrow & \Ext^3(\Ideal{Y_a}(2\Theta),F_b\otimes L)  \rightarrow 
\Ext^3(F_a,F_b\otimes L) \rightarrow H^3(F_b\otimes L) \rightarrow 0
\end{eqnarray}
Note the isomorphism $F_b^*(2\Theta) \cong F_b$. Serre's Duality yields
\[
\Ext^i(\Ideal{Y_a}(2\Theta),F_b\otimes L)^* \cong
\Ext^{3-i}(F_b\otimes L,\Ideal{Y_a}(2\Theta)) \cong
H^{3-i}(F_b\otimes\Ideal{Y_a}\otimes L^{-1}).
\]

Consider the long exact sequence
\begin{eqnarray*}
0&\rightarrow & H^0(F_b\otimes\Ideal{Y_a}\otimes L^{-1})\rightarrow H^0(F_b\otimes L^{-1})\rightarrow 
H^0(Y_a,F_b\otimes \restricted{L}{Y_a})
\\
&\rightarrow & 
H^1(F_b\otimes\Ideal{Y_a}\otimes L^{-1})\rightarrow H^1(F_b\otimes L^{-1})\rightarrow 
H^1(Y_a,F_b\otimes \restricted{L}{Y_a})
\\
&\rightarrow & 
H^2(F_b\otimes\Ideal{Y_a}\otimes L^{-1})\rightarrow H^2(F_b\otimes L^{-1})\rightarrow 
0
\\
&\rightarrow & 
H^3(F_b\otimes\Ideal{Y_a}\otimes L^{-1})\rightarrow H^3(F_b\otimes L^{-1})\rightarrow 
0
\end{eqnarray*}
If in Question (\ref{question-dimenstion-over-Y-a}) $\dim H^i(Y_a,F_b\otimes \restricted{L}{Y_a})=0$, for $i=1,2$,
then 
\[
\dim\Ext^1(\Ideal{Y_a}(2\Theta),F_b\otimes L)=\dim H^{2}(F_b\otimes\Ideal{Y_a}\otimes L^{-1})=
\dim H^2(F_b\otimes L^{-1})=0.
\] 
Similarly,
$\dim\Ext^2(\Ideal{Y_a}(2\Theta),F_b\otimes L)=\dim H^{1}(F_b\otimes\Ideal{Y_a}\otimes L^{-1})=
\dim H^1(F_b\otimes L^{-1})=6n-8.$ The following is thus a piece of the long exact sequence
(\ref{eq-long-exact-sequence-of-exts}):
\[
0\rightarrow \Ext^1(F_a,F_b\otimes L) \rightarrow H^1(F_b\otimes L) \RightArrowOf{\epsilon_a\circ}
 \Ext^2(\Ideal{Y_a}(2\Theta),F_b\otimes L)  \rightarrow 
\Ext^2(F_a,F_b\otimes L) \rightarrow 0,
\]
where the middle homomorphism $\epsilon_a\circ$ is between two vector spaces of dimension $6n-8$ and we identify $H^1(F_b\otimes L)$ with $\Ext^1(\StructureSheaf{X},F_b\otimes L)$ when we compose with the extension class $\epsilon_a\in\Ext^1(\Ideal{Y_a}(2\Theta),\StructureSheaf{X})$.

}

%
\section{Abelian sixfolds of Weil type associated to a coherent sheaf with a positive Igusa invariant on an abelian threefold}
\label{sec-Igusa-invariant}
We used an ad hoc construction of secant sheaves on abelian $3$-folds in Lemma \ref{lemma-ch-F-i-is-on-secant-to-spinor-variety}. We provide a more conceptual construction of secant sheaves using results of Igusa. The results of this section are not used in the paper. 

%
\subsection{The Igusa quartic}
Let $X$ be an abelian $3$-fold, so that $V$ has rank $12$. 
\label{sec-coherent-sheaves-with-positive-Igusa-invariant}
The group $\Spin(V)$ acts naturally on the coordinate polynomial ring $\Sym((S^+_\QQ)^*)$ of the half-spin representation $S^+_\QQ$. 
The ring $\Sym((S^+_\QQ)^*)^{\Spin(V)}$ of invariant polynomials is generated by a single polynomial $J$ of degree $4$, by \cite[Prop. 3]{igusa}. The hyperplane $V(J)$ in $\PP(S^+_\CC)$ defined by $J$
is the tangential variety of the spinor variety \cite[Remark 2.1.1]{abuaf}. In other words, $V(J)$ is the union of lines in $\PP(S^+_\CC)$ tangent to the spinor variety. 

We recall next the explicit formula for Igusa's quartic.
Let $\{e_1, \dots, e_6\}$ be a basis of $H^1(X,\Integers)$ with
$\int_X e_1\wedge e_2\wedge \cdots\wedge e_6=1$.
Set $[pt_X]:=e_1\wedge e_2\wedge \cdots\wedge e_6$.
For $1\leq i<j\leq 6$, set 
\[
e_{ij}^*:=(-1)^{i+j-1}e_1\wedge \cdots \wedge e_{i-1}\wedge e_{i+1} \wedge \cdots \wedge e_{j-1}\wedge e_{j+1}\wedge \cdots \wedge e_6,
\]
so that $e_i\wedge e_j\wedge e_{ij}^*=[pt_X]$. Given an alternating matrix $A$ of rank $2r$, denote by $Pf(A)$ the Pfaffian of $A$ normalized so that $Pf\left(\begin{array}{cc}0 & I_r\\-I_r & 0\end{array}\right)=1$,
where $I_r$ is the $r\times r$ identity matrix. An element $x\in S^+_\QQ$ is a linear combination
\[
x=x_0+\sum_{i<j}x_{ij}e_i\wedge e_j+\sum_{i<j}y_{ij}e_{ij}^*+y_0[pt_X]
\]
with rational coefficients $x_0$, $x_{ij}$, $y_{ij}$, and $y_0$.
Let $X_{ij}$ be the matrix obtained from $(x_{ij})$ by crossing out the $i$-th and $j$-th rows and columns. 
Define $Y_{ij}$ similarly in terms of $(y_{ij})$.
Then the Igusa quartic is
\begin{equation}
\label{eq-J}
J(x)=
x_0Pf((y_{ij})) + y_0Pf((x_{ij})) + \sum_{i<j}Pf(X_{ij})Pf(Y_{ij})-(1/4)(x_0y_0-\sum_{i<j}x_{ij}y_{ij})^2,
\end{equation}
\cite[Prop. 3]{igusa}.

Given a non-zero element $w\in S^+_\CC$ denote by $[w]\in\PP(S^+_\CC)$ the line spanned by $w$.

\begin{lem}
\label{lemma-secant-to-spinor-variety}
For each $[w]\in \PP(S^+_\CC)\setminus V(J)$ there exists a unique plane
$P_w\subset S^+_\CC$, containing $w$, such that the line $\PP(P_w)$ 
intersects the even spinor variety along two pure spinors corresponding to two transversal maximal isotropic subspaces $W_i$, $i=1,2$, of $V_\CC$. 
The homomorphism $\rho:\Spin(V_\CC)\rightarrow SO(V_\CC)$ maps the
stabilizer $\Spin(V_\CC)_w$, of an element $w\in S^+_\CC$ spanning $[w]$, isomorphically onto the image of the embedding 
\begin{equation}
\label{eq-embedding-e-of-stabilizer-of-w}
e:SL(W_1)\rightarrow SO(V_\CC)
\end{equation}
acting on $W_2$ via the isomorphism $W_1^*\cong W_2$ induced  by the pairing (\ref{eq-pairing-on-V}). If $w$ belongs to $S^+_\QQ$, then the plane $P_w$ is defined over $\QQ$.
\end{lem}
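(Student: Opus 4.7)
The plan is to reduce each of the three assertions to facts already developed in Sections \ref{sec-clifford-algebra} and \ref{sec-pure-spinors}. First I would establish existence and uniqueness of $P_w$. For $n=3$ the secant variety of the spinor variety in $\PP(S^+_\CC)$ is birational to $\PP(S^+_\CC)$, so through a general $[w]$ passes at least one secant line meeting the spinor variety in two distinct points. The obstruction to the existence of such a \emph{proper} secant at $[w]$ is precisely the tangential variety of the spinor variety, which is cut out by $J$ (cited in the paper via \cite[Remark 2.1.1]{abuaf}). Hence for $[w]\notin V(J)$ at least one such secant $\PP(P_w)$ exists. Uniqueness of $P_w$ for $n$ odd is exactly the content of Remark \ref{remark-stabilizer-of-w-may-have-two-connected-components}: a point of $P$ which is not a pure spinor determines $P$ uniquely, and moreover $\PP(P)$ is then the unique secant to the spinor variety through $w$. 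The transversality $W_1\cap W_2=(0)$ is immediate from \cite[III.1.12]{chevalley}: if the two maximal isotropic subspaces met non-trivially, the line $\PP(P_w)$ would meet the spinor variety in more than two points, contradicting the uniqueness just established (and contradicting Lemma \ref{lemma-P-is-non-isotropic-iff-W-1-and-W-2-are-transversal} read in reverse).

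Next I would identify the stabilizer $\Spin(V_\CC)_w$. Any $g\in\Spin(V_\CC)_w$ preserves the unique secant plane $P_w$, and hence permutes the two intersection points $\ell_1,\ell_2$ of $\PP(P_w)$ with the spinor variety. Because $g$ fixes the non-special point $[w]\in\PP(P_w)$, the induced automorphism of $\PP(P_w)$ either fixes each $\ell_i$ or swaps the two. Remark \ref{remark-stabilizer-of-w-may-have-two-connected-components} in the odd case $n=3$ rules out the swapping component and gives $\Spin(V_\CC)_w=\Spin(V_\CC)_{P_w}$. By (the complex-coefficient version of) Lemma \ref{lemma-Spin-V-K-is-SL-n-K}, $\Spin(V_\CC)_{P_w}$ is the kernel of the character $\det_1$ on $\Spin(V_\CC)_{\ell_1,\ell_2}$, and so, via the isomorphism $\Spin(V_\CC)_{\ell_1,\ell_2}/\{\pm 1\}\cong GL(W_1)$ recalled in Section \ref{sec-pure-spinors}, it is canonically identified with $SL(W_1)$. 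The map $\rho$ restricted to this subgroup is injective since $-1\in\Spin(V_\CC)$ acts by $-\mathrm{id}$ on $S^+_\CC$ and therefore cannot fix $w$. The description of the induced action on $W_2$ through the identification $W_2\cong W_1^*$ given by the pairing $(\bullet,\bullet)_V$ is inherited directly from the corresponding description of $\Spin(V_K)_{\ell_1,\ell_2}$ already recorded in Section \ref{sec-pure-spinors}.

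Rationality is then a short Galois-descent argument: if $w\in S^+_\QQ$, then for any $\sigma\in\mathrm{Gal}(\CC/\QQ)$ the plane $\sigma(P_w)$ is again a complex plane through $\sigma(w)=w$ whose projectivization meets the spinor variety (which is defined over $\QQ$) transversally in two points. The uniqueness of $P_w$ established in the first step forces $\sigma(P_w)=P_w$ for all $\sigma$, so $P_w$ is defined over $\QQ$.

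The main technical point I expect to require care is the first step: pinning down that the complement $\PP(S^+_\CC)\setminus V(J)$ is exactly the locus where a \emph{proper} secant with transversal pure-spinor pair exists (rather than, say, a tangent line at a single pure spinor, or a secant with $W_1\cap W_2\neq 0$). This is where the identification of $V(J)$ with the tangential variety of the spinor variety does the real work, and where invoking \cite[Remark 2.1.1]{abuaf} is essential; everything downstream is then a bookkeeping exercise in the spinor dictionary of Section \ref{sec-pure-spinors}.
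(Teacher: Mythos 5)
Your proof takes a genuinely different route from the paper's. The paper invokes Igusa's result that $S^+_\CC\setminus\tilde V(J)$ meets each level set of $J$ in a single $\Spin(V_\CC)$-orbit, reduces to the explicit model $w=1+d[pt_X]$, and then reads off the stabilizer and the unique secant $\span\{1,[pt_X]\}$ by a direct computation using \cite[Lemma 2]{igusa}. You instead try to argue geometrically through every point of $\PP(S^+_\CC)\setminus V(J)$ at once, using the tangential-variety interpretation of $V(J)$ for existence and Remark~\ref{remark-stabilizer-of-w-may-have-two-connected-components} for uniqueness and the stabilizer.

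The existence step has a real gap. You write that ``the obstruction to the existence of such a proper secant at $[w]$ is precisely the tangential variety,'' citing \cite[Remark 2.1.1]{abuaf}; but that reference only identifies $V(J)$ with the tangential variety of the spinor variety. It does not by itself yield the statement you need, namely that every $[w]\notin V(J)$ lies on an honest secant with two \emph{distinct} contact points whose maximal isotropics are \emph{transversal}. To go from ``$V(J)=\tau(\mathbb S_6)$'' to this requires a further argument: a compactness/degeneration argument (a point of $\Sigma\setminus\Sigma^0$ lies on a limit of secants, hence on a tangent line, hence in $\tau$), plus a computation showing that $J$ restricted to a line through two pure spinors is proportional to $(u_1,u_2)_S^2\,a^2b^2$, so that $J|_{\PP(P)}\neq 0$ forces $(u_1,u_2)_S\neq 0$, i.e.\ $W_1\cap W_2=(0)$ by Lemma~\ref{lemma-P-is-non-isotropic-iff-W-1-and-W-2-are-transversal}. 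Your transversality argument (``would meet the spinor variety in more than two points'') is hand-waving rather than the correct computation; and your citation of \cite[III.1.12]{chevalley} is to a statement that \emph{assumes} $W_1\cap W_2=(0)$, so it cannot be used to derive it. Note also that your uniqueness step leans on Remark~\ref{remark-stabilizer-of-w-may-have-two-connected-components}, which is itself sourced from \cite[Lemma 2]{igusa}; you have not actually escaped the dependency on Igusa, you have merely rerouted it, while the paper's route through Igusa's transitivity result (\cite[Prop. 3]{igusa}) is what closes the existence gap cleanly. Your Galois-descent argument for rationality is fine and arguably slightly more direct than the paper's (which descends through the stabilizer rather than the secant itself); formally, one should descend through $\operatorname{Gal}(\bar\QQ/\QQ)$ rather than ``$\operatorname{Gal}(\CC/\QQ)$,'' after noting $P_w$ is determined by algebraic conditions on $w\in S^+_\QQ$ and hence is defined over $\bar\QQ$.
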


\begin{proof}
The homomorphism $\rho$ restricts to an injective homomorphism from $\Spin(V_\CC)_w$ into $SO(V_\CC)$.
Indeed, the kernel of $\rho$ has order $2$, generated by $-1\in C(V_\CC)$, and the latter
acts as $-id_{S_\CC}$ on the spin representation. Hence, the kernel of $\rho$ intersects $\Spin(V_\CC)_w$ trivially.

Let $\tilde{V}(J)\subset S^+_\CC$ be the cone over $V(J)$.
The complement $S^+_\CC\setminus \tilde{V}(J)$ intersects each fiber of $J:S^+_\CC\rightarrow \CC$ in a single $\Spin(V_\CC)$-orbit, by \cite[Prop. 3]{igusa}. 
Hence, it suffices to prove the statement for one such $w\in S^+_\ComplexNumbers$ in each fiber. 
Let $w=1+d[pt_X]$, $d\neq 0$, where $[pt_X]\in H^6(X,\Integers)$ is the class Poincar\'{e} dual to a point. 
Then $w$ belongs to the plane spanned by the two pure spinors $1$ and $[pt_X]$ and $J(w)=-(1/4)d^2$. 
The kernel of $m_1:V_\CC\rightarrow S^-_\CC$ is the maximal isotropic subspace $W_1:=H^1(\hat{X},\CC)$ and 
the kernel of $m_{[pt_X]}:V_\CC\rightarrow S^-_\CC$ is $W_2:=H^1(X,\CC)$. The description of the stabilizer of 
$w$ as the embedding (\ref{eq-embedding-e-of-stabilizer-of-w}) 
is given in this case in \cite[Lemma 2]{igusa}. Clearly, $W_1$ and $W_2$ are the only two maximal isotropic subspaces invariant under $e(SL(W_1))$. Hence, the two lines spanned by $1$ and $[pt_X]$ are the only pure spinor lines in $S^+_\CC$ stabilized under $\Spin(V_\CC)_w$ and the line
$\PP(\mbox{span}\{1, \ [pt_X]\})$ is the unique line secant to the spinor variety and passing through $w$, every point of which 
is $\Spin(V_\CC)_w$-invariant. We conclude, 
more generally, that for every $w$ with $[w]\in \PP(S^+_\CC)\setminus V(J)$, the  stabilizer $\Spin(V)_w$ fixes precisely two lines of even pure spinors, $[u_1]$, $[u_2]$, 
and that  $w$ belongs to the plane $P_w:=\mbox{span}\{u_1, u_2\}$. Furthermore, setting $W_i:=\ker(m_{u_i})$, 
$i=1,2$, the stabilizer $\Spin(V)_w$ is isomorphic to the stated embedding (\ref{eq-embedding-e-of-stabilizer-of-w}) of $SL(W_1)$. 

Assume next that $w$ belongs to $S^+_\QQ$. Then the stabilizer $\Spin(V_\CC)_w$ is defined over $\QQ$. 
The latter determines $P_w$. Hence, $P_w$ is defined over $\QQ$ as well.
\hide{
to 
the pair $\{W_1, W_2\}$ is determined by the stabilizer $\Spin(V)_w$, and hence so does the plain $P_w$ spanned by the 

The secant variety of the spinor variety is the whole of
$\PP(S^+_\CC)$, by dimension count. 
The complement $\PP(S^+_\CC)\setminus V(J)$ is a single $\Spin(V)$-orbit, by \cite[Prop. 3]{igusa}, hence there exists a secant with transversal maximal isotropic subspaces, for every $w\in \PP(S^+_\CC)\setminus V(J)$.
It remains to prove the uniqueness of the plane $P_w$.
Indeed, given one 
such plane $P$, the intersection of $\PP(P)$ with the even spinor variety consists of two points $u_1$ and $u_2$, such that 
the line $\PP(P)$ intersects $V(J)$ along these two points with multiplicity $2$, since the even spinor variety is contained in the singular locus of $V(J)$. Set $W_i:=\ker(m_{u_i})$. The $W_1\cap W_2=(0),$ by assumption.
The stabilizer $\Spin(V)_w$ of $w$ in $\Spin(V)$ is 
isomorphic to the the embedding of $SL(W_1)$ in $\Spin(V)$ 
acting on $W_2$ leaving invariant the isomorphism $W_1\cong W_2^*$ induced  by the pairing (\ref{eq-Mukai-pairing}) \cite[Prop. 3]{igusa}. Hence, $\Spin(V)_w$ fixes $u_1$ and $u_2$ and acts as the identity on the lines $\tilde{u}_i\subset S^+_\CC$ and so on $P$. Now, clearly, the subgroup of 
}
\end{proof}


\begin{rem}
\begin{enumerate}
\item
For any subfield $F$ of $\CC$, 
the level set $J^{-1}(d)\subset S^+_F$, $d\in F$, consists of a single $\Spin(V_F)$-orbit, by \cite[Prop. 3]{igusa}. 
Furthermore, if $w=1+e_{14}^*+e_{25}^*+de_{36}^*$, $d\in F$, then $J(w)=d$.
\item
Note that the secant $\PP(P_w)$ to the spinor variety in Lemma \ref{lemma-secant-to-spinor-variety} intersects the quartic $V(J)$ along the same two points along which it intersected the spinor variety, each with multiplicity $2$, since the even spinor variety is contained in the singular locus of $V(J)$. The subspace $P_w$ is defined over $\QQ$, and so the length two subscheme of the intersection of $\PP(P_w)$ with the spinor variety is defined over $\QQ$. 
\end{enumerate}
\end{rem}

%
\subsection{Complex multiplication}
If $w$ belongs to $S^+_\QQ$, then $d:=J(w)$ is a rational number, since $J$ is defined over $\QQ$. Set $K:=\QQ[\sqrt{-d}]$. 
Assume that $-d$ is not a square of a rational number.
Let $\sigma: K\rightarrow K$ be the involution in $Gal(K/\QQ)$. Denote by $\sigma$ also the induced involution on $S^+_K$, $S^-_K$, and $V_K$. 
Let $Nm:K\rightarrow\QQ$ be the norm map $Nm(\lambda)=\lambda\sigma(\lambda)$.

Let $\tilde{O}(V_\QQ)$ be the group of rational similarities of $V_\QQ$, defined in (\ref{eq-group-of-similarities}).

\begin{lem}
\label{lemma-imaginary-quadratic-field-is-centralizer-sixfold-case}
Let $w\in S^+_\QQ$ be a rational class with $d:=J(w)> 0$. Set $K:=\QQ[\sqrt{-d}]$. 
Then the two maximal isotropic subspaces $W_1$ and $W_2$ of $V_\CC$ invariant under $\Spin(V_\QQ)_w$ are defined over $K$, but not over $\QQ$, and $\sigma(W_1)=W_2$. The centralizer of $\rho(\Spin(V_\QQ)_w)$ in $\tilde{O}(V_\QQ)$
is isomorphic to $K^\times$.
\end{lem}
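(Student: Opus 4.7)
The plan is to identify the subgroup $\Spin(V_\QQ)_w$ with the subgroup $\Spin(V_\QQ)_P$ for $P = P_w$, where $P_w$ is the rational secant plane supplied by Lemma \ref{lemma-secant-to-spinor-variety}, and then invoke Lemma \ref{lemma-centralizer-of-rho-Spin-V-P}. First, since $J(w) = d \neq 0$, Lemma \ref{lemma-secant-to-spinor-variety} produces a unique plane $P_w \subset S^+_\CC$ containing $w$, defined over $\QQ$ (as $w \in S^+_\QQ$), such that $\PP(P_w)$ meets the spinor variety at two points $[u_1], [u_2]$ with transversal maximal isotropic subspaces $W_1, W_2 \subset V_\CC$. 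An element of $\Spin(V_\CC)_w$ fixes $[u_1]$ and $[u_2]$ (since these are the only two pure spinor lines stabilized by $\Spin(V_\CC)_w$, and they are stabilized as a set by any element stabilizing $w$); writing $w = u_1 + u_2$ after rescaling, the equation $g(w) = \lambda_1 u_1 + \lambda_2 u_2 = w$ forces $\lambda_1 = \lambda_2 = 1$. Hence $\Spin(V_\CC)_w$ fixes every vector in $P_w$, so $\Spin(V_\QQ)_w = \Spin(V_\QQ)_{P_w}$.

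Next I would show that $W_1, W_2$ are defined over $K = \QQ(\sqrt{-d})$ but not over $\QQ$, and that they are $\sigma$-conjugate. The plane $P_w$ is $\QQ$-rational, so the degree-two subscheme $\PP(P_w) \cap \{\text{spinor variety}\}$ is defined over $\QQ$; its two points are therefore either individually $\QQ$-rational or form a single Galois orbit over a quadratic extension. To identify this quadratic extension with $K$, I would use Igusa's single-orbit property (for each value $d' \in \QQ$, the locus $J^{-1}(d') \cap S^+_\QQ$ is a single $\Spin(V_\QQ)$-orbit) to reduce to a single convenient representative of the orbit. A natural choice is a rational class lying in the plane $P$ constructed in Section~\ref{sec-polarized-avwt-from-secants} from $u = \sqrt{-d}\,\Theta$ for any ample $\Theta$ on an abelian threefold, since for that plane the two pure spinor lines $\span_K\{\exp(\pm\sqrt{-d}\,\Theta)\}$ are manifestly $\sigma$-conjugate and defined over $K$ but not over $\QQ$. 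One then verifies that $J$ evaluated on a rational generator of this plane equals $d$ (up to a positive rational square, which is all that matters for identifying $K$), so the orbit is indeed $J^{-1}(d)$. Transporting the conclusion back along a $\Spin(V_\QQ)$-element to the original $w$ preserves $K$-rationality of $W_1, W_2$ and the conjugacy relation $\sigma(W_1) = W_2$.

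With $W_1, W_2$ now known to be defined over $K$ and complex conjugate, the hypotheses of Lemma \ref{lemma-centralizer-of-rho-Spin-V-P} apply to $P = P_w$. That lemma identifies the centralizer of $\rho(\Spin(V_\QQ)_P)$ in $\tilde O(V_\QQ)$ with the image of $\tilde e : K^\times \to GL(V_\QQ)$, which is injective, so the centralizer is isomorphic to $K^\times$. Combined with $\Spin(V_\QQ)_w = \Spin(V_\QQ)_{P_w}$, this finishes the proof.

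The main obstacle is the second step: establishing that the quadratic extension over which the two pure spinors of $P_w$ become rational is exactly $K = \QQ(\sqrt{-d})$, and not $\QQ$ or some other quadratic field. The use of Igusa's orbit classification together with the explicit pure-spinor construction of Section~\ref{sec-polarized-avwt-from-secants} is what makes this tractable; the alternative of computing the restriction of $J$ to $\PP(P_w)$ directly (which would factor as $c(t-\alpha)^2(t-\beta)^2$, with the discriminant $(\alpha-\beta)^2$ determining the field) requires an explicit choice of coordinates and is more laborious, though it would give the same answer.
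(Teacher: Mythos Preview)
Your overall strategy is right and matches the paper: identify $\Spin(V_\QQ)_w$ with $\Spin(V_\QQ)_{P_w}$ (the paper cites Remark \ref{remark-stabilizer-of-w-may-have-two-connected-components} for this, since $n=3$ is odd) and then invoke Lemma \ref{lemma-centralizer-of-rho-Spin-V-P}. The gap is in your middle step, establishing that $W_1,W_2$ are defined over $K$.

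Your transport argument via $\Spin(V_\QQ)$ does not go through. A rational class $\alpha$ in the plane $P$ of Section \ref{sec-polarized-avwt-from-secants} (say $\alpha = 1 - \tfrac{d}{2}\Theta^2$) satisfies $J(\alpha) = d^3$, as one computes from Example \ref{example-gulbrandsen}. You correctly note that $d^3 = d\cdot d^2$ identifies the field $K$, but the conclusion ``so the orbit is indeed $J^{-1}(d)$'' is false: Igusa's single-orbit statement says $J^{-1}(d')\cap S^+_\QQ$ is a single $\Spin(V_\QQ)$-orbit for each \emph{fixed} $d'$, and your $\alpha$ lies in $J^{-1}(d^3)$, not $J^{-1}(d)$. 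Rescaling does not help in general, since $J(\lambda\alpha)=\lambda^4 d^3$ equals $d$ only when $\lambda^2 = 1/d$, which forces $d$ to be a rational square. So there is no $\Spin(V_\QQ)$-element carrying $\alpha$ to $w$, and the transport fails.

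The paper repairs this by working over $K$ rather than $\QQ$. The element $1 + 2\sqrt{-d}\,[pt_X]\in S^+_K$ satisfies $J(1 + 2\sqrt{-d}\,[pt_X]) = -\tfrac{1}{4}(2\sqrt{-d})^2 = d$ exactly, so it lies in the same $\Spin(V_K)$-orbit as $w$ by Igusa's result applied with $F=K$. Choosing $g\in\Spin(V_K)$ with $g(w)=1+2\sqrt{-d}\,[pt_X]$, one has $P_{g(w)}=\span\{1,[pt_X]\}$ and hence $W_i = g^{-1}(H^1(X,K))$, $g^{-1}(H^1(\hat X,K))$, which are visibly defined over $K$. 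For non-rationality over $\QQ$, the paper argues by contradiction: if the $W_i$ were $\QQ$-rational then $P_w$ would be $\Spin(V_\QQ)$-equivalent to $\span\{1,[pt_X]\}$, on which formula (\ref{eq-J}) gives $J(x_0 + y_0[pt_X]) = -\tfrac14(x_0y_0)^2\le 0$ at every rational point, contradicting $J(w)>0$. Your proposal does not contain an argument ruling out the $\QQ$-rational case; you rely on the same broken transport to import it from the example.
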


\begin{proof} 
There exists an element $g\in \Spin(V_K)$, such that $g(w)=1+2\sqrt{-d}[pt_X]$, by the last paragraph in the proof of \cite[Prop. 3]{igusa}. Then $P_{g(w)}=\mbox{span}\{1,[pt_X]\}$ and so
$W_1:=g^{-1}(H^1(X,\QQ))$ and $W_2:=g^{-1}(H^1(\hat{X},\QQ))$ are defined over $K$ and are the two maximal isotropic subspaces of $V_K$ invariant under $\Spin(V_\QQ)_w$. The vanishing $W_1\cap W_2=(0)$ follows from the vanishing $H^1(X,\QQ)\cap H^1(\hat{X},\QQ)=(0)$ of the intersection in $V_\QQ$.
The maximal isotropic subspaces $W_1$ and $W_2$ are not defined over $\QQ$, since otherwise the plane $P_w$ would belong to the $\Spin(V_\QQ)$ orbit of $\mbox{span}\{1,[pt_X]\}$ and $J$ would have non-positive values on rational points of $P_w$, by formula (\ref{eq-J}). This would contradict the assumption that $J(w)>0$.
The pair $\{W_1, W_2\}$ is invariant under $\sigma$, since $P_w$ is, by Lemma \ref{lemma-secant-to-spinor-variety}. We conclude that $W_2=\sigma(W_1)$.

The equality $\Spin(V_\QQ)_w=\Spin(V_\QQ)_P$ holds, by Remark \ref{remark-stabilizer-of-w-may-have-two-connected-components}.
Let $\cm:K^\times\rightarrow GL(V_\QQ)$ be the homomorphism given in (\ref{eq-action-by-imaginary-quadratic-number-field}).
We have seen that the centralizer of $\rho(\Spin(V_\QQ)_P)$ in $\tilde{O}(V_\QQ)$
is equal to $\cm(K^\times)$ in Lemma \ref{lemma-centralizer-of-rho-Spin-V-P}.
\hide{
The subset $V_\QQ$ of $V_K$ is equal to $\{v_1+\sigma(v_1) \ : \ v_1\in W_1\}$.
Let $\cm:K^\times\rightarrow GL(V_\QQ)$ be the homomorphism given in (\ref{eq-action-by-imaginary-quadratic-number-field}).
The image of $\cm$ clearly centralizes $e(SL(W_1))$, where $e$ is given in (\ref{eq-embedding-e-of-stabilizer-of-w}), and so it centralizes $\rho(\Spin(V_\QQ)_w)$, by Lemma \ref{lemma-secant-to-spinor-variety}. 
Given $v_1, v_1'\in W_1$, set $v=v_1+\sigma(v_1)$ and $v'=v_1'+\sigma(v_1')$. 
Then 
\begin{eqnarray*}
(\cm_\lambda(v),\cm_\lambda(v'))_V&=&(\lambda v_1+\sigma(\lambda)\sigma(v_1),\lambda v_1'+\sigma(\lambda)\sigma(v_1'))_V
\\
&=&
Nm(\lambda)\left[(v_1,\sigma(v_1'))_V+(v_1',\sigma(v_1))_V\right]
=  Nm(\lambda)(v,v').
\end{eqnarray*}
Now, $Nm(\lambda)$ is positive, for all $\lambda\in K^\times$.
Hence, the image of $\cm$ is contained in $\tilde{O}(V_\QQ)$.

Conversely, if $g\in\tilde{O}(V_\QQ)$  centralizes $\rho(\Spin(V_\QQ)_w)$,
then $W_1$ and $W_2$ are $g$ invariant and $g$ acts on $W_i$ via multiplication by a scalar $\lambda_i\in K^\times$. Given $v_1\in W_1$, we have
\[
\lambda_1v_1=g(v_1)=(\sigma g\sigma)(v_1)=\sigma(\lambda_2\sigma(v_1))=\sigma(\lambda_2)v_1.
\]
Hence, $\lambda_2=\sigma(\lambda_1)$ and $g=\cm(\lambda_1)$.
}
\end{proof}

Given $w\in S^+_\QQ$ as in Lemma \ref{lemma-imaginary-quadratic-field-is-centralizer-sixfold-case} and an orientation of the unique secant $P$ through $w$ (so a choice of one of the two maximal isotropic subspaces $W_i$, $i=1,2$), we get the injective group homomorphism $\cm:K^\times\rightarrow GL(V_\QQ)$  given in (\ref{eq-action-by-imaginary-quadratic-number-field}). The image of $\cm$ is contained in $\tilde{O}(V_\QQ)$ and is equal to the centralizer of $\rho(\Spin(V_\QQ)_w)$, by Lemma \ref{lemma-centralizer-of-rho-Spin-V-P}. When $P$ is spanned by Hodge classes, then $\cm$ defines a complex multiplication on $X\times\hat{X}$ and a $2$-dimensional subspace of Hodge-Weil classes, by Lemma \ref{lemma-decomposition-into-4-direct-summands} and Corollary \ref{cor-plane-of-Hodge-Weil-classes}.

\begin{example}
\label{example-gulbrandsen}
Moduli spaces of rank $2$ vector bundles with trivial determinant on a principally polarized abelian threefold $(X,\Theta)$ were studied in \cite{gulbrandsen}. Gulbrandsen proves the non-emptiness of the moduli space $\M(2,0,d\Theta^2)$ 
of slope-stable vector bundles $F$ of rank $2$ with $c_1(F)=0$ and $c_2(F)=d\Theta^2$, for every positive integer $d$
\cite[Theorem 2.3]{gulbrandsen}. He also proves that 
the moduli space with $c_2=\Theta^2$ contains a Zariski open subset of dimension $13$ birational to a $\PP^1$ bundle over a finite quotient of
$X^2\times_X X^2\times_X X^2$, where $X^2$ is considered a variety over $X$ via the group operation
\cite[Theorem 6.1]{gulbrandsen}.
The Chern character of a vector bundle $F$ in $\M(2,0,d\Theta^2)$
is $w:=2-d\Theta^2$. We claim that $J(w)=16d^3$, so that $K=\QQ(\sqrt{-d})$. Indeed, there is a basis $\{e_1, e_2, \dots, e_6\}$ of $H^1(X,\Integers)$, such that $\Theta=e_1\wedge e_4+e_2\wedge e_5+ e_3\wedge e_6$. So 
\[
\Theta^2=2\left[e_1\wedge  e_4\wedge  e_2 \wedge  e_5+
e_1\wedge  e_4\wedge  e_3\wedge  e_6+
e_2\wedge  e_5\wedge  e_3\wedge  e_6\right]
= (-2)(e_{36}^*+e_{25}^*+e_{14}^*),
\] 
and $w=2+2d(e_{36}^*+e_{25}^*+e_{14}^*)$, so $J(w)=2^4d^3J(1+(e_{36}^*+e_{25}^*+e_{14}^*))=16d^3$. 
Note that $w=2\alpha$, where $\alpha$ is the class in Lemma \ref{lemma-ch-F-i-is-on-secant-to-spinor-variety}.
\end{example}

\begin{example}
The Igusa invariant of the Chern character of the ideal sheaf of a length $n$ zero dimensional subscheme is
$J(1-n[pt_X])=-(1/4)n^2=-(n/2)^2$, and so $K=\QQ$
\end{example}

\hide{
%
\section{Some representation theory}
We have the isomorphisms of $\Spin(V_\QQ)$ representations
\[
S_\QQ\otimes S_\QQ \cong \End(S_\QQ)\cong  C(V_\QQ) \cong \oplus_{i=0}^{12}\wedge^iV_\QQ.
\]
The exterior power $\wedge^iV_\QQ$ is irreducible, for $i\neq 6$, while 
$\wedge^6V_\QQ$ is the direct sum of two irreducible representations of $\Spin(V_\QQ)$, 
$\wedge^6_+V_\QQ$ spanned by top exterior powers of even maximal isotropic subspaces of $V_\QQ$
and $\wedge^6_-V_\QQ$ spanned by top exterior powers of odd maximal isotropic subspaces of $V_\QQ$
\cite[III.4.5]{chevalley}.
The tensor square $S_\QQ\otimes S_\QQ$ decomposes into the direct sum of $S^+_\QQ\otimes S^+_\QQ$, 
$S^-_\QQ\otimes S^-_\QQ$, each decomposing further into symmetric and alternating squares, 
$S^+_\QQ\otimes S^-_\QQ$, and $S^-_\QQ\otimes S^+_\QQ$. The decomposition of each into irreducible representations is given below \cite[Sec. III.3.4]{chevalley}.
\begin{equation}
\label{eq-decomposition-of-sym-2-S-plus-into-spin-12-irreducible-reps}
\Sym^2(S^+_\QQ) \cong \wedge^2V_\QQ\oplus \wedge^6_+V_\QQ.
\end{equation}
\[
\Sym^2(S^-_\QQ) \cong \wedge^2V_\QQ\oplus \wedge^6_-V_\QQ.
\]
\begin{equation}
\label{eq-wedge-2-S-+}
\wedge^2S^+_\QQ \cong \wedge^2S^-_\QQ \cong 1\oplus \wedge^4V_\QQ.
\end{equation}
\[
S^+_\QQ\otimes S^-_\QQ \cong V_\QQ\oplus\wedge^3V_\QQ\oplus \wedge^5V_\QQ.
\]
%
\subsection{Restriction to the stabilizer $\Spin(V_\QQ)_w$}
Keep the notation of Lemma \ref{lemma-imaginary-quadratic-field-is-centralizer-sixfold-case}.
The representation $V_K$ of $\Spin(V_K)_w$ decomposes as
\[
V_K\cong W_1\oplus W_2,
\]
and $W_2\cong W_1^*$.
The half-spin representation $S^+_\QQ$ admits the filtration 
\[
\mbox{span}\{w\}\subset w^\perp\subset S^+_\QQ, 
\]
where $w^\perp$ is the symplectic-orthogonal subspace with respect to the pairing (\ref{eq-Mukai-pairing}). The plane $P_w\subset S^+_\QQ$ contains $w$ and it is not contained in $w^\perp$, since the two even maximal isotropic subspaces $W_1$ and $W_2$ are transversal, and so their even pure spinors pair non-trivially, by \cite[III.2.4]{chevalley}.
Hence, $P_w$ projects onto $S^+_\QQ/w^\perp$, 
yielding the isomorphism $P_w^\perp\cong w^\perp/\mbox{span}\{w\}$ and the 
decomposition into a direct sum of  $\Spin(V_\QQ)_w$-representations
\begin{equation}
\label{eq-restriction-of-positive-half-spin-decomposes}
S^+_\QQ\cong P_w\oplus (w^\perp/\mbox{span}\{w\}).
\end{equation}
The half-spin representation $S^+_K$ is isomorphic to the even exterior algebra of each maximal isotropic subspace of $V_K$. Hence, 
over $K$, the summand $w^\perp/\mbox{span}\{w\}$ further decomposes as the direct sum $\wedge^2W_1\oplus \wedge^4 W_1$ of two dual representations.
\[
S^+_K\cong P_w\oplus \wedge^2W_1\oplus \wedge^4W_1.
\]
We conclude that the dimension of the subspace of $\Sym^2(S^+_K)$ of $\Spin(V_\QQ)_w$--invariant vectors is $4$, the sum of the $3$-dimensional $\Sym^2(P_w)$ and a one-dimensional subspace of $\wedge^2W_1\otimes \wedge^4W_1$. Clearly, the direct summand $\wedge^2V_K$ in 
(\ref{eq-decomposition-of-sym-2-S-plus-into-spin-12-irreducible-reps}) contributes a one-dimensional subspace of $\Spin(V_\QQ)_w$-invariant vectors. Hence, the subspace of $\wedge^6_+V_K$ of $\Spin(V_\QQ)_w$--invariant vectors is 
$3$-dimensional.

We have
\[
\wedge^6V_K\cong \oplus_{i=0}^6\left[\wedge^iW_1\otimes\wedge^{6-i}W_1^*\right].
\]
The trivial representation of $\Spin(V_K)_w$ appears with multiplicity $3$ as the summands
$\wedge^0W_1\otimes\wedge^6W_1^*$, 
$\wedge^6W_1\otimes\wedge^0W_1^*$, and once in
$\wedge^3W_1\otimes\wedge^3W_1^*\cong\End(\wedge^3W_1).$
Hence, every $\Spin(V_K)_w$-invariant vector in $\wedge^6V_K$ belongs to $\wedge^6_+V_K$.
This $3$-dimensional trivial subrepresentation of $\wedge^6V_K$ is defined over $\QQ$, since the direct sum 
$[\wedge^0W_1\otimes\wedge^6W_2]\oplus 
[\wedge^6W_1\otimes\wedge^0W_2]$ is, as is the third power of the invariant line  in $\wedge^2V_\QQ$ dual to the one spanned by $(f(\bullet),\bullet)$ in $\wedge_2V_\QQ^*$, where $f$ is given in Equation (\ref{eq-f}). 

The second partial $J_{ww}$ of the Igusa quartic with respect to $w$ is an element of 
$(\Sym^2S^+_\QQ)^*\cong \Sym^2(S^+_\QQ)$. 

\begin{lem}
\label{lemma-hat-J}
The projection $\hat{J}_{ww}$ of $J_{ww}$ to the direct summand $\wedge^6_+V_\QQ$ in 
(\ref{eq-decomposition-of-sym-2-S-plus-into-spin-12-irreducible-reps}) is non-trivial. 
The subset $\{\hat{J}_{ww},\Theta_w^3\}$ of $\wedge^6_+V_\QQ$ is linearly independent.
\end{lem}

\begin{proof}
We have the $\Spin(V_\QQ)$-equivariant isomorphism $J:\Sym^2(S^+_\QQ)\rightarrow \Sym^2(S^+_\QQ)^*$ and the isomorphism $\Sym^2(S^+_\QQ)^*\cong \Sym^2(S^+_\QQ)$ induced by the bilinear pairing (\ref{eq-Mukai-pairing})
The subspace $\wedge^6_+ V_\QQ$ of $\Sym^2(S^+_\QQ)$ is spanned by squares of pure spinors \cite[III.3.2 and III.4.5]{chevalley}.
Taking partials with respect to the coordinate $x_0$  of $(S^+_\QQ)^*$ 
corresponds to contraction with a pure spinor associated to the maximal isotropic subspace $H^1(X,\QQ)$ of $V_\QQ$. The second partial  $J_{x_0x_0}=-y_0^2/2$ does not vanish, and so $J$ restricts to an isomorphism
from $\wedge^6_+ V_\QQ$ onto $(\wedge^6_+ V_\QQ)^*$, as the two direct summands in 
(\ref{eq-decomposition-of-sym-2-S-plus-into-spin-12-irreducible-reps}) are distinct irreducible $\Spin(V_\QQ)$-representations. The non-vanishing of $\hat{J}_{ww}$ would follow once we 
prove that the element $w^2$  projects to a non-zero element in $\wedge^6_+ V_\QQ$.
The bilinear pairing (\ref{eq-Mukai-pairing})
induces a $\Spin(V_\QQ)$-invariant pairing on $\Sym^2(S^+_\QQ)$ with respect to which the direct summands of 
(\ref{eq-decomposition-of-sym-2-S-plus-into-spin-12-irreducible-reps}) are orthogonal. Hence, it suffices to prove that $w^2$ pairs non-trivially with the square of some even pure spinor.
Indeed, $w=au_1+bu_2$, where $u_1$ and $u_2$ are even pure spinors and $a,b$ are non-zero elements of $K$ and so $(w^2,u_2^2)=(a^2u_1^2,u_2^2)+(2abu_1u_2,u_2^2)+(b^2u_2^2,u_2^2)=a^2(u_1,u_2)_S^2$
and $(u_1,u_2)_S$ does not vanish, since $W_1$ and $W_2$ are transversal \cite[III.2.4]{chevalley}.

We prove next that $\{\hat{J}_{ww},\Theta_w^3\}$ is linearly independent.
Let $\{e_1, \dots, e_6\}$ be a basis of $W_2$. We get the non-zero element  $e_{W_2}:=e_1\wedge \cdots \wedge e_6$ of $\wedge^6W_2$.
The bilinear pairing (\ref{eq-pairing-on-V}) induces a symmetric bilinear pairing on $\wedge^iV_\QQ$,
which we denote by $(\bullet,\bullet)_V$ as well.
On $\wedge^6_+V_\QQ$ this pairing must be proportional to the one considered above and induced from $\Sym^2S^+_\QQ$ by 
(\ref{eq-Mukai-pairing}) and (\ref{eq-decomposition-of-sym-2-S-plus-into-spin-12-irreducible-reps}), since $\wedge^6_+V_\QQ$ is an irreducible representation of $\Spin(V_\QQ)$. 
We conclude that $(\hat{J}_{ww},e_{W_2})_V\neq 0$, since we  have seen that $(w^2,u_2^2)$ does not vanish.
Now $(\Theta_w^3,e_{W_2})_V$ is proportional to the product
$(\Theta_w,e_1\wedge e_2)_V(\Theta_w,e_3\wedge e_4)_V(\Theta_w,e_5\wedge e_6)_V$, which vanishes, since 
$(\Theta_w,e_i\wedge e_j)_V$ is proportional to $\Theta_w(e_i,e_j)=(f(e_i),e_j)_V$, which in turn vanishes, since 
$f(e_i)=-\sqrt{-q}e_i$ and $W_2$ is an isotropic subspace of $V_K$.
\end{proof}

\begin{rem}
Clearly, the element $\hat{J}_{ww}$ of $\wedge^6_+V_\QQ$ is $\Spin(V_\QQ)_w$-invariant.
Note that $\hat{J}_{ww}$ is defined over $\QQ$.  
\end{rem}

\begin{rem}
Consider more generally the partials of $J$ with respect to elements of $\Sym^2P_w$ and relate them to the three dimensional $\Spin(V)_w$-invariant subspace of $\wedge^6V_\QQ$. The latter has the canonical line spanned by $\Theta_w^3$ and the former the canonical line spanned by the product of the two pure spinors $u_1$ and $u_2$ in $P_w$. So $\Theta_w^3$ should be proportional to $\hat{J}_{u_1u_2}$ (???)
\end{rem}

}

%
\hide{

\subsection{Miscellaneous}
We get also the decomposition
\[
\wedge^2S^+_\QQ\cong \wedge^2P_w\oplus [P_w\otimes P_w^\perp]\oplus \wedge^2(P_w^\perp).
\]
Now, the line $\wedge^2P_w$ is different from the trivial $\Spin(V_\QQ)$ subrepresentation
appearing in (\ref{eq-wedge-2-S-+}) and hence project non-trivially to $\wedge^4 V_\QQ$.
The summand $\wedge^2(P_w^\perp)$ is isomorphic to its dual, which contains the restriction
of the pairing  (\ref{eq-Mukai-pairing}). Hence, it decomposes as 
\[
\wedge^2(P_w^\perp)\cong 1\oplus \wedge^2_0(P_w^\perp),
\]
where 
$\wedge^2_0(P_w^\perp)$ is the intersection $\wedge^2(P_w^\perp)\cap \wedge^4 V_\QQ$ in $\wedge^2S^+_\QQ$.
We get a decomposition
\[
\wedge^4 V_\QQ\cong 1\oplus \wedge^2_0(P_w^\perp)\oplus [P_w\otimes P_w^\perp].
\]

As a representation of the stabilizer $\Spin(V_\QQ)_w$, the space
$\wedge^6_+V_\QQ$ contains the two dimensional trivial representation $P$, such that 
$P\otimes_\QQ K=\wedge^6W_1+\wedge^6 W_2.$
}

%
\section{Appendix}
\label{appendix}

Let $X$ be a smooth $3$-dimensional variety and $C$ and $\Sigma$ smooth curves on $X$, such that the subscheme $C\cap \Sigma$ is zero-dimensional. Let
\begin{equation}
\label{eq-short-exact-sequence-of-ideal-sheaf-of-C}
0\rightarrow \Ideal{C}\rightarrow\StructureSheaf{X}\rightarrow \StructureSheaf{C}\rightarrow 0
\end{equation}
be the short exact sequence of the ideal sheaf of $C$.

\begin{lem}
\label{lemma-vanishing-of-tor-sheaves}
\begin{enumerate}
\item
\label{lemma-item-Tor-1-embeds-in-tensor-product}
Let $\Ideal{C,C\cap \Sigma}$ be the ideal sheaf of $C\cap \Sigma$ as a subscheme of $C$. The following sequence is short exact.
\[
0\rightarrow \SheafTor_{-1}(\StructureSheaf{C},\StructureSheaf{\Sigma})
\rightarrow \StructureSheaf{C}\otimes \Ideal{\Sigma}\rightarrow \Ideal{C,C\cap \Sigma} \rightarrow 0.
\]
In particular, the  sheaf $\StructureSheaf{C}\otimes \Ideal{\Sigma}$ has a $1$-dimensional support and a subsheaf with a $0$-dimensional support.
\item
The torsion sheaves $\SheafTor_{-k}(\Ideal{C},\Ideal{\Sigma})$ vanish, for $k>0$, and tensoring (\ref{eq-short-exact-sequence-of-ideal-sheaf-of-C}) with $\Ideal{\Sigma}$ we get the short exact sequence
\begin{equation}
\label{eq-short-exact-sequence-of-tensor-product-of-two-ideal-sheaves}
0\rightarrow \Ideal{C}\otimes\Ideal{\Sigma}\rightarrow \Ideal{\Sigma}\rightarrow \StructureSheaf{C}\otimes\Ideal{\Sigma}\rightarrow 0.
\end{equation}
\end{enumerate}
\end{lem}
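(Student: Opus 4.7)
The plan is to handle (1) by tensoring the structure sequence of $\Sigma$ with $\StructureSheaf{C}$, and to deduce (2) by dimension shifting, with the single real computation being the vanishing of $\SheafTor_{-2}(\StructureSheaf{C},\StructureSheaf{\Sigma})$.

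For part (1), I would tensor the short exact sequence $0\to\Ideal{\Sigma}\to\StructureSheaf{X}\to\StructureSheaf{\Sigma}\to 0$ with $\StructureSheaf{C}$ and take the long exact sequence of $\SheafTor$. Freeness of $\StructureSheaf{X}$ kills $\SheafTor_{-1}(\StructureSheaf{C},\StructureSheaf{X})$, yielding
\[
0\to\SheafTor_{-1}(\StructureSheaf{C},\StructureSheaf{\Sigma})\to\StructureSheaf{C}\otimes\Ideal{\Sigma}\to\StructureSheaf{C}\to\StructureSheaf{C}\otimes\StructureSheaf{\Sigma}\to 0.
\]
Identifying $\StructureSheaf{C}\otimes\StructureSheaf{\Sigma}\cong\StructureSheaf{C\cap\Sigma}$ and recognizing the kernel of the restriction $\StructureSheaf{C}\to\StructureSheaf{C\cap\Sigma}$ as $\Ideal{C,C\cap\Sigma}$ gives the displayed sequence. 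The support statements follow: $\Ideal{C,C\cap\Sigma}$ is supported on the $1$-dimensional $C$, while $\SheafTor_{-1}(\StructureSheaf{C},\StructureSheaf{\Sigma})$ is supported on the $0$-dimensional $C\cap\Sigma$.

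For part (2), I would run two dimension shifts. Tensoring $0\to\Ideal{C}\to\StructureSheaf{X}\to\StructureSheaf{C}\to 0$ with $\Ideal{\Sigma}$ and using freeness of $\StructureSheaf{X}$ gives $\SheafTor_{-k}(\Ideal{C},\Ideal{\Sigma})\cong\SheafTor_{-k-1}(\StructureSheaf{C},\Ideal{\Sigma})$ for $k\geq 1$. Similarly tensoring $0\to\Ideal{\Sigma}\to\StructureSheaf{X}\to\StructureSheaf{\Sigma}\to 0$ with $\StructureSheaf{C}$ gives $\SheafTor_{-j}(\StructureSheaf{C},\Ideal{\Sigma})\cong\SheafTor_{-j-1}(\StructureSheaf{C},\StructureSheaf{\Sigma})$ for $j\geq 1$. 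Composing, $\SheafTor_{-k}(\Ideal{C},\Ideal{\Sigma})\cong\SheafTor_{-k-2}(\StructureSheaf{C},\StructureSheaf{\Sigma})$ for $k\geq 1$. Since $C$ has codimension $2$ in a smooth $3$-fold, $\Ideal{C}$ is locally generated by a regular sequence of length $2$, so $\StructureSheaf{C}$ has local projective dimension $2$ and $\SheafTor_{-j}(\StructureSheaf{C},-)=0$ for $j\geq 3$. This handles $k\geq 2$.

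The one substantive step (and the main obstacle) is $\SheafTor_{-2}(\StructureSheaf{C},\StructureSheaf{\Sigma})=0$. I would argue locally: at a point $p\in C\cap\Sigma$, pick a regular sequence $f_1,f_2$ generating $\Ideal{C,p}$ and use the Koszul resolution $0\to\StructureSheaf{X}\to\StructureSheaf{X}^2\to\StructureSheaf{X}\to\StructureSheaf{C}\to 0$. Tensoring with $\StructureSheaf{\Sigma,p}$ identifies $\SheafTor_{-2}(\StructureSheaf{C},\StructureSheaf{\Sigma})_p$ with the common annihilator in $\StructureSheaf{\Sigma,p}$ of the images $\bar f_1,\bar f_2$. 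Because $\Sigma$ is smooth of dimension $1$, the local ring $\StructureSheaf{\Sigma,p}$ is a DVR, hence a domain; and because $\dim(C\cap\Sigma)=0$, the inclusion $\Ideal{C,p}\subset\Ideal{\Sigma,p}$ fails, so at least one of $\bar f_1,\bar f_2$ is nonzero in $\StructureSheaf{\Sigma,p}$ and therefore a non-zero-divisor. This forces the common annihilator to vanish, proving $\SheafTor_{-2}(\StructureSheaf{C},\StructureSheaf{\Sigma})_p=0$; away from $C\cap\Sigma$ the sheaf is clearly zero. Together with the shift above, this yields $\SheafTor_{-1}(\Ideal{C},\Ideal{\Sigma})=0$, so tensoring the structure sequence of $\Ideal{C}$ with $\Ideal{\Sigma}$ produces the asserted short exact sequence \eqref{eq-short-exact-sequence-of-tensor-product-of-two-ideal-sheaves}.
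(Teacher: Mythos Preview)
Your proof is correct and follows essentially the same strategy as the paper: reduce via dimension shifting to the vanishing of $\SheafTor_{-2}(\StructureSheaf{C},\StructureSheaf{\Sigma})$, and verify that vanishing locally via the Koszul resolution of $\StructureSheaf{C}$, using that at least one generator of $\Ideal{C,p}$ restricts to a nonzero element of the integral domain $\StructureSheaf{\Sigma,p}$. Your treatment of part~(1) is in fact slightly cleaner than the paper's: you invoke the long exact Tor sequence for $0\to\Ideal{\Sigma}\to\StructureSheaf{X}\to\StructureSheaf{\Sigma}\to 0$ tensored with $\StructureSheaf{C}$ directly, whereas the paper writes out the two-term free resolution of $\Ideal{\Sigma}$ and chases a diagram to exhibit the embedding $\SheafTor_{-1}(\StructureSheaf{C},\StructureSheaf{\Sigma})\hookrightarrow\StructureSheaf{C}\otimes\Ideal{\Sigma}$ by hand; the content is identical.
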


The lemma implies that $\Ideal{C}\otimes\Ideal{\Sigma}$ is the ideal sheaf of a subscheme $Z$ 
with embedded points along $C\cap\Sigma$ whose reduced induced subscheme is $C\cup\Sigma$ and $\Ideal{C}\otimes\Ideal{\Sigma}$ represents also the left derived tensor product.

\begin{proof}
\underline{Step 1:}
We prove part (\ref{lemma-item-Tor-1-embeds-in-tensor-product}) as well as that the sheaf $\SheafTor_{-k}(\StructureSheaf{C},\StructureSheaf{\Sigma})$ vanishes, for $k\geq 2$.
Let $p$ be a point of intersection of $C$ and $\Sigma$. Let $x$, $y$ be a regular sequence in the stalk $\Ideal{C,(p)}$. Consider the Koszul complex resolving the stalk $\StructureSheaf{C,(p)}$.
\[
0\rightarrow \StructureSheaf{X,(p)}\RightArrowOf{(y,-x)} \StructureSheaf{X,(p)}\oplus \StructureSheaf{X,(p)}\RightArrowOf{(x,y)}
 \StructureSheaf{X,(p)}\rightarrow \StructureSheaf{C,(p)}.
\]
Tensoring with $\StructureSheaf{\Sigma,(p)}$ we get the complex
\[
\StructureSheaf{\Sigma,(p)}\RightArrowOf{(\bar{y},-\bar{x})} \StructureSheaf{\Sigma,(p)}\oplus \StructureSheaf{\Sigma,(p)}\RightArrowOf{(\bar{x},\bar{y})}\StructureSheaf{\Sigma,(p)}
\]
whose homology sheaves represent the stalk of $\SheafTor_{-k}(\StructureSheaf{C},\StructureSheaf{\Sigma})$.
The left arrow is injective, since at least one of the restrictions $\bar{x}$ and $\bar{y}$ of $x$ and $y$ to $\Sigma$ does not vanish. Hence, $\SheafTor_{-k}(\StructureSheaf{C},\StructureSheaf{\Sigma})$ vanishes, for $k\geq 2$.

Choose next a regular sequence $(x',y')$ in the stalk $\Ideal{\Sigma,(p)}$ and consider the diagram, whose top row is a locally free  resolution.
\[
\xymatrix{
0\ar[r] & \StructureSheaf{X,(p)}\ar[r]^-{(y',-x')} \ar[d]^{=}& \StructureSheaf{X,(p)}\oplus \StructureSheaf{X,(p)}\ar[r] \ar[d]^{=}&\Ideal{\Sigma,(p)}\ar[r] \ar[d]&0
\\
0\ar[r] & \StructureSheaf{X,(p)}\ar[r]^-{(y',-x')} & \StructureSheaf{X,(p)}\oplus \StructureSheaf{X,(p)}\ar[r]^-{(x',y')} &
\StructureSheaf{X,(p)}  
}
\]
Tensoring with $\StructureSheaf{C,(p)}$ we get again that $(\bar{y}',-\bar{x}')$ is injective and its cokernel is the stalk of $\Ideal{\Sigma}\otimes\StructureSheaf{C}$.
We conclude that the stalk at $p$ of 
$\SheafTor_{-1}(\StructureSheaf{C},\StructureSheaf{\Sigma})$, which is $\ker(\bar{x}',\bar{y}')/Im(\bar{y}',-\bar{x}'))$, 
embeds as a submodule  of the stalk of $\Ideal{\Sigma}\otimes\StructureSheaf{C}$, which is the cokernel of $(\bar{y}',-\bar{x}')$. This completes the proof of part (\ref{lemma-item-Tor-1-embeds-in-tensor-product}).

\underline{Step 2:} We prove next the vanishing of $\SheafTor_{-k}(\StructureSheaf{C},\Ideal{\Sigma})$, for $k\geq 1$.
The exactness of the sequence (\ref{eq-short-exact-sequence-of-tensor-product-of-two-ideal-sheaves}) follows.
Tensoring (\ref{eq-short-exact-sequence-of-ideal-sheaf-of-C}) with a coherent sheaf $F$ we get the long exact sequence
\[
\xymatrix{
& \cdots \ar[r]& 0 \ar[r]& \SheafTor_{-2}(\StructureSheaf{C},F)
\\
\ar[r]&\SheafTor_{-1}(\Ideal{C},F) \ar[r] & 0 \ar[r] & \SheafTor_{-1}(\StructureSheaf{C},F) 
\\
\ar[r] & \Ideal{C}\otimes F \ar[r] & \StructureSheaf{X}\otimes F \ar[r] & \StructureSheaf{C}\otimes F \ar[r] & 0.
}
\]
We get the isomorphisms $\SheafTor_{-k}(\Ideal{C},F)\rightarrow \SheafTor_{-k-1}(\StructureSheaf{C},F)$, for $k\geq 1$,
and the analogous isomorphism for $\Sigma$.
\begin{eqnarray}
\label{eq-isomorphisms-of-torsion-sheaves1}
\SheafTor_{-k}(\Ideal{C},F)&\IsomRightArrow &\SheafTor_{-k-1}(\StructureSheaf{C},F),
\\
\label{eq-isomorphisms-of-torsion-sheaves2}
\SheafTor_{-k}(\Ideal{\Sigma},F)&\IsomRightArrow& \SheafTor_{-k-1}(\StructureSheaf{\Sigma},F),
\end{eqnarray}
for $k\geq 1$. Hence, we have
\[
\SheafTor_{-k}(\StructureSheaf{C},\Ideal{\Sigma})\cong 
\SheafTor_{-k}(\Ideal{\Sigma},\StructureSheaf{C})\stackrel{(\ref{eq-isomorphisms-of-torsion-sheaves2})}{\cong} \SheafTor_{-k-1}(\StructureSheaf{\Sigma},\StructureSheaf{C})=0,
\]
for $k\geq 1$, where the right vanishing is by Step 1.

\underline{Step 3:}
The isomorphism (\ref{eq-isomorphisms-of-torsion-sheaves1}) with $F=\Ideal{\Sigma}$ yields
$\SheafTor_{-k}(\Ideal{C},\Ideal{\Sigma})\cong \SheafTor_{-k-1}(\StructureSheaf{C},\Ideal{\Sigma})$, for $k\geq 1$,
and the right hand side vanishes, by Step 2. Hence, $\SheafTor_{-k}(\Ideal{C},\Ideal{\Sigma})$ vanishes, for $k>0$.
\end{proof}

%
\section{Notation}

\hspace{1ex}
\small

\begin{longtable}{l l l}
\\
$X$ & a complex abelian variety & 
\\
$\hat{X}$ & the abelian variety $\Pic^0(X)$ dual to $X$ &
\\
$\P$ & the Poincar\'{e} line bundle over $X\times\hat{X}$. & 
\\
$V$ & the lattice $H^1(X,\Integers)\oplus H^1(\hat{X},\Integers)$ & 
\\
$V_{\bullet}$ & the vector space $V\otimes_\ZZ\bullet$ over a field $\bullet$ &
\\
$(\bullet,\bullet)_V$ & the bilinear pairing on the lattice $V$ & (\ref{eq-pairing-on-V})
\\
$\Spin(V)$ & the subgroup of $\Spin(V_\QQ)$ preserving the lattice $V$ &
\\
$S$ & $H^*(X,\ZZ)$ regarded as the spin representation of $\Spin(V)$ &
\\
$S^+$ & $H^{ev}(X,\ZZ)$ regarded as the half spin representation of $\Spin(V)$
\\
$S^-$ & $H^{odd}(X,\ZZ)$ regarded as the half spin representation of $\Spin(V)$
\\
$(\bullet,\bullet)_S$ & the Mukai pairing on the spin representation & (\ref{eq-Mukai-pairing})
\\
$\tau$ &  the main anti-automorphism of $S$ & (\ref{eq-Mukai-pairing})
\\
$\tau_{\bullet}$ & translation automorphism of $X$ or $X\times\hat{X}$ by an element $\bullet$
\\
$C(V)$ & the Clifford algebra of $V$ & (\ref{eq-Clifford-relation})
\\
$G(V)$ & the Clifford group & Sec. \ref{sec-clifford-algebra}
\\
$\rho$ & the standard representation of $G(V)$ and of $\Spin(V)$ & Sec. \ref{sec-clifford-algebra}
\\
$\rho$ & denotes also the extension of $\rho$ to a representation on $\wedge^*V$ & (\ref{eq-rho-extended-to-exterior-algebra})
\\
$\rho'$ & a representation of $\Spin(V)$ on $\wedge^*V$ & (\ref{eq-rho-prime})
\\
$m$ & the spin representation of $C(V)$ and $\Spin(V)$ & Sec. \ref{sec-clifford-algebra}
\\
$m_\bullet$ & value of $m$ at $\bullet\in \Spin(V)$ & 
\\
$m_{\bullet}^\dagger$ & $\tau m_{\bullet}\tau$ & (\ref{eq-m-dagger})
\\
$IGr(2n,V_\CC)$ & the maximal isotropic grassmannian of $V_\CC$ & Sec. \ref{sec-pure-spinors}
\\
$IGr_+(2n,V_\CC)$ & the even connected component of $IGr(2n,V_\CC)$ & Sec. \ref{sec-pure-spinors}
\\
$P$ & a plane in $S^+_\QQ$ corresponding to a secant to $IGr_+(2n,V_\CC)$ & Sec. \ref{sec-pure-spinors}
\\
$\Spin(V)_P$ & subgroup of $\Spin(V)$ leaving invariant all elements of $P\subset S^+_\QQ$ & (\ref{eq-Spin(V)_P})
\\
$K$ & an imaginary quadratic number field & \hspace{1ex}
\\
$\Spin(V_K)_{\ell_1,\ell_2}$ & subgroup of $\Spin(V_K)$ leaving invariant two pure spinors  & (\ref{eq-spin-V-K-ell-1-ell-2})
\\
$\det_i$ & a $K^\times$ valued character of $\Spin(V_K)_{\ell_1,\ell_2}$, $i=1,2$ & Sec \ref{sec-pure-spinors}
\\ 
$F^\bullet(\wedge^*V)$ & the increasing filtration $\oplus_{i=0}^\bullet\wedge^iV$ of $\wedge^*V$ & Sec. \ref{subsection-the-isomorphism-tilde-varphi}
\\
$F_\bullet(\wedge^*V)$ & the decreasing filtration $\oplus_{i\geq\bullet}\wedge^iV$ of $\wedge^*V$ & (\ref{eq-decreasing-weight-filtration})
\\
$HW_P$ & $2$-dimensional subspace of $S^+_\QQ\otimes S^+_\QQ$ corresponding to $\ell_1^2\oplus \ell_2^2$ & Sec. \ref{sec-HW-classes-from-squares-of-pure-spinors}
\\
$\hat{HW}$ &  subspace of Weil classes in the middle cohomology& Sec. \ref{sec-introduction-abelian-varieties-of-Weil-type}
\\
$\hat{HW}_P$ &  subspace of Weil classes in  $H^{2n}(X\times\hat{X},\QQ)$ & Sec. \ref{sec-abelian-varieties-of-Weil-type-introduction}
\\
$\cm$ & an embedding of $K$ in $\End(V_\QQ)$ & (\ref{eq-action-by-imaginary-quadratic-number-field})
\\
$\Xi_P$ & a $(1,1)$ form on $X\times \hat{X}$ & (\ref{eq-Xi})
\\
$\varphi$ & the isomorphism of $S\otimes_\ZZ S\rightarrow C(V)$ & (\ref{eq-Chevalley-varphi}) 
\\
$\tilde{\varphi}$ & the isomorphism of $S\otimes_\ZZ S\rightarrow \wedge^*V$ & (\ref{eq-tilde-varphi})
\\
$SO_+(V)$ & the kernel of the spinor norm $SO(V)\rightarrow \{\pm 1\}$,  $=\rho(\Spin(V))$  & 
\\
$SO_+(V_\QQ)_f$ & the image of $\Spin(V_\QQ)_P$ in $SO_+(V_\QQ)$, here $f=\cm(\sqrt{-d})$ & (\ref{eq-so-f})
\\
$H(\bullet,\bullet)$ & a $K$-valued Hermitian form on $V_\QQ$ & (\ref{eq-H})
\\
$\Omega_P$ & a period domain of abelian varieties of Weil type & (\ref{eq-Omega})
\\
$\nu(I)$ & a discrete invariant of a complex structure $I$ in $\rho(\Spin(V_\RR)_P)$ & Lem. \ref{lemma-3-dimensional-eigenvalues}
\\
$\nu$ & the isomorphism $H^*(X\times X,\ZZ)\rightarrow H^*(\hat{X}\times X)$ & Lem. \ref{lemma-nu-equal-tilde-varphi}
\\
$\Phi_{\bullet}$ & integral transform $D^b(X)\rightarrow D^b(Y)$ with kernel $\bullet\in D^b(X\times Y)$ & Sec. \ref{sec-cohomological-action-of-derived-equivalences}
\\
$\Psi_{\bullet}$ &integral transform $D^b(Y)\rightarrow D^b(X)$ with kernel $\bullet\in D^b(X\times Y)$ & Sec. \ref{sec-cohomological-action-of-derived-equivalences}
\\
$\Phi$ & Orlov's derived equivalence $\Phi:D^b(X\times X)\rightarrow D^b(X\times\hat{X})$ & (\ref{eq-Orlov-derived-equivalence-from-XxX-to-X-times-hat-X})
\\
$\phi$ & the isomorphism $S\otimes_\ZZ S\rightarrow \wedge^*V$ induced by $\Phi$ & (\ref{eq-Orlov-cohomological-isomorphism})
\\
$\kappa(\bullet)$ & the characteristic class $ch(\bullet)\exp(-c_1(\bullet))$ & Def. \ref{def-kappa-B}
\\
$at_\bullet$ & the Atiyah class of a coherent (possibly twisted) sheaf $\bullet$ & Def. \ref{def-atiyah-class-of-twisted-sheaf}
\\
$\sigma$ & in Sections \ref{section-abelian-2n-folds-of-Weil-type-from-rational-secants} and \ref{sec-Hermitian-form}:  Galois involution of the number field $K$ &
\\
$\sigma$ & in Sections \ref{section-semiregular-twisted-sheaves} to \ref{section-Jacobians-of-genus-3-curves} it denotes the semi-regularity map & (\ref{eq-semiregularity-map})
\\
$HH^i(X)$ & the $i$-th Hochschild cohomology of $X$ & Sec. \ref{sec-rank-6-obstruction-map}
\\
$ev_F$ & the evaluation homomorphism $ev_F:HH^*(X)\rightarrow \Ext^*(F,F)$ & (\ref{eq-ev-F})
\\
$ob_F$ & $ev_F:HH^2(X)\rightarrow \Ext^2(F,F)$ & Sec. \ref{sec-rank-6-obstruction-map}
\\
$HT^i(X)$ & $\oplus_{j=0}^i H^j(X,\wedge^{i-j}TX)$ & Sec. \ref{sec-rank-6-obstruction-map}
\\
$\iota$ & in Section \ref{section-Jacobians-of-genus-3-curves}: the involution of $X=\Pic^2(C)$, $L\mapsto \omega_C\otimes L^{-1}$ & Sec. \ref{section-Jacobians-of-genus-3-curves}
\\
$\iota$ & our default notation for an inclusion or an embedding

\end{longtable}

\normalsize
%
{\bf Acknowledgements:}
This work was partially supported by a grant  from the Simons Foundation (\#962242). 
I thank Nick Addington, Alexander Perry and Jonathan Pridham for helpful communications.


\end{document}